\tikzset{
	marrow/.style={decoration={markings,mark=at position 0.5 with {\arrow{#1}}}, postaction=decorate}
}
\numberwithin{equation}{section}
\theoremstyle{plain}
\newtheorem{theorem}{Theorem}[section]
\newtheorem{lemma}[theorem]{Lemma}
\newtheorem{corollary}[theorem]{\bf Corollary}
\newtheorem{proposition}[theorem]{\bf Proposition}
\theoremstyle{definition}
\newtheorem{definition}[theorem]{\bf Definition}
\newcommand{\lin}{\mathrel{\vphantom{<}\text{\mathsurround=0pt\ooalign{$<$\cr$-$\cr}}}}
\newcommand{\nil}{\mathrel{\vphantom{>}\text{\mathsurround=0pt\ooalign{$>$\cr$-$\cr}}}}
\newcommand{\sqni}{\mathrel{\vphantom{\sqsupset}\text{\mathsurround=0pt\ooalign{$\sqsupset$\cr$-$\cr}}}}
\newcommand{\sqnii}{\mathrel{\vphantom{\sqsupset}\text{\mathsurround=0pt\ooalign{$\sqsupset$\cr$=$\cr}}}}
\newcommand{\sqin}{\mathrel{\vphantom{\sqsubset}\text{\mathsurround=0pt\ooalign{$\sqsubset$\cr$-$\cr}}}}
\newcommand{\oc}{\mathop{\varpropto}}
\newcommand{\co}{\mathop{\reflectbox{$\oc$}}}
\newcommand{\Dashv}{\mathrel{\reflectbox{$\vDash$}}}
\author[T. Bice]{Tristan Bice}
\address{Institute of Mathematics of the Czech Academy of Sciences, \v{Z}itn\'a 25, Prague}
\email{bice@math.cas.cz}
\thanks{Research of Tristan Bice was supported by GAČR project 22-07833K and RVO: 67985840.}
\author[M. Malicki]{Maciej Malicki}
\address{Institute of Mathematics, Polish Academy of Sciences}
\email{mamalicki@gmail.com}
\keywords{pseudoarc, continua, graph limits, Fr\"a\'isse theory, amalgamation}
\subjclass[2020]{05C62, 06A07, 37B45, 54F15, 54H11}
\begin{document}

\title{Homeomorphisms of the Pseudoarc}

\begin{abstract}
    We construct homeomorphisms of compacta from relations between finite graphs representing their open covers.  Applied to the pseudoarc, this yields simple Fra\"iss\'e theoretic proofs of several important results, both old and new.  Specifically, we recover Bing's classic results on the uniqueness and homogeneity of the pseudoarc.  We also show that the autohomeomorphism group of the pseudoarc has a dense conjugacy class, thus confirming a conjecture of Kwiatkowska.
\end{abstract}

\maketitle

\setcounter{tocdepth}{2}
\tableofcontents

\section{Introduction}

Here we focus on dynamical systems coming from a single homeomorphism $\phi$ acting on a topological space $X$.  While such systems have been studied for a long time, they are best understood when the underlying space $X$ is particularly `nice'.  For example, when $X$ is a manifold one can bring to bear various tools from analysis and geometry.  At the other extreme, dynamical systems on zero dimensional spaces are particularly amenable to combinatorial methods, owing to the arbitrarily fine clopen partitions such spaces possess.  Indeed, this led Shimomura to develop a very general method for constructing and analysing zero dimensional systems from `graph covers' in \cite{Shimomura2014}, building on earlier work on Cantor systems in \cite{GambaudoMartens2006} and \cite{BernadesDarji2012}.  This method has much potential to push our understanding of zero dimensional systems even further, both improving on previous results and solving open problems, as already demonstrated in \cite{BoronskiKupkaOprocha2019}, for example.

In contrast, much less is known about dynamical systems on spaces lying between these two extremes, like those often considered in continuum theory.  By definition, continua are connected but can have local branching and twisting behaviour that sets them far apart from any kind of manifold.  On the other hand, the lack of non-trivial clopen partitions means that continua lack any obvious combinatorial structure.  And without a suitable combinatorial description of the spaces, there would appear to be little hope of finding some graph-like description of the homeomorphisms upon them.

Fortunately, more combinatorial methods of constructing continua have been developed over the past couple of decades.  The general idea is to obtain continua from appropriate limits of finite graphs, as first demonstrated by Irwin and Solecki in \cite{IrwinSolecki2006}.  There they showed that the pseudoarc, a space of central interest in continuum theory, can indeed be constructed as a projective Fra\"iss\'e limit in a category of edge-preserving surjections between path graphs.  Taking the same kinds of Fra\"iss\'e limits  in different categories of finite graphs yields other important continua, as demonstrated in \cite{BaKw} and \cite{PanagiotopoulosSolecki2022}, for example.  Potential for combining the Irwin-Solecki approach to constructing spaces with the Shimomura approach to constructing homeomorphisms has even been demonstrated recently in \cite{Kucharski2022}, which may well be a promising line of future research.

One issue with the Irwin-Solecki approach, however, is that the limit in the usual sense is only a `pre-space', i.e. a Cantor space together with an equivalence relation -- it is only after taking the quotient that one obtains the desired continuum.  As a result the graphs get represented as closed covers on the space whose individuals only touch not overlap (i.e.~their interiors are disjoint).  These are somewhat more difficult to work with than the open covers usually employed in topological arguments.  This is perhaps best illustrated by attempts to prove in this framework that the autohomeomorphism group of the pseudoarc has a dense conjugacy class, which have so far all been flawed -- see \cite{Kwiatkowska2023}.

To resolve this issue, the authors of \cite{BaBiVi2} developed a slightly different method of constructing compacta using more general relations between the graphs, building on the theory developed in \cite{BaBiVi}.  The key advantage here is that then the graphs do indeed correspond to open covers of the resulting space.  This approach could thus be viewed as an abstract generalisation of Bing's construction of the pseudoarc in \cite{Bing1948} via crooked chains of open sets, while the Irwin-Solecki approach is more like Moise's construction of the pseudoarc in \cite{Moise1948}.  Indeed, while the Irwin-Solecki construction of the pseudoarc has provided a much needed new perspective, simpler combinatorial proofs of Bing's original results have so far remained elusive.

In the present paper we show how Bing's results can indeed be recovered within the framework of \cite{BaBiVi2}.  The key is to first extend the Shimomura theory to more general compacta arising from relations between finite graphs.  Applying this to the pseudoarc, we are indeed able to recover Bing's results stating that the pseudoarc is unique up to homeomorphism and pointwise homogeneous, while also verifying Kwiatkowska's conjecture that the autohomeomorphism group of the pseudoarc has a dense conjugacy class.  However, our methods could be applied much more broadly and, just like the Irwin-Solecki theory before it, have the potential to reveal even more about various other kinds of continua and their dynamical systems.

\section{Posets, Spectra and Continuous Functions}

\subsection{Preliminaries}

Here we review some notation and terminology from \cite{BaBiVi}.

First let us denote the power set of any set $X$ by
\[\mathsf{P}X=\{W:W\subseteq X\}.\]
The product with any other set $Y$ will be denoted by
\[Y\times X=\{\langle y,x\rangle:y\in Y\text{ and }x\in X\}.\]

\subsubsection{Relations}
A \emph{relation} with domain $X$ and codomain $Y$ is a subset of $Y\times X$, i.e. an element of $\mathsf{P}(Y\times X)$.  We use the standard infix notation for relations ${\sqsupset}\subseteq Y\times X$, i.e.
\[y\sqsupset x\qquad\Leftrightarrow\qquad\langle y,x\rangle\in{\sqsupset}.\]
We denote the complement of any relation ${\sqsupset}\subseteq Y\times X$ by ${\not\sqsupset}={\sqsupset^\mathsf{c}}={\sqsupset}\setminus(Y\times X)$, i.e.
\[y\not\sqsupset x\qquad\Leftrightarrow\qquad\neg(y\sqsupset x).\]
We also denote the \emph{inverse} of any relation ${\sqsupset}\subseteq Y\times X$ by ${\sqsubset}={\sqsupset^{-1}}\subseteq X\times Y$, i.e.
\[x\sqsubset y\qquad\Leftrightarrow\qquad y\sqsupset x.\]
The \emph{image} of any $W\subseteq X$ under $\sqsupset$ will be denoted by
\[W^\sqsubset={\sqsupset}[W]=\{y\in Y:\exists w\in W\ (y\sqsupset w)\}.\]
The \emph{preimage} of any $Z\subseteq Y$ under $\sqsupset$ is just the image under the inverse $\sqsubset$, i.e.
\[Z^\sqsupset={\sqsubset}[Z]=\{x\in X:\exists z\in Z\ (z\sqsupset x)\}.\]
The \emph{demonic image} of $W\subseteq X$ and the \emph{demonic preimage} of $Z\subseteq Y$ under $\sqsupset$ are defined as above with $\forall$ instead of $\exists$, which we denote by subscripts rather than superscripts, i.e.
\begin{align*}
    W_\sqsubset&=\{y\in Y:\forall w\in W\ (y\sqsupset w)\}=\{y\in Y:W\subseteq y^\sqsupset\}.\\
    Z_\sqsupset&=\{x\in X:\forall z\in Z\ (z\sqsupset x)\}=\{x\in X:Z\subseteq x^\sqsubset\}.
\end{align*}
The \emph{restriction} of $\sqsupset$ to $Z\times W$ is the relation ${{\sqsupset}|_W^Z}={\sqsupset}\cap (Z\times W)$, i.e.
\[y\mathrel{{\sqsupset}|_W^Z}x\qquad\Leftrightarrow\qquad Z\ni y\sqsupset x\in W.\]
Likewise, the individual restrictions to $W$ and $Z$ respectively will be denoted by
\[{{\sqsupset}|_W}={{\sqsupset}|_W^Y}={\sqsupset}\cap(Y\times W)\qquad\text{and}\qquad{{\sqsupset}|^Z}={{\sqsupset}|_X^Z}={\sqsupset}\cap(Z\times X).\]
We will also use the same symbol $\sqsubset$ to denote the relation on $\mathsf{P}X\times\mathsf{P}Y$ defined by
\[W\sqsubset Z\qquad\Leftrightarrow\qquad W\subseteq Z^\sqsupset\qquad\Leftrightarrow\qquad\forall w\in W\ \exists z\in Z\ (w\sqsubset z).\]
We call $\sqsupset$ \emph{surjective} if $X^\sqsubset=Y$ and \emph{co-surjective} if $Y^\sqsupset=X$.  We call $\sqsupset$ \emph{co-injective} if, for all $y\in Y$, we have $x\in X$ with $x^\sqsubset=\{y\}$ (so co-injectivity implies surjectivity).  We call $\sqsupset$ \emph{co-bijective} if it is both co-surjective and co-injective.

The \emph{composition} of ${\sqni}\subseteq Z\times Y$ and ${\sqsupset}\subseteq Y\times X$ is the relation ${\sqni\circ\sqsupset}\subseteq Z\times X$ where
\[z\sqni\circ\sqsupset x\qquad\Leftrightarrow\qquad\exists y\in Y\ (z\sqni y\sqsupset x)\qquad\Leftrightarrow\qquad z^{\sqni}\cap x^\sqsubset\neq\emptyset.\]
The \emph{demonic composition} is the relation ${\sqni\co\sqsupset}=(\sqni^\mathsf{c}\circ\sqsupset)^\mathsf{c}\cap Z\times Y^\sqsupset$, i.e.
\[z\sqni\co\sqsupset x\qquad\Leftrightarrow\qquad\exists y\sqsupset x\text{ and }\forall y\sqsupset x\ (z\sqni y)\qquad\Leftrightarrow\qquad\emptyset\neq x^\sqsubset\subseteq z^{\sqni}.\]
The \emph{codemonic composition} is the relation ${\sqni\oc\sqsupset}=(\sqin\co\sqsubset)^{-1}$ so
\[z\sqni\oc\sqsupset x\qquad\Leftrightarrow\qquad\exists y\sqin z\text{ and }\forall y\sqin z\ (y\sqsupset x)\qquad\Leftrightarrow\qquad\emptyset\neq z^{\sqni}\subseteq x^\sqsubset.\]

\subsubsection{Posets}\label{Posets}

A relation ${\geq}\subseteq\mathbb{P}\times\mathbb{P}$ is \emph{transitive} if ${\geq\circ\geq}\subseteq{\geq}$.  We denote the equality relation/identity function on $\mathbb{P}$ by $=_\mathbb{P}$ so $\geq$ is \emph{reflexive} if ${=_\mathbb{P}}\subseteq{\geq}$ and \emph{antiysymmetric} if ${\geq}\cap{\leq}\subseteq{=_\mathbb{P}}$.  If $\geq$ is transitive and reflexive then $\geq$ is a \emph{preorder}, while if $\geq$ is also antisymmetric then $\geq$ is a \emph{partial order}.  A \emph{poset} is a set $\mathbb{P}$ on which we have a partial order $\geq_\mathbb{P}$, often written just as $\geq$ when no confusion will result.

Now fix a poset $\mathbb{P}$.  The \emph{strict order} is defined by ${>}={\geq}\cap{\neq}$, i.e.
\[p<q\qquad\Leftrightarrow\qquad p\leq q\neq p.\]
The \emph{common lower bound relation} on a poset $\mathbb{P}$ is defined by ${\wedge}={\geq\circ\leq}$ so
\[p\wedge q\qquad\Leftrightarrow\qquad\exists r\in\mathbb{P}\ (p,q\geq r).\]
For any $C\subseteq\mathbb{P}$, the \emph{$C$-star-below} relation is then defined by ${\vartriangleleft_C}={({\wedge}\circ{=_C})\oc{\leq}}$ so
\[p\vartriangleleft_Cq\qquad\Leftrightarrow\qquad Cp\subseteq q^\geq,\]
where $Cp=C\cap p^\wedge$.  The \emph{$C$-star-above} relation is the inverse ${\vartriangleright_C}=(\vartriangleleft_C)^{-1}={{\geq}\co{({=_C}\circ{\wedge})}}$.

We call $p\in\mathbb{P}$ an \emph{atom} if it is minimal, i.e. if $p^>=\emptyset$.  We call $\mathbb{P}$ \emph{atomless} if it has no atoms.  Likewise, $p\in\mathbb{P}$ is a \emph{coatom} if it is maximal.  We denote the coatoms of $\mathbb{P}$ by
\[\mathbb{P}^0=\{p\in P:p^<=\emptyset\}.\]
The $n^\mathrm{th}$ \emph{cone} of $\mathbb{P}$ is then defined recursively for $n>0$ by
\[\mathbb{P}^n=\{p\in\mathbb{P}:p^<\subseteq\mathbb{P}^{n-1}\}.\]
The atoms of the $n^\mathrm{th}$ cone form the $n^\mathrm{th}$ \emph{level} of $\mathbb{P}$, which we denote by
\[\mathbb{P}_n=\{p\in\mathbb{P}^n:p^>\cap\mathbb{P}^n=\emptyset\}.\]
The \emph{caps} of $\mathbb{P}$ are the subsets refined by levels, which we denote by
\[\mathsf{C}\mathbb{P}=\{C\subseteq\mathbb{P}:\exists n\in\mathbb{P}_n\ (\mathbb{P}_n\leq C)\}.\]
The \emph{star-below} relation is defined by
\[{\vartriangleleft}=\bigcup_{n\in\omega}\vartriangleleft_{\mathbb{P}_n}=\bigcup_{C\in\mathsf{C}\mathbb{P}}\vartriangleleft_C.\]

We call $\mathbb{P}$ an \emph{$\omega$-poset} if every level is finite and $\mathbb{P}=\bigcup_{n\in\omega}\mathbb{P}^n(=\bigcup_{n\in\omega}\mathbb{P}_n)$.
We call an $\omega$-poset $\mathbb{P}$ \emph{regular} if, for all $m\in\omega$, we have $n>m$ such that $\mathbb{P}_n\vartriangleleft\mathbb{P}_m$.  In this case we can make $n$ larger if necessary to even obtain $\mathbb{P}_n\vartriangleleft_{\mathbb{P}_n}\mathbb{P}_m$.  For $m\leq n$ we define
\[{\geq^m_n}={\geq}|^{\mathbb{P}_m}_{\mathbb{P}_n},\qquad{>^m_n}={>}|^{\mathbb{P}_m}_{\mathbb{P}_n}\qquad\text{and}\qquad{\vartriangleright^m_n}={\vartriangleright_{\mathbb{P}_n}}|^{\mathbb{P}_m}_{\mathbb{P}_n}.\]
We call an $\omega$-poset \emph{graded} if its levels are disjoint and ${>^l_n}={>^l_m}\circ{>^m_n}$ whenever $l<m<n$ (which is equivalent to being atomless and graded in the sense of \cite[\S1.5]{BaBiVi}).

\subsection{Spectra}

In \cite{BaBiVi}, a canonical way of constructing compacta from any $\omega$-poset $\mathbb{P}$ was investigated.  To describe this, let us first call $S\subseteq\mathbb{P}$ a \emph{selector} if $S\cap C\neq\emptyset$, for all $C\in\mathsf{C}\mathbb{P}$.  The \emph{spectrum} of $\mathbb{P}$ is then defined by
\[\mathsf{S}\mathbb{P}=\{S\subseteq\mathbb{P}:S\text{ is a minimal selector}\}\]
considered as a subspace of the power-space $\mathsf{P}\mathbb{P}$, i.e.~with the open subbasis $(p_\mathsf{S})_{p\in\mathbb{P}}$ where
\[p_\mathsf{S}=\{S\in\mathsf{S}\mathbb{P}:p\in S\}.\]
If $\mathbb{P}$ is an $\omega$-poset then every $S\in\mathsf{S}\mathbb{P}$ will be a filter and so $(p_\mathsf{S})_{p\in\mathbb{P}}$ will in fact form basis for the topology on $\mathsf{S}\mathbb{P}$.  In \cite{BaBiVi}, it is shown that every second countable $\mathsf{T}_1$ compactum is homeomorphic to $\mathsf{S}\mathbb{P}$, for some $\omega$-poset $\mathbb{P}$.

Several results in \cite{BaBiVi} required the $\omega$-poset $\mathbb{P}$ in question to be \emph{prime} in the sense that $p_\mathsf{S}\neq\emptyset$, for all $p\in\mathbb{P}$.  While this is a relatively weak assumption, it can sometimes be eliminated by making use of the following closed subsets of $\mathsf{S}\mathbb{P}$.

\begin{proposition}
    For any $\omega$-poset $\mathbb{P}$, each $p\in\mathbb{P}$ defines a closed subset of $\mathsf{S}\mathbb{P}$ by
    \[\overline{p_\mathsf{S}}=\{S\in\mathsf{S}\mathbb{P}:S\subseteq p^\wedge\}.\]
    Moreover, $\mathrm{cl}(p_\mathsf{S})\subseteq\overline{p_\mathsf{S}}$ always holds and if $\mathbb{P}$ is prime then $\mathrm{cl}(p_\mathsf{S})=\overline{p_\mathsf{S}}$.
\end{proposition}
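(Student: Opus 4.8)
The plan is to dispatch the three assertions in turn, the first two being essentially formal and the third carrying the real content. \emph{Closedness.} I would show that the complement of $\overline{p_\mathsf{S}}$ in $\mathsf{S}\mathbb{P}$ is open by writing it as a union of subbasic sets: a selector $S$ fails to lie in $\overline{p_\mathsf{S}}$ exactly when $S \not\subseteq p^\wedge$, i.e. when some $q \in S$ satisfies $\neg(p \wedge q)$, so $\mathsf{S}\mathbb{P} \setminus \overline{p_\mathsf{S}} = \bigcup \{q_\mathsf{S} : \neg(p \wedge q)\}$ is open. \emph{The inclusion $\mathrm{cl}(p_\mathsf{S}) \subseteq \overline{p_\mathsf{S}}$.} Since $\overline{p_\mathsf{S}}$ is closed it suffices to check $p_\mathsf{S} \subseteq \overline{p_\mathsf{S}}$, and here I would use the recorded fact that every $S \in \mathsf{S}\mathbb{P}$ is a filter: if $p, q \in S$ then $p$ and $q$ have a common lower bound in $S$, so $p \wedge q$ and hence $q \in p^\wedge$; letting $q$ range over $S$ gives $S \subseteq p^\wedge$, i.e. $S \in \overline{p_\mathsf{S}}$.

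For the reverse inclusion under primeness I would fix $S \in \overline{p_\mathsf{S}}$ and show $S \in \mathrm{cl}(p_\mathsf{S})$. Since $(q_\mathsf{S})_{q \in \mathbb{P}}$ is a basis for an $\omega$-poset, it is enough to verify that each basic neighbourhood of $S$ meets $p_\mathsf{S}$, and such neighbourhoods are precisely the $q_\mathsf{S}$ with $q \in S$. So fix $q \in S$; as $S \subseteq p^\wedge$ we have $p \wedge q$, giving some $r$ with $p, q \geq r$. Now primeness enters through $r_\mathsf{S} \neq \emptyset$: choosing $T \in r_\mathsf{S}$ we have $r \in T$, and since $T$ is a filter (hence upward closed) the relations $r \leq p$ and $r \leq q$ force $p, q \in T$. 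Thus $T \in p_\mathsf{S} \cap q_\mathsf{S}$, so $q_\mathsf{S}$ meets $p_\mathsf{S}$; as $q \in S$ was arbitrary, $S \in \mathrm{cl}(p_\mathsf{S})$.

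I expect the reverse inclusion to be the only real obstacle, and the crux is seeing why primeness is exactly what is needed: the hypothesis $S \subseteq p^\wedge$ delivers only a \emph{common lower bound} $r$ of $p$ and a given $q \in S$, whereas witnessing that $q_\mathsf{S}$ meets $p_\mathsf{S}$ requires an honest selector. Primeness manufactures such a selector through $r$, and the filter (upward-closure) property of selectors then propagates membership up to both $p$ and $q$. Without primeness some $r_\mathsf{S}$ can be empty and this step — and the equality — may fail, so the care lies in routing the argument through a single witness $r$ below $p$ rather than trying to work with $p$ and $q$ directly.
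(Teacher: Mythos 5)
Your proof is correct and follows essentially the same route as the paper's: closedness by exhibiting the complement as a union of sets $q_\mathsf{S}$ with $\neg(p\wedge q)$, and the prime case by passing to a common lower bound $r\leq p,q$ and using $\emptyset\neq r_\mathsf{S}\subseteq p_\mathsf{S}\cap q_\mathsf{S}$. The only (harmless) difference is the middle step: the paper proves $\mathrm{cl}(p_\mathsf{S})\subseteq\overline{p_\mathsf{S}}$ directly, applying the filter property to witnesses $T\in p_\mathsf{S}\cap q_\mathsf{S}$ of closure membership, whereas you deduce it from closedness of $\overline{p_\mathsf{S}}$ plus the inclusion $p_\mathsf{S}\subseteq\overline{p_\mathsf{S}}$, which uses the same filter property applied to elements of $p_\mathsf{S}$ itself.
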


\begin{proof}
    If $S\in\mathsf{S}\mathbb{P}\setminus\overline{p_\mathsf{S}}$ then we must have $q\in S\setminus p^\wedge$ so $S\in q_\mathsf{S}$ and $\overline{p_\mathsf{S}}\cap q_\mathsf{S}=\emptyset$, showing that $\mathsf{S}\mathbb{P}\setminus\overline{p_\mathsf{S}}$ is open and hence $\overline{p_\mathsf{S}}$ is closed.  If $S\in\mathrm{cl}(p_\mathsf{S})$ then, for every $q\in S$, we have $T\in p_\mathsf{S}\cap q_\mathsf{S}$ and hence $p\wedge q$, as $T$ is a filter.  Thus $S\subseteq p^\wedge$ and hence $S\in\overline{p_\mathsf{S}}$, showing that $\mathrm{cl}(p_\mathsf{S})\subseteq\overline{p_\mathsf{S}}$.  Finally, if $\mathbb{P}$ is prime and $S\in\overline{p_\mathsf{S}}$ then, for every $q\in S\subseteq p^\wedge$, we have $r\in p^\geq\cap q^\geq$ so $\emptyset\neq r_\mathsf{S}\subseteq p_\mathsf{S}\cap q_\mathsf{S}$, showing that $S\in\mathrm{cl}(p_\mathsf{S})$.
\end{proof}

For example, we can eliminate the prime assumption in \cite[Proposition 2.33]{BaBiVi} and characterise the relation $\vartriangleleft$ on general $\omega$-posets in terms of the spectrum as follows.

\begin{proposition}\label{VarChar}
    For any $\omega$-poset $\mathbb{P}$ and $p,q\in\mathbb{P}$,
    \[p\vartriangleleft q\qquad\Leftrightarrow\qquad\overline{p_\mathsf{S}}\subseteq q_\mathsf{S}.\]
\end{proposition}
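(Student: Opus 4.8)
The plan is to prove the two implications separately, using throughout that for an $\omega$-poset every $S\in\mathsf{S}\mathbb{P}$ is a filter and that, by the previous proposition, $\overline{p_\mathsf{S}}=\{S\in\mathsf{S}\mathbb{P}:S\subseteq p^\wedge\}$. I would first unwind the definition of $\vartriangleleft$ into its level form: since $\vartriangleleft=\bigcup_{n\in\omega}\vartriangleleft_{\mathbb{P}_n}$ and each $\mathbb{P}_n\in\mathsf{C}\mathbb{P}$ (as $\mathbb{P}_n\leq\mathbb{P}_n$), the relation $p\vartriangleleft q$ is equivalent to the existence of some $n$ with $\mathbb{P}_n\cap p^\wedge\subseteq q^\geq$; that is, every level-$n$ element compatible with $p$ lies below $q$.

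For the forward implication, suppose $p\vartriangleleft q$ and fix $n$ with $\mathbb{P}_n\cap p^\wedge\subseteq q^\geq$. Given $S\in\overline{p_\mathsf{S}}$, so $S\subseteq p^\wedge$, the selector property applied to the cap $\mathbb{P}_n$ yields some $c\in S\cap\mathbb{P}_n$. Then $c\in\mathbb{P}_n\cap p^\wedge\subseteq q^\geq$, so $c\leq q$, and since $S$ is an upward closed filter containing $c$ we get $q\in S$, i.e.\ $S\in q_\mathsf{S}$. Hence $\overline{p_\mathsf{S}}\subseteq q_\mathsf{S}$. This direction is routine and works for any $\omega$-poset, prime or not.

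For the converse I would argue by contraposition, producing a point of $\overline{p_\mathsf{S}}$ missing $q_\mathsf{S}$ whenever $p\not\vartriangleleft q$. In that case, for every $n$ the ``bad'' set $A_n=\{c\in\mathbb{P}_n:c\wedge p\text{ and }c\not\leq q\}$ is non-empty. Consider the nested closed sets $F_n=\{S\in\overline{p_\mathsf{S}}:S\cap\mathbb{P}_m\subseteq A_m\text{ for all }m\leq n\}$; each is closed because the requirement $S\cap\mathbb{P}_m\subseteq A_m$ merely forbids membership in the finitely many subbasic opens $c_\mathsf{S}$ with $c\in\mathbb{P}_m\setminus A_m$. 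If every $F_n$ is non-empty then, by compactness of $\mathsf{S}\mathbb{P}$ and the finite intersection property of this decreasing family, some $S$ lies in $\bigcap_n F_n$. This $S$ satisfies $S\subseteq p^\wedge$, so $S\in\overline{p_\mathsf{S}}$, while $q\notin S$, since $q$ belongs to some level $\mathbb{P}_k$ and $q\notin A_k$ because $q\leq q$; were $q\in S$ we would get $q\in S\cap\mathbb{P}_k\subseteq A_k$. Thus $S\in\overline{p_\mathsf{S}}\setminus q_\mathsf{S}$, contradicting the hypothesis.

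The crux, and the step I expect to demand the most care, is verifying that each $F_n$ is non-empty: one must extend a bad element $c\in A_n$ to an actual minimal selector that remains inside $p^\wedge$ and meets every level $m\leq n$ only in bad elements. I would establish this using the branch-like description of minimal selectors for $\omega$-posets from \cite{BaBiVi}, threading bad choices coherently through the finitely many levels below $n$ via a K\"onig's lemma argument backed by compactness. Alternatively, one can adapt the argument of \cite[Proposition 2.33]{BaBiVi} verbatim, simply replacing $\mathrm{cl}(p_\mathsf{S})$ by $\overline{p_\mathsf{S}}$ so as to dispense with the primeness hypothesis.
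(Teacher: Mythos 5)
Your forward direction is correct and is essentially the paper's own argument: fix a cap (or level) $C$ witnessing $p\vartriangleleft_C q$, pick $c\in S\cap C$ using the selector property, note that $c\in C\cap p^\wedge\subseteq q^\geq$, and conclude $q\in S$ from upward closure of the filter $S$. No complaints there.

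The converse, however, has a genuine gap exactly where you flag it: you never prove $F_n\neq\emptyset$, and that claim carries the entire weight of the implication. Neither of your two suggested remedies is an argument. ``Threading bad choices coherently through the levels via K\"onig's lemma'' presupposes that a coherent thread of bad elements can be completed to a minimal selector staying inside $p^\wedge$, which is precisely what is at issue; and ``adapt \cite[Proposition 2.33]{BaBiVi} verbatim, replacing $\mathrm{cl}(p_\mathsf{S})$ by $\overline{p_\mathsf{S}}$'' is circular, since removing the primeness hypothesis from that proposition is exactly the task at hand. The missing idea is short: $p\not\vartriangleleft q$ means $C\cap p^\wedge\nsubseteq q^\geq$ for \emph{every} cap $C$, so the set $p^\wedge\setminus q^\geq$ meets every cap, i.e.\ it is a selector, and hence contains a minimal selector $S\in\mathsf{S}\mathbb{P}$ (every selector contains a minimal one, \cite[Proposition 2.2]{BaBiVi}). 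Then $S\subseteq p^\wedge$ gives $S\in\overline{p_\mathsf{S}}$, while $q\in q^\geq$ gives $q\notin S$, so $S\in\overline{p_\mathsf{S}}\setminus q_\mathsf{S}$. This single observation also shows your compactness scaffolding is superfluous: the selector $S$ so produced avoids $q^\geq$ on every level simultaneously, so it already lies in $\bigcap_n F_n$; there is no need to treat the levels $m\leq n$ separately and pass to a limit.
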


\begin{proof}
    If $p\vartriangleleft q$ then, for any $S\in\mathsf{S}\mathbb{P}$,
    \[S\in\overline{p_\mathsf{S}}\quad\Leftrightarrow\quad p\in S_\wedge\quad\Rightarrow\quad q\in (S_\wedge)^\vartriangleleft\subseteq S^\leq\subseteq S\quad\Rightarrow\quad S\in q_\mathsf{S},\]
    showing that $\overline{p_\mathsf{S}}\subseteq q_\mathsf{S}$.  Conversely, if $p\not\vartriangleleft q$ then $Cp\nleq q$, for all $C\in\mathsf{C}\mathbb{P}$.  This means that $Cp\setminus q^\geq$ is a selector and hence contains a minimal selector $S\in\overline{p_\mathsf{S}}\setminus q_\mathsf{S}$, i.e.~$\overline{p_\mathsf{S}}\nsubseteq q_\mathsf{S}$.
\end{proof}

In \cite{BaBiVi}, we in fact showed that all second countable $\mathsf{T}_1$ compacta arise as spectra of $\omega$-posets that are also graded and \emph{predetermined} in the sense that, for any non-atomic $p\in\mathbb{P}$, we have some $q\in\mathbb{P}$ with $q^<=p^\leq$.  Predetermined graded $\omega$-posets will be of particular interest to us as these are precisely the posets formed from sequences of finite co-bijective relations.  With this in mind, let us note that predetermined $\omega$-posets can be characterised via the spectrum as follows.  For any $Q\subseteq\mathbb{P}$, let
\[Q_\mathsf{S}=\bigcup_{q\in Q}q_\mathsf{S}\qquad\text{and}\qquad\overline{Q_\mathsf{S}}=\bigcup_{q\in Q}\overline{q_\mathsf{S}}.\]

\begin{proposition}\label{PreChars}
    For any $\omega$-poset $\mathbb{P}$, the following are equivalent.
    \begin{enumerate}
        \item\label{Pre} $\mathbb{P}$ is predetermined.
        \item\label{pSp} For every $p\in\mathbb{P}$, we have $S\in p_\mathsf{S}$ with $S\subseteq p^\leq\cup p^\geq$.
        \item\label{SubLeq} For any $n\in\omega$, $A\subseteq\mathbb{P}_n$ and $B\subseteq\mathbb{P}^n$,
        \[A_\mathsf{S}\subseteq B_\mathsf{S}\qquad\Rightarrow\qquad A\leq B.\]
    \end{enumerate}
\end{proposition}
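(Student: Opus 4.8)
The plan is to establish the four implications \ref{Pre}$\,\Rightarrow\,$\ref{pSp}, \ref{pSp}$\,\Rightarrow\,$\ref{Pre}, \ref{pSp}$\,\Rightarrow\,$\ref{SubLeq} and \ref{SubLeq}$\,\Rightarrow\,$\ref{pSp}, so that condition \ref{pSp}---the existence, through each $p$, of a minimal selector consisting only of elements comparable to $p$---serves as the hub. Throughout I would write $I_p=\mathbb{P}\setminus(p^{\leq}\cup p^{\geq})$ for the elements incomparable to $p$, so that \ref{pSp} is exactly the assertion $p_\mathsf{S}\nsubseteq (I_p)_\mathsf{S}$, i.e.\ that some $S\in p_\mathsf{S}$ avoids $I_p$. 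The first three implications are elementary; the real content lies in \ref{SubLeq}$\,\Rightarrow\,$\ref{pSp}.

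For \ref{Pre}$\,\Rightarrow\,$\ref{pSp} I would use predeterminedness to descend straight below $p$: starting from $q_0=p$, whenever $q_k$ is non-atomic pick via \ref{Pre} the element $q_{k+1}$ with $q_{k+1}^{<}=q_k^{\leq}$ (so $q_{k+1}<q_k$), and set $q_{k+1}=q_k$ once an atom is reached. A short induction gives $\bigcup_k q_k^{\leq}=p^{\leq}\cup\{q_k:k\ge 1\}\subseteq p^{\leq}\cup p^{\geq}$, and this set is a filter meeting every level; since a filter meeting every level automatically meets every cap (if $\mathbb{P}_n\leq C$ and it contains $s\in\mathbb{P}_n$ with $s\leq c\in C$, then $c$ lies in it by upward closure), it is a selector, and passing to a minimal selector $S'$ inside it keeps $p$, because $p$ is the only element of this set at level $m$. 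For the converse \ref{pSp}$\,\Rightarrow\,$\ref{Pre}, given non-atomic $p$ take a comparable $S\in p_\mathsf{S}$; as $p^{\leq}$ fails to meet all sufficiently deep levels, $S$ must descend strictly below $p$, so it has a $\leq$-maximal element $q$ among those lying strictly below $p$. Then $q^{<}\supseteq p^{\leq}$ automatically, while any $r>q$ lies in $S\subseteq p^{\leq}\cup p^{\geq}$ and maximality of $q$ forces $r\ge p$; hence $q^{<}=p^{\leq}$. For \ref{pSp}$\,\Rightarrow\,$\ref{SubLeq}, suppose $A_\mathsf{S}\subseteq B_\mathsf{S}$ with $A\subseteq\mathbb{P}_n$ and $B\subseteq\mathbb{P}^n$, and fix $a\in A$; a comparable $S\in a_\mathsf{S}$ meets $B$ in some $b$ comparable to $a$, and since $a\in\mathbb{P}_n$ is an atom of $\mathbb{P}^n$ while $b\in\mathbb{P}^n$, the case $b<a$ is impossible, leaving $a\le b$, so $A\le B$.

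The hard direction is \ref{SubLeq}$\,\Rightarrow\,$\ref{pSp}, which I would prove contrapositively: assuming \ref{pSp} fails at $p$, so $p_\mathsf{S}\subseteq (I_p)_\mathsf{S}$, I must manufacture $A\subseteq\mathbb{P}_n$ and $B\subseteq\mathbb{P}^n$ with $A_\mathsf{S}\subseteq B_\mathsf{S}$ but $A\nleq B$. The idea is to extract a finite witness at a single cone level: writing $(I_p)_\mathsf{S}=\bigcup_n (I_p\cap\mathbb{P}^n)_\mathsf{S}$ as an increasing union of opens and exploiting compactness via the description $\overline{p_\mathsf{S}}=\{S:S\subseteq p^{\wedge}\}$ established above, I would fix $n$ and a finite $B\subseteq I_p\cap\mathbb{P}^n$ covering the relevant selectors, and take for $A$ the elements of $\mathbb{P}_n$ lying below $p$. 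Since every $b\in B$ is incomparable to $p$, no $b$ can lie above a descendant of $p$, yielding $A\nleq B$, while the covering gives $A_\mathsf{S}\subseteq B_\mathsf{S}$. The main obstacle is exactly this localisation: passing from ``every selector through $p$ meets some incomparable element'' to a finite family at one fixed cone level, and simultaneously arranging that the chosen level set $A$ genuinely fails to be dominated by $B$. This is where the failure of predeterminedness---every element below $p$ acquiring a side-parent incomparable to $p$---must be converted into the required order-theoretic non-domination, and I expect it to be the most delicate step of the argument.
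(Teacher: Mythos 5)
Your implications (1)$\Rightarrow$(2), (2)$\Rightarrow$(3) and (2)$\Rightarrow$(1) are all sound: the first two are essentially the paper's own arguments (the same descending sequence $p_{k+1}^<=p_k^\leq$, the same atom-of-$\mathbb{P}^n$ observation), while (2)$\Rightarrow$(1) --- taking a maximal element $q$ of $S\cap p^>$ and using that minimal selectors are upward closed filters to force $q^<=p^\leq$ --- is a correct direct argument the paper does not have (it closes the cycle via (3)$\Rightarrow$(1) instead). The gap is in your hard direction (3)$\Rightarrow$(2), and it is not only the localisation issue you flag: the decisive non-domination step is false. You claim that ``since every $b\in B$ is incomparable to $p$, no $b$ can lie above a descendant of $p$,'' but $a\leq p$, $a\leq b$, $b$ incomparable to $p$ is a perfectly consistent configuration --- indeed it is exactly the configuration that failure of predeterminedness produces, as your own gloss (``every element below $p$ acquiring a side-parent incomparable to $p$'') says. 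If every $a\in A=\mathbb{P}_n\cap p^\geq$ has an upper bound in $I_p=\mathbb{P}\setminus(p^\leq\cup p^\geq)$, then, since upper bounds of elements of $\mathbb{P}_n$ automatically lie in $\mathbb{P}^n$, you get precisely $A\leq B$ for $B=I_p\cap\mathbb{P}^n$, and your pair witnesses nothing. This situation really occurs within the scope of your contrapositive: in a sufficiently rich $\omega$-poset of overlapping arcs representing the circle, every element strictly below a fixed non-atomic $p$ has an upper bound incomparable to $p$; this already forces (2) to fail at $p$ (a minimal selector through $p$ is upward closed and meets arbitrarily deep levels, where nothing lies above the non-atomic $p$, so its deep elements are either in $I_p$ or below $p$ and hence below an element of $I_p$), yet every candidate $a$ is dominated by $B$.

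There is a second, repairable, problem with the localisation itself: $p_\mathsf{S}$ is open, not compact, so covering $p_\mathsf{S}$ by the increasing open sets $(I_p\cap\mathbb{P}^n)_\mathsf{S}$ does not by itself produce a single $n$. The repair is to note that failure of (2) at $p$ forces $\overline{p_\mathsf{S}}\subseteq(I_p)_\mathsf{S}$: if $S\subseteq p^\wedge$ avoids $I_p$ then $S\subseteq p^\leq\cup p^\geq$, and then $p\in S$ by upward closedness (since $S\subseteq p^<$ is impossible, deep levels containing nothing above $p$), contradicting the failure of (2); one may then apply compactness to the closed set $\overline{p_\mathsf{S}}$. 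But even after this repair the resulting $n$ may exceed every level containing $p$ (a non-atomic $p$ lies in only finitely many levels), so you cannot place $p$ itself in $A$, which is the only easy way to get $A\nleq B$ from $B\subseteq I_p$; and without $p\in A$ the step fails as explained above. Contrast this with the paper's proof of the remaining implication, the contrapositive of (3)$\Rightarrow$(1): there one takes $n$ with $p\in\mathbb{P}_n\setminus\mathbb{P}_{n+1}$, sets $A=\mathbb{P}_n$ and $B=\mathbb{P}^n\setminus p^\leq$ --- crucially $B$ is \emph{not} contained in $I_p$, as it also contains everything below $p$ --- and uses the failure of predeterminedness only to check $\mathbb{P}_{n+1}\leq B$, so that $B$ is a cap; then $B_\mathsf{S}=\mathsf{S}\mathbb{P}\supseteq A_\mathsf{S}$ comes for free with no compactness at all, while $A\nleq B$ holds because $p\in A$ has no upper bound in $B$. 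Since you already have (1)$\Rightarrow$(2)$\Rightarrow$(3), adopting that argument for (3)$\Rightarrow$(1) closes your cycle, and your (2)$\Rightarrow$(1) becomes a pleasant extra rather than a load-bearing step.
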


\begin{proof}\
    \begin{itemize}
        \item[\eqref{Pre}$\Rightarrow$\eqref{pSp}]  Assume $\mathbb{P}$ is predetermined.  If $p\in\mathbb{P}$ is an atom then $p^\leq$ is already a minimal selector.  If $p\in\mathbb{P}$ is not an atom then, for any $p\in\mathbb{P}$, we can recursively define $(p_n)$ with $p_0=p$ and $p_{k+1}^<=p_k^\leq$, for all $k\in\omega$.  Setting $S=\bigcup_{k\in\omega}p_k^\leq\subseteq p^\leq\cup p^\geq$, we see that $S\cap\mathbb{P}_{\mathsf{r}(p_k)}=\{p_k\}$, where $\mathsf{r}(q)=\min\{r\in\omega:q\in\mathbb{P}^r\}$, and hence $S$ is again a minimal selector.

        \item[\eqref{pSp}$\Rightarrow$\eqref{SubLeq}]  Assume \eqref{pSp} holds and that we have $n\in\omega$, $A\subseteq\mathbb{P}_n$ and $B\subseteq\mathbb{P}^n$ with $A_\mathsf{S}\subseteq B_\mathsf{S}$.  For any $a\in A$, we then have $S\in a_\mathsf{S}\subseteq B_\mathsf{S}$ with $S\subseteq a^\leq\cup a^\geq$.  Taking any $b\in B\cap S\subseteq\mathbb{P}^n$, we know that $b\not<a$, as $a\in\mathbb{P}_n$, so the only remaining possibility is $a\leq b$.  As $a$ was arbitrary, this shows that $A\leq B$.

        \item[\eqref{SubLeq}$\Rightarrow$\eqref{Pre}]  Say $\mathbb{P}$ is not predetermined, so we have some non-atomic $p\in\mathbb{P}$ with $q^<\nsubseteq p^\leq$, for all $q<p$.  As $p$ is not an atom, we have some $n\in\omega$ with $p\in\mathbb{P}_n\setminus\mathbb{P}_{n+1}$.  We claim that $\mathbb{P}_{n+1}\leq B$, where $B=\mathbb{P}^n\setminus p^\leq$.  To see this, take any $q\in\mathbb{P}_{n+1}$.  Then we have $r\in\mathbb{P}_n$ with $q\leq r$.  If $r\neq p$ then $r\ngeq p$ so $r\in B$.  On the other hand, if $r=p$ then $q<p$ so, by our assumption on $p$, we can pick some other $s\in q^<\setminus p^\leq\subseteq B$.  In either case the claim is proved.  In particular, $B\in\mathsf{C}\mathbb{P}$ so $B_\mathsf{S}=\mathsf{S}\mathbb{P}=\mathbb{P}_{n\mathsf{S}}$ even though $p\notin B^\geq$, showing that \eqref{Pre} fails with $A=\mathbb{P}_n$.\qedhere
    \end{itemize}
\end{proof}

In particular, every predetermined $\omega$-poset is prime.  In this case, the spectrum $\mathsf{S}\mathbb{P}$ is Hausdorff and hence metrisable precisely when $\mathbb{P}$ is also regular (see \cite[Corollary 2.40]{BaBiVi}).  If an $\omega$-poset $\mathbb{P}$ is regular then the points of the spectrum can also be characterised in different ways.  First call $R\subseteq\mathbb{P}$ \emph{round} if $R\subseteq R^\vartriangleleft$ and call $U\subseteq\mathbb{P}$ \emph{unbounded} if $U\cap C^\geq\neq\emptyset$, for all $C\in\mathsf{C}\mathbb{P}$.  By \cite[Proposition 2.41]{BaBiVi}, regularity implies that
\[\mathsf{S}\mathbb{P}=\{S\subseteq\mathbb{P}:S\text{ is an unbounded round filter}\}.\]

If we look at different kinds of unbounded subsets, we can obtain a slightly different `clique-spectrum' which is still homeomorphic to $\mathsf{S}\mathbb{P}$.  To describe this, first let us say that $p,q\in\mathbb{P}$ are \emph{adjacent} if, for all $C\in\mathsf{C}\mathbb{P}$, we have $c\in C$ with $p\wedge c\wedge q$ (actually it suffices to consider $C=\mathbb{P}_n$, for $n\in\omega$).  The adjacency relation will be denoted by $\barwedge$ so
\[p\barwedge q\qquad\Leftrightarrow\qquad\forall C\in\mathsf{C}\mathbb{P}\ (p^\wedge\cap C\cap q^\wedge\neq\emptyset).\]
This relation can also be characterised via the spectrum as follows.

\begin{proposition}\label{barwedgeX}
    If $\mathbb{P}$ is an $\omega$-poset then, for all $p,q\in\mathbb{P}$,
    \[p\barwedge q\qquad\Leftrightarrow\qquad\overline{p_\mathsf{S}}\cap\overline{q_\mathsf{S}}\neq\emptyset.\]
\end{proposition}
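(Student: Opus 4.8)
The plan is to recognise that both sides of the claimed equivalence are really assertions about the single set $p^\wedge\cap q^\wedge$. On the spectrum side, the first Proposition of this subsection identifies $\overline{p_\mathsf{S}}=\{S\in\mathsf{S}\mathbb{P}:S\subseteq p^\wedge\}$, so that $\overline{p_\mathsf{S}}\cap\overline{q_\mathsf{S}}=\{S\in\mathsf{S}\mathbb{P}:S\subseteq p^\wedge\cap q^\wedge\}$; hence this intersection is nonempty exactly when $p^\wedge\cap q^\wedge$ contains a minimal selector. On the combinatorial side, unwinding the definition of $\barwedge$ and using that $\wedge$ is symmetric, the condition $p^\wedge\cap C\cap q^\wedge\neq\emptyset$ for every $C\in\mathsf{C}\mathbb{P}$ is literally the statement that $(p^\wedge\cap q^\wedge)\cap C\neq\emptyset$ for every cap $C$, i.e.\ that $p^\wedge\cap q^\wedge$ is a selector. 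Thus the whole proposition collapses to the single equivalence: $p^\wedge\cap q^\wedge$ is a selector if and only if it contains a minimal selector.

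For the forward implication, I would argue that once $p^\wedge\cap q^\wedge$ is known to be a selector, it contains a minimal selector $S$; this is exactly the fact already invoked in the proof of \Cref{VarChar}, namely that every selector contains a minimal selector, so I would cite it rather than reprove it. Such an $S$ satisfies $S\subseteq p^\wedge\cap q^\wedge$, whence $S\in\overline{p_\mathsf{S}}\cap\overline{q_\mathsf{S}}$ and the intersection is nonempty. For the reverse implication, I would take any $S\in\overline{p_\mathsf{S}}\cap\overline{q_\mathsf{S}}$. Then $S$ is a selector with $S\subseteq p^\wedge\cap q^\wedge$, and since any superset of a selector meets every cap and is therefore again a selector, $p^\wedge\cap q^\wedge$ is itself a selector; reading this back through the definition of adjacency yields $p\barwedge q$.

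The only genuinely nontrivial ingredient is the existence of a minimal selector inside an arbitrary selector, and this is the step I would expect to be the main obstacle were it not already available from the earlier development; given that it is, the remaining work is purely a matter of matching definitions. I would take care to state explicitly that $p^\wedge\cap C\cap q^\wedge$ and $(p^\wedge\cap q^\wedge)\cap C$ denote the same set, since it is precisely this reindexing that makes the adjacency description and the selector description line up.
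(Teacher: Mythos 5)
Your proof is correct and follows essentially the same route as the paper's: both directions reduce to the observation that $p\barwedge q$ says exactly that $p^\wedge\cap q^\wedge$ meets every cap, combined with the fact that every selector contains a minimal selector (for $\Rightarrow$) and the definition $\overline{p_\mathsf{S}}=\{S\in\mathsf{S}\mathbb{P}:S\subseteq p^\wedge\}$ (for $\Leftarrow$). The only cosmetic difference is that the paper's converse directly picks $c\in C\cap S\subseteq p^\wedge\cap C\cap q^\wedge$ for each cap $C$, whereas you phrase the same step via ``a superset of a selector is a selector''.
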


\begin{proof}
    If $p\barwedge q$ then $p^\wedge\cap q^\wedge$ is a selector and hence contains a minimal selector $S\in\mathsf{S}\mathbb{P}$, by \cite[Proposition 2.2]{BaBiVi}.  Thus $S\in\overline{p_\mathsf{S}}\cap\overline{q_\mathsf{S}}$, proving the $\Rightarrow$ part.

    Conversely, if we have $S\in\overline{p_\mathsf{S}}\cap\overline{q_\mathsf{S}}$ then, for any $C\in\mathsf{C}\mathbb{P}$, we have $c\in C\cap S\subseteq p^\wedge\cap C\cap q^\wedge$, showing that $p\mathrel{\barwedge}q$, proving the $\Leftarrow$ part.
\end{proof}

We observe that $\barwedge$ is weaker than $\wedge$, thanks to \cite[Proposition 2.31]{BaBiVi}, and we can also replace $\wedge$ with $\barwedge$ on the left in \cite[(2.7)]{BaBiVi}, i.e.
\begin{equation}\label{BarwedgeVar}
    {{\barwedge}\circ{\vartriangleleft}}\ \ \subseteq\ {\wedge}\ \ \subseteq\ \ {\barwedge}.
\end{equation}
Indeed, if $p\barwedge q\vartriangleleft_Cr$ then we have $c\in C$ with $p\wedge c\wedge q$ so $p\wedge c\leq r$ and hence $p\wedge r$.

\begin{proposition}\label{RegularBar}
    If $\mathbb{P}$ is a regular $\omega$-poset then, for all $p\in\mathbb{P}$,
    \[\overline{p_\mathsf{S}}=\{S\in\mathsf{S}\mathbb{P}:S\subseteq p^\barwedge\}.\]
\end{proposition}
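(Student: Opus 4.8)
The plan is to establish the two inclusions separately. The inclusion $\overline{p_\mathsf{S}}\subseteq\{S\in\mathsf{S}\mathbb{P}:S\subseteq p^\barwedge\}$ is immediate from what is already in hand: since ${\wedge}\subseteq{\barwedge}$, as observed just before \eqref{BarwedgeVar}, we have $p^\wedge\subseteq p^\barwedge$, and the defining description $\overline{p_\mathsf{S}}=\{S\in\mathsf{S}\mathbb{P}:S\subseteq p^\wedge\}$ then yields the inclusion at once.

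The real content is the reverse inclusion, and this is where regularity enters. Given $S\in\mathsf{S}\mathbb{P}$ with $S\subseteq p^\barwedge$, I want to upgrade adjacency to an actual common lower bound, i.e. to show $S\subseteq p^\wedge$. The tool is regularity, which by \cite[Proposition 2.41]{BaBiVi} lets me treat every point of the spectrum as an unbounded round filter; in particular $S$ is round, so $S\subseteq S^\vartriangleleft$. Thus, fixing $q\in S$, roundness supplies some $q'\in S$ with $q'\vartriangleleft q$. Since $q'\in S\subseteq p^\barwedge$ we have $p\barwedge q'$, and then the chain $p\barwedge q'\vartriangleleft q$ together with the inclusion ${\barwedge}\circ{\vartriangleleft}\subseteq{\wedge}$ from \eqref{BarwedgeVar} gives $p\wedge q$. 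As $q\in S$ was arbitrary, this shows $S\subseteq p^\wedge$, i.e. $S\in\overline{p_\mathsf{S}}$, completing the argument.

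I do not anticipate any serious obstacle: once the round-filter characterisation of spectrum points is available, the argument reduces to a single application of \eqref{BarwedgeVar}. The only points requiring care are checking that the direction of the composition in ${\barwedge}\circ{\vartriangleleft}$ matches the chain $p\barwedge q'\vartriangleleft q$, and that the witness $q'$ provided by roundness genuinely lies in $S$ (so that it is adjacent to $p$); both are exactly how roundness and \eqref{BarwedgeVar} are set up, so this is really the crux that makes the weaker relation $\barwedge$ collapse to $\wedge$ on points of the spectrum.
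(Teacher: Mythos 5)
Your proof is correct and takes essentially the same route as the paper: the forward inclusion follows from ${\wedge}\subseteq{\barwedge}$, and the reverse inclusion uses regularity to make $S$ round and then applies ${\barwedge}\circ{\vartriangleleft}\subseteq{\wedge}$ from \eqref{BarwedgeVar}. The only difference is cosmetic, as you argue element-wise with a fixed $q\in S$ where the paper writes the same chain as the set inclusions $S\subseteq S^\vartriangleleft\subseteq p^{\barwedge\vartriangleleft}\subseteq p^\wedge$.
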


\begin{proof}
    If $S\in\overline{p_\mathsf{S}}$ then $S\subseteq p^\wedge\subseteq p^\barwedge$.  Conversely, if $\mathbb{P}$ is regular then every $S\in\mathsf{S}\mathbb{P}$ is round so $S\subseteq p^\barwedge$ implies $S\subseteq S^\vartriangleleft\subseteq p^{\barwedge\vartriangleleft}\subseteq p^\wedge$, by \eqref{BarwedgeVar}, and hence $S\in\overline{p_\mathsf{S}}$.
\end{proof}

Here is a further illustration of the close relationship between $\wedge$ and $\barwedge$.

\begin{proposition}\label{DenseCovers}
    For any $\omega$-poset $\mathbb{P}$ and finite $E,F\subseteq\mathbb{P}$,
    \begin{align*}
        E^\wedge=\mathbb{P}\text{ and }E\vartriangleleft F\quad\Rightarrow\quad F\in\mathsf{C}\mathbb{P}\quad\Rightarrow\quad F^\wedge=\mathbb{P}\quad&\Rightarrow\quad\overline{F_\mathsf{S}}=\mathsf{S}\mathbb{P}\quad\Rightarrow\quad F^\barwedge=\mathbb{P}.\\
        \mathbb{P}\text{ is regular and }F^\barwedge=\mathbb{P}\quad&\Rightarrow\quad\overline{F_\mathsf{S}}=\mathsf{S}\mathbb{P}.\\
        \mathbb{P}\text{ is prime and }\overline{F_\mathsf{S}}=\mathsf{S}\mathbb{P}\quad&\Rightarrow\quad F^\wedge=\mathbb{P}.
    \end{align*}
    Consequently, if $\mathbb{P}$ is regular, $E^\barwedge=\mathbb{P}$ and $E\vartriangleleft F$ then again $F\in\mathsf{C}\mathbb{P}$.
\end{proposition}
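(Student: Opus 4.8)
The plan is to verify the displayed implications one at a time, since together with the dictionary between $\mathbb{P}$ and $\mathsf{S}\mathbb{P}$ they form an almost-cycle. The linchpin is the second implication, $F\in\mathsf{C}\mathbb{P}\Rightarrow F^\wedge=\mathbb{P}$, which I would isolate as the statement that \emph{every cap is a $\wedge$-cover}. This rests on two order-theoretic facts about the levels of an $\omega$-poset, both coming from well-foundedness of the cones $\mathbb{P}^n$ (finite ranks): first, that each $p$ lies in $\mathbb{P}_{\mathsf{r}(p)}$ and that $p^\geq$ meets $\mathbb{P}_n$ whenever $n\geq\mathsf{r}(p)$; and second, the refinement $\mathbb{P}_n\leq\mathbb{P}_m$ for $n\geq m$ (a genuine rank-$(m{+}1)$ element has a rank-$m$ ancestor since membership in $\mathbb{P}^{m+1}\setminus\mathbb{P}^m$ pins one down, and the remaining minimal elements already lie in $\mathbb{P}_m$). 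Granting these, for any $p$ one finds $a\in\mathbb{P}_n$ with $a\leq p$ (if $n\geq\mathsf{r}(p)$) or $p\leq a$ (if $n\leq\mathsf{r}(p)$, using $p\in\mathbb{P}_{\mathsf{r}(p)}$ and refinement), so $\mathbb{P}_n^\wedge=\mathbb{P}$; transporting a compatible $a\in\mathbb{P}_n$ along $\mathbb{P}_n\leq F$ through a common lower bound then gives $F^\wedge=\mathbb{P}$.

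The remaining implications are spectral and short. For $F^\wedge=\mathbb{P}\Rightarrow\overline{F_\mathsf{S}}=\mathsf{S}\mathbb{P}$ I would argue contrapositively: if some $S$ avoided every $\overline{f_\mathsf{S}}$, choosing $s_f\in S$ with $\neg(s_f\wedge f)$ and a common lower bound $s\in S$ (filter), $s$ would be incompatible with all of $F$. The same filtered-lower-bound trick, with $\barwedge$ in place of $\wedge$ and its monotonicity under $\leq$, gives the second line $F^\barwedge=\mathbb{P}\Rightarrow\overline{F_\mathsf{S}}=\mathsf{S}\mathbb{P}$ once \Cref{RegularBar} rewrites $\overline{f_\mathsf{S}}$ as $\{S:S\subseteq f^\barwedge\}$; this is the one spot regularity enters. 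For $\overline{F_\mathsf{S}}=\mathsf{S}\mathbb{P}\Rightarrow F^\barwedge=\mathbb{P}$ I would use that $\overline{p_\mathsf{S}}\neq\emptyset$ (exactly the crux, since $p^\wedge$ is then a selector containing a minimal one by \cite[Proposition 2.2]{BaBiVi}), pick $S\in\overline{p_\mathsf{S}}\subseteq\overline{F_\mathsf{S}}$, and apply \Cref{barwedgeX} to $S\in\overline{p_\mathsf{S}}\cap\overline{f_\mathsf{S}}$. The third line $\overline{F_\mathsf{S}}=\mathsf{S}\mathbb{P}\Rightarrow F^\wedge=\mathbb{P}$ is immediate from primeness: take $S\ni p$, land in some $\overline{f_\mathsf{S}}$, so $p\in S\subseteq f^\wedge$.

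For the first implication, $E^\wedge=\mathbb{P}$ and $E\vartriangleleft F\Rightarrow F\in\mathsf{C}\mathbb{P}$, I would first collapse $E\vartriangleleft F$ to a single level: each $e\vartriangleleft f$ holds at some $\mathbb{P}_{n_e}$, and level refinement gives $\vartriangleleft_{\mathbb{P}_{n_e}}\subseteq\vartriangleleft_{\mathbb{P}_n}$ for $n=\max_e n_e$, so $E\vartriangleleft_{\mathbb{P}_n}F$. Then for $b\in\mathbb{P}_n$, $E^\wedge=\mathbb{P}$ supplies $e$ with $b\wedge e$, whence $b\in\mathbb{P}_n\cap e^\wedge\subseteq f^\geq$, i.e. $b\leq f$; thus $\mathbb{P}_n\leq F$. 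The main obstacle throughout is precisely the gap between compatibility $\wedge$ and adjacency $\barwedge$: the argument genuinely needs $b\wedge e$, not merely $b\barwedge e$, because distinct level elements typically overlap, so $b\barwedge e$ does not force $b$ itself below the dominating $f$.

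This is also why the final ``Consequently'' cannot simply reuse the first implication, and I would route it through the spectrum. From regularity and $E^\barwedge=\mathbb{P}$ the second line gives $\overline{E_\mathsf{S}}=\mathsf{S}\mathbb{P}$, and $E\vartriangleleft F$ with \Cref{VarChar} gives $\overline{e_\mathsf{S}}\subseteq f_\mathsf{S}$, so $\mathsf{S}\mathbb{P}=\overline{E_\mathsf{S}}\subseteq F_\mathsf{S}$; that is, the open sets $(f_\mathsf{S})_{f\in F}$ already cover $\mathsf{S}\mathbb{P}$. Now roundness of each $S$ (from regularity) yields, for the $f\in F\cap S$, some $a_S\in S$ with $a_S\vartriangleleft f$, and compactness of $\mathsf{S}\mathbb{P}$ extracts a finite $A$ with $A_\mathsf{S}=\mathsf{S}\mathbb{P}$ and $A\vartriangleleft F$. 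Finally $A^\wedge=\mathbb{P}$ follows from the crux once more: for any $p$, any $S\in\overline{p_\mathsf{S}}$ contains some $a\in A$, and $a\in S\subseteq p^\wedge$ gives $a\wedge p$. The first implication applied to $A\vartriangleleft F$ then delivers $F\in\mathsf{C}\mathbb{P}$. The delicate points to get right are the collapse to a single level and the compactness extraction, both leaning on regularity.
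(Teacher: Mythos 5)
Your proof is correct, and while it keeps the same overall scaffolding as the paper's (the chain of implications mediated by \Cref{barwedgeX}, \Cref{RegularBar} and \Cref{VarChar}), it replaces three of the paper's steps with genuinely different arguments. Where the paper simply cites \cite[Proposition 2.32]{BaBiVi} for $F\in\mathsf{C}\mathbb{P}\Rightarrow F^\wedge=\mathbb{P}$, you reconstruct it from the rank/cone structure; your three order facts ($p\in\mathbb{P}_{\mathsf{r}(p)}$, existence of elements of $\mathbb{P}_n$ below $p$ when $n\geq\mathsf{r}(p)$, and level refinement) are all correct, each resting on the observation that $q<p$ with $q\in\mathbb{P}^n$ forces $p\in\mathbb{P}^{n-1}$. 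For $F^\wedge=\mathbb{P}\Rightarrow\overline{F_\mathsf{S}}=\mathsf{S}\mathbb{P}$ and its $\barwedge$-variant, the paper argues directly by pigeonhole: some $f\in F$ has $f^\wedge\cap S$ unbounded, and minimality of $S$ (choosing caps $C$ with $C\cap S=\{s\}$) upgrades unboundedness to $S\subseteq f^\wedge$; your contrapositive -- incompatibility witnesses $s_f$ admit a common lower bound $s\in S$ by the filter property, and monotonicity of $\wedge$ (resp.\ $\barwedge$) under $\leq$ makes $s$ a global counterexample -- is shorter and equally legitimate, since filterhood of minimal selectors in $\omega$-posets is stated in the paper. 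For the final ``Consequently'' clause, both you and the paper first get $\mathsf{S}\mathbb{P}=\overline{E_\mathsf{S}}\subseteq F_\mathsf{S}$ from \Cref{VarChar}, but the paper then just cites \cite[Proposition 2.8]{BaBiVi} (a basic open cover of the spectrum comes from a cap), whereas you re-derive that step in situ: roundness and compactness of $\mathsf{S}\mathbb{P}$ extract a finite $A$ with $A_\mathsf{S}=\mathsf{S}\mathbb{P}$ and $A\vartriangleleft F$, nonemptiness of each $\overline{p_\mathsf{S}}$ gives $A^\wedge=\mathbb{P}$, and the first implication applied to the pair $(A,F)$ closes the loop. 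What your route buys is self-containedness, with citations needed only for foundational facts (filterhood, roundness, compactness, existence of minimal selectors); what it costs is the appeal to compactness of the spectrum, which the paper's one-line citation sidesteps. The steps you flag as delicate do go through: $\vartriangleleft_{\mathbb{P}_m}\subseteq\vartriangleleft_{\mathbb{P}_n}$ for $n\geq m$ follows from level refinement exactly as you say (this is also implicitly how the paper collapses $E\vartriangleleft F$ to a single cap $C$ with $E\vartriangleleft_C F$), and the compactness extraction is sound.
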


\begin{proof}
    As $E$ is finite, $E\vartriangleleft F$ implies that we have a single cap $C\in\mathsf{C}\mathbb{P}$ with $E\vartriangleleft_CF$.  Thus $C=E^\wedge\cap C\subseteq F^\geq$ and hence $F\in\mathsf{C}\mathbb{P}$ as well.

    If $F\in\mathsf{C}\mathbb{P}$ then $F^\wedge=\mathbb{P}$ by \cite[Proposition 2.32]{BaBiVi}.
    
    Now assume $F^\wedge=\mathbb{P}$ and take $S\in\mathsf{S}\mathbb{P}$.  As $F$ is finite, we must have $f\in F$ such that $f^\wedge\cap S$ is unbounded.  For any $s\in S$, minimality means we have $C\in\mathsf{C}\mathbb{P}$ with $C\cap S=\{s\}$.  As $f^\wedge\cap S$ is unbounded, we have $t\in C^\geq\cap f^\wedge\cap S$, which means we have $c\in C\cap t^\leq\subseteq C\cap S=\{s\}$, i.e. $s\geq t\mathrel{\wedge}f$ and hence $s\mathrel{\wedge}f$.  This shows that $S\subseteq f^\wedge$ and hence $S\in\overline{f_\mathsf{S}}$, which in turn shows that $\overline{F_\mathsf{S}}=\mathsf{S}\mathbb{P}$.

    Next assume $\overline{F_\mathsf{S}}=\mathsf{S}\mathbb{P}$.  For any $p\in\mathbb{P}$, \Cref{barwedgeX} yields $S\in\overline{p_\mathsf{S}}$ (as $p\mathrel{\barwedge}p$, by \cite[Proposition 2.32]{BaBiVi}).  As $S\in\overline{f_\mathsf{S}}$, for some $f\in F$, \Cref{barwedgeX} again then yields $p\mathrel{\barwedge}f$, thus showing that $F^\barwedge=\mathbb{P}$.

    If $F^\barwedge=\mathbb{P}$ then we can apply the same argument as above show that, for any $S\in\mathsf{S}\mathbb{P}$, we have $f\in F$ with $S\subseteq f^\barwedge$.  If $\mathbb{P}$ is regular, \Cref{RegularBar} then implies $S\in\overline{f_\mathsf{S}}$, thus again showing that $\overline{F_\mathsf{S}}=\mathsf{S}\mathbb{P}$.

    Now say $\mathbb{P}$ is prime and $\overline{F_\mathsf{S}}=\mathsf{S}\mathbb{P}$.  For every $p\in\mathbb{P}$, this means we have $S\in p_\mathsf{S}$.  We then have $f\in F$ with $S\in\overline{f_\mathsf{S}}$ and hence $p\in S\subseteq f^\wedge$, showing that $F^\wedge=\mathbb{P}$.

    Finally say $\mathbb{P}$ is regular, $E^\barwedge=\mathbb{P}$ and $E\vartriangleleft F$.  Then, by what we just showed, $\mathsf{S}\mathbb{P}=\overline{E_\mathsf{S}}\subseteq F_\mathsf{S}$, by \Cref{VarChar}, and hence $F\in\mathsf{C}\mathbb{P}$, by \cite[Proposition 2.8]{BaBiVi}.
\end{proof}

In regular $\omega$-posets we can also replace $\wedge$ in the definition of $\barwedge$ with $\barwedge$ itself.

\begin{proposition}\label{RegularBarwedge}
    If $\mathbb{P}$ is a regular $\omega$-poset then, for all $p,q\in\mathbb{P}$,
    \[p\barwedge q\qquad\Leftrightarrow\qquad\forall C\in\mathsf{C}\mathbb{P}\ (p^\barwedge\cap C\cap q^\barwedge\neq\emptyset).\]
\end{proposition}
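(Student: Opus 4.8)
The plan is to reduce both directions to the spectral characterisation of $\barwedge$ from \Cref{barwedgeX}, namely $p\barwedge q\Leftrightarrow\overline{p_\mathsf{S}}\cap\overline{q_\mathsf{S}}\neq\emptyset$, combined with the regular description $\overline{p_\mathsf{S}}=\{S\in\mathsf{S}\mathbb{P}:S\subseteq p^\barwedge\}$ from \Cref{RegularBar}. The forward implication should require no work beyond \eqref{BarwedgeVar}: since ${\wedge}\subseteq{\barwedge}$ we have $p^\wedge\subseteq p^\barwedge$ and $q^\wedge\subseteq q^\barwedge$, so any witness lying in $p^\wedge\cap C\cap q^\wedge$ already lies in $p^\barwedge\cap C\cap q^\barwedge$. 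Thus $p\barwedge q$ immediately yields the right-hand condition for every cap $C$.

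For the reverse implication, the first thing I would notice is that the hypothesis $\forall C\in\mathsf{C}\mathbb{P}\ (p^\barwedge\cap C\cap q^\barwedge\neq\emptyset)$ says precisely that the set $p^\barwedge\cap q^\barwedge$ meets every cap, i.e.~that $p^\barwedge\cap q^\barwedge$ is a selector. I would then extract a minimal selector $S\in\mathsf{S}\mathbb{P}$ contained in it, exactly as in the proof of \Cref{barwedgeX}, using \cite[Proposition 2.2]{BaBiVi}.

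This $S$ satisfies $S\subseteq p^\barwedge$ and $S\subseteq q^\barwedge$, so regularity enters through \Cref{RegularBar} to give $S\in\overline{p_\mathsf{S}}\cap\overline{q_\mathsf{S}}$. Feeding this nonempty intersection back into \Cref{barwedgeX} then delivers $p\barwedge q$, completing the argument.

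The step I expect to carry the real content is the passage from $S\subseteq p^\barwedge$ to $S\in\overline{p_\mathsf{S}}$: this is exactly where regularity is indispensable, since without it \Cref{RegularBar} fails, and a minimal selector sitting inside $p^\barwedge$ need not lie in $\overline{p_\mathsf{S}}=\{S:S\subseteq p^\wedge\}$. Everything else is a matter of unwinding the definition of a selector and reusing the two spectral propositions already established.
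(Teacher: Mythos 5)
Your proof is correct, but it takes a genuinely different route from the paper's. The paper argues entirely inside the poset: given a cap $C$, regularity produces a cap $B$ with $B\vartriangleleft_B C$; the hypothesis gives $b\in B$ with $p\barwedge b\barwedge q$, whence there are $d,e\in B$ with $p\wedge d\wedge b\wedge e\wedge q$, and pushing $d,e$ up into a single $c\in C$ with $b^\wedge\cap B\leq c$ yields the explicit witness $p\wedge c\wedge q$. You instead observe that the right-hand condition says exactly that $p^\barwedge\cap q^\barwedge$ is a selector, extract a minimal selector $S\subseteq p^\barwedge\cap q^\barwedge$, and then invoke \Cref{RegularBar} (where regularity enters) and \Cref{barwedgeX} to conclude. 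Both arguments are sound and of comparable length; yours buys conceptual economy by reusing the spectral characterisations already established, and it correctly isolates the role of regularity in the passage from $S\subseteq p^\barwedge$ to $S\in\overline{p_\mathsf{S}}$, whereas the paper's proof buys a self-contained combinatorial argument that never leaves the poset and produces, for each cap $C$, an explicit element $c\in C$ with $p\wedge c\wedge q$ --- a slightly more constructive output. There is also no circularity in your route: both \Cref{barwedgeX} and \Cref{RegularBar} precede this proposition in the paper and are proved independently of it.
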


\begin{proof}
    For the $\Rightarrow$ part it suffices to note that ${\wedge}\subseteq{\barwedge}$, which follows immediately from \cite[Proposition 2.31]{BaBiVi}.  Conversely, say we have $p,q\in\mathbb{P}$ with $p^\barwedge\cap C\cap q^\barwedge\neq\emptyset$, for all $C\in\mathsf{C}\mathbb{P}$.  For any $C\in\mathsf{C}\mathbb{P}$, regularity yields $B\in\mathsf{C}\mathbb{P}$ with $B\vartriangleleft_BC$.  Thus $p\barwedge b\barwedge q$, for some $b\in B$, and hence we have $d,e\in B$ with $p\wedge d\wedge b\wedge e\wedge q$.  Taking $c\in C$ with $b^\wedge\cap B\leq c$, it follows that $d,e\leq c$ and hence $p\wedge c\wedge q$, thus showing that $p\barwedge q$.
\end{proof}

Let us call $X\subseteq\mathbb{P}$ a \emph{clique} if $p\barwedge q$, for all $p,q\in X$.

\begin{proposition}\label{UnboundedCliques}
    If $U$ is an unbounded clique in a regular $\omega$-poset $\mathbb{P}$ then $U_\barwedge$ is the largest clique containing $U$.  Moreover, $U^\vartriangleleft$ is then the smallest selector contained in $U_\barwedge$.
\end{proposition}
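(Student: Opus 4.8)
The plan is to verify four things in turn: that $U_\barwedge$ is a clique containing $U$, that it is the largest such, that $U^\vartriangleleft$ is a selector contained in $U_\barwedge$, and finally that it is the smallest selector with this property. The recurring device throughout is to upgrade the weak adjacency relation $\barwedge$ to the common lower bound relation $\wedge$ using regularity, via \Cref{RegularBarwedge} and \eqref{BarwedgeVar}.

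First I would note $U\subseteq U_\barwedge$, which is immediate from the clique hypothesis on $U$ together with reflexivity of $\barwedge$ (\cite[Proposition 2.32]{BaBiVi}). To see that $U_\barwedge$ is itself a clique, I would fix $p,q\in U_\barwedge$ and appeal to \Cref{RegularBarwedge}: given any cap $C\in\mathsf{C}\mathbb{P}$, unboundedness of $U$ yields $u\in U\cap C^\geq$, so $u\le c$ for some $c\in C$; since then $u^\wedge\subseteq c^\wedge$, the relations $p\barwedge u$ and $q\barwedge u$ promote to $p\barwedge c$ and $q\barwedge c$, placing $c$ in $p^\barwedge\cap C\cap q^\barwedge$. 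As $C$ was arbitrary, \Cref{RegularBarwedge} gives $p\barwedge q$. That $U_\barwedge$ is the largest clique containing $U$ is then a formality: any clique $V\supseteq U$ has every $v\in V$ adjacent to every $u\in U\subseteq V$, so $V\subseteq U_\barwedge$.

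Next I would check that $U^\vartriangleleft$ is a selector contained in $U_\barwedge$. For containment, if $u\vartriangleleft q$ with $u\in U$ then for every $u'\in U$ we have $u'\barwedge u\vartriangleleft q$, so $u'\wedge q$ by \eqref{BarwedgeVar} and hence $u'\barwedge q$; thus $q\in U_\barwedge$. For the selector property, fix a cap $C$; since $\mathbb{P}$ is regular I can choose a level $\mathbb{P}_n$ with $\mathbb{P}_n\vartriangleleft C$ (using that $C$ is a cap and that increasing the right argument preserves $\vartriangleleft$), and then unboundedness gives $u\in U$ with $u\le p$ for some $p\in\mathbb{P}_n$; picking $c\in C$ with $p\vartriangleleft c$ and using that decreasing the left argument preserves $\vartriangleleft$ (as $u^\wedge\subseteq p^\wedge$), I get $u\vartriangleleft c$, so $c\in U^\vartriangleleft\cap C$.

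The subtle part, and the main obstacle, is showing $U^\vartriangleleft$ is contained in every selector $S\subseteq U_\barwedge$, since a general selector need not be upward closed and so $q\le s\in S$ does not by itself force $q\in S$. My plan is to route through the spectrum. Take any selector $S\subseteq U_\barwedge$ and, using \cite[Proposition 2.2]{BaBiVi}, a minimal selector $S_0\subseteq S$; then $S_0\in\mathsf{S}\mathbb{P}$ and $S_0\subseteq U_\barwedge$. Now for $q\in U^\vartriangleleft$, say $u\vartriangleleft q$ with $u\in U$, the inclusion $S_0\subseteq U_\barwedge\subseteq u^\barwedge$ gives $S_0\in\overline{u_\mathsf{S}}$ by \Cref{RegularBar}, while $u\vartriangleleft q$ gives $\overline{u_\mathsf{S}}\subseteq q_\mathsf{S}$ by \Cref{VarChar}; hence $S_0\in q_\mathsf{S}$, i.e.\ $q\in S_0\subseteq S$. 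This yields $U^\vartriangleleft\subseteq S$, and since $U^\vartriangleleft$ is itself a selector inside $U_\barwedge$ it is the smallest. The delicate point to get right is exactly this $\barwedge$-to-membership passage, where regularity is used twice: once to identify $\overline{u_\mathsf{S}}$ with $\{S\in\mathsf{S}\mathbb{P}:S\subseteq u^\barwedge\}$, and once implicitly in the spectral characterisation of $\vartriangleleft$.
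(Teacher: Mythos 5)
Your proof is correct, and while the first three steps (that $U_\barwedge$ is a clique containing $U$, that it is the largest such, and that $U^\vartriangleleft$ is a selector contained in $U_\barwedge$) coincide with the paper's argument almost verbatim, your treatment of the minimality claim takes a genuinely different route. The paper works entirely at the poset level: it proves directly that $U^\vartriangleleft$ is \emph{round}, i.e.\ $U^\vartriangleleft\subseteq U^{\vartriangleleft\vartriangleleft}$, via a combinatorial argument with caps $A\vartriangleleft B\vartriangleleft C$ and unboundedness, concludes from \cite[Proposition 2.39]{BaBiVi} that $U^\vartriangleleft$ is a \emph{minimal} selector, and then plays minimal selectors off against each other (comparing $U^\vartriangleleft$ with $(U_\barwedge)^\vartriangleleft$ and with a minimal selector inside an arbitrary selector $S\subseteq U_\barwedge$) to get smallestness. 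You instead bypass the roundness computation entirely and argue through the spectrum: any selector $S\subseteq U_\barwedge$ contains a minimal selector $S_0$ by \cite[Proposition 2.2]{BaBiVi}, and then $S_0\subseteq U_\barwedge\subseteq u^\barwedge$ forces $S_0\in\overline{u_\mathsf{S}}$ by \Cref{RegularBar}, whence $u\vartriangleleft q$ and \Cref{VarChar} give $q\in S_0\subseteq S$ directly. This is shorter and cleaner, at the cost of being less self-contained: the roundness of minimal selectors in regular posets, which the paper's proof establishes by hand for $U^\vartriangleleft$, is hidden inside your appeal to \Cref{RegularBar} (whose proof invokes \cite[Proposition 2.39]{BaBiVi}). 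Both arguments yield the same full conclusion, since being the smallest selector inside $U_\barwedge$ automatically makes $U^\vartriangleleft$ a minimal selector, which is the form needed later in \Cref{SvsX}; and since \Cref{RegularBar} and \Cref{VarChar} precede this proposition, there is no circularity in your dependencies.
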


\begin{proof}
    Take any $p,q\in U_\barwedge$.  As $U$ is unbounded, for any $C\in\mathsf{C}\mathbb{P}$ we have $u\in C^\geq\cap U$.  Taking $c\in C$ with $u\leq c$, it follows that $p\barwedge c\barwedge q$.  Thus $p\barwedge q$, by \Cref{RegularBarwedge}, showing that $U_\barwedge$ is a clique.  Also $U\subseteq U_\barwedge$, as $U$ itself is a clique.  Certainly any other clique containing $U$ must be contained in $U_\barwedge$ so it is the largest clique containing $U$.

    To see that $U^\vartriangleleft$ is a selector, take any $C\in\mathsf{C}\mathbb{P}$.  By regularity, we have $B\in\mathsf{C}\mathbb{P}$ with $B\vartriangleleft_BC$.  As $U$ is unbounded, we have $u\in B^\geq\cap U$.  Then we have $b\in B$ and $c\in C$ with $u\leq b\vartriangleleft c$ and hence $c\in C\cap u^\vartriangleleft\subseteq C\cap U^\vartriangleleft$, as required.  By \eqref{BarwedgeVar}, we also know that $p\wedge q$, for all $p,q\in U^\vartriangleleft$.  To show that $U^\vartriangleleft$ is a minimal selector, it thus suffices to show it is round, by \cite[Proposition 2.39]{BaBiVi}, i.e. that $U^\vartriangleleft\subseteq U^{\vartriangleleft\vartriangleleft}$.  Accordingly, say $U\ni u\vartriangleleft_Cp$ and take $A,B\in\mathsf{C}\mathbb{P}$ with $A\vartriangleleft B\vartriangleleft C$.  As $U$ is unbounded, we have $t\in U\cap A^\geq$.  Then we have $a\in A$, $b\in B$ and $c\in C$ with $t\leq a\vartriangleleft b\vartriangleleft c$ and hence $b\in t^\vartriangleleft\subseteq U^\vartriangleleft$.  Also $u\barwedge t\vartriangleleft c$ so $u\wedge c$, by \eqref{BarwedgeVar}, and hence $c\leq p$.  Thus $b\vartriangleleft p$ and hence $p\in U^{\vartriangleleft\vartriangleleft}$, as required.  So $U^\vartriangleleft$ is indeed a minimal selector, which is contained in $U_\wedge\subseteq U_\barwedge$, again by \eqref{BarwedgeVar} and the fact that ${\wedge}\subseteq{\barwedge}$, by \cite[Proposition 2.31]{BaBiVi}.  Likewise, $(U_\barwedge)^\vartriangleleft$ is a minimal selector contained in $U_{\barwedge\barwedge}=U_\barwedge$, which certainly contains $U^\vartriangleleft$ and must therefore be the same as $U^\vartriangleleft$, by minimality.  Any other selector in $U_\barwedge$ contains a minimal selector $S=S^\vartriangleleft\subseteq(U_\barwedge)^\vartriangleleft$, by \cite[Propositions 2.2 and 2.39]{BaBiVi}, and hence minimality again yields $S=(U_\barwedge)^\vartriangleleft=U^\vartriangleleft$.  This shows that $U^\vartriangleleft$ is indeed the smallest selector contained in $U_\barwedge$.
\end{proof}

We define the \emph{clique-spectrum} of $\mathbb{P}$ as the subspace
\[\mathsf{X}\mathbb{P}=\{X\subseteq\mathbb{P}:X\text{ is an unbounded maximal clique}\}\]
of the complementary power-space, i.e.~with the closed subbasis $(p_\mathsf{X})_{p\in\mathbb{P}}$ where
\[p_\mathsf{X}=\{X\in\mathsf{X}\mathbb{P}:p\in X\}.\]

\begin{theorem}\label{SvsX}
    If $\mathbb{P}$ is a regular $\omega$-poset then we have mutually inverse homeomorphisms $\mathsf{s}:\mathsf{X}\mathbb{P}\rightarrow\mathsf{S}\mathbb{P}$ and $\mathsf{x}:\mathsf{S}\mathbb{P}\rightarrow\mathsf{X}\mathbb{P}$ defined by
    \[\mathsf{x}(S)=S_\wedge=S_\barwedge\qquad\text{and}\qquad\mathsf{s}(X)=X^\vartriangleleft.\]
\end{theorem}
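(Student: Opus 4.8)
The plan is to verify that $\mathsf{x}$ and $\mathsf{s}$ are well-defined, that they are mutually inverse bijections, and that $\mathsf{x}$ is continuous; the homeomorphism property will then follow from compactness of $\mathsf{S}\mathbb{P}$ together with Hausdorffness of $\mathsf{X}\mathbb{P}$. First I would check well-definedness. Given $S\in\mathsf{S}\mathbb{P}$, regularity makes $S$ an unbounded round filter, and since filters are downward directed any two elements of $S$ have a common lower bound and so are $\wedge$-related, hence $\barwedge$-related; thus $S$ is an unbounded clique. Then \Cref{UnboundedCliques} shows $S_\barwedge$ is the largest -- hence a maximal -- clique containing $S$, and it is unbounded as it contains $S$, so $S_\barwedge\in\mathsf{X}\mathbb{P}$. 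The identity $S_\wedge=S_\barwedge$ holds because ${\wedge}\subseteq{\barwedge}$ gives $S_\wedge\subseteq S_\barwedge$, while roundness lets me replace each $s\in S$ by some $s'\in S$ with $s'\vartriangleleft s$, so that $p\in S_\barwedge$ yields $p\barwedge s'\vartriangleleft s$ and hence $p\wedge s$ by \eqref{BarwedgeVar}. Dually, for $X\in\mathsf{X}\mathbb{P}$ the set $X^\vartriangleleft$ is a minimal selector by \Cref{UnboundedCliques}, so $X^\vartriangleleft\in\mathsf{S}\mathbb{P}$.

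Next I would check the maps invert one another. For $S\in\mathsf{S}\mathbb{P}$, the identity $(S_\barwedge)^\vartriangleleft=S^\vartriangleleft$ from the proof of \Cref{UnboundedCliques} reduces $\mathsf{s}(\mathsf{x}(S))$ to $S^\vartriangleleft$, and $S^\vartriangleleft=S$ since roundness gives $S\subseteq S^\vartriangleleft$ while $S^\vartriangleleft$ is the smallest selector inside $S_\barwedge\supseteq S$ and so $S^\vartriangleleft\subseteq S$. For $X\in\mathsf{X}\mathbb{P}$, maximality forces $X_\barwedge=X$, whence $X^\vartriangleleft\subseteq X_\barwedge=X$; applying the antitone, idempotent operation $(\cdot)_\barwedge$ gives $X=X_\barwedge\subseteq(X^\vartriangleleft)_\barwedge$, and since $(X^\vartriangleleft)_\barwedge$ is a clique containing the maximal clique $X$ we conclude $\mathsf{x}(\mathsf{s}(X))=(X^\vartriangleleft)_\barwedge=X$.

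For continuity of $\mathsf{x}$, I would compute the preimage of a subbasic closed set: $\mathsf{x}^{-1}(p_\mathsf{X})=\{S:p\in S_\barwedge\}=\{S:S\subseteq p^\barwedge\}=\overline{p_\mathsf{S}}$ by \Cref{RegularBar}, which is closed. Since $(p_\mathsf{X})_{p\in\mathbb{P}}$ is a closed subbasis for $\mathsf{X}\mathbb{P}$, this makes $\mathsf{x}$ continuous. As $\mathsf{S}\mathbb{P}$ is compact (from \cite{BaBiVi}), it then only remains to show $\mathsf{X}\mathbb{P}$ is Hausdorff, whereupon the continuous bijection $\mathsf{x}$ from a compact space to a Hausdorff space is automatically a homeomorphism with continuous inverse $\mathsf{s}$.

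The main obstacle is precisely this Hausdorffness (equivalently, the continuity of $\mathsf{s}$, where the gap between $\wedge$ and $\barwedge$ obstructs a direct argument). Given distinct maximal cliques $X\neq Y$, neither contains the other, so maximality produces $p\in X$ and $q\in Y$ with $p\not\barwedge q$. Using \Cref{RegularBarwedge} and then refining to a level $\mathbb{P}_n\leq C$ (upward closure of $\barwedge$ carrying a general witnessing cap down to a level), I would obtain a finite cap $C=\mathbb{P}_n$ with $p^\barwedge\cap C\cap q^\barwedge=\emptyset$. Writing $A=p^\barwedge\cap C$, any clique through $p$ meets $C$ only inside $A$ and any clique through $q$ meets $C$ only inside $C\setminus A$. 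Crucially, every maximal clique $Z$ meets $C$, since $Z\supseteq Z^\vartriangleleft$ (as $Z^\vartriangleleft$ is the smallest selector inside $Z_\barwedge=Z$) and selectors meet every cap. Hence the sets $\{Z:Z\cap(C\setminus A)=\emptyset\}$ and $\{Z:Z\cap A=\emptyset\}$ -- open because $A$ and $C\setminus A$ are finite -- contain $X$ and $Y$ respectively, and are disjoint since their intersection would consist of maximal cliques missing all of $C$. This separates $X$ from $Y$, so $\mathsf{X}\mathbb{P}$ is Hausdorff and the proof is complete.
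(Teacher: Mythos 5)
Your proof is correct, and for the hardest part it takes a genuinely different route from the paper. The well-definedness, mutual-inverse, and continuity-of-$\mathsf{x}$ steps coincide with the paper's (your computation $\mathsf{x}^{-1}(p_\mathsf{X})=\overline{p_\mathsf{S}}$ via \Cref{RegularBar} is just a slicker packaging of the paper's direct verification). The divergence is at the end: the paper proves continuity of $\mathsf{s}$ directly --- given $x\in X$ and $x\vartriangleleft_C p$, regularity supplies a finite cap $B\vartriangleleft C$, and the open set $O=\bigcap_{b\in B\setminus x^\barwedge}(\mathsf{X}\mathbb{P}\setminus b_\mathsf{X})$ is shown to satisfy $\mathsf{s}[O]\subseteq p_\mathsf{S}$ --- so the theorem is self-contained and never invokes compactness. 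You instead prove that $\mathsf{X}\mathbb{P}$ is Hausdorff (your separation argument is sound: \Cref{RegularBarwedge} plus upward-closure of $\barwedge$ gives a finite level $C=\mathbb{P}_n$ with $p^\barwedge\cap C\cap q^\barwedge=\emptyset$, every $Z\in\mathsf{X}\mathbb{P}$ meets $C$ since $Z\supseteq Z^\vartriangleleft$ by \Cref{UnboundedCliques}, and the two finite intersections of complements of subbasic closed sets separate $X$ from $Y$), and then finish by citing compactness of $\mathsf{S}\mathbb{P}$ from \cite{BaBiVi} together with the compact-to-Hausdorff lemma. It is worth noting how close the two arguments are in substance: both hinge on finiteness of level-caps, on maximal cliques containing the selector $Z^\vartriangleleft$ (hence meeting every cap), and on open sets of the form $\bigcap(\mathsf{X}\mathbb{P}\setminus b_\mathsf{X})$. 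What your route buys is a conceptually cleaner finish and an intrinsic proof that the clique-spectrum is Hausdorff, independent of its identification with $\mathsf{S}\mathbb{P}$; what it costs is the imported fact that $\mathsf{S}\mathbb{P}$ is compact, which the paper's proof of this theorem deliberately avoids using, keeping the equivalence of the two spectra a purely order-theoretic/topological bookkeeping statement.
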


\begin{proof}
    If $S\in\mathsf{S}\mathbb{P}$ then, in particular, $S$ is unbounded and hence $\mathsf{x}(S)=S_\barwedge\in\mathsf{X}\mathbb{P}$, by \Cref{UnboundedCliques}.  Moreover, $S_\wedge\subseteq S_\barwedge$, as ${\wedge}\subseteq{\barwedge}$.  Conversely, take any $p\in S_\barwedge$.  For any $s\in S$, we have $t\in S\cap s^\vartriangleright$, as $S$ is round by \cite[Proposition 2.39]{BaBiVi}.  Then $p\barwedge t\vartriangleleft s$ so $p\wedge s$, showing that $p\in S_\wedge$.  This in turn shows that $S_\barwedge\subseteq S_\wedge$ and hence $S_\wedge=S_\barwedge$.  Moreover, roundness again yields $S=S^\vartriangleleft=(S_\barwedge)^\vartriangleleft$, again by \Cref{UnboundedCliques}, so $\mathsf{s}\circ\mathsf{x}=\mathrm{id}_{\mathsf{S}\mathbb{P}}$.

    Similarly, if $X\in\mathsf{X}\mathbb{P}$ then $\mathsf{s}(X)=X^\vartriangleleft\in\mathsf{S}\mathbb{P}$, by \Cref{UnboundedCliques}.  As ${\barwedge}\circ{\vartriangleleft}\subseteq{\wedge}\subseteq{\barwedge}$, we see that $X^\vartriangleleft\subseteq X_\barwedge=X$, by maximality, and hence $\mathsf{X}\mathbb{P}\ni(X^\vartriangleleft)_\barwedge\subseteq X_\barwedge=X$ so $X=(X^\vartriangleleft)_\barwedge$, again by maximalilty.  This shows that $\mathsf{x}\circ\mathsf{s}=\mathrm{id}_{\mathsf{X}\mathbb{P}}$.

    To see that $\mathsf{x}$ is continuous, say $S\in\mathsf{S}\mathbb{P}$ and $S_\barwedge\notin p_\mathsf{X}$.  This means we have $s\in S$ with $p\mathrel{\not\hspace{-3pt}\barwedge}s$.  And this means $S\in s_\mathsf{S}$ and $T_\barwedge\notin p_\mathsf{X}$, for all $T\in s_\mathsf{S}$, thus verifying continuity.  To see that $\mathsf{s}$ is also continuous, say $X\in\mathsf{X}\mathbb{P}$ and $X^\vartriangleleft\in p_\mathsf{S}$.  This means we have $x\in X$ and $C\in\mathsf{C}\mathbb{P}$ with $x\vartriangleleft_Cp$.  By regularity, we then have finite $B\in\mathsf{C}\mathbb{P}$ with $B\vartriangleleft C$.  Let $O=\bigcap_{b\in B\setminus x^\barwedge}\mathsf{X}\mathbb{P}\setminus b_\mathsf{X}$, which is an open subset of $\mathsf{X}\mathbb{P}$ containing $X$.  Any $Y\in O$ is a selector (containing the minimal selector $Y^\vartriangleleft$, for example) and hence we must have some $b\in Y\cap B\cap x^\barwedge$.  Taking any $c\in C$ with $b\vartriangleleft c$, it follows that $x\wedge c$ and hence $c\leq p$.  Thus $p\in b^\vartriangleleft\subseteq Y^\vartriangleleft$, i.e.~$Y^\vartriangleleft\in p_\mathsf{S}$, showing that $\mathsf{s}[O]\subseteq p_\mathsf{S}$.  Thus $\mathsf{s}$ is also continuous so both $\mathsf{x}$ and $\mathsf{s}$ are indeed homeomorphisms.
\end{proof}

In particular, if $\mathbb{P}$ is a regular $\omega$-poset then $\mathsf{x}[\overline{p_\mathsf{S}}]=\{S_\wedge:p^\wedge\supseteq S\in\mathsf{S}\mathbb{P}\}=p_\mathsf{X}$ and hence
\[\overline{p_\mathsf{S}}=\mathsf{s}[p_\mathsf{X}],\]
for all $p\in\mathbb{P}$.  Thus in this case $(\overline{p_\mathsf{S}})_{p\in\mathbb{P}}$ constitutes a closed subbasis for $\mathsf{S}\mathbb{P}$.

\subsection{Refiners}\label{Refiners}

Continuous functions between spectra of regular $\omega$-posets $\mathbb{P}$ and $\mathbb{Q}$ can be encoded by certain relations between $\mathbb{P}$ and $\mathbb{Q}$.  Specifically, let us call ${\sqsupset}\subseteq\mathbb{Q}\times\mathbb{P}$ a \emph{refiner} if, for every $C\in\mathsf{C}\mathbb{Q}$, we have $B\in\mathsf{C}\mathbb{P}$ with $B\sqsubset C$.  We call $\sqsupset$ \emph{$\barwedge$-preserving} if
\[\tag{$\barwedge$-preserving}{\sqsupset}\circ{\barwedge}\circ{\sqsubset}\ \ \subseteq\ \ {\barwedge}.\]
Likewise $\sqsupset$ is \emph{$\wedge$-preserving} if we can replace $\barwedge$ above with $\wedge$.  These $\wedge$-preserving refiners were the relations considered in \cite{BaBiVi}, but here we show how to extend the theory to $\barwedge$-preserving refiners.  First we note that these are indeed more general.
\begin{proposition}
    Any $\wedge$-preserving refiner ${\sqsupset}\subseteq\mathbb{Q}\times\mathbb{P}$ is automatically $\barwedge$-preserving.
\end{proposition}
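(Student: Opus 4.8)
The plan is to verify the inclusion ${\sqsupset}\circ{\barwedge}\circ{\sqsubset}\subseteq{\barwedge}$ directly from the definition of adjacency, pulling caps back along the refiner and then transporting common lower bounds forward via $\wedge$-preservation. First I would unwind the hypothesis. Reading the composition right to left, $q\mathrel{({\sqsupset}\circ{\barwedge}\circ{\sqsubset})}q'$ means there are $p,p'\in\mathbb{P}$ with $q\sqsupset p$, $p\barwedge p'$ and $q'\sqsupset p'$ (recall $p'\sqsubset q'$ is the same as $q'\sqsupset p'$). To conclude $q\barwedge q'$ it suffices, by the definition of $\barwedge$ on $\mathbb{Q}$, to produce for each $C\in\mathsf{C}\mathbb{Q}$ some $c\in C$ with $q\wedge c\wedge q'$.

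So fix an arbitrary cap $C\in\mathsf{C}\mathbb{Q}$. The key step is to invoke the refiner property, which yields a cap $B\in\mathsf{C}\mathbb{P}$ with $B\sqsubset C$. Since $B$ is genuinely a cap of $\mathbb{P}$, I can feed it to the adjacency $p\barwedge p'$, obtaining some $b\in B$ with $p\wedge b\wedge p'$. The relation $B\sqsubset C$ then supplies $c\in C$ with $b\sqsubset c$, i.e.\ $c\sqsupset b$.

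It remains to transport these common lower bounds from $\mathbb{P}$ to $\mathbb{Q}$, which I would do by applying $\wedge$-preservation twice. From $q\sqsupset p$, $c\sqsupset b$ and $p\wedge b$, the inclusion ${\sqsupset}\circ{\wedge}\circ{\sqsubset}\subseteq{\wedge}$ gives $q\wedge c$; symmetrically, from $q'\sqsupset p'$, $c\sqsupset b$ and $b\wedge p'$ it gives $q'\wedge c$. Hence $q\wedge c\wedge q'$ with $c\in C$, and since $C$ was arbitrary we obtain $q\barwedge q'$, as desired.

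I do not expect any genuine obstacle here; the content is purely the correct bookkeeping of composition directions together with the two symmetric uses of the defining data. The one point that must be handled with care is that the refiner property delivers an honest cap $B\in\mathsf{C}\mathbb{P}$ rather than an arbitrary subset, since it is precisely the capness of $B$ that licenses applying the adjacency $p\barwedge p'$ to it and thereby extracting the single witness $b$ lying below both $p$ and $p'$.
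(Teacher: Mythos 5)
Your proof is correct and follows essentially the same route as the paper's: pull the cap $C\in\mathsf{C}\mathbb{Q}$ back to a cap $B\in\mathsf{C}\mathbb{P}$ via the refiner property, extract the witness $b\in B$ with $p\wedge b\wedge p'$ from the adjacency, push $b$ forward to some $c\in C$ with $b\sqsubset c$, and apply $\wedge$-preservation twice to obtain $q\wedge c\wedge q'$. The only difference is expository: you spell out the two symmetric applications of ${\sqsupset}\circ{\wedge}\circ{\sqsubset}\subseteq{\wedge}$ that the paper compresses into a single clause.
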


\begin{proof}
    Say $q\sqsupset p\mathrel{\barwedge}p'\sqsubset q'$ and take any $C\in\mathsf{C}\mathbb{Q}$.  As $\sqsupset$ is a refiner, we have $B\in\mathsf{C}\mathbb{P}$ with $B\sqsubset C$.  As $p\mathrel{\barwedge}p'$, we have $b\in B$ with $p\mathrel{\wedge}b\mathrel{\wedge}p'$.  Then we have $c\in C$ with $b\leq c$ and hence $q\mathrel{\wedge}c\mathrel{\wedge}q'$, as $\sqsupset$ is $\wedge$-preserving.  This shows that $q\mathrel{\barwedge}q'$, as required.
\end{proof}

Continuous maps can be defined from $\barwedge$-preserving refiners as follows.

\begin{proposition}
    For any $\barwedge$-preserving refiner ${\sqsupseteq}\subseteq\mathbb{Q}\times\mathbb{P}$ between regular $\omega$-posets $\mathbb{P}$ and $\mathbb{Q}$, we have a continuous map $\mathsf{X}_\sqsupseteq:\mathsf{X}\mathbb{P}\rightarrow\mathsf{X}\mathbb{Q}$ defined by
    \[\mathsf{X}_\sqsupseteq(Y)=(Y^\sqsubseteq)_\barwedge.\]
\end{proposition}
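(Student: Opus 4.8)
The plan is to first verify that $\mathsf{X}_\sqsupseteq$ genuinely lands in $\mathsf{X}\mathbb{Q}$, and then to establish continuity; the latter is where the real work lies. For well-definedness, I would fix $Y\in\mathsf{X}\mathbb{P}$ and show that $Y^\sqsubseteq$ is an unbounded clique, so that \Cref{UnboundedCliques} immediately gives that $(Y^\sqsubseteq)_\barwedge$ is the largest (hence maximal) clique containing $Y^\sqsubseteq$, with unboundedness passing up from $Y^\sqsubseteq\subseteq(Y^\sqsubseteq)_\barwedge$. That $Y^\sqsubseteq$ is a clique is a one-line application of $\barwedge$-preservation: given $q,q'\in Y^\sqsubseteq$, pick $p,p'\in Y$ with $q\sqsupseteq p$ and $q'\sqsupseteq p'$; since $p\barwedge p'$ we get $q\mathrel{{\sqsupseteq}\circ{\barwedge}\circ{\sqsubseteq}}q'$ and hence $q\barwedge q'$. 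For unboundedness I would use that $Y^\vartriangleleft=\mathsf{s}(Y)$ is a selector contained in $Y$ (by \Cref{UnboundedCliques} and maximality, $Y^\vartriangleleft\subseteq Y_\barwedge=Y$). Given $C\in\mathsf{C}\mathbb{Q}$, the refiner property yields $B\in\mathsf{C}\mathbb{P}$ with $B\sqsubseteq C$; as the selector $Y^\vartriangleleft$ meets $B$ we obtain $b\in B\cap Y$ and then $c\in C$ with $c\sqsupseteq b$, so $c\in Y^\sqsubseteq\cap C^\geq$.

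For continuity I would show that the preimage of each subbasic open set $\mathsf{X}\mathbb{Q}\setminus q_\mathsf{X}$ is open. Writing $P_q=\{p\in\mathbb{P}:\exists u\sqsupseteq p,\ q\not\barwedge u\}$, one checks directly that $q\notin(Y^\sqsubseteq)_\barwedge$ iff $Y\cap P_q\neq\emptyset$, so the preimage is $\{Y:Y\cap P_q\neq\emptyset\}$. Given such a $Y$ with witness $p\in Y\cap P_q$ and $u\sqsupseteq p$, $q\not\barwedge u$, I would build an explicit basic open neighbourhood of $Y$ inside the preimage, in the spirit of the continuity argument in \Cref{SvsX}. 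Regularity of $\mathbb{Q}$ together with \Cref{RegularBarwedge} turns $q\not\barwedge u$ into a cap $C\in\mathsf{C}\mathbb{Q}$ with $q^\barwedge\cap C\cap u^\barwedge=\emptyset$; the refiner then gives $B\in\mathsf{C}\mathbb{P}$ with $B\sqsubseteq C$, which I replace by a finite subcap using that the levels of the $\omega$-poset $\mathbb{P}$ are finite. Setting $F=\{b\in B:p\not\barwedge b\}$, a finite set disjoint from the clique $Y$, the candidate neighbourhood is $O=\bigcap_{b\in F}(\mathsf{X}\mathbb{P}\setminus b_\mathsf{X})$.

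The key point, and the step I expect to be the main obstacle, is to show $B\cap p^\barwedge\subseteq P_q$: every cap element compatible with $p$ already lies in $P_q$. Here $\barwedge$-preservation does the decisive work, but now in the opposite direction from well-definedness. For $b\in B$ with $b\barwedge p$, choose $c\in C$ with $c\sqsupseteq b$; then $c\sqsupseteq b\barwedge p\sqsubseteq u$ forces $c\barwedge u$, so $c$ cannot also satisfy $q\barwedge c$ (otherwise $c\in q^\barwedge\cap C\cap u^\barwedge$, contradicting the choice of $C$), whence $q\not\barwedge c$ and $b\in P_q$, as witnessed by $c$. Granting this, any $Y'\in O$ is a selector and so meets $B$; avoiding $F$ it must meet $B\cap p^\barwedge\subseteq P_q$, giving $Y'\cap P_q\neq\emptyset$ and hence $O\subseteq\{Y:Y\cap P_q\neq\emptyset\}$, which proves the preimage open.

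I expect two points to require the most care. The first is the finiteness reduction of the cap $B$: refining $B$ downward does not obviously preserve $B\sqsubseteq C$, so I would instead pass to a finite \emph{subcap} of the cap supplied by the refiner. The second is insisting on $\not\barwedge$ rather than the weaker $\not\wedge$ throughout, which is exactly why \Cref{RegularBarwedge} is invoked to produce a cap $C$ witnessing non-adjacency in the $\barwedge$-sense; working with $\wedge$ alone would break the contradiction that drives $B\cap p^\barwedge\subseteq P_q$.
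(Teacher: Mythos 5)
Your proposal is correct and follows essentially the same approach as the paper: the well-definedness argument is identical, and the continuity argument is likewise the paper's — a cap $C$ witnessing non-adjacency obtained from \Cref{RegularBarwedge}, a finite subcap $B\sqsubseteq C$ obtained from the refiner property, a basic neighbourhood of the form $\bigcap_b(\mathsf{X}\mathbb{P}\setminus b_\mathsf{X})$, and the decisive use of $\barwedge$-preservation $c\sqsupseteq b\barwedge p\sqsubseteq u\Rightarrow c\barwedge u$, which forces $q\mathrel{\not\hspace{-3pt}\barwedge}c$. The only differences are cosmetic: your reformulation of the preimage via $P_q$, and your finite exclusion set $F=\{b\in B:p\mathrel{\not\hspace{-3pt}\barwedge}b\}$ in place of the paper's $B\setminus Y\supseteq F$, so your neighbourhood is slightly larger but the argument runs the same way.
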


\begin{proof}
    For any $Y\in\mathsf{X}\mathbb{P}$, we know that $Y^\sqsubseteq$ is a clique, as $\sqsupseteq$ is $\barwedge$-preserving.  Also, as $\sqsupseteq$ is a refiner, for any $C\in\mathsf{C}\mathbb{Q}$, we have $B\in\mathsf{C}\mathbb{P}$ with $B\sqsubseteq C$.  As $Y$ is a selector, we then have $b\in B\cap Y$.  Taking any $c\in C$ with $b\sqsubseteq c$, it follows that $c\in C\cap Y^\sqsubseteq$.  This shows $Y^\sqsubseteq$ is also a selector.  In particular, $Y^\sqsubseteq$ is unbounded so $(Y^\sqsubseteq)_\barwedge\in\mathsf{X}\mathbb{Q}$, by \Cref{UnboundedCliques}, showing that $\mathsf{X}_\sqsupseteq$ does indeed map $\mathsf{X}\mathbb{P}$ to $\mathsf{X}\mathbb{Q}$.

    To prove continuity, take any $Y\in\mathsf{X}\mathbb{P}$ and $q\in\mathbb{Q}$ such that $\mathsf{X}_\sqsupseteq(Y)\notin q_\mathsf{X}$.  This means we have some $r\in Y^\sqsubseteq$ with $q\mathrel{\not\hspace{-3pt}\barwedge}r$ and hence we have some $C\in\mathsf{C}\mathbb{Q}$ with $q^\barwedge\cap C\cap r^\barwedge=\emptyset$, by \Cref{RegularBarwedge}.  Taking finite $B\in\mathsf{C}\mathbb{P}$ with $B\sqsubseteq C$, it follows that
    \[q^\barwedge\cap C\cap (B\cap Y)^\sqsubseteq\subseteq q^\barwedge\cap C\cap r^\barwedge=\emptyset.\]
    Note that the open set $O=\bigcap_{b\in B\setminus Y}(\mathsf{X}\mathbb{P}\setminus b_\mathsf{X})$ includes $Y$.  Moreover, any other $Z\in O$ is a selector and thus contains some $b\in B\cap Y$.  Taking any $c\in C$ with $b\sqsubseteq c$, it follows that $q\mathrel{\not\hspace{-3pt}\barwedge}c$ and hence $q\notin c_\barwedge\supseteq (Z^\sqsubseteq)_\barwedge=\mathsf{X}_\sqsupseteq(Z)$.  In other words $Y\in O\subseteq\mathsf{X}_\sqsupseteq^{-1}[\mathsf{X}\mathbb{Q}\setminus q_\mathsf{X}]$, showing that $\mathsf{X}_\sqsupseteq$ is indeed continuous.
\end{proof}

\begin{corollary}
    The map $\mathsf{X}$ above defines a functor from $\mathbf{R}$ to $\mathbf{C}$ where
    \begin{enumerate}
        \item $\mathbf{R}$ is the category of regular $\omega$-posets with $\barwedge$-preserving refiners as morphisms.
        \item $\mathbf{C}$ is the category of metrisable compacta with continuous maps as morphisms.
    \end{enumerate}
\end{corollary}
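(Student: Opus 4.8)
The plan is to read this off the two preceding propositions, which already carry the genuinely topological content: each regular $\omega$-poset $\mathbb{P}$ has $\mathsf{X}\mathbb{P}$ a metrisable compactum (it is homeomorphic to $\mathsf{S}\mathbb{P}$ by \Cref{SvsX}, which is a metrisable compactum for regular $\mathbb{P}$ by \cite[Corollary 2.40]{BaBiVi}), and each $\barwedge$-preserving refiner $\sqsupseteq$ induces a continuous $\mathsf{X}_\sqsupseteq$ landing in the correct clique-spectrum. What remains is purely formal bookkeeping: that $\mathbf{R}$ is genuinely a category, and that $\mathsf{X}$ respects identities and composition. I would fix composable refiners ${\sqsupseteq_1}\subseteq\mathbb{Q}\times\mathbb{P}$ and ${\sqsupseteq_2}\subseteq\mathbb{R}\times\mathbb{Q}$ at the outset and note that the composite morphism is $\sqsupseteq_2\circ\sqsupseteq_1$, with inverse $\sqsubseteq_1\circ\sqsubseteq_2$.

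For the category axioms, the equality relation $=_\mathbb{P}$ serves as the identity at $\mathbb{P}$: it refines every cap by itself and trivially satisfies ${=_\mathbb{P}}\circ{\barwedge}\circ{=_\mathbb{P}}={\barwedge}$, while associativity is inherited from that of relation composition. The two substantive closure properties are that refiners compose to refiners and that $\barwedge$-preservation survives composition. The first is a cap-chase: given $C\in\mathsf{C}\mathbb{R}$, the refiner property of $\sqsupseteq_2$ yields $D\in\mathsf{C}\mathbb{Q}$ with $D\sqsubseteq_2 C$, then that of $\sqsupseteq_1$ yields $B\in\mathsf{C}\mathbb{P}$ with $B\sqsubseteq_1 D$; stitching the witnesses $b\sqsubseteq_1 d\sqsubseteq_2 c$ gives $B\,({\sqsubseteq_1}\circ{\sqsubseteq_2})\,C$, which is exactly $B\sqsubseteq C$ for the composite. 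The second is a one-line relational computation using associativity,
\[
(\sqsupseteq_2\circ\sqsupseteq_1)\circ{\barwedge}\circ(\sqsubseteq_1\circ\sqsubseteq_2)=\sqsupseteq_2\circ(\sqsupseteq_1\circ{\barwedge}\circ\sqsubseteq_1)\circ\sqsubseteq_2\subseteq\sqsupseteq_2\circ{\barwedge}\circ\sqsubseteq_2\subseteq{\barwedge},
\]
applying $\barwedge$-preservation of $\sqsupseteq_1$ to the inner bracket (in $\mathbb{Q}$) and then of $\sqsupseteq_2$.

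Identity preservation is immediate: $\mathsf{X}_{=_\mathbb{P}}(Y)=(Y^{=_\mathbb{P}})_\barwedge=Y_\barwedge=Y$, since $Y\in\mathsf{X}\mathbb{P}$ is a maximal clique. The real content is $\mathsf{X}_{\sqsupseteq_2\circ\sqsupseteq_1}=\mathsf{X}_{\sqsupseteq_2}\circ\mathsf{X}_{\sqsupseteq_1}$. Writing $U=Y^{\sqsubseteq_1}$ for $Y\in\mathsf{X}\mathbb{P}$, unravelling the definitions turns the left-hand side into $(U^{\sqsubseteq_2})_\barwedge$ and the right-hand side into $((U_\barwedge)^{\sqsubseteq_2})_\barwedge$, so the theorem reduces to showing that inserting the intermediate closure $_\barwedge$ between the two images is harmless. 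Here I would use that $U$ is at once an unbounded clique and a selector — being the image of the selector-clique $Y$ under the $\barwedge$-preserving refiner $\sqsupseteq_1$ — so that $U^{\sqsubseteq_2}$ and $(U_\barwedge)^{\sqsubseteq_2}$ are again unbounded cliques and $U_\barwedge$ is the maximal clique on $U$. Since $U\subseteq U_\barwedge$ gives $U^{\sqsubseteq_2}\subseteq(U_\barwedge)^{\sqsubseteq_2}$, and since $(U_\barwedge)^{\sqsubseteq_2}$ is a clique containing the unbounded clique $U^{\sqsubseteq_2}$, \Cref{UnboundedCliques} yields the sandwich
\[
U^{\sqsubseteq_2}\subseteq(U_\barwedge)^{\sqsubseteq_2}\subseteq(U^{\sqsubseteq_2})_\barwedge .
\]
Applying the antitone operation $_\barwedge$ and using its idempotence on maximal cliques, $(U^{\sqsubseteq_2})_{\barwedge\barwedge}=(U^{\sqsubseteq_2})_\barwedge$, squeezes the middle term between two copies of $(U^{\sqsubseteq_2})_\barwedge$,
\[
(U^{\sqsubseteq_2})_\barwedge\supseteq((U_\barwedge)^{\sqsubseteq_2})_\barwedge\supseteq(U^{\sqsubseteq_2})_\barwedge ,
\]
forcing the desired equality.

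I expect the composition law to be the only genuine obstacle; the category axioms and the identity law are essentially formal. The delicate point throughout is keeping straight which sets are cliques and which are selectors, since the squeeze leans on the facts (from \Cref{UnboundedCliques} and the preceding propositions) that a $\barwedge$-preserving refiner carries cliques to cliques and, being a refiner, carries selectors to selectors, together with the idempotence of $_\barwedge$ on maximal cliques. A secondary point I would state carefully is the object-level claim that $\mathsf{X}$ actually lands in $\mathbf{C}$, which I justify by \Cref{SvsX} and the metrisability of $\mathsf{S}\mathbb{P}$ for regular $\mathbb{P}$.
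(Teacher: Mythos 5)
Your proposal is correct and follows essentially the same route as the paper: the paper likewise reduces everything to the composition law $\mathsf{X}_{\sqsupset\circ\sqsupseteq}=\mathsf{X}_\sqsupset\circ\mathsf{X}_\sqsupseteq$ and settles it by noting that $Y^\sqsubseteq\subseteq(Y^\sqsubseteq)_\barwedge$ yields $\mathsf{X}_{\sqsupset\circ\sqsupseteq}(Y)\supseteq\mathsf{X}_\sqsupset(\mathsf{X}_\sqsupseteq(Y))$, with equality by maximality of cliques --- the same mechanism as your sandwich via \Cref{UnboundedCliques} and idempotence of $(\cdot)_\barwedge$. The only difference is that you additionally spell out the routine verifications (that $\mathbf{R}$ is a category, that $\mathsf{X}\mathbb{P}$ lands in $\mathbf{C}$, and that identities are preserved), which the paper leaves implicit.
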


\begin{proof}
    To verify that $\mathsf{X}$ is a functor we just have to show that, for any regular $\omega$-posets $\mathbb{P}$, $\mathbb{Q}$ and $\mathbb{R}$ and $\barwedge$-preserving refiners ${\sqsupseteq}\subseteq\mathbb{Q}\times\mathbb{P}$ and ${\sqsupset}\subseteq\mathbb{R}\times\mathbb{Q}$,
    \[\mathsf{X}_{\sqsupset\circ\sqsupseteq}=\mathsf{X}_\sqsupset\circ\mathsf{X}_\sqsupseteq.\]
    To see this, note $Y\in\mathsf{X}\mathbb{P}$ implies $Y^\sqsubseteq\subseteq (Y^\sqsubseteq)_\barwedge=\mathsf{X}_\sqsupseteq(Y)$ so $Y^{\sqsubseteq\sqsubset}\subseteq\mathsf{X}_\sqsupseteq(Y)^\sqsubset$ and hence $\mathsf{X}_{\sqsupset\circ\sqsupseteq}(Y)=(Y^{\sqsubseteq\sqsubset})_\barwedge\supseteq(\mathsf{X}_\sqsupseteq(Y)^\sqsubset)_\barwedge=\mathsf{X}_\sqsupset(\mathsf{X}_\sqsupseteq(Y))$.  Now equality follows, by maximality.
\end{proof}

The following different but isomorphic functor $\mathsf{S}$ was investigated in \cite[\S3]{BaBiVi}, at least on the slightly smaller class of $\wedge$-preserving refiners.

\begin{corollary}
    We have another functor $\mathsf{S}:\mathbf{R}\rightarrow\mathbf{C}$ taking each regular $\omega$-poset $\mathbb{P}$ to $\mathsf{S}\mathbb{P}$ and each $\barwedge$-preserving refiner ${\sqsupseteq}\subseteq\mathbb{Q}\times\mathbb{P}$ to $\mathsf{S}_\sqsupseteq$ where
    \[\mathsf{S}_\sqsupseteq(Y)=Y^{\sqsubseteq\vartriangleleft}.\]
    Furthermore, $\mathsf{s}$ and $\mathsf{x}$ are natural isomorphisms between $\mathsf{S}$ and $\mathsf{X}$.
\end{corollary}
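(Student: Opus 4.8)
The plan is to obtain $\mathsf{S}$ from the functor $\mathsf{X}$ of the previous corollary by transporting it across the natural family of homeomorphisms supplied by \Cref{SvsX}. Concretely, for a $\barwedge$-preserving refiner ${\sqsupseteq}\subseteq\mathbb{Q}\times\mathbb{P}$ between regular $\omega$-posets I would simply define
\[\mathsf{S}_\sqsupseteq=\mathsf{s}_\mathbb{Q}\circ\mathsf{X}_\sqsupseteq\circ\mathsf{x}_\mathbb{P},\]
which is continuous as a composite of the continuous maps $\mathsf{x}_\mathbb{P}\colon\mathsf{S}\mathbb{P}\to\mathsf{X}\mathbb{P}$, $\mathsf{X}_\sqsupseteq\colon\mathsf{X}\mathbb{P}\to\mathsf{X}\mathbb{Q}$ and $\mathsf{s}_\mathbb{Q}\colon\mathsf{X}\mathbb{Q}\to\mathsf{S}\mathbb{Q}$. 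With this definition in hand, functoriality and naturality become essentially formal, and the only genuine work is to check that this composite agrees with the stated formula $\mathsf{S}_\sqsupseteq(Y)=Y^{\sqsubseteq\vartriangleleft}$.

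For functoriality, given a second refiner ${\sqsupset}\subseteq\mathbb{R}\times\mathbb{Q}$ I would compute
\[\mathsf{S}_\sqsupset\circ\mathsf{S}_\sqsupseteq=\mathsf{s}_\mathbb{R}\circ\mathsf{X}_\sqsupset\circ(\mathsf{x}_\mathbb{Q}\circ\mathsf{s}_\mathbb{Q})\circ\mathsf{X}_\sqsupseteq\circ\mathsf{x}_\mathbb{P}=\mathsf{s}_\mathbb{R}\circ\mathsf{X}_{\sqsupset\circ\sqsupseteq}\circ\mathsf{x}_\mathbb{P}=\mathsf{S}_{\sqsupset\circ\sqsupseteq},\]
using that $\mathsf{x}_\mathbb{Q}\circ\mathsf{s}_\mathbb{Q}=\mathrm{id}_{\mathsf{X}\mathbb{Q}}$ by \Cref{SvsX} together with the functoriality of $\mathsf{X}$ from the previous corollary; preservation of identities is identical. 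Naturality of $\mathsf{x}$ is then immediate, since $\mathsf{x}_\mathbb{Q}\circ\mathsf{S}_\sqsupseteq=(\mathsf{x}_\mathbb{Q}\circ\mathsf{s}_\mathbb{Q})\circ\mathsf{X}_\sqsupseteq\circ\mathsf{x}_\mathbb{P}=\mathsf{X}_\sqsupseteq\circ\mathsf{x}_\mathbb{P}$, and naturality of the inverse $\mathsf{s}$ follows symmetrically; as each component is a homeomorphism by \Cref{SvsX}, these are natural isomorphisms.

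The main obstacle is therefore verifying the formula. Here I would first observe that for $Y\in\mathsf{S}\mathbb{P}$ the set $Y$ is an unbounded clique, being an unbounded filter with ${\wedge}\subseteq{\barwedge}$, and likewise $Y_\barwedge=\mathsf{x}_\mathbb{P}(Y)$ is an unbounded maximal clique; $\barwedge$-preservation together with the refiner property then makes both $Y^\sqsubseteq$ and $(Y_\barwedge)^\sqsubseteq$ unbounded cliques in $\mathbb{Q}$, exactly as in the proof that $\mathsf{X}_\sqsupseteq$ is well defined. Unwinding the definitions gives
\[\mathsf{S}_\sqsupseteq(Y)=\bigl(((Y_\barwedge)^\sqsubseteq)_\barwedge\bigr)^\vartriangleleft.\]
Applying \Cref{UnboundedCliques} to the unbounded clique $U=(Y_\barwedge)^\sqsubseteq$, for which $(U_\barwedge)^\vartriangleleft=U^\vartriangleleft$, collapses this to $((Y_\barwedge)^\sqsubseteq)^\vartriangleleft$. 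Finally, since $Y\subseteq Y_\barwedge$ we have $Y^\sqsubseteq\subseteq(Y_\barwedge)^\sqsubseteq$ and hence $(Y^\sqsubseteq)^\vartriangleleft\subseteq((Y_\barwedge)^\sqsubseteq)^\vartriangleleft$; both sides are minimal selectors by \Cref{UnboundedCliques}, so the containment forces equality, yielding $\mathsf{S}_\sqsupseteq(Y)=(Y^\sqsubseteq)^\vartriangleleft=Y^{\sqsubseteq\vartriangleleft}$ as required.
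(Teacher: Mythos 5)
Your proposal is correct and follows essentially the same route as the paper: both define $\mathsf{S}_\sqsupseteq$ by transporting $\mathsf{X}_\sqsupseteq$ across the homeomorphisms $\mathsf{s}$ and $\mathsf{x}$ of \Cref{SvsX}, verify via \Cref{UnboundedCliques} that $\mathsf{s}\circ\mathsf{X}_\sqsupseteq\circ\mathsf{x}(Y)=(((Y_\barwedge)^\sqsubseteq)_\barwedge)^\vartriangleleft=Y^{\sqsubseteq\vartriangleleft}$, and then obtain functoriality and the natural isomorphisms formally. Your write-up merely fills in the intermediate collapsing steps (and the minimal-selector comparison) that the paper leaves implicit in its chain of equalities.
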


\begin{proof}
    For any regular $\omega$-posets $\mathbb{P}$ and $\mathbb{Q}$, any $\barwedge$-preserving refiner ${\sqsupseteq}\subseteq\mathbb{Q}\times\mathbb{P}$ and any $S\in\mathsf{S}\mathbb{P}$, \Cref{UnboundedCliques} tells us that
    \[\mathsf{s}\circ\mathsf{X}_\sqsupseteq\circ\mathsf{x}(S)=(((S_\barwedge)^\sqsubseteq)_\barwedge)^\vartriangleleft=((S^\sqsubseteq)_\barwedge)^\vartriangleleft=S^{\sqsubseteq\vartriangleleft}=\mathsf{S}_\sqsupseteq(S).\]
    Thus $\mathsf{S}_\sqsupseteq=\mathsf{s}\circ\mathsf{X}_\sqsupseteq\circ\mathsf{x}$ from which it follows that $\mathsf{S}$ is a functor which is naturally isomorphic to $\mathsf{X}$ via $\mathsf{s}:\mathsf{X}\rightarrow\mathsf{S}$ and $\mathsf{x}:\mathsf{S}\rightarrow\mathsf{X}$.
\end{proof}

We note that these functors are far from being faithful, i.e.~the same continuous map can arise from various different $\barwedge$-preserving refiners.  For example, extending or reducing a given $\barwedge$-preserving refiner to another one will not change the resulting continuous map, i.e.~if ${\sqsupset},{\sqsupseteq}\subseteq\mathbb{Q}\times\mathbb{P}$ are $\barwedge$-preserving refiners between regular $\omega$-posets $\mathbb{P}$ and $\mathbb{Q}$ then
\begin{equation}\label{Subrefiners}
    {\sqsupset}\ \subseteq\ {\sqsupseteq}\qquad\Rightarrow\qquad\mathsf{S}_\sqsupset=\mathsf{S}_\sqsupseteq.
\end{equation}
Indeed, if ${\sqsupset}\subseteq{\sqsupseteq}$ then, for any $S\in\mathsf{S}\mathbb{P}$, it follows that $\mathsf{S}_\sqsupset(S)\subseteq\mathsf{S}_\sqsupseteq(S)$ and hence $\mathsf{S}_\sqsupset(S)=\mathsf{S}_\sqsupseteq(S)$, by the minimality of the selectors in $\mathsf{S}\mathbb{Q}$.

However they are full, as we can show by associating an appropriate $\barwedge$-preserving refiner to each continuous map $\phi:\mathsf{S}\mathbb{P}\rightarrow\mathsf{S}\mathbb{Q}$.  In fact, there are several canonical options.  Here we consider the relations $\sqsupset_\phi$ and $\sqsupseteq_\phi$ defined on $\mathbb{Q}\times\mathbb{P}$ by
\begin{align*}
    q\sqsupseteq_\phi p\qquad&\Leftrightarrow\qquad\overline{q_\mathsf{S}}\supseteq\phi[\overline{p_\mathsf{S}}].\\
    q\sqsupset_\phi p\qquad&\Leftrightarrow\qquad q_\mathsf{S}\supseteq\phi[\overline{p_\mathsf{S}}].
\end{align*}
For general $\omega$-posets $\mathbb{P}$ and $\mathbb{Q}$ we see that, setting ${\sqsubseteq_\phi}={\sqsupseteq_\phi^{-1}}$ and ${\sqsubset_\phi}={\sqsupset_\phi^{-1}}$,
\begin{align*}
    {{\sqsupseteq_\phi}\circ{\barwedge}\circ{\sqsubseteq_\phi}}\ \ &\subseteq\ \ {\barwedge}.\\
    {{\sqsupset_\phi}\circ{\barwedge}\circ{\sqsubset_\phi}}\ \ &\subseteq\ \ {\wedge}.
\end{align*}
Indeed, if $p\mathrel{\barwedge}o$ then \Cref{barwedgeX} yields $S\in\overline{p_\mathsf{S}}\cap\overline{o_\mathsf{S}}$.  If $q\sqsupseteq_\phi p$ and $r\sqsupseteq_\phi o$ then $\phi(S)\in\overline{q_\mathsf{S}}\cap\overline{r_\mathsf{S}}$ and hence $q\mathrel{\barwedge}r$, again by \Cref{barwedgeX}.  On the other hand, if $q\sqsupset_\phi p$ and $r\sqsupset_\phi o$ then $\phi(S)\in q_\mathsf{S}\cap r_\mathsf{S}$ and hence $q\wedge r$, as $\phi(S)$ is a filter.  So $\sqsupseteq_\phi$ is $\barwedge$-preserving and $\sqsupset_\phi$ is $\wedge$-preserving.  When $\mathbb{P}$ is regular, these relations are also refiners.

\begin{proposition}\label{SqsupseteqPhi}
    Say we are given $\omega$-posets $\mathbb{P}$ and $\mathbb{Q}$ and continuous $\phi:\mathsf{S}\mathbb{P}\rightarrow\mathsf{S}\mathbb{Q}$.  If $\mathbb{P}$ is regular then $\sqsupset_\phi$ and $\sqsupseteq_\phi$ are refiners.  If $\mathbb{Q}$ is also regular then $\phi=\mathsf{S}_{\sqsupseteq_\phi}=\mathsf{S}_{\sqsupset_\phi}$.
\end{proposition}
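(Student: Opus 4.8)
The plan is to handle both relations at once, reducing everything to a single compactness argument for $\sqsupset_\phi$. Since $q_\mathsf{S}\subseteq\overline{q_\mathsf{S}}$, we have ${\sqsupset_\phi}\subseteq{\sqsupseteq_\phi}$, so once $\sqsupset_\phi$ is shown to be a refiner, $\sqsupseteq_\phi$ is automatically one too (any $B\in\mathsf{C}\mathbb{P}$ witnessing $B\sqsubset_\phi C$ also witnesses $B\sqsubseteq_\phi C$). For the final equality it then suffices to prove $\mathsf{S}_{\sqsupseteq_\phi}=\phi$: since $\sqsupseteq_\phi$ is $\barwedge$-preserving and $\sqsupset_\phi$ is $\wedge$-preserving, hence also $\barwedge$-preserving, both are $\barwedge$-preserving refiners, and the inclusion ${\sqsupset_\phi}\subseteq{\sqsupseteq_\phi}$ gives $\mathsf{S}_{\sqsupset_\phi}=\mathsf{S}_{\sqsupseteq_\phi}$ by \eqref{Subrefiners}.

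For the refiner property, fix $C\in\mathsf{C}\mathbb{Q}$. As $C$ is a cap, $(c_\mathsf{S})_{c\in C}$ is an open cover of $\mathsf{S}\mathbb{Q}$, so $(\phi^{-1}[c_\mathsf{S}])_{c\in C}$ is an open cover of the compact space $\mathsf{S}\mathbb{P}$. First I would produce, for each $S\in\mathsf{S}\mathbb{P}$, an element $p'\in S$ and some $c\in C$ with $c\sqsupset_\phi p'$, i.e. $\phi[\overline{p'_\mathsf{S}}]\subseteq c_\mathsf{S}$. To do this, pick $c\in C$ with $\phi(S)\in c_\mathsf{S}$, so that $S$ lies in the open set $\phi^{-1}[c_\mathsf{S}]$; choose a basic neighbourhood $p_\mathsf{S}$ with $S\in p_\mathsf{S}\subseteq\phi^{-1}[c_\mathsf{S}]$, and then use regularity (roundness of $S$) to pick $p'\in S$ with $p'\vartriangleleft p$. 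By \Cref{VarChar}, $\overline{p'_\mathsf{S}}\subseteq p_\mathsf{S}$, whence $\phi[\overline{p'_\mathsf{S}}]\subseteq\phi[p_\mathsf{S}]\subseteq c_\mathsf{S}$, as desired. The sets $(p'_\mathsf{S})$ cover $\mathsf{S}\mathbb{P}$, so by compactness finitely many of the chosen $p'$, say forming $B$, already satisfy $B_\mathsf{S}=\mathsf{S}\mathbb{P}$; by \cite[Proposition 2.8]{BaBiVi} this makes $B\in\mathsf{C}\mathbb{P}$, and by construction $B\sqsubset_\phi C$, proving $\sqsupset_\phi$ is a refiner.

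For $\mathsf{S}_{\sqsupseteq_\phi}=\phi$, fix $S\in\mathsf{S}\mathbb{P}$; both $\phi(S)$ and $\mathsf{S}_{\sqsupseteq_\phi}(S)=S^{\sqsubseteq_\phi\vartriangleleft}$ are minimal selectors in $\mathsf{S}\mathbb{Q}$, so it is enough to show $\phi(S)\subseteq S^{\sqsubseteq_\phi\vartriangleleft}$ and invoke minimality. Given $q\in\phi(S)$, I would use roundness of $\phi(S)$ (regularity of $\mathbb{Q}$) to pick $q'\vartriangleleft q$ and then $q''\vartriangleleft q'$, both in $\phi(S)$, so that $\overline{q''_\mathsf{S}}\subseteq q'_\mathsf{S}$ by \Cref{VarChar}. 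Since $\phi(S)\in q''_\mathsf{S}$, the point $S$ lies in the open set $\phi^{-1}[q''_\mathsf{S}]$, and exactly as above there is $p'\in S$ with $\phi[\overline{p'_\mathsf{S}}]\subseteq q''_\mathsf{S}\subseteq\overline{q'_\mathsf{S}}$. This says $q'\sqsupseteq_\phi p'$, so $q'\in S^{\sqsubseteq_\phi}$, and hence $q\in S^{\sqsubseteq_\phi\vartriangleleft}$ because $q'\vartriangleleft q$. As $q\in\phi(S)$ was arbitrary, $\phi(S)\subseteq S^{\sqsubseteq_\phi\vartriangleleft}$, giving the required equality.

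The main obstacle I anticipate is the recurring passage from an open neighbourhood to a closed subbasic set: the relations $\sqsupset_\phi$ and $\sqsupseteq_\phi$ are defined through images of the closed sets $\overline{p_\mathsf{S}}$, yet continuity of $\phi$ only delivers open preimages. Bridging this gap is precisely what regularity is for — roundness lets one interpolate an element $p'\vartriangleleft p$ so that \Cref{VarChar} upgrades $p'_\mathsf{S}\subseteq p_\mathsf{S}$ to the closed inclusion $\overline{p'_\mathsf{S}}\subseteq p_\mathsf{S}$; the same manoeuvre, now on the $\mathbb{Q}$ side, drives the second part. The only other point needing care is checking that a finite subcover yields a genuine cap rather than a mere cover, which is handled by \cite[Proposition 2.8]{BaBiVi}.
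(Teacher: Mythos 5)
Your proof is correct, but both halves take genuinely different routes from the paper's. For the refiner property, the paper invokes the fact from \cite{BaBiVi} that any open cover of $\mathsf{S}\mathbb{P}$ is refined by one of the form $\{b_\mathsf{S}:b\in B\}$ with $B\in\mathsf{C}\mathbb{P}$, and then applies regularity once at the level of caps, taking $A\in\mathsf{C}\mathbb{P}$ with $A\vartriangleleft B$ so that \Cref{VarChar} immediately gives $A\sqsubset_\phi C$. You instead rebuild that refinement pointwise: roundness of each minimal selector (which is where regularity of $\mathbb{P}$ enters for you, via \cite[Proposition 2.41]{BaBiVi}) plus the basis property yields $p'\in S$ with $p'\sqsubset_\phi c$, and compactness of $\mathsf{S}\mathbb{P}$ together with \cite[Proposition 2.8]{BaBiVi} turns a finite subcover into a cap --- in effect you re-prove the cap-refinement lemma inline, which makes your argument more self-contained at the cost of invoking compactness and Proposition 2.8 explicitly. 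For the equality $\phi=\mathsf{S}_{\sqsupseteq_\phi}$, the two proofs establish \emph{opposite} inclusions before appealing to minimality: the paper shows $S^{\sqsubseteq_\phi\vartriangleleft}\subseteq\phi(S)$, which is pure definition-chasing ($S\ni p\sqsubseteq_\phi q\vartriangleleft r$ gives $S\in\overline{p_\mathsf{S}}\subseteq\phi^{-1}[\overline{q_\mathsf{S}}]\subseteq\phi^{-1}[r_\mathsf{S}]$, so $r\in\phi(S)$) and needs no further appeal to continuity or roundness, and then uses minimality of $\phi(S)$; you show $\phi(S)\subseteq S^{\sqsubseteq_\phi\vartriangleleft}$, which costs another run of the continuity-plus-roundness interpolation (now on the $\mathbb{Q}$ side too), and then use minimality of $S^{\sqsubseteq_\phi\vartriangleleft}$. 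Both are legitimate, since once the first part is in place $S^{\sqsubseteq_\phi\vartriangleleft}=\mathsf{S}_{\sqsupseteq_\phi}(S)$ is itself a minimal selector, but the paper's choice of inclusion is the cheaper one; also note your double interpolation $q''\vartriangleleft q'\vartriangleleft q$ is harmless but unnecessary --- a single step $q'\vartriangleleft q$ already gives $q'\sqsupset_\phi p'$ and hence $q\in S^{\sqsubseteq_\phi\vartriangleleft}$. Your reduction steps (passing the refiner property from $\sqsupset_\phi$ to $\sqsupseteq_\phi$ via ${\sqsupset_\phi}\subseteq{\sqsupseteq_\phi}$, and deducing $\mathsf{S}_{\sqsupset_\phi}=\mathsf{S}_{\sqsupseteq_\phi}$ from \eqref{Subrefiners}) coincide with the paper's, which concludes by the same minimality observation.
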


\begin{proof}
    Assume $\mathbb{P}$ is regular.  As ${\sqsupset_\phi}\subseteq{\sqsupseteq_\phi}$, it suffices to show that $\sqsupset_\phi$ is a refiner.  Accordingly, take any $C\in\mathsf{C}\mathbb{Q}$.  As $\{c_\mathsf{S}:c\in C\}$ is an open cover of $\mathsf{S}\mathbb{Q}$, we have an open cover of $\mathsf{S}\mathbb{P}$ given by $\{\phi^{-1}[c_\mathsf{S}]:c\in C\}$, which is thus refined by an open cover of the form $\{b_\mathsf{S}:b\in B\}$, for some $B\in\mathsf{C}\mathbb{P}$.  As $\mathbb{P}$ is regular, we have some $A\in\mathsf{C}\mathbb{P}$ with $A\vartriangleleft B$.  For every $a\in A$, we therefore have $b\in B$ with $a\vartriangleleft b$ and hence $\overline{a_\mathsf{S}}\subseteq b_\mathsf{S}$, by \Cref{VarChar}.  We then also have $c\in C$ with $b_\mathsf{S}\subseteq\phi^{-1}[c_\mathsf{S}]$ and hence $\phi[\overline{a_\mathsf{S}}]\subseteq c_\mathsf{S}$, i.e.~$a\sqsubset_\phi c$.  This shows that $A\sqsubset_\phi C$, which in turn shows that $\sqsupset_\phi$ is a refiner.

    Now if $\mathsf{S}\mathbb{P}\ni S\ni p\sqsubseteq_\phi q\vartriangleleft r$ then $S\in p_\mathsf{S}\subseteq\overline{p_\mathsf{S}}\subseteq\phi^{-1}[\overline{q_\mathsf{S}}]\subseteq\phi^{-1}[r_\mathsf{S}]$ and hence $r\in\phi(S)$, showing that $S^{\sqsubseteq_\phi\vartriangleleft}\subseteq\phi(S)$.  If $\mathbb{Q}$ is also regular then $S^{\sqsubseteq_\phi\vartriangleleft}\in\mathsf{S}\mathbb{Q}$ and hence $S^{\sqsubseteq_\phi\vartriangleleft}=\phi(S)$, by minimality, showing that $\mathsf{S}_{\sqsupseteq_\phi}=\phi$.  As ${\sqsupset_\phi}\subseteq{\sqsupseteq_\phi}$, minimality again yields $\mathsf{S}_{\sqsupset_\phi}=\phi$.
\end{proof}

What distinguishes ${\sqsupseteq_\phi}$ here is that it is actually the largest $\barwedge$-preserving refiner mapped to $\phi$ by $\mathsf{S}$.  Put another way, ${\sqsupseteq}\subseteq{\sqsupseteq_{\mathsf{S}_\sqsupseteq}}$, for every $\barwedge$-preserving refiner $\sqsupseteq$.  We can also express ${\sqsupseteq_{\mathsf{S}_\sqsupseteq}}$ in terms of $\barwedge$, $\sqsupseteq$ and $\wedge$ as follows.

\begin{proposition}\label{sqXsq}
    For any $\barwedge$-preserving refiner ${\sqsupseteq}\subseteq\mathbb{Q}\times\mathbb{P}$ between regular $\omega$-posets,
    \[{\sqsupseteq}\ \subseteq\ {{\barwedge}\co{({\sqsupseteq}\circ{\barwedge})}}\ =\ {{\barwedge}\co{({\sqsupseteq}\circ{\wedge})}}\ =\ {\sqsupseteq_{\mathsf{S}_\sqsupseteq}}.\]
\end{proposition}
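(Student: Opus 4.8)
The plan is to unpack the two demonic compositions and then establish the cyclic chain of inclusions
\[{\sqsupseteq}\ \subseteq\ {\barwedge}\co({\sqsupseteq}\circ{\barwedge})\ \subseteq\ {\sqsupseteq_{\mathsf{S}_\sqsupseteq}}\ \subseteq\ {\barwedge}\co({\sqsupseteq}\circ{\wedge})\ \subseteq\ {\barwedge}\co({\sqsupseteq}\circ{\barwedge}),\]
which yields both the stated inclusion and the two equalities simultaneously. Writing $\phi=\mathsf{S}_\sqsupseteq$ and using the formula $z\mathrel{{\sqni}\co{\sqsupset}}x\Leftrightarrow\emptyset\neq x^\sqsubset\subseteq z^\sqni$, the relation ${\barwedge}\co({\sqsupseteq}\circ{\barwedge})$ unpacks to: $q\mathrel{{\barwedge}\co({\sqsupseteq}\circ{\barwedge})}p$ iff the set $B_1=\{q':q'\sqsupseteq p'\barwedge p\text{ for some }p'\}$ is nonempty and contained in $q^\barwedge$, and likewise ${\barwedge}\co({\sqsupseteq}\circ{\wedge})$ corresponds to $B_2$ (defined with $\wedge$ in place of the inner $\barwedge$). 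Since $\sqsupseteq$ is a refiner, every cap of $\mathbb{Q}$ is $\sqsubseteq$-refined by some cap $B\in\mathsf{C}\mathbb{P}$; as $B^\wedge=\mathbb{P}$ by \Cref{DenseCovers}, there is $b\in B$ with $b\wedge p$ and some $c\sqsupseteq b$, so $c\in B_2$. Thus $B_2$ (and a fortiori $B_1\supseteq B_2$) is always nonempty and both demonic compositions reduce to the subset conditions $B_1\subseteq q^\barwedge$ and $B_2\subseteq q^\barwedge$.

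The workhorse I will isolate first is: if $S\in\mathsf{S}\mathbb{P}$ satisfies $S\subseteq p'^\barwedge$ and $q'\sqsupseteq p'$, then $\phi(S)=S^{\sqsubseteq\vartriangleleft}\in\overline{q'_\mathsf{S}}$. Indeed any $r\in\phi(S)$ has $q''\vartriangleleft r$ with $q''\sqsupseteq s$ and $s\in S\subseteq p'^\barwedge$; then $q'\sqsupseteq p'\barwedge s\sqsubseteq q''$ gives $q'\barwedge q''$ by $\barwedge$-preservation, and $q'\barwedge q''\vartriangleleft r$ gives $q'\barwedge r$ by \eqref{BarwedgeVar}, so $\phi(S)\subseteq q'^\barwedge$, i.e.~$\phi(S)\in\overline{q'_\mathsf{S}}$ by \Cref{RegularBar}. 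The first inclusion is then immediate: if $q\sqsupseteq p$ then $q\in B_1$ (as $p\barwedge p$) and the same $\barwedge$-preservation argument gives $B_1\subseteq q^\barwedge$. For ${\barwedge}\co({\sqsupseteq}\circ{\barwedge})\subseteq{\sqsupseteq_{\mathsf{S}_\sqsupseteq}}$, assume $B_1\subseteq q^\barwedge$ and take $S\in\overline{p_\mathsf{S}}$, so $S\subseteq p^\barwedge$; every $q''\in S^\sqsubseteq$ then lies in $B_1$, whence $q\barwedge q''$, and $q\barwedge q''\vartriangleleft r$ gives $\phi(S)\subseteq q^\barwedge$ by \eqref{BarwedgeVar}, i.e.~$\phi[\overline{p_\mathsf{S}}]\subseteq\overline{q_\mathsf{S}}$.

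For ${\sqsupseteq_{\mathsf{S}_\sqsupseteq}}\subseteq{\barwedge}\co({\sqsupseteq}\circ{\wedge})$, assume $\phi[\overline{p_\mathsf{S}}]\subseteq\overline{q_\mathsf{S}}$ and take $q'\in B_2$, say $q'\sqsupseteq p'$ with $p'\wedge p$, so there is $d\leq p',p$. Then $d\leq p$ and $d\leq p'$ give $d^\barwedge\subseteq p^\barwedge\cap p'^\barwedge$, and $\overline{d_\mathsf{S}}\neq\emptyset$ by \Cref{barwedgeX} (as $d\barwedge d$); picking $S\in\overline{d_\mathsf{S}}$ yields $S\subseteq p^\barwedge\cap p'^\barwedge$. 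By hypothesis $\phi(S)\in\overline{q_\mathsf{S}}$, and by the workhorse $\phi(S)\in\overline{q'_\mathsf{S}}$, so $q\barwedge q'$ by \Cref{barwedgeX}. Hence $B_2\subseteq q^\barwedge$.

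The main obstacle is the last inclusion ${\barwedge}\co({\sqsupseteq}\circ{\wedge})\subseteq{\barwedge}\co({\sqsupseteq}\circ{\barwedge})$, i.e.~$B_2\subseteq q^\barwedge\Rightarrow B_1\subseteq q^\barwedge$, since $B_1\supseteq B_2$ and an element $q'\sqsupseteq p'\barwedge p$ of $B_1$ need not lie in $B_2$. Here I would take such a $q'$, use \Cref{barwedgeX} to get $S\in\overline{p'_\mathsf{S}}\cap\overline{p_\mathsf{S}}$, note $\phi(S)\in\overline{q'_\mathsf{S}}$ by the workhorse, and prove $\phi(S)\in\overline{q_\mathsf{S}}$; then $q\barwedge q'$ by \Cref{barwedgeX}. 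The crux is upgrading $\barwedge$ to $\wedge$: for $r\in\phi(S)$ with $q''\vartriangleleft r$ and $q''\sqsupseteq s$, roundness of $S$ provides $s_0\in S$ with $s_0\vartriangleleft s$, and since $s_0\in S\subseteq p^\barwedge$ we get $p\barwedge s_0\vartriangleleft s$, hence $p\wedge s$ by \eqref{BarwedgeVar}; thus $q''\sqsupseteq s\wedge p$ places $q''\in B_2$, so $q\barwedge q''$ and finally $q\barwedge r$. This roundness step is precisely where regularity of $\mathbb{P}$ is used. Assembling the four inclusions closes the cycle and gives ${\barwedge}\co({\sqsupseteq}\circ{\barwedge})={\barwedge}\co({\sqsupseteq}\circ{\wedge})={\sqsupseteq_{\mathsf{S}_\sqsupseteq}}$ together with ${\sqsupseteq}\subseteq{\barwedge}\co({\sqsupseteq}\circ{\barwedge})$.
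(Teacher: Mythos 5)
Your proof is correct, but it is organized around a different cycle of inclusions than the paper's, so the two are worth comparing. Write $A_1={{\barwedge}\co{({\sqsupseteq}\circ{\barwedge})}}$ and $A_2={{\barwedge}\co{({\sqsupseteq}\circ{\wedge})}}$. The paper closes the cycle as $A_1\subseteq A_2\subseteq{\sqsupseteq_{\mathsf{S}_\sqsupseteq}}\subseteq A_1$, treating $A_1\subseteq A_2$ as immediate (your $B_2$ is contained in your $B_1$) and proving the two substantive inclusions by contrapositive: from $q\not\sqsupseteq_{\mathsf{S}_\sqsupseteq}p$ it extracts a configuration $p\mathrel{\wedge}t\sqsubseteq r\mathrel{\not\hspace{-3pt}\barwedge}q$ refuting $(q,p)\in A_2$, and from $p\mathrel{\barwedge}t\sqsubseteq r\mathrel{\not\hspace{-3pt}\barwedge}q$ it manufactures, via a cap separating $r$ from $q$, a point of $\mathsf{S}_\sqsupseteq[\overline{p_\mathsf{S}}]\setminus\overline{q_\mathsf{S}}$. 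You instead prove $A_1\subseteq{\sqsupseteq_{\mathsf{S}_\sqsupseteq}}\subseteq A_2\subseteq A_1$ by direct arguments, all routed through your workhorse lemma, with the substantive step being $A_2\subseteq A_1$. Both routes rest on the same machinery (\Cref{barwedgeX}, \eqref{BarwedgeVar}, and $\mathsf{S}_\sqsupseteq(S)=S^{\sqsubseteq\vartriangleleft}$); the paper's contrapositive version is shorter, but yours makes explicit something the paper glosses over, namely the non-emptiness clause built into the demonic composition, which you dispose of once and for all via the refiner property and \Cref{DenseCovers}. This clause is genuinely needed: without it even the paper's ``free'' inclusion $A_1\subseteq A_2$ is not pure monotonicity (demonic composition fails to be antitone in its second argument on account of it), and it is needed again to convert the paper's final contrapositive into ${\sqsupseteq_{\mathsf{S}_\sqsupseteq}}\subseteq A_1$. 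One simplification to your last step: the roundness detour is unnecessary, since $S\in\overline{p_\mathsf{S}}$ means $S\subseteq p^\wedge$ by definition, so $p\mathrel{\wedge}s$ and hence $q''\in B_2$ are immediate.
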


\begin{proof}
    First note ${\sqsupseteq}\subseteq{{\barwedge}\co{({\sqsupseteq}\circ{\barwedge})}}$ is just saying that ${\sqsupseteq}\subseteq\mathbb{Q}\times\mathbb{P}$ is $\barwedge$-preserving.

    Now say $q\not\sqsupseteq_{\mathsf{S}_\sqsupseteq}p$, i.e.~$\overline{q_\mathsf{S}}\nsupseteq\mathsf{S}_\sqsupseteq[\overline{p_\mathsf{S}}]$ so we have $T\in \overline{p_\mathsf{S}}$ with $T^{\sqsubseteq\vartriangleleft}=\mathsf{S}_\sqsupseteq(T)\nsubseteq q^\wedge$ and hence $T^\sqsubseteq\nsubseteq q^\barwedge$, by \eqref{BarwedgeVar}.  Thus we have $t\in T\subseteq p^\wedge$ with $p\mathrel{\wedge}t\sqsubseteq r\mathrel{\not\hspace{-3pt}\barwedge}q$, for some $r$, showing that $(q,p)\notin{{\barwedge}\co{({\sqsupseteq}\circ{\wedge})}}$ and hence
    \[{{\barwedge}\co{({\sqsupseteq}\circ{\barwedge})}}\ \subseteq\ {{\barwedge}\co{({\sqsupseteq}\circ{\wedge})}}\ \subseteq\ {\sqsupseteq_{\mathsf{S}_\sqsupseteq}}.\]
    
    Conversely, if $p\mathrel{\barwedge}t\sqsubseteq r\mathrel{\not\hspace{-3pt}\barwedge}q$ then \Cref{barwedgeX} yields $T\in\overline{p_\mathsf{S}}\cap\overline{t_\mathsf{S}}$.  Taking $C\in\mathsf{C}\mathbb{Q}$ with $r^\wedge\cap C\cap q^\wedge=\emptyset$, we then have $c\in C\cap T^{\sqsubseteq\vartriangleleft}\subseteq((t^\sqsubseteq)_\barwedge)^\vartriangleleft\subseteq r^{\barwedge\vartriangleleft}\subseteq r^\wedge$, by \eqref{BarwedgeVar}.  Thus $c\in\mathsf{S}_\sqsupseteq(T)\setminus q^\wedge$ so $\mathsf{S}_\sqsupseteq(T)\in\mathsf{S}_\sqsupseteq[\overline{p_\mathsf{S}}]\setminus\overline{q_\mathsf{S}}$, showing that $\mathsf{S}_\sqsupseteq[\overline{p_\mathsf{S}}]\nsubseteq\overline{q_\mathsf{S}}$ and hence $q\not\sqsupseteq_{\mathsf{S}_\sqsupseteq}p$.
\end{proof}

Thus if we wanted to make $\mathsf{S}$ faithful, we could proceed as follows.  First call $\sqsupseteq$ a \emph{weak refiner} if $\sqsupseteq$ is a refiner satisfying ${\sqsupseteq}={{\barwedge}\co{({\sqsupseteq}\circ{\barwedge})}}$.  Let $\mathbf{W}$ be the category of regular $\omega$-posets with weak refiners as morphisms under a composition operation $\overline{\circ}$ defined by
\[{\sqsupseteq\overline{\circ}\sqsupset}\ =\ {{\barwedge}\co{({\sqsupseteq\circ\sqsupset}\circ{\barwedge})}}.\]
Then $\mathsf{S}$ is a fully faithful functor from $\mathbf{W}$ to $\mathbf{C}$, as can be shown via \Cref{sqXsq}.  In this way we see that the category $\mathbf{C}$ is equivalent to the more combinatorial category $\mathbf{W}$.

\subsection{Strong Refiners}

On the other hand, what distinguishes $\sqsupset_\phi$ above is that it is a `strong refiner', like those considered in \cite{BaBiVi}.  Indeed, these strong refiners are more closely related to the natural topology on continuous functions that we are interested in.

First, for any relation ${\sqsupset}\subseteq\mathbb{Q}\times\mathbb{P}$ between $\omega$-posets $\mathbb{P}$ and $\mathbb{Q}$, let us define another relation ${\sqsupset^*}\subseteq\mathbb{Q}\times\mathbb{P}$ by
\[q\sqsupset^*p\qquad\Leftrightarrow\qquad\exists C\in\mathsf{C}\mathbb{P}\ (Cp\sqsubset q)\]
(where again $Cp=C\cap p^\wedge$).  We define \emph{star-composition} with any other ${\sqni}\subseteq\mathbb{R}\times\mathbb{Q}$ by
\[{\sqni*\sqsupset}=(\sqni\circ\sqsupset)^*.\]

\begin{proposition}\label{StarVar}
    If $\mathbb{P}$ and $\mathbb{Q}$ are regular $\omega$-posets then, for any $\barwedge$-preserving refiner ${\sqsupseteq}\subseteq\mathbb{Q}\times\mathbb{P}$,
    \[{\vartriangleright*\sqsupseteq}={\sqsupset_{\mathsf{S}_\sqsupseteq}}.\]
\end{proposition}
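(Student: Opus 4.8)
The plan is to reduce both relations in the claimed identity to a single condition about the set
\[P=\{c\in\mathbb{P}:q\mathrel{(\vartriangleright\circ\sqsupseteq)}c\}=\{c\in\mathbb{P}:\exists r\in\mathbb{Q}\ (c\sqsubseteq r\vartriangleleft q)\},\]
so that $c\in P$ exactly when $q\in c^{\sqsubseteq\vartriangleleft}$. Unwinding the star operation, $q\mathrel{(\vartriangleright*\sqsupseteq)}p$ means there is a cap $C\in\mathsf{C}\mathbb{P}$ with $C\cap p^\wedge\subseteq P$. On the other side, abbreviating $\phi=\mathsf{S}_\sqsupseteq$ so that $\phi(S)=S^{\sqsubseteq\vartriangleleft}$, the definition of $\sqsupset_\phi$ gives $q\sqsupset_{\mathsf{S}_\sqsupseteq}p\Leftrightarrow\phi[\overline{p_\mathsf{S}}]\subseteq q_\mathsf{S}$, i.e.\ $q\in S^{\sqsubseteq\vartriangleleft}$ for all $S\in\overline{p_\mathsf{S}}$; since $q\in S^{\sqsubseteq\vartriangleleft}$ is equivalent to $S\cap P\neq\emptyset$, this says precisely $\overline{p_\mathsf{S}}\subseteq P_\mathsf{S}$. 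Thus the proposition reduces, for regular $\mathbb{P}$, to proving
\[\exists C\in\mathsf{C}\mathbb{P}\ (C\cap p^\wedge\subseteq P)\qquad\Longleftrightarrow\qquad\overline{p_\mathsf{S}}\subseteq P_\mathsf{S}.\]

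For the forward implication (the inclusion $\vartriangleright*\sqsupseteq\subseteq\sqsupset_{\mathsf{S}_\sqsupseteq}$) I would take such a cap $C$ and any $S\in\overline{p_\mathsf{S}}$. Then $S\subseteq p^\wedge$ and $S$ is a minimal selector, so $S$ meets $C$; any $c\in S\cap C$ lies in $C\cap p^\wedge\subseteq P$, whence $c\in S\cap P$ and $S\in P_\mathsf{S}$. As $S$ was arbitrary, $\overline{p_\mathsf{S}}\subseteq P_\mathsf{S}$.

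The reverse implication is the crux. Assuming $\overline{p_\mathsf{S}}\subseteq P_\mathsf{S}=\bigcup_{c\in P}c_\mathsf{S}$ and using that $\overline{p_\mathsf{S}}$ is closed, hence compact, in $\mathsf{S}\mathbb{P}$, I would first extract a finite $F\subseteq P$ with $\overline{p_\mathsf{S}}\subseteq F_\mathsf{S}$. The obstruction is that $F$ covers only $\overline{p_\mathsf{S}}$, so it need not be a cap, and $P$ is not downward closed, so one cannot simply pass to a level. I would get around this by a complementation trick: each $S\in\mathsf{S}\mathbb{P}\setminus\overline{p_\mathsf{S}}$ contains some $s\notin p^\wedge$, so $\{c_\mathsf{S}:c\in P\}\cup\{s_\mathsf{S}:s\in\mathbb{P}\setminus p^\wedge\}$ is an open cover of all of $\mathsf{S}\mathbb{P}$. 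Extracting a finite subcover produces finite $F\subseteq P$ and $G\subseteq\mathbb{P}\setminus p^\wedge$ with $(F\cup G)_\mathsf{S}=\mathsf{S}\mathbb{P}$. Putting $C=F\cup G$, the inclusion $\mathsf{S}\mathbb{P}\subseteq C_\mathsf{S}$ forces $C\in\mathsf{C}\mathbb{P}$ by the selector--cap duality \cite[Proposition 2.8]{BaBiVi} already invoked in the proof of \Cref{DenseCovers}. Since $G\cap p^\wedge=\emptyset$, we get $C\cap p^\wedge=F\cap p^\wedge\subseteq F\subseteq P$, so $C$ is the desired cap.

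The main obstacle is exactly this reverse step: converting the compactness-extracted finite family covering $\overline{p_\mathsf{S}}$ into a genuine element of $\mathsf{C}\mathbb{P}$ while keeping its $p^\wedge$-part inside $P$. The complementation trick, combined with the selector--cap duality, is what accomplishes both at once; the forward direction and all the bookkeeping that rephrases each side in terms of $P$ are then routine.
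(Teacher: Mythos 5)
Your proof is correct, but the crux runs in the opposite direction to the paper's. The reduction to the set $P=q^{\vartriangleright\sqsupseteq}$ and your forward inclusion ${\vartriangleright*\sqsupseteq}\subseteq{\sqsupset_{\mathsf{S}_\sqsupseteq}}$ coincide with the paper's argument. For the reverse inclusion, however, the paper argues contrapositively and purely order-theoretically: if no cap $C$ satisfies $C\cap p^\wedge\subseteq P$, then $p^\wedge\setminus P$ meets every cap, i.e.\ it is a selector, so by \cite[Proposition 2.2]{BaBiVi} it contains a minimal selector $S\in\overline{p_\mathsf{S}}$ with $q\notin S^{\sqsubseteq\vartriangleleft}=\mathsf{S}_\sqsupseteq(S)$, witnessing $q\not\sqsupset_{\mathsf{S}_\sqsupseteq}p$. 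You instead argue directly: from $\overline{p_\mathsf{S}}\subseteq P_\mathsf{S}$ you manufacture the cap itself, via compactness of $\mathsf{S}\mathbb{P}$, the complementation trick with $\mathbb{P}\setminus p^\wedge$, and the duality that a finite set whose basic open sets cover $\mathsf{S}\mathbb{P}$ lies in $\mathsf{C}\mathbb{P}$ --- the same appeal to \cite[Proposition 2.8]{BaBiVi} that the paper makes in the proof of \Cref{DenseCovers}, so your invocation is legitimate. The two constructions are dual: the paper extracts a minimal selector avoiding $P$ inside $p^\wedge$ to witness failure, while you extract a cap whose $p^\wedge$-part lies inside $P$ to witness success. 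The paper's route is more economical, needing only the fact that every selector contains a minimal one and no topology at all; your route is heavier, invoking compactness of the spectrum plus Proposition 2.8 (which is itself proved by essentially the same selector-extraction argument the paper uses directly), but it exhibits the statement cleanly as an instance of compactness-plus-cap--selector duality, and the complementation device for keeping the $p^\wedge$-part of the finite subcover inside $P$ is a genuinely nice touch not present in the paper.
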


\begin{proof}
    Say $q\vartriangleright*\sqsupseteq p$, so we have $C\in\mathsf{C}\mathbb{P}$ with $Cp\sqsubseteq\circ\vartriangleleft q$.  For any $S\in\overline{p_\mathsf{S}}$, we have $c\in S\cap C\subseteq Cp$ and then $q\in c^{\sqsubseteq\vartriangleleft}\subseteq S^{\sqsubseteq\vartriangleleft}=\mathsf{S}_\sqsupseteq(S)$, showing that $q_\mathsf{S}\supseteq\mathsf{S}_\sqsupseteq[\overline{p_\mathsf{S}}]$, i.e.~$q\sqsupset_{\mathsf{S}_\sqsupseteq}p$.

    Conversely, if $q\vartriangleright*\sqsupseteq p$ does not hold then $Cp\nsubseteq q^{\vartriangleright\sqsupseteq}$, for every $C\in\mathsf{C}\mathbb{P}$.  This means $p^\wedge\setminus q^{\vartriangleright\sqsupseteq}$ is a selector and hence contains a minimal selector $S\in\overline{p_\mathsf{S}}$, as $S\subseteq p^\wedge$.  Also $q\notin S^{\sqsubseteq\vartriangleleft}=\mathsf{S}_\sqsupseteq(S)$ and hence $\mathsf{S}_\sqsupseteq(S)\in\mathsf{S}_\sqsupseteq[\overline{p_\mathsf{S}}]\setminus q_\mathsf{S}$, showing that $q_\mathsf{S}\nsupseteq\mathsf{S}_\sqsupseteq[\overline{p_\mathsf{S}}]$, i.e.~$q\not\sqsupset_{\mathsf{S}_\sqsupseteq}p$.
\end{proof}

Let us call ${\sqsupset}\subseteq\mathbb{Q}\times\mathbb{P}$ a \emph{strong refiner} if $\sqsupset$ is a $\wedge$-preserving refiner satisfying ${\sqsupset}={\vartriangleright*\sqsupset}$.

\begin{corollary}\label{UniqueStrong}
    For any regular $\omega$-posets $\mathbb{P}$ and $\mathbb{Q}$ and any continuous $\phi:\mathsf{S}\mathbb{P}\rightarrow\mathsf{S}\mathbb{Q}$, ${\sqsupset_\phi}$ is the unique strong refiner mapped to $\phi$ by $\mathsf{S}$.
\end{corollary}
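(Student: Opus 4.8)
The plan is to assemble the result directly from the two immediately preceding propositions, since all of the genuine work has already been carried out there. First I would record that $\sqsupset_\phi$ is a strong refiner mapped to $\phi$. By \Cref{SqsupseteqPhi}, regularity of $\mathbb{P}$ guarantees that $\sqsupset_\phi$ is a refiner, and regularity of $\mathbb{Q}$ then gives $\mathsf{S}_{\sqsupset_\phi}=\phi$; moreover $\sqsupset_\phi$ is $\wedge$-preserving, as was observed when these relations were first introduced. Thus the only point left to check is the defining fixed-point identity ${\sqsupset_\phi}={\vartriangleright*\sqsupset_\phi}$. For this I would invoke \Cref{StarVar} with $\sqsupseteq=\sqsupset_\phi$, which is legitimate because every $\wedge$-preserving refiner is automatically $\barwedge$-preserving. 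This yields ${\vartriangleright*\sqsupset_\phi}={\sqsupset_{\mathsf{S}_{\sqsupset_\phi}}}={\sqsupset_\phi}$, the final equality using $\mathsf{S}_{\sqsupset_\phi}=\phi$. Hence $\sqsupset_\phi$ is indeed a strong refiner with $\mathsf{S}_{\sqsupset_\phi}=\phi$.

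For uniqueness I would take an arbitrary strong refiner ${\sqsupset}\subseteq\mathbb{Q}\times\mathbb{P}$ satisfying $\mathsf{S}_\sqsupset=\phi$ and argue that it must coincide with $\sqsupset_\phi$. Since $\sqsupset$ is a strong refiner it is $\wedge$-preserving and satisfies ${\sqsupset}={\vartriangleright*\sqsupset}$. As $\wedge$-preserving implies $\barwedge$-preserving, \Cref{StarVar} applies once more and gives ${\vartriangleright*\sqsupset}={\sqsupset_{\mathsf{S}_\sqsupset}}={\sqsupset_\phi}$, the last step again using $\mathsf{S}_\sqsupset=\phi$. Combining this with the strong-refiner identity, ${\sqsupset}={\vartriangleright*\sqsupset}={\sqsupset_\phi}$, which is exactly what uniqueness asserts.

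The argument is essentially bookkeeping, so I do not expect a serious obstacle; the single point demanding care is confirming that the hypotheses line up. Specifically, \Cref{StarVar} is stated for $\barwedge$-preserving refiners whereas strong refiners are defined to be $\wedge$-preserving, so each appeal to \Cref{StarVar} tacitly rests on the earlier proposition that $\wedge$-preserving refiners are automatically $\barwedge$-preserving. Once that is noted, the identity ${\vartriangleright*\sqsupseteq}={\sqsupset_{\mathsf{S}_\sqsupseteq}}$ from \Cref{StarVar} does all the real work: it simultaneously certifies that $\sqsupset_\phi$ satisfies the strong-refiner condition and forces any strong refiner inducing $\phi$ to equal $\sqsupset_\phi$.
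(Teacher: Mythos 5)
Your proposal is correct and follows essentially the same route as the paper: both existence and uniqueness are obtained by combining \Cref{SqsupseteqPhi} with \Cref{StarVar}, using the fact that $\wedge$-preserving refiners are automatically $\barwedge$-preserving so that \Cref{StarVar} applies to $\sqsupset_\phi$ and to any strong refiner $\sqsupset$ with $\mathsf{S}_\sqsupset=\phi$. The only cosmetic difference is that the paper packages the uniqueness step as a general equivalence ${\vartriangleright*\sqsupset}={\vartriangleright*\sqsupseteq}\Leftrightarrow\mathsf{S}_\sqsupset=\mathsf{S}_\sqsupseteq$ before specializing, whereas you apply \Cref{StarVar} directly to the given strong refiner, which is the same argument in streamlined form.
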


\begin{proof}
    By \Cref{SqsupseteqPhi,StarVar}, ${\sqsupset_\phi*\vartriangleright}={\sqsupset_{\mathsf{S}_{\sqsupset_\phi}}}={\sqsupset_\phi}$ so $\sqsupset_\phi$ is indeed a strong refiner.  Also, for any $\barwedge$-preserving refiner ${\sqsupseteq}\subseteq\mathbb{Q}\times\mathbb{P}$, \Cref{SqsupseteqPhi,StarVar} yield
    \[\mathsf{S}_{\vartriangleright*\sqsupseteq}=\mathsf{S}_{\sqsupset_{\mathsf{S}_\sqsupseteq}}=\mathsf{S}_\sqsupseteq.\]
    Thus, for any $\barwedge$-preserving refiners ${\sqsupset},{\sqsupseteq}\subseteq\mathbb{Q}\times\mathbb{P}$,
    \[{\vartriangleright*\sqsupset}={\vartriangleright*\sqsupseteq}\qquad\Leftrightarrow\qquad\mathsf{S}_\sqsupset=\mathsf{S}_\sqsupseteq.\]
    Indeed, ${\vartriangleright*\sqsupset}={\vartriangleright*\sqsupseteq}$ implies $\mathsf{S}_\sqsupset=\mathsf{S}_{\vartriangleright*\sqsupset}=\mathsf{S}_{\vartriangleright*\sqsupseteq}=\mathsf{S}_\sqsupseteq$, while the converse is immediate from \Cref{StarVar}.  This means that any strong refiner $\sqsupset$ with $\mathsf{S}_\sqsupset=\phi=\mathsf{S}_{\sqsupset_\phi}$ must satisfy ${\sqsupset}={\vartriangleright*\sqsupset}={\vartriangleright*\sqsupset_\phi}={\sqsupset_\phi}$, showing that $\sqsupset_\phi$ is unique.
\end{proof}

Star-composition of strong refiners between regular $\omega$-posets is associative, by \cite[Proposition 3.24]{BaBiVi}.  Also $\vartriangleright$ is a strong refiner on any regular $\omega$-poset $\mathbb{P}$ which is an identity with respect to star-composition, as noted in \cite[\S3.3]{BaBiVi}.  We thus have a category $\mathbf{S}$ of regular $\omega$-posets with strong refiners as morphisms under star-composition.  Moreover, invoking \cite[Proposition 3.17]{BaBiVi} again, we see that $\mathsf{S}_{\sqsupset*\sqni}=\mathsf{S}_{\sqsupset\circ\sqni}=\mathsf{S}_\sqsupset\circ\mathsf{S}_{\sqni}$. 
 So $\mathsf{S}|_\mathbf{S}$ is still a functor from $\mathbf{S}$ to $\mathbf{C}$, a fully faithful functor now which thus witnesses the categorical equivalence of $\mathbf{S}$ and $\mathbf{C}$.  Also, as at the start of \cite[Theorem 3.25]{BaBiVi}, we see that ${\sqsupseteq}\mapsto{\vartriangleleft*\sqsupseteq}$ defines a quotient functor $\mathsf{Q}:\mathbf{R}\rightarrow\mathbf{S}$ such that $\mathsf{S}=\mathsf{S}|_\mathbf{S}\circ\mathsf{Q}$.

Strong refiners and continuous functions also carry natural topologies.  Specifically, for any regular $\omega$-posets $\mathbb{P}$ and $\mathbb{Q}$, we consider
\[\mathbf{S}_\mathbb{P}^\mathbb{Q}=\{{\sqsupset}\subseteq\mathbb{Q}\times\mathbb{P}:{\sqsupset}\text{ is a strong refiner}\}\]
again as a subspace of the power space $\mathsf{P}(\mathbb{Q}\times\mathbb{P})$, i.e.~with the subbasis $(\langle q,p\rangle^\mathbf{S})_{p\in\mathbb{P}}^{q\in\mathbb{Q}}$ where
\[{\langle q,p\rangle^\mathbf{S}}=\{{\sqsupset}\in\mathbf{S}_\mathbb{P}^\mathbb{Q}:q\sqsupset p\}.\]
Each finite ${\sqni}\subseteq\mathbb{Q}\times\mathbb{P}$ determines an open set in $\mathbf{S}_\mathbb{P}^\mathbb{Q}$ defined by
\begin{equation}\label{sqniS}
    {\sqni^\mathbf{S}}=\bigcap_{q\sqni p}\langle q,p\rangle^\mathbf{S}=\{{\sqsupset}\in\mathbf{S}_\mathbb{P}^\mathbb{Q}:{\sqni}\subseteq{\sqsupset}\}
\end{equation}
and these then form a basis for the topology on $\mathbf{S}_\mathbb{P}^\mathbb{Q}$.

On the other hand, for any topological spaces $X$ and $Y$, we consider
\[\mathbf{C}_X^Y=\{\phi\in Y^X:\phi\text{ is continuous}\}\]
as a subspace of the Vietoris hyperspace of closed subsets of $Y\times X$.  Then every relation ${\leftarrow}\subseteq\mathbb{Q}\times\mathbb{P}$ determines an open set in $\mathsf{S}\mathbb{Q}\times\mathsf{S}\mathbb{P}$ given by
\[{\leftarrow_\mathsf{S}}=\bigcup_{q\leftarrow p}q_\mathsf{S}\times p_\mathsf{S},\]
which turn defines an open set in $\mathbf{C}_{\mathsf{S}\mathbb{P}}^{\mathsf{S}\mathbb{Q}}$ given by
\begin{equation}\label{leftarrowC}
    {\leftarrow_\mathbf{C}}=\{\phi\in\mathbf{C}_{\mathsf{S}\mathbb{P}}^{\mathsf{S}\mathbb{Q}}:\phi\subseteq{\leftarrow_\mathsf{S}}\}=\{\phi\in\mathbf{C}_{\mathsf{S}\mathbb{P}}^{\mathsf{S}\mathbb{Q}}:\forall S\in\mathsf{S}\mathbb{P}\ \exists p,q\ (\phi(S)\ni q\leftarrow p\in S)\},
\end{equation}
and these again form a basis for the topology on $\mathbf{C}_{\mathsf{S}\mathbb{P}}^{\mathsf{S}\mathbb{Q}}$.

\begin{proposition}
    The map ${\sqsupset}\mapsto\mathsf{S}_\sqsupset$ is a homeomorphism from $\mathbf{S}_\mathbb{P}^\mathbb{Q}$ onto $\mathbf{C}_{\mathsf{S}\mathbb{P}}^{\mathsf{S}\mathbb{Q}}$.
\end{proposition}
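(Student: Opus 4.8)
The plan is to exhibit the map explicitly as a bijection, with inverse $\phi\mapsto{\sqsupset_\phi}$, and then check continuity of each direction against the two given bases $(\sqni^\mathbf{S})$ and $(\leftarrow_\mathbf{C})$. Write $\Phi({\sqsupset})=\mathsf{S}_\sqsupset$ and $\Psi(\phi)={\sqsupset_\phi}$. Bijectivity is immediate from the theory already developed: by \Cref{UniqueStrong}, for each continuous $\phi$ the relation ${\sqsupset_\phi}$ is the \emph{unique} strong refiner with $\mathsf{S}_{\sqsupset_\phi}=\phi$, so in particular every strong refiner ${\sqsupset}$ equals ${\sqsupset_{\mathsf{S}_\sqsupset}}$. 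Hence $\Phi$ and $\Psi$ are mutually inverse bijections between $\mathbf{S}_\mathbb{P}^\mathbb{Q}$ and $\mathbf{C}_{\mathsf{S}\mathbb{P}}^{\mathsf{S}\mathbb{Q}}$, and it remains to see both are continuous.

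The main obstacle is continuity of $\Phi$, since enlarging ${\sqsupset}$ within the strong refiners does change the map, so one must show that a \emph{finite} part of ${\sqsupset}$ already pins $\mathsf{S}_{\sqsupset}$ into a given basic open $\leftarrow_\mathbf{C}$, uniformly over all strong refiners extending that finite part. Fix ${\sqsupset}$ with $\mathsf{S}_\sqsupset\in{\leftarrow_\mathbf{C}}$, i.e.~$\mathsf{S}_\sqsupset\subseteq{\leftarrow_\mathsf{S}}$. For each $S\in\mathsf{S}\mathbb{P}$ the containment yields $p_S\in S$ and $q_S\in\mathsf{S}_\sqsupset(S)=S^{\sqsubset\vartriangleleft}$ with $q_S\leftarrow p_S$; unwinding $S^{\sqsubset\vartriangleleft}$ produces $s_S\in S$ and $r_S$ with $r_S\sqsupset s_S$ and $r_S\vartriangleleft q_S$. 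The sets $p_{S\mathsf{S}}\cap s_{S\mathsf{S}}$ form an open cover of the compactum $\mathsf{S}\mathbb{P}$, so finitely many $S_1,\dots,S_k$ suffice; set ${\sqni}=\{\langle r_{S_i},s_{S_i}\rangle:i\le k\}\subseteq{\sqsupset}$, a finite subrelation. Now take any strong refiner ${\sqsupset'}\supseteq{\sqni}$ and any $T\in\mathsf{S}\mathbb{P}$, and pick $i$ with $T\in p_{S_i\mathsf{S}}\cap s_{S_i\mathsf{S}}$. Then $s_{S_i}\in T$ together with $r_{S_i}\sqsupset' s_{S_i}$ gives $r_{S_i}\in T^{\sqsubset'}$, and $r_{S_i}\vartriangleleft q_{S_i}$ gives $q_{S_i}\in T^{\sqsubset'\vartriangleleft}=\mathsf{S}_{\sqsupset'}(T)$, while $p_{S_i}\in T$ and $q_{S_i}\leftarrow p_{S_i}$; hence $\langle\mathsf{S}_{\sqsupset'}(T),T\rangle\in{\leftarrow_\mathsf{S}}$. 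As $T$ was arbitrary, $\mathsf{S}_{\sqsupset'}\in{\leftarrow_\mathbf{C}}$, so ${\sqsupset}\in{\sqni^\mathbf{S}}\subseteq\Phi^{-1}[\leftarrow_\mathbf{C}]$, proving $\Phi^{-1}[\leftarrow_\mathbf{C}]$ open.

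For continuity of $\Psi$ it suffices to show $\Psi^{-1}[\sqni^\mathbf{S}]=\{\phi:{\sqni}\subseteq{\sqsupset_\phi}\}$ is open for finite ${\sqni}$. This is a finite intersection over $\langle q,p\rangle\in{\sqni}$ of the sets $\{\phi:q\sqsupset_\phi p\}=\{\phi:\phi[\overline{p_\mathsf{S}}]\subseteq q_\mathsf{S}\}$, so it is enough to see each of these is open. Given $\phi$ with $\phi[\overline{p_\mathsf{S}}]\subseteq q_\mathsf{S}$, cover the compact graph of $\phi$ by basic boxes $q'_\mathsf{S}\times p'_\mathsf{S}\ni\langle\phi(S),S\rangle$ chosen so that $q'_\mathsf{S}\subseteq q_\mathsf{S}$ whenever $S\in\overline{p_\mathsf{S}}$ and $p'_\mathsf{S}\cap\overline{p_\mathsf{S}}=\emptyset$ whenever $S\notin\overline{p_\mathsf{S}}$, both possible since $(p_\mathsf{S})$ and $(q_\mathsf{S})$ are bases and $\overline{p_\mathsf{S}}$ is closed. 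A finite subcover determines a finite relation ${\leftarrow}$ with $\phi\in{\leftarrow_\mathbf{C}}$. For any $\psi\in{\leftarrow_\mathbf{C}}$ and $S\in\overline{p_\mathsf{S}}$, the point $\langle\psi(S),S\rangle$ lies in one of the chosen boxes, which cannot be of the second type (as $S\in p'_\mathsf{S}\cap\overline{p_\mathsf{S}}$); hence it is of the first type and $\psi(S)\in q'_\mathsf{S}\subseteq q_\mathsf{S}$. Thus ${\leftarrow_\mathbf{C}}\subseteq\{\psi:\psi[\overline{p_\mathsf{S}}]\subseteq q_\mathsf{S}\}$, proving the latter open. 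Combining the three steps, $\Phi$ is a continuous bijection with continuous inverse, hence a homeomorphism from $\mathbf{S}_\mathbb{P}^\mathbb{Q}$ onto $\mathbf{C}_{\mathsf{S}\mathbb{P}}^{\mathsf{S}\mathbb{Q}}$.
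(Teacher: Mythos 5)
Your proof is correct, but it follows a genuinely different route from the paper's. The paper disposes of the proposition in two lines: bijectivity comes from \Cref{StarVar} and \Cref{UniqueStrong} (as in your first step), and the topological content is then outsourced to classical hyperspace folklore --- the map is first identified as a homeomorphism onto $\mathbf{C}_{\mathsf{S}\mathbb{P}}^{\mathsf{S}\mathbb{Q}}$ with the \emph{compact-open} topology, and one then invokes the standard fact that, since $\mathsf{S}\mathbb{P}$ is compact, the compact-open topology coincides with the Vietoris topology (and with uniform convergence). You never mention the compact-open topology at all; instead you verify bicontinuity directly against the two declared bases, using compactness of $\mathsf{S}\mathbb{P}$ to extract a finite fragment ${\sqni}\subseteq{\sqsupset}$ that forces $\mathsf{S}_{\sqsupset'}\in{\leftarrow_\mathbf{C}}$ \emph{uniformly} over all strong refiners ${\sqsupset'}\supseteq{\sqni}$ (this uniformity is exactly the point, since $\mathsf{S}$ is not monotone-invariant on strong refiners), and then using compactness of the graph of $\phi$ together with closedness of $\overline{p_\mathsf{S}}$ to show each set $\{\psi:\psi[\overline{p_\mathsf{S}}]\subseteq q_\mathsf{S}\}$ is open. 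Both of your compactness arguments are sound under the section's standing assumption that $\mathbb{P}$ and $\mathbb{Q}$ are regular (so $\mathsf{S}\mathbb{P}$, $\mathsf{S}\mathbb{Q}$ are metrisable compacta, minimal selectors are filters, and $(q'_\mathsf{S})$ is a basis). What each approach buys: the paper's proof is shorter and situates the result in a familiar general-topology context, but it leaves the translation between the combinatorial basis $({\sqni^\mathbf{S}})$ and compact-open subbasic sets implicit; your proof is self-contained, makes the finite-fragment mechanism explicit, and effectively also certifies the paper's unproved assertion that the sets ${\leftarrow_\mathbf{C}}$ form a basis adapted to this correspondence.
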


\begin{proof}
    By \Cref{StarVar} and \Cref{UniqueStrong}, ${\sqsupset}\mapsto\mathsf{S}_\sqsupset$ is a homeomorphism from $\mathbf{R}_\mathbb{P}^\mathbb{Q}$ onto $\mathbf{C}_{\mathsf{S}\mathbb{P}}^{\mathsf{S}\mathbb{Q}}$ with the compact-open topology.  But as $\mathsf{S}\mathbb{P}$ is compact, this coincides with the Vietoris topology on $\mathbf{C}_{\mathsf{S}\mathbb{P}}^{\mathsf{S}\mathbb{Q}}$ (which also coincides with the topology of uniform convergence with respect to any metric compatible with the topology of $\mathsf{S}\mathbb{Q}$).
\end{proof}

Before moving on, let us make a couple more simple observations about $\leftarrow_\mathsf{S}$.

\begin{proposition}\label{SubfactorOpen}
    For any continuous $\phi:\mathsf{S}\mathbb{P}\rightarrow\mathsf{S}\mathbb{Q}$, ${\leftarrow}\subseteq{\mathbb{Q}\times\mathbb{P}}$ and $C\in\mathsf{C}\mathbb{P}$,
    \[{=_C}\subseteq{{\sqsubset_\phi}\circ{\leftarrow}\circ{\geq}}\qquad\Rightarrow\qquad\phi\subseteq{\leftarrow_\mathsf{S}}.\]
\end{proposition}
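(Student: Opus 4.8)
The plan is to unwind both the hypothesis and the conclusion into pointwise statements about selectors, and then feed the witnesses supplied by the hypothesis directly into the defining condition for $\leftarrow_\mathsf{S}$. Recall from \eqref{leftarrowC} that $\phi\subseteq{\leftarrow_\mathsf{S}}$ amounts to saying that, for every $S\in\mathsf{S}\mathbb{P}$, there exist $p,q$ with $\phi(S)\ni q\leftarrow p\in S$. So it suffices to produce, for each selector $S$, one such pair $\langle q,p\rangle$.

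First I would fix $S\in\mathsf{S}\mathbb{P}$ and, since $C\in\mathsf{C}\mathbb{P}$ is a cap, choose $c\in S\cap C$, which exists because $S$ is a selector. Applying the hypothesis ${=_C}\subseteq{{\sqsubset_\phi}\circ{\leftarrow}\circ{\geq}}$ to the diagonal pair $\langle c,c\rangle$ and parsing the triple composition from right to left, I obtain $q\in\mathbb{Q}$ and $p\in\mathbb{P}$ with $c\sqsubset_\phi q$, $q\leftarrow p$ and $p\geq c$. These will be the required witnesses, provided they sit in the right places.

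For the codomain side, $c\sqsubset_\phi q$ unfolds to $q\sqsupset_\phi c$, that is $q_\mathsf{S}\supseteq\phi[\overline{c_\mathsf{S}}]$. Since $S$ is a filter containing $c$, it is downward directed, so $S\subseteq c^\wedge$ and hence $S\in\overline{c_\mathsf{S}}$; applying $\phi$ then gives $\phi(S)\in\phi[\overline{c_\mathsf{S}}]\subseteq q_\mathsf{S}$, i.e.\ $q\in\phi(S)$. For the domain side, $S$ is also upward closed, so $c\in S$ together with $p\geq c$ yields $p\in S$. Thus $\phi(S)\ni q\leftarrow p\in S$, and as $S$ was arbitrary this establishes $\phi\subseteq{\leftarrow_\mathsf{S}}$.

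The one place demanding care — and the step I would watch most closely — is the bookkeeping when unfolding ${\sqsubset_\phi}\circ{\leftarrow}\circ{\geq}$: one must correctly identify which existential witness is the $\mathbb{Q}$-element feeding $\sqsubset_\phi$ and which is the $\mathbb{P}$-element related to $c$ through $\geq$, and check that the composite indeed lands in $\mathbb{P}\times\mathbb{P}$ so that comparison with $=_C$ is meaningful. Beyond this, the argument only uses the defining features of selectors in an $\omega$-poset (they meet every cap and form upward-closed, downward-directed filters) together with the inequality $q_\mathsf{S}\supseteq\phi[\overline{c_\mathsf{S}}]$ built into $\sqsubset_\phi$; in particular, no regularity is required.
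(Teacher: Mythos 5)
Your proof is correct and follows essentially the same route as the paper's: fix $S\in\mathsf{S}\mathbb{P}$, pick $c\in C\cap S$, unwind the hypothesis at $\langle c,c\rangle$ to get witnesses $q\sqsupset_\phi c$ and $p\geq c$ with $q\leftarrow p$, then use the filter properties of $S$ to place $p\in S$ and $\phi(S)\in q_\mathsf{S}$. The only cosmetic difference is that you pass through $\overline{c_\mathsf{S}}$ via $S\subseteq c^\wedge$ where the paper uses $\phi[c_\mathsf{S}]\subseteq q_\mathsf{S}$ directly; both rest on the same facts.
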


\begin{proof}
    For any $S\in\mathsf{S}\mathbb{P}$, we have $c\in C\cap S$ and then ${=_C}\subseteq{{\sqsubset_\phi}\circ{\leftarrow}\circ{\geq}}$ implies we have $q\sqsupset_\phi c$ and $p\geq c$ with $q\leftarrow p$.  But then $S\in c_\mathsf{S}\subseteq p_\mathsf{S}$ and $\phi(S)\in\phi[c_\mathsf{S}]\subseteq q_\mathsf{S}$ and hence $(\phi(S),S)\in q_\mathsf{S}\times p_\mathsf{S}\subseteq{\leftarrow_\mathsf{S}}$.  As $S$ was arbitrary, this shows that $\phi\subseteq{\leftarrow_\mathsf{S}}$.
\end{proof}

\begin{proposition}\label{ConjugateArrow}
    For any homeomorphism $\phi:\mathsf{S}\mathbb{P}\rightarrow\mathsf{S}\mathbb{Q}$ and ${\leftarrow}\subseteq{\mathbb{Q}\times\mathbb{Q}}$,
    \[{\twoheadleftarrow}\subseteq{{\sqsubset_\phi}\circ{\leftarrow}\circ{\sqsupset_\phi}}\qquad\Rightarrow\qquad\phi\circ\overline{\twoheadleftarrow_\mathsf{S}}\circ\phi^{-1}\subseteq{\leftarrow_\mathsf{S}}.\]
\end{proposition}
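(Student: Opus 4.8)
The plan is to unwind both sides of the asserted inclusion into elementary membership statements about selectors and then to feed the hypothesis through the defining property of $\sqsupset_\phi$. Since $\phi$ is a bijection, a pair $(T,T')\in\mathsf{S}\mathbb{Q}\times\mathsf{S}\mathbb{Q}$ lies in the conjugated relation $\phi\circ\overline{\twoheadleftarrow_\mathsf{S}}\circ\phi^{-1}$ exactly when $(\phi^{-1}(T),\phi^{-1}(T'))\in\overline{\twoheadleftarrow_\mathsf{S}}$. Writing $S=\phi^{-1}(T)$ and $S'=\phi^{-1}(T')$, I would first observe that membership $(S,S')\in\overline{\twoheadleftarrow_\mathsf{S}}=\bigcup_{p\twoheadleftarrow p'}\overline{p_\mathsf{S}}\times\overline{p'_\mathsf{S}}$ supplies witnesses $p,p'\in\mathbb{P}$ with $p\twoheadleftarrow p'$, $S\in\overline{p_\mathsf{S}}$ and $S'\in\overline{p'_\mathsf{S}}$.

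Next I would invoke the hypothesis ${\twoheadleftarrow}\subseteq{\sqsubset_\phi\circ\leftarrow\circ\sqsupset_\phi}$ at the pair $(p,p')$. Unwinding the composition, while keeping careful track of domains and codomains since $\sqsupset_\phi\subseteq\mathbb{Q}\times\mathbb{P}$, ${\leftarrow}\subseteq\mathbb{Q}\times\mathbb{Q}$ and $\sqsubset_\phi\subseteq\mathbb{P}\times\mathbb{Q}$, the relation $p\twoheadleftarrow p'$ yields $q,q'\in\mathbb{Q}$ with $q\sqsupset_\phi p$, $q\leftarrow q'$ and $q'\sqsupset_\phi p'$.

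The crux is then the definition $q\sqsupset_\phi p\Leftrightarrow q_\mathsf{S}\supseteq\phi[\overline{p_\mathsf{S}}]$, which is precisely tailored to the closed factors appearing in $\overline{\twoheadleftarrow_\mathsf{S}}$. From $S\in\overline{p_\mathsf{S}}$ together with $q\sqsupset_\phi p$ I obtain $T=\phi(S)\in q_\mathsf{S}$, that is $q\in T$; symmetrically $S'\in\overline{p'_\mathsf{S}}$ and $q'\sqsupset_\phi p'$ give $q'\in T'$. Combining these with $q\leftarrow q'$ yields $(T,T')\in q_\mathsf{S}\times q'_\mathsf{S}\subseteq{\leftarrow_\mathsf{S}}$, which is exactly the required conclusion.

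I expect no serious obstacle here: once the notation is decoded the argument is a direct diagram chase. The only genuine care needed is the bookkeeping of the composition directions, and the recognition that it is precisely the closure-based definition of $\sqsupset_\phi$, rather than $\sqsupseteq_\phi$, that matches the closed sets $\overline{p_\mathsf{S}}$ and $\overline{p'_\mathsf{S}}$ of $\overline{\twoheadleftarrow_\mathsf{S}}$; replacing them by the open subbasic sets $p_\mathsf{S}$ would break the chain, which is the conceptual reason the statement is phrased with the closed sets.
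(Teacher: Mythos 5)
Your proof is correct and follows essentially the same diagram chase as the paper's: pull the pair back through $\phi$, extract a witnessing pair $p\twoheadleftarrow p'$ from the definition of $\overline{\twoheadleftarrow_\mathsf{S}}$, apply the hypothesis to obtain $q\leftarrow q'$ with $q\sqsupset_\phi p$ and $q'\sqsupset_\phi p'$, and conclude via the defining property $q_\mathsf{S}\supseteq\phi[\overline{p_\mathsf{S}}]$. One small caveat on your closing aside: replacing $\overline{p_\mathsf{S}}$ by $p_\mathsf{S}$ would not actually break the chain, since $p_\mathsf{S}\subseteq\overline{p_\mathsf{S}}$ makes the same argument go through verbatim; the closed sets are used because they give the stronger (larger) left-hand side, not because the open version would fail.
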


\begin{proof}
    Say ${\twoheadleftarrow}\subseteq{{\sqsubset_\phi}\circ{\leftarrow}\circ{\sqsupset_\phi}}$ and  $S'\mathrel{\phi\circ\overline{\twoheadleftarrow_\mathsf{S}}\circ\phi^{-1}}S$ so $\phi^{-1}(S')\mathrel{\overline{\twoheadleftarrow_\mathsf{S}}}\phi^{-1}(S)$ and hence we have $p\in\phi^{-1}(S)_\wedge$ and $p'\in\phi^{-1}(S')_\wedge$ with $p'\twoheadleftarrow p$.  Then we have $q\sqsupset_\phi p$ and $q'\sqsupset_\phi p'$ such that $q'\leftarrow q$.  But this means $S\in\phi[\overline{p_\mathsf{S}}]\subseteq q_\mathsf{S}$ and $S'\in\phi[\overline{p'_\mathsf{S}}]\subseteq q'_\mathsf{S}$ and hence $S'\leftarrow_\mathsf{S}S$.
\end{proof}

\subsection{Arrows}\label{Arrows}

Analogously to $\leftarrow_\mathsf{S}$, for any $\omega$-posets $\mathbb{P}$ and $\mathbb{Q}$ and any ${\leftarrow}\subseteq\mathbb{Q}\times\mathbb{P}$, let
\[\overline{\leftarrow_\mathsf{S}}=\bigcup_{q\leftarrow p}\overline{q_\mathsf{S}}\times\overline{p_\mathsf{S}}.\]
If ${\leftarrow}\subseteq\mathbb{Q}\times\mathbb{P}$ is finite then, by \Cref{DenseCovers}, $\overline{(\mathbb{Q}^\leftarrow)_\mathsf{S}}=\mathsf{S}\mathbb{P}$ implies $\mathbb{Q}^{\leftarrow\barwedge}=\mathbb{P}$, i.e.
\[\overline{\leftarrow_\mathsf{S}}\text{ is co-surjective}\qquad\Rightarrow\qquad{{\leftarrow}\circ{\barwedge}}\text{ is co-surjective},\]
and conversely when $\mathbb{P}$ is regular.  Accordingly, we call ${\leftarrow}\subseteq\mathbb{Q}\times\mathbb{P}$ an \emph{arrow} if $\leftarrow$ is finite and ${{\leftarrow}\circ{\barwedge}}$ is co-surjective.  We view these arrows as approximations to continuous functions, as we will elaborate on in the next section when we consider arrow-sequences.

In this section we consider general properties of individual arrows and more general relations.  For example, if $\leftarrow$ approximates a continuous function then we can also approximate the corresponding strong refiner by the relation ${\langle\!\leftarrow}\subseteq\mathbb{Q}\times\mathbb{P}$ defined by
\[{\langle\!\leftarrow}={{\vartriangleright}\co{({\leftarrow}\circ{\barwedge})}}.\]
In other words, for all $p\in\mathbb{P}$ and $q\in\mathbb{Q}$,
\[q\mathrel{\langle\!\leftarrow}p\qquad\Leftrightarrow\qquad p^{\barwedge\rightarrow}\vartriangleleft q.\]

\begin{proposition}
    For any $\omega$-posets $\mathbb{P}$ and $\mathbb{Q}$, continuous $\phi:\mathsf{S}\mathbb{P}\rightarrow\mathsf{S}\mathbb{Q}$ and ${\leftarrow}\subseteq\mathbb{Q}\times\mathbb{P}$,
    \begin{equation}\label{ArrowModification}
        \phi\ \subseteq\ \overline{\leftarrow_\mathsf{S}}\qquad\Rightarrow\qquad{\langle\!\leftarrow}\ \subseteq\ {\sqsupset_\phi}.
    \end{equation}
\end{proposition}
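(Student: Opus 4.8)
The plan is to unwind both inclusions into statements about individual points of the spectrum and then bridge them using \Cref{barwedgeX} and \Cref{VarChar}. Recall that $q\sqsupset_\phi p$ means $\phi[\overline{p_\mathsf{S}}]\subseteq q_\mathsf{S}$, so to establish ${\langle\!\leftarrow}\subseteq{\sqsupset_\phi}$ it suffices to fix $q,p$ with $q\mathrel{\langle\!\leftarrow}p$ and a point $T\in\overline{p_\mathsf{S}}$, and then show $q\in\phi(T)$. Throughout I would use the characterisation $\overline{p_\mathsf{S}}=\{S\in\mathsf{S}\mathbb{P}:S\subseteq p^\wedge\}$ freely, together with the reading of $q\mathrel{\langle\!\leftarrow}p$ that comes from the demonic composition ${\vartriangleright}\co({\leftarrow}\circ{\barwedge})$, namely that every $r\in p^{\barwedge\rightarrow}$ satisfies $r\vartriangleleft q$.

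First I would feed $T$ into the hypothesis $\phi\subseteq\overline{\leftarrow_\mathsf{S}}$. Since the pair $(\phi(T),T)$ lies in $\overline{\leftarrow_\mathsf{S}}=\bigcup_{q'\leftarrow p'}\overline{q'_\mathsf{S}}\times\overline{p'_\mathsf{S}}$, this produces witnesses $q'\leftarrow p'$ with $\phi(T)\in\overline{q'_\mathsf{S}}$ and $T\in\overline{p'_\mathsf{S}}$. The crucial step is then to certify that $q'$ actually belongs to $p^{\barwedge\rightarrow}$. For this I note that $T$ now lies in $\overline{p_\mathsf{S}}\cap\overline{p'_\mathsf{S}}$, so \Cref{barwedgeX} yields $p\mathrel{\barwedge}p'$; combined with $q'\leftarrow p'$ this says precisely that $q'\in p^{\barwedge\rightarrow}$.

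It then remains to cash in the hypothesis $q\mathrel{\langle\!\leftarrow}p$. Applied to the element $q'\in p^{\barwedge\rightarrow}$ just produced, it gives $q'\vartriangleleft q$, whence \Cref{VarChar} converts this into $\overline{q'_\mathsf{S}}\subseteq q_\mathsf{S}$. As $\phi(T)\in\overline{q'_\mathsf{S}}$, we obtain $\phi(T)\in q_\mathsf{S}$, i.e.~$q\in\phi(T)$. Since $T\in\overline{p_\mathsf{S}}$ was arbitrary, this shows $\phi[\overline{p_\mathsf{S}}]\subseteq q_\mathsf{S}$, that is $q\sqsupset_\phi p$, as desired.

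I expect the one genuine subtlety to be the middle step: the hypothesis $\phi\subseteq\overline{\leftarrow_\mathsf{S}}$ only hands us some arrow pair $q'\leftarrow p'$ adapted to $T$, with no a priori link between $p'$ and the given $p$, so the argument would stall unless one can force $p\mathrel{\barwedge}p'$. Observing that the single point $T$ sits in both $\overline{p_\mathsf{S}}$ and $\overline{p'_\mathsf{S}}$ is exactly what \Cref{barwedgeX} needs, and this is what lets $q'$ enter the set $p^{\barwedge\rightarrow}$ governed by the demonic composition. Note that this works without any regularity assumption on $\mathbb{P}$ or $\mathbb{Q}$, since the sets $\overline{p_\mathsf{S}}$ and both \Cref{barwedgeX,VarChar} are available for arbitrary $\omega$-posets, so no appeal to roundness is required.
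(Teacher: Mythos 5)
Your proof is correct and follows essentially the same route as the paper's: both extract a witnessing pair $q'\leftarrow p'$ from $\phi\subseteq\overline{\leftarrow_\mathsf{S}}$, use the fact that a common point of $\overline{p_\mathsf{S}}$ and $\overline{p'_\mathsf{S}}$ forces $p\mathrel{\barwedge}p'$ (\Cref{barwedgeX}), and then convert $q'\vartriangleleft q$ into $\overline{q'_\mathsf{S}}\subseteq q_\mathsf{S}$ via \Cref{VarChar}. The paper merely phrases this with unions and intersections over all witnesses rather than fixing a single point $T$, so the two arguments are the same in substance.
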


\begin{proof}
    Note $S\in\overline{p_\mathsf{S}}\cap\overline{r_\mathsf{S}}$ implies $p\barwedge r$.  So if $\phi\subseteq\overline{\leftarrow_\mathsf{S}}$ and $S\in\overline{p_\mathsf{S}}$ then
    \[\phi(S)
    \in\bigcup\{\overline{q_\mathsf{S}}:\exists r\rightarrow q\ (S\in\overline{r_\mathsf{S}})\}\subseteq\bigcup\{\overline{q_\mathsf{S}}:q\leftarrow\circ\mathrel{\barwedge}p\}.\]
    This means $\phi[\overline{p_\mathsf{S}}]\subseteq\bigcup_{q\in p^{\barwedge\rightarrow}}\overline{q_\mathsf{S}}\subseteq\bigcap_{r\mathrel{\langle\!\leftarrow}\,p}r_\mathsf{S}$, showing that ${\langle\!\leftarrow}\subseteq{\sqsupset_\phi}$.
\end{proof}

Arrows always give rise to $\wedge$-preserving relations in this way.

\begin{proposition}\label{DoubleArrowWedgePreserving}
    For any $\omega$-posets $\mathbb{P}$ and $\mathbb{Q}$ and any arrow ${\leftarrow}\subseteq\mathbb{Q}\times\mathbb{P}$,
    \[{{\langle\!\leftarrow}\circ{\wedge}\circ{\rightarrow\!\rangle}}\ \ \subseteq\ \ {\wedge}.\]
\end{proposition}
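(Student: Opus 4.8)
The plan is to unpack both composed relations into pointwise statements and then, from the genuine common lower bound supplied by the middle $\wedge$, manufacture a single element of $\mathbb{Q}$ witnessing the star-below conditions encoded in both copies of $\langle\!\leftarrow$. So I would fix $q,q'\in\mathbb{Q}$ with $q\mathrel{({\langle\!\leftarrow}\circ{\wedge}\circ{\rightarrow\!\rangle})}q'$, giving $p,p'\in\mathbb{P}$ with $q\mathrel{\langle\!\leftarrow}p$, $p\wedge p'$ and $q'\mathrel{\langle\!\leftarrow}p'$. Using the reformulation $q\mathrel{\langle\!\leftarrow}p\Leftrightarrow p^{\barwedge\rightarrow}\vartriangleleft q$ (and likewise for $q',p'$), the task reduces to exhibiting a single $\tilde q\in p^{\barwedge\rightarrow}\cap p'^{\barwedge\rightarrow}$, since then $\tilde q\vartriangleleft q$ and $\tilde q\vartriangleleft q'$, from which I must conclude $q\wedge q'$.

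To build the witness I would take a common lower bound $s$ of $p$ and $p'$ from $p\wedge p'$ (so $s\leq p$ and $s\leq p'$) and apply co-surjectivity of ${\leftarrow}\circ{\barwedge}$, which is precisely the arrow hypothesis, to $s$: this yields $\tilde q\in\mathbb{Q}$ and $t\in\mathbb{P}$ with $\tilde q\leftarrow t\mathrel{\barwedge}s$. The key auxiliary fact is upward monotonicity of adjacency: if $t\mathrel{\barwedge}s$ and $s\leq p$ then $t\mathrel{\barwedge}p$. Indeed, for any $C\in\mathsf{C}\mathbb{P}$, adjacency gives $c\in C$ with $t\wedge c\wedge s$, and since $c\wedge s$ and $s\leq p$ a common lower bound of $c,s$ sits below $p$ by transitivity of $\leq$, so $c\wedge p$ and hence $t\wedge c\wedge p$; as $C$ was arbitrary, $t\mathrel{\barwedge}p$. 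Applying this with $s\leq p$ and with $s\leq p'$ gives $t\mathrel{\barwedge}p$ and $t\mathrel{\barwedge}p'$, so $\tilde q\leftarrow t\mathrel{\barwedge}p$ and $\tilde q\leftarrow t\mathrel{\barwedge}p'$, placing $\tilde q$ in both $p^{\barwedge\rightarrow}$ and $p'^{\barwedge\rightarrow}$, whence $\tilde q\vartriangleleft q$ and $\tilde q\vartriangleleft q'$.

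For the final deduction I would argue combinatorially. Pick a cap $C$ witnessing $\tilde q\vartriangleleft_C q'$, so $C\cap\tilde q^\wedge\subseteq q'^\geq$; reflexivity of $\barwedge$ (so $\tilde q\mathrel{\barwedge}\tilde q$) gives some $c\in C\cap\tilde q^\wedge$, and hence $c\leq q'$ and $c\wedge\tilde q$. Since $c\wedge\tilde q$ yields $c\mathrel{\barwedge}\tilde q$ and $\tilde q\vartriangleleft q$, we get $c\mathrel{({\barwedge}\circ{\vartriangleleft})}q$, so $c\wedge q$ by \eqref{BarwedgeVar}; a common lower bound $u$ of $c$ and $q$ then satisfies $u\leq c\leq q'$, so $u$ is a common lower bound of $q$ and $q'$, giving $q\wedge q'$. (Alternatively, \Cref{VarChar,barwedgeX} give $\emptyset\neq\overline{\tilde q_\mathsf{S}}\subseteq q_\mathsf{S}\cap q'_\mathsf{S}$, and any point there is a filter containing $q$ and $q'$.) The main obstacle is this middle step: the hypotheses only hand us adjacency of $p$ and $p'$ for free, and it is the genuine common lower bound $s$ coming from $\wedge$, combined with co-surjectivity of ${\leftarrow}\circ{\barwedge}$ and the upward monotonicity of $\barwedge$, that lets one preimage $\tilde q$ serve both sides simultaneously.
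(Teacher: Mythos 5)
Your proof is correct and follows essentially the same route as the paper's: take a common lower bound $s$ of $p,p'$, apply co-surjectivity of ${\leftarrow}\circ{\barwedge}$ to it to get $\tilde q\leftarrow t\mathrel{\barwedge}s$, push the adjacency up to $p$ and $p'$, and read off $\tilde q\vartriangleleft q,q'$ from the definition of $\langle\!\leftarrow$ to conclude $q\wedge q'$. The only difference is that you spell out the two steps the paper compresses into ``hence'' and ``in particular'' (upward monotonicity of $\barwedge$, and the deduction of $q\wedge q'$ from $\tilde q\vartriangleleft q,q'$ via \eqref{BarwedgeVar}), both of which you handle correctly.
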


\begin{proof}
    If $q\mathrel{\langle\!\leftarrow}p\mathrel{\wedge}p'\mathrel{\rightarrow\!\rangle}q'$ then we have $p''\in\mathbb{P}$ with $p,p'\geq p''$.  As ${{\leftarrow}\circ{\barwedge}}$ is co-surjective, we then have $c$ and $d$ with $d\leftarrow c\mathrel{\barwedge} p''$ and hence $p\barwedge c\barwedge p'$.  The definition of $\langle\!\leftarrow$ then means that $q,q'\vartriangleright d$.  In particular $q\wedge q'$, as required.
\end{proof}

When the arrow itself is $\wedge$-preserving, we have the following observation.

\begin{proposition}\label{DoubleTriangle}
    Assume $\mathbb{P}$ and $\mathbb{Q}$ are $\omega$-posets and ${\leftarrow}\subseteq\mathbb{Q}\times\mathbb{P}$ is a finite $\wedge$-preserving relation such that $\mathbb{Q}^\leftarrow\in\mathsf{C}\mathbb{P}$.  Then, for any $A,B\subseteq\mathbb{Q}$ with $\mathbb{Q}^\rightarrow\subseteq A\vartriangleleft_AB$,
    \[{{\vartriangleright_B}\circ{\vartriangleright_A}\circ{\leftarrow}}\ \subseteq\ {\langle\!\leftarrow}.\]
\end{proposition}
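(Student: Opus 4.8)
The plan is to reduce everything to an elementwise chase. Unwinding the composition, suppose $q\mathrel{\vartriangleright_B\circ\vartriangleright_A\circ\leftarrow}p$, so we have $q_0,q_1\in\mathbb{Q}$ with $q\vartriangleright_Bq_1\vartriangleright_Aq_0\leftarrow p$. Recall that $q\mathrel{\langle\!\leftarrow}p$ unwinds, via the demonic composition defining ${\langle\!\leftarrow}$, to the two conditions $p^{\barwedge\rightarrow}\neq\emptyset$ and $p^{\barwedge\rightarrow}\vartriangleleft q$ (each member $\vartriangleleft q$). Nonemptiness is immediate: since ${\wedge}\subseteq{\barwedge}$ gives $p\barwedge p$ and $q_0\leftarrow p$, we already have $q_0\in p^{\barwedge\rightarrow}$. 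So the work is to fix an arbitrary $q'\in p^{\barwedge\rightarrow}$, witnessed by some $p'$ with $p\barwedge p'$ and $q'\leftarrow p'$, and prove $q'\vartriangleleft q$.

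The crux — and what I expect to be the main obstacle — is that $\leftarrow$ is only assumed $\wedge$-preserving, whereas the witness comes with the weaker $p\barwedge p'$; I must first manufacture genuine compatibilities. Here the hypothesis $\mathbb{Q}^\leftarrow\in\mathsf{C}\mathbb{P}$ is exactly what is needed: applying the definition of $\barwedge$ to the cap $C=\mathbb{Q}^\leftarrow$ produces $c\in\mathbb{Q}^\leftarrow$ with $p\wedge c\wedge p'$, and $c\in\mathbb{Q}^\leftarrow$ gives some $q_c$ with $q_c\leftarrow c$. Routing through this point $c$ of the range cap, where $p$ and $p'$ genuinely meet, lets me invoke $\wedge$-preservation twice: from $q_0\leftarrow p\wedge c$ and $c\rightarrow q_c$ I get $q_0\wedge q_c$, and from $q'\leftarrow p'\wedge c$ and $c\rightarrow q_c$ I get $q'\wedge q_c$. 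Note also that $q_c$ and $q'$, both being in the range of $\leftarrow$, lie in $A$ by $\mathbb{Q}^\rightarrow\subseteq A$.

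The remainder is a chase through the two star-above relations and the refinement $A\vartriangleleft_A B$. From $q_c\in A\cap q_0^\wedge$ and $q_1\vartriangleright_Aq_0$ (that is, $A\cap q_0^\wedge\subseteq q_1^\geq$) I obtain $q_c\leq q_1$, so that $q'\wedge q_c\leq q_1$ forces $q'\wedge q_1$. Applying $A\vartriangleleft_A B$ to $q'\in A$ yields $b\in B$ with $q'\vartriangleleft_A b$; feeding in $q'$ itself (as $q'\in A\cap{q'}^\wedge$) gives $q'\leq b$, and hence $q'\wedge q_1$ upgrades to $b\wedge q_1$. Then $b\in B\cap q_1^\wedge$ together with $q\vartriangleright_Bq_1$ gives $b\leq q$. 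Finally, for any $a\in A\cap{q'}^\wedge$, the relation $q'\vartriangleleft_A b$ gives $a\leq b\leq q$, so $A\cap{q'}^\wedge\subseteq q^\geq$, i.e. $q'\vartriangleleft_A q$.

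The one remaining point is to upgrade $q'\vartriangleleft_A q$ to the full $q'\vartriangleleft q$ required by ${\langle\!\leftarrow}$, which needs $\vartriangleleft$ to be witnessed by an honest cap. This is where I expect to use that $A\in\mathsf{C}\mathbb{Q}$ (so that ${\vartriangleleft_A}\subseteq{\vartriangleleft}$): in the intended applications $A$ is a level, and more generally I would read this into the hypothesis $\mathbb{Q}^\rightarrow\subseteq A\vartriangleleft_A B$. Granting this, since $q'$ was an arbitrary element of $p^{\barwedge\rightarrow}$ we conclude $p^{\barwedge\rightarrow}\vartriangleleft q$ and hence $q\mathrel{\langle\!\leftarrow}p$, giving the desired inclusion.
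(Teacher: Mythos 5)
Your proof is correct and takes essentially the same route as the paper's: pass the $\barwedge$-witness through the cap $\mathbb{Q}^\leftarrow\in\mathsf{C}\mathbb{P}$ to obtain a genuine meeting point $c$, apply $\wedge$-preservation twice to produce the anchor $q_c$ in the range, then chase through $\vartriangleright_A$, $A\vartriangleleft_AB$ and $\vartriangleright_B$ to reach $q'\vartriangleleft_Aq$; the paper's argument is identical up to trivial bookkeeping (it derives $q_c\leq b$ where you derive $q'\leq b$ before crossing over to $B$). The caveat you flag at the end is a genuine subtlety, but the paper's own proof makes the identical silent jump, writing ``$a'\vartriangleleft b'\leq c$'' when only $a'\vartriangleleft_Ab'$ has been established; since $\vartriangleleft$ is by definition the union of the relations $\vartriangleleft_C$ over caps $C\in\mathsf{C}\mathbb{Q}$ only, that step does require $A\in\mathsf{C}\mathbb{Q}$, so your reading of the hypothesis is the intended one --- and it holds in every application of the proposition in the paper, where $A$ and $B$ are levels or finite caps of $\mathbb{Q}$.
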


\begin{proof}
    Say $c\vartriangleright_Bb\vartriangleright_Aa\leftarrow p$.  If $a'\leftarrow p'\mathrel{\barwedge}p$ then we must have $r\in\mathbb{Q}^\leftarrow$ with $p'\wedge r\wedge p$, as $\mathbb{Q}^\leftarrow\in\mathsf{C}\mathbb{P}$.  If $q\leftarrow r$ then $a'\mathrel{\wedge}q\mathrel{\wedge}a$, as $\leftarrow$ is $\wedge$-preserving.  For any $b'\in B$ with $b'\vartriangleright_Aa'$, it follows that $b,b'\geq q$ so $b\mathrel{\wedge}b'$ and hence $a'\vartriangleleft b'\leq c$, as $b\vartriangleleft_Bc$, showing that $c\mathrel{\langle\!\leftarrow}p$.
\end{proof}

In a similar vein, we have the following.

\begin{proposition}\label{DoubleTriangle2}
    Assume $\mathbb{P}$ and $\mathbb{Q}$ are $\omega$-posets, ${\leftarrow},{\sqsupset}\subseteq\mathbb{Q}\times\mathbb{P}$ and $\sqsupset$ is $\wedge$-preserving.  Then, for any $C\subseteq\mathbb{P}$ and $B\subseteq\mathbb{Q}$ with $\mathbb{Q}^\sqsupset\subseteq C\mathrel{\rightarrow\!\rangle}B$ and ${=_C}\subseteq{{\wedge}\circ{\sqsubset}\circ{\leftarrow}\circ{\barwedge}}$,
    \[{{\vartriangleright_B}\circ{\sqsupset}}\ \subseteq\ {\langle\!\leftarrow}.\]
\end{proposition}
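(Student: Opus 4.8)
The plan is to mimic the argument for \Cref{DoubleTriangle}: to establish the relational inclusion I would fix $q',q\in\mathbb{Q}$ and $p\in\mathbb{P}$ with $q'\vartriangleright_B q$ and $q\sqsupset p$, and verify $q'\mathrel{\langle\!\leftarrow}p$ straight from the definition, namely that $p^{\barwedge\rightarrow}\vartriangleleft q'$ (so in particular that $p^{\barwedge\rightarrow}$ is nonempty and that every $r\in p^{\barwedge\rightarrow}$ is star-below $q'$).

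The first move is to produce a convenient $b\in B$. Since $q\sqsupset p$ witnesses $p\in\mathbb{Q}^\sqsupset\subseteq C$, the hypothesis $C\mathrel{\rightarrow\!\rangle}B$ hands us some $b\in B$ with $b\mathrel{\langle\!\leftarrow}p$, i.e.\ $p^{\barwedge\rightarrow}\vartriangleleft b$. Hence every $r\in p^{\barwedge\rightarrow}$ already satisfies $r\vartriangleleft b$, and since $r\vartriangleleft b\leq q'$ forces $r\vartriangleleft q'$, the entire problem collapses to showing $b\leq q'$. Now $q'\vartriangleright_B q$ says precisely that every element of $B$ lying in $q^\wedge$ sits below $q'$, so it suffices to prove $b\wedge q$.

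To get $b\wedge q$ I would unpack the remaining two hypotheses together with the $\wedge$-preservation of $\sqsupset$. Applying ${=_C}\subseteq{{\wedge}\circ{\sqsubset}\circ{\leftarrow}\circ{\barwedge}}$ to $p\in C$ yields witnesses $a,b_0\in\mathbb{P}$ and $q_0\in\mathbb{Q}$ with $p\wedge a$, $q_0\sqsupset a$, $q_0\leftarrow b_0$ and $b_0\barwedge p$. The last two relations read $q_0\leftarrow b_0\barwedge p$, so $q_0\in p^{\barwedge\rightarrow}$; this both shows $p^{\barwedge\rightarrow}\neq\emptyset$ and, combined with $p^{\barwedge\rightarrow}\vartriangleleft b$, gives $q_0\vartriangleleft b$. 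On the other hand $q\sqsupset p\wedge a\sqsubset q_0$ together with $\wedge$-preservation of $\sqsupset$ gives $q\wedge q_0$, whence $q\barwedge q_0$ as ${\wedge}\subseteq{\barwedge}$. Feeding $q\barwedge q_0\vartriangleleft b$ through \eqref{BarwedgeVar} then delivers $q\wedge b$, as required; $q'\vartriangleright_B q$ now gives $b\leq q'$ and the proof closes.

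I expect the only real obstacle to be recognising the double role that $q_0$ must play in the middle step. The hypothesis on ${=_C}$ is engineered exactly so that $q_0$ lands inside $p^{\barwedge\rightarrow}$, making it controllable by $b$, while at the same time being $\sqsupset$-linked, through $a\wedge p$, back to $q$, so that $\wedge$-preservation ties it to $q$. Spotting that one should route the comparison of $b$ and $q$ through such a common intermediary $q_0$, rather than attempting to relate $b$ and $q$ directly, is the crux; everything after $q\wedge b$ is the same routine manipulation of $\vartriangleleft$, $\leq$ and $\vartriangleright_B$ used in \Cref{DoubleTriangle}.
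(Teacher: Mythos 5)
Your proposal is correct and follows essentially the same argument as the paper's proof: the same witness $b\in B$ obtained from $C\mathrel{\rightarrow\!\rangle}B$, the same witnesses (your $a,q_0,b_0$ are the paper's $c',a'$ and its implicit intermediate element) extracted from ${=_C}\subseteq{{\wedge}\circ{\sqsubset}\circ{\leftarrow}\circ{\barwedge}}$, the same use of $\wedge$-preservation to link $q$ to $q_0$, of \eqref{BarwedgeVar} to get $q\wedge b$, and of $\vartriangleright_B$ to conclude $b\leq q'$. The only differences are cosmetic (variable names, doing the reduction to $b\wedge q$ up front), and you additionally make explicit the nonemptiness of $p^{\barwedge\rightarrow}$ required by the demonic composition, which the paper leaves implicit.
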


\begin{proof}
    Say $q\vartriangleright_Ba\sqsupset c$.  Take $b\in B$ with $b\mathrel{\langle\!\leftarrow}c$.  Take $c'\in\mathbb{P}$ and $a'\in\mathbb{Q}$ such that $c\mathrel{\wedge}c'\sqsubset a'\leftarrow\circ\mathrel{\barwedge}c$ and hence $a\mathrel{\wedge}a'$, as $\sqsupset$ is $\wedge$-preserving.  For any other $q'\in\mathbb{Q}$ with $q'\leftarrow\circ\mathrel{\barwedge}c$, we see that $q',a'\vartriangleleft b$, as $b\mathrel{\langle\!\leftarrow}c$.  Thus $a\mathrel{\wedge}b$ and hence $q'\vartriangleleft b\leq q$, as $a\vartriangleleft_Bq$, showing that $q\mathrel{\langle\!\leftarrow}c$.
\end{proof}

For any $\omega$-posets $\mathbb{P}$ and $\mathbb{Q}$, let us define a preorder on $\mathsf{P}(\mathbb{Q}\times\mathbb{P})$ by
\[{\leftarrow}\ \leq\ {\twoheadleftarrow}\qquad\Leftrightarrow\qquad{\leftarrow}\ \subseteq\ {\leq\circ\twoheadleftarrow\circ\geq}.\]

\begin{proposition}\label{leqSubs}
    For any $\omega$-posets $\mathbb{P}$ and $\mathbb{Q}$ and ${\leftarrow},{\twoheadleftarrow}\subseteq\mathbb{Q}\times\mathbb{P}$,
    \[{\leftarrow}\ \leq\ {\twoheadleftarrow}\qquad\Rightarrow\qquad{\leftarrow_\mathsf{S}}\ \subseteq\ {\twoheadleftarrow_\mathsf{S}},\quad{\overline{\leftarrow_\mathsf{S}}}\ \subseteq\ {\overline{\twoheadleftarrow_\mathsf{S}}}\quad\text{and}\quad{\langle\!\twoheadleftarrow}\ \subseteq\ {\langle\!\leftarrow}.\]
\end{proposition}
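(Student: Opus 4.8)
The plan is to read the preorder off explicitly and then recognise all three inclusions as monotonicity statements. Unpacking the definition, ${\leftarrow}\leq{\twoheadleftarrow}$ says precisely that whenever $q\leftarrow p$ there exist $q'\geq q$ and $p'\geq p$ with $q'\twoheadleftarrow p'$. So each conclusion reduces to checking that enlarging the two endpoints of an edge (moving up in both $\mathbb{Q}$ and $\mathbb{P}$) preserves the relevant structure. Before doing anything else I would isolate three elementary facts: (i) every $S\in\mathsf{S}\mathbb{P}$ and $S'\in\mathsf{S}\mathbb{Q}$ is an upward-closed filter, so $s\in S$ and $s\leq t$ force $t\in S$; (ii) common lower bounds survive enlargement, i.e. $b\leq c$ gives $b^\wedge\subseteq c^\wedge$, whence $a\wedge b\Rightarrow a\wedge c$ and, since $\barwedge$ is symmetric and defined through $\wedge$, also $a\barwedge b\Rightarrow a\barwedge c$; and (iii) $\vartriangleleft$ absorbs $\leq$ on its lower argument, i.e. $\tilde q\leq q'\vartriangleleft q$ implies $\tilde q\vartriangleleft q$ (because $\tilde q\leq q'$ gives $C\tilde q\subseteq Cq'$ for every cap $C$).

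For ${\leftarrow_\mathsf{S}}\subseteq{\twoheadleftarrow_\mathsf{S}}$ and ${\overline{\leftarrow_\mathsf{S}}}\subseteq{\overline{\twoheadleftarrow_\mathsf{S}}}$ I would argue piecewise. Given $q\leftarrow p$, choose $q'\geq q$ and $p'\geq p$ with $q'\twoheadleftarrow p'$. For the first inclusion, (i) gives $q_\mathsf{S}\subseteq q'_\mathsf{S}$ and $p_\mathsf{S}\subseteq p'_\mathsf{S}$, so $q_\mathsf{S}\times p_\mathsf{S}\subseteq q'_\mathsf{S}\times p'_\mathsf{S}\subseteq{\twoheadleftarrow_\mathsf{S}}$, and unioning over all $q\leftarrow p$ finishes it. For the second, (ii) gives $q^\wedge\subseteq q'^\wedge$ and $p^\wedge\subseteq p'^\wedge$, hence $\overline{q_\mathsf{S}}\subseteq\overline{q'_\mathsf{S}}$ and $\overline{p_\mathsf{S}}\subseteq\overline{p'_\mathsf{S}}$ straight from the description $\overline{r_\mathsf{S}}=\{S:S\subseteq r^\wedge\}$, and the same unioning argument applies.

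The main work, and the one inclusion that runs contravariantly, is ${\langle\!\twoheadleftarrow}\subseteq{\langle\!\leftarrow}$. Here I would use the characterisation $q\mathrel{\langle\!\leftarrow}p\Leftrightarrow p^{\barwedge\rightarrow}\vartriangleleft q$. Assume $q\mathrel{\langle\!\twoheadleftarrow}p$, so every element of $p^{\barwedge\twoheadrightarrow}$ is $\vartriangleleft q$; I must verify the same for every $\tilde q\in p^{\barwedge\rightarrow}$. Fix such a $\tilde q$, witnessed by $\tilde q\leftarrow\tilde p$ with $\tilde p\barwedge p$. Applying ${\leftarrow}\leq{\twoheadleftarrow}$ to $\tilde q\leftarrow\tilde p$ yields $q'\geq\tilde q$ and $p'\geq\tilde p$ with $q'\twoheadleftarrow p'$. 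Then $\tilde p\barwedge p$ together with $\tilde p\leq p'$ gives $p'\barwedge p$ by (ii), so $q'\twoheadleftarrow p'\barwedge p$, i.e. $q'\in p^{\barwedge\twoheadrightarrow}$; the hypothesis gives $q'\vartriangleleft q$, and finally $\tilde q\leq q'\vartriangleleft q$ gives $\tilde q\vartriangleleft q$ by (iii). As $\tilde q$ was arbitrary this shows $p^{\barwedge\rightarrow}\vartriangleleft q$, hence $q\mathrel{\langle\!\leftarrow}p$.

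The delicate points are purely directional, and I expect the bookkeeping in the last paragraph to be the only real obstacle: one must keep straight that ${\leftarrow}\leq{\twoheadleftarrow}$ pushes both endpoints upward, that $\barwedge$ is preserved by raising an endpoint while $\vartriangleleft$ is preserved by lowering its left argument, and that the contravariance is exactly why the smaller relation $\leftarrow$ produces the larger relation $\langle\!\leftarrow$. Facts (i)--(iii) are each a one-line consequence of the definitions of filter, $\wedge$, $\barwedge$ and $\vartriangleleft$, so once they are recorded the three inclusions follow almost mechanically.
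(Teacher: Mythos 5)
Your proof is correct and follows essentially the same route as the paper's: the first two inclusions by pushing each pair $(q,p)$ up to a pair $(q',p')$ in $\twoheadleftarrow$ and using upward closure, and the third by taking a witness $\tilde q\leftarrow\tilde p\barwedge p$, lifting it via $\leq$ into $p^{\barwedge\twoheadrightarrow}$, and pulling $\vartriangleleft q$ back down along $\leq$. The only difference is that you isolate explicitly the monotonicity facts (that $\wedge$, $\barwedge$ and $\vartriangleleft$ absorb $\leq$ in the appropriate arguments) which the paper uses silently.
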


\begin{proof}
    Assume ${\leftarrow}\leq{\twoheadleftarrow}$.  For any $q\leftarrow p$, this means we have $q'\geq q$ and $p'\geq p$ with $q'\twoheadleftarrow p'$ and hence $q_\mathsf{S}\times p_\mathsf{S}\subseteq q'_\mathsf{S}\times p'_\mathsf{S}\subseteq{\twoheadleftarrow_\mathsf{S}}$.  Thus ${\leftarrow_\mathsf{S}}\subseteq{\twoheadleftarrow_\mathsf{S}}$ and, likewise, ${\overline{\leftarrow_\mathsf{S}}}\subseteq{\overline{\twoheadleftarrow_\mathsf{S}}}$.
    
    Now say $q\mathrel{\langle\!\twoheadleftarrow}p$.  Whenever $q'\leftarrow p'\mathrel{\barwedge}p$, we then have $q''\geq q'$ and $p''\geq p'$ with $q''\twoheadleftarrow p''$.  But $p''\geq p'\mathrel{\barwedge}p$ implies $p\mathrel{\barwedge}p''\twoheadrightarrow q''$ and hence $q'\leq q''\vartriangleleft q$, as $q\mathrel{\langle\!\twoheadleftarrow}p$.  This shows that $q\mathrel{\langle\!\leftarrow}p$, which in turn shows that ${\langle\!\twoheadleftarrow}\subseteq{\langle\!\leftarrow}$.
\end{proof}

For predetermined posets, we also have a converse.

\begin{proposition}
    For any predetermined $\omega$-posets $\mathbb{P}$ and $\mathbb{Q}$ and any $n\in\omega$,
    \[{\leftarrow}\subseteq\mathbb{Q}_n\times\mathbb{P}_n,\quad{\twoheadleftarrow}\subseteq\mathbb{Q}^n\times\mathbb{P}^n\quad\text{and}\quad{\leftarrow_\mathsf{S}}\subseteq{\twoheadleftarrow_\mathsf{S}}\qquad\Rightarrow\qquad{\leftarrow}\leq{\twoheadleftarrow}.\]
\end{proposition}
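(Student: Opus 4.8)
The plan is to unwind the defining inclusion of the preorder and construct the required witnesses from a single point of the product spectrum. Recall that ${\leftarrow}\leq{\twoheadleftarrow}$ means ${\leftarrow}\subseteq{\leq\circ\twoheadleftarrow\circ\geq}$, i.e.~that for every $q\leftarrow p$ there exist $q'\geq q$ and $p'\geq p$ with $q'\twoheadleftarrow p'$. So I would fix a pair $q\leftarrow p$ with $q\in\mathbb{Q}_n$ and $p\in\mathbb{P}_n$, manufacture a point of $q_\mathsf{S}\times p_\mathsf{S}$, feed the hypothesis ${\leftarrow_\mathsf{S}}\subseteq{\twoheadleftarrow_\mathsf{S}}$ to relocate it inside $\twoheadleftarrow_\mathsf{S}$, and read off $q'$ and $p'$ from the resulting basic open rectangle.

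The crucial input is predeterminedness, which I would use through the equivalent condition \eqref{pSp} of \Cref{PreChars}: for $p\in\mathbb{P}$ there is $S\in p_\mathsf{S}$ with $S\subseteq p^\leq\cup p^\geq$, and likewise for $q\in\mathbb{Q}$ there is $T\in q_\mathsf{S}$ with $T\subseteq q^\leq\cup q^\geq$. Choosing such $S$ and $T$, the pair $(T,S)$ lies in $q_\mathsf{S}\times p_\mathsf{S}$; since $q\leftarrow p$ this puts $(T,S)\in{\leftarrow_\mathsf{S}}\subseteq{\twoheadleftarrow_\mathsf{S}}$, so there are $q'\twoheadleftarrow p'$ with $q'\in T$ and $p'\in S$. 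As ${\twoheadleftarrow}\subseteq\mathbb{Q}^n\times\mathbb{P}^n$, we automatically have $q'\in\mathbb{Q}^n$ and $p'\in\mathbb{P}^n$.

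It then remains to upgrade comparability to the correct direction. From $q'\in T\subseteq q^\leq\cup q^\geq$ the element $q'$ is comparable with $q$; if $q'\leq q$ with $q'\neq q$ then $q'\in q^>\cap\mathbb{Q}^n$, contradicting that $q\in\mathbb{Q}_n$ is an atom of the cone $\mathbb{Q}^n$ (so $q^>\cap\mathbb{Q}^n=\emptyset$), whence $q'\geq q$; the same argument with $p'\in S\subseteq p^\leq\cup p^\geq$, $p'\in\mathbb{P}^n$ and $p\in\mathbb{P}_n$ yields $p'\geq p$. This produces the witnesses $q'\geq q$, $p'\geq p$, $q'\twoheadleftarrow p'$ and hence ${\leftarrow}\leq{\twoheadleftarrow}$.

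I expect the main obstacle to be precisely this last comparability step, and it is where predeterminedness is indispensable: the naive route of picking arbitrary $S\in p_\mathsf{S}$, $T\in q_\mathsf{S}$ and extracting a common lower bound from the filter property does not work, since cones are upward- but not downward-closed, so a common lower bound of $q$ and $q'$ need not lie in $\mathbb{Q}^n$ and cannot be compared with the atom $q$ via the level structure. Condition \eqref{pSp} repairs this by furnishing selectors contained in $p^\leq\cup p^\geq$ (resp.~$q^\leq\cup q^\geq$), which forces the extracted $p'$ (resp.~$q'$) to be directly comparable with $p$ (resp.~$q$), after which the atom property of the $n$-th level fixes the direction.
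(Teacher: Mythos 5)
Your proposal is correct and follows essentially the same route as the paper: both use condition \eqref{pSp} of \Cref{PreChars} to obtain selectors $S\subseteq p^\leq\cup p^\geq$ and $T\subseteq q^\leq\cup q^\geq$, pass the pair $(T,S)$ through the hypothesis ${\leftarrow_\mathsf{S}}\subseteq{\twoheadleftarrow_\mathsf{S}}$ to extract $q'\twoheadleftarrow p'$ with $q'\in T$, $p'\in S$, and then use the fact that $\mathbb{P}_n$ and $\mathbb{Q}_n$ consist of atoms of the cones $\mathbb{P}^n$ and $\mathbb{Q}^n$ to rule out $p'<p$ and $q'<q$, forcing $p\leq p'$ and $q\leq q'$.
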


\begin{proof}
    We argue as in the proof of \Cref{PreChars} \eqref{pSp}$\Rightarrow$\eqref{SubLeq}.  Specifically, say ${\leftarrow}\subseteq\mathbb{Q}_n\times\mathbb{P}_n$, ${\twoheadleftarrow}\subseteq\mathbb{Q}^n\times\mathbb{P}^n$, ${\leftarrow_\mathsf{S}}\subseteq{\twoheadleftarrow_\mathsf{S}}$ and $q\leftarrow p$.  As $\mathbb{P}$ and $\mathbb{Q}$ are predetermined, \Cref{PreChars} \eqref{pSp} yields $S\in p_\mathsf{S}$ and $T\in q_\mathsf{S}$ with $S\subseteq p^\leq\cap p^\geq$ and $T\subseteq q^\leq\cap q^\geq$.  Thus $(T,S)\in{\leftarrow_\mathsf{S}}\subseteq{\twoheadleftarrow_\mathsf{S}}$ so we have $p'\in S$ and $q'\in T$ with $q'\twoheadleftarrow p'$.  As $\mathbb{P}_n$ and $\mathbb{Q}_n$ consist of atoms of $\mathbb{P}^n$ and $\mathbb{Q}^n$ respectively, we know that $p'\not<p$ and $q'\not<q$.  The only remaining possibility is $p\leq p'$ and $q\leq q'$.  As $(q,p)$ was an arbitrary element of $\leftarrow$, this shows that ${\leftarrow}\leq{\twoheadleftarrow}$.
\end{proof}

Let us define another relation $\vartriangleleft$ on $\mathsf{P}(\mathbb{Q}\times\mathbb{P})$ by
\[{\twoheadleftarrow}\ \vartriangleleft\ {\leftarrow}\qquad\Leftrightarrow\qquad{\twoheadleftarrow}\ \subseteq\ {{\vartriangleleft}\circ{\leftarrow}\circ{\vartriangleright}}.\]
We can also define a weaker and slightly simpler variant of ${\langle\!\leftarrow}$ by
\[{[\!\leftarrow}={{\geq}\co({\leftarrow}\circ{\wedge})}.\]

\begin{proposition}\label{SquareArrow}
    For any $\omega$-posets $\mathbb{P}$ and $\mathbb{Q}$ and ${\leftarrow},{\twoheadleftarrow}\subseteq\mathbb{Q}\times\mathbb{P}$,
    \[\twoheadleftarrow\ \vartriangleleft\ \leftarrow\qquad\Rightarrow\qquad{[\!\leftarrow}\subseteq{\langle\!\twoheadleftarrow}.\]
\end{proposition}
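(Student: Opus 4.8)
The plan is to unwind both bracket relations pointwise and then chase a single witnessing element through the hypothesis ${\twoheadleftarrow}\vartriangleleft{\leftarrow}$. Concretely, $q\mathrel{[\!\leftarrow}p$ means that every $q'$ admitting $q'\leftarrow p'$ for some $p'\wedge p$ satisfies $q'\leq q$, whereas the goal $q\mathrel{\langle\!\twoheadleftarrow}p$ means that every $q'$ admitting $q'\twoheadleftarrow p'$ for some $p'\mathrel{\barwedge}p$ satisfies $q'\vartriangleleft q$. So, assuming ${\twoheadleftarrow}\vartriangleleft{\leftarrow}$ and $q\mathrel{[\!\leftarrow}p$, I would fix an arbitrary witness $q'\twoheadleftarrow p'\mathrel{\barwedge}p$ and try to deduce $q'\vartriangleleft q$.

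First I would feed $q'\twoheadleftarrow p'$ into the hypothesis ${\twoheadleftarrow}\subseteq{{\vartriangleleft}\circ{\leftarrow}\circ{\vartriangleright}}$, obtaining intermediate $q'',p''$ with $q'\vartriangleleft q''$, $q''\leftarrow p''$ and $p'\vartriangleleft p''$. The key move is then to push $p''$ into the $\wedge$-neighbourhood of $p$, so that the hypothesis $q\mathrel{[\!\leftarrow}p$ can be applied: from $p\mathrel{\barwedge}p'\vartriangleleft p''$ and the containment ${\barwedge}\circ{\vartriangleleft}\subseteq{\wedge}$ recorded in \eqref{BarwedgeVar}, I get $p\wedge p''$. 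Hence $q''\leftarrow p''\wedge p$ exhibits $q''$ as one of the elements constrained by $q\mathrel{[\!\leftarrow}p$, giving $q''\leq q$.

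It then remains to collapse the chain $q'\vartriangleleft q''\leq q$ to $q'\vartriangleleft q$. For this I would use that $\vartriangleleft$ absorbs $\leq$ on the right, i.e.\ ${\vartriangleleft}\circ{\leq}\subseteq{\vartriangleleft}$: if $q'\vartriangleleft_C q''$ then $Cq'\subseteq {q''}^\geq\subseteq q^\geq$ by transitivity of $\leq$, whence $q'\vartriangleleft_C q$. As $q'$ was an arbitrary witness, this establishes $q\mathrel{\langle\!\twoheadleftarrow}p$ and hence the desired inclusion ${[\!\leftarrow}\subseteq{\langle\!\twoheadleftarrow}$.

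The step I expect to be the real crux is recognising that the correct way to make $q\mathrel{[\!\leftarrow}p$ usable is to combine the $\barwedge$-link $p\mathrel{\barwedge}p'$ supplied by the chosen witness with the $\vartriangleleft$-link $p'\vartriangleleft p''$ extracted from ${\twoheadleftarrow}\vartriangleleft{\leftarrow}$, converting them via \eqref{BarwedgeVar} into the single $\wedge$-relation $p\wedge p''$; everything else is bookkeeping. The two auxiliary facts used — that ${\barwedge}\circ{\vartriangleleft}\subseteq{\wedge}$ and that $\vartriangleleft$ absorbs $\leq$ — are both routine, the former already available as \eqref{BarwedgeVar} and the latter immediate from the definition of $\vartriangleleft_C$.
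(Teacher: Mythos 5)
Your proof is correct and is essentially the paper's own argument: the paper compresses exactly your pointwise chase into the single inclusion chain $p^{\barwedge\twoheadrightarrow}\subseteq p^{\barwedge\vartriangleleft\rightarrow\vartriangleright}\subseteq p^{\wedge\rightarrow\vartriangleright}\subseteq q^{\geq\vartriangleright}\subseteq q^{\vartriangleright}$, where the first step unwinds ${\twoheadleftarrow}\subseteq{\vartriangleleft}\circ{\leftarrow}\circ{\vartriangleright}$ just as you do. The two auxiliary facts you isolate --- ${\barwedge}\circ{\vartriangleleft}\subseteq{\wedge}$ from \eqref{BarwedgeVar} and the absorption ${\vartriangleleft}\circ{\leq}\subseteq{\vartriangleleft}$ --- are precisely the middle and final inclusions of that chain, so there is no substantive difference between the two arguments.
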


\begin{proof}
    Say ${\twoheadleftarrow}\vartriangleleft{\leftarrow}$ and $q\mathrel{[\!\leftarrow}p$, i.e.~$p^{\wedge\rightarrow}\subseteq q^\geq$.  Then $p^{\barwedge\twoheadrightarrow}\subseteq p^{\barwedge\vartriangleleft\rightarrow\vartriangleright}\subseteq p^{\wedge\rightarrow\vartriangleright}\subseteq q^{\geq\vartriangleright}\subseteq q^\vartriangleright$, showing that $q\mathrel{\langle\!\twoheadleftarrow}p$.  This in turn shows that ${[\!\leftarrow}\subseteq{\langle\!\twoheadleftarrow}$.
\end{proof}

\subsection{Arrow-Sequences}\label{ArrowSequences}

We call a decreasing sequence $(\leftarrow_n)\subseteq\mathsf{P}(\mathbb{Q}\times\mathbb{P})$ of arrows (i.e.~where ${\leftarrow_{n+1}}\leq{\leftarrow_n}$, for all $n\in\omega$) an \emph{arrow-sequence}.  By the \Cref{DoubleArrowWedgePreserving,leqSubs}, we then have a $\wedge$-preserving relation ${\sqsupset}\subseteq\mathbb{Q}\times\mathbb{P}$ given by
\[{\sqsupset}=\bigcup_{n\in\omega}\langle\!\leftarrow_n.\]
Moreover, this relation $\sqsupset$ will be a (strong) refiner precisely when the subsets $(\overline{\leftarrow_{n\mathsf{S}}})$ intersect to define a unique (continuous) function $\phi:\mathsf{S}\mathbb{P}\rightarrow\mathsf{S}\mathbb{Q}$.

\begin{theorem}
For any arrow-sequence $(\leftarrow_n)\subseteq\mathsf{P}(\mathbb{Q}\times\mathbb{P})$ between regular $\omega$-posets,
\[{\sqsupset}=\bigcup_{n\in\omega}\langle\!\leftarrow_n\text{ is a refiner}\qquad\Leftrightarrow\qquad\phi=\bigcap_{n\in\omega}\overline{\leftarrow_{n\mathsf{S}}}\text{ is a function}.\]
In this case, $\sqsupset$ is a strong refiner with corresponding continuous function $\phi=\mathsf{S}_\sqsupset$.
\end{theorem}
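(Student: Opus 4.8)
The plan is to show that each of the two conditions is in fact equivalent to the single identity ${\sqsupset}={\sqsupset_\phi}$, after which the ``moreover'' clause is immediate: whenever $\phi$ is continuous, $\sqsupset_\phi$ is a strong refiner with $\mathsf{S}_{\sqsupset_\phi}=\phi$ by \Cref{SqsupseteqPhi} and \Cref{UniqueStrong}. The pivotal observation, which I would establish first and unconditionally, is that ${\sqsupset_\phi}\subseteq{\sqsupset}$ always holds. Fixing $p\in\mathbb{P}$, the sets $K_n=\overline{\leftarrow_{n\mathsf{S}}}[\overline{p_\mathsf{S}}]$ are closed, and unwinding \Cref{barwedgeX} identifies $K_n$ with $\bigcup\{\overline{q_\mathsf{S}}:\exists p'\ (q\leftarrow_n p'\mathrel{\barwedge}p)\}$. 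Since $(\leftarrow_n)$ is decreasing, \Cref{leqSubs} gives $\overline{\leftarrow_{(n+1)\mathsf{S}}}\subseteq\overline{\leftarrow_{n\mathsf{S}}}$, so the $K_n$ decrease; a routine subsequential limit argument in the compact metrisable space $\mathsf{S}\mathbb{P}$ then shows $\bigcap_n K_n=\phi[\overline{p_\mathsf{S}}]$. Hence if $q\sqsupset_\phi p$, i.e.\ $\phi[\overline{p_\mathsf{S}}]\subseteq q_\mathsf{S}$, compactness forces $K_n\subseteq q_\mathsf{S}$ for some $n$; as each contributing $\overline{q'_\mathsf{S}}\subseteq q_\mathsf{S}$ gives $q'\vartriangleleft q$ by \Cref{VarChar} (the set $p^{\barwedge}\!\rightarrow$ being nonempty as $\leftarrow_n$ is an arrow), this says exactly that $q\mathrel{\langle\!\leftarrow_n}p$, whence $q\sqsupset p$. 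I would also record here that $\phi$ is always co-surjective: each $\leftarrow_n$ being an arrow makes $\overline{\leftarrow_{n\mathsf{S}}}$ co-surjective via \Cref{DenseCovers}, and the fibres $\{T:(T,S)\in\overline{\leftarrow_{n\mathsf{S}}}\}$ are nonempty, closed and decreasing, so $\phi(S)=\bigcap_n\{T:(T,S)\in\overline{\leftarrow_{n\mathsf{S}}}\}$ is nonempty by compactness.

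For the direction $\phi\text{ a function}\Rightarrow\sqsupset\text{ a refiner}$, note that the graph of $\phi=\bigcap_n\overline{\leftarrow_{n\mathsf{S}}}$ is closed, so a single-valued $\phi$ from the compact space $\mathsf{S}\mathbb{P}$ into the Hausdorff space $\mathsf{S}\mathbb{Q}$ is continuous. Then $\phi\subseteq\overline{\leftarrow_{n\mathsf{S}}}$ for every $n$ yields ${\langle\!\leftarrow_n}\subseteq{\sqsupset_\phi}$ by \eqref{ArrowModification}, hence ${\sqsupset}\subseteq{\sqsupset_\phi}$; combined with the pivot inclusion this gives ${\sqsupset}={\sqsupset_\phi}$, which is a (strong) refiner, and $\mathsf{S}_\sqsupset=\mathsf{S}_{\sqsupset_\phi}=\phi$.

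For the converse $\sqsupset\text{ a refiner}\Rightarrow\phi\text{ a function}$, recall that $\sqsupset$ is $\wedge$-preserving (by \Cref{DoubleArrowWedgePreserving} and \Cref{leqSubs}), so $\mathsf{S}_\sqsupset$ is a well-defined continuous function and it suffices to prove $\phi=\mathsf{S}_\sqsupset$. As $\phi$ is already co-surjective, I only need the graph inclusion $\phi\subseteq\mathsf{S}_\sqsupset$, i.e.\ that any $(T,S)\in\phi$ satisfies $\mathsf{S}_\sqsupset(S)=S^{\sqsubset\vartriangleleft}\subseteq T$, with equality then following from minimality of $T$. This single-valuedness step is the main obstacle. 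Given $q\in S^{\sqsubset\vartriangleleft}$, pick $s\in S$ and $r$ with $r\sqsupset s$ and $r\vartriangleleft q$; then $r\mathrel{\langle\!\leftarrow_n}s$ for some $n$, and $(T,S)\in\overline{\leftarrow_{n\mathsf{S}}}$ supplies $q'\leftarrow_n p'$ with $T\in\overline{q'_\mathsf{S}}$ and $p'\mathrel{\barwedge}s$. The definition of $\langle\!\leftarrow_n$ then forces $q'\vartriangleleft r$, so $T\in\overline{q'_\mathsf{S}}\subseteq r_\mathsf{S}$ by \Cref{VarChar}, i.e.\ $r\in T$; since $T$ is a round filter, hence a clique, $r\in T$ gives $T\subseteq r^\barwedge$, so $T\in\overline{r_\mathsf{S}}$ by \Cref{RegularBar}, and a second application of \Cref{VarChar} to $r\vartriangleleft q$ yields $q\in T$. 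Thus $\phi=\mathsf{S}_\sqsupset$ is a function; being continuous, the previous direction then upgrades this to ${\sqsupset}={\sqsupset_\phi}$ with $\phi=\mathsf{S}_\sqsupset$, completing the theorem together with its ``moreover'' clause. The delicate bookkeeping in this last argument -- correctly threading the stage $n$ through the definitions of $\langle\!\leftarrow_n$ and $\overline{\leftarrow_{n\mathsf{S}}}$, and invoking that minimal selectors are cliques -- is where I expect the real work to lie.
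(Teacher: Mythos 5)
Your proof is correct, and it takes a genuinely different route from the paper's. The paper proves the equivalence by contraposition on one side: assuming $\sqsupset$ is not a refiner, it builds a minimal selector $S\subseteq\mathbb{P}\setminus C^\sqsupset$ and extracts two distinct points $T\neq U$ of $\bigcap_{n}S^{\overline{\rightarrow_{n\mathsf{S}}}}$, so that $\phi$ fails to be single-valued; and it obtains the strong-refiner clause by explicitly verifying ${\sqsupset}={\vartriangleright*\sqsupset}$ via separate computations of ${\sqsupset}\subseteq{\vartriangleright*\sqsupset}$, ${\sqsupset^*}\subseteq{\sqsupset}$ and ${\vartriangleright}\circ{\sqsupset}\subseteq{\sqsupset}$. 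You instead reduce both implications to the single identity ${\sqsupset}={\sqsupset_\phi}$. Your pivot inclusion ${\sqsupset_\phi}\subseteq{\sqsupset}$ --- obtained by showing the closed sets $K_n=\overline{\leftarrow_{n\mathsf{S}}}[\overline{p_\mathsf{S}}]$ decrease to $\phi[\overline{p_\mathsf{S}}]$ and applying compactness --- has no counterpart in the paper; it lets you deduce ``function $\Rightarrow$ refiner'' from the closed graph theorem together with \eqref{ArrowModification} and \Cref{SqsupseteqPhi}, and it makes the strong-refiner clause an immediate consequence of \Cref{UniqueStrong} rather than a separate star-composition computation. In the other direction both arguments identify $\phi$ with $\mathsf{S}_\sqsupset$, but where the paper checks cap by cap that $T\cap\mathsf{S}_\sqsupset(S)$ is a selector (which needs the membership $c\in\mathsf{S}_\sqsupset(S)=S^{\sqsubset\vartriangleleft}$, a point requiring a little more justification than the displayed $c\in S^{\sqsubset}$), you prove the pointwise inclusion $\mathsf{S}_\sqsupset(S)\subseteq T$ and finish by minimality of $T$, which is cleaner and sidesteps that issue. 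The trade-off: your route is more topological and modular, leaning on metrisability, sequential compactness and the closed graph theorem, and it delivers the sharper explicit conclusion that $\bigcup_n{\langle\!\leftarrow_n}$ is precisely the canonical strong refiner $\sqsupset_\phi$ of $\phi$; the paper's route stays entirely inside the combinatorics of selectors, caps and star-composition, so it is self-contained and needs none of these topological ingredients.
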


\begin{proof}
    Say $\sqsupset$ is not a refiner, so we have $C\in\mathsf{C}\mathbb{Q}$ such that $B\nsubseteq C^\sqsupset$, for all $B\in\mathsf{C}\mathbb{P}$.  So $\mathbb{P}\setminus C^\sqsupset$ is a selector and hence contains a minimal selector $S$.  Then $S^{\overline{\rightarrow_{n\mathsf{S}}}}$ defines a decreasing sequence of non-empty closed subsets of $\mathsf{S}\mathbb{Q}$ and hence we have some $T\in\bigcap_{n\in\omega}S^{\overline{\rightarrow_{n\mathsf{S}}}}$, i.e.~satisfying $T\mathrel{\phi}S$.  Taking any $c\in C\cap T$, we know that $S\cap c^\sqsupset=\emptyset$.  For each $s\in S$ and $n\in\omega$, we thus have some $q\in s^{\barwedge\rightarrow_n}\setminus c^\vartriangleright$.  As $S$ is directed and $\leftarrow_n$ is finite, we must then have $(q,p)\in{\leftarrow_n}$ with $p\in S_\barwedge$ and $q\not\vartriangleleft c$ and hence $\emptyset\neq\overline{q_\mathsf{S}}\setminus c_\mathsf{S}\subseteq S^{\overline{\leftarrow_{n\mathsf{S}}}}\setminus c_\mathsf{S}$.  We then have $U\in\bigcap_{n\in\omega}S^{\overline{\rightarrow_{n\mathsf{S}}}}\setminus c_\mathsf{S}$ and hence $T\neq U\mathrel{\phi}S$, showing that $\phi$ is not a function.

    Now say $\sqsupset$ is a refiner and take $S\in\mathsf{S}\mathbb{P}$ and $T\in\mathsf{S}\mathbb{Q}$ with $(T,S)\in\phi$.  For any $C\in\mathsf{C}\mathbb{Q}$, this means we have $B\in\mathsf{C}\mathbb{P}$ and $n\in\omega$ with $B\mathrel{{}_n\!\!\rightarrow\!\rangle}C$.  As $(T,S)\in\phi$, we also have $s\in S_\wedge$ and $t\in T_\wedge$ with $t\leftarrow_ns$.  Taking any $b\in B\cap S$ and $c\in C$ with $c\mathrel{\langle\!\leftarrow_n}b$, it follows that $s\barwedge b$ so $t\vartriangleleft c$ and hence $c\in T\cap\mathsf{S}_\sqsupset(S)$.  This shows that $U=T\cap\mathsf{S}_\sqsupset(S)$ is a selector and hence $T=\mathsf{S}_\sqsupset(X)$, by minimality.  This shows that $\phi=\mathsf{S}_\sqsupset$.
    
    We next claim ${\sqsupset}\subseteq{\vartriangleright*\sqsupset}$.  To see this, say $q\mathrel{\langle\!\leftarrow_m}p$ and hence $p^{\barwedge\rightarrow_m}\vartriangleleft_Cq$, for some $C\in\mathsf{C}\mathbb{Q}$.  As $\mathbb{Q}$ is regular, we have $B\in\mathsf{C}\mathbb{Q}$ with $B\vartriangleleft C$.  As $\sqsupset$ is a refiner, we have $n>m$ and $A\in\mathsf{C}\mathbb{P}$ with $A\mathrel{{}_n\!\!\rightarrow\!\rangle}B$.  So whenever $p\mathrel{\wedge}a\in A$, we have $b\in B$ and $c\in C$ with $a\mathrel{{}_n\!\!\rightarrow\!\rangle}b\vartriangleleft c$.  Take $p'\in\mathbb{Q}^{\leftarrow_n}$ with $p\mathrel{\barwedge}p'\mathrel{\barwedge}a$, so we have $q'\in\mathbb{Q}$ with $p'\rightarrow_nq'$, necessarily with $q'\vartriangleleft b$.  Take $p''$ and $q''$ with $p'\leq p''\rightarrow_mq''\geq q'$.  Then $p\mathrel{\barwedge}p''$ so $q''\vartriangleleft_Cq$, but also $c\vartriangleright b\mathrel{\wedge}q''$ so this implies $b\vartriangleleft c\leq q$ and hence $a\mathrel{{}_n\!\!\rightarrow\!\rangle}b\vartriangleleft q$.  This shows that $q\mathrel{{\vartriangleright}*{\langle\!\leftarrow_n}}p$, which in turn shows that ${\sqsupset}\subseteq{\vartriangleright*\sqsupset}$.

    On the other hand, say $q\sqsupset^*p$, so we have some finite $C\in\mathsf{C}\mathbb{P}$ with $Cp\sqsubset q$.  This means we must have some $n\in\omega$ with $Cp\mathrel{{}_n\!\!\rightarrow\!\rangle}q$.  Whenever $p\mathrel{\barwedge}p'\rightarrow_nq'$, we have $c\in C$ with $p\mathrel{\wedge}c\mathrel{\wedge}p'$ so $c\in Cp\mathrel{{}_n\!\!\rightarrow\!\rangle}q$ and hence $q'\in c^{\wedge\rightarrow_n}\subseteq c^{\barwedge\rightarrow_n}\vartriangleleft q$.  This shows that $p^{\barwedge\rightarrow_n}\vartriangleleft q$ and hence $p\sqsubset q$, which in turn shows that ${\sqsupset^*}\subseteq{\sqsupset}$.  Also ${{\vartriangleright}\circ{\langle\!\leftarrow}}\subseteq{\langle\!\leftarrow}$, for any ${\leftarrow}\subseteq\mathbb{Q}\times\mathbb{P}$, so ${\vartriangleright\circ\sqsupset}\subseteq{\sqsupset}$ and hence ${\vartriangleright*\sqsupset}\subseteq{\sqsupset^*}\subseteq{\sqsupset}\subseteq{\vartriangleright*\sqsupset}$, i.e.~${\sqsupset}={\vartriangleright*\sqsupset}$.  This shows that $\sqsupset$ is indeed a strong refiner.
\end{proof}

Let us call an arrow-sequence $(\leftarrow_n)$ \emph{thin} in the above situation, i.e.~if ${\sqsupset}=\bigcup_{n\in\omega}{\langle\!\leftarrow_n}$ is a refiner or, equivalently, if $\phi=\bigcap_{n\in\omega}\overline{\leftarrow_{n\mathsf{S}}}$ is a function.  So thin arrow-sequences yield continuous functions and, conversely, every continuous function arises in this way.

To see this first, for any $\omega$-posets $\mathbb{P}$ and $\mathbb{Q}$, $n\in\omega$ and $\phi\subseteq\mathsf{S}\mathbb{Q}\times\mathsf{S}\mathbb{P}$, define
\begin{align*}
    {\leftarrow^\phi_n}&=\{(q,p)\in\mathbb{Q}_n\times\mathbb{P}_n:\phi\cap(\overline{q_\mathsf{S}}\times\overline{p_\mathsf{S}})\neq\emptyset\}\\
    &=\{(q,p)\in\mathbb{Q}_n\times\mathbb{P}_n:\exists S,T\in\mathsf{S}\mathbb{P}\ (q^\wedge\supseteq T\mathrel{\phi}S\subseteq p^\wedge)\}.
\end{align*}
For all $n\in\omega$, note ${\leftarrow^\phi_{n+1}}\leq{\leftarrow^\phi_n}$ and $\phi\subseteq{\leftarrow^\phi_{n\mathsf{S}}}$.  So if $\phi$ is co-surjective then $(\leftarrow^\phi_n)$ is an arrow-sequence.  When $\phi$ is a continuous function, this arrow-sequence will also be thin.

\begin{proposition}\label{ArrowPhi}
    For any regular $\omega$-posets $\mathbb{P}$ and $\mathbb{Q}$, continuous $\phi:\mathsf{S}\mathbb{P}\rightarrow\mathsf{S}\mathbb{Q}$ and $D\in\mathsf{C}\mathbb{Q}$, we have $n\in\omega$ such that $D^{\langle\!\leftarrow^\phi_n}\in\mathsf{C}\mathbb{P}$.
\end{proposition}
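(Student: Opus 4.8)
The plan is to approximate the strong refiner $\sqsupset_\phi$ by the finite-level relations $\langle\!\leftarrow^\phi_n$ and then collapse a cap obtained for $\sqsupset_\phi$ down to a single finite stage. Since $\mathbb{P}$ is regular, $\sqsupset_\phi$ is a refiner by \Cref{SqsupseteqPhi}. Using regularity of $\mathbb{Q}$, I would first pick a cap $D'\in\mathsf{C}\mathbb{Q}$ with $D'\vartriangleleft D$ (e.g.\ a sufficiently fine level), so each $d'\in D'$ satisfies $d'\vartriangleleft d$ for some $d\in D$ and hence $\overline{d'_\mathsf{S}}\subseteq d_\mathsf{S}$ by \Cref{VarChar}. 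Applying the refiner property of $\sqsupset_\phi$ to $D'$ gives a cap $A\in\mathsf{C}\mathbb{P}$ with $A\sqsubset_\phi D'$; thinning $A$ to a finite subcap $A_0$ (one element above each member of a level refining $A$), every $a\in A_0$ comes with some $d'_a\in D'$, and thus some $d_a\in D$ with $d'_a\vartriangleleft d_a$, satisfying $\phi[\overline{a_\mathsf{S}}]\subseteq d'_{a\mathsf{S}}$. As supersets of caps are caps, it then suffices to show that for each $a\in A_0$ we have $d_a\langle\!\leftarrow^\phi_n a$ for all sufficiently large $n$: taking $n$ to be the maximum of the finitely many thresholds yields $A_0\subseteq D^{\langle\!\leftarrow^\phi_n}\in\mathsf{C}\mathbb{P}$.

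The crux is the following uniform continuity statement, which is where compactness enters and which I expect to be the main obstacle. Fix $a\in A_0$. I claim that for all large $n$, every $p'\in\mathbb{P}_n$ with $a\barwedge p'$ satisfies $\phi[\overline{p'_\mathsf{S}}]\subseteq d'_{a\mathsf{S}}$. Suppose not; then for infinitely many $n$ there are $p'_n\in\mathbb{P}_n$ with $a\barwedge p'_n$ and points $S_n\in\overline{p'_{n\mathsf{S}}}$ with $\phi(S_n)\notin d'_{a\mathsf{S}}$. By \Cref{barwedgeX}, $a\barwedge p'_n$ yields $R_n\in\overline{a_\mathsf{S}}\cap\overline{p'_{n\mathsf{S}}}$. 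Since $\mathbb{P}$ is regular, $\mathsf{S}\mathbb{P}$ is a metrisable compactum on which the closed level covers $\{\overline{p_\mathsf{S}}\}_{p\in\mathbb{P}_n}$ become arbitrarily fine (their mesh tends to $0$ in any compatible metric $\rho$); as $S_n,R_n\in\overline{p'_{n\mathsf{S}}}$ this forces $\rho(S_n,R_n)\to 0$. Passing to a convergent subnet $R_n\to R_\infty$ we get $S_n\to R_\infty$ as well, and $R_\infty\in\overline{a_\mathsf{S}}$ as this set is closed, so $\phi(R_\infty)\in\phi[\overline{a_\mathsf{S}}]\subseteq d'_{a\mathsf{S}}$. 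But $d'_{a\mathsf{S}}$ is open and $\phi(S_n)\to\phi(R_\infty)$ by continuity, so eventually $\phi(S_n)\in d'_{a\mathsf{S}}$, a contradiction. The difficulty here is precisely the upgrade from pointwise continuity of $\phi$ to simultaneous control over \emph{all} level-$n$ neighbours of $a$, for which both compactness of $\mathsf{S}\mathbb{P}$ and the fineness of the level covers (from regularity) are essential.

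Finally I would unfold $d_a\langle\!\leftarrow^\phi_n a$, which means $a^{\barwedge\rightarrow^\phi_n}\neq\emptyset$ and $q''\vartriangleleft d_a$ for every $q''\in a^{\barwedge\rightarrow^\phi_n}$. Nonemptiness is clear: $\overline{a_\mathsf{S}}\neq\emptyset$ (as $a\barwedge a$), so for $S\in\overline{a_\mathsf{S}}$ any $p'\in S\cap\mathbb{P}_n\subseteq a^\barwedge$ and $q''\in\phi(S)\cap\mathbb{Q}_n$ witness $q''\leftarrow^\phi_n p'$ with $a\barwedge p'$. For the star-below condition, take $q''\in a^{\barwedge\rightarrow^\phi_n}$, so there is $p'\in\mathbb{P}_n$ with $a\barwedge p'$ and some $S\in\overline{p'_\mathsf{S}}$ with $\phi(S)\in\overline{q''_\mathsf{S}}$. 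Enlarging $n$ beyond the threshold of the claim gives $\phi(S)\in\phi[\overline{p'_\mathsf{S}}]\subseteq d'_{a\mathsf{S}}$, whence $\phi(S)\in\overline{q''_\mathsf{S}}\cap\overline{d'_{a\mathsf{S}}}$. Enlarging $n$ further so that the $\mathbb{Q}$-side level-$n$ mesh is below $\rho(\overline{d'_{a\mathsf{S}}},\mathsf{S}\mathbb{Q}\setminus d_{a\mathsf{S}})$, which is positive since $\overline{d'_{a\mathsf{S}}}\subseteq d_{a\mathsf{S}}$, the small diameter of $\overline{q''_\mathsf{S}}$ together with $\overline{q''_\mathsf{S}}\cap\overline{d'_{a\mathsf{S}}}\neq\emptyset$ forces $\overline{q''_\mathsf{S}}\subseteq d_{a\mathsf{S}}$, i.e.\ $q''\vartriangleleft d_a$ by \Cref{VarChar}. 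Hence $d_a\langle\!\leftarrow^\phi_n a$, so $a\in D^{\langle\!\leftarrow^\phi_n}$ for all large $n$, and the reduction in the first paragraph completes the proof.
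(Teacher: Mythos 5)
Your proof is correct, but it takes a genuinely different route from the paper's. The paper's argument stays entirely inside the combinatorial calculus: it picks a star-refinement $C\vartriangleleft_C D$ in $\mathbb{Q}$, a cap $B\in\mathsf{C}\mathbb{P}$ whose closed basic sets $\overline{b_\mathsf{S}}$ refine the open cover $\{\phi^{-1}[c_\mathsf{S}]:c\in C\}$, and then a single level $n$ and cap $A$ with $\mathbb{Q}_n\vartriangleleft C$ and $\mathbb{P}_n\vartriangleleft_A\circ\vartriangleleft_{\mathbb{P}_n}B$; a short chase through the star-below relations then shows that the \emph{whole level} $\mathbb{P}_n$ lies in $D^{\langle\!\leftarrow^\phi_n}$, with no metric and only one covering-type appeal to compactness (the closed refinement by $B$). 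You instead route everything through the strong refiner $\sqsupset_\phi$ of \Cref{SqsupseteqPhi}, reduce to the finitely many points of a cap $A_0$, and for each $a\in A_0$ invoke compactness twice in metric form: a sequential ``uniform continuity'' claim on the $\mathbb{P}$ side (via \Cref{barwedgeX} and a convergent subsequence), and a positive-distance/mesh argument on the $\mathbb{Q}$ side to convert $\overline{q''_\mathsf{S}}\cap\overline{d'_{a\mathsf{S}}}\neq\emptyset$ into $q''\vartriangleleft d_a$ via \Cref{VarChar}. The paper's version is shorter, needs no metric, and yields the stronger containment $\mathbb{P}_n\subseteq D^{\langle\!\leftarrow^\phi_n}$; yours makes the topological content explicit (uniform continuity of $\phi$ and separation of disjoint compacta), at the cost of importing the metric structure of $\mathsf{S}\mathbb{P}$ and $\mathsf{S}\mathbb{Q}$.

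One caveat: the single ingredient you assert rather than prove is that the closed level covers $\{\overline{p_\mathsf{S}}:p\in\mathbb{P}_n\}$ have mesh tending to $0$. This is true, but it does not follow from regularity alone; you need (i) that any finite $F$ with $F_\mathsf{S}=\mathsf{S}\mathbb{P}$ is a cap (the fact the paper cites as \cite[Proposition 2.8]{BaBiVi}), so that by compactness every open cover is refined by the basic opens of some level; (ii) that each level refines all earlier levels, which upgrades ``some level'' to ``all sufficiently deep levels'' and makes the sets $\overline{p_\mathsf{S}}$ shrink monotonically; and (iii) regularity plus \Cref{VarChar} to pass from small basic opens $q_\mathsf{S}$ to small closed sets $\overline{p_\mathsf{S}}$ with $p\vartriangleleft q$. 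Since this is of exactly the same nature as the Lebesgue-type refinement the paper uses without proof inside its own argument for this proposition (and invokes explicitly in \Cref{ArrowPhiLebesgue}), I would not call it a gap, but in a written-up version it should be isolated as a lemma rather than left as a parenthetical.
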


\begin{proof}
    Take $C\in\mathsf{C}\mathbb{Q}$ with $C\vartriangleleft_CD$.  As $\{\phi^{-1}[c_\mathsf{S}]:c\in C\}$ covers $\mathsf{S}\mathbb{P}$, it is refined by $\{\overline{b_\mathsf{S}}:b\in B\}$, for some $B\in\mathsf{C}\mathbb{P}$.  Take any $n\in\omega$ and $A\in\mathsf{C}\mathbb{P}$ with $\mathbb{Q}_n\vartriangleleft C$ and $\mathbb{P}_n\vartriangleleft_A\circ\vartriangleleft_{\mathbb{P}_n}B$.  For any $p\in\mathbb{P}_n$, we thus have $p'\in\mathbb{P}$, $b\in B$, $c\in C$ and $d\in D$ such that $p\vartriangleleft_A p'\vartriangleleft_{\mathbb{P}_n}b$, $\overline{b_\mathsf{S}}\subseteq\phi^{-1}[c_\mathsf{S}]$ and $c\vartriangleleft_Cd$.  Now if $q\leftarrow^\phi_np''\mathrel{\barwedge}p$ then we have $a\in A$ with $p''\mathrel{\wedge}a\mathrel{\wedge}p$ so $a\leq p'$ and hence $p''\mathrel{\wedge}p'$ which in turn yields $p''\leq b$ and hence $\phi[\overline{p''_\mathsf{S}}]\subseteq\phi[\overline{b_\mathsf{S}}]\subseteq c_\mathsf{S}$.  Taking any $q'\in C$ with $q\vartriangleleft q'$, it follows that $\emptyset\neq \overline{q_\mathsf{S}}\cap\phi[\overline{p''_\mathsf{S}}]\subseteq q'_\mathsf{S}\cap c_\mathsf{S}$ so $q'\mathrel{\wedge}c$ and hence $q\vartriangleleft q'\leq d$.  This shows that $d\mathrel{\langle\!\leftarrow^\phi_n}p$ and hence $D^{\langle\!\leftarrow^\phi_n}\supseteq\mathbb{P}_n\in\mathsf{C}\mathbb{P}$.
\end{proof}

\begin{corollary}\label{Function->ArrowSequence}
    For any regular $\omega$-posets $\mathbb{P}$ and $\mathbb{Q}$ and any continuous $\phi:\mathsf{S}\mathbb{P}\rightarrow\mathsf{S}\mathbb{Q}$, $(\leftarrow^\phi_n)$ is a thin arrow-sequence such that $\phi=\bigcap\leftarrow^\phi_{n\mathsf{S}}$.
\end{corollary}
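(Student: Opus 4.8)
The plan is to reduce everything to \Cref{ArrowPhi} together with the characterisation of thin arrow-sequences established at the start of this subsection, since the substantive technical work has already been carried out there. First I would check that $(\leftarrow^\phi_n)$ really is an arrow-sequence. As $\phi$ is a total function, every $S\in\mathsf{S}\mathbb{P}$ has $\phi(S)$ with $\phi(S)\mathrel{\phi}S$, so $\phi$ is co-surjective as a relation; the remark preceding \Cref{ArrowPhi} then gives that $(\leftarrow^\phi_n)$ is a decreasing sequence of arrows, and moreover that $\phi\subseteq\leftarrow^\phi_{n\mathsf{S}}$ for every $n$. (The finiteness part of being an arrow is automatic, since $\leftarrow^\phi_n\subseteq\mathbb{Q}_n\times\mathbb{P}_n$ and the levels of an $\omega$-poset are finite.)

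Next I would establish thinness, i.e.~that ${\sqsupset}=\bigcup_{n}\langle\!\leftarrow^\phi_n$ is a refiner. This is exactly where \Cref{ArrowPhi} does the work: given any $D\in\mathsf{C}\mathbb{Q}$, that proposition supplies $n\in\omega$ with $B:=D^{\langle\!\leftarrow^\phi_n}\in\mathsf{C}\mathbb{P}$. Since $\langle\!\leftarrow^\phi_n\subseteq{\sqsupset}$ we have $B=D^{\langle\!\leftarrow^\phi_n}\subseteq D^\sqsupset$, which is precisely the statement $B\sqsubset D$. As $D\in\mathsf{C}\mathbb{Q}$ was arbitrary, this is the refiner property, so the arrow-sequence is thin.

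Finally I would identify the intersection with $\phi$. By the theorem characterising thin arrow-sequences, $\psi:=\bigcap_n\overline{\leftarrow^\phi_{n\mathsf{S}}}$ is a continuous function. From $\phi\subseteq\leftarrow^\phi_{n\mathsf{S}}\subseteq\overline{\leftarrow^\phi_{n\mathsf{S}}}$ we get $\phi\subseteq\psi$; but two total functions on $\mathsf{S}\mathbb{P}$ one of which contains the other must coincide (for each $S$ the unique pair $(\phi(S),S)$ lies in $\psi$, forcing $\psi(S)=\phi(S)$), so $\phi=\psi$. Chaining the inclusions
\[
\phi\ \subseteq\ \bigcap_n\leftarrow^\phi_{n\mathsf{S}}\ \subseteq\ \bigcap_n\overline{\leftarrow^\phi_{n\mathsf{S}}}\ =\ \phi,
\]
where the middle inclusion uses $q_\mathsf{S}\subseteq\overline{q_\mathsf{S}}$, then forces $\phi=\bigcap_n\leftarrow^\phi_{n\mathsf{S}}$, as required.

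I do not expect a genuine obstacle here, since \Cref{ArrowPhi} is precisely the refiner-building input and the Arrow-Sequence Theorem converts thinness into the desired function. The only point needing a little care is distinguishing the open version $\leftarrow^\phi_{n\mathsf{S}}$ appearing in the statement from the closed version $\overline{\leftarrow^\phi_{n\mathsf{S}}}$ used to define thinness; the squeeze above lets one pass between them precisely because $\phi$ is already contained in the open version, so no separate argument is needed.
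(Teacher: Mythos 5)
Your proof is correct and takes essentially the same route as the paper: it gets the arrow-sequence property from the remark preceding \Cref{ArrowPhi} (co-surjectivity of $\phi$ and $\phi\subseteq{\leftarrow^\phi_{n\mathsf{S}}}$), derives thinness from \Cref{ArrowPhi} via the refiner property, and then identifies $\phi$ with the intersection by the containment-of-total-functions argument. Your explicit squeeze $\phi\subseteq\bigcap_n{\leftarrow^\phi_{n\mathsf{S}}}\subseteq\bigcap_n\overline{\leftarrow^\phi_{n\mathsf{S}}}=\phi$ merely makes precise the open-versus-closed distinction that the paper's one-line proof glosses over.
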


\begin{proof}
    The previous result immediately tells us that $(\leftarrow^\phi_n)$ is thin and thus $\bigcap\leftarrow^\phi_{n\mathsf{S}}$ is a function containing and hence equal to $\phi$.
\end{proof}

Let us yet again define a relation $\vartriangleleft_n$ on $\mathsf{P}(\mathbb{Q}\times\mathbb{P})$, for each $n\in\omega$, by
\[{\twoheadleftarrow}\ \vartriangleleft_n\ {\leftarrow}\qquad\Leftrightarrow\qquad{\twoheadleftarrow}\ \subseteq\ {{\vartriangleleft_{\mathbb{Q}_n}}\circ{\leftarrow}\circ{\vartriangleright_{\mathbb{P}_n}}}.\]
If $\mathbb{P}$ and $\mathbb{Q}$ are regular $\omega$-posets then, for each $m\in\omega$, we have $n<m$ with $\mathbb{P}_n\vartriangleleft_{\mathbb{P}_n}\mathbb{P}_m$ and $\mathbb{Q}_n\vartriangleleft_{\mathbb{Q}_n}\mathbb{Q}_m$.  For any $\phi\subseteq\mathsf{S}\mathbb{Q}\times\mathsf{S}\mathbb{P}$ it then follows that
\[{\leftarrow^\phi_n}\ \ \subseteq\ \ {{\vartriangleleft_{\mathbb{Q}_n}}\circ{=_{\mathbb{Q}_m}}\circ{\vartriangleright_{\mathbb{Q}_n}}\circ{\leftarrow^\phi_n}\circ{\vartriangleleft_{\mathbb{P}_n}}}\circ{=_{\mathbb{P}_m}}\circ{\vartriangleright_{\mathbb{P}_n}}\ \ \subseteq\ \ {{\vartriangleleft_{\mathbb{Q}_n}}\circ{\leftarrow^\phi_m}\circ{\vartriangleright_{\mathbb{P}_n}}}\]
and hence ${\leftarrow^\phi_n}\vartriangleleft_n{\leftarrow^\phi_m}$.  We can even replace $\leftarrow^\phi_m$ here with a relation $\leftarrow$ with $\mathrm{cl}(\phi)\subseteq{\leftarrow_\mathsf{S}}$.

\begin{proposition}\label{ArrowPhiLebesgue}
    For any regular $\omega$-posets $\mathbb{P}$ and $\mathbb{Q}$, closed relation $\phi\subseteq\mathsf{S}\mathbb{Q}\times\mathsf{S}\mathbb{P}$ and relation ${\leftarrow}\subseteq\mathbb{Q}\times\mathbb{P}$ such that $\phi\subseteq{\leftarrow_\mathsf{S}}$, we have $n\in\omega$ such that ${\leftarrow^\phi_n}\vartriangleleft_n{\leftarrow}$.
\end{proposition}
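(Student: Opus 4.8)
The plan is to run a Lebesgue-number argument against the open cover $\leftarrow_\mathsf{S}$, in the same spirit as \Cref{ArrowPhi}. Since $\phi$ is closed in the compact product $\mathsf{S}\mathbb{Q}\times\mathsf{S}\mathbb{P}$ it is compact, and $\leftarrow_\mathsf{S}=\bigcup_{q\leftarrow p}q_\mathsf{S}\times p_\mathsf{S}\supseteq\phi$ is an open cover by basic boxes. First I would extract a finite subcover $q_1\leftarrow p_1,\dots,q_k\leftarrow p_k$ with $\phi\subseteq\bigcup_i q_{i\mathsf{S}}\times p_{i\mathsf{S}}$. Then, using roundness of the selectors in each witnessing point together with \Cref{VarChar}, I would shrink each box to one with genuine $\vartriangleleft$-slack: for every $(T,S)\in\phi$ lying in $q_{i\mathsf{S}}\times p_{i\mathsf{S}}$, roundness yields $\tilde q\in T$, $\tilde p\in S$ with $\tilde q\vartriangleleft q_i$ and $\tilde p\vartriangleleft p_i$, so that $(T,S)\in\tilde q_\mathsf{S}\times\tilde p_\mathsf{S}$ while $\overline{\tilde q_\mathsf{S}}\subseteq q_{i\mathsf{S}}$ and $\overline{\tilde p_\mathsf{S}}\subseteq p_{i\mathsf{S}}$. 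Compactness then gives finitely many such shrunk boxes, say $\tilde q_j\vartriangleleft q_{i(j)}$, $\tilde p_j\vartriangleleft p_{i(j)}$ with $q_{i(j)}\leftarrow p_{i(j)}$ and $\phi\subseteq\bigcup_j\tilde q_{j\mathsf{S}}\times\tilde p_{j\mathsf{S}}$.

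Next I would choose $n$ by regularity, exactly as in \Cref{ArrowPhi}. Using that $\tilde p_j\vartriangleleft p_{i(j)}$ (and symmetrically on $\mathbb{Q}$), pick a cap $B\in\mathsf{C}\mathbb{P}$ refining $\{\tilde p_{j\mathsf{S}}\}$ and then $n\in\omega$ and an auxiliary cap $A\in\mathsf{C}\mathbb{P}$ with $\mathbb{P}_n\vartriangleleft_A{\circ}\vartriangleleft_{\mathbb{P}_n}B$, and likewise on the $\mathbb{Q}$-side; take the common (larger) $n$ that works for both posets and all finitely many $j$. Now given $(q,p)\in{\leftarrow^\phi_n}$, the defining witness $(T,S)\in\phi\cap(\overline{q_\mathsf{S}}\times\overline{p_\mathsf{S}})$ lies in some $\tilde q_{j\mathsf{S}}\times\tilde p_{j\mathsf{S}}$, so $\tilde q_j\in T$ and $\tilde p_j\in S$; by \Cref{barwedgeX} this gives $q\barwedge\tilde q_j$ and $p\barwedge\tilde p_j$. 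I would then replay the star-refinement chain of \Cref{ArrowPhi}: the adjacency $p\barwedge\tilde p_j$ feeds through the cap $A$ and the relation $\mathbb{P}_n\vartriangleleft_A{\circ}\vartriangleleft_{\mathbb{P}_n}B$ to force every level-$n$ cell meeting $p$ below the relevant $b\in B$, and hence (via $\tilde p_j\vartriangleleft p_{i(j)}$ and \eqref{BarwedgeVar}) below $p_{i(j)}$, i.e.~$p\vartriangleleft_{\mathbb{P}_n}p_{i(j)}$. The identical argument on $\mathbb{Q}$ gives $q\vartriangleleft_{\mathbb{Q}_n}q_{i(j)}$. Since $q_{i(j)}\leftarrow p_{i(j)}$, this places $(q,p)$ in ${\vartriangleleft_{\mathbb{Q}_n}}\circ{\leftarrow}\circ{\vartriangleright_{\mathbb{P}_n}}$, which is exactly ${\leftarrow^\phi_n}\vartriangleleft_n{\leftarrow}$.

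The main obstacle is precisely the middle step: the witness only yields the \emph{adjacency} $p\barwedge\tilde p_j$, whereas the conclusion demands the much stronger \emph{star-below} relation $p\vartriangleleft_{\mathbb{P}_n}p_{i(j)}$ (and symmetrically for $q$). This is a genuine Lebesgue-number phenomenon, and bridging it is where the fineness of level $n$ must beat the slack built into the cover. The crucial leverage is the observation underlying \Cref{ArrowPhi} that a relation $\vartriangleleft_{\mathbb{P}_n}$ applied to a \emph{level-$n$} element produces an actual $\leq$, so that the chain $p''\barwedge p\vartriangleleft_A p'\vartriangleleft_{\mathbb{P}_n}b$ collapses to $p''\leq b$ for every competing level-$n$ cell $p''$; everything here is purely order-theoretic and uses only regularity (not primeness or predeterminedness). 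The remaining care is bookkeeping: verifying that a single $n$ can be chosen to serve both coordinates and all the finitely many refined boxes simultaneously, which is handled by taking the maximum of the finitely many thresholds produced by regularity.
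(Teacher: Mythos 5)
Your overall strategy is the one the paper itself alludes to (a Lebesgue-number argument against the open cover ${\leftarrow_\mathsf{S}}$), but two steps break down as written. The first is the choice of ``a cap $B\in\mathsf{C}\mathbb{P}$ refining $\{\tilde p_{j\mathsf{S}}\}$''. Here $\phi$ is only a closed \emph{relation}: your finitely many boxes cover $\phi$, not $\mathsf{S}\mathbb{Q}\times\mathsf{S}\mathbb{P}$, so the family $\{\tilde p_{j\mathsf{S}}\}$ need not cover $\mathsf{S}\mathbb{P}$ at all (take $\phi$ to be a single pair $\{(T_0,S_0)\}$). But every minimal selector meets every cap, so $B_\mathsf{S}=\mathsf{S}\mathbb{P}$ for \emph{every} cap $B$, and hence a cap can refine a family of open sets only if that family already covers $\mathsf{S}\mathbb{P}$; no such $B$ exists in general. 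This is exactly where the present proposition is harder than \Cref{ArrowPhi}: there $\phi$ is a total continuous function, so $\{\phi^{-1}[c_\mathsf{S}]:c\in C\}$ is a genuine open cover of $\mathsf{S}\mathbb{P}$ which a cap can refine, whereas a relation admits no pullback cover. (Padding your family with the open complement of the compact projection of $\phi$ restores the existence of $B$, but not the rest of the argument.)

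The second, deeper gap is the inference ``every level-$n$ cell meeting $p$ is $\leq b$, hence $\leq p_{i(j)}$''. For this you need $b\leq p_{i(j)}$, and neither cited tool yields it: \eqref{BarwedgeVar} turns ${\barwedge}\circ{\vartriangleleft}$ into $\wedge$, i.e.\ into mere compatibility, never into an order inequality; worse, $b$ is produced by the abstract chain $p\vartriangleleft_Ap'\vartriangleleft_{\mathbb{P}_n}b$ and is a priori unrelated to the index $j$ of the witness $(T,S)$, while the $\mathbb{Q}$-side chain would point to a possibly different index --- yet the target ${\vartriangleleft_{\mathbb{Q}_n}}\circ{\leftarrow}\circ{\vartriangleright_{\mathbb{P}_n}}$ requires one and the same pair $q_{i(j)}\leftarrow p_{i(j)}$ above \emph{both} coordinates. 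Both problems are fixed by two observations you omit: (i) since $S\subseteq p^\wedge$ and $S$ meets every cap, the chain stays inside $S$ (any $a\in A\cap S$ lies in $A\cap p^\wedge\leq p'$, so $p'\in S$; then any $s\in\mathbb{P}_n\cap S$ lies in $\mathbb{P}_n\cap p'^\wedge\leq b$, so $b\in S$); (ii) the finitely many roundness witnesses can be placed on a single common level, $\tilde p_j\vartriangleleft_{\mathbb{P}_M}p_{i(j)}$ for all $j$, since ${\vartriangleleft}=\bigcup_m{\vartriangleleft_{\mathbb{P}_m}}$ and ${\vartriangleleft_{\mathbb{P}_m}}\subseteq{\vartriangleleft_{\mathbb{P}_M}}$ for $m\leq M$ (levels decrease), and one should then take $B:=\mathbb{P}_M$ --- a cap, so the covering issue of the first paragraph also disappears. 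Now $b\in\mathbb{P}_M\cap S$ and $\tilde p_j\in S$ give $b\in\mathbb{P}_M\cap\tilde p_j^\wedge\leq p_{i(j)}$, which is exactly the missing inequality, and the index $j$ is the same on both sides because it comes from the single witness $(T,S)$. With (i) and (ii) your outline does go through; without them it does not. For comparison, the paper's own proof avoids covers entirely: it attaches to each $(T,S)\in\phi$ the least $n$ with $(T\cap\mathbb{Q}_n)\times(S\cap\mathbb{P}_n)\leq{\leftarrow}$, proves this bounded on $\phi$ by a convergence argument, and only then upgrades $\leq$ to $\vartriangleleft_n$ via regularity.
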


\begin{proof}
    This follows from the Lebesgue number lemma combined with \cite[Proposition 1.17 and Theorem 2.29]{BaBiVi}.  For convenience, we also provide a more direct proof.
    
    For any $(T,S)\in\phi\subseteq{\leftarrow_\mathsf{S}}$, we have $p\in S$ and $q\in T$ such that $q\leftarrow p$.  As the spectrum consists of minimal selectors, we then have $n\in\omega$ such that $\mathbb{P}_n\cap S\leq p$ and $\mathbb{Q}_n\cap T\leq q$.  For each $(T,S)\in\phi$, we may thus let $n(T,S)$ be the minimal $n\in\omega$ such that $(T\cap\mathbb{Q}_n)\times(S\cap\mathbb{P}_n)\leq{\leftarrow}$.  We claim that $n[\phi]$ is bounded.

    Indeed, if $n[\phi]$ where unbounded then we would have a sequence $(T_k,S_k)\subseteq\phi$ with $n(T_k,S_k)$ strictly increasing.  Reverting to a subsequence if necessary, we may further assume that $(T_k,S_k)$ converges to some $(T,S)\in\phi$.  Take any $m>n=n(T,S)$ with $\mathbb{P}_m\vartriangleleft_{\mathbb{P}_m}\mathbb{P}_n$ and $\mathbb{Q}_m\vartriangleleft_{\mathbb{Q}_m}\mathbb{Q}_n$.  Taking any $p\in\mathbb{P}_m\cap S$ and $q\in\mathbb{Q}_m\cap T$, we then have $p'\in\mathbb{P}_n$ and $q'\in\mathbb{Q}_n$ with $p\vartriangleleft_{\mathbb{P}_m}p'$ and $q\vartriangleleft_{\mathbb{Q}_m}q'$.  By the definition of $n(T,S)$, we then have $p''\vartriangleright p'$ and $q''\vartriangleright q'$ with $q''\leftarrow p''$.  However, as $(T_k,S_k)$ converges to $(T,S)$, we can pick $k\in\omega$ such that $p\in S_k$, $q\in T_k$ and $n(T_k,S_k)>m$.  For any $p'''\in S_k\cap\mathbb{P}_m$ and $q'''\in T_k\cap\mathbb{Q}_m$, it follows that $p'''\mathrel{\wedge}p$ and $q'''\mathrel{\wedge}q$ so $p'''\leq p'\leq p$ and $q'''\leq q'\leq q$.  This shows that $(\phi(S_k)\cap\mathbb{Q}_m)\times(S_k\cap\mathbb{P}_m)\leq{\leftarrow}$ so $n(S_k)\leq m$, a contradiction.

    This proves the claim that we have $m\in\omega$ such that $(T\cap\mathbb{Q}_m)\times(S\cap\mathbb{P}_m)\leq{\leftarrow}$, for all $(T,S)\in\phi$.  Taking any $n>m$ with $\mathbb{P}_n\vartriangleleft_{\mathbb{P}_n}\mathbb{P}_m$, it follows that $(T_{\wedge}\cap\mathbb{Q}_n)\times(S_{\wedge}\cap\mathbb{P}_n)\leq{\leftarrow}$, for all $(T,S)\in\phi$.  But if $q\leftarrow^\phi_np$ then we have $(T,S)\in\phi\cap(\overline{p_\mathsf{S}}\cap\overline{q_\mathsf{S}})$ which then implies $(q,p)\in(T_{\wedge}\cap\mathbb{Q}_n)\times(S_{\wedge}\cap\mathbb{P}_n)\leq{\leftarrow}$.  This shows that ${\leftarrow^\phi_n}\leq{\leftarrow}$.  Taking $n'>n$ large enough that ${\leftarrow^\phi_{n'}}\vartriangleleft_{n'}{\leftarrow^\phi_n}$ then yields ${\leftarrow^\phi_{n'}}\vartriangleleft_{n'}{\leftarrow}$ as well.
\end{proof}

One immediately corollary is that relations on levels define a basis for $\mathbf{C}^{\mathsf{S}\mathbb{Q}}_{\mathsf{S}\mathbb{P}}$.

\begin{corollary}\label{LevelBasis}
    For any regular $\omega$-posets $\mathbb{P}$ and $\mathbb{Q}$, we have a basis for $\mathbf{C}^{\mathsf{S}\mathbb{Q}}_{\mathsf{S}\mathbb{P}}$ given by
    \[\{{\leftarrow_\mathbf{C}}:n\in\omega\text{ and }{\leftarrow}\subseteq\mathbb{Q}_n\times\mathbb{P}_n\}.\]
\end{corollary}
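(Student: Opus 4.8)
The plan is to show that the level-indexed sets $\leftarrow_\mathbf{C}$ refine the general basic open sets $\twoheadleftarrow_\mathbf{C}$ (for finite ${\twoheadleftarrow}\subseteq\mathbb{Q}\times\mathbb{P}$), which are already known to form a basis for $\mathbf{C}^{\mathsf{S}\mathbb{Q}}_{\mathsf{S}\mathbb{P}}$ by the discussion around \eqref{leftarrowC}. Since every level relation ${\leftarrow}\subseteq\mathbb{Q}_n\times\mathbb{P}_n$ is in particular a relation on $\mathbb{Q}\times\mathbb{P}$, the set $\leftarrow_\mathbf{C}$ is already among the declared open sets, so openness is automatic. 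It thus remains to show: given any $\phi\in\mathbf{C}^{\mathsf{S}\mathbb{Q}}_{\mathsf{S}\mathbb{P}}$ and any finite ${\twoheadleftarrow}\subseteq\mathbb{Q}\times\mathbb{P}$ with $\phi\in\twoheadleftarrow_\mathbf{C}$, there exist $n\in\omega$ and ${\leftarrow}\subseteq\mathbb{Q}_n\times\mathbb{P}_n$ with $\phi\in\leftarrow_\mathbf{C}\subseteq\twoheadleftarrow_\mathbf{C}$.

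The natural candidate is ${\leftarrow}={\leftarrow^\phi_n}$ for suitably large $n$. As $\phi$ is a continuous function between metrisable compacta, its graph is a closed relation, and $\phi\in\twoheadleftarrow_\mathbf{C}$ means $\phi\subseteq\twoheadleftarrow_\mathsf{S}$. I would therefore apply \Cref{ArrowPhiLebesgue} with ${\leftarrow}:={\twoheadleftarrow}$ to obtain some $n\in\omega$ with ${\leftarrow^\phi_n}\vartriangleleft_n{\twoheadleftarrow}$, i.e.\ ${\leftarrow^\phi_n}\subseteq{\vartriangleleft_{\mathbb{Q}_n}}\circ{\twoheadleftarrow}\circ{\vartriangleright_{\mathbb{P}_n}}$. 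Membership is then immediate: as noted just before \Cref{ArrowPhi}, $\phi\subseteq\leftarrow^\phi_{n\mathsf{S}}$ always holds, so $\phi\in\leftarrow^\phi_{n\mathbf{C}}$.

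The key step is to upgrade ${\leftarrow^\phi_n}\vartriangleleft_n{\twoheadleftarrow}$ to the plain ordering ${\leftarrow^\phi_n}\leq{\twoheadleftarrow}$. Here the crucial point is that ${\leftarrow^\phi_n}\subseteq\mathbb{Q}_n\times\mathbb{P}_n$, so for any $q\leftarrow^\phi_np$ we have $q\in\mathbb{Q}_n$ and $p\in\mathbb{P}_n$; unpacking the composition gives $q', p'$ with $q\vartriangleleft_{\mathbb{Q}_n}q'\twoheadleftarrow p'\vartriangleright_{\mathbb{P}_n}p$. On level elements the star-below relations degenerate to the ordering: since $q\in\mathbb{Q}_n\cap q^\wedge\subseteq (q')^\geq$ we get $q\leq q'$, and symmetrically $p\in\mathbb{P}_n\cap p^\wedge\subseteq (p')^\geq$ gives $p\leq p'$, i.e.\ $p'\geq p$. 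Hence $(q,p)\in{\leq}\circ{\twoheadleftarrow}\circ{\geq}$, so ${\leftarrow^\phi_n}\leq{\twoheadleftarrow}$. \Cref{leqSubs} then yields ${\leftarrow^\phi_{n\mathsf{S}}}\subseteq{\twoheadleftarrow_\mathsf{S}}$, and so $\leftarrow^\phi_{n\mathbf{C}}\subseteq\twoheadleftarrow_\mathbf{C}$ straight from the definition \eqref{leftarrowC}. Combined with $\phi\in\leftarrow^\phi_{n\mathbf{C}}$, this gives the desired $\phi\in\leftarrow^\phi_{n\mathbf{C}}\subseteq\twoheadleftarrow_\mathbf{C}$ with ${\leftarrow^\phi_n}\subseteq\mathbb{Q}_n\times\mathbb{P}_n$, completing the refinement.

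The main obstacle is conceptual rather than computational, and it has essentially already been overcome: nearly all the work is carried by the Lebesgue-number argument of \Cref{ArrowPhiLebesgue}, which supplies the level $n$ uniformly over the closed relation $\phi$. Granting that, the only genuinely new observation is the degeneration of $\vartriangleleft_{\mathbb{Q}_n}$ and $\vartriangleright_{\mathbb{P}_n}$ to $\leq$ and $\geq$ on elements of $\mathbb{Q}_n$ and $\mathbb{P}_n$, which is precisely what forces $\vartriangleleft_n$ to collapse to $\leq$ for relations supported on $\mathbb{Q}_n\times\mathbb{P}_n$; the remainder is routine bookkeeping with the definitions of $\leftarrow_\mathsf{S}$ and $\leftarrow_\mathbf{C}$.
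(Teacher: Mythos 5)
Your proof is correct and takes essentially the same route as the paper: both apply \Cref{ArrowPhiLebesgue} to the given $\phi\subseteq{\twoheadleftarrow_\mathsf{S}}$ to produce a level relation ${\leftarrow^\phi_n}$ below ${\twoheadleftarrow}$, then combine $\phi\subseteq{\leftarrow^\phi_{n\mathsf{S}}}$ with \Cref{leqSubs} to get $\phi\in{\leftarrow^\phi_{n\mathbf{C}}}\subseteq{\twoheadleftarrow_\mathbf{C}}$. Your explicit observation that ${\vartriangleleft_n}$ collapses to ${\leq}$ for relations supported on $\mathbb{Q}_n\times\mathbb{P}_n$ (via reflexivity of $\wedge$ on level elements) is a welcome clarification of a step the paper elides, since it quotes \Cref{ArrowPhiLebesgue} as yielding ${\leftarrow^\phi_n}\leq{\twoheadleftarrow}$ directly, which is really an intermediate claim inside that lemma's proof.
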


\begin{proof}
    By definition, $\mathbf{C}^{\mathsf{S}\mathbb{P}}_{\mathsf{S}\mathbb{P}}$ has a basis consisting open sets of the form $\leftarrow_\mathbf{C}$, for ${\leftarrow}\subseteq\mathbb{Q}\times\mathbb{P}$.  But if $\phi\in{\leftarrow_\mathbf{C}}$, i.e.~$\phi\subseteq{\leftarrow_\mathsf{S}}$, then the above result yields $n\in\omega$ such that ${\leftarrow^\phi_n}\leq{\leftarrow}$ and hence $\phi\in(\leftarrow^\phi_n)_\mathsf{S}\subseteq{\leftarrow_\mathsf{S}}$.  As ${\leftarrow^\phi_n}\subseteq\mathbb{Q}_n\times\mathbb{P}_n$, we are done.
\end{proof}

To prove another corollary, let us first observe that
\[\mathbb{Q}_n\times\mathbb{P}_n\ \supseteq\ {\twoheadleftarrow}\ \leq\ {\leftarrow}\qquad\Rightarrow\qquad{\twoheadleftarrow}\ \subseteq\ {\leftarrow_n^{\overline{\leftarrow_\mathsf{S}}}}.\]
Indeed, if $q\twoheadleftarrow p$ then $\overline{q_\mathsf{S}}\times\overline{p_\mathsf{S}}\subseteq\overline{\twoheadleftarrow_\mathsf{S}}\subseteq\overline{\leftarrow_\mathsf{S}}$ so $\emptyset\neq\overline{\leftarrow_\mathsf{S}}\cap\overline{q_\mathsf{S}}\times\overline{p_\mathsf{S}}$ and hence $q\leftarrow_n^{\overline{\leftarrow_\mathsf{S}}}p$.

\begin{corollary}\label{OpenNeighbourhoodSequence}
    Whenever we are given regular $\omega$-posets $\mathbb{P}$ and $\mathbb{Q}$, a strictly increasing sequence $(n_k)\subseteq\omega$, a decreasing sequence ${\leftarrow_k}\subseteq\mathbb{Q}_{n_k}\times\mathbb{P}_{n_k}$ and a relation ${\leftarrow}\subseteq\mathbb{Q}\times\mathbb{P}$ with $\phi\subseteq{\leftarrow_\mathsf{S}}$, where $\phi=\bigcap_{k\in\omega}\overline{\leftarrow_{k\mathsf{S}}}$, there is some $k\in\omega$ with ${\leftarrow_k}\vartriangleleft_k{\leftarrow}$.
\end{corollary}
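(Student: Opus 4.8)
The plan is to reduce the whole statement to a single application of \Cref{ArrowPhiLebesgue}, using compactness of the spectra to tame the tail of the decreasing sequence $(\overline{\leftarrow_{k\mathsf{S}}})$ and then matching levels by hand. First I would note that $(\overline{\leftarrow_{k\mathsf{S}}})$ is a decreasing sequence of closed, hence compact, subsets of $\mathsf{S}\mathbb{Q}\times\mathsf{S}\mathbb{P}$: it is decreasing because ${\leftarrow_{k+1}}\leq{\leftarrow_k}$ gives ${\overline{\leftarrow_{(k+1)\mathsf{S}}}}\subseteq{\overline{\leftarrow_{k\mathsf{S}}}}$ by \Cref{leqSubs}, and each term is closed since the relevant levels are finite. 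As the intersection $\phi$ lies in the open set ${\leftarrow_\mathsf{S}}$, the compacta ${\overline{\leftarrow_{k\mathsf{S}}}}\setminus{\leftarrow_\mathsf{S}}$ decrease to $\emptyset$, so by the finite intersection property there is some $k_1$ with ${\overline{\leftarrow_{k\mathsf{S}}}}\subseteq{\leftarrow_\mathsf{S}}$ for all $k\geq k_1$. I then set $\psi={\overline{\leftarrow_{k_1\mathsf{S}}}}$, a closed relation with $\psi\subseteq{\leftarrow_\mathsf{S}}$.

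Next I would feed $\psi$ into \Cref{ArrowPhiLebesgue} to obtain some $n_0$ with ${\leftarrow^\psi_{n_0}}\vartriangleleft_{n_0}{\leftarrow}$, and then upgrade this to every sufficiently fine level. For $n>n_0$ chosen by regularity so that $\mathbb{P}_n\vartriangleleft_{\mathbb{P}_n}\mathbb{P}_{n_0}$ and $\mathbb{Q}_n\vartriangleleft_{\mathbb{Q}_n}\mathbb{Q}_{n_0}$, the level-monotonicity noted just before \Cref{ArrowPhiLebesgue} gives ${\leftarrow^\psi_n}\vartriangleleft_n{\leftarrow^\psi_{n_0}}$; substituting ${\leftarrow^\psi_{n_0}}\subseteq{{\vartriangleleft_{\mathbb{Q}_{n_0}}}\circ{\leftarrow}\circ{\vartriangleright_{\mathbb{P}_{n_0}}}}$ and absorbing the inner stars via ${\vartriangleleft}\circ{\leq}\subseteq{\vartriangleleft}$ (licit since the middle witnesses lie in $\mathbb{Q}_{n_0}$, respectively $\mathbb{P}_{n_0}$) yields ${\leftarrow^\psi_n}\vartriangleleft_n{\leftarrow}$. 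Thus ${\leftarrow^\psi_n}\vartriangleleft_n{\leftarrow}$ holds for all sufficiently fine $n$.

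Finally I would pick $k\geq k_1$ with $n_k$ this fine. The observation immediately preceding the corollary, applied with ${\twoheadleftarrow}={\leftarrow}={\leftarrow_k}$ and the reflexivity of $\leq$, gives ${\leftarrow_k}\subseteq{\leftarrow_{n_k}^{\overline{\leftarrow_{k\mathsf{S}}}}}$; and since $k\geq k_1$ forces ${\overline{\leftarrow_{k\mathsf{S}}}}\subseteq\psi$, the defining nonempty-intersection condition shows ${\leftarrow_{n_k}^{\overline{\leftarrow_{k\mathsf{S}}}}}\subseteq{\leftarrow^\psi_{n_k}}$. Chaining these with the previous paragraph gives ${\leftarrow_k}\subseteq{\leftarrow^\psi_{n_k}}\subseteq{{\vartriangleleft_{\mathbb{Q}_{n_k}}}\circ{\leftarrow}\circ{\vartriangleright_{\mathbb{P}_{n_k}}}}$, which is precisely ${\leftarrow_k}\vartriangleleft_k{\leftarrow}$.

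I expect the main obstacle to be the mismatch of levels: \Cref{ArrowPhiLebesgue} only delivers the star-below relation at one level $n_0$ of its own choosing, whereas the conclusion is demanded at the prescribed level $n_k$ fixed by the given sequence. This is exactly what forces the upgrade-to-all-fine-levels step and the choice of a large $k$. A secondary but essential subtlety is that I must work with the truncation $\psi={\overline{\leftarrow_{k_1\mathsf{S}}}}$ rather than with $\phi$ itself: a pair in $\leftarrow_k$ need not have a nearby point of $\phi$, so only the containment ${\leftarrow_k}\subseteq{\leftarrow^\psi_{n_k}}$ is available, not ${\leftarrow_k}\subseteq{\leftarrow^\phi_{n_k}}$, and the hypothesis $\phi\subseteq{\leftarrow_\mathsf{S}}$ enters solely through the compactness step that makes $\psi\subseteq{\leftarrow_\mathsf{S}}$.
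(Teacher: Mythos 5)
Your proposal is correct and takes essentially the same route as the paper's proof: compactness of $\mathsf{S}\mathbb{Q}\times\mathsf{S}\mathbb{P}$ gives a tail index $l$ with $\overline{\leftarrow_{l\mathsf{S}}}\subseteq{\leftarrow_\mathsf{S}}$, \Cref{ArrowPhiLebesgue} is applied to this closed truncation rather than to $\phi$ itself, and the observation preceding the corollary together with ${\leftarrow_k}\leq{\leftarrow_l}$ transfers the star-refinement to ${\leftarrow_k}$ for suitably large $k$. The only difference is that you make explicit the level-matching step (your ``upgrade to all sufficiently fine levels'' via absorbing ${\vartriangleleft_{\mathbb{Q}_{n_0}}}$ into ${\vartriangleleft_{\mathbb{Q}_n}}$), which the paper compresses into the phrase ``enlarging $k$ if necessary.''
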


\begin{proof}
    As $\phi\subseteq{\leftarrow_\mathsf{S}}$ and $\mathsf{S}\mathbb{Q}\times\mathsf{S}\mathbb{P}$ is compact, we have $l\in\omega$ with ${\overline{\leftarrow_{l\mathsf{S}}}}\subseteq{\leftarrow_\mathsf{S}}$.  The previous result then yields $k\in\omega$ with ${\leftarrow^{\overline{\leftarrow_{l\mathsf{S}}}}_k}\vartriangleleft_k{\leftarrow}$.  Enlarging $k$ if necessary, we may assume $k\geq l$ so ${\leftarrow_k}\leq{\leftarrow_l}$ and hence ${\leftarrow_k}\subseteq{\leftarrow^{\overline{\leftarrow_{l\mathsf{S}}}}_k}$, by the above observation, so ${\leftarrow_k}\vartriangleleft_k{\leftarrow}$ as well.
\end{proof}

Let us call $(\leftarrow_n)$ \emph{regular} if, for all $m\in\omega$, we have $n>m$ with ${\leftarrow_n}\vartriangleleft{\leftarrow_m}$.  We already observed that if $\mathbb{P}$ and $\mathbb{Q}$ are regular $\omega$-posets then $(\leftarrow^\phi_n)$ is regular, for any $\phi\subseteq\mathsf{S}\mathbb{Q}\times\mathsf{S}\mathbb{P}$, where $\vartriangleleft$ can even be replaced by the stronger relation $\vartriangleleft_n$.  In fact this last statement remains valid for general regular sequences, which can also be characterised as follows.

\begin{proposition}\label{RegularSequence}
    Given any regular $\omega$-posets $\mathbb{P}$ and $\mathbb{Q}$, any strictly increasing sequence $(n_k)\subseteq\omega$ and any decreasing sequence ${\leftarrow_k}\subseteq\mathbb{Q}_{n_k}\times\mathbb{P}_{n_k}$,
    \[(\leftarrow_k)\text{ is regular}\qquad\Leftrightarrow\qquad{\bigcap_{k\in\omega}\overline{\leftarrow_{k\mathsf{S}}}}={\bigcap_{k\in\omega}\leftarrow_{k\mathsf{S}}}\qquad\Rightarrow\qquad{\bigcup_{k\in\omega}\langle\!\leftarrow_k}={\bigcup_{k\in\omega}[\!\leftarrow_k}.\]
    Moreover, if $(\leftarrow_n)$ regular then
    \begin{enumerate}
        \item for all $m\in\omega$, we can even find $n>m$ such that ${\leftarrow_n}\vartriangleleft_n{\leftarrow_m}$, and
        \item $(\leftarrow_n)$ is an arrow-sequence precisely when $\mathbb{Q}^{\leftarrow_n}\in\mathsf{C}\mathbb{P}$, for all $n\in\omega$.
    \end{enumerate}
\end{proposition}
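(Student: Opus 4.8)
The plan is to establish the central equivalence (A)$\Leftrightarrow$(B) first, deduce (C) from it, and then read off the two ``Moreover'' clauses, with \Cref{OpenNeighbourhoodSequence} carrying the analytic weight. Write $\Phi=\bigcap_{k}\overline{\leftarrow_{k\mathsf{S}}}$ and $\Psi=\bigcap_{k}\leftarrow_{k\mathsf{S}}$. Since every $S\in\mathsf{S}\mathbb{P}$ is a filter we have $p_\mathsf{S}\subseteq\overline{p_\mathsf{S}}$, whence ${\leftarrow_{k\mathsf{S}}}\subseteq{\overline{\leftarrow_{k\mathsf{S}}}}$ for each $k$ and so $\Psi\subseteq\Phi$ always; the force of (B) is the reverse inclusion. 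For (A)$\Rightarrow$(B) I would fix $(T,S)\in\Phi$ and an index $j$, use regularity to choose $k>j$ with ${\leftarrow_k}\vartriangleleft{\leftarrow_j}$, and extract from $(T,S)\in\overline{\leftarrow_{k\mathsf{S}}}$ some $(q,p)\in{\leftarrow_k}$ with $T\subseteq q^\wedge$ and $S\subseteq p^\wedge$. Then ${\leftarrow_k}\vartriangleleft{\leftarrow_j}$ provides $(q',p')\in{\leftarrow_j}$ with $q\vartriangleleft q'$ and $p\vartriangleleft p'$, so \Cref{VarChar} gives $T\in\overline{q_\mathsf{S}}\subseteq q'_\mathsf{S}$ and $S\in\overline{p_\mathsf{S}}\subseteq p'_\mathsf{S}$, i.e. $(T,S)\in q'_\mathsf{S}\times p'_\mathsf{S}\subseteq{\leftarrow_{j\mathsf{S}}}$; as $j$ was arbitrary, $(T,S)\in\Psi$.

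The converse (B)$\Rightarrow$(A) is the only delicate point, and the temptation to argue level-by-level must be resisted: one cannot recover ${\leftarrow_k}\vartriangleleft{\leftarrow_j}$ from $\overline{\leftarrow_{k\mathsf{S}}}\subseteq{\leftarrow_{j\mathsf{S}}}$ alone, since a single box $\overline{q_\mathsf{S}}\times\overline{p_\mathsf{S}}$ may be covered by several boxes of ${\leftarrow_{j\mathsf{S}}}$ without lying in any one of them. Instead I would feed the whole sequence into \Cref{OpenNeighbourhoodSequence}. Assuming (B), for each $j$ we have $\Phi=\Psi\subseteq{\leftarrow_{j\mathsf{S}}}$; applying \Cref{OpenNeighbourhoodSequence} to the tail $({\leftarrow_i})_{i>j}$ (whose closed limit is again $\Phi$) with target relation ${\leftarrow_j}$ yields $k>j$ with ${\leftarrow_k}\vartriangleleft_k{\leftarrow_j}$, hence ${\leftarrow_k}\vartriangleleft{\leftarrow_j}$ because $\vartriangleleft_{\mathbb{Q}_{n_k}}\subseteq\vartriangleleft$ and $\vartriangleright_{\mathbb{P}_{n_k}}\subseteq\vartriangleright$. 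This is precisely regularity of $({\leftarrow_k})$. The identical computation, run under the standing hypothesis that $({\leftarrow_n})$ is regular so that (A)$\Leftrightarrow$(B) already gives $\Phi=\Psi$, produces for each $m$ an $n>m$ with ${\leftarrow_n}\vartriangleleft_n{\leftarrow_m}$, which is clause (1).

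For (C), the inclusion $\bigcup_k{\langle\!\leftarrow_k}\subseteq\bigcup_k{[\!\leftarrow_k}$ holds termwise, ${\langle\!\leftarrow}\subseteq{[\!\leftarrow}$ being exactly the sense in which $[\!\leftarrow$ is a weakening of $\langle\!\leftarrow$ (replacing $\barwedge$ by $\wedge$ and $\vartriangleleft$ by $\leq$, with images nonempty since the $\leftarrow_k$ are arrows). For the reverse I take $q\mathrel{[\!\leftarrow_j}p$, use regularity to find $k>j$ with ${\leftarrow_k}\vartriangleleft{\leftarrow_j}$, and apply \Cref{SquareArrow} with $\twoheadleftarrow={\leftarrow_k}$ and $\leftarrow={\leftarrow_j}$ to get $q\mathrel{\langle\!\leftarrow_k}p$, so that $\bigcup_k{[\!\leftarrow_k}\subseteq\bigcup_k{\langle\!\leftarrow_k}$ and hence equality.

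Finally, for (2), put $F_n=\mathbb{Q}^{\leftarrow_n}$ and note each ${\leftarrow_n}$ is finite, so it is an arrow exactly when $F_n^\barwedge=\mathbb{P}$. If every $F_n\in\mathsf{C}\mathbb{P}$ then \Cref{DenseCovers} gives $F_n^\barwedge=\mathbb{P}$, so every ${\leftarrow_n}$ is an arrow and (being decreasing) $({\leftarrow_n})$ is an arrow-sequence. Conversely, if $({\leftarrow_n})$ is an arrow-sequence then every $F_k^\barwedge=\mathbb{P}$; fixing $n$ and using clause (1) to pick $k>n$ with ${\leftarrow_k}\vartriangleleft{\leftarrow_n}$ gives $F_k\vartriangleleft F_n$ (each $p\in F_k$ is $\vartriangleleft$-below some $p'\in F_n$), so the last assertion of \Cref{DenseCovers}, applied with $E=F_k$ and $F=F_n$ (using that $\mathbb{P}$ is regular and $F_k^\barwedge=\mathbb{P}$), yields $F_n\in\mathsf{C}\mathbb{P}$. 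The main obstacle is thus entirely conceptual: recognising that (B)$\Rightarrow$(A) and clause (1) should be routed through the sequence-level statement \Cref{OpenNeighbourhoodSequence} rather than through a false per-level characterisation, after which the remaining steps are bookkeeping with \Cref{VarChar}, \Cref{SquareArrow} and \Cref{DenseCovers}.
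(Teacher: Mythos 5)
Your treatment of the central equivalence and of the two ``Moreover'' clauses is essentially the paper's own proof: the forward implication via \Cref{VarChar} (from ${\leftarrow_k}\vartriangleleft{\leftarrow_j}$ one gets $\overline{\leftarrow_{k\mathsf{S}}}\subseteq{\leftarrow_{j\mathsf{S}}}$, while $\bigcap_k{\leftarrow_{k\mathsf{S}}}\subseteq\bigcap_k\overline{\leftarrow_{k\mathsf{S}}}$ is automatic), the converse together with clause (1) via \Cref{OpenNeighbourhoodSequence}, and clause (2) via the final assertion of \Cref{DenseCovers} applied to $E=\mathbb{Q}^{\leftarrow_k}$ and $F=\mathbb{Q}^{\leftarrow_n}$. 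Those parts are correct and match the paper's route exactly.

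There is, however, a genuine gap in your argument for the last implication, namely in the inclusion $\bigcup_k{\langle\!\leftarrow_k}\subseteq\bigcup_k{[\!\leftarrow_k}$. You assert it holds termwise, on the grounds that ${[\!\leftarrow}$ arises from ${\langle\!\leftarrow}$ by ``replacing $\barwedge$ by $\wedge$ and $\vartriangleleft$ by $\leq$''. Replacing $\barwedge$ by $\wedge$ does weaken the condition (the universal quantifier runs over a smaller set), but replacing $\vartriangleleft$ by $\leq$ does not, because ${\vartriangleleft}\nsubseteq{\leq}$ in a general regular $\omega$-poset. Concretely, take the regular $\omega$-poset with top element $a$, first level $\{y,q,x'\}$, second level $\{w,v\}$ where $w<y$, $w<q$, $w<x'$ and $v<y$, $v<q$ but $v\nless x'$, with a single infinite chain continuing below each of $w$ and $v$. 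The only second-level element compatible with $x'$ is $w\leq y$, so $x'\vartriangleleft y$, yet $x'\nleq y$. Taking ${\leftarrow}=\{(x',w)\}$ one computes $w^{\barwedge\rightarrow}=w^{\wedge\rightarrow}=\{x'\}$, so $y\mathrel{\langle\!\leftarrow}w$ holds but $y\mathrel{[\!\leftarrow}w$ fails; thus ${\langle\!\leftarrow}\nsubseteq{[\!\leftarrow}$ pointwise. (Your parenthetical ``since the $\leftarrow_k$ are arrows'' also imports a hypothesis the proposition does not make; deciding when the $\leftarrow_k$ are arrows is exactly the content of clause (2).)

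A correct argument for this inclusion must use the sequence, not a pointwise comparison, and this is also where the paper is at its tersest (\Cref{SquareArrow} only supplies ${[\!\leftarrow_m}\subseteq{\langle\!\leftarrow_n}$, i.e.~the direction you handled second). One way to repair it: given $q\mathrel{\langle\!\leftarrow_m}p$, finiteness of $p^{\barwedge\rightarrow_m}$ yields a single cap $C\in\mathsf{C}\mathbb{Q}$ with $p^{\barwedge\rightarrow_m}\vartriangleleft_Cq$; choose $j\geq m$ large enough that $\mathbb{Q}_{n_j}\leq C$ (levels refine any fixed cap from some point on). For any $y\in p^{\wedge\rightarrow_j}$, decreasingness of the sequence gives $y\leq y'$ for some $y'\in p^{\wedge\rightarrow_m}\subseteq p^{\barwedge\rightarrow_m}$, and any $c\in C$ above $y$ is then compatible with $y'$, so $c\in Cy'\subseteq q^\geq$ and hence $y\leq c\leq q$. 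This is what actually forces the deep relations to land $\leq q$ (the non-emptiness clauses built into the demonic compositions still need separate attention); your termwise claim short-circuits this mechanism and is false as stated.
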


\begin{proof}
    If $(\leftarrow_k)$ is regular then, for every $m\in\omega$, we have $n>m$ with ${\leftarrow_n}\vartriangleleft{\leftarrow_m}$.  This means $\overline{\leftarrow_{n\mathsf{S}}}\subseteq{\leftarrow_{m\mathsf{S}}}$, by \Cref{VarChar}, ${[\!\leftarrow_m}\subseteq{\langle\!\leftarrow_n}$, by \Cref{SquareArrow}, and $\mathbb{Q}^{\leftarrow_n}\vartriangleleft\mathbb{Q}^{\leftarrow_m}$.  From this it follows that ${\bigcap_{k\in\omega}\overline{\leftarrow_{k\mathsf{S}}}}={\bigcap_{k\in\omega}\leftarrow_{k\mathsf{S}}}$, ${\bigcup_{k\in\omega}\langle\!\leftarrow_k}={\bigcup_{k\in\omega}[\!\leftarrow_k}$ and $\mathbb{Q}^{\leftarrow_m}\in\mathsf{C}\mathbb{P}$ if $\leftarrow_n$ is an arrow, by \Cref{DenseCovers}.

    Conversely, if ${\bigcap_{k\in\omega}\overline{\leftarrow_{k\mathsf{S}}}}={\bigcap_{k\in\omega}\leftarrow_{k\mathsf{S}}}$ then, for every $m\in\omega$, ${\bigcap_{k\in\omega}\overline{\leftarrow_{k\mathsf{S}}}}\subseteq{\leftarrow_{m\mathsf{S}}}$.  Then \Cref{OpenNeighbourhoodSequence} yields $n>m$ with ${\leftarrow_n}\vartriangleleft_n{\leftarrow_m}$.
\end{proof}

For later use let us also prove the following.

\begin{lemma}\label{ConjugacyLemma}
    Assume we are given regular $\omega$-posets $\mathbb{P}$ and $\mathbb{Q}$ together with a continuous function $\phi:\mathsf{S}\mathbb{Q}\rightarrow\mathsf{S}\mathbb{Q}$, a homeomorphism $\theta:\mathsf{S}\mathbb{P}\rightarrow\mathsf{S}\mathbb{Q}$ and a thin arrow-sequence $(\leftarrow_k)\subseteq\mathbb{P}_{n_k}\times\mathbb{P}_{n_k}$, for some strictly increasing $(n_k)\subseteq\omega$, such that
    \[\theta^{-1}\circ\phi\circ\theta=\bigcap\overline{\leftarrow_{k\mathsf{S}}}.\]
    For any $n\in\omega$ and ${\sqsupset}\subseteq{\sqsupset_\theta}$ with $\mathbb{Q}_n^\sqsupset\in\mathsf{C}\mathbb{P}$, we then have $k\in\omega$ with ${\leftarrow_k}\vartriangleleft_k{\sqsubset\circ\leftarrow^\phi_n\circ\sqsupset}$.
\end{lemma}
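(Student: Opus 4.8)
The plan is to reduce the claim to \Cref{OpenNeighbourhoodSequence}. Write $\psi=\theta^{-1}\circ\phi\circ\theta:\mathsf{S}\mathbb{P}\to\mathsf{S}\mathbb{P}$ and set ${\leftarrow}={\sqsubset\circ\leftarrow^\phi_n\circ\sqsupset}\subseteq\mathbb{P}\times\mathbb{P}$. Since $(\leftarrow_k)$ is a thin arrow-sequence with $\psi=\bigcap_k\overline{\leftarrow_{k\mathsf{S}}}$ by hypothesis, \Cref{OpenNeighbourhoodSequence} will yield some $k\in\omega$ with ${\leftarrow_k}\vartriangleleft_k{\leftarrow}$ --- exactly the desired conclusion --- as soon as we verify the one hypothesis not already at hand, namely $\psi\subseteq{\leftarrow_\mathsf{S}}$. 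So the entire proof reduces to this single containment.

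Before proving it, I would record the elementary point-transport property of $\sqsupset_\theta$: if $p\in S$ and $q\sqsupset_\theta p$, then $S\in p_\mathsf{S}\subseteq\overline{p_\mathsf{S}}$ gives $\theta(S)\in\theta[\overline{p_\mathsf{S}}]\subseteq q_\mathsf{S}$, i.e.\ $q\in\theta(S)$; as ${\sqsupset}\subseteq{\sqsupset_\theta}$, the same implication holds for $\sqsupset$ itself. The engine of the argument is the cap hypothesis $\mathbb{Q}_n^\sqsupset\in\mathsf{C}\mathbb{P}$, since every point of a spectrum is a minimal selector and so meets every cap. Fixing $S\in\mathsf{S}\mathbb{P}$, I apply this twice: to $S$, obtaining $p\in S$ and $q\in\mathbb{Q}_n$ with $q\sqsupset p$ and hence $q\in\theta(S)$; and to the selector $\psi(S)=\theta^{-1}(\phi(\theta(S)))$, obtaining $p''\in\psi(S)$ and $q'\in\mathbb{Q}_n$ with $q'\sqsupset p''$ (equivalently $p''\sqsubset q'$) and hence $q'\in\theta(\psi(S))=\phi(\theta(S))$, using $\theta\circ\psi=\phi\circ\theta$.

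Now $q\in\theta(S)\cap\mathbb{Q}_n$ and $q'\in\phi(\theta(S))\cap\mathbb{Q}_n$ give $\theta(S)\in\overline{q_\mathsf{S}}$ and $\phi(\theta(S))\in\overline{q'_\mathsf{S}}$, so the pair $(\phi(\theta(S)),\theta(S))\in\phi$ witnesses $\phi\cap(\overline{q'_\mathsf{S}}\times\overline{q_\mathsf{S}})\neq\emptyset$, i.e.\ $q'\leftarrow^\phi_n q$. Chaining $p''\sqsubset q'\leftarrow^\phi_n q\sqsupset p$ shows $(p'',p)\in{\leftarrow}$ with $p''\in\psi(S)$ and $p\in S$, so $(\psi(S),S)\in p''_\mathsf{S}\times p_\mathsf{S}\subseteq{\leftarrow_\mathsf{S}}$. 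As $S$ was arbitrary this proves $\psi\subseteq{\leftarrow_\mathsf{S}}$, and the reduction is complete.

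The step demanding the most care is the bookkeeping of composition order and of which poset each symbol inhabits: $\sqsupset$ runs from the $\mathbb{P}$-side to the $\mathbb{Q}$-side, $\leftarrow^\phi_n$ lives on $\mathbb{Q}_n$, and $\sqsubset$ returns to $\mathbb{P}$, so one must confirm that $\sqsubset\circ\leftarrow^\phi_n\circ\sqsupset$ really is a relation on $\mathbb{P}$ compatible with the levels of $(\leftarrow_k)$. The conceptual point is that the cap hypothesis $\mathbb{Q}_n^\sqsupset\in\mathsf{C}\mathbb{P}$ is exactly what furnishes the two ``entry points'' $p\in S$ and $p''\in\psi(S)$ possessing $\sqsupset$-partners in $\mathbb{Q}_n$, while ${\sqsupset}\subseteq{\sqsupset_\theta}$ is just strong enough to push them forward through $\theta$; no converse co-surjectivity of $\sqsupset$ is needed, which is what makes the two symmetric applications of the cap hypothesis go through.
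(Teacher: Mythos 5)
Your proof is correct and follows essentially the same route as the paper's: both reduce the statement to \Cref{OpenNeighbourhoodSequence} by proving $\theta^{-1}\circ\phi\circ\theta\subseteq(\sqsubset\circ\leftarrow^\phi_n\circ\sqsupset)_\mathsf{S}$, applying the cap hypothesis $\mathbb{Q}_n^\sqsupset\in\mathsf{C}\mathbb{P}$ once to $S$ and once to $\theta^{-1}(\phi(\theta(S)))$, and using ${\sqsupset}\subseteq{\sqsupset_\theta}$ to transport the resulting points into $\theta(S)$ and $\phi(\theta(S))$ so that $\theta(S)$ witnesses $q'\leftarrow^\phi_nq$.
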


\begin{proof}
    First we claim that $\theta^{-1}\circ\phi\circ\theta\subseteq(\sqsubset\circ\leftarrow^\phi_n\circ\sqsupset)_\mathsf{S}$.  To see this, take any $S\in\mathsf{S}\mathbb{P}$.  As $\mathbb{Q}_n^\sqsupset\in\mathsf{C}\mathbb{P}$, we have $p\in\mathbb{Q}_n^\sqsupset\cap S$, which means we have $q\in\mathbb{Q}_n$ with $q\sqsupset p$ and hence $q\in\theta(S)$, as ${\sqsupset}\subseteq{\sqsupset_\theta}$.  Likewise, we have $p'\in\theta^{-1}(\phi(\theta(S)))$ and $q'\in\mathbb{Q}_n$ with $q'\sqsupset p'$, necessarily with $q'\in\theta(\theta^{-1}(\phi(\theta(S))))=\phi(\theta(S))$.  Thus $\theta(S)$ witnesses the fact that $q'\leftarrow^\phi_nq$ and hence $(\theta^{-1}(\phi(\theta(S))),S)\in p'_\mathsf{S}\times p_\mathsf{S}\subseteq(\sqsubset\circ\leftarrow^\phi_n\circ\sqsupset)_\mathsf{S}$.  As $S$ was arbitrary, this proves the claim that $\theta^{-1}\circ\phi\circ\theta\subseteq(\sqsubset\circ\leftarrow^\phi_n\circ\sqsupset)_\mathsf{S}$.  Now just apply \Cref{OpenNeighbourhoodSequence}. 
\end{proof}

Another simple observation that will be useful later is the following.

\begin{proposition}\label{ArrowFromRefiner}
    For any continuous $\phi:\mathsf{S}\mathbb{P}\rightarrow\mathsf{S}\mathbb{Q}$ and ${\sqsupset}\subseteq{\mathbb{Q}_m\times\mathbb{P}_n}$, where $m\leq n$,
    \[{\sqsupset}\ \subseteq\ {\sqsupseteq_\phi}\qquad\Rightarrow\qquad{{\sqsupset}\circ{\leq^m_n}}\ \subseteq\ {\leftarrow^\phi_m}.\]
\end{proposition}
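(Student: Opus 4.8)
The plan is to unwind the composition on the left and the definition of $\leftarrow^\phi_m$ on the right, reducing the whole statement to a single application of the hypothesis ${\sqsupset}\subseteq{\sqsupseteq_\phi}$. First I would fix $q$ and $p$ with $q\mathrel{{\sqsupset}\circ{\leq^m_n}}p$. By the definition of composition this means there is some $p'\in\mathbb{P}_n$ with $q\sqsupset p'\mathrel{\leq^m_n}p$, i.e.\ $q\sqsupset p'$ and $p'\leq p$, where necessarily $q\in\mathbb{Q}_m$ (from the codomain of $\sqsupset$) and $p\in\mathbb{P}_m$ (from the codomain of $\leq^m_n$). The goal $q\leftarrow^\phi_m p$ amounts, by the definition of $\leftarrow^\phi_m$, to producing a single $S\in\mathsf{S}\mathbb{P}$ with $S\in\overline{p_\mathsf{S}}$ and $\phi(S)\in\overline{q_\mathsf{S}}$.

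Next I would translate the hypothesis. Since ${\sqsupset}\subseteq{\sqsupseteq_\phi}$ and $q\sqsupset p'$, we have $q\sqsupseteq_\phi p'$, that is, $\phi[\overline{p'_\mathsf{S}}]\subseteq\overline{q_\mathsf{S}}$. Thus any point of $\overline{p'_\mathsf{S}}$ will serve as our witness $S$, provided $\overline{p'_\mathsf{S}}$ is inhabited and sits inside $\overline{p_\mathsf{S}}$. The containment $\overline{p'_\mathsf{S}}\subseteq\overline{p_\mathsf{S}}$ is immediate from $p'\leq p$: this forces $p'^\wedge\subseteq p^\wedge$ (if $p'\wedge r$ via a common lower bound $s$, then $p\geq p'\geq s$ gives $p\wedge r$), and $\overline{r_\mathsf{S}}=\{S:S\subseteq r^\wedge\}$ by definition.

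The one point needing care is nonemptiness of $\overline{p'_\mathsf{S}}$. Here I would invoke the fact that $p'\barwedge p'$ always holds (\cite[Proposition 2.32]{BaBiVi}, exactly as already used in the proof of \Cref{DenseCovers}), whence \Cref{barwedgeX} gives $\overline{p'_\mathsf{S}}=\overline{p'_\mathsf{S}}\cap\overline{p'_\mathsf{S}}\neq\emptyset$. Picking any $S\in\overline{p'_\mathsf{S}}$, we get $S\in\overline{p_\mathsf{S}}$ by the previous paragraph and $\phi(S)\in\phi[\overline{p'_\mathsf{S}}]\subseteq\overline{q_\mathsf{S}}$ by the hypothesis, so $(\phi(S),S)$ witnesses $\phi\cap(\overline{q_\mathsf{S}}\times\overline{p_\mathsf{S}})\neq\emptyset$. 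Since $(q,p)\in\mathbb{Q}_m\times\mathbb{P}_m$, this is precisely $q\leftarrow^\phi_m p$, and as the pair was arbitrary we conclude ${\sqsupset}\circ{\leq^m_n}\subseteq{\leftarrow^\phi_m}$.

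I expect no serious obstacle here: once the composition is unwound the argument is essentially definitional, and the only substantive ingredient is the nonemptiness of $\overline{p'_\mathsf{S}}$, which is standard and holds in any $\omega$-poset (so regularity of $\mathbb{P}$ and $\mathbb{Q}$ is not even needed). The subtlest part is purely the index bookkeeping, namely verifying from the domains and codomains of $\sqsupset$ and $\leq^m_n$ that the produced pair genuinely lands in $\mathbb{Q}_m\times\mathbb{P}_m$, as demanded by the definition of $\leftarrow^\phi_m$.
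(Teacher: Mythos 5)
Your proof is correct and is essentially the paper's own argument: the paper also takes $q\sqsupseteq_\phi r\leq^m_n p$ and notes $\overline{q_\mathsf{S}}\cap\phi[\overline{p_\mathsf{S}}]\supseteq\phi[\overline{r_\mathsf{S}}]\neq\emptyset$, which is exactly your chain of containments plus the nonemptiness of $\overline{r_\mathsf{S}}$. You have merely made explicit the two ingredients the paper leaves implicit (monotonicity $r\leq p\Rightarrow\overline{r_\mathsf{S}}\subseteq\overline{p_\mathsf{S}}$ and $\overline{r_\mathsf{S}}\neq\emptyset$ via $r\barwedge r$), including the correct observation that regularity is not needed.
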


\begin{proof}
    Just note that if $\mathbb{Q}_m\ni q\sqsupseteq_\phi r\leq^m_np$ then $\overline{q_\mathsf{S}}\cap\phi[\overline{p_\mathsf{S}}]\supseteq\phi[\overline{r_\mathsf{S}}]\neq\emptyset$ so $q\leftarrow^\phi_mp$.
\end{proof}

Lastly, let us note that arrow-sequences can be composed via $\barwedge$.

\begin{proposition}\label{ArrowComposition}
    Given $\omega$-posets $\mathbb{P}$, $\mathbb{Q}$ and $\mathbb{R}$, any arrow-sequences $(\leftarrow_n)\subseteq\mathsf{P}(\mathbb{Q}\times\mathbb{P})$ and $(\twoheadleftarrow_n)\subseteq\mathsf{P}(\mathbb{R}\times\mathbb{Q})$ define another arrow-sequence \[{\leftarrowtail_n}\ =\ {{\twoheadleftarrow_n}\circ{\barwedge}\circ{\leftarrow_n}}.\]
    Moreover, if $(\leftarrow_n)$ and $(\twoheadleftarrow_n)$ are both thin then so is $(\leftarrowtail_n)$ and the resulting strong refiners ${\leftarrow}={\bigcup_{n\in\omega}{\langle\!\leftarrow_n}}$, ${\twoheadleftarrow}={\bigcup_{n\in\omega}{\langle\!\twoheadleftarrow_n}}$ and ${\leftarrowtail}={\bigcup_{n\in\omega}{\langle\!\leftarrowtail_n}}$ will then satisfy
    \[{\leftarrowtail}\ =\ {\twoheadleftarrow*\leftarrow}.\]
\end{proposition}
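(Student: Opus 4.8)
The plan is to treat the three assertions in turn: that $(\leftarrowtail_n)$ is an arrow-sequence, that it is thin, and that the associated strong refiner equals $\twoheadleftarrow*\leftarrow$. The last two I would deduce together, by identifying the limit function of $(\leftarrowtail_n)$ with the composite of the limit functions of $(\leftarrow_n)$ and $(\twoheadleftarrow_n)$, and then invoking uniqueness of strong refiners.

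First I would verify that each $\leftarrowtail_n$ is an arrow. Finiteness is clear, since $\leftarrowtail_n$ is assembled from the finite relations $\leftarrow_n$ and $\twoheadleftarrow_n$. For co-surjectivity of ${\leftarrowtail_n}\circ{\barwedge}$, fix $p\in\mathbb{P}$: co-surjectivity of ${\leftarrow_n}\circ{\barwedge}$ gives $q'\leftarrow_n p'\barwedge p$, and co-surjectivity of ${\twoheadleftarrow_n}\circ{\barwedge}$ applied to $q'$ gives $r\twoheadleftarrow_n q''\barwedge q'$, whence $r\twoheadleftarrow_n q''\barwedge q'\leftarrow_n p'$ yields $r\leftarrowtail_n p'\barwedge p$. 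To see the sequence is decreasing I would use ${\geq}\circ{\barwedge}\circ{\leq}\subseteq{\barwedge}$, which is immediate from the definition of $\barwedge$ because $p'\leq p$ forces $p'^\wedge\subseteq p^\wedge$. Combining this with ${\leftarrow_{n+1}}\subseteq{\leq}\circ{\leftarrow_n}\circ{\geq}$ and the analogous inclusion for $\twoheadleftarrow$ gives ${\leftarrowtail_{n+1}}\subseteq{\leq}\circ{\leftarrowtail_n}\circ{\geq}$, i.e.\ ${\leftarrowtail_{n+1}}\leq{\leftarrowtail_n}$.

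The conceptual heart of the argument is the identity
\[\overline{\leftarrowtail_{n\mathsf{S}}}\ =\ \overline{\twoheadleftarrow_{n\mathsf{S}}}\circ\overline{\leftarrow_{n\mathsf{S}}},\]
relating the combinatorial composition through $\barwedge$ to the topological composition of closed relations on the spectra. This is where \Cref{barwedgeX} enters: a factor $q_1\barwedge q_2$ in $r\twoheadleftarrow_n q_1\barwedge q_2\leftarrow_n p$ is, by that proposition, the same data as a point $T\in\overline{q_{1\mathsf{S}}}\cap\overline{q_{2\mathsf{S}}}$, and such a $T$ is exactly what is needed to factor a pair $(U,S)\in\overline{r_\mathsf{S}}\times\overline{p_\mathsf{S}}$ through $\mathsf{S}\mathbb{Q}$; reading this in both directions gives the identity. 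Since $(\leftarrow_n)$ and $(\twoheadleftarrow_n)$ are assumed thin, the Theorem of this subsection tells us that $\mathsf{S}_\leftarrow=\bigcap_n\overline{\leftarrow_{n\mathsf{S}}}$ and $\mathsf{S}_\twoheadleftarrow=\bigcap_n\overline{\twoheadleftarrow_{n\mathsf{S}}}$ are continuous functions, and it remains to show $\bigcap_n\bigl(\overline{\twoheadleftarrow_{n\mathsf{S}}}\circ\overline{\leftarrow_{n\mathsf{S}}}\bigr)=\mathsf{S}_\twoheadleftarrow\circ\mathsf{S}_\leftarrow$.

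This interchange of intersection and composition is the step I expect to be the main obstacle, as it is false for arbitrary relations and rests on compactness. The inclusion $\supseteq$ is routine, since $\mathsf{S}_\leftarrow\subseteq\overline{\leftarrow_{n\mathsf{S}}}$ and $\mathsf{S}_\twoheadleftarrow\subseteq\overline{\twoheadleftarrow_{n\mathsf{S}}}$ for every $n$. For $\subseteq$, given $(U,S)$ in the intersection I would choose intermediate points $T_n\in\mathsf{S}\mathbb{Q}$ witnessing the factorisation at stage $n$; as $\mathsf{S}\mathbb{Q}$ is a metrisable compactum I may pass to a convergent subsequence $T_{n_k}\to T$, and closedness of the $\overline{\leftarrow_{m\mathsf{S}}}$ and $\overline{\twoheadleftarrow_{m\mathsf{S}}}$ together with their decreasing nestedness forces $(T,S)\in\overline{\leftarrow_{m\mathsf{S}}}$ and $(U,T)\in\overline{\twoheadleftarrow_{m\mathsf{S}}}$ for all $m$, i.e.\ $T=\mathsf{S}_\leftarrow(S)$ and $U=\mathsf{S}_\twoheadleftarrow(T)$. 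Thus $\bigcap_n\overline{\leftarrowtail_{n\mathsf{S}}}=\mathsf{S}_\twoheadleftarrow\circ\mathsf{S}_\leftarrow$ is a function, so $(\leftarrowtail_n)$ is thin by the Theorem and $\leftarrowtail$ is a strong refiner with $\mathsf{S}_\leftarrowtail=\mathsf{S}_\twoheadleftarrow\circ\mathsf{S}_\leftarrow$. Finally, since $\mathsf{S}_{\twoheadleftarrow*\leftarrow}=\mathsf{S}_\twoheadleftarrow\circ\mathsf{S}_\leftarrow$ and both $\leftarrowtail$ and $\twoheadleftarrow*\leftarrow$ are strong refiners with the same image under $\mathsf{S}$, the uniqueness in \Cref{UniqueStrong} yields ${\leftarrowtail}={\twoheadleftarrow*\leftarrow}$.
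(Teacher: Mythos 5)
Your proof is correct, but it takes a genuinely different route from the paper's for the main part of the statement. The verification that $(\leftarrowtail_n)$ is an arrow-sequence is the same in both (chaining co-surjectivity and using ${\geq}\circ{\barwedge}\circ{\leq}\subseteq{\barwedge}$). For the ``moreover'' clause, however, the paper stays entirely on the combinatorial side: it proves the inclusion ${\langle\!\twoheadleftarrow_n}\circ{\langle\!\leftarrow_n}\subseteq{\langle\!\leftarrowtail_n}$ directly from the definitions (via $p^{\barwedge\rightarrowtail_n}\subseteq q^{\barwedge\twoheadrightarrow_n}\vartriangleleft r$), observes that ${\leftarrowtail'}=\bigcup_n{\langle\!\twoheadleftarrow_n}\circ{\langle\!\leftarrow_n}$ is then a refiner contained in both $\leftarrowtail$ and ${\twoheadleftarrow}\circ{\leftarrow}\subseteq{\twoheadleftarrow}*{\leftarrow}$, and concludes thinness and the equality ${\leftarrowtail}={\vartriangleright}*{\leftarrowtail'}={\twoheadleftarrow}*{\leftarrow}$ from the star-composition identities; no topology is invoked beyond what is already packaged in the refiner machinery. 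You instead work on the spectrum side: the identity $\overline{\leftarrowtail_{n\mathsf{S}}}=\overline{\twoheadleftarrow_{n\mathsf{S}}}\circ\overline{\leftarrow_{n\mathsf{S}}}$ via \Cref{barwedgeX} is correct in both directions, your compactness argument (closedness and nestedness of the $\overline{\leftarrow_{m\mathsf{S}}}$, plus a convergent subsequence of witnesses $T_n$ in the metrisable compactum $\mathsf{S}\mathbb{Q}$) legitimately interchanges the intersection with the composition, and the final appeal to $\mathsf{S}_{\twoheadleftarrow*\leftarrow}=\mathsf{S}_\twoheadleftarrow\circ\mathsf{S}_\leftarrow$ together with \Cref{UniqueStrong} is valid since both $\leftarrowtail$ and ${\twoheadleftarrow}*{\leftarrow}$ are strong refiners. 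What each approach buys: yours makes the topological content explicit and identifies the limit function conceptually as the composite, at the cost of leaning on regularity (for the ``refiner iff function'' equivalence and metrisability of the spectrum) and on the uniqueness machinery for strong refiners; the paper's argument is sharper at the relational level -- the inclusion ${\langle\!\twoheadleftarrow_n}\circ{\langle\!\leftarrow_n}\subseteq{\langle\!\leftarrowtail_n}$ is finer information than equality of the limit functions -- and transfers more readily to settings where one wants to track the refiners themselves rather than the maps they induce. One small polish for your write-up: the subsequence extraction can be replaced by a finite-intersection-property argument on the closed sets of witnesses, which removes even the need for metrisability.
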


\begin{proof}
    As $\mathbb{R}^{\leftarrowtail_n\barwedge}=\mathbb{R}^{\twoheadleftarrow_n\barwedge\leftarrow_n\barwedge}=\mathbb{Q}^{\leftarrow_n\barwedge}=\mathbb{P}$, each $\leftarrowtail_n$ is an arrow.  Also, for each $n\in\omega$,
    \begin{align*}
        {\leftarrowtail_{n+1}}\ =\ {{\twoheadleftarrow_{n+1}}\circ{\barwedge}\circ{\leftarrow_{n+1}}}&\subseteq\ {{\leq}\circ{\twoheadleftarrow_n}\circ{\geq}\circ{\barwedge}\circ{\leq}\circ{\leftarrow_n}\circ{\geq}}\\
        &\subseteq\ {{\leq}\circ{\twoheadleftarrow_n}\circ{\barwedge}\circ{\leftarrow_n}\circ{\geq}}\ =\ {{\leq}\circ{\leftarrowtail_n}\circ{\geq}},
    \end{align*}
    so $(\leftarrowtail_n)$ is an arrow sequence.  Next note that if
    $r\mathrel{\langle\!\twoheadleftarrow_n}q\mathrel{\langle\!\leftarrow_n}p$ then
    \[p^{\barwedge\rightarrowtail_n}=p^{\barwedge\rightarrow_n\barwedge\twoheadrightarrow_n}\subseteq q^{\vartriangleright\barwedge\twoheadrightarrow_n}\subseteq q^{\barwedge\twoheadrightarrow_n}\vartriangleleft r,\]
    so ${\langle\!\twoheadleftarrow_n}\circ{\langle\!\leftarrow_n}\ \subseteq\ {\langle\!\leftarrowtail_n}$.
    If $(\leftarrow_n)$ and $(\twoheadleftarrow_n)$ are thin then this means ${\leftarrowtail'}=\bigcup_{n\in\omega}{\langle\!\twoheadleftarrow_n}\circ{\langle\!\leftarrow_n}$ is a refiner contained in both ${\leftarrowtail}$ and ${{\twoheadleftarrow}\circ{\leftarrow}}\subseteq{{\twoheadleftarrow}*{\leftarrow}}$.  This means $(\leftarrowtail_n)$ is thin and
    \[{\leftarrowtail}={{\vartriangleright}*{\leftarrowtail'}}={{\twoheadleftarrow}*{\leftarrow}}.\qedhere\]
\end{proof}

Let us call an arrow-sequence $(\leftarrow_n)\subseteq\mathsf{P}(\mathbb{Q}\times\mathbb{P})$ \emph{bi-thin} if $(\leftarrow_n)$ and $(\rightarrow_n)$ are thin.  We then have a homeomorphism $\mathsf{S}_\leftarrow=\bigcap_{n\in\omega}\overline{\leftarrow_{n\mathsf{S}}}$ from $\mathsf{S}\mathbb{P}$ to $\mathsf{S}\mathbb{Q}$, where ${\leftarrow}=\bigcup_{n\in\omega}\langle\!\leftarrow$.

\begin{corollary}\label{BackAndForthArrows}
    Assume we have arrow-sequences $(\leftarrow_n)\subseteq\mathsf{P}(\mathbb{P}\times\mathbb{P})$, $(\twoheadleftarrow_n)\subseteq\mathsf{P}(\mathbb{Q}\times\mathbb{Q})$ and $(\sqsupset_n)\subseteq\mathsf{P}(\mathbb{Q}\times\mathbb{P})$ and set ${\leftarrow}=\bigcup_{n\in\omega}\langle\!\leftarrow_n$, ${\twoheadleftarrow}=\bigcup_{n\in\omega}\langle\!\twoheadleftarrow_n$ and ${\sqsupset}={\bigcup_{n\in\omega}\langle\sqsupset_n}$.  Then
    \begin{align*}
        (\sqsupset_n)\text{ is bi-thin, }(\leftarrow_n)\text{ is thin and }\forall n\ ({\twoheadleftarrow_n}\ \subseteq\ {{\sqsubset_n}\circ{\leftarrow_n}\circ{\sqsupset_n}})&\\
        \Rightarrow\quad(\twoheadleftarrow_n)\text{ is thin and }\ \mathsf{S}_{\twoheadleftarrow}={\mathsf{S}_{\sqsupset}^{-1}\circ{\mathsf{S}_\leftarrow}\circ{\mathsf{S}_\sqsupset}}&.
    \end{align*}
\end{corollary}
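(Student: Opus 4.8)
The plan is to realise $(\twoheadleftarrow_n)$ as an inclusion-shrinking of the $\barwedge$-composite of the three given arrow-sequences, and then to read off both conclusions from \Cref{ArrowComposition} together with the equivalence (in the unnumbered theorem above) that an arrow-sequence is thin precisely when the associated intersection $\bigcap_n\overline{\cdot_{n\mathsf{S}}}$ is a function.

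First I would record the consequences of bi-thinness of $(\sqsupset_n)$: both $(\sqsupset_n)$ and $(\sqsubset_n)$ are thin, and $\mathsf{S}_\sqsupset\colon\mathsf{S}\mathbb{P}\to\mathsf{S}\mathbb{Q}$ is a homeomorphism with $\mathsf{S}_\sqsubset=\mathsf{S}_\sqsupset^{-1}$. Since $(\leftarrow_n)$ is also thin, I may apply \Cref{ArrowComposition} twice — to $(\sqsubset_n)$ and $(\leftarrow_n)$, and then to the result and $(\sqsupset_n)$, invoking associativity of star-composition — to obtain a thin composite arrow-sequence
\[{\leftarrowtail_n}\ =\ {{\sqsupset_n}\circ{\barwedge}\circ{\leftarrow_n}\circ{\barwedge}\circ{\sqsubset_n}}\]
whose limiting strong refiner satisfies $\leftarrowtail=\sqsupset*\leftarrow*\sqsubset$ and hence $\mathsf{S}_\leftarrowtail=\mathsf{S}_\sqsupset\circ\mathsf{S}_\leftarrow\circ\mathsf{S}_\sqsubset=\mathsf{S}_\sqsupset\circ\mathsf{S}_\leftarrow\circ\mathsf{S}_\sqsupset^{-1}$, i.e. exactly the conjugate of $\mathsf{S}_\leftarrow$ by the homeomorphism $\mathsf{S}_\sqsupset$ claimed in the statement. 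The role of the hypothesis is precisely to place each $\twoheadleftarrow_n$ inside this composite: it asserts that $\twoheadleftarrow_n$ is contained in the plain (un-padded) composition of the same three relations $\sqsupset_n,\leftarrow_n,\sqsubset_n$, and since $\barwedge$ is reflexive ($p\barwedge p$ always holds, so $R\circ S\subseteq R\circ\barwedge\circ S$), inserting $\barwedge$ between successive factors only enlarges the relation; thus $\twoheadleftarrow_n\subseteq\leftarrowtail_n$ for every $n$.

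From $\twoheadleftarrow_n\subseteq\leftarrowtail_n$ and the monotonicity of $\overline{\cdot_\mathsf{S}}$ (\Cref{leqSubs}) I get $\overline{\twoheadleftarrow_{n\mathsf{S}}}\subseteq\overline{\leftarrowtail_{n\mathsf{S}}}$, whence the closed relation $\phi:=\bigcap_n\overline{\twoheadleftarrow_{n\mathsf{S}}}$ is contained in the function $\bigcap_n\overline{\leftarrowtail_{n\mathsf{S}}}=\mathsf{S}_\leftarrowtail$; in particular $\phi$ is single-valued. To promote this to $\phi=\mathsf{S}_\leftarrowtail$ I would show $\phi$ is total: each $\twoheadleftarrow_n$ is an arrow, so each $\overline{\twoheadleftarrow_{n\mathsf{S}}}$ is co-surjective, and for fixed $S\in\mathsf{S}\mathbb{Q}$ the fibres over $S$ form a decreasing sequence of non-empty closed subsets of the compact space $\mathsf{S}\mathbb{Q}$, so their intersection is non-empty. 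Hence $\phi$ is a total, single-valued, closed relation sitting inside the function $\mathsf{S}_\leftarrowtail$, which forces $\phi=\mathsf{S}_\leftarrowtail$. By the thinness equivalence, $\phi=\bigcap_n\overline{\twoheadleftarrow_{n\mathsf{S}}}$ being a function is exactly the assertion that $(\twoheadleftarrow_n)$ is thin, and then $\mathsf{S}_\twoheadleftarrow=\phi=\mathsf{S}_\sqsupset\circ\mathsf{S}_\leftarrow\circ\mathsf{S}_\sqsupset^{-1}$, as required.

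The main obstacle I anticipate is this totality step, namely upgrading the inclusion $\phi\subseteq\mathsf{S}_\leftarrowtail$ to an equality; this is where the arrow condition on $(\twoheadleftarrow_n)$ (through co-surjectivity of the approximants) and compactness of $\mathsf{S}\mathbb{Q}$ are genuinely used, everything else being formal manipulation via \Cref{ArrowComposition} and the reflexivity ${=}\subseteq{\barwedge}$. A less streamlined alternative would bypass \Cref{ArrowComposition}: verify directly that $\bigcup_n\langle\!\twoheadleftarrow_n$ is a refiner and then identify the resulting map through \Cref{ConjugateArrow}; but routing through the already-established functoriality of star-composition makes the computation of $\mathsf{S}_\twoheadleftarrow$ essentially automatic and is therefore the cleaner route.
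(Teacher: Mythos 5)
Your proposal is correct and is essentially the paper's own argument: the paper's one-line proof invokes exactly \Cref{ArrowComposition} together with the observation that ${{\sqsubset_n}\circ{\leftarrow_n}\circ{\sqsupset_n}}\ \subseteq\ {{\sqsubset_n}\circ{\barwedge}\circ{\leftarrow_n}\circ{\barwedge}\circ{\sqsupset_n}}$ (your reflexivity remark), and your compactness/co-surjectivity step upgrading the inclusion $\bigcap_{n\in\omega}\overline{\twoheadleftarrow_{n\mathsf{S}}}\subseteq\mathsf{S}_{\leftarrowtail}$ to an equality is precisely the routine finish hidden in the paper's ``this follows''.  The one discrepancy---you conjugate as $\mathsf{S}_\sqsupset\circ\mathsf{S}_\leftarrow\circ\mathsf{S}_\sqsupset^{-1}$ and order the composite accordingly, while the statement reads $\mathsf{S}_\sqsupset^{-1}\circ\mathsf{S}_\leftarrow\circ\mathsf{S}_\sqsupset$---is merely a relabelling of $\sqsupset$ versus $\sqsubset$ forced by the statement's own inconsistent typing (with $(\sqsupset_n)\subseteq\mathsf{P}(\mathbb{Q}\times\mathbb{P})$ as declared, neither the displayed hypothesis nor the displayed conjugation formula type-checks, so some reorientation is unavoidable), not a mathematical gap.
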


\begin{proof}
    This follows from \Cref{ArrowComposition} and the observation that \[{{\sqsubset_n}\circ{\leftarrow_n}\circ{\sqsupset_n}}\ \ \subseteq\ \ {{\sqsubset_n}\circ{\barwedge}\circ{\leftarrow_n}\circ{\barwedge}\circ{\sqsupset_n}}.\qedhere\]
\end{proof}

The following shows how a back and forth argument gives rise to bi-thin arrow-sequences.

\begin{lemma}\label{BackAndForth}
Assume that we have regular $\omega$-posets $\mathbb{P}$ and $\mathbb{Q}$ together with 
\begin{enumerate}
    \item coinitial sequences $(C_n)\subseteq\mathsf{C}\mathbb{P}$ and $(D_n)\subseteq\mathsf{C}\mathbb{Q}$ of finite caps, and
    \item $\wedge$-preserving co-surjective $\sqsupset_n\ \subseteq D_n\times C_n$ and $\sqni_n\ \subseteq C_n\times D_{n+1}$ such that
    \begin{equation}\label{subequality}
        \sqsupset_n\circ\sqni_n\ \subseteq\ \geq_\mathbb{Q}\qquad\text{and}\qquad\sqni_n\circ\sqsupset_{n+1}\ \subseteq\ \geq_\mathbb{P},
    \end{equation}
\end{enumerate}
for all $n\in\omega$.  Then we have a bi-thin arrow-sequence $(\leftarrow_n)$ defined by
\[{\leftarrow_{2n}}={\sqsupset_n}\qquad\text{and}\qquad{\leftarrow_{2n+1}}={\sqin_n}.\]
\end{lemma}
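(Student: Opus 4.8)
\emph{Overview of the plan.} The goal is to verify the two clauses separately: first that $(\leftarrow_n)$ is a decreasing sequence of arrows, and then that both $(\leftarrow_n)$ and $(\rightarrow_n)$ are thin. Everything will be read off from co-surjectivity, $\wedge$-preservation, and the two containments in \eqref{subequality}, with \Cref{DoubleTriangle} doing the real work for thinness.

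\emph{Arrow-sequence.} Finiteness is immediate. For $\leftarrow_{2n}=\sqsupset_n$, co-surjectivity gives that its domain $\mathbb{Q}^{\sqsupset_n}=C_n$ is a cap, so $\sqsupset_n\circ\barwedge$ is co-surjective. For $\leftarrow_{2n+1}=\sqin_n$ the relevant domain in $\mathbb{P}$ is $\{c\in C_n:\exists d'\ (c\sqni_nd')\}$; here I would invoke \eqref{subequality}, observing that for each $c'\in C_{n+1}$, co-surjectivity of $\sqsupset_{n+1}$ and then of $\sqni_n$ produces $c\sqni_nd'\sqsupset_{n+1}c'$, whence $c\geq_\mathbb{P}c'$ by $\sqni_n\circ\sqsupset_{n+1}\subseteq\geq_\mathbb{P}$. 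Thus $C_{n+1}$ lies below this domain, which is therefore a cap and makes $\sqin_n$ an arrow. Monotonicity is then a one-line computation: given $c\sqni_nd'$, co-surjectivity of $\sqsupset_n$ gives $d\sqsupset_nc$ with $d\geq_\mathbb{Q}d'$ (by $\sqsupset_n\circ\sqni_n\subseteq\geq_\mathbb{Q}$), so $d'\leq_\mathbb{Q}d\sqsupset_nc\geq_\mathbb{P}c$, i.e.\ $\sqin_n\leq\sqsupset_n$; symmetrically $\sqsupset_{n+1}\leq\sqin_n$, using $\sqni_n\circ\sqsupset_{n+1}\subseteq\geq_\mathbb{P}$.

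\emph{Bi-thinness.} The key point is that each direction contains a cofinal family of $\wedge$-preserving co-surjective arrows with cap domains — namely $\sqsupset_n=\leftarrow_{2n}$ for $(\leftarrow_n)$ and $\sqni_n=\rightarrow_{2n+1}$ for $(\rightarrow_n)$ (the remaining halves $\sqin_n$ and $\sqsubset_n$ need not be $\wedge$-preserving). Since a relation containing a refiner is a refiner, and thinness means $\bigcup_n\langle\!\leftarrow_n$ is a refiner, it suffices to show $\bigcup_n\langle\!\sqsupset_n$ is a refiner (and symmetrically for $\bigcup_n\langle\!\sqni_n$). As $\sqsupset_n$ is $\wedge$-preserving with $\mathbb{Q}^{\sqsupset_n}=C_n\in\mathsf{C}\mathbb{P}$, \Cref{DoubleTriangle} gives ${\vartriangleright_B}\circ{\vartriangleright_{D_n}}\circ{\sqsupset_n}\subseteq{\langle\!\sqsupset_n}$ whenever $D_n\vartriangleleft_{D_n}B$. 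Given any $D\in\mathsf{C}\mathbb{Q}$, I would use regularity of $\mathbb{Q}$ to choose $B\vartriangleleft_BD$ and then coinitiality of $(D_n)$ to choose $n$ with $\mathrm{im}(\sqsupset_n)\subseteq D_n\vartriangleleft_{D_n}B$; then each $c\in C_n$ has, via some $d\sqsupset_nc$, an $x\in D$ with $x\mathrel{\vartriangleright_B}b\mathrel{\vartriangleright_{D_n}}d\sqsupset_nc$, so $x\langle\!\sqsupset_nc$ and hence $C_n\sqsubset D$. As $D$ was arbitrary, $\bigcup_n\langle\!\sqsupset_n$ is a refiner, and the reverse direction is identical with $\sqni_n$.

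\emph{Main obstacle.} The arrow-and-monotonicity bookkeeping is routine once the correct witness from \eqref{subequality} is identified in each case. The delicate step is the refiner argument: one must align the star-below caps furnished by regularity with the $D_n$ so that the subscripts in \Cref{DoubleTriangle} match — in particular verifying $D_n\vartriangleleft_{D_n}B$ for a coinitially chosen $D_n$ refining a cap star-below $B$ — and exploit coinitiality to push $n$ high enough that $\mathrm{im}(\sqsupset_n)$ is fine relative to the target cap $D$. It is this organization of cap refinements, rather than any isolated computation, where the real work lies.
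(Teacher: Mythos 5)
Your proof is correct and follows essentially the same route as the paper's: monotonicity is read off from \eqref{subequality} together with co-surjectivity, and bi-thinness comes from \Cref{DoubleTriangle} applied to $\sqsupset_n$ (resp.\ $\sqni_n$) after aligning $D_n\vartriangleleft_{D_n}B\vartriangleleft_B A$ (resp.\ $C_n\vartriangleleft_{C_n}B\vartriangleleft_B A$) via regularity and coinitiality, exactly as in the paper. The only difference is cosmetic: you additionally verify the arrow property itself (that the domain of each $\sqin_n$ contains a cap, via $C_{n+1}\leq\mathrm{dom}(\sqin_n)$), a point the paper leaves implicit.
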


\begin{proof}
    The co-surjectivity of $\sqsupset_n$ yields
    \[{\sqin_n}\ \subseteq\ {\sqin_n}\circ{\sqsubset_n}\circ{\sqsupset_n}\ \subseteq\ {\leq}\circ{\sqsupset_n}\ \subseteq\ {\leq}\circ{\sqsupset_n}\circ{\geq},\]
    showing that ${\leftarrow_{2n+1}}={\sqin_n}\leq{\sqsupset_n}={\leftarrow_{2n}}$.  Likewise, the co-surjectivity of $\sqni_n$ yields ${\leftarrow_{2n+2}}={\sqsupset_{n+1}}\leq{\sqin_n}={\leftarrow_{2n+1}}$, showing that $(\leftarrow_n)$ is indeed an arrow-sequence.  For bi-thinness note that, for any $A\in\mathsf{C}\mathbb{Q}$, we have $B\in\mathsf{C}\mathbb{Q}$ and $n\in\omega$ with $D_n\vartriangleleft_{D_n}B\vartriangleleft_BA$, thanks to the regularity of $\mathbb{Q}$ and the co-initiality of $(D_n)$.  Then \Cref{DoubleTriangle} implies that ${\vartriangleright_B}\circ{\vartriangleright_{D_n}}\circ{\sqsupset_n}\subseteq\langle\!\leftarrow_{2n}$ and hence $C_n\subseteq A^{\langle\!\leftarrow_{2n}}$.  Likewise, for every $A\in\mathsf{C}\mathbb{P}$, we have $n\in\omega$ with $D_{n+1}\subseteq A^{\langle\sqni_n}=A^{\langle\rightarrow_{2n+1}}$, showing that $(\leftarrow_n)$ is indeed bi-thin.
\end{proof}

\section{Fra\"iss\'e Theory for Relational Categories of Graphs}

Given any category $\mathbf{K}$, we denote each hom-set of morphisms from $G$ to $H$ by $\mathbf{K}_G^H$.  Let us further denote the family of all morphisms in a category $\mathbf{K}$ having $G$ as their domain or codomain respectively by \[\mathbf{K}_G^\bullet=\bigcup_{H\in\mathbf{K}}\mathbf{K}_G^H\qquad\text{and}\qquad\mathbf{K}^G_\bullet=\bigcup_{H\in\mathbf{K}}\mathbf{K}^G_H.\]
We denote the family of all morphisms in $\mathbf{K}$ with arbitrary domain and codomain by
\[\mathbf{K}^\bullet_\bullet=\bigcup_{G\in\mathbf{K}}\mathbf{K}_G^\bullet=\bigcup_{G\in\mathbf{K}}\mathbf{K}_\bullet^G.\]

We call a collection of morphisms $W\subseteq\mathbf{K}_\bullet^\bullet$ \emph{wide} if every object $G\in\mathbf{K}$ is the codomain of at least one morphism in $W$, i.e. $W\cap\mathbf{K}^G_\bullet\neq\emptyset$.  We call a collection of morphisms $I\subseteq\mathbf{K}_\bullet^\bullet$ an \emph{ideal} if it is closed under composition with arbitrary morphisms, i.e. if \[I=\mathbf{K}_\bullet^\bullet\circ I\circ\mathbf{K}_\bullet^\bullet=\{{\sqsupset\circ\nil\circ\sqni}:{\sqsupset}\in\mathbf{K}^\bullet_G,{\nil}\in I\cap\mathbf{K}^G_H\text{ and }{\sqni}\in\mathbf{K}^H_\bullet\}.\]

We say $\mathbf{K}$ is \emph{directed} if, for all $G,H\in\mathbf{K}$, we have $F\in\mathbf{K}$ with $\mathbf{K}^G_F\neq\emptyset\neq\mathbf{K}^H_F$.  This is equivalent to saying $\bigcup\{\mathbf{K}^\bullet_F:\mathbf{K}^G_F\neq\emptyset\}$ is wide, for all $G\in\mathbf{K}$.  We say a collection of objects $C\subseteq\mathbf{K}$ is \emph{coinitial} if, for all $G\in\mathbf{K}$, we have $H\in C$ with $\mathbf{K}^G_H\neq\emptyset$.  Again this is equivalent to saying that $\bigcup_{G\in C}\mathbf{K}^\bullet_G$ is wide.  Note that if $\mathbf{K}$ has a directed subcategory whose objects are coinitial in $\mathbf{K}$ then $\mathbf{K}$ itself must also be directed.

\subsection{Relational Categories}

We will exclusively consider categories $\mathbf{K}$ where
\begin{enumerate}
    \item each object $G\in\mathbf{K}$ is a finite set $\dot{G}$ together with some relation(s) on $\dot{G}$.
    \item each hom-set $\mathbf{K}_G^H\subseteq \mathsf{P}(\dot{H}\times\dot{G})$ consists of co-bijective relations `from $\dot{G}$ to $\dot{H}$'.
    \item the categorical product is the usual relational composition $\circ$, and
    \item the identity morphism on each $G\in\mathbf{K}$ is the equality relation $=_{\dot{G}}$ on $\dot{G}$.
\end{enumerate}
In particular, there is a canonical ordering on each hom-set $\mathbf{K}_G^H$ given by inclusion $\subseteq$.  As morphisms are co-injective, $\mathbf{K}_G^H\neq\emptyset$ only if $\dot{G}$ has at least as many elements as $\dot{H}$ and, if they have the same number of elements, then $\mathbf{K}_G^H$ consists entirely of functions from $\dot{G}$ to $\dot{H}$.  Following standard practice, from now on we will usually identify the structure $G$ with the underlying set $\dot{G}$ as long as no confusion will result.

For any $G\in\mathbf{K}$, we say a relation ${\leftarrow}\subseteq G\times G$ \emph{subfactors} into ${\sqsupset},{\sqni}\subseteq\mathbf{K}^G_H$ if
\[=_H\ \ \subseteq\ \ \sqsubset\circ\leftarrow\circ\sqni.\]
More explicitly this means that, for all $h\in H$, we have $g\sqsupset h$ and $g'\sqni h$ with $g\leftarrow g'$.  In this case we say that the relation $\leftarrow$ is \emph{$\mathbf{K}$-subfactorisable}.

\begin{proposition}\label{SurjectiveSubfactor}
    Every $\mathbf{K}$-subfactorisable relation is surjective and co-surjective.
\end{proposition}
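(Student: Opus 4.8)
The plan is to unwind the definition of $\mathbf{K}$-subfactorisability into its pointwise form and then exploit the co-injectivity of the witnessing morphisms. Suppose ${\leftarrow}\subseteq G\times G$ subfactors into ${\sqsupset},{\sqni}\subseteq\mathbf{K}^G_H$, so that ${=_H}\subseteq{\sqsubset\circ\leftarrow\circ\sqni}$. As already noted after the definition, unpacking the composition shows this means precisely that for every $h\in H$ there are $g,g'\in G$ with $g\sqsupset h$, $g\leftarrow g'$ and $g'\sqni h$. Since $\sqsupset$ and $\sqni$ are morphisms of $\mathbf{K}$, they are co-bijective, hence in particular co-injective. Recall that surjectivity of $\leftarrow$ says that every $g\in G$ admits some $g'$ with $g\leftarrow g'$, while co-surjectivity says that every $g'\in G$ admits some $g$ with $g\leftarrow g'$; these are the two things I must establish.

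For surjectivity I would fix $g\in G$ and invoke co-injectivity of $\sqsupset$ to obtain some $h\in H$ with $h^\sqsubset=\{g\}$, i.e.\ with $g$ the unique element of $G$ satisfying $g\sqsupset h$. Applying subfactorisability at this particular $h$ produces $g_0,g'_0\in G$ with $g_0\sqsupset h$, $g_0\leftarrow g'_0$ and $g'_0\sqni h$. But $g_0\sqsupset h$ forces $g_0\in h^\sqsubset=\{g\}$, so $g_0=g$, and hence $g\leftarrow g'_0$. As $g$ was arbitrary this shows $\leftarrow$ is surjective.

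Co-surjectivity I would then prove by the mirror-image argument, now using co-injectivity of $\sqni$: given $g'\in G$, pick $h\in H$ with $h^\sqin=\{g'\}$ (where ${\sqin}={\sqni^{-1}}$), so that $g'$ is the unique element with $g'\sqni h$; the subfactor witness at this $h$ has its right-hand component $g'_0\sqni h$ forced to equal $g'$, yielding some $g$ with $g\leftarrow g'$. Running over all $g'\in G$ gives co-surjectivity.

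The one point I would be most careful about is that it is genuinely co-\emph{injectivity}, rather than mere surjectivity, of the morphisms that does the work: surjectivity of $\sqsupset$ alone would supply some $h$ lying over each $g$, but would not pin the subfactor witness $g_0$ down to be $g$ itself. The uniqueness packaged into co-injectivity is exactly what upgrades ``some witness exists over $h$'' to ``a partner of the prescribed $g$ exists''. The only other bookkeeping to watch is the direction convention read off from $\sqsubset\circ\leftarrow\circ\sqni$, namely that $\sqsupset$ constrains the left (codomain) coordinate of $\leftarrow$ and $\sqni$ the right (domain) coordinate, so that the two co-injectivity arguments deliver surjectivity and co-surjectivity respectively.
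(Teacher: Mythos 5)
Your proof is correct and follows essentially the same route as the paper: use co-injectivity of $\sqsupset$ to find $h$ with $h^\sqsubset=\{g\}$, so the subfactorisation witness over that $h$ is forced to have $g$ as its left component. The only cosmetic difference is that the paper dispatches co-surjectivity by remarking that $\mathbf{K}$-subfactorisability is symmetric (so the same argument applies to $\rightarrow$), whereas you write out the mirror-image argument with $\sqni$ explicitly, which amounts to the same thing.
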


\begin{proof}
    Take any $G\in\mathbf{K}$ and $\mathbf{K}$-subfactorisable ${\leftarrow}\subseteq G\times G$, so we have ${\sqsupset},{\sqni}\subseteq\mathbf{K}^G_H$ with ${=_H}\subseteq{\sqsubset\circ\leftarrow\circ\sqni}$.  As we are assuming all morphisms in $\mathbf{K}$ are co-injective, for every $g\in G$, we have $h\in H$ with $h^\sqsubset=\{g\}$.  As $h\sqsubset g'\leftarrow g''\sqni h$, for some $g',g''\in G$, the only possibility is $g=g'\leftarrow g''$, showing that $\leftarrow$ is surjective.  Now just note $\mathbf{K}$-subfactorisability is a symmetric condition and so the same applies to $\rightarrow$.
\end{proof}

Let us call $\leftarrow$ \emph{widely $\mathbf{K}$-subfactorisable} if $\sqsubset\circ\leftarrow\circ\sqsupset$ is subfactorisable, for all ${\sqsupset}\in\mathbf{K}^G_\bullet$.

\begin{proposition}\label{EqualitySubfactor}
    For every $G\in\mathbf{K}$, the equality relation $=_G$ is widely $\mathbf{K}$-subfactorisable.
\end{proposition}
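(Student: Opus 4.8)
The plan is to unwind the definition of wide subfactorisability directly. To show $=_G$ is widely $\mathbf{K}$-subfactorisable, I must show that for every ${\sqsupset}\in\mathbf{K}^G_\bullet$, say ${\sqsupset}\in\mathbf{K}^G_H$ (a morphism from $H$ to $G$, so as a relation ${\sqsupset}\subseteq G\times H$), the composite relation ${\sqsubset}\circ{=_G}\circ{\sqsupset}$ on $H$ is itself $\mathbf{K}$-subfactorisable. The first step is to simplify this composite: since $=_G$ is the identity relation on $G$ we have ${=_G}\circ{\sqsupset}={\sqsupset}$, so ${\sqsubset}\circ{=_G}\circ{\sqsupset}={\sqsubset}\circ{\sqsupset}\subseteq H\times H$, and the goal reduces to showing that ${\sqsubset}\circ{\sqsupset}$ subfactors as a relation on the object $H$.

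For the subfactoring witnesses the natural choice is the identity morphism ${=_H}\in\mathbf{K}^H_H$ on both sides. By the definition of subfactoring, taking $F=H$ and $\sqsupset'=\sqni'={=_H}$, what is required is $=_H\subseteq{=_H}\circ({\sqsubset}\circ{\sqsupset})\circ{=_H}={\sqsubset}\circ{\sqsupset}$. Unravelling the composition, this says that for each $h\in H$ there is some $g\in G$ with $h\sqsubset g\sqsupset h$, i.e.\ with $g\sqsupset h$. But this is precisely the co-surjectivity of $\sqsupset$, which holds because every morphism of $\mathbf{K}$ is co-bijective and hence co-surjective. Thus ${\sqsubset}\circ{\sqsupset}$ subfactors into ${=_H},{=_H}$, completing the argument.

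The argument is essentially immediate once the composite is simplified, so I do not anticipate a genuine obstacle; the only points needing care are bookkeeping: recognising that the resulting relation ${\sqsubset}\circ{\sqsupset}$ lives on the object $H$ (not $G$), that ${=_H}$ qualifies as a morphism in $\mathbf{K}^H_H$ since the identity morphism on each object is the equality relation, and that it is \emph{co}-surjectivity, rather than surjectivity, that is the relevant consequence of co-bijectivity here.
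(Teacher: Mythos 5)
Your proof is correct and is essentially identical to the paper's: both reduce to the observation that co-surjectivity of ${\sqsupset}\in\mathbf{K}^G_H$ gives ${=_H}\subseteq{\sqsubset}\circ{=_G}\circ{\sqsupset}$, so that this relation subfactors into the identity morphisms $=_H$ and $=_H$. The bookkeeping points you flag (the composite living on $H$, the identity being a legitimate morphism, co-surjectivity being the relevant half of co-bijectivity) are exactly the content of the paper's one-line argument.
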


\begin{proof}
    As every ${\sqsupset}\in\mathbf{K}^G_H$ is co-surjective, ${=_H}\subseteq{{=_H}\circ{\sqsubset}\circ{=_G}\circ{\sqsupset}\circ{=_H}}$, showing that ${\sqsubset}\circ{=_G}\circ{\sqsupset}$ subfactors into $=_H$ and $=_H$.
\end{proof}

We call a morphism ${\nil}\in\mathbf{K}^E_F$ \emph{amalgamable} if
\[{\sqsupset}\in\mathbf{K}^F_G\ \text{and}\ {\sqni}\in\mathbf{K}^F_H\quad\Rightarrow\quad\exists\,{\sqsupset'}\in\mathbf{K}^G_I\ \exists\,{\sqni'}\in\mathbf{K}^H_I\ ({\nil\circ\sqsupset\circ\sqsupset'}\ =\ {\nil\circ\sqni\circ\sqni'}).\]
More generally, we call ${\nil}\in\mathbf{K}^E_F$ \emph{subamalgamable} if we have ${\overline\nil}\in\mathbf{K}^E_F$ such that ${\nil}\subseteq{\overline\nil}$ and, whenever ${\sqsupset}\in\mathbf{K}^F_G$ and ${\sqni}\in\mathbf{K}^F_H$, we have ${\sqsupset'}\in\mathbf{K}^G_I$ and ${\sqni'}\in\mathbf{K}^H_I$ satisfying
\[{{\nil}\circ{\sqsupset}\circ{\sqsupset'}}\ \subseteq\ {{\overline\nil}\circ{\sqni}\circ{\sqni'}}.\]

\begin{lemma}\label{WideSubfactorisability}
    For any ${\leftarrow}\subseteq F\times F$ and subamalgamable ${\nil}\in\mathbf{K}^F_G$ witnessed by $\overline{\nil}\in\mathbf{K}^F_G$,
    \[\lin\circ\leftarrow\circ\nil\text{ is $\mathbf{K}$-subfactorisable}\qquad\Rightarrow\qquad{{\overline\lin}\circ{\leftarrow}\circ{\overline\nil}}\text{ is widely $\mathbf{K}$-subfactorisable}.\]
\end{lemma}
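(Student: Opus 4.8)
The plan is to unfold both definitions and reduce everything to building one subfactorisation by amalgamation. Write $\lin=\nil^{-1}$ and $\overline{\lin}={\overline{\nil}}^{-1}$. Since $\lin\circ\leftarrow\circ\nil$ is assumed $\mathbf{K}$-subfactorisable, I fix $H\in\mathbf{K}$ and $\sqsupset_0,\sqni_0\in\mathbf{K}^G_H$ with $=_H\subseteq\sqsupset_0^{-1}\circ\lin\circ\leftarrow\circ\nil\circ\sqni_0$. To obtain wide subfactorisability of $\overline{\lin}\circ\leftarrow\circ\overline{\nil}$ I must show, for every morphism $\mathbf{m}\in\mathbf{K}^G_M$ into $G$, that the relation $\mathbf{m}^{-1}\circ\overline{\lin}\circ\leftarrow\circ\overline{\nil}\circ\mathbf{m}$ on $M$ is subfactorisable. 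The case $\mathbf{m}={=_G}$ is free: since $\nil\subseteq\overline{\nil}$ forces $\lin\circ\leftarrow\circ\nil\subseteq\overline{\lin}\circ\leftarrow\circ\overline{\nil}$, and subfactorisability is monotone under $\subseteq$, the same $\sqsupset_0,\sqni_0$ work. The real content is manufacturing, for general $\mathbf{m}$, a single object over $M$ carrying both subfactorising legs.

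The key idea is to transport the subfactorisation living over $G$ to one living over $M$ by applying the subamalgamability of $\nil$ (witnessed by $\overline{\nil}$) twice, in a nested way, always feeding a hypothesis leg into the plain-$\nil$ slot and $\mathbf{m}$ into the $\overline{\nil}$ slot. First, since $\sqsupset_0$ and $\mathbf{m}$ are both morphisms into $G$, subamalgamability gives $I$, $a\in\mathbf{K}^H_I$ and $b\in\mathbf{K}^M_I$ with
\[{\nil}\circ{\sqsupset_0}\circ a\ \subseteq\ {\overline{\nil}}\circ\mathbf{m}\circ b.\]
Now $\sqni_0\circ a$ and $\mathbf{m}\circ b$ are again morphisms into $G$, so a second application gives $J$ and $c,d\in\mathbf{K}^I_J$ with
\[{\nil}\circ{\sqni_0}\circ a\circ c\ \subseteq\ {\overline{\nil}}\circ\mathbf{m}\circ b\circ d.\]
Taking the second amalgamation over the codomain $I$ of the first is exactly what forces both output legs to factor through the common $b$; this lets me set $\sqsupset:=b\circ c$ and $\sqni:=b\circ d$, two morphisms in $\mathbf{K}^M_J$ out of the one object $J$.

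With these choices the verification is a single composition chain, run forward from the identity on $J$. Using co-surjectivity of the morphisms $c$ and $a$ (so $=_J\subseteq c^{-1}\circ c$ and $=_I\subseteq a^{-1}\circ a$) and then the hypothesis factorisation sandwiched between the $a$'s, I reach $c^{-1}\circ a^{-1}\circ(\sqsupset_0^{-1}\circ\lin\circ\leftarrow\circ\nil\circ\sqni_0)\circ a\circ c=({\nil}\circ{\sqsupset_0}\circ a\circ c)^{-1}\circ\leftarrow\circ({\nil}\circ{\sqni_0}\circ a\circ c)$; feeding in the two displayed inclusions (the first post-composed with $c$ and inverted) upgrades this to $({\overline{\nil}}\circ\mathbf{m}\circ b\circ c)^{-1}\circ\leftarrow\circ({\overline{\nil}}\circ\mathbf{m}\circ b\circ d)$, which is precisely $\sqsupset^{-1}\circ\mathbf{m}^{-1}\circ\overline{\lin}\circ\leftarrow\circ\overline{\nil}\circ\mathbf{m}\circ\sqni$. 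This shows the target relation subfactorises into $\sqsupset,\sqni$, and as $\mathbf{m}$ was arbitrary the conclusion follows.

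I expect the main obstacle to be orientation bookkeeping rather than any deep idea: one must route the left hypothesis leg $\sqsupset_0$ (carrying $\sqsupset_0^{-1}$, matching $\overline{\lin}$ on the left of the target) and the right leg $\sqni_0$ (matching $\overline{\nil}$ on the right) through the correct sides of the two amalgamations, and recognise that one amalgamation cannot place both legs over a common object, which is why the second must be nested over the first. The only structural facts used are that $\mathbf{K}$-morphisms are co-bijective, hence co-surjective (yielding $=\subseteq R^{-1}\circ R$), the monotonicity of $\circ$ and of subfactorisability, and $\nil\subseteq\overline{\nil}$; no appeal to directedness beyond what subamalgamability itself supplies is needed.
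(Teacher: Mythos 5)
Your proof is correct and takes essentially the same route as the paper's: two nested applications of subamalgamability, one per leg of the hypothesis subfactorisation, always with the arbitrary morphism into $G$ in the $\overline{\nil}$ slot, followed by a verification that splices the hypothesis inclusion ${=}\subseteq{\sqsubset_0\circ\lin\circ\leftarrow\circ\nil\circ\sqni_0}$ between the newly produced legs. The only differences are cosmetic: you feed $\mathbf{m}\circ b$ rather than the raw $\mathbf{m}$ into the second amalgamation, so both final subfactorising morphisms $b\circ c$ and $b\circ d$ pass through $b$ (the paper instead uses the asymmetric pair $\sqsupset_J\circ\sqni_L$ and $\sqsupset_L$), and you run the check as a relation-algebraic composition chain where the paper chases elements.
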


\begin{proof}
    Assume that $\lin\circ\leftarrow\circ\nil$ is $\mathbf{K}$-subfactorisable and take any ${\sqsupset}\in\mathbf{K}^G_H$.  So we have ${\sqni},{\sqnii}\in\mathbf{K}^G_I$ such that ${=_I}\subseteq{\sqin\circ\lin\circ\leftarrow\circ\nil\circ\sqnii}$.  We then have ${\sqsupset_J}\in\mathbf{K}^H_J$ and ${\sqni_J}\in\mathbf{K}^I_J$ with ${\nil\circ\sqni\circ\sqni_J}\subseteq{{\overline\nil}\circ{\sqsupset}\circ{\sqsupset_J}}$, as well as ${\sqsupset_L}\in\mathbf{K}_L^H$ and ${\sqni_L}\in\mathbf{K}_L^J$ with ${\nil\circ\sqnii\circ\sqni_J\circ\sqni_L}\subseteq{{\overline\nil}\circ{\sqsupset}\circ{\sqsupset_L}}$.

    For every $l\in L$, take $j_l\in J$ and $i_l\in I$ with $i_l\sqni_Jj_l\sqni_Ll$.  As ${=_I}\subseteq{\sqin\circ\lin\circ\leftarrow\circ\nil\circ\sqnii}$, we have $f_l,f_l'\in F$ with $i_l\mathrel{\sqin\circ\lin}f_l\leftarrow f_l'\nil\circ\sqnii i_l$.  Then ${\nil\circ\sqni\circ\sqni_J}\subseteq{{\overline\nil}\circ{\sqsupset}\circ{\sqsupset_J}}$ implies that we have $h_l\in H$ with $f_l\mathrel{{\overline\nil}\circ{\sqsupset}}h_l\sqsupset_Jj_l$, while ${\nil\circ\sqnii\circ\sqni_J\circ\sqni_L}\subseteq{{\overline\nil}\circ{\sqsupset}\circ{\sqsupset_L}}$ implies that we have $h_l'\in H$ with $f_l'\mathrel{{\overline\nil}\circ{\sqsupset}}h'_l\sqsupset_Ll$.  This shows that ${\sqsubset}\circ{\overline\lin}\circ{\leftarrow}\circ{\overline\nil}\circ{\sqsupset}$ subfactors into $\sqsupset_J\circ\sqni_L$ and $\sqsupset_L$ in $\mathbf{K}^H_L$.  As ${\sqsupset}\in\mathbf{K}^G_H$ was arbitrary, this shows that ${{\overline\lin}\circ{\leftarrow}\circ{\overline\nil}}$ is widely $\mathbf{K}$-subfactorisable.
\end{proof}

By a \emph{graph} we mean a finite set $G$ together with a reflexive symmetric binary \emph{edge relation} ${\sqcap_G}$ on $G$, which we denote just by $\sqcap$ when $G$ is clear.  We call a relation ${\sqsupset}\subseteq{H\times G}$ from one graph $G$ to another graph $H$ \emph{edge-preserving} if
\[\tag{Edge-Preserving}{{\sqsupset}\circ{\sqcap}\circ{\sqsubset}}\ \subseteq\ {\sqcap}.\]
Graphs form a category $\mathbf{G}$ with hom-sets
\[\mathbf{G}_F^E=\{{\sqsupset}\subseteq E\times F:{\sqsupset}\text{ is co-bijective and edge-preserving}\}.\]

Our primary interest is in subcategories $\mathbf{K}$ of $\mathbf{G}$ where we restrict both the shape of the graphs and the types of morphisms between them (which represent open covers and refinements between them in certain classes of compacta).  Accordingly, let us make the following standing assumption for the rest of this subsection:
\begin{center}
    \textbf{$\mathbf{K}$ is a subcategory of the graph category $\mathbf{G}$.}
\end{center}

We call a morphism ${\nil}\in\mathbf{K}^E_F$ \emph{near-amalgamable} if
\[{\sqsupset}\in\mathbf{K}^F_G\ \text{and}\ {\sqni}\in\mathbf{K}^F_H\quad\Rightarrow\quad\exists\,{\sqsupset'}\in\mathbf{K}^G_K\ \exists\,{\sqni'}\in\mathbf{K}^H_K\ ({\nil}\circ{\sqsupset}\circ{\sqsupset'}\circ{\sqcap}\circ{\sqin'}\circ{\sqin}\circ{\lin}\ \subseteq\ {\sqcap}).\]

\begin{proposition}\label{SubImpliesNear}
    For any ${\nil}\in\mathbf{K}^F_G$,
    \[{\nil}\text{ is subamalgamable}\quad\Rightarrow\quad{\nil}\text{ is near-amalgamable}.\]
\end{proposition}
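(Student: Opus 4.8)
The plan is to feed the very morphisms to be near-amalgamated into the subamalgamability hypothesis, and then to exploit the fact that any composite of $\mathbf{K}$-morphisms is again edge-preserving.

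To set up, write $\nil\in\mathbf{K}^F_G$ and let $\overline{\nil}\in\mathbf{K}^F_G$ with $\nil\subseteq\overline{\nil}$ be the witness to subamalgamability. Note that since $\nil$ has domain $G$, both the subamalgamability and the near-amalgamability conditions quantify over exactly the same pairs ${\sqsupset}\in\mathbf{K}^G_A$ and ${\sqni}\in\mathbf{K}^G_B$. So I would fix such a pair and apply subamalgamability to obtain ${\sqsupset'}\in\mathbf{K}^A_I$ and ${\sqni'}\in\mathbf{K}^B_I$ with
\[{\nil}\circ{\sqsupset}\circ{\sqsupset'}\ \subseteq\ {\overline{\nil}}\circ{\sqni}\circ{\sqni'}.\]
I would then take $K=I$ and offer these same $\sqsupset'$ and $\sqni'$ as the witnesses required by near-amalgamability.

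The crucial observation is that $Q:={\overline{\nil}}\circ{\sqni}\circ{\sqni'}$ is a single morphism in $\mathbf{K}^F_I$; being a morphism in a subcategory of $\mathbf{G}$, it is edge-preserving, i.e.\ ${Q}\circ{\sqcap}\circ{Q^{-1}}\subseteq{\sqcap}$. It then remains to sandwich the near-amalgamability composite between $Q$ and $Q^{-1}$. On the left, the displayed inclusion gives ${\nil}\circ{\sqsupset}\circ{\sqsupset'}\subseteq Q$. On the right, since $\nil\subseteq\overline{\nil}$ gives ${\lin}\subseteq{\overline{\lin}}$, monotonicity of relational composition yields ${\sqin'}\circ{\sqin}\circ{\lin}\subseteq{\sqin'}\circ{\sqin}\circ{\overline{\lin}}=Q^{-1}$. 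Composing and invoking edge-preservation of $Q$,
\[{\nil}\circ{\sqsupset}\circ{\sqsupset'}\circ{\sqcap}\circ{\sqin'}\circ{\sqin}\circ{\lin}\ \subseteq\ {Q}\circ{\sqcap}\circ{Q^{-1}}\ \subseteq\ {\sqcap},\]
which is exactly what near-amalgamability of $\nil$ demands.

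I expect the only real subtlety to be the bookkeeping of domains, codomains and inversion directions: one must ensure that the factor to the right of the central $\sqcap$ is controlled by $Q^{-1}$ rather than by $\lin$ alone, which is precisely where the enlargement $\nil\subseteq\overline{\nil}$ of subamalgamability (as opposed to genuine amalgamability) gets used. Everything else is routine monotonicity of relational composition together with the single structural fact that morphisms in $\mathbf{K}$ are edge-preserving.
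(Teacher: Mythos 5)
Your proposal is correct and is essentially identical to the paper's own proof: both feed the given pair into the subamalgamability hypothesis, reuse the resulting $\sqsupset'$, $\sqni'$ as the near-amalgamation witnesses, replace the left factor via the inclusion ${\nil}\circ{\sqsupset}\circ{\sqsupset'}\subseteq{\overline\nil}\circ{\sqni}\circ{\sqni'}$ and the right factor via ${\lin}\subseteq{\overline\lin}$, and then conclude from the fact that ${\overline\nil}\circ{\sqni}\circ{\sqni'}$ is $\sqcap$-preserving. Naming this composite $Q$ and phrasing the last step as $Q\circ{\sqcap}\circ Q^{-1}\subseteq{\sqcap}$ is only a cosmetic repackaging of the paper's displayed chain of inclusions.
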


\begin{proof}
    Take ${\overline\nil}\in\mathbf{K}^F_G$ with ${\nil}\subseteq{\overline\nil}$ such that, for any ${\sqsupset}\in\mathbf{K}^F_G$ and ${\sqni}\in\mathbf{K}^F_H$, we have ${\sqsupset'}\in\mathbf{K}^G_K$ and ${\sqni'}\in\mathbf{K}^H_K$ with ${\nil\circ\sqsupset\circ\sqsupset'}\ \subseteq\ {\overline{\nil}\circ\sqni\circ\sqni'}$.  Now just note that this implies
        \[{\nil}\circ{\sqsupset}\circ{\sqsupset'}\circ{\sqcap}\circ{\sqin'}\circ{\sqin}\circ{\lin}\quad\subseteq\quad{\overline\nil}\circ{\sqni}\circ{\sqni'}\circ{\sqcap}\circ{\sqin'}\circ{\sqin}\circ{\overline\lin}\quad\subseteq\quad{\sqcap},\]
    as ${\overline\nil}\circ{\sqni}\circ{\sqni'}$ is $\sqcap$-preserving.
\end{proof}

We call ${\nil}\in\mathbf{K}^F_G$ \emph{neighbourly} if we have $\overline\nil\in\mathbf{K}^F_G$ such that
\[{{\nil}\circ{\sqcap}}\ \subseteq\ {\overline\nil}.\]

\begin{proposition}\label{NearImpliesSub}
    For any ${\nil},{\dashv}\in\mathbf{K}^E_F$,
    \[{\nil}\text{ is neighbourly and }{\dashv}\text{ is near-amalgamable}\quad\Rightarrow\quad{\nil\circ\dashv}\text{ is subamalgamable}.\]
\end{proposition}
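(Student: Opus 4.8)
The plan is to use the neighbourly witness to manufacture the subamalgamability witness. Write $\overline\nil\in\mathbf{K}^E_F$ for the morphism witnessing that $\nil$ is neighbourly, so that ${\nil}\circ{\sqcap}\subseteq{\overline\nil}$; since $\sqcap$ is reflexive this already gives ${\nil}\subseteq{\overline\nil}$ and hence ${\nil}\circ{\dashv}\subseteq{\overline\nil}\circ{\dashv}$. I claim that $\overline\nil\circ\dashv$ is the required witness for the subamalgamability of $\nil\circ\dashv$. The guiding idea is that near-amalgamability of $\dashv$ yields an amalgamating pair whose relevant composite lands merely inside the edge relation $\sqcap$ rather than inside an honest inclusion, and that neighbourliness of $\nil$ is precisely the hypothesis needed to absorb this residual $\sqcap$ into $\overline\nil$.

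So, given any span $\sqsupset,\sqni$ out of the (common) domain of $\dashv$ and $\nil\circ\dashv$, I would first apply near-amalgamability of $\dashv$ to produce amalgamating morphisms $\sqsupset',\sqni'$ satisfying
\[{\dashv}\circ{\sqsupset}\circ{\sqsupset'}\circ{\sqcap}\circ{\sqin'}\circ{\sqin}\circ{\dashv^{-1}}\ \subseteq\ {\sqcap}.\]
The crux of the argument is the intermediate inclusion
\[{\dashv}\circ{\sqsupset}\circ{\sqsupset'}\ \subseteq\ {\sqcap}\circ{\dashv}\circ{\sqni}\circ{\sqni'}.\]
To establish it, fix a pair $(f,k)$ in the left-hand side. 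Because every morphism of $\mathbf{K}$ is co-bijective, and so in particular co-surjective, I can choose in turn $b$, $g$ and $f'$ with $f'\dashv g\sqni b\sqni' k$; this already places $(f',k)$ in ${\dashv}\circ{\sqni}\circ{\sqni'}$. Inverting this chain and splicing in the reflexive edge $k\sqcap k$ at its endpoint exhibits $(f,f')$ as an element of ${\dashv}\circ{\sqsupset}\circ{\sqsupset'}\circ{\sqcap}\circ{\sqin'}\circ{\sqin}\circ{\dashv^{-1}}$, so the near-amalgamability inclusion forces $f\sqcap f'$ and therefore $(f,k)\in{\sqcap}\circ{\dashv}\circ{\sqni}\circ{\sqni'}$, as claimed.

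To finish, I would post-compose the intermediate inclusion with $\nil$ on the left and invoke neighbourliness in the form ${\nil}\circ{\sqcap}\subseteq{\overline\nil}$:
\[{\nil}\circ{\dashv}\circ{\sqsupset}\circ{\sqsupset'}\ \subseteq\ {\nil}\circ{\sqcap}\circ{\dashv}\circ{\sqni}\circ{\sqni'}\ \subseteq\ {\overline\nil}\circ{\dashv}\circ{\sqni}\circ{\sqni'}.\]
This is exactly the inequality required for subamalgamability of $\nil\circ\dashv$ with witness $\overline\nil\circ\dashv$ and amalgamating morphisms $\sqsupset',\sqni'$. I expect the only real obstacle to be the intermediate inclusion: one has to manufacture the target chain using nothing but co-surjectivity and then route it through the near-amalgamability inclusion via the single reflexive loop $k\sqcap k$, while carefully distinguishing the edge relation on the pullback object (which is consumed inside that inclusion) from the edge relation on $F$ (the one ultimately swallowed by neighbourliness). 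The remaining steps are routine, relying only on monotonicity of composition under $\subseteq$ and reflexivity of $\sqcap$.
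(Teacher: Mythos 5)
Your proof is correct and takes essentially the same route as the paper's: the same witness ${\overline\nil}\circ{\dashv}$, the same intermediate inclusion ${\dashv}\circ{\sqsupset}\circ{\sqsupset'}\subseteq{\sqcap}\circ{\dashv}\circ{\sqni}\circ{\sqni'}$ obtained by feeding a co-surjectivity chain and the reflexive edge on the amalgam back into the near-amalgamability inclusion, and the same final absorption of $\sqcap$ into ${\overline\nil}$ via neighbourliness. The only difference is presentational: you make explicit the co-surjectivity chain and the containment ${\nil}\subseteq{\overline\nil}$ (hence ${\nil}\circ{\dashv}\subseteq{\overline\nil}\circ{\dashv}$), which the paper leaves implicit.
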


\begin{proof}
    Assume $\nil$ is neighbourly, as witnessed by ${\overline\nil}$, and that $\dashv$ is near-amalgamable.  Take any ${\sqsupset}\in\mathbf{K}^F_G$ and ${\sqni}\in\mathbf{K}^F_H$, so we have ${\sqsupset'}\in\mathbf{K}^G_K$ and ${\sqni'}\in\mathbf{K}^H_K$ with
    \[{\dashv}\circ{\sqsupset}\circ{\sqsupset'}\circ{\sqcap}\circ{\sqin'}\circ{\sqin}\circ{\vdash}\ \subseteq\ {\sqcap}\qquad\text{and hence}\qquad{\dashv}\circ{\sqsupset}\circ{\sqsupset'}\ \subseteq\ {\sqcap}\circ{\dashv}\circ{\sqni}\circ{\sqni'}.\]
    Indeed, if $f\mathrel{{\dashv}\circ{\sqsupset}\circ{\sqsupset'}}h$ then, as all morphisms are co-surjective, we also have $f'\in F$ with $f'\mathrel{{\dashv}\circ{\sqni}\circ{\sqni'}}h$ and hence $f\mathrel{\sqcap}f'$.  From this it follows that
    \[{\nil}\circ{\dashv}\circ{\sqsupset}\circ{\sqsupset'}\ \subseteq\ {\nil}\circ{\sqcap}\circ{\dashv}\circ{\sqni}\circ{\sqni'}\ \subseteq\ {\overline\nil}\circ{\dashv}\circ{\sqni}\circ{\sqni'},\]
    showing that ${\overline\nil}\circ{\dashv}$ witnesses the subamalgamability of ${\nil}\circ{\dashv}$.
\end{proof}

\begin{proposition}
    If the subamalgamable morphisms are wide in $\mathbf{K}$ then so are the near-amalgamable morphisms.  Conversely, if both the near-amalgamable and neighbourly morphisms are wide in $\mathbf{K}$ then so are the subamalgamable morphisms.
\end{proposition}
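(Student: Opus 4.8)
The plan is to read off both implications directly from \Cref{SubImpliesNear} and \Cref{NearImpliesSub}, the only thing to keep track of being codomains, since wideness is purely a condition on codomains: a class $W$ of morphisms is wide exactly when, for every object $G\in\mathbf{K}$, some morphism of $W$ lies in $\mathbf{K}^G_\bullet$.

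For the first implication I would assume the subamalgamable morphisms are wide and fix an arbitrary $G\in\mathbf{K}$. Wideness yields a subamalgamable ${\nil}\in\mathbf{K}^G_\bullet$, say ${\nil}\in\mathbf{K}^G_F$ for some $F$. By \Cref{SubImpliesNear}, ${\nil}$ is then near-amalgamable, and it is of course the same morphism, so it still has codomain $G$. As $G$ was arbitrary, this shows the near-amalgamable morphisms are wide as well.

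For the converse I would assume both the near-amalgamable and the neighbourly morphisms are wide and again fix $G\in\mathbf{K}$. Applying wideness of the neighbourly morphisms at $G$ produces a neighbourly ${\nil}\in\mathbf{K}^G_E$ for some object $E$; applying wideness of the near-amalgamable morphisms at this $E$ then produces a near-amalgamable ${\dashv}\in\mathbf{K}^E_F$ for some $F$. Since the codomain of ${\dashv}$ equals the domain of ${\nil}$, the composite ${\nil\circ\dashv}\in\mathbf{K}^G_F$ is defined, has codomain $G$, and is subamalgamable by \Cref{NearImpliesSub}. As $G$ was arbitrary, the subamalgamable morphisms are wide.

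I do not expect a genuine obstacle here: each half is an immediate corollary of one of the two preceding propositions, the whole content being the bookkeeping observation that \Cref{SubImpliesNear} returns the very same morphism (hence the same codomain), while the composite in \Cref{NearImpliesSub} inherits its codomain from its outer, neighbourly factor, which is precisely the factor we are free to pin to the prescribed object $G$.
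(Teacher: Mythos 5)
Your proof is correct and is exactly the paper's argument: the paper's proof reads ``Immediate from \Cref{SubImpliesNear} and \Cref{NearImpliesSub}'', and your write-up simply makes explicit the codomain bookkeeping (wideness of the neighbourly class pins the outer factor's codomain to $G$, which the composite in \Cref{NearImpliesSub} inherits) that the paper leaves implicit.
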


\begin{proof}
    Immediate from \Cref{SubImpliesNear} and \Cref{NearImpliesSub}.
\end{proof}

We are particularly interested in graphs arising from levels of an $\omega$-poset $\mathbb{P}$.  Specifically, on each level $\mathbb{P}_n$ we consider the edge relation $\wedge_n$ coming from the restriction of $\wedge$, i.e.
\[{\wedge_n}={\wedge}\cap(\mathbb{P}_n\times\mathbb{P}_n).\]
Each restriction $\geq^m_n$ of the order on $\mathbb{P}$ to levels $m\leq n$ is then an edge-preserving relation.  By a \emph{$\mathbf{K}$-poset} we mean an $\omega$-poset $\mathbb{P}$ such that its levels are all graphs in $\mathbf{K}$ and the restrictions of the order relation to these levels are all morphisms in $\mathbf{K}$, i.e.~such that $(\mathbb{P}_n,\wedge_n)\in\mathbf{K}$ and ${\geq^m_n}\in\mathbf{K}^m_n$ whenever $m\leq n$, where $\mathbf{K}^m_n=\mathbf{K}^{\mathbb{P}_m}_{\mathbb{P}_n}=\mathbf{K}^{(\mathbb{P}_m,\wedge_m)}_{(\mathbb{P}_n,\wedge_n)}$.

\subsection{Subabsorption}

In this subsection we make the following standing assumption:

\begin{center}
    \textbf{$\mathbb{P}$ is a $\mathbf{K}$-poset for some fixed subcategory $\mathbf{K}$ of the graph category $\mathbf{G}$.}
\end{center}

We call ${\nil}\in\mathbf{K}^m_n$ \emph{subabsorbing} if ${\nil}\subseteq{\geq^m_n}$ and, for all ${\sqsupset}\subseteq\mathbf{K}^{\mathbb{P}_n}_G$, we have ${\sqsupset'}\in\mathbf{K}^G_{\mathbb{P}_{n'}}$ with
\[\tag{Subabsorbing}{{\nil}\circ{\sqsupset}\circ{\sqsupset'}}\ \subseteq\ {\geq^m_{n'}}.\]

\begin{proposition}\label{SubNearAbsorbing}
     Every subabsorbing ${\nil}\in\mathbf{K}^m_n$ is near-amalgamable.
\end{proposition}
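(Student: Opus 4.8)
The plan is to apply subabsorption twice---once to each of the two incoming morphisms---in order to push both down onto a common level $\mathbb{P}_{n'}$ of the poset, where the edge relation is literally the common-lower-bound relation $\wedge_{n'}$, and then to read off the near-amalgamability inequality directly from transitivity of $\geq$.

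Recall that a subabsorbing ${\nil}\in\mathbf{K}^m_n$ satisfies ${\nil}\subseteq{\geq^m_n}$ and, for every ${\sqsupset}\in\mathbf{K}^{\mathbb{P}_n}_G$, yields ${\sqsupset'}\in\mathbf{K}^G_{\mathbb{P}_{n'}}$ with ${\nil}\circ{\sqsupset}\circ{\sqsupset'}\subseteq{\geq^m_{n'}}$. Given the data ${\sqsupset}\in\mathbf{K}^{\mathbb{P}_n}_G$ and ${\sqni}\in\mathbf{K}^{\mathbb{P}_n}_H$ from the definition of near-amalgamability, I would apply subabsorption to each. Applied to $\sqsupset$ it produces ${\sqsupset_0}\in\mathbf{K}^G_{\mathbb{P}_{n_1}}$ with ${\nil}\circ{\sqsupset}\circ{\sqsupset_0}\subseteq{\geq^m_{n_1}}$; applied to $\sqni$ (legitimately, since the quantifier ranges over all objects, so one may take the dummy $G$ to be $H$) it produces ${\sqni_0}\in\mathbf{K}^H_{\mathbb{P}_{n_2}}$ with ${\nil}\circ{\sqni}\circ{\sqni_0}\subseteq{\geq^m_{n_2}}$.

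Next I would descend to a common level. Put $K=\mathbb{P}_{n'}$ for any $n'\geq\max(n_1,n_2)$. Since $\mathbb{P}$ is a $\mathbf{K}$-poset, the order restrictions ${\geq^{n_1}_{n'}}$ and ${\geq^{n_2}_{n'}}$ are morphisms of $\mathbf{K}$, so I may set ${\sqsupset'}={\sqsupset_0}\circ{\geq^{n_1}_{n'}}\in\mathbf{K}^G_K$ and ${\sqni'}={\sqni_0}\circ{\geq^{n_2}_{n'}}\in\mathbf{K}^H_K$; these lie in $\mathbf{K}$ because it is closed under composition. Transitivity of $\geq$ gives ${\geq^m_{n_1}}\circ{\geq^{n_1}_{n'}}\subseteq{\geq^m_{n'}}$ and similarly for $n_2$, so both composites absorb into a single relation:
\[{\nil}\circ{\sqsupset}\circ{\sqsupset'}\ \subseteq\ {\geq^m_{n'}}\qquad\text{and}\qquad{\nil}\circ{\sqni}\circ{\sqni'}\ \subseteq\ {\geq^m_{n'}}.\]
Now I would verify the edge-preservation inequality. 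Take $(e,e')$ in ${\nil}\circ{\sqsupset}\circ{\sqsupset'}\circ{\sqcap}\circ{\sqin'}\circ{\sqin}\circ{\lin}$, so there are $k,k'\in K=\mathbb{P}_{n'}$ with $e\mathrel{({\nil}\circ{\sqsupset}\circ{\sqsupset'})}k$, with $k\mathrel{\sqcap}k'$, and with $e'\mathrel{({\nil}\circ{\sqni}\circ{\sqni'})}k'$ (the last factor being the inverse of ${\nil}\circ{\sqni}\circ{\sqni'}$). The two displayed inclusions give $e\geq k$ and $e'\geq k'$ with $e,e'\in\mathbb{P}_m$, while the edge relation $\sqcap$ on $K$ is $\wedge_{n'}$, so $k\wedge k'$, i.e.\ $k,k'\geq r$ for some $r\in\mathbb{P}$. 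Then $e,e'\geq r$, whence $e\wedge e'$; since $e,e'\in\mathbb{P}_m$ this is exactly $e\mathrel{\wedge_m}e'=e\mathrel{\sqcap}e'$, giving the required containment in $\sqcap$.

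The only genuine point to watch is that the two applications of subabsorption generally land at different levels $n_1$ and $n_2$, so the main (and essentially only) obstacle is the bookkeeping of levels; this is resolved cleanly by composing with the order-restriction morphisms to descend to a common $\mathbb{P}_{n'}$ and then invoking transitivity of $\geq$. Everything else---membership in $\mathbf{K}$ and co-bijectivity---is automatic from closure under composition together with the $\mathbf{K}$-poset hypothesis.
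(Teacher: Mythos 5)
Your proof is correct and follows essentially the same route as the paper's: apply subabsorption separately to the two given morphisms, pass to a common level (the paper phrases your explicit composition with $\geq^{n_1}_{n'}$, $\geq^{n_2}_{n'}$ as ``making $n'$ or $n''$ larger if necessary''), and conclude via the inclusion ${\geq^m_{n'}}\circ{\wedge}\circ{\leq^m_{n'}}\subseteq{\wedge}$, which is exactly your final transitivity argument.
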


\begin{proof}
    If ${\nil}\in\mathbf{K}^m_n$ is subabsorbing then, for any ${\sqsupset}\subseteq\mathbf{K}^{\mathbb{P}_n}_G$ and ${\sqni}\subseteq\mathbf{K}^{\mathbb{P}_n}_G$, we have ${\sqsupset'}\in\mathbf{K}^G_{\mathbb{P}_{n'}}$ and ${\sqni'}\in\mathbf{K}^G_{\mathbb{P}_{n''}}$ such that ${{\nil}\circ{\sqsupset}\circ{\sqsupset'}}\subseteq{\geq^m_{n'}}$ and ${{\nil}\circ{\sqni}\circ{\sqni'}}\subseteq{\geq^m_{n''}}$.  Making $n'$ or $n''$ larger if necessary we may assume $n'=n''$ and then
    \[{{\nil}\circ{\sqsupset}\circ{\sqsupset'}\circ{\wedge}\circ{\sqin'}\circ{\sqin}\circ{\lin}}\ \ \subseteq\ \ {\geq^m_{n'}}\circ{\wedge}\circ{\leq^m_{n'}}\ \ \subseteq\ \ {\wedge},\]
    showing that $\nil$ is near-amalgamable.
\end{proof}

Call ${\leftarrow}\subseteq\mathbb{P}_m\times\mathbb{P}_m$ \emph{$\mathbf{K}$-like} if it subfactors into ${\sqsupset}\in\mathbf{K}^m_n$ and $\geq^m_n$, for some $m\geq n$, i.e.
\[\tag{$\mathbf{K}$-Like}{=_{\mathbb{P}_n}}\ \subseteq\ {\sqsubset\circ\leftarrow\circ\geq^m_n}.\]

\begin{lemma}\label{KSubfactorKLike}
    If ${\nil}\in\mathbf{K}^k_m$ is subabsorbing then, for any ${\leftarrow}\subseteq\mathbb{P}_k\times\mathbb{P}_k$,
    \[{\lin\circ\leftarrow\circ\nil}\text{ is $\mathbf{K}$-subfactorisable}\qquad\Rightarrow\qquad\leftarrow\text{ is $\mathbf{K}$-like}.\]
\end{lemma}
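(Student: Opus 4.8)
The plan is to unpack both definitions and then use subabsorption as the engine that converts the arbitrary right-hand morphism of a subfactorisation into the canonical order relation. Concretely, the hypothesis that $\lin\circ\leftarrow\circ\nil$ is $\mathbf{K}$-subfactorisable provides some $H\in\mathbf{K}$ and morphisms $\sqsupset,\sqni\in\mathbf{K}^{\mathbb{P}_m}_H$ with
\[{=_H}\ \subseteq\ {\sqsubset\circ\lin\circ\leftarrow\circ\nil\circ\sqni},\]
whereas the conclusion demands some $n\geq k$ and a single morphism $\sqsupset''\in\mathbf{K}^k_n$ with ${=_{\mathbb{P}_n}}\subseteq{(\sqsupset'')^{-1}\circ\leftarrow\circ\geq^k_n}$. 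Thus the whole task is to massage the first containment into the second, the essential difference being that the right-hand factor must end up as $\geq^k_n$ rather than the composite $\nil\circ\sqni$.

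First I would feed the morphism $\sqni\in\mathbf{K}^{\mathbb{P}_m}_H$ into the subabsorbing property of $\nil$ (taking the ambient object to be $H$), obtaining $\sqni'\in\mathbf{K}^H_{\mathbb{P}_{m'}}$, for some $m'\geq k$, such that
\[{\nil\circ\sqni\circ\sqni'}\ \subseteq\ {\geq^k_{m'}}.\]
This is precisely the step that trades the composite $\nil\circ\sqni$ for the order relation, at the price of descending to the deeper level $m'$; note that the very definedness of $\geq^k_{m'}$ forces $k\leq m'$, so $m'$ will be an admissible witness.

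Next I would compose the subfactorisation on the right with $\sqni'$ and on the left with its inverse $\sqin'$. Since $\sqni'$ is co-surjective, $\sqin'\circ\sqni'$ contains $=_{\mathbb{P}_{m'}}$, so the left-hand side remains at least the identity, while on the right the bound just obtained collapses the tail:
\[{=_{\mathbb{P}_{m'}}}\ \subseteq\ {\sqin'\circ\sqni'}\ \subseteq\ {\sqin'\circ\sqsubset\circ\lin\circ\leftarrow\circ\nil\circ\sqni\circ\sqni'}\ \subseteq\ {\sqin'\circ\sqsubset\circ\lin\circ\leftarrow\circ\geq^k_{m'}}.\]
Setting $\sqsupset''=\nil\circ\sqsupset\circ\sqni'$, which lies in $\mathbf{K}^k_{m'}$ as a composite of morphisms with matching domains and codomains, we have $(\sqsupset'')^{-1}=\sqin'\circ\sqsubset\circ\lin$, and the display reads ${=_{\mathbb{P}_{m'}}}\subseteq{(\sqsupset'')^{-1}\circ\leftarrow\circ\geq^k_{m'}}$. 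This is exactly the statement that $\leftarrow$ is $\mathbf{K}$-like, witnessed by $n=m'$, which finishes the argument.

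The main obstacle here is bookkeeping rather than conceptual depth: one must keep straight that it is the right factor $\sqni$ (paired with $\nil$) that gets absorbed, so that the left factors $\sqsubset\circ\lin$ can be bundled with $\sqin'$ into the inverse of a single morphism. The two points genuinely requiring care are the index arithmetic, namely checking $m'\geq k$ so that $\geq^k_{m'}$ makes sense and $n=m'$ is legitimate, and the verification that $\nil\circ\sqsupset\circ\sqni'$ really is a $\mathbf{K}$-morphism in $\mathbf{K}^k_{m'}$, which is immediate from closure of $\mathbf{K}$ under composition. Everything else reduces to monotonicity of composition under $\subseteq$ together with the co-surjectivity of $\sqni'$.
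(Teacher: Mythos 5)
Your proof is correct and follows essentially the same route as the paper's: apply the subabsorbing property of $\nil$ to the right-hand factor of the given subfactorisation to replace ${\nil}\circ{\sqni}$ (composed with the new morphism) by ${\geq^k_{m'}}$, then use co-surjectivity to recover the identity on $\mathbb{P}_{m'}$ and read off that $\leftarrow$ subfactors into ${\nil}\circ{\sqsupset}\circ{\sqni'}\in\mathbf{K}^k_{m'}$ and ${\geq^k_{m'}}$. The only differences are notational (your $\sqsupset,\sqni,\sqni'$ are the paper's $\sqsupset',\sqsupset,\sqni$), and your bookkeeping of the witnessing morphism and level index matches the paper's exactly.
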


\begin{proof}
    Assume ${\lin\circ\leftarrow\circ\nil}$ subfactors into ${\sqsupset'},{\sqsupset}\in\mathbf{K}^{\mathbb{P}_m}_G$.  As $\nil$ is subabsorbing, we have $n>m$ and ${\sqni}\in\mathbf{K}^G_{\mathbb{P}_n}$ such that ${\nil\circ\sqsupset\circ\sqni}\subseteq{\geq^k_n}$ and hence
    \[{=_{\mathbb{P}_n}}\ \ \subseteq\ \ {{\sqin}\circ{=_G}\circ{\sqni}}\ \ \subseteq\ \ {{\sqin}\circ{\sqsubset'}\circ{\lin}\circ{\leftarrow}\circ{\nil}\circ{\sqsupset}\circ{\sqni}}\ \ \subseteq\ \ \sqin\circ\sqsubset'\circ\lin\circ\leftarrow\circ\geq^k_n.\]
    This shows that ${\leftarrow}$ subfactors into $\nil\circ\sqsupset'\circ\sqni$ and $\geq^k_n$.
\end{proof}

We say that $\mathbb{P}$ is \emph{$\mathbf{K}$-coinitial} if its level are coinitial in $\mathbf{K}$ i.e.~ if, for all $G\in\mathbf{K}$, we have $n\in\omega$ with $\mathbf{K}^G_{\mathbb{P}_n}\neq\emptyset$.  We now come the central definition of this section.

\begin{definition}
    We say $\mathbb{P}$ is \emph{$\mathbf{K}$-subabsorbing} if, for all $m\in\omega$, we have subabsorbing ${\nil}\in\mathbf{K}^m_n$, for some $n>m$.  If $\mathbb{P}$ is also $\mathbf{K}$-coinitial then we say $\mathbb{P}$ is \emph{$\mathbf{K}$-sub-Fra\"iss\'e}.
\end{definition}

\begin{proposition}\label{SubabsorbingDirected}
    If $\mathbb{P}$ is $\mathbf{K}$-subabsorbing and $\mathbf{K}$ is directed then $\mathbb{P}$ is also $\mathbf{K}$-coinitial and hence $\mathbf{K}$-sub-Fra\"iss\'e.  In this case, near-amalgamable morphisms are also wide in $\mathbf{K}$.
\end{proposition}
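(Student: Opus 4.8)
The plan is to treat the two assertions in turn, in each case using a subabsorbing morphism to bridge between an arbitrary object of $\mathbf{K}$ and the levels of $\mathbb{P}$, with directedness supplying the initial link to the poset. For $\mathbf{K}$-coinitiality, I would fix any subabsorbing ${\nil}\in\mathbf{K}^m_n$ (available since $\mathbb{P}$ is $\mathbf{K}$-subabsorbing). Given an arbitrary $G\in\mathbf{K}$, apply directedness to the pair $G,\mathbb{P}_n$ to obtain an object $F$ together with morphisms ${\sqsupset}\in\mathbf{K}^G_F$ and ${\sqni}\in\mathbf{K}^{\mathbb{P}_n}_F$. The latter is a morphism into the level $\mathbb{P}_n$, so the defining property of the subabsorbing ${\nil}$, applied to ${\sqni}$, yields some ${\sqni'}\in\mathbf{K}^F_{\mathbb{P}_{n'}}$ with ${\nil}\circ{\sqni}\circ{\sqni'}\subseteq{\geq^m_{n'}}$. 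Composing, ${\sqsupset}\circ{\sqni'}\in\mathbf{K}^G_{\mathbb{P}_{n'}}$, so $\mathbf{K}^G_{\mathbb{P}_{n'}}\neq\emptyset$. As $G$ was arbitrary, the levels of $\mathbb{P}$ are coinitial in $\mathbf{K}$, whence $\mathbb{P}$ is $\mathbf{K}$-coinitial and, being already $\mathbf{K}$-subabsorbing, $\mathbf{K}$-sub-Fra\"iss\'e.

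For the wideness of near-amalgamable morphisms, the step I would isolate is a stability lemma: if ${\nil}\in\mathbf{K}^E_F$ is near-amalgamable and $r\in\mathbf{K}^G_E$ is any morphism, then $r\circ{\nil}\in\mathbf{K}^G_F$ is again near-amalgamable. To prove it, take test morphisms ${\sqsupset}\in\mathbf{K}^F_{G_0}$ and ${\sqni}\in\mathbf{K}^F_{H_0}$ into the common domain $F$; near-amalgamability of ${\nil}$ supplies amalgamating ${\sqsupset'},{\sqni'}$ with ${\nil}\circ{\sqsupset}\circ{\sqsupset'}\circ{\sqcap}\circ{\sqin'}\circ{\sqin}\circ{\lin}\subseteq{\sqcap}$. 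Since $(r\circ{\nil})^{-1}={\lin}\circ r^{-1}$, pre- and post-composing this inclusion with $r$ and $r^{-1}$ and then invoking edge-preservation of $r$ (namely $r\circ{\sqcap}\circ r^{-1}\subseteq{\sqcap}$) gives exactly the defining inclusion for $r\circ{\nil}$ against the same test morphisms, witnessed by the same ${\sqsupset'},{\sqni'}$.

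With the lemma in hand, I would conclude as follows. For arbitrary $G\in\mathbf{K}$, the coinitiality just established provides a level $\mathbb{P}_k$ and a morphism $r\in\mathbf{K}^G_{\mathbb{P}_k}$. Choosing a subabsorbing ${\nil}\in\mathbf{K}^k_{k'}$, which is near-amalgamable by \Cref{SubNearAbsorbing}, the stability lemma makes $r\circ{\nil}\in\mathbf{K}^G_{\mathbb{P}_{k'}}$ a near-amalgamable morphism with codomain $G$. As $G$ was arbitrary, the near-amalgamable morphisms are wide in $\mathbf{K}$.

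I expect the main obstacle to be the bookkeeping of composition directions together with recognising the stability lemma, i.e.~that post-composing a near-amalgamable morphism with an arbitrary (edge-preserving) morphism preserves near-amalgamability. This is precisely what lets the near-amalgamability of a subabsorbing morphism between two levels be transported out to an arbitrary object; once it is in place, both assertions reduce to a single application of directedness (to reach the levels) together with the defining property of subabsorbing morphisms.
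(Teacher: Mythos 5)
Your proof is correct and follows essentially the same route as the paper: directedness plus the defining property of a subabsorbing morphism produces a morphism from some level of $\mathbb{P}$ to an arbitrary $G\in\mathbf{K}$, and then a subabsorbing (hence near-amalgamable) morphism between levels is transported to codomain $G$ by post-composition. The stability lemma you isolate---that post-composing a near-amalgamable morphism with an edge-preserving morphism preserves near-amalgamability---is exactly the step the paper leaves implicit in asserting that ${\dashv}\circ{\nil}$ is near-amalgamable, so your write-up simply makes that verification explicit.
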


\begin{proof}
    If $\mathbb{P}$ is $\mathbf{K}$-subabsorbing then, for any $k\in\omega$, we have subabsorbing ${\dashv}\in\mathbf{K}^k_m$.  If $\mathbf{K}$ is also directed then, for any $G\in\mathbf{K}$, we have $H\in\mathbf{K}$ such that we have some ${\sqsupset}\in\mathbf{K}^{\mathbb{P}_m}_H$ and ${\sqni}\in\mathbf{K}^G_H$.  As ${\nil}$ is subabsorbing, we then have ${\sqsupset'}\in\mathbf{K}^H_{\mathbb{P}_n}$ and hence ${\sqni\circ\sqsupset'}\in\mathbf{K}^G_{\mathbb{P}_n}$, showing that $\mathbb{P}$ is $\mathbf{K}$-coinitial.

    Now for any $G\in\mathbf{K}$, $\mathbf{K}$-coinitiality yields ${\dashv}\in\mathbf{K}^G_{\mathbb{P}_m}$, for some $m\in\omega$.  Then $\mathbf{K}$-subabsorption yields subabsorbing ${\nil}\in\mathbf{K}^m_n$, for some $n>m$.  By \Cref{SubNearAbsorbing}, $\nil$ is near-amalgamble and hence so is $\dashv\circ\nil$, i.e.~near-amalgamable morphisms are wide.
\end{proof}

If $\geq^m_n$ is a witness for the subamalgamability of ${\nil}\in\mathbf{K}^m_n$, we call $\nil$ \emph{$\mathbb{P}$-subamalgamable}.  Likewise, we say that ${\nil}\in\mathbf{K}^m_n$ is \emph{$\mathbb{P}$-neighbourly} if ${\nil}\circ{\wedge_n}\subseteq{\geq^m_n}$, i.e.~if ${\nil}\subseteq{\vartriangleright_{\mathbb{P}_n}}$.   We say that $\mathbb{P}$ is \emph{$\mathbf{K}$-regular} if, for all $m\in\omega$, we have $\mathbb{P}$-neighbourly ${\nil}\in\mathbf{K}^m_n$, for some $n>m$.  In particular, $\mathbf{K}$-regularity implies regularity as defined earlier for general $\omega$-posets.

\begin{proposition}\label{SubabsorbingRegular}
    Assume $\mathbb{P}$ is $\mathbf{K}$-subabsorbing.
    \begin{enumerate}
        \item If the neighbourly morphisms are wide in $\mathbf{K}$ then $\mathbb{P}$ is $\mathbf{K}$-regular.
        \item If $\mathbb{P}$ is $\mathbf{K}$-regular then, for all $m\in\omega$, we have $\mathbb{P}$-subamalgamable ${\nil}\in\mathbf{K}^m_n$.
    \end{enumerate}
\end{proposition}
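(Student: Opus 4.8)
The plan is to prove the two implications separately, in each case combining the subabsorbing morphisms supplied by the standing hypothesis with the relevant ``wide'' or ``$\mathbb{P}$-regular'' data and composing in such a way that the witness collapses to the order relation $\geq^m_n$. For \textbf{(1)}, fix $m\in\omega$. Since $\mathbb{P}$ is $\mathbf{K}$-subabsorbing I would first take subabsorbing ${\nil}\in\mathbf{K}^m_n$ for some $n>m$. As the neighbourly morphisms are wide in $\mathbf{K}$, the object $\mathbb{P}_n$ is the codomain of some neighbourly ${\dashv}\in\mathbf{K}^{\mathbb{P}_n}_G$, witnessed by ${\overline{\dashv}}\in\mathbf{K}^{\mathbb{P}_n}_G$ with ${\dashv}\circ{\sqcap_G}\subseteq{\overline{\dashv}}$ (so in particular ${\dashv}\subseteq{\overline{\dashv}}$, by reflexivity of $\sqcap_G$). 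The crucial move is to apply the subabsorption of $\nil$ to the single morphism $\overline{\dashv}$, obtaining ${\sqsupset}\in\mathbf{K}^G_{\mathbb{P}_{n'}}$ with ${\nil}\circ{\overline{\dashv}}\circ{\sqsupset}\subseteq{\geq^m_{n'}}$; enlarging $n'$ if necessary (replacing $\sqsupset$ by $\sqsupset\circ\geq^{n'}_{n''}$) we may assume $n'>m$. I then claim that ${\mathbf{e}}={\nil}\circ{\dashv}\circ{\sqsupset}\in\mathbf{K}^m_{n'}$ is $\mathbb{P}$-neighbourly.

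The verification runs as follows. Since $\sqsupset$ is co-bijective, hence co-surjective, we have ${=_{\mathbb{P}_{n'}}}\subseteq{\sqsubset}\circ{\sqsupset}$, and edge-preservation gives ${\sqsupset}\circ{\wedge_{n'}}\circ{\sqsubset}\subseteq{\sqcap_G}$; combining these yields ${\sqsupset}\circ{\wedge_{n'}}\subseteq{\sqsupset}\circ{\wedge_{n'}}\circ{\sqsubset}\circ{\sqsupset}\subseteq{\sqcap_G}\circ{\sqsupset}$. Feeding this through, then using neighbourliness ${\dashv}\circ{\sqcap_G}\subseteq{\overline{\dashv}}$ and the subabsorption choice, gives
\[
{\mathbf{e}}\circ{\wedge_{n'}}\ =\ {\nil}\circ{\dashv}\circ{\sqsupset}\circ{\wedge_{n'}}\ \subseteq\ {\nil}\circ{\dashv}\circ{\sqcap_G}\circ{\sqsupset}\ \subseteq\ {\nil}\circ{\overline{\dashv}}\circ{\sqsupset}\ \subseteq\ {\geq^m_{n'}}.
\]
As ${\mathbf{e}}\subseteq{\nil}\circ{\overline{\dashv}}\circ{\sqsupset}\subseteq{\geq^m_{n'}}$ as well, this is exactly the assertion that $\mathbf{e}$ is $\mathbb{P}$-neighbourly, so $\mathbb{P}$ is $\mathbf{K}$-regular.

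For \textbf{(2)}, fix $m\in\omega$ and use $\mathbf{K}$-regularity to take $\mathbb{P}$-neighbourly ${\nil}\in\mathbf{K}^m_k$ for some $k>m$, so that ${\nil}\circ{\wedge_k}\subseteq{\geq^m_k}$; thus $\nil$ is neighbourly with witness $\geq^m_k$. Then invoke the standing $\mathbf{K}$-subabsorption hypothesis to take subabsorbing ${\dashv}\in\mathbf{K}^k_n$ for some $n>k$; by \Cref{SubNearAbsorbing} $\dashv$ is near-amalgamable, and ${\dashv}\subseteq{\geq^k_n}$. Now \Cref{NearImpliesSub} shows ${\nil}\circ{\dashv}\in\mathbf{K}^m_n$ is subamalgamable, witnessed by ${\geq^m_k}\circ{\dashv}$. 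Since ${\dashv}\subseteq{\geq^k_n}$, transitivity of $\geq$ yields both ${\nil}\circ{\dashv}\subseteq{\geq^m_k}\circ{\geq^k_n}\subseteq{\geq^m_n}$ and ${\geq^m_k}\circ{\dashv}\subseteq{\geq^m_n}$. Because replacing a subamalgamation witness by any larger morphism in $\mathbf{K}^m_n$ only weakens the defining containment ${\nil}\circ{\sqsupset}\circ{\sqsupset'}\subseteq{\overline{\nil}}\circ{\sqni}\circ{\sqni'}$, and $\geq^m_n$ is such a morphism (it lies in $\mathbf{K}^m_n$ and contains $\nil\circ\dashv$), it follows that $\geq^m_n$ is itself a witness; that is, $\nil\circ\dashv$ is $\mathbb{P}$-subamalgamable.

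The main obstacle is the bookkeeping in \textbf{(1)}: one must force a single refinement $\sqsupset$ to control $\dashv$ and its witness $\overline{\dashv}$ \emph{simultaneously}, since it is only through ${\dashv}\circ{\sqcap_G}\subseteq{\overline{\dashv}}$ that a $\wedge_{n'}$-step at the source can be converted into containment in $\geq^m_{n'}$. Applying subabsorption to $\overline{\dashv}$ and exploiting ${\dashv}\subseteq{\overline{\dashv}}$ is precisely the trick that achieves this, and the edge-preservation-plus-co-surjectivity identity ${\sqsupset}\circ{\wedge_{n'}}\subseteq{\sqcap_G}\circ{\sqsupset}$ is what transports the adjacency through $\sqsupset$. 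By contrast \textbf{(2)} is essentially an assembly of \Cref{SubNearAbsorbing,NearImpliesSub} together with the elementary observation that enlarging a subamalgamation witness to $\geq^m_n$ is harmless.
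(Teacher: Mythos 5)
Your proof is correct and takes essentially the same route as the paper's: in (1) you apply the subabsorption of the given morphism to the neighbourliness witness $\overline{\dashv}$ and then transport $\wedge_{n'}$ through $\sqsupset$ via co-surjectivity and edge-preservation, which is exactly the paper's computation, and in (2) you assemble \Cref{SubNearAbsorbing,NearImpliesSub} and enlarge the resulting witness ${\geq^m_k}\circ{\dashv}$ to ${\geq^m_n}$, again just as the paper does. The only differences are cosmetic: swapped variable names and your (harmless, slightly more careful) explicit handling of the requirement $n'>m$.
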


\begin{proof}
    If $\mathbb{P}$ is $\mathbf{K}$-subabsorbing then, for any $k\in\omega$, we have subabsorbing ${\dashv}\in\mathbf{K}^k_m$.  If the neighbourly morphisms are also wide in $\mathbf{K}$ then we have neighbourly ${\nil}\in\mathbf{K}^{\mathbb{P}_m}_G$, as witnessed by some larger $\overline{\nil}\in\mathbf{K}^{\mathbb{P}_m}_G$.  As ${\nil}$ is subabsorbing, we then have ${\sqsupset}\in\mathbf{K}^G_{\mathbb{P}_n}$ with ${\dashv}\circ{\overline{\nil}}\circ{\sqsupset}\subseteq{\geq^m_n}$.  As $\sqsupset$ is necessarily co-surjective and $\sqcap$-preserving then
    \[{\dashv}\circ{\nil}\circ{\sqsupset}\circ{\wedge_n}\ \ \subseteq\ \ {\dashv}\circ{\nil}\circ{\sqcap_G}\circ{\sqsupset}\ \ \subseteq\ \ {\dashv}\circ{\overline{\nil}}\circ{\sqsupset}\ \ \subseteq\ \ {\geq^m_n}.\]
    This shows that ${\dashv}\circ{\nil}\circ{\sqsupset}$ is $\mathbb{P}$-neighbourly, which in turn shows that $\mathbb{P}$ is $\mathbf{K}$-regular.

    For (2) just note that, for all $m\in\omega$, we can compose any $\mathbb{P}$-neighbourly ${\dashv}\in\mathbf{K}^m_l$ with any subabsorbing and hence near-amalgamable ${\nil}\in\mathbf{K}^l_n$ to obtain a subamalgamable morphism ${\dashv}\circ{\nil}\in\mathbf{K}^m_n$, by \Cref{SubNearAbsorbing,NearImpliesSub}.  Indeed, by the proof of \Cref{NearImpliesSub}, ${\geq^m_l}\circ{\nil}$ is a witness for the subamalgamability of ${\dashv}\circ{\nil}$, and hence so too is ${\geq^m_n}\supseteq{\geq^m_l}\circ{\nil}$, showing that ${\dashv}\circ{\nil}$ is $\mathbb{P}$-subamalgamable.
\end{proof}

\begin{proposition}\label{SubFraisseImpliesSubAmalgamation}
    If $\mathbb{P}$ is $\mathbf{K}$-sub-Fra\"iss\'e and $\mathbf{K}$-regular then the subamalgamable and neighbourly morphisms are wide in $\mathbf{K}$.
\end{proposition}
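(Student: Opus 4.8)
The plan is to establish both wideness statements by the same two-step recipe: use $\mathbf{K}$-regularity (together with \Cref{SubabsorbingRegular}) to manufacture a morphism with the desired property \emph{between two levels} of $\mathbb{P}$, and then transport it to an arbitrary $G\in\mathbf{K}$ by post-composing with a morphism into $G$ supplied by $\mathbf{K}$-coinitiality, which is available since $\mathbb{P}$ is $\mathbf{K}$-sub-Fra\"iss\'e. The only thing to verify in each case is that the relevant property survives this post-composition.

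For neighbourliness, I would fix $G\in\mathbf{K}$ and use $\mathbf{K}$-coinitiality to get ${\dashv}\in\mathbf{K}^G_{\mathbb{P}_m}$ for some $m$, and $\mathbf{K}$-regularity to get a $\mathbb{P}$-neighbourly ${\nil}\in\mathbf{K}^m_n$, i.e. one with ${\nil}\circ{\wedge_n}\subseteq{\geq^m_n}$; as $\wedge_n$ is reflexive this precisely says that $\geq^m_n$ witnesses the neighbourliness of $\nil$. I then claim $\dashv\circ\nil\in\mathbf{K}^G_{\mathbb{P}_n}$ is neighbourly, witnessed by $\dashv\circ\geq^m_n$: indeed $(\dashv\circ\nil)\circ\wedge_n=\dashv\circ(\nil\circ\wedge_n)\subseteq\dashv\circ\geq^m_n$, and $\dashv\circ\nil\subseteq\dashv\circ\geq^m_n$ by monotonicity. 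Since $G$ was arbitrary, the neighbourly morphisms are wide.

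For subamalgamability, the cleanest route is \Cref{SubabsorbingRegular}(2): as $\mathbb{P}$ is $\mathbf{K}$-sub-Fra\"iss\'e it is $\mathbf{K}$-subabsorbing, so combined with $\mathbf{K}$-regularity we obtain for each $m$ a $\mathbb{P}$-subamalgamable ${\nil}\in\mathbf{K}^m_n$, whose subamalgamability is witnessed by $\geq^m_n$ (so in particular $\nil\subseteq\geq^m_n$). Fixing $G\in\mathbf{K}$ and taking ${\dashv}\in\mathbf{K}^G_{\mathbb{P}_m}$ from coinitiality, I would check that $\dashv\circ\nil\in\mathbf{K}^G_{\mathbb{P}_n}$ is subamalgamable with witness $\dashv\circ\geq^m_n$: given any ${\sqsupset},{\sqni}$ into $\mathbb{P}_n$, the subamalgamability of $\nil$ returns ${\sqsupset'},{\sqni'}$ with ${\nil}\circ{\sqsupset}\circ{\sqsupset'}\subseteq{\geq^m_n}\circ{\sqni}\circ{\sqni'}$, and applying $\dashv$ on the left preserves this inclusion. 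Hence the subamalgamable morphisms are wide. (Alternatively one could first note that the near-amalgamable morphisms are wide, via \Cref{SubNearAbsorbing} applied to a subabsorbing morphism together with coinitiality, and then combine a neighbourly morphism with a near-amalgamable one through \Cref{NearImpliesSub}.)

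I expect no serious obstacle: all the genuine work is packaged into \Cref{SubabsorbingRegular} and into the definitions of $\mathbf{K}$-regular and $\mathbf{K}$-sub-Fra\"iss\'e. The only points requiring care are the two ``preservation under post-composition'' verifications, and these are essentially bookkeeping — one must confirm that the composite witness $\dashv\circ\geq^m_n$ still lies in $\mathbf{K}^G_{\mathbb{P}_n}$ and that the defining inclusion for neighbourliness (respectively subamalgamability) is stable under left-composition with $\dashv$. Each reduces to associativity and monotonicity of relational composition together with $\nil\subseteq\geq^m_n$; notably, edge-preservation of $\dashv$ is not needed for this direct argument, and would only enter in the near-amalgamable variant.
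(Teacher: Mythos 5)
Your proof is correct and follows essentially the same route as the paper: use $\mathbf{K}$-coinitiality (from sub-Fra\"iss\'eness) to get ${\dashv}\in\mathbf{K}^G_{\mathbb{P}_m}$, obtain a $\mathbb{P}$-neighbourly (resp.\ $\mathbb{P}$-subamalgamable, via \Cref{SubabsorbingRegular}) morphism ${\nil}\in\mathbf{K}^m_n$, and observe that ${\dashv}\circ{\nil}$ inherits the property. The paper states the two closure-under-composition facts without proof, whereas you verify them explicitly with the witness ${\dashv}\circ{\geq^m_n}$; this is exactly the intended justification.
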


\begin{proof}
    For any $G\in\mathbf{K}$, $\mathbf{K}$-coinitiality yields ${\dashv}\in\mathbf{K}^G_{\mathbb{P}_m}$, for some $m\in\omega$.  Then $\mathbf{K}$-regularity yields ($\mathbb{P}$-)neighbourly ${\nil}\in\mathbf{K}^m_n$, for some $n>m$.  Then $\dashv\circ\nil$ is also neighbourly, showing that the neighbourly morphisms are wide.  Likewise, the above result yields subamalgamable ${\nil}\in\mathbf{K}^m_n$, for some $n>m$, and then $\dashv\circ\nil$ is also subamalgamable.
\end{proof}

We call a morphism ${\sqsupset}\in\mathbf{K}^F_G$ \emph{edge-witnessing} if
\[\tag{Edge-Witnessing}{\sqcap_F}\ \ \subseteq\ \ {{\sqsupset}\circ{\sqsubset}}.\]
Note that if ${\sqsupseteq}\supseteq{\sqsupset}$ then $\sqsupseteq$ is necessarily edge-witnessing too.  Moreover, for any ${\sqni}\in\mathbf{K}^G_H$, we see that ${\sqsupset}\circ{\sqni}$ is also edge-witnessing, as $\sqni$ is co-injective and hence surjective.

Let us also call $\mathbf{K}$ \emph{widely disjoint} if, for every finite set of graphs $F,G_0,\ldots,G_n\in\mathbf{K}$, we have another graph $H\in\mathbf{K}$ disjoint from $\bigcup_{0\leq k\leq n}G_k$ such that $\mathbf{K}^F_H\neq\emptyset$. 

\begin{theorem}
    If $\mathbf{K}$ is countable and widely disjoint and its subamalgamble and edge-witnessing morphisms are wide then there is a $\mathbf{K}$-subabsorbing graded $\omega$-poset.
\end{theorem}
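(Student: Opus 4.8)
The plan is to realise $\mathbb{P}$ as the graded $\omega$-poset determined by an inverse sequence of finite graphs $\mathbb{P}_0,\mathbb{P}_1,\dots\in\mathbf{K}$, with co-bijective edge-preserving bonding morphisms $\beta_n={\geq^n_{n+1}}\in\mathbf{K}^n_{n+1}$ produced by recursion. Using that $\mathbf{K}$ is widely disjoint I keep the underlying sets of the $\mathbb{P}_n$ mutually disjoint, and I set ${\geq^m_n}=\beta_m\circ\dots\circ\beta_{n-1}$; by the theory of \cite{BaBiVi} any such sequence of co-bijective relations yields a predetermined graded $\omega$-poset. Since $\sqcap$ is reflexive and every $\mathbf{K}$-morphism is edge-preserving, pushing a common lower bound up the sequence shows ${\wedge_n}\subseteq{\sqcap_{\mathbb{P}_n}}$ automatically; the content is to arrange the reverse inclusion, so that $(\mathbb{P}_n,\wedge_n)$ is exactly the chosen graph and $\mathbb{P}$ is a genuine $\mathbf{K}$-poset. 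As $\mathbf{K}$ is countable, the recursion is organised by a dovetailing of three families of requirements.

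First family -- installing the witnesses. For each $m\in\omega$, at some later stage with current finest level $\mathbb{P}_N$ ($N\geq m$) I append a bonding of the special form $\beta_N=\rho\circ\overline\sigma$, where $\rho\in\mathbf{K}^{\mathbb{P}_N}_\bullet$ is edge-witnessing (available as edge-witnessing morphisms are wide) and $\overline\sigma$ witnesses the subamalgamability of some $\sigma$ with $\sigma\subseteq\overline\sigma$ (available as subamalgamable morphisms are wide). Then $\beta_N$ is edge-witnessing, and pre-composing the defining inclusion of $\sigma$ by $\rho$ shows $\varsigma=\rho\circ\sigma\subseteq\beta_N$ is again subamalgamable with witness $\beta_N$. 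Setting $n_m=N+1$ and $\nu_m={\geq^m_N}\circ\varsigma$ (this $\nu_m$ plays the role of the subabsorbing $\nil$), a further pre-composition by ${\geq^m_N}$ shows $\nu_m\subseteq{\geq^m_{n_m}}$ and that $\nu_m$ is subamalgamable with witness ${\geq^m_{n_m}}$; that is, $\nu_m$ is $\mathbb{P}$-subamalgamable. This is the hands-on version of the composition behind \Cref{NearImpliesSub} and \Cref{SubabsorbingRegular}.

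Second family -- absorbing, which forces subabsorption. Once $\nu_m$ and $n_m$ are fixed I enumerate the countable set $\mathbf{K}^{\mathbb{P}_{n_m}}_\bullet$ and, for each ${\sqsupset}\in\mathbf{K}^{\mathbb{P}_{n_m}}_G$ in turn, with current finest level $\mathbb{P}_L$, apply $\mathbb{P}$-subamalgamability of $\nu_m$ to the pair $\bigl({\sqsupset},{\geq^{n_m}_L}\bigr)$ of morphisms into $\mathbb{P}_{n_m}$. This yields ${\sqsupset'}\in\mathbf{K}^G_I$ and ${\sqni'}\in\mathbf{K}^{\mathbb{P}_L}_I$ with ${\nu_m}\circ{\sqsupset}\circ{\sqsupset'}\subseteq{\geq^m_{n_m}}\circ{\geq^{n_m}_L}\circ{\sqni'}={\geq^m_L}\circ{\sqni'}$. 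Taking $I$ as the next level $\mathbb{P}_{L+1}$, so that ${\sqni'}$ becomes the bonding $\beta_L$, the right-hand side is ${\geq^m_{L+1}}$ and hence ${\nu_m}\circ{\sqsupset}\circ{\sqsupset'}\subseteq{\geq^m_{L+1}}$ with ${\sqsupset'}\in\mathbf{K}^G_{\mathbb{P}_{L+1}}$; this inclusion survives post-composition by all later bondings. As ${\sqsupset}$ was arbitrary, $\nu_m$ satisfies the subabsorbing condition, and as $m$ was arbitrary $\mathbb{P}$ is $\mathbf{K}$-subabsorbing.

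Third family and the main obstacle -- edge-witnessing. To secure ${\sqcap_{\mathbb{P}_n}}\subseteq{\wedge_n}$ I interleave steps refining the current top by an edge-witnessing morphism, so that every edge of every level eventually acquires a common lower bound; with ${\wedge_n}\subseteq{\sqcap_{\mathbb{P}_n}}$ this gives $(\mathbb{P}_n,\wedge_n)\in\mathbf{K}$. The difficulty, which I expect to be the crux, is that the bonding maps ${\sqni'}$ coming out of the absorption step are not a priori edge-witnessing, so one cannot simply demand both properties of a single map. I would resolve this by always amalgamating against an edge-witnessing leg -- replacing ${\geq^{n_m}_L}$ by ${\geq^{n_m}_L}\circ\rho$ for an edge-witnessing $\rho$, so the coarse factor of each new bonding is edge-witnessing -- and by scheduling the steps so that the edge-witnessing invariant is restored at each new top. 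The key structural fact to exploit is that every edge-witnessing map is edge-reflecting and edge-reflection composes, so that once all bondings reflect edges a single edge-witnessing refinement high in the sequence witnesses the edges of all earlier levels. Verifying that the amalgamation outputs can be kept edge-reflecting, and that the absorption inclusions persist through these extra refinements, is the delicate bookkeeping that makes the three families mutually compatible.
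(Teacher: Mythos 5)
Your construction is essentially the paper's own proof: a recursion organised by a countable dovetailing, in which each new bonding map is an edge-witnessing morphism composed with subamalgamation data and a morphism into a disjoint graph (via wide disjointness), countability being used to exhaust all morphisms with codomain on any level, and with ${\sqcap_{\mathbb{P}_n}}={\wedge_n}$ following from edge-preservation in one direction and edge-witnessing of the bondings in the other.

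The only place you wobble is the ``main obstacle'' of your third family, and it is not actually an obstacle: your own proposed fix is precisely the paper's device, and it already finishes the job. Since every morphism in $\mathbf{K}$ is co-injective and hence surjective, edge-witnessing is stable under composition on the right: if $\rho$ is edge-witnessing then so is $\rho\circ{\sqni'}$ for \emph{any} morphism ${\sqni'}$ (and any morphism into a disjoint copy appended after that). So once you amalgamate against the leg ${\geq^{n_m}_L}\circ\rho$ with $\rho$ edge-witnessing and take the new bonding to be $\rho\circ{\sqni'}$ (further composed with a disjointifying morphism), \emph{every} bonding in your sequence is edge-witnessing outright, and every edge of every level acquires a common lower bound already at the very next level. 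Nothing remains to be verified: no notion of edge-reflection, no separate third family, no extra scheduling, and the absorption inclusions are untouched because they are stated as inclusions into ${\geq^m_{L+1}}$, which is exactly the composite ${\geq^m_L}\circ\rho\circ{\sqni'}$. The paper streamlines this one step further by merging your first two families: every bonding has the form (edge-witnessing subamalgamable)$\circ$(amalgamation output)$\circ$(disjointifying morphism), so a single coarse factor simultaneously installs the next subamalgamable witness and makes the bonding edge-witnessing.
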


\begin{proof}
    Like in the proof of \cite[Theorem 3.7]{Ku}, we first recursively define disjoint graphs $(G_n)\subseteq\mathbf{K}$ together with edge-witnessing subamalgamable morphisms ${\sqsupset^m_{m+1}}\in\mathbf{K}^{G_m}_{G_{m+1}}$, as witnessed by ${\sqsupseteq^m_{m+1}}\supseteq{\sqsupset^m_{m+1}}$, such that, for all ${\sqsupset}\in\mathbf{K}^{G_{m+1}}_G$, we have ${\sqsupset'}\in\mathbf{K}^G_{G_n}$ with ${{\sqsupset^m_{m+1}}\circ{\sqsupset}\circ{\sqsupset'}}\subseteq{\sqsupseteq^m_n}$, where ${\sqsupseteq^m_n}={\sqsupseteq^m_{m+1}}\circ\ldots\circ{\sqsupseteq^{n-1}_n}$.
    
    First let $(m_k)\subseteq\omega\setminus\{0\}$ be any sequence where $m_k\leq k$, for all $k$, and each element of $\omega\setminus\{0\}$ appears infinitely many times.  Next we take any $G_0\in\mathbf{K}$, edge-witnessing ${\sqni}\in\mathbf{K}^{G_0}_G$, subamalgamable ${\sqsupset}\in\mathbf{K}^G_H$ and ${\sqnii}\in\mathbf{K}^H_{G_1}$ with $G_1\cap G_0=\emptyset$.  Then we have edge-witnessing subamalgamable ${\sqsupset^0_1}={{\sqni}\circ{\sqsupset}\circ{\sqnii}}\in\mathbf{K}^{G_0}_{G_1}$.  Once $\sqsupset^{k-1}_k$ has been constructed, we construct $\sqsupset^k_{k+1}$ as follows.  Again take any edge-witnessing subamalgamable ${\sqni}\in\mathbf{K}^{G_k}_G$.  As $\sqsupset^{m_k-1}_{m_k}$ is subamalgamable, for any ${\sqsupset}\in\mathbf{K}^{G_{m_k}}_H$, we have ${\sqni'}\in\mathbf{K}^G_I$ and ${\sqsupset'}\in\mathbf{K}^H_I$ with ${{\sqsupset^{m_k-1}_{m_k}}\circ{\sqsupset}\circ{\sqsupset'}}\subseteq{{\sqsupseteq^{m_k-1}_k}\circ{\sqni}\circ{\sqni'}}$.  Taking any ${\sqnii}\in\mathbf{K}^H_{G_{k+1}}$ with $G_{k+1}\cap\bigcup_{j\leq k}G_j=\emptyset$, we then again have edge-witnessing subamalgamable ${\sqsupset^k_{k+1}}={{\sqni}\circ{\sqni'}\circ{\sqnii}}\in\mathbf{K}^{G_k}_{G_{k+1}}$ such that
    \[{{\sqsupset^{m_k-1}_{m_k}}\circ{\sqsupset}\circ{\sqsupset'}\circ{\sqnii}}\ \ \subseteq\ \ {{\sqsupseteq^{m_k-1}_k}\circ{\sqsupset^k_{k+1}}}\ \ \subseteq\ \ {\sqsupseteq^{m_k-1}_{k+1}}.\]
    As $\mathbf{K}$ is countable we can do this with choices of ${\sqsupset}\in\mathbf{K}^{G_{m_k}}_H$ that exhaust $\bigcup_{n\in\omega\setminus\{0\}}\mathbf{K}^{G_n}_\bullet$.  In this way we obtain a sequence of graphs and morphisms with the desired properties.
    
    Setting $\mathbb{P}=\bigcup_{n\in\omega}G_n$ and ${>_\mathbb{P}}=\bigcup_{m<n}\sqsupseteq^m_n$, it follows that $\mathbb{P}$ is a graded $\omega$-poset with $\mathbb{P}_n=G_n$ and ${>^m_n}={\sqsupseteq^m_n}$, whenever $m<n$.  We further claim that ${\sqcap_{G_m}}={\wedge_m}$, for all $m\in\omega$.  Certainly ${\sqcap_{G_m}}\supseteq{\wedge_m}$, as $G_m\ni p,q>r\in G_n$ implies $p\sqsupseteq^m_nr\mathrel{\sqcap_{G_n}}r\sqsubseteq^m_nq$ and hence $p\mathrel{\sqcap_{G_m}}q$, as $\sqsupseteq^m_n$ is $\sqcap$-preserving.  Conversely, if $p\mathrel{\sqcap_{G_m}}q$ then we have $r\in G_{m+1}$ with $p,q>r$, as $\sqsupset^m_{m+1}$ is edge-witnessing, showing that ${\sqcap_{G_m}}\subseteq{\wedge_m}$.  Thus $\mathbb{P}$ is a $\mathbf{K}$-poset that is $\mathbf{K}$-subabsorbing, by construction.
\end{proof}

\subsection{Compatiblity}\label{Compatibility}

Throughout the rest of this section we assume that
\textbf{
\begin{enumerate}
    \item $\mathbf{K}$ is a fixed subcategory of the graph category $\mathbf{G}$.
    \item $\mathbb{P}$ is a $\mathbf{K}$-regular $\mathbf{K}$-subabsorbing $\mathbf{K}$-poset.
\end{enumerate}
}
In particular, $\mathbb{P}$ is regular so $\mathsf{S}\mathbb{P}$ is Hausdorff, by \cite[Corollary 2.40]{BaBiVi}.  As $\mathbf{K}$-morphisms are co-injective, each level $\mathbb{P}_n$ is a minimal cap, by \cite[Proposition 1.21]{BaBiVi}, which thus yields a minimal open cover $\{p_\mathsf{S}:p\in\mathbb{P}_n\}$ of $\mathsf{S}\mathbb{P}$, by \cite[Proposition 2.8]{BaBiVi}.  In particular, $p_\mathsf{S}\neq\emptyset$, for all $p\in\mathbb{P}$, i.e.~$\mathbb{P}$ is also a prime $\omega$-poset.

We are interested in continuous maps on $\mathsf{S}\mathbb{P}$ that are compatible with the given subcategory $\mathbf{K}$.  First, for any finite ${\sqsupset}\subseteq\mathbb{P}\times\mathbb{P}$, let $\sqsupset^\mathbf{C}$ denote the basic open subset of $\mathbf{C}^{\mathsf{S}\mathbb{P}}_{\mathsf{S}\mathbb{P}}$ corresponding to the open subset $\sqsupset^\mathbf{S}$ of strong refiners defined in \eqref{sqniS}, i.e.~
\[{\sqsupset^\mathbf{C}}=\{\phi\in\mathbf{C}^{\mathsf{S}\mathbb{P}}_{\mathsf{S}\mathbb{P}}:{\sqsupset}\subseteq{\sqsupset_\phi}\}=\{\phi\in\mathbf{C}^{\mathsf{S}\mathbb{P}}_{\mathsf{S}\mathbb{P}}:\forall p,q\ (q\sqsupset p\Rightarrow q_\mathsf{S}\supseteq\phi[\overline{p_\mathsf{S}}])\}.\]

\begin{definition}
    The \emph{$\mathbf{K}$-compatible} functions on $\mathsf{S}\mathbb{P}$ are given by
    \[\mathbf{C}_\mathbf{K}={\bigcap_{m\in\omega}\bigcup_{n\geq m}\bigcup_{{\sqsupset}\in\mathbf{K}_n^m}\!\!\!\sqsupset^\mathbf{C}}=\{\phi\in\mathbf{C}_{\mathsf{S}\mathbb{P}}^{\mathsf{S}\mathbb{P}}:\forall\,m\in\omega\ \exists\,{\sqsupset}\in\mathbf{K}_n^m\ ({\sqsupset}\subseteq{\sqsupset_\phi})\}.\]
\end{definition}

\Cref{LevelBasis} tells us that $\mathbf{C}_\mathbf{K}$ has another basis consisting of open sets $\leftarrow_\mathbf{K}$, for relations ${\leftarrow}\subseteq\mathbb{P}_n\times\mathbb{P}_n$ defined on levels of $\mathbb{P}$, where
\[{\leftarrow_\mathbf{K}}={\leftarrow_\mathbf{C}}\cap\mathbf{C}_\mathbf{K}=\{\phi\in\mathbf{C}_\mathbf{K}:\phi\subseteq{\leftarrow_\mathsf{S}}\}.\]
However, many of these basic sets can be empty and our interest in $\mathbf{K}$-like relations stems from the fact that these are the only ones for which $\leftarrow_\mathsf{S}$ can contain any $\phi\in\mathbf{C}_\mathbf{K}$.  We can even revert to a relation below $\leftarrow$ which not just $\mathbf{K}$-like but widely $\mathbf{K}$-subfactorisable (note every widely $\mathbf{K}$-subfactorisable $\leftarrow$ is $\mathbf{K}$-like, by \Cref{KSubfactorKLike}, as $\mathbb{P}$ is $\mathbf{K}$-subabsorbing).

Let us denote the widely $\mathbf{K}$-subfactorisable relations on any $G\in\mathbf{K}$ by
\[G^\mathbf{K}=\{{\leftarrow}\subseteq G\times G:\forall\,{\nil}\in\mathbf{K}^G_H\ ({\lin\circ\leftarrow\circ\nil}\text{ subfactors into some }{\sqsupset},{\sqni}\in\mathbf{K}^H_I)\}.\]

\begin{lemma}\label{KLikeKSubfactorisable}
For any $m,m'\in\omega$, ${\leftarrow}\subseteq\mathbb{P}_m\times\mathbb{P}_m$, ${\leftarrow'}\subseteq\mathbb{P}_{m'}\times\mathbb{P}_{m'}$ and $\phi\in{\leftarrow_\mathbf{K}\cap\leftarrow'_\mathbf{K}}$, we have some $n>\max(m,m')$ and ${\twoheadleftarrow}\subseteq\mathbb{P}_n^\mathbf{K}$ with $\phi\in{\twoheadleftarrow_\mathbf{K}}$, ${\twoheadleftarrow}\leq{\leftarrow}$ and ${\twoheadleftarrow}\leq{\leftarrow'}$
\end{lemma}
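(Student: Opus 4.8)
The plan is to realise $\twoheadleftarrow$ as a level-lift of the arrow ${\leftarrow^\phi_k}$ and to obtain wide $\mathbf{K}$-subfactorisability from \Cref{WideSubfactorisability}. First I fix the levels. Since $\phi\in{\leftarrow_\mathbf{K}}\cap{\leftarrow'_\mathbf{K}}$ we have $\phi\subseteq{\leftarrow_\mathsf{S}}$ and $\phi\subseteq{\leftarrow'_\mathsf{S}}$, so two applications of \Cref{ArrowPhiLebesgue} furnish a level $k>\max(m,m')$ with ${\leftarrow^\phi_k}\leq{\leftarrow}$ and ${\leftarrow^\phi_k}\leq{\leftarrow'}$. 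Using $\mathbf{K}$-regularity, \Cref{SubabsorbingRegular} supplies a $\mathbb{P}$-subamalgamable ${\mu}\in\mathbf{K}^k_n$, for some $n>k$, witnessed by ${\geq^k_n}$. I take as candidate
\[{\twoheadleftarrow}\ =\ {{\leq^k_n}\circ{\leftarrow^\phi_k}\circ{\geq^k_n}}\ \subseteq\ \mathbb{P}_n\times\mathbb{P}_n,\]
which is exactly the relation that \Cref{WideSubfactorisability} (with $F=\mathbb{P}_k$, $G=\mathbb{P}_n$ and ${\nil}={\mu}$) declares widely $\mathbf{K}$-subfactorisable, provided its hypothesis holds; note $n>k>\max(m,m')$.

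The order and spectrum requirements are then immediate. As ${\twoheadleftarrow}\subseteq{{\leq}\circ{\leftarrow^\phi_k}\circ{\geq}}$ we have ${\twoheadleftarrow}\leq{\leftarrow^\phi_k}$, so transitivity of the preorder $\leq$ together with ${\leftarrow^\phi_k}\leq{\leftarrow},{\leftarrow'}$ gives ${\twoheadleftarrow}\leq{\leftarrow}$ and ${\twoheadleftarrow}\leq{\leftarrow'}$. For the inclusion $\phi\subseteq{\twoheadleftarrow_\mathsf{S}}$, recall $\phi\subseteq{\leftarrow^\phi_{k\mathsf{S}}}$ always holds; given $(T,S)\in{\leftarrow^\phi_{k\mathsf{S}}}$ witnessed by $a\mathrel{\leftarrow^\phi_k}c$ with $a\in T$, $c\in S$, I replace $a,c$ by the level-$n$ members $x=T\cap\mathbb{P}_n$ and $x'=S\cap\mathbb{P}_n$ of the filters $T,S$ (each level being a minimal cap). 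Upward closure of the filters forces $x\leq a$ and $x'\leq c$, whence $x\mathrel{\twoheadleftarrow}x'$ and $(T,S)\in x_\mathsf{S}\times x'_\mathsf{S}\subseteq{\twoheadleftarrow_\mathsf{S}}$. Thus ${\leftarrow^\phi_{k\mathsf{S}}}\subseteq{\twoheadleftarrow_\mathsf{S}}$ and so $\phi\subseteq{\twoheadleftarrow_\mathsf{S}}$, which with $\phi\in\mathbf{C}_\mathbf{K}$ yields $\phi\in{\twoheadleftarrow_\mathbf{K}}$.

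The heart of the matter, and the step I expect to be the main obstacle, is the hypothesis of \Cref{WideSubfactorisability}: that ${\mu^{-1}}\circ{\leftarrow^\phi_k}\circ{\mu}$ is $\mathbf{K}$-subfactorisable. Here $\mathbf{K}$-compatibility is used: as $\phi\in\mathbf{C}_\mathbf{K}$ there are $l\geq k$ and a morphism ${\sqsupset}\in\mathbf{K}^k_l$ with ${\sqsupset}\subseteq{\sqsupset_\phi}\subseteq{\sqsupseteq_\phi}$, and \Cref{ArrowFromRefiner} then gives ${\sqsupset}\circ{\leq^k_l}\subseteq{\leftarrow^\phi_k}$. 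Since $\mathbf{K}$-subfactorisability is preserved under enlarging the relation, it suffices to subfactor the smaller relation ${\mu^{-1}}\circ{\sqsupset}\circ{\leq^k_l}\circ{\mu}$, which injects the $\mathbf{K}$-morphism $\sqsupset$ carrying the genuinely non-order information about $\phi$; this injection already shows ${\leftarrow^\phi_k}$ is \emph{$\mathbf{K}$-like}, as ${=_{\mathbb{P}_l}}\subseteq{{\sqsupset^{-1}}\circ{\sqsupset}\circ{\leq^k_l}\circ{\geq^k_l}}$ by co-surjectivity of $\sqsupset$ and of $\geq^k_l$. The remaining work is to upgrade this $\mathbf{K}$-likeness to honest subfactorisability after conjugation by $\mu$: reversing the reasoning of \Cref{KSubfactorKLike}, one absorbs $\sqsupset$ into the order $\geq$ by means of a subabsorbing morphism supplied by the standing hypothesis that $\mathbb{P}$ is $\mathbf{K}$-subabsorbing, passes to a sufficiently fine level, and reads off the required pair of co-bijective witnesses from the order morphisms there. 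This is the one step that genuinely exploits $\mathbf{K}$-subabsorption; once it is in hand, \Cref{WideSubfactorisability} shows ${\twoheadleftarrow}$ is widely $\mathbf{K}$-subfactorisable, i.e.\ ${\twoheadleftarrow}\subseteq\mathbb{P}_n^\mathbf{K}$, completing the proof.
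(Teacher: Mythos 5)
Your proof has the same skeleton as the paper's (pass to ${\leftarrow^\phi_k}$ via \Cref{ArrowPhiLebesgue}, take a $\mathbb{P}$-subamalgamable morphism from \Cref{SubabsorbingRegular}, conclude by \Cref{WideSubfactorisability}), but the step you yourself flag as the heart of the matter is a genuine gap, and your sketch would not close it. You must verify the hypothesis of \Cref{WideSubfactorisability}, namely that ${\mu^{-1}}\circ{\leftarrow^\phi_k}\circ{\mu}$ is $\mathbf{K}$-subfactorisable, but your compatibility morphism has the wrong type: ${\sqsupset}\in\mathbf{K}^k_l$ has codomain $\mathbb{P}_k$, the \emph{codomain} of $\mu$, so ${\mu^{-1}}\circ{\sqsupset}\circ{\leq^k_l}\circ{\mu}$ carries its $\phi$-information only at levels $k$ and $l$, and no pair of witnesses in any $\mathbf{K}^n_H$ is in sight. ``Reversing'' \Cref{KSubfactorKLike} is not a move that exists -- that lemma goes from subfactorisability to $\mathbf{K}$-likeness, not back -- and $\mathbf{K}$-subabsorption alone does not manufacture the witnesses. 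The paper resolves this by an order-of-quantifiers trick: it fixes the subamalgamable ${\nil}\in\mathbf{K}^k_l$ \emph{first} and only then invokes $\mathbf{K}$-compatibility at level $l$, the \emph{domain} of $\nil$, obtaining ${\sqsupset}\in\mathbf{K}^l_n$ with ${\sqsupset}\subseteq{\sqsupset_\phi}$; the middle relation is built around this $\sqsupset$, so that after conjugation it contains ${\sqsupset}\circ{\leq^l_n}$, which subfactors into ${\sqsupset}$ and ${\geq^l_n}$ and is therefore trivially $\mathbf{K}$-subfactorisable. Your setup can be repaired in the same spirit: invoke compatibility a second time at level $n$ to get ${\sqsupset'}\in\mathbf{K}^n_{l'}$ with ${\sqsupset'}\subseteq{\sqsupset_\phi}$; then ${\sqsupset'}\circ{\leq^n_{l'}}\subseteq{\leftarrow^\phi_n}\subseteq{\mu^{-1}}\circ{\leftarrow^\phi_k}\circ{\mu}$ (the last inclusion using only co-surjectivity of $\mu$ and ${\mu}\subseteq{\geq^k_n}$), and the smaller relation subfactors into ${\sqsupset'}$ and ${\geq^n_{l'}}$, so the larger one is $\mathbf{K}$-subfactorisable. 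Without some such move, your application of \Cref{WideSubfactorisability} is unjustified.

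There is also a flaw in your verification of $\phi\subseteq{\twoheadleftarrow_\mathsf{S}}$: ``upward closure of the filters forces $x\leq a$'' is backwards. A filter is up-closed and down-directed, so a level-$n$ element and a level-$k$ element of $T$ need only admit a common lower bound in $T$; they can perfectly well be incomparable, and $T\cap\mathbb{P}_n$ need not be a singleton. The correct argument runs the other way: pick any $x\in S\cap\mathbb{P}_n$ and $y\in\phi(S)\cap\mathbb{P}_n$, choose $c,a\in\mathbb{P}_k$ with $x\leq c$ and $y\leq a$; up-closure puts $c\in S$ and $a\in\phi(S)$, down-directedness gives $S\subseteq c^\wedge$ and $\phi(S)\subseteq a^\wedge$, so $a\mathrel{\leftarrow^\phi_k}c$ by the very definition of $\leftarrow^\phi_k$, and then $y\leq a\mathrel{\leftarrow^\phi_k}c\geq x$ exhibits $(\phi(S),S)\in{\twoheadleftarrow_\mathsf{S}}$. (The paper sidesteps this entirely by citing \Cref{SubfactorOpen}.) This second issue is easily fixed; the first one is the real gap.
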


\begin{proof}
    By \Cref{ArrowPhiLebesgue}, we have $k>m$ with ${\leftarrow^\phi_k}\vartriangleleft_k{\leftarrow}$ and ${\leftarrow^\phi_k}\vartriangleleft_k{\leftarrow'}$.  By \Cref{SubabsorbingRegular}, we have $\mathbb{P}$-subamalgamable ${\nil}\in\mathbf{K}^k_l$.  As $\phi\in\mathbf{C}_\mathbf{K}$, we have ${\sqsupset}\in\mathbf{K}^l_n$ with ${\sqsupset}\subseteq{\sqsupset_\phi}$.  By \Cref{SubfactorOpen,ArrowFromRefiner}, $\phi\subseteq(\sqsupset\circ\leq^l_n)_\mathsf{S}$ and ${\sqsupset\circ\leq^l_n}\subseteq{\leftarrow^\phi_l}$.  Now set
    \[{\twoheadleftarrow'}\ =\ {{\lin}\circ{\nil}\circ{\sqsupset}\circ{\leq^l_n}\circ{\lin}\circ{\nil}}\qquad\text{and}\qquad{\twoheadleftarrow}\ =\ {{\leq^k_l}\circ{\nil}\circ{\sqsupset}\circ{\leq^l_n}\circ{\lin}\circ{\geq^k_l}}.\]
    As ${{\sqsupset}\circ{\leq^l_n}}\subseteq{\twoheadleftarrow'}$, it follows that $\twoheadleftarrow'$ is also a $\mathbf{K}$-like relation satisfying $\phi\subseteq{\twoheadleftarrow'_\mathsf{S}}$.  The same then applies to the even larger relation $\twoheadleftarrow$, which is also widely $\mathbf{K}$-subfactorisable, by \Cref{WideSubfactorisability}.  As ${\sqsupset\circ\leq^l_n}\subseteq{\leftarrow^\phi_l}\subseteq{{\leq^k_l}\circ{\leftarrow^\phi_k}\circ{\geq^k_l}}$ and ${\leftarrow^\phi_k}\vartriangleleft_k{\leftarrow}$,
    \[{\twoheadleftarrow}\ \ \subseteq\ \ {{\leq^k_l}\circ{\geq^k_l}\circ{\leq^k_l}\circ{\leftarrow^\phi_k}\circ{\geq^k_l}\circ{\leq^k_l}\circ{\geq^k_l}}\ \ \subseteq\ \ {{\leq^k_l}\circ{\wedge_k}\circ{\leftarrow^\phi_k}\circ{\wedge_k}\circ{\geq^k_l}}\ \ \subseteq\ \ {{\leq^m_l}\circ{\leftarrow}\circ{\geq^m_l}}.\]
    This shows that ${\twoheadleftarrow}\leq{\leftarrow}$ and, likewise, ${\twoheadleftarrow}\leq{\leftarrow'}$ as well.
\end{proof}

\begin{corollary}\label{SubfactorisableBasis}
    The open sets $\{{\leftarrow_\mathbf{K}}:n\in\omega\text{ and }{\leftarrow}\in\mathbb{P}_n^\mathbf{K}\}$ form a basis for $\mathbf{C}_\mathbf{K}$.
\end{corollary}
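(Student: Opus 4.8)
The plan is to show that $\mathcal{B}=\{{\leftarrow_\mathbf{K}}:n\in\omega,\ {\leftarrow}\in\mathbb{P}_n^\mathbf{K}\}$ is a basis for the subspace $\mathbf{C}_\mathbf{K}$, by using \Cref{KLikeKSubfactorisable} to refine the already-established level basis. First I would record that each member of $\mathcal{B}$ is open in $\mathbf{C}_\mathbf{K}$, since ${\leftarrow_\mathbf{K}}={\leftarrow_\mathbf{C}}\cap\mathbf{C}_\mathbf{K}$ and ${\leftarrow_\mathbf{C}}$ is a basic open subset of $\mathbf{C}^{\mathsf{S}\mathbb{P}}_{\mathsf{S}\mathbb{P}}$. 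By the observation following \Cref{LevelBasis}, the larger family $\{{\leftarrow_\mathbf{K}}:{\leftarrow}\subseteq\mathbb{P}_n\times\mathbb{P}_n\}$ of all level-relation sets is already a basis for $\mathbf{C}_\mathbf{K}$. Hence it suffices to show that whenever $\phi\in{\leftarrow_\mathbf{K}}$ for some level relation ${\leftarrow}\subseteq\mathbb{P}_m\times\mathbb{P}_m$, there is some ${\twoheadleftarrow}\in\mathbb{P}_n^\mathbf{K}$ with $\phi\in{\twoheadleftarrow_\mathbf{K}}\subseteq{\leftarrow_\mathbf{K}}$; since an arbitrary open neighbourhood of $\phi$ in $\mathbf{C}_\mathbf{K}$ contains such a ${\leftarrow_\mathbf{K}}$, this yields the basis property.

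For the refinement step I would invoke \Cref{KLikeKSubfactorisable} with its two input relations taken to be the single relation ${\leftarrow}$ (that is, with $m=m'$ and ${\leftarrow}={\leftarrow'}$). Since $\phi\in{\leftarrow_\mathbf{K}}={\leftarrow_\mathbf{K}}\cap{\leftarrow_\mathbf{K}}$, the lemma produces some $n>m$ and a widely $\mathbf{K}$-subfactorisable ${\twoheadleftarrow}\in\mathbb{P}_n^\mathbf{K}$ with $\phi\in{\twoheadleftarrow_\mathbf{K}}$ and ${\twoheadleftarrow}\leq{\leftarrow}$. To convert this order relation into the desired inclusion of open sets I would apply \Cref{leqSubs}: from ${\twoheadleftarrow}\leq{\leftarrow}$ it gives ${\twoheadleftarrow_\mathsf{S}}\subseteq{\leftarrow_\mathsf{S}}$, whence ${\twoheadleftarrow_\mathbf{C}}\subseteq{\leftarrow_\mathbf{C}}$ and therefore ${\twoheadleftarrow_\mathbf{K}}\subseteq{\leftarrow_\mathbf{K}}$. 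Combined with $\phi\in{\twoheadleftarrow_\mathbf{K}}$ this closes the argument.

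I do not expect any real obstacle here, as essentially all of the content has been pushed into \Cref{KLikeKSubfactorisable} (which itself relies on $\mathbf{K}$-subabsorption through \Cref{SubabsorbingRegular,WideSubfactorisability} and the Lebesgue-type \Cref{ArrowPhiLebesgue}). The corollary is then a formal deduction, and the only points requiring a little care are checking that the lemma may legitimately be applied with a single relation, and getting the direction of the inequality in \Cref{leqSubs} right, so that ${\twoheadleftarrow}\leq{\leftarrow}$ indeed yields ${\twoheadleftarrow_\mathbf{K}}\subseteq{\leftarrow_\mathbf{K}}$ rather than the reverse inclusion.
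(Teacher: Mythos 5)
Your proof is correct and follows essentially the same route as the paper: the paper's proof is exactly the one-line observation that \Cref{KLikeKSubfactorisable} (applied to a single level relation) produces a widely $\mathbf{K}$-subfactorisable ${\twoheadleftarrow}\leq{\leftarrow}$ with $\phi\in{\twoheadleftarrow_\mathbf{K}}$, together with the fact that ${\twoheadleftarrow}\leq{\leftarrow}$ implies ${\twoheadleftarrow_\mathbf{K}}\subseteq{\leftarrow_\mathbf{K}}$ (via \Cref{leqSubs}), all resting on the level basis from \Cref{LevelBasis}. Your version simply spells out these same steps in more detail, and the points you flagged for care (applying the lemma with ${\leftarrow}={\leftarrow'}$, and the direction of the inclusion) are both handled exactly as in the paper.
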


\begin{proof}
    This is immediate from \Cref{KLikeKSubfactorisable} once we note ${\twoheadleftarrow}\leq{\leftarrow}$ implies ${\twoheadleftarrow_\mathbf{K}}\subseteq{\leftarrow_\mathbf{K}}$.
\end{proof}

Another immediate corollary of \Cref{KLikeKSubfactorisable} is that, for any ${\leftarrow}\subseteq\mathbb{P}_m\times\mathbb{P}_m$,
\begin{equation}\label{NonemptyImpliesSubfactorisable}
    {\leftarrow_\mathbf{K}}\neq\emptyset\qquad\Rightarrow\qquad{\leftarrow}\text{ is $\mathbf{K}$-like}.
\end{equation}
(because if ${\twoheadleftarrow}\leq{\leftarrow}$ and $\twoheadleftarrow$ is $\mathbf{K}$-like then so is $\leftarrow$).  In particular, for any $\phi\in\mathbf{C}_\mathbf{K}$, the corresponding thin arrow-sequence $(\leftarrow^\phi_n)$ consists entirely of $\mathbf{K}$-like relations.  Conversely, all  $\mathbf{K}$-subfactorisable thin arrow-sequences produce $\mathbf{K}$-compatible functions.

\begin{proposition}\label{KLikeArrowSequence}
    If we have a thin arrow-sequence of $\mathbf{K}$-subfactorisable ${\leftarrow_k}\subseteq\mathbb{P}_{n_k}\times\mathbb{P}_{n_k}$, for some strictly increasing $(n_k)\subseteq\omega$, then $\phi=\bigcap_{k\in\omega}\overline{\leftarrow_{k\mathsf{S}}}$ is $\mathbf{K}$-compatible.
\end{proposition}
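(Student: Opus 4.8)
The plan is to use thinness to identify $\sqsupset_\phi$ explicitly and then, for each level, manufacture a genuine $\mathbf K$-morphism sitting inside it. Since $(\leftarrow_k)$ is thin, the theorem characterising thin arrow-sequences gives that $\phi=\mathsf S_\sqsupset$ is continuous, where ${\sqsupset}=\bigcup_k{\langle\!\leftarrow_k}$ is a strong refiner; by \Cref{UniqueStrong} this is the unique such, so ${\sqsupset_\phi}=\bigcup_k{\langle\!\leftarrow_k}$. By definition $\phi\in\mathbf C_\mathbf K$ means that for every $m$ there is a morphism ${\sqsupset'}\in\mathbf K^m_n$ with ${\sqsupset'}\subseteq{\sqsupset_\phi}$. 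Since $q'\geq q\sqsupset_\phi p$ forces $q'_\mathsf S\supseteq q_\mathsf S\supseteq\phi[\overline{p_\mathsf S}]$, we have ${\geq}\circ{\sqsupset_\phi}\subseteq{\sqsupset_\phi}$, so it suffices to produce, for an arbitrarily fine coarse target level $m''$, a $\mathbf K$-morphism into $\mathbb P_{m''}$ contained in $\sqsupset_\phi$, and then post-compose with $\geq^m_{m''}$ to reach any prescribed $m\leq m''$.

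Fix such a target and, via $\mathbf K$-regularity and $\mathbf K$-subabsorption (\Cref{SubabsorbingRegular}), select a tower $m\leq m''<m'<n_k$ carrying a $\mathbb P$-neighbourly ${\dashv}\in\mathbf K^{m''}_{m'}$ and a subabsorbing ${\nil}\in\mathbf K^{m'}_{n_k}$, bridging level gaps by composing with order relations (which preserves both neighbourliness and subabsorption), with $k$ as large as desired. As $\leftarrow_k$ is $\mathbf K$-subfactorisable it subfactors into morphisms ${\sqsupset},{\sqsupset'}\in\mathbf K^{\mathbb P_{n_k}}_H$, i.e.\ ${=_H}\subseteq{\sqsubset}\circ{\leftarrow_k}\circ{\sqsupset'}$. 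Applying the subabsorbing property of $\nil$ to the single morphism $\sqsupset'$ yields a level $\mathbb P_N$ and ${\sigma}\in\mathbf K^H_{\mathbb P_N}$ with ${\nil}\circ{\sqsupset'}\circ{\sigma}\subseteq{\geq^{m'}_N}$. Setting ${\twoheadleftarrow}={\nil}\circ{\leftarrow_k}\circ{\lin}$, the co-surjectivity of $\nil$ gives ${\leftarrow_k}\leq{\twoheadleftarrow}$, so ${\langle\!\twoheadleftarrow}\subseteq{\langle\!\leftarrow_k}\subseteq{\sqsupset_\phi}$ by \Cref{leqSubs}.

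The point of this set-up is that, writing ${\mathfrak a}={\nil}\circ{\sqsupset}\circ{\sigma}\in\mathbf K^{m'}_N$ and inserting ${\lin}\circ{\nil}\supseteq{=}$ around $\leftarrow_k$, the subfactorisation becomes ${=_{\mathbb P_N}}\subseteq{\mathfrak a^{-1}}\circ{\twoheadleftarrow}\circ({\nil}\circ{\sqsupset'}\circ{\sigma})\subseteq{\wedge}\circ{\mathfrak a^{-1}}\circ{\twoheadleftarrow}\circ{\barwedge}$, using ${\geq^{m'}_N}\subseteq{\barwedge}$; crucially the right-hand factor has been turned into an order relation. This is exactly the hypothesis of \Cref{DoubleTriangle2} with $C=\mathbb P_N$ and the $\wedge$-preserving morphism $\mathfrak a$. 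Taking $B=\mathbb P_{m''}$, the remaining hypothesis ${\mathbb P^{\mathfrak a}}\subseteq C\mathrel{\rightarrow\!\rangle}B$ asserts that every $c\in\mathbb P_N$ lies ${\langle\!\twoheadleftarrow}$-below some element of $\mathbb P_{m''}$; since $\twoheadleftarrow$ is an arrow approximating the \emph{function} $\phi$, this holds once $k$ and $N$ are large enough, exactly as in \Cref{ArrowPhi} (where ${\langle\!}$ pulls the cap $\mathbb P_{m''}$ back to a cap). \Cref{DoubleTriangle2} then yields ${\vartriangleright_{\mathbb P_{m''}}}\circ{\mathfrak a}\subseteq{\langle\!\twoheadleftarrow}\subseteq{\sqsupset_\phi}$.

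Finally, because the coarser cap gives the larger relation, ${\vartriangleright_{\mathbb P_{m'}}}\subseteq{\vartriangleright_{\mathbb P_{m''}}}$, and the $\mathbb P$-neighbourly ${\dashv}\in\mathbf K^{m''}_{m'}$ satisfies ${\dashv}\subseteq{\vartriangleright_{\mathbb P_{m'}}}$; hence ${\dashv}\circ{\mathfrak a}\in\mathbf K^{m''}_N$ is a genuine $\mathbf K$-morphism with ${\dashv}\circ{\mathfrak a}\subseteq{\vartriangleright_{\mathbb P_{m''}}}\circ{\mathfrak a}\subseteq{\sqsupset_\phi}$. Post-composing with $\geq^m_{m''}$ produces the morphism in $\mathbf K^m_N$ below $\sqsupset_\phi$ required for the original $m$; as $m$ was arbitrary, $\phi\in\mathbf C_\mathbf K$. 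The hard part is the middle step: arranging a \emph{single} subabsorption that converts the right-hand factor of the subfactorisation into an order relation (so that it lands inside $\barwedge$) while the left factor survives as a category morphism, and then verifying the covering hypothesis of \Cref{DoubleTriangle2}. This last point is where thinness is indispensable, since it is precisely what forces $\twoheadleftarrow$ to genuinely approximate a function and thereby makes ${\langle\!\twoheadleftarrow}$ cover.
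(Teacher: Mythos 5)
Your reduction steps are fine (the identification $\sqsupset_\phi=\bigcup_k{\langle\!\leftarrow_k}$, the tower, and the derivation of ${=_{\mathbb{P}_N}}\subseteq{\wedge}\circ{\mathfrak{a}^{-1}}\circ{\twoheadleftarrow}\circ{\barwedge}$ all check out), but the step you yourself flag as ``the hard part'' is a genuine gap, not a routine appeal to thinness. The covering hypothesis of \Cref{DoubleTriangle2} that you need is $\mathbb{P}_N\subseteq\mathbb{P}_{m''}^{\langle\!\twoheadleftarrow}$, a statement about ${\langle\!\twoheadleftarrow}$, and this does \emph{not} follow from thinness of $(\leftarrow_j)$: by your own citation of \Cref{leqSubs}, ${\leftarrow_k}\leq{\twoheadleftarrow}$ gives ${\langle\!\twoheadleftarrow}\subseteq{\langle\!\leftarrow_k}$, i.e.\ coarsening to level $m'$ \emph{shrinks} the bracket relation, so the covering that thinness supplies for the ${\langle\!\leftarrow_j}$ transfers in the wrong direction. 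Worse, with your order of quantifiers the covering is simply false in general. Since $\nil$ is co-bijective with ${\nil}\subseteq{\geq^{m'}_{n_k}}$, any $q\in\mathbb{P}_{n_k}$ lying below only one $z\in\mathbb{P}_{m'}$ must satisfy $z\nil q$, so the fibre $z^\nil$ covers essentially all of $\overline{z_\mathsf{S}}$; hence for $z\barwedge c$ the set $c^{\barwedge\twoheadrightarrow}$ contains level-$m'$ elements spread over roughly all of $\phi[\overline{z_\mathsf{S}}]$. This spread is fixed once $m'$ is fixed and does not shrink as $k,N\to\infty$; but $m'$ was chosen by $\mathbf{K}$-regularity alone, before the modulus of continuity of $\phi$ enters. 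A $\phi$ stretching a single element of $\mathbb{P}_{m'}$ across several stars of $\mathbb{P}_{m''}$ (take e.g.\ the canonical arrows ${\leftarrow_k}={\leftarrow^\phi_{n_k}}$ for such a $\phi$) defeats every choice of $k$ and $N$, and \Cref{ArrowPhi} cannot rescue this since it concerns arrows at a level chosen \emph{after}, and fine relative to, the target cap. A secondary error compounds this: ${\vartriangleright_{\mathbb{P}_{m'}}}\subseteq{\vartriangleright_{\mathbb{P}_{m''}}}$ is backwards, since for caps refinement gives ${\vartriangleleft_{\mathbb{P}_{m''}}}\subseteq{\vartriangleleft_{\mathbb{P}_{m'}}}$ (the coarser cap yields the \emph{smaller} star relation), so neighbourliness ${\dashv}\subseteq{\vartriangleright_{\mathbb{P}_{m'}}}$ does not place $\dashv$ inside ${\vartriangleright_{\mathbb{P}_{m''}}}$.

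For contrast, the paper never demands a covering property of a coarsened relation. It first treats the case where each $\leftarrow_k$ is $\mathbf{K}$-like: there the right-hand factor of the subfactorisation is already the order relation $\geq^{n_k}_n$, so the hypothesis of \Cref{DoubleTriangle2} holds for $\leftarrow_k$ \emph{itself} at an arbitrarily deep level $n$, and the covering needed is $\mathbb{P}_n\mathrel{{}_k\!\!\rightarrow\!\rangle}\mathbb{P}_{m'}$, which is exactly what thinness provides. The general subfactorisable case is then reduced to this one: replace $(\leftarrow_k)$ by the fattened sequence $({\barwedge}\circ{\leftarrow_k}\circ{\barwedge})$ on the \emph{same} levels $n_k$, which is thin with the same $\phi$ by \Cref{ArrowComposition}, and witness $\mathbf{K}$-likeness of each fattened relation by ${\nil}\circ{\leftarrow_m}\circ{\lin}$ for a subabsorbing ${\nil}\in\mathbf{K}^{n_k}_{n_m}$ between levels of the subsequence, via \Cref{KSubfactorKLike}. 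The crucial structural difference is that the paper's coarsening transports the \emph{finer} relation $\leftarrow_m$ up to a level of the sequence and is used only to witness $\mathbf{K}$-likeness, never to carry a covering; your coarsening transports $\leftarrow_k$ up to an extraneous level $m'$ and then needs a covering there, which is precisely what cannot be had.
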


\begin{proof}
    First assume each $\leftarrow_k$ is $\mathbf{K}$-like.  For any $m\in\omega$, we have neighbourly ${\nil}\in\mathbf{K}^m_{m'}$.  We also have $k>m'$ such that $\mathbb{P}_n\mathrel{{}_k\!\!\rightarrow\!\rangle}\mathbb{P}_{m'}$, for all sufficiently large $n$.  As $\leftarrow_k$ is $\mathbf{K}$-like, we have ${\sqsupset}\in\mathbf{K}^{n_k}_n$ such that ${=_{\mathbb{P}_n}}\subseteq{\sqsubset\circ\leftarrow_k\circ\geq^{n_k}_n}\subseteq{{\wedge}\circ{\sqsubset}\circ{\leftarrow_k}\circ{\barwedge}}$, where we can also ensure that $n$ here is large enough that $\mathbb{P}_n\mathrel{{}_k\!\!\rightarrow\!\rangle}\mathbb{P}_{m'}$.  By \Cref{DoubleTriangle2},
    \[\mathbf{K}^m_n\ \ni\ {{\nil}\circ{\geq^{m'}_{n_k}}\circ{\sqsupset}}\ \ \subseteq\ \ {{\vartriangleright_{\mathbb{P}_{n_k}}}\circ{\sqsupset}}\ \ \subseteq\ \ {\langle\!\leftarrow_k}\ \ \subseteq\ \ {\sqsupset_\phi}.\]
    As $m$ was arbitrary, this shows that ${\phi}\in\mathbf{C}_\mathbf{K}$.

    To prove the general case, first note that $({=_{\mathbb{P}_{n_k}}}\circ{\barwedge}\circ{\leftarrow_k}\circ{\barwedge}\circ{=_{\mathbb{P}_{n_k}}})$ is again a thin arrow-sequence resulting in the same function $\phi$, by \Cref{ArrowComposition}.  For each $k\in\omega$, we have $m>k$ and subabsorbing ${\nil}\in\mathbf{K}^{n_k}_{n_m}$.  Note ${{\lin}\circ{\nil}\circ{\leftarrow_m}\circ{\lin}\circ{\nil}}$ is $\mathbf{K}$-subfactorisable, as it is bigger than $\leftarrow_k$, and hence ${\nil}\circ{\leftarrow_m}\circ{\lin}$ is $\mathbf{K}$-like, by \Cref{KSubfactorKLike}.  Moreover,
    \[{\nil}\circ{\leftarrow_m}\circ{\lin}\ \ \subseteq\ \ {\nil}\circ{\leq}\circ{\leftarrow_k}\circ{\geq}\circ{\lin}\ \ \subseteq\ \ {=_{\mathbb{P}_{n_k}}}\circ{\barwedge}\circ{\leftarrow_k}\circ{\barwedge}\circ{=_{\mathbb{P}_{n_k}}}\]
    so ${=_{\mathbb{P}_{n_k}}}\circ{\barwedge}\circ{\leftarrow_k}\circ{\barwedge}\circ{=_{\mathbb{P}_{n_k}}}$ is also $\mathbf{K}$-like.  Thus ${\phi}\in\mathbf{C}_\mathbf{K}$ as before.
\end{proof}

By definition, $\mathbf{C}_\mathbf{K}$ a $G_\delta$ subsemigroup of $\mathbf{C}^{\mathsf{S}\mathbb{P}}_{\mathsf{S}\mathbb{P}}$.  We can now show it is actually closed.

\begin{corollary}\label{CKClosed}
    $\mathbf{C}_\mathbf{K}$ is a closed subsemigroup of $\mathbf{C}^{\mathsf{S}\mathbb{P}}_{\mathsf{S}\mathbb{P}}$.
\end{corollary}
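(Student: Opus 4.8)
The plan is to treat the closedness as the real content, since $\mathbf{C}_\mathbf{K}$ is manifestly $G_\delta$ and its subsemigroup property is a direct check: given $\phi,\psi\in\mathbf{C}_\mathbf{K}$ and a level $m$, choose ${\sqsupset}\in\mathbf{K}^m_n$ with ${\sqsupset}\subseteq{\sqsupset_\phi}$ and then ${\sqni}\in\mathbf{K}^n_l$ with ${\sqni}\subseteq{\sqsupset_\psi}$; then ${\sqsupset\circ\sqni}\in\mathbf{K}^m_l$ and ${\sqsupset\circ\sqni}\subseteq{\sqsupset_\phi\circ\sqsupset_\psi}\subseteq{\sqsupset_{\phi\circ\psi}}$, the last inclusion holding because $q\sqsupset_\phi p\sqsupset_\psi o$ yields $(\phi\circ\psi)[\overline{o_\mathsf{S}}]\subseteq\phi[p_\mathsf{S}]\subseteq\phi[\overline{p_\mathsf{S}}]\subseteq q_\mathsf{S}$, i.e.~$q\sqsupset_{\phi\circ\psi}o$. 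Hence $\phi\circ\psi\in\mathbf{C}_\mathbf{K}$, and it remains only to establish $\overline{\mathbf{C}_\mathbf{K}}\subseteq\mathbf{C}_\mathbf{K}$.

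For this I would take $\phi\in\overline{\mathbf{C}_\mathbf{K}}$ and invoke \Cref{Function->ArrowSequence}: as $\phi$ is a continuous self-map of $\mathsf{S}\mathbb{P}$ and $\mathbb{P}$ is regular, $(\leftarrow^\phi_n)\subseteq\mathbb{P}_n\times\mathbb{P}_n$ is a thin arrow-sequence with $\phi=\bigcap_n\overline{\leftarrow^\phi_{n\mathsf{S}}}$. Since $\phi\subseteq{\leftarrow^\phi_{n\mathsf{S}}}$ for every $n$, we have $\phi\in(\leftarrow^\phi_n)_\mathbf{C}$, a basic open neighbourhood of $\phi$ in $\mathbf{C}^{\mathsf{S}\mathbb{P}}_{\mathsf{S}\mathbb{P}}$. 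Because $\phi$ lies in the closure of $\mathbf{C}_\mathbf{K}$, this neighbourhood must meet $\mathbf{C}_\mathbf{K}$, so $(\leftarrow^\phi_n)_\mathbf{K}=(\leftarrow^\phi_n)_\mathbf{C}\cap\mathbf{C}_\mathbf{K}\neq\emptyset$. By \eqref{NonemptyImpliesSubfactorisable} each $\leftarrow^\phi_n$ is then $\mathbf{K}$-like, and as every $\geq^m_n$ is itself a $\mathbf{K}$-morphism this makes $\leftarrow^\phi_n$ in particular $\mathbf{K}$-subfactorisable. Thus $(\leftarrow^\phi_n)$ is a thin arrow-sequence of $\mathbf{K}$-subfactorisable relations, and \Cref{KLikeArrowSequence} yields $\phi=\bigcap_n\overline{\leftarrow^\phi_{n\mathsf{S}}}\in\mathbf{C}_\mathbf{K}$, as desired.

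The essential move is the middle one, where topology feeds combinatorics: a limit point $\phi$ is forced to have each canonical approximant $\leftarrow^\phi_n$ be $\mathbf{K}$-like, precisely because every basic neighbourhood $(\leftarrow^\phi_n)_\mathbf{C}$ of $\phi$ must contain an honest $\mathbf{K}$-compatible map. The only delicate points are confirming that $\phi$ truly sits inside each $(\leftarrow^\phi_n)_\mathbf{C}$ (so that these are legitimate neighbourhoods to probe against the closure) and that the $\mathbf{K}$-likeness delivered by \eqref{NonemptyImpliesSubfactorisable} does feed the hypothesis of \Cref{KLikeArrowSequence}; both are dispatched above, and the remainder is a routine assembly of \Cref{Function->ArrowSequence} and \Cref{KLikeArrowSequence}.
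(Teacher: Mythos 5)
Your proof is correct and takes essentially the same route as the paper: probe the limit point $\phi$ with the basic neighbourhoods $(\leftarrow^\phi_n)_\mathbf{C}$ supplied by \Cref{Function->ArrowSequence}, use \eqref{NonemptyImpliesSubfactorisable} to conclude each $\leftarrow^\phi_n$ is $\mathbf{K}$-like, and finish with \Cref{KLikeArrowSequence}. Your additional details --- the explicit semigroup check via ${\sqsupset\circ\sqni}\subseteq{\sqsupset_{\phi\circ\psi}}$ and the bridge that $\mathbf{K}$-likeness gives $\mathbf{K}$-subfactorisability because ${\geq^m_n}\in\mathbf{K}^m_n$ --- only make explicit what the paper leaves implicit.
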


\begin{proof}
    Take any $(\phi_n)\subseteq\mathbf{C}_\mathbf{K}$ with $\phi_n\rightarrow\phi$ in $\mathbf{C}^{\mathsf{S}\mathbb{P}}_{\mathsf{S}\mathbb{P}}$.  By \Cref{Function->ArrowSequence}, $(\leftarrow^\phi_n)$ is a thin arrow-sequence with
    $\phi=\bigcap_{n\in\omega}{\leftarrow^\phi_{n\mathsf{S}}}$.
    For all $n\in\omega$, $\leftarrow^\phi_{n\mathbf{C}}$ is a neighbourhood of $\phi$ and hence contains some $\phi_m\in\mathbf{C}_\mathbf{K}$ so $\leftarrow^\phi_n$ is $\mathbf{K}$-like, by \eqref{NonemptyImpliesSubfactorisable}.  Thus $\phi\in\mathbf{C}_\mathbf{K}$, by \Cref{KLikeArrowSequence}.
\end{proof}

We can also show that $\mathbf{C}_\mathbf{K}$ is closed under taking inverses, when they exist.

\begin{proposition}\label{InverseCK}
    If $\phi\in\mathbf{C}_\mathbf{K}$ and $\phi^{-1}$ is a function then $\phi^{-1}\in\mathbf{C}_\mathbf{K}$ as well.
\end{proposition}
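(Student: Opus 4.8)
The plan is to produce a thin arrow-sequence for $\phi^{-1}$ built out of \emph{widely} $\mathbf{K}$-subfactorisable relations and then invoke \Cref{KLikeArrowSequence}; the point of insisting on widely $\mathbf{K}$-subfactorisable relations (rather than merely $\mathbf{K}$-like ones) is that this class is symmetric under relational inversion, whereas the $\mathbf{K}$-like condition need not be. First I would note that, since $\phi^{-1}$ is a total function, $\phi$ is a continuous bijection of the compact Hausdorff space $\mathsf{S}\mathbb{P}$ and hence a homeomorphism. By \Cref{Function->ArrowSequence}, $(\leftarrow^\phi_n)$ is a thin arrow-sequence with $\phi=\bigcap_n\overline{\leftarrow^\phi_{n\mathsf{S}}}$, and $\phi\subseteq\leftarrow^\phi_{n\mathsf{S}}$ for every $n$, so that $\phi\in\leftarrow^\phi_{n\mathbf{K}}$ for all $n$.

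Next I would recursively construct a strictly increasing $(n_k)\subseteq\omega$ and a decreasing sequence ${\twoheadleftarrow_k}\in\mathbb{P}_{n_k}^\mathbf{K}$ of widely $\mathbf{K}$-subfactorisable relations with $\phi\in\twoheadleftarrow_{k\mathbf{K}}$ and ${\twoheadleftarrow_k}\leq{\leftarrow^\phi_k}$. For the base case I apply \Cref{KLikeKSubfactorisable} with $\leftarrow=\leftarrow'=\leftarrow^\phi_0$; at stage $k+1$ I apply it with $\leftarrow=\twoheadleftarrow_k$ and $\leftarrow'=\leftarrow^\phi_{k+1}$, using $\phi\in\twoheadleftarrow_{k\mathbf{K}}\cap\leftarrow^\phi_{(k+1)\mathbf{K}}$, to obtain $\twoheadleftarrow_{k+1}\in\mathbb{P}_{n_{k+1}}^\mathbf{K}$ with $\phi\in\twoheadleftarrow_{(k+1)\mathbf{K}}$, ${\twoheadleftarrow_{k+1}}\leq{\twoheadleftarrow_k}$ and ${\twoheadleftarrow_{k+1}}\leq{\leftarrow^\phi_{k+1}}$. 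Each $\twoheadleftarrow_k$, being widely $\mathbf{K}$-subfactorisable, is $\mathbf{K}$-subfactorisable (take the identity morphism $=_{\mathbb{P}_{n_k}}$) and hence surjective and co-surjective by \Cref{SurjectiveSubfactor}; co-surjectivity together with the reflexivity of $\barwedge$ makes $\twoheadleftarrow_k$ an arrow, so $(\twoheadleftarrow_k)$ is a decreasing arrow-sequence. From ${\twoheadleftarrow_k}\leq{\leftarrow^\phi_k}$ and \Cref{leqSubs} I get $\bigcap_k\overline{\twoheadleftarrow_{k\mathsf{S}}}\subseteq\bigcap_k\overline{\leftarrow^\phi_{k\mathsf{S}}}=\phi$, while $\phi\in\twoheadleftarrow_{k\mathbf{K}}$ gives the reverse inclusion; thus $\phi=\bigcap_k\overline{\twoheadleftarrow_{k\mathsf{S}}}$ and $(\twoheadleftarrow_k)$ is thin.

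Finally I would invert the entire sequence. Because $\mathbf{K}$-subfactorisability is a symmetric condition (inverting the two subfactoring witnesses), the class $\mathbb{P}_{n_k}^\mathbf{K}$ is closed under taking inverses, so each $\twoheadrightarrow_k=\twoheadleftarrow_k^{-1}$ again lies in $\mathbb{P}_{n_k}^\mathbf{K}$. The preorder $\leq$ on relations is preserved by inversion, so $(\twoheadrightarrow_k)$ is again a decreasing arrow-sequence. Inverting $\phi=\bigcap_k\overline{\twoheadleftarrow_{k\mathsf{S}}}$ and using $\overline{\twoheadleftarrow_{k\mathsf{S}}}^{-1}=\overline{\twoheadrightarrow_{k\mathsf{S}}}$ yields $\phi^{-1}=\bigcap_k\overline{\twoheadrightarrow_{k\mathsf{S}}}$, which is a function by hypothesis, so $(\twoheadrightarrow_k)$ is thin. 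As each $\twoheadrightarrow_k$ is (widely, hence) $\mathbf{K}$-subfactorisable, \Cref{KLikeArrowSequence} gives $\phi^{-1}=\bigcap_k\overline{\twoheadrightarrow_{k\mathsf{S}}}\in\mathbf{C}_\mathbf{K}$, as required.

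The main obstacle I anticipate is exactly this asymmetry: the arrow-sequence $(\leftarrow^\phi_n)$ handed to us by \Cref{Function->ArrowSequence} consists only of $\mathbf{K}$-like relations, and the inverse of a $\mathbf{K}$-like relation need not be $\mathbf{K}$-like, since the defining subfactorisation ${=_{\mathbb{P}_n}}\subseteq{\sqsubset\circ\leftarrow\circ\geq^m_n}$ places the order morphism $\geq^m_n$ on one fixed side. The repair is to descend first, via \Cref{KLikeKSubfactorisable}, to widely $\mathbf{K}$-subfactorisable relations, a condition phrased purely in terms of $\mathbf{K}$-subfactorisability and therefore stable under inversion. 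Everything else—arrowhood of the inverses, monotonicity of $\leq$, and the identity $\phi^{-1}=\bigcap_k\overline{\twoheadrightarrow_{k\mathsf{S}}}$—is formal bookkeeping about relational inverses and intersections.
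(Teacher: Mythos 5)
Your proof is correct, and its engine is ultimately the same as the paper's: invert a $\mathbf{K}$-subfactorisable thin arrow-sequence converging to $\phi$ and feed the result into \Cref{KLikeArrowSequence}. Where you differ is in the preparatory work, and here your proof does strictly more than necessary. The asymmetry you flag is real — $\mathbf{K}$-likeness puts $\geq^m_n$ on a fixed side, so the inverse of a $\mathbf{K}$-like relation need not be $\mathbf{K}$-like — but \Cref{KLikeArrowSequence} never asks for $\mathbf{K}$-like (nor for widely $\mathbf{K}$-subfactorisable) terms: it only asks for plain $\mathbf{K}$-subfactorisable ones, and that class is already closed under inversion, since ${=_H}\subseteq{\sqsubset}\circ{\leftarrow}\circ{\sqni}$ inverts to ${=_H}\subseteq{\sqin}\circ{\rightarrow}\circ{\sqsupset}$. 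As each $\leftarrow^\phi_n$ is $\mathbf{K}$-like by \eqref{NonemptyImpliesSubfactorisable} (because $\phi\in\leftarrow^\phi_{n\mathbf{K}}$), hence $\mathbf{K}$-subfactorisable, the paper simply observes that $(\rightarrow^\phi_n)$ is itself a $\mathbf{K}$-subfactorisable arrow-sequence (co-surjectivity of each term on its level comes from \Cref{SurjectiveSubfactor}, and the order $\leq$ is inversion-stable), which is thin exactly when $\phi^{-1}=\bigcap_n\overline{\rightarrow^\phi_{n\mathsf{S}}}$ is a function, and then applies \Cref{KLikeArrowSequence} directly — a two-line proof. Your recursive descent through \Cref{KLikeKSubfactorisable} to widely $\mathbf{K}$-subfactorisable relations is sound (wide subfactorisability is indeed inversion-stable, and your bookkeeping with $\leq$, \Cref{leqSubs} and the intersections checks out), but it is a detour that can be excised wholesale. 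Two small further points: your justification that $\twoheadleftarrow_k$ is an arrow should appeal not to reflexivity of $\barwedge$ but to the fact that the level $\mathbb{P}_{n_k}$ is a cap, so every element of $\mathbb{P}$ meets it in the sense of $\wedge$ (\Cref{DenseCovers}); and your opening claim that $\phi$ is a homeomorphism is circular as stated, since surjectivity of $\phi$ is not literally part of the hypothesis "$\phi^{-1}$ is a function" — fortunately that claim is never used, and surjectivity in fact falls out automatically because thin arrow-sequences always define total functions.
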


\begin{proof}
    If $\phi\in\mathbf{C}_\mathbf{K}$ then $(\leftarrow^\phi_n)$ is a $\mathbf{K}$-subfactorisable thin arrow-sequence.  Thus $(\rightarrow^\phi_n)$ is also a $\mathbf{K}$-subfactorisable arrow-sequence, which is also thin if $\phi^{-1}$ is a function, in which case $\phi^{-1}=\bigcap_{n\in\omega}\overline{\rightarrow_{n\mathsf{S}}^\phi}\in\mathbf{C}_\mathbf{K}$, by \Cref{KLikeArrowSequence}.
\end{proof}

Now we restrict our attention to the autohomeomorphisms of $\mathsf{S}\mathbb{P}$ compatible with $\mathbf{K}$ whose inverses are functions and hence also compatible with $\mathbf{K}$, by \Cref{InverseCK}.  Accordingly, let
\[\mathbf{C}_\mathbf{K}^\times=\mathbf{C}_\mathbf{K}\cap\mathbf{C}_{\mathbf{K}}^{-1}=\{\phi\in\mathbf{C}_\mathbf{K}:\phi^{-1}\in\mathbf{C}_\mathbf{K}\}=\{\phi\in\mathbf{C}_\mathbf{K}:\phi^{-1}\text{ is a function}\}.\]
Note that $\mathbf{C}_\mathbf{K}^\times$ is a closed subgroup of the autohomeomorphism group of $\mathsf{S}\mathbb{P}$, either by \Cref{CKClosed} or the general fact that a $G_\delta$ subgroup of a Polish group is always closed.

First we wish to prove a kind of converse to \eqref{NonemptyImpliesSubfactorisable}, namely that widely $\mathbf{K}$-subfactorisable relations yield open sets containing functions in $\mathbf{C}_\mathbf{K}^\times$.  For this we will need the following.
    
\begin{lemma}\label{SKlemma}
    For any ${\nil}\in\mathbf{K}^l_{l'}$, ${\nil'}\in\mathbf{K}^{l'}_m$, ${\dashv}\in\mathbf{K}^m_{m'}$ and ${\sqsupset}\in\mathbf{K}^{m'}_n$,
    \[\text{$\nil$ and $\nil'$ are $\mathbb{P}$-neighbourly and $\dashv$ is subabsorbing}\quad\Rightarrow\quad({\nil}\circ{\nil'}\circ{\dashv}\circ{\sqsupset})^\mathbf{C}\cap\mathbf{C}_\mathbf{K}^\times\ \neq\ \emptyset.\]
\end{lemma}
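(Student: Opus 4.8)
The plan is to construct, by a back-and-forth argument, an autohomeomorphism $\phi\in\mathbf{C}_\mathbf{K}^\times$ of $\mathsf{S}\mathbb{P}$ whose associated strong refiner $\sqsupset_\phi$ contains the composite ${\nil}\circ{\nil'}\circ{\dashv}\circ{\sqsupset}$, and the first move is to reduce this containment to a statement about the single relation ${\leftarrow_0}:={\dashv}\circ{\sqsupset}\in\mathbf{K}^m_n$. Since $\nil'$ and $\nil$ are $\mathbb{P}$-neighbourly we have ${\nil'}\subseteq{\vartriangleright_{\mathbb{P}_m}}$ and ${\nil}\subseteq{\vartriangleright_{\mathbb{P}_{l'}}}$, and the co-surjectivity of the neighbourly $\nil'$ forces $\mathbb{P}_m\vartriangleleft_{\mathbb{P}_m}\mathbb{P}_{l'}$. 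As $\leftarrow_0$ is a finite $\wedge$-preserving relation with $\mathbb{P}^{\leftarrow_0}=\mathbb{P}_n\in\mathsf{C}\mathbb{P}$ and image $\mathbb{P}_m$, \Cref{DoubleTriangle} applied with $A=\mathbb{P}_m$ and $B=\mathbb{P}_{l'}$ gives ${\vartriangleright_{\mathbb{P}_{l'}}}\circ{\vartriangleright_{\mathbb{P}_m}}\circ{\leftarrow_0}\subseteq{\langle\!\leftarrow_0}$, whence
\[{\nil}\circ{\nil'}\circ{\dashv}\circ{\sqsupset}\ \subseteq\ {\vartriangleright_{\mathbb{P}_{l'}}}\circ{\vartriangleright_{\mathbb{P}_m}}\circ{\leftarrow_0}\ \subseteq\ {\langle\!\leftarrow_0}.\]
So it suffices to produce $\phi\in\mathbf{C}_\mathbf{K}^\times$ arising from a bi-thin arrow-sequence whose zeroth arrow is $\leftarrow_0$, for then ${\langle\!\leftarrow_0}\subseteq{\sqsupset_\phi}$ and hence $\phi\in({\nil}\circ{\nil'}\circ{\dashv}\circ{\sqsupset})^\mathbf{C}$.

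To build such a $\phi$ I would invoke \Cref{BackAndForth} with $\mathbb{Q}=\mathbb{P}$ and caps drawn from the levels of $\mathbb{P}$, which form a coinitial sequence of finite caps by regularity. I set the first forth-morphism to ${\sqsupset_0}={\dashv}\circ{\sqsupset}$, so that $C_0=\mathbb{P}_n$ and $D_0=\mathbb{P}_m$. The hypothesis that $\dashv$ is \emph{subabsorbing} then supplies the first back-morphism directly: fed the given $\sqsupset\in\mathbf{K}^{m'}_n$, it yields ${\sqsupset'}\in\mathbf{K}^{\mathbb{P}_n}_{\mathbb{P}_{n'}}$ with ${\dashv}\circ{\sqsupset}\circ{\sqsupset'}\subseteq{\geq^m_{n'}}$, so taking $\sqni_0=\sqsupset'$ and $D_1=\mathbb{P}_{n'}$ gives the first coherence equation ${\sqsupset_0}\circ{\sqni_0}\subseteq{\geq_\mathbb{P}}$ of \eqref{subequality}. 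The remaining forth- and back-morphisms are produced by iterating the same device at successively finer levels: by \Cref{SubabsorbingRegular}, $\mathbf{K}$-regularity together with $\mathbf{K}$-subabsorption furnishes, at every level, a subabsorbing ($\mathbb{P}$-subamalgamable) morphism whose witness is the order $\geq$ itself, and feeding the current morphism into it returns the next one satisfying the matching half of \eqref{subequality}. Because each step passes to a strictly finer level, both $(C_k)$ and $(D_k)$ become coinitial, so \Cref{BackAndForth} yields a bi-thin arrow-sequence $(\leftarrow_k)$ with $\leftarrow_0={\dashv}\circ{\sqsupset}$ and a homeomorphism $\phi=\mathsf{S}_\leftarrow$ with ${\langle\!\leftarrow_0}\subseteq{\sqsupset_\phi}$.

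Finally I would verify $\phi\in\mathbf{C}_\mathbf{K}^\times$. Each arrow $\leftarrow_k$ is a $\mathbf{K}$-morphism between levels, and precomposing with two $\mathbb{P}$-neighbourly morphisms (available at every level by $\mathbf{K}$-regularity) and applying \Cref{DoubleTriangle} exactly as in the first paragraph exhibits, for every $m\in\omega$, a $\mathbf{K}$-morphism contained in ${\langle\!\leftarrow_k}\subseteq{\sqsupset_\phi}$; hence $\phi\in\mathbf{C}_\mathbf{K}$. Bi-thinness makes $\phi$ a homeomorphism, and \Cref{InverseCK} gives $\phi^{-1}\in\mathbf{C}_\mathbf{K}$, so $\phi\in\mathbf{C}_\mathbf{K}^\times$, completing the argument. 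I expect the main obstacle to be the zig-zag recursion of the second paragraph: organising the interleaved construction so that at each stage the subamalgamation is carried out inside $\mathbf{K}$ \emph{with witness exactly $\geq$}, so that both halves of \eqref{subequality} hold simultaneously while keeping the two cap-sequences coinitial. The neighbourliness bookkeeping that upgrades $\langle\!\leftarrow_0$ to the full composite, and the initialisation via the subabsorption of $\dashv$, are comparatively routine once this amalgamation engine is running.
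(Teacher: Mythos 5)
Your overall strategy coincides with the paper's: run \Cref{BackAndForth} with $\mathbb{Q}=\mathbb{P}$, make ${\leftarrow_0}={\dashv}\circ{\sqsupset}$ the zeroth forth-morphism, use \Cref{DoubleTriangle} with the two neighbourly morphisms to upgrade ${\langle\!\leftarrow_0}\subseteq{\sqsupset_\phi}$ to ${\nil}\circ{\nil'}\circ{\dashv}\circ{\sqsupset}\subseteq{\sqsupset_\phi}$, and verify $\phi\in\mathbf{C}_\mathbf{K}$ via \Cref{DoubleTriangle} again (your appeal to \Cref{InverseCK} for $\phi^{-1}$ is a legitimate shortcut; the paper instead runs \Cref{DoubleTriangle} on the back-morphisms). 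Your first and third paragraphs are sound. The genuine gap is in the second paragraph's recursion, which you flag as the main obstacle but then treat as routine bookkeeping: as you have set it up, nothing can carry it past the first step.

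Concretely, you fix ${\sqni_0}={\sqsupset'}\in\mathbf{K}^n_{n'}$, the output of feeding $\sqsupset$ \emph{alone} into the subabsorption of $\dashv$. The next half of \eqref{subequality} demands some ${\sqsupset_1}$ with ${\sqni_0}\circ{\sqsupset_1}\subseteq{\geq_\mathbb{P}}$, and no hypothesis supplies it: subabsorption only bounds composites with the subabsorbing morphism on the \emph{left}, i.e.\ it gives containments of the form ${\nil}\circ{\sqsupset}\circ{\sqsupset'}\subseteq{\geq}$, whereas here the already-fixed ${\sqni_0}$ stands on the left and is an arbitrary $\mathbf{K}$-morphism with no subabsorbing left factor. (\Cref{SubabsorbingRegular} does not help either: $\mathbb{P}$-subamalgamability, which you conflate with subabsorption, only yields ${\nil}\circ{\sqsupset}\circ{\sqsupset'}\subseteq{\geq}\circ{\sqni}\circ{\sqni'}$, not containment in $\geq$ itself.) The missing idea — and this is exactly how the paper's proof runs — is a one-step look-ahead: \emph{before} applying the subabsorption of $\dashv$, choose the next subabsorbing morphism ${\Dashv_0}\in\mathbf{K}^n_{n''}$, feed in the composite ${\sqsupset}\circ{\Dashv_0}$ to obtain ${\sqni_0}\in\mathbf{K}^{n''}_{m_1}$ with ${\dashv}\circ{\sqsupset}\circ{\Dashv_0}\circ{\sqni_0}\subseteq{\geq^m_{m_1}}$, and take ${\Dashv_0}\circ{\sqni_0}$ — not ${\sqni_0}$ — as the back-morphism. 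Then every forth- and back-morphism in the sequence factors through a subabsorbing morphism on the left, so its own subabsorption property can be invoked on the next look-ahead composite (e.g.\ ${\sqni_0}\circ{\dashv_1}$ for subabsorbing ${\dashv_1}\in\mathbf{K}^{m_1}_{m_1'}$) to produce the following morphism, and both halves of \eqref{subequality} hold at every stage. With your choice of ${\sqni_0}$ the back-and-forth engine stalls immediately, so the homeomorphism $\phi$ is never constructed.
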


\begin{proof}
    Take subabsorbing ${\Dashv_0}\in\mathbf{K}^n_{n'}$.  As $\dashv$ is subabsorbing, we have ${\sqni_0}\in\mathbf{K}^{n'}_{m_1}$ with
    \[{{\dashv}\circ{\sqsupset}\circ{\Dashv_0}\circ{\sqni_0}}\ \subseteq\ {\geq^m_{m_1}}.\]
    Again taking subabsorbing ${\dashv_1}\in\mathbf{K}^{m_1}_{m_1'}$, we have ${\sqsupset_1}\in\mathbf{K}^{m_1'}_{n_1}$ such that
    \[{{\Dashv_0}\circ{\sqni_0}\circ{\dashv_1}\circ{\sqsupset_1}}\ \subseteq\ {\geq^{n_0}_{n_1}}.\]
    Continuing in this way we get ${\sqsupset_k'}={{\dashv_k}\circ{\sqsupset_k}}\in\mathbf{K}^{m_k}_{n_k}$ and ${\sqni_k'}={{\Dashv_k}\circ{\sqni_k}}\in\mathbf{K}^{n_k}_{m_{k+1}}$ such that ${\sqsupset_k'\circ\sqni_k'}\subseteq{\geq^{m_k}_{m_{k+1}}}$ and ${\sqni_k'\circ\sqsupset_{k+1}'}\subseteq{\geq^{n_k}_{n_{k+1}}}$, for all $k\in\omega$ (where $m_0=m$, $m_0'=m'$, $n_0=n$, $n_0'=n'$ and ${\sqsupset_0}={\sqsupset}$).  By \Cref{BackAndForth}, we then have a homeomorphism
    \[\phi=\bigcap_{n\in\omega}\overline{\sqsupset_{n\mathsf{S}}'}=\bigcap_{n\in\omega}\overline{\sqin_{n\mathsf{S}}'}.\]
    
    For each $j\in\omega$, we can take $\mathbb{P}$-neighbourly ${\sqnii}\in\mathbf{K}^j_{j'}$ and ${\sqnii'}\in\mathbf{K}^{j'}_{m_k}$, for some $k>j$, so 
    \[{{\sqnii}\circ{\sqnii'}\circ{\sqsupset_k'}}\ \subseteq\ {\sqsupset_\phi},\]
    by \Cref{DoubleTriangle}.  This shows that $\phi\in\mathbf{C}_\mathbf{K}$.  Likewise, for each $j\in\omega$, we can take $\mathbb{P}$-neighbourly ${\sqnii}\in\mathbf{K}^j_{j'}$ and ${\sqnii'}\in\mathbf{K}^{j'}_{n_k}$, for some $k>j$, so \Cref{DoubleTriangle} again yields
    \[{{\sqnii}\circ{\sqnii'}\circ{\sqni_k'}}\ \subseteq\ {\sqsupset_{\phi^{-1}}},\]
    showing that $\phi^{-1}\in\mathbf{C}_\mathbf{K}$ too and hence $\phi\in\mathbf{C}_\mathbf{K}^\times$.  Now \Cref{DoubleTriangle} again tells us that 
    \[{{\nil}\circ{\nil'}\circ{\dashv}\circ{\sqsupset}}\ =\ {{\nil}\circ{\nil'}\circ{\sqsupset_0'}}\ \subseteq\ {\sqsupset_\phi},\]
    i.e.~$\phi$ also lies in $({\nil}\circ{\nil'}\circ{\dashv}\circ{\sqsupset})^\mathbf{C}$, as required.
\end{proof}

Like before, let us denote the basic open set in $\mathbf{C}_\mathbf{K}^\times$ defined by any ${\leftarrow}\subseteq\mathbb{P}_n\times\mathbb{P}_n$ by
\[{\leftarrow^\times_\mathbf{K}}={{\leftarrow_\mathbf{C}}\cap\mathbf{C}_\mathbf{K}^\times}=\{\phi\in\mathbf{C}_\mathbf{K}^\times:\phi\subseteq{\leftarrow_\mathsf{S}}\}.\]

\begin{corollary}\label{SubfactorisableImpliesNonempty}
    If $m\in\omega$ and ${\leftarrow}\in\mathbb{P}_m^\mathbf{K}$ then ${\leftarrow_\mathbf{K}^\times}\neq\emptyset$.
\end{corollary}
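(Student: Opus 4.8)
The plan is to manufacture the desired homeomorphism from \Cref{SKlemma} and then certify that it lies in $\leftarrow_\mathsf{S}$ via \Cref{SubfactorOpen}. The guiding observation is that if I run \Cref{SKlemma} with the \emph{coarsest} level of its morphism chain set equal to $m$, then the controlling morphism it produces, of the form $\nil\circ\nil'\circ\dashv\circ\sqsupset$, lands in $\mathbf{K}^m_n$ and hence constrains $\sqsupset_\phi$ at precisely the level $\mathbb{P}_m$ on which $\leftarrow$ is defined. Accordingly I would first fix, using the standing $\mathbf{K}$-regularity and $\mathbf{K}$-subabsorption hypotheses (via \Cref{SubabsorbingRegular}), $\mathbb{P}$-neighbourly morphisms $\nil\in\mathbf{K}^m_{l'}$ and $\nil'\in\mathbf{K}^{l'}_k$ together with a subabsorbing $\dashv\in\mathbf{K}^k_{m'}$, and abbreviate $\mathbf{N}=\nil\circ\nil'\circ\dashv\in\mathbf{K}^m_{m'}$. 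Neighbourliness gives $\nil\circ\nil'\subseteq{\geq^m_k}$, which will be what folds the trailing maps into the plain order later.

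The crux is to produce a morphism $\sqsupset\in\mathbf{K}^{m'}_n$ with
\[{=_{\mathbb{P}_n}}\ \subseteq\ (\nil\circ\nil'\circ\dashv\circ\sqsupset)^{-1}\circ{\leftarrow}\circ{\geq^m_n}.\]
Since $\mathbf{N}\in\mathbf{K}^m_\bullet$ and $\leftarrow\in\mathbb{P}_m^\mathbf{K}$ is widely $\mathbf{K}$-subfactorisable, the relation ${\mathbf{N}^{-1}\circ\leftarrow\circ\mathbf{N}}$ on $\mathbb{P}_{m'}$ subfactors into some $\sqsupset_2,\sqni_2\in\mathbf{K}^{\mathbb{P}_{m'}}_J$. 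Invoking $\mathbf{K}$-coinitiality I then pick a morphism $\sqni_3\in\mathbf{K}^J_{\mathbb{P}_j}$ and post-compose, so that (using that $\sqni_3$ is co-surjective, whence ${=_{\mathbb{P}_j}}\subseteq{\sqin_3\circ\sqni_3}$) I obtain ${=_{\mathbb{P}_j}}\subseteq(\sqsupset_2\circ\sqni_3)^{-1}\circ\mathbf{N}^{-1}\circ\leftarrow\circ\mathbf{N}\circ(\sqni_2\circ\sqni_3)$. Finally I apply the subabsorbing property of $\dashv$ to $\sqni_2\circ\sqni_3\in\mathbf{K}^{\mathbb{P}_{m'}}_{\mathbb{P}_j}$ to get $\sqsupset''\in\mathbf{K}^{\mathbb{P}_j}_{\mathbb{P}_n}$ with ${\dashv\circ(\sqni_2\circ\sqni_3)\circ\sqsupset''}\subseteq{\geq^k_n}$; then ${\mathbf{N}\circ(\sqni_2\circ\sqni_3)\circ\sqsupset''}\subseteq{\nil\circ\nil'\circ\geq^k_n}\subseteq{\geq^m_n}$, and post-composing the last cover with $\sqsupset''$ yields the displayed inclusion with $\sqsupset=\sqsupset_2\circ\sqni_3\circ\sqsupset''$.

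With this data assembled, \Cref{SKlemma} (coarsest level $m$) furnishes some $\phi\in(\nil\circ\nil'\circ\dashv\circ\sqsupset)^\mathbf{C}\cap\mathbf{C}_\mathbf{K}^\times$, so that ${\nil\circ\nil'\circ\dashv\circ\sqsupset}\subseteq{\sqsupset_\phi}$ and hence $(\nil\circ\nil'\circ\dashv\circ\sqsupset)^{-1}\subseteq{\sqsubset_\phi}$. Combined with the boxed inclusion this gives ${=_{\mathbb{P}_n}}\subseteq{\sqsubset_\phi\circ\leftarrow\circ\geq}$, so \Cref{SubfactorOpen} with the cap $C=\mathbb{P}_n$ yields $\phi\subseteq{\leftarrow_\mathsf{S}}$. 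Thus $\phi\in{\leftarrow_\mathbf{C}}\cap\mathbf{C}_\mathbf{K}^\times={\leftarrow^\times_\mathbf{K}}$, as required. The main obstacle is the middle paragraph: one must match the rigid shape $\nil\circ\nil'\circ\dashv\circ\sqsupset$ demanded by \Cref{SKlemma} against the subfactorisation handed over by wide $\mathbf{K}$-subfactorisability, bringing all the relations onto the single level $\mathbb{P}_n$ and absorbing the trailing coarsening maps into $\geq^m_n$ — this level bookkeeping is exactly where coinitiality and the subabsorbing property do the work, and it is essential that $\leftarrow$ be controlled at a level at least as fine as its own, which is guaranteed by placing $m$ at the coarse end of the chain.
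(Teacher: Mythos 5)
Your overall strategy is exactly the paper's: feed a morphism of the shape ${\nil}\circ{\nil'}\circ{\dashv}\circ{\sqsupset}$ into \Cref{SKlemma} to get $\phi\in\mathbf{C}_\mathbf{K}^\times$, and certify $\phi\subseteq{\leftarrow_\mathsf{S}}$ via \Cref{SubfactorOpen}, where the middle step turns wide $\mathbf{K}$-subfactorisability of $\leftarrow$ into a containment ${=_{\mathbb{P}_n}}\subseteq(\mathbf{N}\circ{\sqsupset})^{-1}\circ{\leftarrow}\circ{\geq^m_n}$. The paper packages that middle step as ``$\mathbf{N}^{-1}\circ{\leftarrow}\circ\mathbf{N}$ is widely $\mathbf{K}$-subfactorisable, hence $\mathbf{K}$-like'' by citing \Cref{KSubfactorKLike}; you have inlined the proof of that lemma instead. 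Up to this repackaging the two arguments coincide.

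However, your inlined version has one unjustified step: the appeal to $\mathbf{K}$-coinitiality to produce ${\sqni_3}\in\mathbf{K}^J_{\mathbb{P}_j}$. The standing assumptions of \S\ref{Compatibility} are only that $\mathbb{P}$ is a $\mathbf{K}$-regular $\mathbf{K}$-subabsorbing $\mathbf{K}$-poset; $\mathbf{K}$-coinitiality is \emph{not} among them (it is the extra ingredient of being $\mathbf{K}$-sub-Fra\"iss\'e, and by \Cref{SubabsorbingDirected} it only comes for free when $\mathbf{K}$ is directed, which is also not assumed), so nothing guarantees that the auxiliary graph $J$ admits any morphism from a level of $\mathbb{P}$ into it by fiat. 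Fortunately the step is also unnecessary: the definition of subabsorbing quantifies over morphisms ${\sqsupset}\in\mathbf{K}^{\mathbb{P}_{m'}}_G$ for \emph{arbitrary} $G\in\mathbf{K}$, not just for levels of $\mathbb{P}$. So apply the subabsorbing property of $\dashv$ directly to ${\sqni_2}\in\mathbf{K}^{\mathbb{P}_{m'}}_J$, obtaining ${\sqsupset''}\in\mathbf{K}^J_{\mathbb{P}_n}$ with ${\dashv}\circ{\sqni_2}\circ{\sqsupset''}\subseteq{\geq^k_n}$; co-surjectivity of $\sqsupset''$ then gives ${=_{\mathbb{P}_n}}\subseteq(\sqsupset'')^{-1}\circ{\sqsupset''}$, and your computation goes through verbatim with ${\sqsupset}={\sqsupset_2}\circ{\sqsupset''}$ and no $\sqni_3$ at all. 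With this one repair your proof is correct and agrees with the paper's.
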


\begin{proof}
    Take ${\nil}\in\mathbf{K}^m_{m'}$ that can be written as ${\nil'}\circ{\nil''}\circ{\dashv}$, for some $\mathbb{P}$-neighbourly $\nil'$ and $\nil''$ and subabsorbing $\dashv$.  As ${\leftarrow}$ is widely $\mathbf{K}$-subfactorisable, so is ${\lin}\circ{\leftarrow}\circ{\nil}$ and hence it is $\mathbf{K}$-like, so we have ${\sqsupset}\in\mathbf{K}^{m'}_n$ with ${=_{\mathbb{P}_n}}\subseteq{\sqsubset}\circ{\lin}\circ{\leftarrow}\circ{\nil}\circ{\geq^{m'}_n}$.  It then follows from \Cref{SubfactorOpen} and \Cref{SKlemma} that $\emptyset\neq({\nil}\circ{\sqsupset})^\mathbf{C}\cap\mathbf{C}^\times_\mathbf{K}\subseteq{\leftarrow}_\mathbf{C}\cap\mathbf{C}^\times_\mathbf{K}={\leftarrow}_\mathbf{K}^\times$.
\end{proof}

The following corollary of the above will be needed in the next section.

\begin{corollary}\label{DigraphShrinking}
    If ${\leftarrow}\in\mathbb{P}_m^\mathbf{K}$ then we have ${\twoheadleftarrow}\in\mathbb{P}_n^\mathbf{K}$ with ${\twoheadleftarrow}\vartriangleleft_n{\leftarrow}$ and $\mathbb{P}_n\mathrel{\twoheadrightarrow\!\rangle}\mathbb{P}_m$.
\end{corollary}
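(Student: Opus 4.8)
The plan is to realise $\twoheadleftarrow$ as a widely $\mathbf{K}$-subfactorisable thickening of a sufficiently fine term of the arrow-sequence of a witnessing homeomorphism, and then transfer both the star-refinement and the covering condition from that term to the thickening.

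First I would apply \Cref{SubfactorisableImpliesNonempty} to obtain a homeomorphism $\phi\in{\leftarrow^\times_\mathbf{K}}$, so that $\phi\in\mathbf{C}_\mathbf{K}$ and $\phi\subseteq{\leftarrow_\mathsf{S}}$, with associated thin arrow-sequence $(\leftarrow^\phi_k)$ satisfying $\phi\subseteq{\leftarrow^\phi_{k\mathsf{S}}}$ and hence $\phi\in{(\leftarrow^\phi_k)_\mathbf{K}}$ for every $k$. This sequence supplies the two fineness inputs. By \Cref{ArrowPhiLebesgue} there is $k_1$ with ${\leftarrow^\phi_{k_1}}\vartriangleleft_{k_1}{\leftarrow}$. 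By \Cref{ArrowPhi} applied to the cap $\mathbb{P}_m$ there is $k_2$ with $\mathbb{P}_m^{\langle\!\leftarrow^\phi_{k_2}}\in\mathsf{C}\mathbb{P}$; since $p'\leq p$ forces $p'^\barwedge\subseteq p^\barwedge$ and therefore $q\mathrel{\langle\!\leftarrow^\phi_{k_2}}p\Rightarrow q\mathrel{\langle\!\leftarrow^\phi_{k_2}}p'$, the preimage $\mathbb{P}_m^{\langle\!\leftarrow^\phi_{k_2}}$ is downward closed, so any level refining this cap is in fact contained in it.

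Next I would feed $\leftarrow$ together with $\leftarrow'={\leftarrow^\phi_k}$, for $k$ chosen at least $\max(k_1,k_2)$ and fine enough to lie below the covering cap, into \Cref{KLikeKSubfactorisable}; this is legitimate since $\phi\in{\leftarrow_\mathbf{K}\cap(\leftarrow^\phi_k)_\mathbf{K}}$. The output is a level $n>\max(m,k)$ and a widely $\mathbf{K}$-subfactorisable ${\twoheadleftarrow}\in\mathbb{P}^\mathbf{K}_n$ with ${\twoheadleftarrow}\leq{\leftarrow}$ and, decisively, ${\twoheadleftarrow}\leq{\leftarrow^\phi_k}$, whence ${\twoheadleftarrow}\leq{\leftarrow^\phi_{k_1}}$ and ${\twoheadleftarrow}\leq{\leftarrow^\phi_{k_2}}$ by the monotonicity of $(\leftarrow^\phi_k)$ and transitivity of $\leq$. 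It then remains to upgrade these $\leq$-facts. For the star-refinement I would combine ${\twoheadleftarrow}\leq{\leftarrow^\phi_{k_1}}$ with ${\leftarrow^\phi_{k_1}}\vartriangleleft_{k_1}{\leftarrow}$ via the absorption identities ${\leq}\circ{\vartriangleleft_{\mathbb{P}_{k_1}}}\subseteq{\vartriangleleft_{\mathbb{P}_{k_1}}}$ and ${\vartriangleright_{\mathbb{P}_{k_1}}}\circ{\geq}\subseteq{\vartriangleright_{\mathbb{P}_{k_1}}}$ to obtain ${\twoheadleftarrow}\vartriangleleft_{k_1}{\leftarrow}$, and then use ${\vartriangleleft_{\mathbb{P}_{k_1}}}\subseteq{\vartriangleleft_{\mathbb{P}_n}}$ (valid since $\mathbb{P}_n\leq\mathbb{P}_{k_1}$) to get ${\twoheadleftarrow}\vartriangleleft_n{\leftarrow}$. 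For the covering I would apply \Cref{leqSubs} to ${\twoheadleftarrow}\leq{\leftarrow^\phi_{k_2}}$ to get ${\langle\!\leftarrow^\phi_{k_2}}\subseteq{\langle\!\twoheadleftarrow}$, so $\mathbb{P}_m^{\langle\!\leftarrow^\phi_{k_2}}\subseteq\mathbb{P}_m^{\langle\!\twoheadleftarrow}$; since a level lies inside $\mathbb{P}_m^{\langle\!\leftarrow^\phi_{k_2}}$ and $\mathbb{P}_m^{\langle\!\twoheadleftarrow}$ is again downward closed with $\mathbb{P}_n$ below that level, I conclude $\mathbb{P}_n\subseteq\mathbb{P}_m^{\langle\!\twoheadleftarrow}$, i.e. $\mathbb{P}_n\mathrel{\twoheadrightarrow\!\rangle}\mathbb{P}_m$.

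The step I expect to be the main obstacle is that passing to a widely $\mathbf{K}$-subfactorisable relation \emph{enlarges} the relation, which a priori works against both targets: a larger relation is harder to keep $\vartriangleleft_n$-below $\leftarrow$, and by \Cref{leqSubs} enlarging a relation \emph{shrinks} its image under $\langle\!$, hurting the covering. What rescues the argument is that \Cref{KLikeKSubfactorisable} thickens in the controlled fashion ${\twoheadleftarrow}\leq{\leftarrow^\phi_k}$: relative to $\leftarrow^\phi_k$ the thickening \emph{decreases}, so $\langle\!\twoheadleftarrow$ moves in the favourable direction for covering, while the star-refinement survives precisely because $\leq$ is absorbed into $\vartriangleleft$ on the left (and dually on the right). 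The remaining ingredients—monotonicity of $\vartriangleleft_C$ under refining $C$, and downward closure of $\langle\!$-preimages—are immediate from the definitions.
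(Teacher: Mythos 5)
Your argument is correct and is essentially the paper's own proof: both run \Cref{SubfactorisableImpliesNonempty} to get $\phi\in{\leftarrow_\mathbf{K}^\times}$, then \Cref{ArrowPhi,ArrowPhiLebesgue} for the two fineness conditions on the sequence $(\leftarrow^\phi_k)$, then \Cref{KLikeKSubfactorisable} to produce ${\twoheadleftarrow}\leq{\leftarrow^\phi_k}$ at some level $n>k$, and finally \Cref{leqSubs} together with absorption of $\leq$ into $\vartriangleleft$ and downward closure of preimages under ${\langle\!\leftarrow^\phi_k}$ to transfer both conclusions. The only divergence is bookkeeping: you track two indices $k_1,k_2$ and spell out the downward-closure argument that the paper leaves implicit in its single choice of $k$.
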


\begin{proof}
    Given ${\leftarrow}\in\mathbb{P}_m^\mathbf{K}$, \Cref{SubfactorisableImpliesNonempty} yields $\phi\in{\leftarrow_\mathbf{K}^\times}$.  By \Cref{ArrowPhi,ArrowPhiLebesgue}, we have $k>m$ with ${\leftarrow^\phi_k}\vartriangleleft_k{\leftarrow}$ and $\mathbb{P}_m^{\langle\!\leftarrow^\phi_k}\in\mathsf{C}\mathbb{P}$.  By \Cref{KLikeKSubfactorisable}, we then have $n>k$ with $\mathbb{P}_n\subseteq\mathbb{P}_m^{\langle\!\leftarrow^\phi_k}$ and ${\twoheadleftarrow}\in\mathbb{P}_n^\mathbf{K}$ with ${\twoheadleftarrow}\leq{\leftarrow^\phi_k}\vartriangleleft_n{\leftarrow}$ so ${\langle\!\leftarrow^\phi_k}\subseteq{\langle\!\twoheadleftarrow}$ and hence $\mathbb{P}_n\subseteq\mathbb{P}_m^{\langle\!\twoheadleftarrow}$.
\end{proof}

\subsection{The Digraph Category}

A \emph{$\mathbf{K}$-digraph} is a graph $G\in\mathbf{K}$ together with a widely $\mathbf{K}$-subfactorisable relation ${\leftarrow_G}\in G^\mathbf{K}$.  The $\mathbf{K}$-digraphs form a category $\underline{\mathbf{K}}$ with hom-sets
\[\underline{\mathbf{K}}_H^G=\underline{\mathbf{K}}_{\leftarrow_H}^{\leftarrow_G}=\{{\sqsupset}\in\mathbf{K}_H^G:{\leftarrow_H}\subseteq{\sqsubset\circ\leftarrow_G\circ\sqsupset}\}.\]
By \Cref{EqualitySubfactor}, $G^\mathbf{K}\neq\emptyset$, for all $G\in\mathbf{K}$, so $\underline{\mathbf{K}}$ is an expansion of $\mathbf{K}$, i.e.~the forgetful functor from $\underline{\mathbf{K}}$ to $\mathbf{K}$ is surjective on objects.

We call ${\leftarrow_k}\in\mathbb{P}_{n_k}^\mathbf{K}$ a \emph{$\underline{\mathbf{K}}$-sequence} on $\mathbb{P}$ if, for all $k\in\omega$, $n_k<n_{k+1}$ and ${\leftarrow_{k+1}}\leq{\leftarrow_k}$.  By \Cref{SurjectiveSubfactor}, every ${\leftarrow}\in\mathbb{P}^\mathbf{K}_n$ is co-surjective and hence an arrow, so every $\underline{\mathbf{K}}$-sequence is an arrow-sequence to which we can apply the theory developed thus far.  In particular, every bi-thin $\underline{\mathbf{K}}$-sequence ${\leftarrow_k}\in\mathbb{P}^\mathbf{K}_{n_k}$ defines a function $\phi=\bigcap_{k\in\omega}\overline{\leftarrow_{k\mathsf{S}}}\in\mathbf{C}_\mathbf{K}^\times$, by \Cref{KLikeArrowSequence}.  We will be interested in conjugacy classes $\{\theta^{-1}\circ\phi\circ\theta:\theta\in\mathbf{C}_\mathbf{K}^\times\}$ of such $\phi$ and how they relate to categorical properties of the defining $\underline{\mathbf{K}}$-sequence $(\leftarrow_k)$.

Like before with posets, we say that a $\underline{\mathbf{K}}$-sequence ${\leftarrow_k}\in\mathbb{P}^\mathbf{K}_{n_k}$ on $\mathbb{P}$ is \emph{$\underline{\mathbf{K}}$-coinitial} if the digraphs $(\mathbb{P}_{n_k},\leftarrow_k)$ defined by $(\leftarrow_k)$ are coinitial in $\underline{\mathbf{K}}$.

\begin{proposition}\label{CoinitialFromDirected}
    If $\underline{\mathbf{K}}$ is directed then there is a bi-thin $\underline{\mathbf{K}}$-coinitial $\underline{\mathbf{K}}$-sequence on $\mathbb{P}$.
\end{proposition}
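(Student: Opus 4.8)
The plan is to build the sequence by a recursion with bookkeeping, using directedness of $\underline{\mathbf{K}}$ to force coinitiality and \Cref{DigraphShrinking} to force bi-thinness. Since the objects of $\mathbf{K}$ are finite graphs and each carries only finitely many relations, $\underline{\mathbf{K}}$ has only countably many isomorphism types; I would fix an enumeration $(H_j,{\leftarrow_{H_j}})_{j\in\omega}$ of representatives of its objects in which each type occurs infinitely often, starting the recursion from ${\leftarrow_0}={=_{\mathbb{P}_{n_0}}}\in\mathbb{P}^\mathbf{K}_{n_0}$ (widely $\mathbf{K}$-subfactorisable by \Cref{EqualitySubfactor}). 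Throughout I would maintain strictly increasing levels $(n_k)$ and relations ${\leftarrow_k}\in\mathbb{P}_{n_k}^\mathbf{K}$ with ${\leftarrow_{k+1}}\leq{\leftarrow_k}$, so that $(\leftarrow_k)$ is automatically a $\underline{\mathbf{K}}$-sequence (by \Cref{SurjectiveSubfactor} each $\leftarrow_k$ is co-surjective, hence an arrow). Three kinds of step, each carried out cofinally often, will be interleaved: forward-shrinking, backward-shrinking, and coinitiality steps.

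First note that wide $\mathbf{K}$-subfactorisability is preserved under inversion: the condition ${=_H}\subseteq{\sqsubset\circ\leftarrow\circ\sqni}$ passes, on taking inverses, to ${=_H}\subseteq{\sqin\circ\rightarrow\circ\sqsupset}$, which is subfactorisability of $\rightarrow$ with the two morphisms swapped, so each $\rightarrow_k$ again lies in $\mathbb{P}_{n_k}^\mathbf{K}$. A forward-shrinking step then applies \Cref{DigraphShrinking} to $\leftarrow_k$ to produce ${\leftarrow_{k+1}}\in\mathbb{P}_{n_{k+1}}^\mathbf{K}$ with ${\leftarrow_{k+1}}\vartriangleleft_{n_{k+1}}{\leftarrow_k}$ (whence ${\leftarrow_{k+1}}\leq{\leftarrow_k}$) and $\mathbb{P}_{n_{k+1}}\subseteq\mathbb{P}_{n_k}^{\langle\!\leftarrow_{k+1}}$; a backward-shrinking step applies \Cref{DigraphShrinking} to $\rightarrow_k$ and takes $\leftarrow_{k+1}$ to be the inverse of the output, yielding the dual covering $\mathbb{P}_{n_{k+1}}\subseteq\mathbb{P}_{n_k}^{\langle\!\rightarrow_{k+1}}$ while still keeping ${\leftarrow_{k+1}}\leq{\leftarrow_k}$. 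Because ${\langle\!\leftarrow}$ is monotone under $\leq$ (\Cref{leqSubs}) these coverings persist along the rest of the sequence, and a routine cofinality argument using the regularity of $\mathbb{P}$ (the levels being cofinal among caps) shows that the forward coverings make ${\sqsupset}=\bigcup_k{\langle\!\leftarrow_k}$ a refiner, i.e. $(\leftarrow_k)$ thin, and the backward coverings do the same for $(\rightarrow_k)$; hence the sequence is bi-thin.

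The coinitiality step is where directedness enters, and it is the main obstacle. To treat the object $(H_j,{\leftarrow_{H_j}})$, directedness of $\underline{\mathbf{K}}$ supplies a digraph $(F,{\leftarrow_F})$ with $\underline{\mathbf{K}}$-morphisms $(F,{\leftarrow_F})\to(\mathbb{P}_{n_k},{\leftarrow_k})$ and $(F,{\leftarrow_F})\to(H_j,{\leftarrow_{H_j}})$. What remains, and is the delicate part, is to \emph{realise} $(F,{\leftarrow_F})$ as a level-digraph $(\mathbb{P}_{n_{k+1}},{\leftarrow_{k+1}})$ whose connecting morphism back to $(\mathbb{P}_{n_k},{\leftarrow_k})$ is the poset order $\geq^{n_k}_{n_{k+1}}$ (and not merely some abstract morphism). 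Here I would use that $\mathbb{P}$ is $\mathbf{K}$-coinitial — which holds since $\underline{\mathbf{K}}$, and hence $\mathbf{K}$, is directed, by \Cref{SubabsorbingDirected} — to choose a level mapping onto $F$, pull ${\leftarrow_F}$ back along that morphism, and then absorb the resulting morphism into the order at a still deeper level exactly as in the proof of \Cref{KLikeKSubfactorisable}, invoking a $\mathbb{P}$-subamalgamable morphism from \Cref{SubabsorbingRegular}. The transported relation stays widely $\mathbf{K}$-subfactorisable by \Cref{WideSubfactorisability} and ends up ${\leq}{\leftarrow_k}$, while the morphism $(F,{\leftarrow_F})\to(H_j,{\leftarrow_{H_j}})$ composes with the realisation to give a $\underline{\mathbf{K}}$-morphism $(\mathbb{P}_{n_{k+1}},{\leftarrow_{k+1}})\to(H_j,{\leftarrow_{H_j}})$.

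Assembling the pieces, the bookkeeping guarantees that all three families of steps occur cofinally, so $(\leftarrow_k)$ is a bi-thin $\underline{\mathbf{K}}$-sequence, and every enumerated object is refined by some $(\mathbb{P}_{n_k},{\leftarrow_k})$, giving $\underline{\mathbf{K}}$-coinitiality. The hard part is precisely the realisation carried out in the coinitiality step — converting an abstract common refinement supplied by directedness into an honest deeper level of $\mathbb{P}$ with the poset order as its connecting map, and keeping the transported digraph relation inside $\mathbb{P}^\mathbf{K}_\bullet$ — which is exactly what the subabsorption apparatus developed in the preceding subsections was built to accomplish.
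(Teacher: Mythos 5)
Your overall skeleton---reduce to countably many targets, interleave shrinking steps with coinitiality steps, and use \Cref{DigraphShrinking} plus the inversion symmetry of wide $\mathbf{K}$-subfactorisability for bi-thinness---is sound and close to the paper's proof, and the bi-thinness half of your argument would go through. The genuine gap is in the coinitiality step, exactly the part you flag as hard. You apply directedness of $\underline{\mathbf{K}}$ \emph{first}, to $(\mathbb{P}_{n_k},\leftarrow_k)$ and $(H_j,\leftarrow_{H_j})$, and only \emph{then} try to absorb the resulting abstract connecting morphism into the order. But subabsorption cannot do this: by definition, a subabsorbing ${\nil}\in\mathbf{K}^m_n$ only absorbs morphisms ${\sqsupset}\in\mathbf{K}^{\mathbb{P}_n}_G$ whose codomain is the \emph{domain} $\mathbb{P}_n$ of the \emph{pre-chosen} $\nil$, and the conclusion ${\nil}\circ{\sqsupset}\circ{\sqsupset'}\subseteq{\geq^m_{n'}}$ is an order containment relative to the codomain level $m$, which lies \emph{above} $n$. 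To absorb your morphism from $F$ to $\mathbb{P}_{n_k}$ you would need a subabsorbing morphism whose domain is $\mathbb{P}_{n_k}$ (which the definition of $\mathbf{K}$-subabsorption does not even guarantee exists, since it prescribes codomains, not domains), and even granting one, say ${\nil}\in\mathbf{K}^m_{n_k}$ with $m<n_k$, unwinding the absorption only yields
\[{\leftarrow_{k+1}}\ \subseteq\ {\leq}\circ{\geq}\circ{\leftarrow_k}\circ{\leq}\circ{\geq},\]
where the detour ${\leq}\circ{\geq}$ passes through the level $m$ above $n_k$. Since ``having a common upper bound'' is strictly weaker than $\leq$, this does not give ${\leftarrow_{k+1}}\leq{\leftarrow_k}$, so the chain condition defining a $\underline{\mathbf{K}}$-sequence fails. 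Neither \Cref{KLikeKSubfactorisable} nor \Cref{SubabsorbingRegular} repairs this after the fact: the mechanism of \Cref{KLikeKSubfactorisable} is mediated by a $\mathbf{K}$-compatible \emph{function} $\phi$ and pieces of its strong refiner $\sqsupset_\phi$, which do interact with the order via \Cref{ArrowFromRefiner}, whereas an abstract $\mathbf{K}$-morphism between levels induces nothing on $\mathsf{S}\mathbb{P}$ and cannot be converted into an order containment by post-composition alone---that would be genuine absorption, strictly stronger than the subabsorption $\mathbb{P}$ is assumed to have, and false e.g.\ in $\mathbf{P}$ (no deeper morphism can undo an order-reversing flip of a path).

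The fix, which is the key move in the paper's proof, is to reverse the order of operations: at step $k$, \emph{first} choose a subabsorbing ${\nil}\in\mathbf{K}^{n_k}_n$ and pull the current relation back to its domain level, forming ${\lin}\circ{\leftarrow_k}\circ{\nil}\in\mathbb{P}_n^\mathbf{K}$; \emph{then} apply directedness of $\underline{\mathbf{K}}$ to this pulled-back digraph together with the target, obtaining $(G,\leftarrow_G)$ with ${\sqni}\in\underline{\mathbf{K}}^{{\lin}\circ{\leftarrow_k}\circ{\nil}}_{\leftarrow_G}$ and ${\sqsupset}\in\underline{\mathbf{K}}^{\leftarrow_{H_j}}_{\leftarrow_G}$. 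Now $\sqni$ has codomain $\mathbb{P}_n$, the domain of $\nil$, so subabsorption applies and gives ${\sqni'}\in\mathbf{K}^G_{\mathbb{P}_{n'}}$ with ${\nil}\circ{\sqni}\circ{\sqni'}\subseteq{\geq^{n_k}_{n'}}$; the pullback of $\leftarrow_k$ along this composite is then genuinely $\leq{\leftarrow_k}$, while ${\sqsupset}\circ{\sqni'}$ supplies the required $\underline{\mathbf{K}}$-morphism to the target, and \Cref{DigraphShrinking} finishes the step as in your shrinking moves. A secondary point: rather than enumerating isomorphism representatives of all objects of $\underline{\mathbf{K}}$ (which presumes $\mathbf{K}$ contains the relevant isomorphisms), first use $\mathbf{K}$-coinitiality of $\mathbb{P}$ to reduce to digraphs on levels of $\mathbb{P}$---for any $(G,\leftarrow_G)$ and ${\sqsupset}\in\mathbf{K}^G_{\mathbb{P}_n}$ one has ${\sqsupset}\in\underline{\mathbf{K}}^{\leftarrow_G}_{{\sqsubset}\circ{\leftarrow_G}\circ{\sqsupset}}$---and these are honestly countable.
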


\begin{proof}
    As $\underline{\mathbf{K}}$ is directed, so is $\mathbf{K}$.  As $\mathbb{P}$ is $\mathbf{K}$-subabsorbing, $\mathbb{P}$ must then be $\mathbf{K}$-coinitial.  Thus, for every $(G,\leftarrow_G)\in\underline{\mathbf{K}}$, we have some $n\in\omega$ and ${\sqsupset}\in\mathbf{K}^G_{\mathbb{P}_n}$ and hence ${\sqsupset}\in\underline{\mathbf{K}}^{\leftarrow_G}_{\sqsubset\circ\leftarrow_G\circ\sqsupset}$.  To construct a $\underline{\mathbf{K}}$-coinitial $\underline{\mathbf{K}}$-sequence it thus suffices to consider relations on levels of $\mathbb{P}$.

    Accordingly, take an enumeration ${\leftarrow_k}\in\mathbb{P}_{m_k}^\mathbf{K}$ of all widely $\mathbf{K}$-subfactorisable relations on all levels of $\mathbb{P}$.  We construct a bi-thin $\underline{\mathbf{K}}$-coinitial $\underline{\mathbf{K}}$-sequence $(\twoheadleftarrow_k)$ recursively as follows.  First set $n_0=0$ and take any ${\twoheadleftarrow_0}\in\mathbb{P}_{n_0}^\mathbf{K}$.  Once ${\twoheadleftarrow_k}\in\mathbb{P}_{n_k}$ has been defined, take any subabsorbing ${\nil}\in\mathbf{K}^{n_k}_n$.  As $\underline{\mathbf{K}}$ is directed, we then have some $(G,\leftarrow_G)\in\underline{\mathbf{K}}$ such that there exists ${\sqsupset}\in\mathbf{K}^{\leftarrow_k}_{\leftarrow_G}$ and ${\sqni}\in\mathbf{K}^{{\lin}\circ{\twoheadleftarrow_k}\circ{\nil}}_{\leftarrow_G}$.  As $\nil$ is subabsorbing, we then have ${\sqni'}\in\mathbf{K}^G_{\mathbb{P}_{n'}}$ such that ${\nil}\circ{\sqni}\circ{\sqni'}\subseteq{\geq^n_{n'}}$.  Applying \Cref{DigraphShrinking} to ${\leftarrow}={{\sqin'}\circ{\sqin}\circ{\lin}\circ{\twoheadleftarrow_k}\circ{\nil}\circ{\sqni}\circ{\sqni'}}$, we obtain $n_{k+1}>n'$ and ${\twoheadleftarrow_{k+1}}\in\mathbb{P}_{n_{k+1}}^\mathbf{K}$ such that $\mathbb{P}_k^{\langle\!\twoheadleftarrow_{k+1}},\mathbb{P}_k^{\langle\!\twoheadrightarrow_{k+1}}\in\mathsf{C}\mathbb{P}$, ${\twoheadleftarrow_{k+1}}\leq{\leftarrow}\leq{\twoheadleftarrow_k}$ and ${{\sqsupset}\circ{\sqni'}}\in\mathbf{K}^{\leftarrow_k}_{\twoheadleftarrow_{k+1}}$.  Thus the resulting $\underline{\mathbf{K}}$-sequence is indeed bi-thin $\underline{\mathbf{K}}$-coinitial.
\end{proof}

\begin{theorem}\label{DirectedVsDenseCC}
    If ${\leftarrow_k}\in\mathbb{P}^\mathbf{K}_{n_k}$ is a bi-thin $\underline{\mathbf{K}}$-sequence and $\phi=\bigcap_{k\in\omega}\overline{\leftarrow_{k\mathsf{S}}}$ then
    \[(\leftarrow_k)\text{ is $\underline{\mathbf{K}}$-coinitial}\quad\Leftrightarrow\quad\mathbb{P}\text{ is $\mathbf{K}$-coinitial and the conjugacy class of $\phi$ is dense in }\mathbf{C}_\mathbf{K}^\times.\]
\end{theorem}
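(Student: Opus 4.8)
The plan is to set up a dictionary between $\underline{\mathbf{K}}$-morphisms into the digraphs $(\mathbb{P}_{n_k},\leftarrow_k)$ and conjugates of $\phi$, and to run a single back-and-forth at the heart of both implications. First I would record the reductions. Since $\underline{\mathbf{K}}$ expands $\mathbf{K}$ and $=_G\in G^\mathbf{K}$ by \Cref{EqualitySubfactor}, a $\underline{\mathbf{K}}$-morphism from some $(\mathbb{P}_{n_k},\leftarrow_k)$ into $(G,=_G)$ is just a $\mathbf{K}$-morphism in $\mathbf{K}^G_{\mathbb{P}_{n_k}}$; hence $\underline{\mathbf{K}}$-coinitiality of $(\leftarrow_k)$ already forces $\mathbb{P}$ to be $\mathbf{K}$-coinitial, so on the left this clause is free and on the right I simply assume it. Conversely, given $\mathbf{K}$-coinitiality and $\sqsupset\in\mathbf{K}^G_{\mathbb{P}_m}$, the relation $\sqsubset\circ\leftarrow_G\circ\sqsupset$ lies in $\mathbb{P}_m^\mathbf{K}$ (wide subfactorisability is preserved under composing with morphisms) and carries a $\underline{\mathbf{K}}$-morphism to $(G,\leftarrow_G)$; so throughout it suffices to treat digraphs $(\mathbb{P}_m,\leftarrow)$ on levels, and $\underline{\mathbf{K}}$-coinitiality becomes the statement that for each $m$ and $\leftarrow\in\mathbb{P}_m^\mathbf{K}$ there are $k$ and $\rho\in\mathbf{K}^m_{n_k}$ with $\leftarrow_k\subseteq\rho^{-1}\circ\leftarrow\circ\rho$. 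Finally, by \Cref{SubfactorisableBasis}, \Cref{SubfactorisableImpliesNonempty} and \Cref{InverseCK} the nonempty sets $\leftarrow^\times_\mathbf{K}$ (for $\leftarrow\in\mathbb{P}_m^\mathbf{K}$) form a basis for $\mathbf{C}_\mathbf{K}^\times$, so density of the conjugacy class of $\phi$ means exactly that each $\leftarrow^\times_\mathbf{K}$ contains a conjugate of $\phi$. I would also note that a coinitial $(\leftarrow_k)$ makes $\underline{\mathbf{K}}$ directed, since $\geq^{n_{k'}}_{n_k}$ is a $\underline{\mathbf{K}}$-morphism whenever $\leftarrow_k\leq\leftarrow_{k'}$.

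The core is the claim that any two bi-thin $\underline{\mathbf{K}}$-coinitial sequences $(\leftarrow_k)$ and $(\twoheadleftarrow_j)$ produce conjugate homeomorphisms $\mathsf{S}_\leftarrow$ and $\mathsf{S}_\twoheadleftarrow$ in $\mathbf{C}_\mathbf{K}^\times$. I would prove this by a back-and-forth: alternately invoking $\underline{\mathbf{K}}$-coinitiality of $(\twoheadleftarrow_j)$ to obtain a $\underline{\mathbf{K}}$-morphism from some $\twoheadleftarrow$-digraph into the current $\leftarrow$-digraph, and of $(\leftarrow_k)$ for the reverse, while using \Cref{DigraphShrinking} to shrink digraphs within their $\underline{\mathbf{K}}$-isomorphism type so as to meet the alternating refinement conditions \eqref{subequality}. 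This yields a bi-thin $\underline{\mathbf{K}}$-sequence $(\sqsupset_i)$ intertwining the two at each index, $\twoheadleftarrow_i\subseteq\sqsubset_i\circ\leftarrow_i\circ\sqsupset_i$, whereupon \Cref{BackAndForthArrows} gives $\mathsf{S}_\twoheadleftarrow=\mathsf{S}_\sqsupset^{-1}\circ\mathsf{S}_\leftarrow\circ\mathsf{S}_\sqsupset$ with $\theta:=\mathsf{S}_\sqsupset\in\mathbf{C}_\mathbf{K}^\times$, exhibiting the conjugacy.

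For the forward implication I would fix a nonempty $\leftarrow^\times_\mathbf{K}$ with $\leftarrow\in\mathbb{P}_m^\mathbf{K}$. Since $\underline{\mathbf{K}}$ is directed, \Cref{CoinitialFromDirected} (seeded at $\twoheadleftarrow_0=\leftarrow$ and, using \Cref{DigraphShrinking}, arranged so that $\twoheadleftarrow_1\vartriangleleft\leftarrow$) produces a bi-thin $\underline{\mathbf{K}}$-coinitial sequence $(\twoheadleftarrow_j)$ whose element $\psi:=\mathsf{S}_\twoheadleftarrow=\bigcap_j\overline{\twoheadleftarrow_{j\mathsf{S}}}$ satisfies $\psi\subseteq\overline{\twoheadleftarrow_{1\mathsf{S}}}\subseteq\leftarrow_\mathsf{S}$ by \Cref{VarChar}, i.e. $\psi\in\leftarrow^\times_\mathbf{K}$. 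The core claim then makes $\psi$ conjugate to $\phi$, so the conjugacy class of $\phi$ meets $\leftarrow^\times_\mathbf{K}$; as these sets form a basis, that class is dense.

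For the reverse implication I assume $\mathbb{P}$ is $\mathbf{K}$-coinitial and the conjugacy class of $\phi$ dense, and must produce, for each $\leftarrow\in\mathbb{P}_m^\mathbf{K}$, some $k$ and $\rho\in\mathbf{K}^m_{n_k}$ with $\leftarrow_k\subseteq\rho^{-1}\circ\leftarrow\circ\rho$. As $\leftarrow^\times_\mathbf{K}\neq\emptyset$ by \Cref{SubfactorisableImpliesNonempty}, density supplies $\theta\in\mathbf{C}_\mathbf{K}^\times$ with $\psi:=\theta^{-1}\phi\theta\subseteq\leftarrow_\mathsf{S}$, and \Cref{ConjugacyLemma}, applied with $\theta^{-1}$ in place of $\theta$ and $\psi$ in place of $\phi$ (so that the conjugate is $\theta\psi\theta^{-1}=\phi=\bigcap_k\overline{\leftarrow_{k\mathsf{S}}}$), does the extraction: choosing a level $a$ with $\leftarrow^\psi_a\vartriangleleft_a\leftarrow$ (by $\psi\subseteq\leftarrow_\mathsf{S}$ and \Cref{ArrowPhiLebesgue}) and $\sqsupset\subseteq\sqsupset_{\theta^{-1}}$ in some $\mathbf{K}^a_{a'}$ (so $\mathbb{P}_a^\sqsupset=\mathbb{P}_{a'}\in\mathsf{C}\mathbb{P}$, available since $\theta^{-1}\in\mathbf{C}_\mathbf{K}$), it yields $k$ with $\leftarrow_k\vartriangleleft_k\sqsubset\circ\leftarrow^\psi_a\circ\sqsupset$, and absorbing the resulting $\vartriangleleft,\vartriangleright$ together with the refinement of $\leftarrow^\psi_a$ to $\leftarrow$ into a genuine $\mathbb{P}$-neighbourly $\mathbf{K}$-morphism (furnished by $\mathbf{K}$-regularity) composed with $\sqsupset$ delivers the desired $\rho$. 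I expect the main obstacle to be the back-and-forth of the core claim: one must keep the intertwining morphisms inside $\underline{\mathbf{K}}$ at every stage, preserving the digraph condition, while simultaneously forcing the two alternating composition identities \eqref{subequality} that guarantee bi-thinness, and it is precisely there that $\mathbf{K}$-subabsorption, $\mathbf{K}$-regularity and \Cref{DigraphShrinking} must be deployed in tandem; the reverse direction is comparatively routine once \Cref{ConjugacyLemma} is available, the only care being the bookkeeping that converts its star-relations into an honest $\mathbf{K}$-morphism.
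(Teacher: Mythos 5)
Your reverse direction (density plus $\mathbf{K}$-coinitiality implies $\underline{\mathbf{K}}$-coinitiality) is essentially the paper's argument, just run directly rather than contrapositively: density and \Cref{SubfactorisableImpliesNonempty} give a conjugate $\psi=\theta^{-1}\circ\phi\circ\theta$ inside $\leftarrow_\mathsf{S}$, and \Cref{ConjugacyLemma} (applied exactly with the substitution you describe) extracts the required $k$ and morphism. One simplification: the bookkeeping converting the output of \Cref{ConjugacyLemma} into an honest $\underline{\mathbf{K}}$-morphism does not need $\mathbf{K}$-regularity; since $\vartriangleleft_{\mathbb{P}_{n_k}}$ restricted to elements of $\mathbb{P}_{n_k}$ is contained in $\leq$, it suffices to compose with the level restrictions $\geq^m_{m'}$ and $\geq^n_{n_k}$ of the order, which are $\mathbf{K}$-morphisms because $\mathbb{P}$ is a $\mathbf{K}$-poset.

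The forward direction, however, has a genuine gap: your ``core claim'' that any two bi-thin $\underline{\mathbf{K}}$-coinitial sequences yield conjugate homeomorphisms is both unestablished by your sketch and false in the stated generality. In the back-and-forth, after choosing a $\underline{\mathbf{K}}$-morphism $\sqsupset_i$ from the digraph $\leftarrow_{k_i}$ to the digraph $\twoheadleftarrow_{j_i}$, you must produce a $\underline{\mathbf{K}}$-morphism $\sqni_i$ from some later $\twoheadleftarrow_{j_{i+1}}$ back to $\leftarrow_{k_i}$ satisfying the compatibility ${\sqsupset_i}\circ{\sqni_i}\subseteq{\geq}$ of \eqref{subequality}. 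Coinitiality only produces \emph{some} morphism back, with no control over its composition with the previously chosen $\sqsupset_i$; \Cref{DigraphShrinking} refines a single digraph and cannot create this compatibility, and the $\mathbf{K}$-subabsorption of $\mathbb{P}$ produces order-compatible morphisms that need not intertwine any digraph of the sequence. The missing ingredient is precisely $\underline{\mathbf{K}}$-subabsorption of the sequence itself, which this theorem deliberately does not assume. Indeed the claim cannot hold in general: by the paper's final theorem, a regular bi-thin coinitial sequence that is also $\underline{\mathbf{K}}$-subabsorbing gives $\phi$ with \emph{comeagre} conjugacy class, while a regular bi-thin coinitial sequence that is not $\underline{\mathbf{K}}$-subabsorbing gives $\phi$ with \emph{meagre} conjugacy class; since no class is both, such functions are not conjugate, although both sequences satisfy your hypotheses. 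The paper avoids this by proving something weaker than your core claim: for each basic open set $\twoheadleftarrow^\times_\mathbf{K}$ it does not conjugate $\phi$ to a prescribed element, but instead uses coinitiality exactly once, to obtain ${\sqsupset}\in\mathbf{K}^{m'}_{n_k}$ with ${\leftarrow_k}\subseteq{\sqsubset}\circ{\lin}\circ{\twoheadleftarrow}\circ{\nil}\circ{\sqsupset}$ for a suitable ${\nil}\in\mathbf{K}^m_{m'}$, and then builds the conjugator $\theta\in({\nil}\circ{\sqsupset})^\mathbf{C}\cap\mathbf{C}_\mathbf{K}^\times$ by the back-and-forth of \Cref{SKlemma}, whose alternating steps are powered by the standing $\mathbf{K}$-subabsorption of the poset $\mathbb{P}$ rather than any property of the sequence $(\leftarrow_k)$. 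Then ${\leftarrow_k}\subseteq{\sqsubset_\theta}\circ{\twoheadleftarrow}\circ{\sqsupset_\theta}$, and \Cref{ConjugateArrow} places $\theta\circ\phi\circ\theta^{-1}$ inside $\twoheadleftarrow_\mathsf{S}$, which is all density requires. You should replace your core claim with this adaptive construction of the conjugating homeomorphism.
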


\begin{proof}
    If $(\leftarrow_k)$ is not $\mathbf{K}$-coinitial then we have $G\in\mathbf{K}$ and ${\twoheadleftarrow}\in G^\mathbf{K}$ such that $\underline{\mathbf{K}}^{\twoheadleftarrow}_{\leftarrow_k}=\emptyset$, for all $k\in\omega$.  Either $\mathbb{P}$ is not $\mathbf{K}$-coinitial, and we are done, or we have $m\in\omega$ with $\mathbf{K}_{\mathbb{P}_m}^G\neq\emptyset$.  If necessary, we may thus replace $G$ with $\mathbb{P}_m$ and $\twoheadleftarrow$ with ${\sqsubset}\circ{\twoheadleftarrow}\circ{\sqsupset}$, for some ${\sqsupset}\in\mathbf{K}_{\mathbb{P}_m}^G$, to ensure that $G=\mathbb{P}_m$, for some $m\in\omega$.  By \Cref{SubfactorisableImpliesNonempty}, we can then take $\psi\in{\twoheadleftarrow^\times_\mathbf{K}}$.  By \Cref{ArrowPhiLebesgue}, we have $m'>m$ with ${\leftarrow^\psi_{m'}}\leq{\twoheadleftarrow}$.  If we had $\theta\in\mathbf{C}^\times_\mathbf{K}$ with $\phi=\theta^{-1}\circ\psi\circ\theta$ then, by the definition of $\mathbf{C}_\mathbf{K}$, we would have $n>m'$ and ${\sqsupset}\in\mathbf{K}^{m'}_n$ with ${\sqsupset}\subseteq{\sqsupset_\theta}$.  By \Cref{ConjugacyLemma}, we would then have $k\in\omega$ with ${\leftarrow_k}\subseteq{{\sqsubset}\circ{\leftarrow^\psi_{m'}}\circ{\sqsupset}}\subseteq{{\sqsubset}\circ{\twoheadleftarrow}\circ{\sqsupset}}$, i.e.~${\sqsupset}\in\underline{\mathbf{K}}^\twoheadleftarrow_{\leftarrow_k}$, a contradiction.  This shows that $\twoheadleftarrow^\times_\mathbf{K}$ is a non-empty open subset of $\mathbf{C}^\times_\mathbf{K}$ disjoint from the conjugacy class of $\phi$.

    Conversely, if $(\leftarrow_k)$ is $\underline{\mathbf{K}}$-coinitial then certainly $\mathbb{P}$ is $\mathbf{K}$-coinitial.  To see that the conjugacy class of $\phi$ is also dense in $\mathbf{C}_\mathbf{K}^\times$, it suffices to consider open sets of the form $\twoheadleftarrow_\mathbf{K}^\times$, for ${\twoheadleftarrow}\in\mathbb{P}^\mathbf{K}_m$, by \Cref{SubfactorisableBasis}.  For any such $\twoheadleftarrow$, we can take ${\nil}\in\mathbf{K}^m_{m'}$ that can be written as ${\nil'}\circ{\nil''}\circ{\dashv}$, for $\mathbb{P}$-neighbourly $\nil'$ and $\nil''$ and subabsorbing $\dashv$.  Then ${{\lin}\circ{\twoheadleftarrow}\circ{\nil}}\in\mathbb{P}_{m'}^\mathbf{K}$ so, as $(\leftarrow_k)$ is $\underline{\mathbf{K}}$-coinitial, we have ${\sqsupset}\in\mathbf{K}^{m'}_{n_k}$ with ${\leftarrow_k}\subseteq{{\sqsubset}\circ{\lin}\circ{\twoheadleftarrow}\circ{\nil}\circ{\sqsupset}}$.  By \Cref{SKlemma}, we have $\theta\in({\nil}\circ{\sqsupset})^\mathbf{C}\cap\mathbf{C}^\times_\mathbf{K}$ so ${\leftarrow_k}\subseteq{{\sqsubset_\theta}\circ{\twoheadleftarrow}\circ{\sqsupset_\theta}}$ and hence $\theta\circ\phi\circ\theta^{-1}\subseteq\theta\circ\overline{\leftarrow_{k\mathsf{S}}}\circ\theta^{-1}\subseteq{\twoheadleftarrow_\mathsf{S}}$, by \Cref{ConjugateArrow}.  This shows that $\theta\circ\phi\circ\theta^{-1}\in{\twoheadleftarrow_\mathbf{K}^\times}$, as required.
\end{proof}

Given a $\underline{\mathbf{K}}$-sequence ${\leftarrow_k}\in\mathbb{P}_{n_k}^\mathbf{K}$, we say that ${\nil}\in\underline{\mathbf{K}}^{\leftarrow_i}_{\leftarrow_j}$ is \emph{$\underline{\mathbf{K}}$-subabsorbing} if ${\nil}\subseteq{\geq^{n_i}_{n_j}}$ and, for all ${\sqsupset}\subseteq\underline{\mathbf{K}}^{\leftarrow_j}_{\leftarrow_G}$, we have ${\sqsupset'}\in\underline{\mathbf{K}}^{\leftarrow_G}_{\leftarrow_{j'}}$ with ${{\nil}\circ{\sqsupset}\circ{\sqsupset'}}\subseteq{\geq^{n_i}_{n_{j'}}}$.  We call the $\underline{\mathbf{K}}$-sequence $(\leftarrow_k)$ \emph{$\underline{\mathbf{K}}$-subabsorbing} if, for all $i\in\omega$, we have $j>i$ and $\underline{\mathbf{K}}$-subabsorbing ${\nil}\in\underline{\mathbf{K}}^{\leftarrow_i}_{\leftarrow_j}$.

\begin{proposition}\label{BiThinFraisse}
Every $\underline{\mathbf{K}}$-subabsorbing $\underline{\mathbf{K}}$-sequence ${\leftarrow_k}\in\mathbb{P}_{n_k}^\mathbf{K}$ is regular and bi-thin.
\end{proposition}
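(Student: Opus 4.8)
The plan is to reduce the whole statement to a single geometric approximation of each $\leftarrow_m$ from below inside the sequence. I would first record a symmetry that halves the work. Since wide $\mathbf{K}$-subfactorisability is a self-inverse condition, each $\mathbb{P}_{n_k}^\mathbf{K}$ is closed under inverses, so $(\rightarrow_k)$ is again a $\underline{\mathbf{K}}$-sequence; moreover the hom-sets of $\underline{\mathbf{K}}$, the defining inclusion ${\nil}\subseteq{\geq^{n_i}_{n_j}}$, and the universal condition ${\nil}\circ{\sqsupset}\circ{\sqsupset'}\subseteq{\geq^{n_i}_{n_{j'}}}$ are all insensitive to whether the levels are equipped with the $\leftarrow_k$ or the $\rightarrow_k$. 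Hence $(\rightarrow_k)$ is $\underline{\mathbf{K}}$-subabsorbing whenever $(\leftarrow_k)$ is. Regularity is likewise self-inverse, since taking inverses of ${\vartriangleleft}\circ{\leftarrow_m}\circ{\vartriangleright}$ turns ${\leftarrow_n}\vartriangleleft{\leftarrow_m}$ into ${\rightarrow_n}\vartriangleleft{\rightarrow_m}$. Thus it suffices to prove that $(\leftarrow_k)$ is regular and thin; applying the identical argument to $(\rightarrow_k)$ then yields thinness of $(\rightarrow_k)$, i.e.\ bi-thinness, together with regularity of $(\rightarrow_k)$.

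\textbf{The approximation claim.} Both properties of $(\leftarrow_k)$ I would extract from one claim: for every $m$ there is a subabsorbing ${\nil}\in\underline{\mathbf{K}}^{\leftarrow_m}_{\leftarrow_j}$, an index $n>j$, and a $\mathbf{K}$-digraph ${\twoheadleftarrow}\in\mathbb{P}_\nu^\mathbf{K}$ with
\[{\twoheadleftarrow}\ \vartriangleleft_\nu\ {\leftarrow_j},\qquad \mathbb{P}_{n_j}^{\langle\!\twoheadleftarrow}\in\mathsf{C}\mathbb{P},\qquad {\leftarrow_n}\ \leq\ {\twoheadleftarrow}.\]
The first two are exactly the output of \Cref{DigraphShrinking} applied to ${\leftarrow_j}\in\mathbb{P}_{n_j}^\mathbf{K}$. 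Granting the third, regularity is immediate from the monotonicity of $\vartriangleleft$ under the order (${\leq}\circ{\vartriangleleft}\subseteq{\vartriangleleft}$, ${\vartriangleright}\circ{\geq}\subseteq{\vartriangleright}$, and ${\vartriangleleft}\circ{\leq}\subseteq{\vartriangleleft}$): from ${\leftarrow_n}\leq{\twoheadleftarrow}\vartriangleleft_\nu{\leftarrow_j}\leq{\leftarrow_m}$ we get ${\leftarrow_n}\subseteq{\vartriangleleft}\circ{\leftarrow_m}\circ{\vartriangleright}$, i.e.\ ${\leftarrow_n}\vartriangleleft{\leftarrow_m}$, and as every $m$ is dominated by a subabsorption target $j$ this gives regularity. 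Thinness is equally quick: \Cref{leqSubs} turns ${\leftarrow_n}\leq{\twoheadleftarrow}$ into ${\langle\!\twoheadleftarrow}\subseteq{\langle\!\leftarrow_n}$, so $\mathbb{P}_{n_j}^{\langle\!\leftarrow_n}\supseteq\mathbb{P}_{n_j}^{\langle\!\twoheadleftarrow}\in\mathsf{C}\mathbb{P}$ is itself a cap; since ${\langle\!\leftarrow_n}$ is upward closed in its left argument, covering the fine level $\mathbb{P}_{n_j}$ forces $\mathbb{P}_d^{\langle\!\leftarrow_n}\in\mathsf{C}\mathbb{P}$ for every coarser cap $\mathbb{P}_d$, which is precisely the assertion that $\bigcup_k{\langle\!\leftarrow_k}$ is a refiner, i.e.\ that $(\leftarrow_k)$ is thin.

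\textbf{The crux: promoting morphisms to the order.} Everything therefore rests on producing a sequence member with ${\leftarrow_n}\leq{\twoheadleftarrow}$, and this is where $\underline{\mathbf{K}}$-subabsorption is used. Since ${\twoheadleftarrow}\vartriangleleft_\nu{\leftarrow_j}$ gives ${\twoheadleftarrow}\leq{\leftarrow_j}$, the order restriction ${\geq^{n_j}_\nu}$ is a $\underline{\mathbf{K}}$-morphism ${\twoheadleftarrow}\to{\leftarrow_j}$; feeding it into the defining universal property of the subabsorbing ${\nil}\in\underline{\mathbf{K}}^{\leftarrow_m}_{\leftarrow_j}$ produces a later member $\leftarrow_n$ together with a $\underline{\mathbf{K}}$-morphism ${\sqsupset'}\colon{\leftarrow_n}\to{\twoheadleftarrow}$ whose composite down to $\leftarrow_m$ lands in ${\geq^{n_m}_{n_n}}$. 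The honest difficulty -- the step I expect to be the main obstacle -- is that this universal property delivers only an \emph{abstract} $\underline{\mathbf{K}}$-morphism $\sqsupset'$, whereas ${\leftarrow_n}\leq{\twoheadleftarrow}$ (and hence the transfer of both the star-below $\vartriangleleft_\nu$ and the covering) requires a refinement lying in the \emph{plain} order $\geq$; a general co-bijective morphism is not contained in $\geq$, and the outermost occurrence of $\sqsupset'$ destroys any star information. To bridge this I would interleave the $\mathbb{P}$-neighbourly morphisms furnished by $\mathbf{K}$-regularity (available via \Cref{SubabsorbingRegular}), running a short back-and-forth between $\leftarrow_j$ and its shrinking exactly as in \Cref{SKlemma} and \Cref{DoubleTriangle}, using neighbourliness at the fine end to convert the ${\geq^{n_m}_{n_n}}$-containment into a genuine ${\leftarrow_n}\leq{\twoheadleftarrow}$. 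As an alternative endgame, once thinness is in hand and $\phi=\bigcap_k\overline{\leftarrow_{k\mathsf{S}}}$ is known to be a homeomorphism, one can feed the strict approximant $\twoheadleftarrow$ (for which $\overline{\twoheadleftarrow_\mathsf{S}}\subseteq{\leftarrow_{j\mathsf{S}}}$ by \Cref{DigraphShrinking} and \Cref{VarChar}) into \Cref{ArrowPhiLebesgue} and \Cref{RegularSequence} and conclude ${\leftarrow_n}\vartriangleleft{\leftarrow_m}$ by a compactness argument; the same obstacle resurfaces there as the passage from the barred $\overline{\leftarrow_{m\mathsf{S}}}$ to the unbarred ${\leftarrow_{m\mathsf{S}}}$, and is again resolved by the neighbourly promotion.
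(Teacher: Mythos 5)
Your symmetry reduction is correct (the paper compresses it into a single ``likewise''), and granting your approximation claim, regularity and thinness would indeed follow by the monotonicity of $\vartriangleleft$ and \Cref{leqSubs} just as you say. The gap is the third clause of that claim, ${\leftarrow_n}\leq{\twoheadleftarrow}$, and it is not a gap that ``neighbourly promotion'' can close: for the $\twoheadleftarrow$ output by \Cref{DigraphShrinking} the clause is in general simply false. That $\twoheadleftarrow$ is a fine approximant of an \emph{arbitrary} homeomorphism $\psi\in(\leftarrow_j)^\times_{\mathbf{K}}$ supplied by \Cref{SubfactorisableImpliesNonempty}, which has nothing to do with your sequence; by \Cref{leqSubs}, ${\leftarrow_n}\leq{\twoheadleftarrow}$ would force $\overline{\leftarrow_{n\mathsf{S}}}\subseteq\overline{\twoheadleftarrow_\mathsf{S}}$ and hence the (eventual) limit of $(\leftarrow_k)$ to lie in $\overline{\twoheadleftarrow_\mathsf{S}}$, which fails whenever $\psi$ disagrees with that limit at the scale of $\twoheadleftarrow$. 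Choosing $\twoheadleftarrow$ adapted to the sequence instead is circular, since the limit only exists once thinness is proved. The structural reason no interleaving trick works is that $\underline{\mathbf{K}}$-subabsorption never constrains the newly produced morphism ${\sqsupset'}$ by itself, but only composites ${\nil}\circ{\sqsupset}\circ{\sqsupset'}$ reaching all the way down to the coarse level; a correct proof must therefore be arranged so that ${\sqsupset'}$ only ever occurs inside such a composite, and is never compared with $\geq$ on its own.

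This is exactly how the paper's proof is arranged: it never asserts any $\leq$-comparison between a sequence member and a test digraph. For regularity the test digraph is not taken from \Cref{DigraphShrinking} at all, but is the neighbourly pullback ${\twoheadleftarrow}={\lin}\circ{\leftarrow_k}\circ{\nil}$ of a sequence member along a $\mathbb{P}$-neighbourly ${\nil}\in\mathbf{K}^{n_k}_l$; feeding ${\geq^{n_k}_l}\in\underline{\mathbf{K}}^{\leftarrow_k}_{\twoheadleftarrow}$ to the subabsorbing ${\dashv}\in\underline{\mathbf{K}}^{\leftarrow_j}_{\leftarrow_k}$ yields ${\sqsupset}\in\underline{\mathbf{K}}^{\twoheadleftarrow}_{\leftarrow_m}$ with ${\dashv}\circ{\geq^{n_k}_l}\circ{\sqsupset}\subseteq{\geq^{n_j}_{n_m}}$, and then the computation
\[{\dashv}\circ{\nil}\circ{\sqsupset}\circ{\wedge_{n_m}}\ \subseteq\ {\dashv}\circ{\nil}\circ{\wedge_l}\circ{\sqsupset}\ \subseteq\ {\dashv}\circ{\geq^{n_k}_l}\circ{\sqsupset}\ \subseteq\ {\geq^{n_j}_{n_m}}\]
(commute $\wedge$ past the abstract $\sqsupset$ by edge-preservation and co-surjectivity, absorb it by neighbourliness of $\nil$, finish by the subabsorption inclusion) shows ${\dashv}\circ{\nil}\circ{\sqsupset}\subseteq{\vartriangleright_{\mathbb{P}_{n_m}}}$, whence ${\leftarrow_m}\subseteq({\dashv}\circ{\nil}\circ{\sqsupset})^{-1}\circ{\leftarrow_j}\circ({\dashv}\circ{\nil}\circ{\sqsupset})$ gives ${\leftarrow_m}\vartriangleleft_{n_m}{\leftarrow_j}$ directly between two sequence members. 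For bi-thinness the paper does invoke \Cref{DigraphShrinking}, but uses only its covering property: from a chain $s\dashv r\mathrel{\langle\!\twoheadleftarrow}q\sqsupset p$ one chases $p^{\wedge\rightarrow_m}\subseteq\cdots\subseteq s^{\dashv\geq\sqsupset}\subseteq s^\geq$, the final inclusion again being the subabsorption one, so that $\mathbb{P}_{n_j}^{[\!\leftarrow_m}\in\mathsf{C}\mathbb{P}$; then \Cref{RegularSequence}, together with the symmetric argument (your symmetry reduction, in effect), gives bi-thinness. I suggest reworking your crux along these lines rather than trying to upgrade ${\sqsupset'}$ to an order relation.
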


\begin{proof}
    For any $j\in\omega$, we have some $\underline{\mathbf{K}}$-subabsorbing ${\dashv}\in\underline{\mathbf{K}}^{\leftarrow_j}_{\leftarrow_k}$.  We then have $\mathbb{P}$-neighbourly ${\nil}\in\mathbf{K}^{n_k}_l$.  Setting ${\twoheadleftarrow}={{\lin}\circ{\leftarrow_k}\circ{\nil}}\in\mathbb{P}_l^\mathbf{K}$, it follows that ${\geq^{n_k}_l}\in\mathbf{K}^{\leftarrow_k}_{\twoheadleftarrow}$.  As $\nil$ is $\underline{\mathbf{K}}$-subabsorbing, we then have ${\sqsupset}\in\underline{\mathbf{K}}^\twoheadleftarrow_{\leftarrow_m}$ with ${{\dashv}\circ{\geq^{n_k}_l}\circ{\sqsupset}}\subseteq{\geq^{n_j}_{n_m}}$.  So
    \[{\leftarrow_m}\ \subseteq\ {{\sqsubset}\circ{\twoheadleftarrow}\circ{\sqsupset}}\ =\ {{\sqsubset}\circ{\lin}\circ{\leftarrow_k}\circ{\nil}\circ{\sqsupset}}\ \subseteq\ {{\sqsubset}\circ{\lin}\circ{\vdash}\circ{\leftarrow_j}\circ{\dashv}\circ{\nil}\circ{\sqsupset}}\]
    and, as ${\nil}$ is $\mathbb{P}$-neighbourly and $\sqsupset$ is co-surjective and $\wedge$-preserving,
    \[{{\dashv}\circ{\nil}\circ{\sqsupset}\circ{\wedge_{n_m}}}\ \subseteq\ {{\dashv}\circ{\nil}\circ{\wedge_l}\circ{\sqsupset}}\ \subseteq\ {{\dashv}\circ{\geq^{n_k}_l}\circ{\sqsupset}}\ \subseteq\ {\geq^{n_j}_{n_m}}.\]
    This means ${{\dashv}\circ{\nil}\circ{\sqsupset}}\subseteq{\vartriangleright_{\mathbb{P}_{n_m}}}$ so ${\leftarrow_m}\vartriangleleft_{n_m}{\leftarrow_j}$, showing that $(\leftarrow_k)$ is indeed regular.

    Now again, for any $j\in\omega$, we have some $\underline{\mathbf{K}}$-subabsorbing ${\dashv}\in\underline{\mathbf{K}}^{\leftarrow_j}_{\leftarrow_k}$.  By \Cref{DigraphShrinking}, we have $l>n_k$ and ${\twoheadleftarrow}\in\mathbb{P}_l^\mathbf{K}$ with ${\twoheadleftarrow}\leq{\leftarrow_k}$, $\mathbb{P}_l\mathrel{\twoheadrightarrow\!\rangle}\mathbb{P}_{n_k}$ and $\mathbb{P}_l\mathrel{\twoheadleftarrow\!\rangle}\mathbb{P}_{n_k}$.  As $\dashv$ is $\underline{\mathbf{K}}$-subabsorbing, we then have ${\sqsupset}\in\underline{\mathbf{K}}^{\twoheadleftarrow}_{\leftarrow_m}$ such that ${{\dashv}\circ{\geq^{n_k}_l}\circ{\sqsupset}}\subseteq{\geq^{n_j}_{n_m}}$.

    Given $p\in\mathbb{P}_{n_m}$, take $q,r,s\in\mathbb{P}$ with $s\dashv r\mathrel{\langle\!\twoheadleftarrow}q\sqsupset p$.  Then
    \[p^{\wedge\rightarrow_m}\subseteq p^{\wedge\sqsubset\twoheadrightarrow\sqsupset}\subseteq q^{\wedge\twoheadrightarrow\sqsupset}\subseteq r^{\geq\sqsupset}\subseteq s^{\dashv\geq\sqsupset}\subseteq s^\geq,\]
    showing that $p\mathrel{{}_m\!\!\rightarrow\!]}s$.  This in turn shows that $\mathbb{P}_{n_m}\mathrel{{}_m\!\!\rightarrow\!]}\mathbb{P}_{n_j}$ and, likewise, $\mathbb{P}_{n_m}\mathrel{{}_m\!\!\leftarrow\!]}\mathbb{P}_{n_j}$.  Thus $(\leftarrow_k)$ is bi-thin, by \Cref{RegularSequence}.
\end{proof}

\begin{theorem}
    If $\underline{\mathbf{K}}$ has countably many morphisms and a wide collection of subamalgamable morphisms then there is a $\underline{\mathbf{K}}$-subabsorbing $\underline{\mathbf{K}}$-sequence $(\leftarrow_k)$ on $\mathbb{P}$.
\end{theorem}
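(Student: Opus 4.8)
The plan is to run a Fra\"iss\'e-style recursion directly over the levels of $\mathbb{P}$, mirroring both the earlier theorem producing a $\mathbf{K}$-subabsorbing graded $\omega$-poset and the proof of \Cref{CoinitialFromDirected}. The output is a $\underline{\mathbf{K}}$-sequence ${\leftarrow_k}\in\mathbb{P}_{n_k}^\mathbf{K}$ carrying, for each $k$, a connecting morphism $\nil_{k+1}\in\underline{\mathbf{K}}^{\leftarrow_k}_{\leftarrow_{k+1}}$ with $\nil_{k+1}\subseteq{\geq^{n_k}_{n_{k+1}}}$ that is subamalgamable with the order restriction ${\geq^{n_k}_{n_{k+1}}}$ as its witness (a ``$\mathbb{P}$-subamalgamable'' digraph morphism). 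Because order restrictions compose downwards, ${\geq^{n_i}_{n_j}}\circ{\geq^{n_j}_{n_{j'}}}\subseteq{\geq^{n_i}_{n_{j'}}}$, they play exactly the role of the witnesses $\sqsupseteq^m_n$ from the poset construction. Since $\underline{\mathbf{K}}$ has only countably many morphisms, I fix a bookkeeping sequence $(m_k)$ with $m_k\leq k$ in which every positive integer recurs infinitely often, and use it to schedule, cofinally often, every morphism ${\sqsupset}$ whose codomain is one of the digraphs $\leftarrow_m$ already constructed.

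The recursive step from $\leftarrow_k$ to $\leftarrow_{k+1}$ does two jobs at once, exactly as in the poset construction. First I take, from the wide collection, a subamalgamable ${\sqni}\in\underline{\mathbf{K}}^{\leftarrow_k}_{\leftarrow_F}$ with codomain $\leftarrow_k$; it serves as the current-state leg of the amalgamation, so that the next connecting morphism factors through it and is therefore again subamalgamable. To discharge the scheduled task ${\sqsupset}\in\underline{\mathbf{K}}^{\leftarrow_{m_k}}_{\leftarrow_G}$ into the earlier digraph $\leftarrow_{m_k}$, I invoke the subamalgamability of the already-built connecting morphism $\nil_{m_k}$ out of $\leftarrow_{m_k}$ to amalgamate ${\sqsupset}$ with the composite $\underline{\mathbf{K}}$-morphism ${\geq^{n_{m_k}}_{n_k}}\circ{\sqni}\colon\leftarrow_F\to\leftarrow_{m_k}$, obtaining span morphisms ${\sqsupset'}\colon I\to\leftarrow_G$ and $\rho'\colon I\to\leftarrow_F$; since the witness of $\nil_{m_k}$ is the order restriction ${\geq^{n_{m_k-1}}_{n_{m_k}}}$, the resulting inequality lands inside ${\geq^{n_{m_k-1}}_{n_k}}\circ{\sqni}\circ\rho'$. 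Finally I realise the abstract amalgam $I$ over a finer level of $\mathbb{P}$: because $\mathbb{P}$ is $\mathbf{K}$-subabsorbing and $I$ maps into the level-digraph $\leftarrow_k$, some later level maps onto $I$, and applying \Cref{DigraphShrinking} yields ${\leftarrow_{k+1}}\in\mathbb{P}_{n_{k+1}}^\mathbf{K}$ with ${\leftarrow_{k+1}}\leq{\leftarrow_k}$ and connecting morphism $\nil_{k+1}$ contained in ${\geq^{n_k}_{n_{k+1}}}$ and factoring through ${\sqni}$. Post-composing with the $\mathbb{P}$-neighbourly morphisms supplied by $\mathbf{K}$-regularity (as in \Cref{SubabsorbingRegular}, via \Cref{SubNearAbsorbing} and \Cref{NearImpliesSub}) upgrades $\nil_{k+1}$ to a genuinely $\mathbb{P}$-subamalgamable morphism, restoring the invariant.

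For the verification, fix any index $i$ and put $j=i+1$, taking $\nil_{i+1}\in\underline{\mathbf{K}}^{\leftarrow_i}_{\leftarrow_{i+1}}$ as the candidate subabsorbing morphism. By construction $\nil_{i+1}\subseteq{\geq^{n_i}_{n_{i+1}}}$, and any ${\sqsupset}\in\underline{\mathbf{K}}^{\leftarrow_{i+1}}_{\leftarrow_G}$ is scheduled (the value $i+1$ recurs infinitely often in $(m_k)$) and hence absorbed at some later stage $k+1$, where the step produces ${\sqsupset'}\in\underline{\mathbf{K}}^{\leftarrow_G}_{\leftarrow_{k+1}}$ with ${\nil_{i+1}\circ\sqsupset\circ\sqsupset'}\subseteq{\geq^{n_i}_{n_{k+1}}}$; this is exactly the amalgamation inequality above, collapsed by transitivity of the order witnesses and the fact that $\nil_{k+1}$ lies below ${\geq^{n_k}_{n_{k+1}}}$. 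Thus $\nil_{i+1}$ is $\underline{\mathbf{K}}$-subabsorbing, so $(\leftarrow_k)$ is a $\underline{\mathbf{K}}$-subabsorbing $\underline{\mathbf{K}}$-sequence; its regularity and bi-thinness then follow automatically from \Cref{BiThinFraisse} and need not be engineered.

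The main obstacle will be the realisation step: transporting the amalgam $I$, produced freely inside $\underline{\mathbf{K}}$ by subamalgamation, down onto a level of the fixed poset $\mathbb{P}$ in such a way that the emerging connecting morphism is simultaneously $\leq$-below the previous relation, contained in the order restriction, and $\mathbb{P}$-subamalgamable with the order restriction as witness. This is precisely the interface between the unconstrained amalgamation available in $\underline{\mathbf{K}}$ and the rigid order structure of $\mathbb{P}$, and I expect it to consume most of the technical effort; it is handled by combining \Cref{DigraphShrinking} with the $\mathbf{K}$-subabsorbing and $\mathbf{K}$-regular structure of $\mathbb{P}$, exactly along the lines of the proofs of \Cref{CoinitialFromDirected} and \Cref{SubabsorbingRegular}. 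The bookkeeping itself is routine once one notes that, as $\underline{\mathbf{K}}$ has countably many morphisms, the morphisms targeting the (countably many) level-digraphs built along the way form a countable set that can be exhausted with the recurring schedule $(m_k)$.
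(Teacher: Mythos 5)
Your overall architecture is the paper's: a bookkept Fra\"iss\'e recursion whose connecting morphisms ${\nil_{k+1}}\in\underline{\mathbf{K}}^{\leftarrow_k}_{\leftarrow_{k+1}}$ are subamalgamable in $\underline{\mathbf{K}}$ witnessed by order restrictions, with scheduled tasks discharged by amalgamating over the previously built ${\nil_{m_k}}$ and then realising the abstract amalgam on a level of $\mathbb{P}$.  But the realisation step, which you yourself flag as the main obstacle, has a genuine gap.  Your second amalgamation leg is ${\geq^{n_{m_k}}_{n_k}}\circ{\sqni}$ with ${\sqni}\in\underline{\mathbf{K}}^{\leftarrow_k}_{\leftarrow_F}$ taken from the wide collection, so after amalgamating you only hold a morphism ${\sqni}\circ{\rho'}$ from the abstract graph $I$ to the level $\mathbb{P}_{n_k}$ itself.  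No tool you cite then produces $\tau\in\mathbf{K}^I_{\mathbb{P}_{n_{k+1}}}$ with ${\sqni}\circ{\rho'}\circ{\tau}\subseteq{\geq^{n_k}_{n_{k+1}}}$, which is exactly what your invariant and your verification inequality need: a subabsorbing ${\dashv}\in\mathbf{K}^j_{n}$ only absorbs morphisms whose codomain is its \emph{finer} level $\mathbb{P}_n$, and the resulting containment ${\dashv}\circ{\sqsupset}\circ{\sqsupset'}\subseteq{\geq^j_{n'}}$ holds only after composing with $\dashv$ and only relative to its \emph{coarser} level $j$; meanwhile \Cref{DigraphShrinking} merely shrinks relations already sitting on levels of $\mathbb{P}$ and cannot transport $I$ onto a level at all.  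This is why the paper inserts the subabsorbing morphism \emph{before} amalgamating: it takes subabsorbing ${\dashv}\in\mathbf{K}^{n_k}_n$, passes to the pullback digraph ${\vdash}\circ{\leftarrow_k}\circ{\dashv}$ on $\mathbb{P}_n$, and uses ${\geq^{n_{m_k}}_{n_k}}\circ{\dashv}$ as the second leg, so that the amalgam's leg ${\sqni}$ lands in $\mathbf{K}^{\mathbb{P}_n}_H$ and $\dashv$'s subabsorption then yields ${\dashv}\circ{\sqni}\circ{\sqni'}\subseteq{\geq^{n_k}_{n'}}$ --- precisely the containment your version cannot reach.

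The second gap is how you restore the invariant.  ``$\mathbb{P}$-subamalgamability'' obtained from \Cref{SubabsorbingRegular} (via \Cref{SubNearAbsorbing} and \Cref{NearImpliesSub}) is subamalgamability in $\mathbf{K}$: the amalgamating span there need only be edge-preserving, whereas your invariant requires ${\nil_{k+1}}$ to be subamalgamable \emph{in} $\underline{\mathbf{K}}$, where the span morphisms must also intertwine the digraph relations.  Post-composing with neighbourly morphisms cannot supply that; it is exactly the theorem's hypothesis (wideness of subamalgamable morphisms in $\underline{\mathbf{K}}$) that must be invoked, and moreover the subabsorption has to be applied \emph{to the witness}.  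Concretely, the paper's preliminary claim takes subabsorbing ${\dashv}\in\mathbf{K}^m_k$, a wide $\underline{\mathbf{K}}$-subamalgamable ${\sqsupset}\in\underline{\mathbf{K}}^{{\vdash}\circ{\twoheadleftarrow}\circ{\dashv}}_{\leftarrow_G}$ witnessed by ${\sqsupseteq}\supseteq{\sqsupset}$, and then absorbs ${\sqsupseteq}$ below the order, ${\dashv}\circ{\sqsupseteq}\circ{\sqni}\subseteq{\geq^m_n}$, so that ${\geq^m_n}$ itself becomes a legitimate witness for the composed connecting morphism ${\dashv}\circ{\sqsupset}\circ{\sqni}$.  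Your construction never absorbs the witness, so even granting the realisation step, the order restriction would not be known to witness subamalgamability; its witness would remain the abstract relation $\overline{\sqni}$.  Repairing both points essentially forces you back to the paper's argument.
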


\begin{proof}
    This is a variant of the usual construction of a Fra\"iss\'e sequence from amalgamation.
    
    First we claim that, for any ${\twoheadleftarrow}\in\mathbb{P}_m^\mathbf{K}$, we have $n>m$, ${\leftarrow}\in\mathbb{P}_n^\mathbf{K}$ and ${\nil}\in\underline{\mathbf{K}}^\twoheadleftarrow_\leftarrow$ that is subamalgamable in $\underline{\mathbf{K}}$, as witnessed by ${\geq^m_n}\supseteq{\nil}$.  Indeed, as $\mathbb{P}$ is $\mathbf{K}$-subabsorbing, we have $k>m$ and subabsorbing ${\dashv}\in\mathbf{K}^m_k$.  We then have $\underline{\mathbf{K}}$-subamalgamable ${\sqsupset}\in\underline{\mathbf{K}}^{\vdash\circ\twoheadleftarrow\circ\dashv}_{\leftarrow_G}$, as witnessed by some ${\sqsupseteq}\in\underline{\mathbf{K}}^{\vdash\circ\twoheadleftarrow\circ\dashv}_{\leftarrow_G}$ with ${\sqsupseteq}\supseteq{\sqsupset}$.  We then further have ${\sqni}\in\mathbf{K}^G_{\mathbb{P}_n}$ such that ${{\dashv}\circ{\sqsupseteq}\circ{\sqni}}\subseteq{\geq^m_n}$.  Setting ${\leftarrow}={{\sqin}\circ{\leftarrow_G}\circ{\sqni}}$ then ${\nil}={{\dashv}\circ{\sqsupset}\circ{\sqni}}\in\underline{\mathbf{K}}^{\twoheadleftarrow}_{\leftarrow}$ is indeed subamalgamable in $\underline{\mathbf{K}}$ as witnessed by ${\geq^m_n}\supseteq{{\dashv}\circ{\sqsupseteq}\circ{\sqni}}\supseteq{\nil}$.
    
    Now take any sequence $(m_k)\subseteq\omega$ such that $0<m_k<k$, for all $k\geq2$, and each element of $\omega\setminus\{0\}$ appears infinitely many times.  Next take any $n_0\in\omega$ and let ${\leftarrow_0}=\mathbb{P}_{n_0}\times\mathbb{P}_{n_0}$.  By the claim just proved, we have $n_1>n_0$, ${\leftarrow_1}\in\mathbb{P}_{n_1}^\mathbf{K}$ and ${\nil_1}\in\underline{\mathbf{K}}^{\leftarrow_0}_{\leftarrow_1}$ that is subamalgamable in $\underline{\mathbf{K}}$, as witnessed by ${\geq^{n_0}_{n_1}}$.  Likewise, we can then take $n_2>n_1$, ${\leftarrow_2}\in\mathbb{P}_{n_2}^\mathbf{K}$ and ${\nil_2}\in\underline{\mathbf{K}}^{\leftarrow_1}_{\leftarrow_2}$ that is subamalgamable in $\underline{\mathbf{K}}$, as witnessed by ${\geq^{n_1}_{n_2}}$.
    
    Once $n_k\in\omega$, ${\leftarrow_k}\in\mathbb{P}_{n_k}^\mathbf{K}$ and ${\nil_k}\in\mathbf{K}^{\leftarrow_{k-1}}_{\leftarrow_k}$ have been defined, we define $n_{k+1}\in\omega$, ${\leftarrow_{k+1}}\in\mathbb{P}_{n_{k+1}}^\mathbf{K}$ and ${\nil_{k+1}}\in\mathbf{K}^{\leftarrow_k}_{\leftarrow_{k+1}}$ from any ${\sqsupset_k}\in\underline{\mathbf{K}}^{\leftarrow_{m_k}}_{\leftarrow_G}$ as follows.  First take subabsorbing ${\dashv}\in\mathbf{K}^{n_k}_n$.  As $\nil_{m_k}$ is subamalgamable in $\underline{\mathbf{K}}$, as witnessed by $\geq^{n_{m_k-1}}_{n_{m_k}}$, we have ${\sqsupset}\in\underline{\mathbf{K}}^{\leftarrow_G}_{\leftarrow_H}$ and ${\sqni}\in\underline{\mathbf{K}}^{{\vdash}\circ{\leftarrow_k}\circ{\dashv}}_{\leftarrow_H}$ with ${{\nil_{m_k}}\circ{\sqsupset_k}\circ{\sqsupset}}\subseteq{{\geq^{n_{m_k-1}}_{n_k}}\circ{\dashv}\circ{\sqni}}$.  As $\dashv$ is subabsorbing, we then have $n'>n$ and ${\sqni'}\in\mathbf{K}^H_{\mathbb{P}_{n'}}$ with ${{\dashv}\circ{\sqni}\circ{\sqni'}}\subseteq{\geq^{n_k}_{n'}}$.  Set
    \[{\twoheadleftarrow}\ =\ {\sqin'\circ\leftarrow_H\circ\sqni'}\ \subseteq\ {\sqin'\circ\sqin\circ\vdash\circ\leftarrow_k\circ\dashv\circ\sqni\circ\sqni'}\ \subseteq\ {\leq_{n'}^{n_k}\circ\leftarrow_k\circ\geq_{n'}^{n_k}}\]
    so ${\twoheadleftarrow}\leq{\leftarrow_k}$.  Moreover, ${{\sqsupset}\circ{\sqni'}}\in\mathbf{K}^{\leftarrow_G}_{\twoheadleftarrow}$ witnesses $\underline{\mathbf{K}}$-subabsorption for $\sqsupset_k$ in that
    \[{{\nil_{m_k}}\circ{\sqsupset_k}\circ{\sqsupset}\circ{\sqni'}}\ \subseteq\ {{\geq^{n_{m_k-1}}_{n_k}}\circ{\dashv}\circ{\sqni}\circ{\sqni'}}\ \subseteq\ {\geq^{n_{m_k-1}}_{n'}}.\]
    Finally, by the claim proved above, we have $n_{k+1}>n'$ and ${\leftarrow_{k+1}}\in\mathbb{P}_{n_{k+1}}^\mathbf{K}$ and ${\nil}\in\underline{\mathbf{K}}^{\twoheadleftarrow}_{\leftarrow_{k+1}}$ that is subamalgamable in $\underline{\mathbf{K}}$ and hence so is ${\nil_{k+1}}={\geq^{n_k}_{n'}}\circ{\nil}\in\underline{\mathbf{K}}^{\leftarrow_k}_{\leftarrow_{k+1}}$.
    
    This completes the recursive construction of the $\underline{\mathbf{K}}$-sequence $(\leftarrow_k)$.  By our assumptions on $(m_k)$ and the fact that $\underline{\mathbf{K}}^\bullet_\bullet$ is countable, we can perform the construction with choices of ${\sqsupset_k}\in\underline{\mathbf{K}}^{\leftarrow_{m_k}}_{\leftarrow_G}$ that exhaust all morphisms in $\underline{\mathbf{K}}$ having codomain $\leftarrow_m$, for any $m\in\omega$.  This will ensure that $(\leftarrow_k)$ is a $\underline{\mathbf{K}}$-subabsorbing $\underline{\mathbf{K}}$-sequence on $\mathbb{P}$.
\end{proof}

\begin{theorem}
    Assume ${\leftarrow_k}\in\mathbb{P}_{n_k}^\mathbf{K}$ is a regular bi-thin $\underline{\mathbf{K}}$-sequence and $\phi=\bigcap_{k\in\omega}{\leftarrow_{k\mathsf{S}}}$.
    \begin{enumerate}
        \item If $(\leftarrow_k)$ is $\underline{\mathbf{K}}$-sub-Fra\"iss\'e then the conjugacy class of $\phi$ in $\mathbf{C}_\mathbf{K}^\times$ is comeagre.
        \item If $(\leftarrow_k)$ is not $\underline{\mathbf{K}}$-subabsorbing then the conjugacy class of $\phi$ in $\mathbf{C}_\mathbf{K}^\times$ is meagre.
    \end{enumerate}
\end{theorem}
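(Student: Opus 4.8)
The plan is to reduce both parts to a single equivalence,
\[(\leftarrow_k)\text{ is }\underline{\mathbf{K}}\text{-subabsorbing}\qquad\Longleftrightarrow\qquad C\text{ is non-meagre in }\overline{C},\]
where $C=\{\theta^{-1}\circ\phi\circ\theta:\theta\in\mathbf{C}_\mathbf{K}^\times\}$ is the conjugacy class. The surrounding reductions are routine: $C$ is the orbit of $\phi$ under the continuous conjugation action of the Polish group $\mathbf{C}_\mathbf{K}^\times$ on itself, hence analytic and so has the Baire property, and it is conjugation-invariant and dense in the closed invariant set $\overline{C}$. Thus the action on $\overline{C}$ is topologically transitive and the topological zero--one law forces $C$ to be either meagre or comeagre in $\overline{C}$; since a meagre subset of a closed set is meagre in the ambient space, meagreness in $\overline{C}$ yields meagreness in $\mathbf{C}_\mathbf{K}^\times$. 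For Part~(2) it therefore suffices to show that failure of subabsorption makes $C$ meagre in $\overline{C}$. For Part~(1), recalling that a $\underline{\mathbf{K}}$-sequence is $\underline{\mathbf{K}}$-sub-Fra\"iss\'e exactly when it is both $\underline{\mathbf{K}}$-subabsorbing and $\underline{\mathbf{K}}$-coinitial, coinitiality gives $\overline{C}=\mathbf{C}_\mathbf{K}^\times$ by \Cref{DirectedVsDenseCC}, while subabsorption gives non-meagreness, and hence comeagreness in the whole space.

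For the substantive (positive) direction I would assume $(\leftarrow_k)$ is $\underline{\mathbf{K}}$-subabsorbing and describe a winning strategy for the second player in the Banach--Mazur game for $C$ played inside $\overline{C}$, using the basis $\{\twoheadleftarrow^\times_\mathbf{K}:\twoheadleftarrow\in\mathbb{P}_m^\mathbf{K}\}$ from \Cref{SubfactorisableBasis}. At each round the second player maintains a finite approximation to a back-and-forth system between the sequence $(\leftarrow^\psi_n)$ of the eventual limit point $\psi$ and the sequence $(\leftarrow_k)$, in the sense of \Cref{BackAndForthArrows}. Given a basic open set played by the first player, I would locate a conjugate inside it using \Cref{SubfactorisableImpliesNonempty} and \Cref{SKlemma} (exactly as in the reverse direction of \Cref{DirectedVsDenseCC}), extract from \Cref{ConjugacyLemma} the $\underline{\mathbf{K}}$-morphism recording how the played relation maps into the sequence through the refiner of the conjugator, and then apply $\underline{\mathbf{K}}$-subabsorption to absorb this morphism a further level down, extending the recorded back-and-forth by one step in both directions. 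Running the alternation to the end pins down a single $\psi\in\overline{C}$, and the accumulated bi-thin $\underline{\mathbf{K}}$-sequence of refiners assembles, via \Cref{BackAndForth} and \Cref{BackAndForthArrows}, into an element $\theta=\mathsf{S}_\sqsupset\in\mathbf{C}_\mathbf{K}^\times$ with $\psi=\theta^{-1}\circ\phi\circ\theta$; hence $\psi\in C$, so $C$ is comeagre in $\overline{C}$ and in particular non-meagre.

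For the converse, suppose $(\leftarrow_k)$ is not $\underline{\mathbf{K}}$-subabsorbing, witnessed by a level $i$ admitting no $\underline{\mathbf{K}}$-subabsorbing ${\nil}\in\underline{\mathbf{K}}^{\leftarrow_i}_{\leftarrow_j}$ for any $j>i$. Localize inside the nonempty relatively open set $W=\twoheadleftarrow^\times_\mathbf{K}\cap\overline{C}$ with $\twoheadleftarrow=\leftarrow_i$, which contains $\phi$ since $\phi\subseteq\leftarrow_{i\mathsf{S}}$ by \Cref{RegularSequence}. Unwinding the failure, for every $j$ there is a morphism ${\sqsupset}\in\underline{\mathbf{K}}^{\leftarrow_j}_{\leftarrow_G}$ that cannot be absorbed back into the order on $\mathbb{P}$; realizing each $\leftarrow_G$ by a genuine element of $\mathbf{C}_\mathbf{K}^\times$ through \Cref{SubfactorisableImpliesNonempty}, these non-absorbable witnesses let the first player win the Banach--Mazur game for the complement of $C$ inside $W$: at each move he plays a neighbourhood around such a witness, and \Cref{ConjugacyLemma} certifies that any point of $C$ respecting the $\leftarrow_i$-pattern would eventually be forced to produce a subabsorbing morphism into the sequence, a contradiction. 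Hence $C$ is meagre in $W$, so not comeagre in $\overline{C}$, and the zero--one law upgrades this to meagreness, giving Part~(2).

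I expect the main obstacle to be the positive Banach--Mazur construction of the second paragraph: one must simultaneously keep the second player's moves inside the conjugacy class at every finite stage -- which requires \Cref{SKlemma} and \Cref{SubfactorisableImpliesNonempty} to realize each combinatorial condition by an actual element of $\mathbf{C}_\mathbf{K}^\times$ -- and ensure that the limiting data is a genuine back-and-forth system assembling to a conjugator, rather than a mere descending chain of open sets. This is precisely where $\underline{\mathbf{K}}$-subabsorption must be invoked at each back-step, and where \Cref{BackAndForthArrows} and \Cref{ConjugacyLemma} have to be interlocked carefully so that the two directions of the record remain mutually compatible.
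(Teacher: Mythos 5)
Your zero--one law framework is sound, and it is a genuinely different packaging from the paper's: the orbit $C$ is analytic, hence has the Baire property, $\overline{C}$ is a closed invariant set on which conjugation acts topologically transitively (it has the dense orbit $C$), so $C$ is meagre or comeagre in $\overline{C}$, and either option transfers to $\mathbf{C}_\mathbf{K}^\times$ since $\overline{C}$ is closed. But this only repackages the soft reductions; essentially the whole of the paper's proof consists of the two Banach--Mazur constructions, and there your sketches have genuine gaps. The most serious is in Part (2): the strategy ``at each move he plays a neighbourhood around such a witness'' is not a legal Banach--Mazur strategy, because from the second move on every move must lie inside the opponent's previous move, while the functions realizing the non-absorbable witnesses ${\leftarrow_G}$ via \Cref{SubfactorisableImpliesNonempty} are fixed elements of $\mathbf{C}_\mathbf{K}^\times$ (not even of $\overline{C}$) that need not lie in the opponent's open set; moreover \Cref{SubfactorisableImpliesNonempty} applies only to relations on levels of $\mathbb{P}$, so ${\leftarrow_G}$ must first be transported onto a level using poset-level subabsorption. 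What is actually required is a refinement claim, which is the technical heart of the paper's argument: for every ${\sqsupset}\in\underline{\mathbf{K}}^{\leftarrow_j}_{\twoheadleftarrow}$, where $\twoheadleftarrow$ is extracted from the opponent's \emph{arbitrary} move, there is ${\leftarrow}\leq{\twoheadleftarrow}$ (hence a legal sub-move ${\leftarrow^\times_\mathbf{K}}$) through which no certificate ${\sqni}$ with ${{\sqsupset}\circ{\geq}\circ{\sqni}}\subseteq{\geq}$ can pass; and on top of that, bookkeeping that exhausts, over all rounds $i$, the countably many morphisms in $\underline{\mathbf{K}}^{\leftarrow_j}_{\twoheadleftarrow_i}$. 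This exhaustion is indispensable: \Cref{ConjugacyLemma}, applied to a hypothetical conjugator of the limit function, only yields \emph{some} such morphism, and the contradiction materializes only if that particular morphism was diagonalized against at some finite stage. Neither the refinement claim nor the bookkeeping appears in your proposal, so the concluding ``contradiction'' step does not go through as described.

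There is also a gap in the positive direction. Your equivalence needs ``subabsorbing $\Rightarrow$ $C$ non-meagre in $\overline{C}$'' \emph{without} coinitiality, which forces the game to be played relative to $\overline{C}$; then every move ${\twoheadleftarrow^\times_\mathbf{K}}\cap\overline{C}$ of the second player must be shown nonempty, and the tools you cite cannot do this: \Cref{SKlemma} and \Cref{SubfactorisableImpliesNonempty} produce elements of $\mathbf{C}_\mathbf{K}^\times$, not conjugates of $\phi$ nor points of $\overline{C}$. The repair is to complete each finite back-and-forth stage to an infinite one using $\underline{\mathbf{K}}$-subabsorption alone and then invoke \Cref{BackAndForth} together with \Cref{BackAndForthArrows} to manufacture an actual conjugate of $\phi$ inside the candidate move; this extra argument is absent from your proposal. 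If instead you sidestep the issue by using coinitiality (via \Cref{DirectedVsDenseCC}) to get $\overline{C}=\mathbf{C}_\mathbf{K}^\times$ and play the global game, the nonemptiness problem disappears, but then the zero--one law is doing no work and your argument collapses into the paper's own proof, with density plus \Cref{ConjugacyLemma} substituting for the paper's direct use of coinitiality in the first step.
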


\begin{proof}
    Both statements are proved via the Banach-Mazur game on $\mathbf{C}_\mathbf{K}^\times$.  Specifically, consider the game with two players, $P_0$ and $P_1$, where $P_0$ first plays any non-empty open $O_0\subseteq\mathbf{C}_\mathbf{K}^\times$, next $P_1$ plays any non-empty open $O_1\subseteq O_0$, then $P_0$ again plays any non-empty open $O_2\subseteq O_1$, etc..  The game is won by $P_1$ if $\bigcap_{k\in\omega}{O_k}$ is contained in some set $C\subseteq\mathbf{C}_\mathbf{K}^\times$ given in advance.  If $P_1$ has a winning strategy then $C$ must be comeagre.
    \begin{enumerate}
        \item We show that $P_1$ has a winning strategy when $C$ is the conjugacy class of $\phi$ in $\mathbf{C}_\mathbf{K}^\times$ and $(\leftarrow_k)$ is $\underline{\mathbf{K}}$-sub-Fra\"iss\'e.

        Accordingly, say $P_0$ plays $O_0$ containing some $\psi\in\mathbf{C}_\mathbf{K}^\times$.  Then take ${\twoheadleftarrow}\subseteq\mathbb{P}_m\times\mathbb{P}_m$ with $\psi\subseteq{\twoheadleftarrow_\mathsf{S}}$ and $\mathrm{cl}(\twoheadleftarrow^\times_\mathbf{K})\subseteq O_{2k}$.  By \Cref{KLikeKSubfactorisable}, we have ${\twoheadleftarrow_0}\in\mathbb{P}_{m_0}^\mathbf{K}$ with ${\twoheadleftarrow_0}\leq{\twoheadleftarrow}$ and $\psi\subseteq{\twoheadleftarrow_{0\mathsf{S}}}$.  Take subabsorbing ${\dashv_0}\in\mathbf{K}^{m_0}_{m_0'}$ and let ${\twoheadleftarrow'_0}={{\vdash_0}\circ{\twoheadleftarrow_0}\circ{\dashv_0}}$.  As $(\leftarrow_k)$ is $\underline{\mathbf{K}}$-sub-Fra\"iss\'e and hence $\underline{\mathbf{K}}$-coinitial, we have $k_0\in\omega$ such that there exists ${\sqsupset_0}\in\underline{\mathbf{K}}^{\twoheadleftarrow'_0}_{\leftarrow_{k_0}}$.  Again because $(\leftarrow_k)$ is $\underline{\mathbf{K}}$-sub-Fra\"iss\'e and hence $\underline{\mathbf{K}}$-subabsorbing, we can take $\underline{\mathbf{K}}$-subabsorbing ${\dashv_1}\in\underline{\mathbf{K}}^{\leftarrow_{k_0}}_{\leftarrow_{k_1}}$.  As $\dashv_0$ is subabsorbing, we then have $m_1>m_0'$ and ${\sqni_0}\in\mathbf{K}^{n_{k_1}}_{m_1}$ such that ${{\dashv_0}\circ{\sqsupset_0}\circ{\dashv_1}\circ{\sqni_0}}\subseteq{\geq^{m_0}_{m_1}}$.  Setting ${\twoheadleftarrow_1}={{\sqin_0}\circ{\leftarrow_{k_1}}\circ{\sqni_0}}$, note ${\twoheadleftarrow_1}\leq{\twoheadleftarrow_0}$ so $P_1$ may play $O_1={\twoheadleftarrow_{1\mathbf{K}}^\times}\subseteq O_0$.

        Say $P_2$ next plays $O_2\subseteq O_1$.  Then again we have $m_2>m_1$ and ${\twoheadleftarrow_2}\in\mathbb{P}_{m_2}^\mathbf{K}$ such that $\emptyset\neq\mathrm{cl}(\twoheadleftarrow_{2\mathbf{K}}^\times)\subseteq O_2$.  Again we can then take subabsorbing ${\dashv_2}\in\mathbf{K}^{m_2}_{m_2'}$ and let ${\twoheadleftarrow'_2}={{\vdash_2}\circ{\twoheadleftarrow_2}\circ{\dashv_2}}$.  As ${\dashv_1}\in\underline{\mathbf{K}}^{\leftarrow_{k_0}}_{\leftarrow_{k_1}}$ is $\underline{\mathbf{K}}$-subabsorbing, we then have $k_2>k_1$ and ${\sqsupset_1}\in\underline{\mathbf{K}}^{\twoheadleftarrow'_2}_{\leftarrow_{k_2}}$ such that ${{\dashv_1}\circ{\sqni_0}\circ{\dashv_2}\circ{\sqsupset_1}}\subseteq{\geq_{n_{k_2}}^{n_{k_0}}}$.  Take $\underline{\mathbf{K}}$-subabsorbing ${\dashv_3}\in\underline{\mathbf{K}}^{\leftarrow_{k_2}}_{\leftarrow_{k_3}}$.  As $\dashv_2$ is subabsorbing, we then have $m_3>m_2'$ and ${\sqni_1}\in\mathbf{K}^{n_{k_3}}_{m_3}$ such that ${{\dashv_2}\circ{\sqsupset_1}\circ{\dashv_3}\circ{\sqni_1}}\subseteq{\geq^{m_2}_{m_3}}$.  Setting ${\twoheadleftarrow_3}={{\sqin_1}\circ{\leftarrow_{k_3}}\circ{\sqni_1}}$, note ${\twoheadleftarrow_3}\leq{\twoheadleftarrow_2}$ so $P_1$ may play $O_3={\twoheadleftarrow_{3\mathbf{K}}^\times}\subseteq O_2$.
        
        If $P_1$ continues to play according to this strategy then we will end up in the situation of \Cref{BackAndForthArrows}.  Indeed, the arrow-sequence defined by ${\sqni_j'}={{\dashv_{2j+1}}\circ{\sqni_j}}$ is bi-thin, by \Cref{BackAndForth}.  As ${\twoheadleftarrow_{2j+1}}\subseteq{{\sqin_j'}\circ{\leftarrow_{k_{2j}}}\circ{\sqni_j'}}$, \Cref{BackAndForthArrows} says that $(\twoheadleftarrow_k)$ is also thin and that the resulting refiners ${\leftarrow}=\bigcup_{j\in\omega}\langle\!\leftarrow_j$, ${\twoheadleftarrow}=\bigcup_{j\in\omega}\langle\!\leftarrow_j$ and ${\sqni}=\bigcup_{j\in\omega}{\langle\sqni_j'}$ satisfy $\mathsf{S}_{\twoheadleftarrow}=\mathsf{S}_{\sqni}^{-1}\circ\phi\circ\mathsf{S}_{\sqni}$.  As $\bigcap_{j\in\omega}O_j=\{\mathsf{S}_{\twoheadleftarrow}\}$, this shows $P_1$ wins the game so the conjugacy class of $\phi$ in $\mathbf{C}_\mathbf{K}^\times$ is comeagre.
        
        \item As $(\leftarrow_k)$ is not $\underline{\mathbf{K}}$-subabsorbing, we have $j\in\omega$ such that there is no $\underline{\mathbf{K}}$-subabsorbing ${\nil}\in\underline{\mathbf{K}}^{\leftarrow_j}_{\leftarrow_k}$, for any $k\geq j$.  Whenever ${\sqsupset}\in\underline{\mathbf{K}}^{\leftarrow_j}_{\twoheadleftarrow}$, for some ${\twoheadleftarrow}\in\mathbb{P}_m^\mathbf{K}$, we claim that we have ${\leftarrow}\in\mathbb{P}_n^\mathbf{K}$ with ${\leftarrow}\leq{\twoheadleftarrow}$ such that there is no ${\sqni}\in\mathbf{K}^\leftarrow_{\leftarrow_k}$, for any $k\in\omega$, with ${{\sqsupset}\circ{\geq^m_n}\circ{\sqni}}\subseteq{\geq^{n_j}_{n_k}}$.

        To see this, first take subabsorbing ${\dashv}\in\mathbf{K}^m_{m'}$.  Note we may assume we have ${\sqsupset'}\in\underline{\mathbf{K}}^{\vdash\circ\twoheadleftarrow\circ\dashv}_{\leftarrow_{j'}}$ with ${{\sqsupset}\circ{\geq^m_{m'}}\circ{\sqsupset'}}\subseteq{\geq^{n_j}_{n_{j'}}}$, otherwise we would already be done by taking ${\leftarrow}={\vdash\circ\twoheadleftarrow\circ\dashv}$ and $n=m'$.  As ${\sqsupset}\circ{\dashv}\circ{\sqsupset'}\in\underline{\mathbf{K}}^{\leftarrow_j}_{\leftarrow{j'}}$ is not $\underline{\mathbf{K}}$-subabsorbing, we then have ${\sqsupset_G}\in\underline{\mathbf{K}}^{\leftarrow_{j'}}_{\leftarrow_G}$ such that there is no ${\sqni}\in\underline{\mathbf{K}}^{\leftarrow_G}_{\leftarrow_k}$, for any $k\in\omega$, with ${{\sqsupset}\circ{\dashv}\circ{\sqsupset'}\circ{\sqsupset_G}\circ{\sqni}}\subseteq{\geq^{n_j}_{n_k}}$.  However, as $\dashv$ is subabsorbing, we do have ${\sqsupset''}\in\mathbf{K}^G_{\mathbb{P}_n}$ with ${{\dashv}\circ{\sqsupset'}\circ{\sqsupset_G}\circ{\sqsupset''}}\subseteq{\geq^m_n}$.  Now set ${\leftarrow}={\sqsubset''}\circ{\leftarrow_G}\circ{\sqsupset''}\leq{\twoheadleftarrow}$ and note that, if we had ${\sqni}\in\mathbf{K}^\leftarrow_{\leftarrow_k}$ with ${{\sqsupset}\circ{\geq^m_n}\circ{\sqni}}\subseteq{\geq^{n_j}_{n_k}}$, then
        \[{{\sqsupset}\circ{\dashv}\circ{\sqsupset'}\circ{\sqsupset_G}\circ{\sqsupset''}\circ{\sqni}}\ \ \subseteq\ \ {{\sqsupset}\circ{\geq^m_n}\circ{\sqni}}\ \ \subseteq\ \ {\geq^{n_j}_{n_k}},\]
        contradicting our assumption on $\sqsupset_G$.  Thus this completes the proof of the claim.

        Now we use the claim to show that $P_1$ has a winning strategy in the Banach-Mazur game where $C$ is the complement of the conjugacy class of $\phi$ in $\mathbf{C}_\mathbf{K}^\times$.  To start with take any sequence $(i_k)\subseteq\omega$ such that each element of $\omega$ appears infinitely many times and $i_k\leq 2k$, for all $k\in\omega$.  All $P_1$'s moves will be of the form $O_{2k+1}={\twoheadleftarrow^\times_{(2k+1)\mathbf{K}}}$, for some ${\twoheadleftarrow_k}\in\mathbb{P}_{m_k}^\mathbf{K}$.  Whenever $P_0$ plays $O_{2k}$, first $P_1$ will also choose $\twoheadleftarrow_{2k}$ with $\mathrm{cl}(\twoheadleftarrow^\times_{2k\mathbf{K}})\subseteq O_{2k}$ and ${\twoheadleftarrow_{2k}}\leq{\twoheadleftarrow_{2k-1}}$.  Only after that will $P_1$ make their own move $O_{2k+1}={\twoheadleftarrow^\times_{(2k+1)\mathbf{K}}}$ with ${\twoheadleftarrow_{2k+1}}\leq{\twoheadleftarrow_{2k}}$.
        
        Accordingly, assume $P_0$ has just played $O_{2k}\subseteq{\twoheadleftarrow_{(2k-1)\mathsf{S}}}$, for some $k\in\omega$, and take $\psi\in O_{2k}$.  Then take ${\twoheadleftarrow}\subseteq\mathbb{P}_m\times\mathbb{P}_m$ such that $\psi\in{\twoheadleftarrow^\times_\mathbf{K}}$ and $\mathrm{cl}(\twoheadleftarrow^\times_\mathbf{K})\subseteq O_{2k}$.  By \Cref{ArrowPhi}, we have $m'\in\omega$ with $\mathbb{P}_k^{\langle\!\leftarrow^\psi_{m'}}\in\mathsf{C}\mathbb{P}$ and $\mathbb{P}_k^{\langle\!\leftarrow^{\psi^{-1}}_{m'}}\in\mathsf{C}\mathbb{P}$.  By \Cref{KLikeKSubfactorisable}, we then have ${\twoheadleftarrow_{2k}}\in\mathbb{P}_{m_{2k}}^\mathbf{K}$ below $\twoheadleftarrow$, $\leftarrow^\psi_{m'}$ and $\twoheadleftarrow_{2k-1}$ with $\psi\in{\twoheadleftarrow^\times_{2k\mathbf{K}}}$.  Given any ${\sqsupset_k}\in\underline{\mathbf{K}}^{\leftarrow_j}_{\twoheadleftarrow_{i_k}}$, the claim above yields ${\twoheadleftarrow_{2k+1}}\in\mathbb{P}_{m_{2k+1}}^\mathbf{K}$ with ${\twoheadleftarrow_{2k+1}}\leq{\twoheadleftarrow_{2k}}$ such that there is no ${\sqni}\in\mathbf{K}^{\twoheadleftarrow_{2k+1}}_{\leftarrow_l}$, for any $l\in\omega$, with ${{\sqsupset_k}\circ{\geq^{m_{i_k}}_{m_{2k+1}}}\circ{\sqni}}\subseteq{\geq^{n_j}_{n_l}}$.  Now just note that we can define $P_1$'s strategy in this way with choices of $\sqsupset_k$ that exhaust all morphisms in $\underline{\mathbf{K}}$ having domain $\twoheadleftarrow_i$, for any $i\in\omega$, as there can only be countably many such morphisms.
        
        It only remains to show that $P_1$ wins any game played in this way, i.e.~that the function $\psi=\bigcap_{k\in\omega}{\twoheadleftarrow_{k\mathsf{S}}}$ resulting from the regular bi-thin $\underline{\mathbf{K}}$-sequence $(\twoheadleftarrow_k)$ is not conjugate to $\phi$.  Indeed, say we had $\theta\in\mathbf{C}_\mathbf{K}^\times$ with $\psi=\theta^{-1}\circ\phi\circ\theta$.  By \Cref{ArrowPhiLebesgue}, we have $n>n_j$ with ${\leftarrow^\phi_n}\leq{\leftarrow_j}$.  As $\mathbb{P}$ is $\mathbf{K}$-regular, we have $\mathbb{P}$-neighbourly ${\dashv}\in\mathbf{K}^n_{n'}$.  As $\theta\in\mathbf{C}_\mathbf{K}$, we have ${\sqsupset}\in\mathbf{K}^{n'}_m$ with ${\sqsupset}\subseteq{\sqsupset_\theta}$.  Then \Cref{ConjugacyLemma} yields $i\in\omega$ with ${\twoheadleftarrow_i}\leq{{\sqsubset}\circ{\leftarrow^\phi_{n'}}\circ{\sqsupset}}$.  For some $k\in\omega$, we must have chosen $i_k=i$ and ${\sqsupset_k}={{\geq^{n_j}_n}\circ{\dashv}\circ{\sqsupset}\circ{\geq^m_{m_i}}}(\in\underline{\mathbf{K}}^{\leftarrow_j}_{\twoheadleftarrow_i})$.  Noting that $\theta\subseteq{\sqsupset_\mathsf{S}}$, \Cref{ArrowPhiLebesgue} again yields $m'>m_{2k+1}$ with ${\leftarrow^\theta_{m'}}\leq{\sqsupset}$ and ${\leftarrow^\psi_{m'}}\leq{\twoheadleftarrow_{2k+1}}$.  As $\theta^{-1}\in\mathbf{C}_\mathbf{K}$, we have ${\sqni}\in\mathbf{K}_{n''}^{m'}$ with ${\sqni}\subseteq{\sqsupset_{\theta^{-1}}}$ and hence ${\sqin}\leq{\leftarrow^\theta_{m'}}\leq{\sqsupset}$.  Then \Cref{ConjugacyLemma} again yields $l$ with ${\leftarrow_l}\leq{\sqin}\circ{\leftarrow^\psi_{m'}}\circ{\sqni}$ so ${{\geq_{m'}^{m_{2k+1}}}\circ{\sqni}\circ{\geq_{n_l}^{n''}}}\in\underline{\mathbf{K}}_{\leftarrow_l}^{\twoheadleftarrow_{2k+1}}$.  As $\sqsupset$ is $\wedge$-preserving and $\dashv$ is $\mathbb{P}$-neighbourly,
        \[\qquad{{\dashv}\circ{\sqsupset}\circ{\geq^m_{m'}}\circ{\sqni}}\ \ \subseteq\ \ {{\dashv}\circ{\sqsupset}\circ{\geq^m_{m'}}\circ{\leq^m_{m'}}\circ{\sqsubset}\circ{\geq^{n'}_{n''}}}\ \ \subseteq\ \ {{\dashv}\circ{\wedge}\circ{\geq^{n'}_{n''}}}\ \ \subseteq\ \ {\geq^n_{n''}}\]
        and hence ${{\sqsupset_k}\circ{\geq^{m_i}_{m_{2k+1}}}\circ({\geq^{m_{2k+1}}_{m'}}\circ{\sqni}\circ{\geq_{n_l}^{n''}})}={{\geq^{n_j}_n}\circ{\dashv}\circ{\sqsupset}\circ{\geq^m_{m'}}\circ{\sqni}\circ{\geq_{n_l}^{n''}}}\subseteq{\geq^{n_j}_{n_l}}$, contradicting our choice of $\twoheadleftarrow_{2k+1}$.  Thus $P_1$ does indeed win the game and this shows that the conjugacy class of $\phi$ in $\mathbf{C}_\mathbf{K}^\times$ is meagre. \qedhere
    \end{enumerate}
\end{proof}

\section{The Pseudoarc}

To describe a natural subcategory of $\mathbf{G}$ that our theory can be applied to, let us first recall some basic graph theoretic terminology.  First recall that for us a graph is a finite non-empty set $G$ of \emph{vertices} together with a reflexive symmetric \emph{edge relation} ${\sqcap}\subseteq G\times G$.  The transitive closure of $\sqcap$ is the \emph{path-relation} $\sim$.  A graph $G$ is \emph{connected} if the path relation is trivial, i.e.~if ${\sim}=G\times G$.  An \emph{edge} is a connected graph with two vertices, i.e.~a graph of the form $E=\{e,e'\}$ with $e\neq e'$ and ${\sqcap}=E\times E$.

We consider any subset $H$ of a graph $G$ as a graph in its own right with respect to the restricted edge relation ${\sqcap}|_H^H$.  The number of edges in a graph $G$ will be denoted by
\[\mathsf{e}(G)=|\{E\subseteq G:E\text{ is an edge}\}|.\]
The \emph{order} of a vertex $g\in G$ is the number of edges it is contained in, which we denote by \[\mathsf{o}(g)=|\{E\subseteq G:E\text{ is an edge containing }g\}|=|g^\sqcap|-1.\]
An \emph{end} is a vertex of order at most $1$.

A \emph{path} is a connected graph $P$ with at least one end in which each vertex has order at most $2$.  In this case $\mathsf{e}(P)$ is the \emph{length} of the path.  So if we fix some distinct potential vertices $(p^n_m)_{m,n\in\omega}$ in advance then every path of length $n$ is isomorphic to the path $P_n=\{p_0^n,\ldots,p_n^n\}$ with the edge relation given by
\[p^n_j\mathrel{\sqcap}p^n_k\qquad\Leftrightarrow\qquad|j-k|\leq1.\]

Given any path $P$, we see that $Q\subseteq P$ is connected if and only if $Q$ is itself a path.  In this case we call $Q$ a \emph{subpath} of $P$.  For any $q,q'\in Q$ there is a minimal subpath containing $q$ and $q'$ which we denote by $[q,q']$, e.g.~if we are considering any vertices $p^n_j$ and $p^n_k$ in the path $P_n$, for some $n\in\omega$, then
\[[p^n_j,p^n_k]=[p^n_k,p^n_j]=\{p^n_l:j\leq l\leq k\text{ or }k\leq l\leq j\}.\]
We further define $(q,q']=[q,q']\setminus\{q\}$, $[q,q')=[q,q']\setminus\{q'\}$ and $(q,q')=[q,q']\setminus\{q,q'\}$.

\subsection{Existence and Uniqueness}

To construct the pseudoarc we consider the full subcategory $\mathbf{P}$ of paths in $\mathbf{G}$.  So each hom-set $\mathbf{P}_Q^R$ consists of all edge-preserving co-bijective relations ${\sqsupset}\subseteq R\times Q$ between the paths $Q$ and $R$.  In particular, for the paths $P_n$ described explicitly above, $\mathbf{P}_{P_n}^{P_n}$ consists of just the identity morphism $=_{P_n}$ and, if $n>0$, the order reversing isomorphism $\iota$, where $\iota(p^n_k)=p^n_{n-k}$ for all $k\leq n$.   On the other hand, if $m<n$ then $\mathbf{P}_{P_m}^{P_n}$ is empty while $\mathbf{P}_{P_n}^{P_m}$ consists entirely of non-injective relations.  In particular, paths are isomorphic if and only if they have the same length.  It follows that every morphism in $\mathbf{P}$ can be decomposed into \emph{prime} morphisms $\sqsupset$, i.e.~such that $\sqsupset$ itself is not an isomorphism but ${\sqsupset}={\sqni}\circ{\sqnii}$ implies either $\sqni$ or $\sqnii$ is an isomorphism.

\begin{proposition}\label{PrimeDecomposition}
    Every non-isomorphism in $\mathbf{P}$ is a finite product of prime morphisms.
\end{proposition}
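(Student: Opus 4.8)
The plan is to argue by induction on a single integer invariant measuring how far a morphism is from being an isomorphism. For a morphism ${\sqsupset}\in\mathbf{P}_{P_n}^{P_m}$ I would define its \emph{degree} to be $\mathsf{e}(P_n)-\mathsf{e}(P_m)=n-m$. First I would record the two structural facts about $\mathbf{P}$ already isolated just before the statement: a morphism from one path to another exists only when the domain is at least as long as the codomain, so $\mathbf{P}_{P_n}^{P_m}\neq\emptyset$ forces $n\geq m$ and hence the degree is a nonnegative integer; and on paths of equal length the hom-set consists only of $=_{P_n}$ and the order-reversing $\iota$, both isomorphisms. Combining these, a morphism is an isomorphism precisely when its degree is $0$: an isomorphism $P_n\to P_m$ has an inverse $P_m\to P_n$, forcing both $n\geq m$ and $m\geq n$, while conversely equal length already yields an isomorphism. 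Thus the non-isomorphisms are exactly the morphisms of degree $\geq 1$.

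Next I would observe that degree is additive under composition: if ${\sqsupset}={\sqni}\circ{\sqnii}$ with ${\sqnii}\in\mathbf{P}_{P_n}^{P_k}$ and ${\sqni}\in\mathbf{P}_{P_k}^{P_m}$, then the intermediate lengths telescope, so the degree of $\sqsupset$ is $(n-k)+(k-m)$, the sum of the degrees of the two factors. I would then run strong induction on the degree $d\geq 1$ of a non-isomorphism $\sqsupset$. In the base case $d=1$, any factorization ${\sqsupset}={\sqni}\circ{\sqnii}$ has factor degrees summing to $1$, so one factor has degree $0$ and is an isomorphism; hence $\sqsupset$ is prime, a product of a single prime. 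For $d>1$, if $\sqsupset$ is already prime we are done; otherwise, unwinding the definition of prime, there is a factorization ${\sqsupset}={\sqni}\circ{\sqnii}$ in which neither factor is an isomorphism, i.e.\ both have degree $\geq 1$. By additivity each factor then has degree strictly less than $d$, so the induction hypothesis writes each of $\sqni$ and $\sqnii$ as a finite product of primes, and concatenating these expresses $\sqsupset$ as a finite product of primes. Finiteness is automatic, since the degrees of the prime factors are positive integers summing to $d$.

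The argument is essentially formal once the invariant is in place, so I expect no serious obstacle; the only points needing care are the two inputs about $\mathbf{P}$, namely that morphisms never increase length and that length-preserving morphisms are exactly the isomorphisms. Both follow immediately from the explicit description of the hom-sets $\mathbf{P}_{P_n}^{P_m}$ recalled before the statement, so the proof reduces to the bookkeeping above. I would therefore present it as a clean well-founded induction on $n-m$, emphasizing additivity of the degree and the contrapositive characterization of non-prime morphisms as exactly those admitting a factorization into two non-isomorphisms.
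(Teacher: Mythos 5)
Your proof is correct and follows essentially the same route as the paper: induction on the difference in length between domain and codomain, using the fact that a non-prime non-isomorphism factors into two non-isomorphisms of strictly smaller length defect. You simply make explicit the bookkeeping (additivity of the degree and the characterization of isomorphisms as the degree-zero morphisms) that the paper's shorter proof leaves implicit.
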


\begin{proof}
    Take any non-isomorphism ${\sqsupset}\in\mathbf{P}_Q^R$.  If $\sqsupset$ itself is prime we are done.  Otherwise ${\sqsupset}={\sqni}\circ{\sqnii}$ for some non-isomorphisms ${\sqni}\in\mathbf{P}^R_P$ and ${\sqnii}\in\mathbf{P}^P_Q$ where $P$ necessarily has length strictly between that of $Q$ and $R$.  Thus the statement follows by induction on the difference in length between the domain and codomain.
\end{proof}

Let us call a morphism ${\sqsupset}\in\mathbf{P}_Q^R$ \emph{simple} if $\mathsf{e}(Q)=\mathsf{e}(R)-1$.  For example, given any $m\leq n$, we have a simple morphism ${\sqsupset}\in\mathbf{P}^{P_n}_{P_{n+1}}$ defined by
\[p^n_j\sqsupset p^{n+1}_k\qquad\Leftrightarrow\qquad m\geq j=k\quad\text{or}\quad m\leq j=k-1.\]
We could also consider the subrelation obtained by replacing $\geq$ or $\leq$ above with $>$ or $<$ respectively.  Note every simple morphism is prime, by the same argument as in the proof of \Cref{PrimeDecomposition}.  However, $\mathbf{P}$ also has other prime morphisms and to describe these we first need to make a few more definitions and observations.

Let us call a morphism ${\sqsupset}\in\mathbf{P}_Q^R$ \emph{edge-injective} if the restriction ${\sqsupset}|_E$ is injective for any edge $E\subseteq Q$.  So $E=[q,q']=\{q,q'\}$, for some $q,q'\in Q$, and ${\sqsupset}|_E$ being injective means that $q^\sqsubset$ and $q'^\sqsubset$ are distinct singletons in $R$ (e.g.~if $q^\sqsubset$ were not a singleton then, as $\sqsupset$ is $\sqcap$-preserving, we would have $q'^\sqsubset\subseteq q^\sqsubset$ and hence there is no $r\in q^\sqsubset$ with $r^\sqsupset=\{q'\}$, contradicting injectivity).  So an edge-injective morphism is just a function which maps each edge of $Q$ onto an edge of $R$ or, put another way, a function which always changes as you move from one vertex in the domain to an adjacent one.  It follows that a composition of edge-injective morphisms is again edge-injective.  We also have the following.

\begin{proposition}
    The non-edge-injective morphisms form an ideal in $\mathbf{P}$.
\end{proposition}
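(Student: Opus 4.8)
A collection $I$ of morphisms is an ideal precisely when $I=\mathbf{P}^\bullet_\bullet\circ I\circ\mathbf{P}^\bullet_\bullet$, and the inclusion $\supseteq$ is automatic since every morphism factors as a composite with identity morphisms. Thus the real content is that composing a non-edge-injective morphism with arbitrary morphisms on either side keeps it non-edge-injective; taking contrapositives, this is equivalent to the edge-injective morphisms being closed under factorisation, i.e.\ if ${\sqsupset}\circ{\sqni}$ is edge-injective then both ${\sqsupset}$ and ${\sqni}$ are. So the plan is to prove two factor-closure statements: a \emph{right-factor} version (which yields closure under post-composition) and a \emph{left-factor} version (closure under pre-composition). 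I would first dispose of the degenerate case of paths of length $0$, where edge-injectivity is vacuous.

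I would record the structural facts used throughout. For any morphism and any vertex $g$, the image $g^\sqsubset$ is a clique by reflexivity and edge-preservation, and for adjacent $g\sqcap g'$ the union $g^\sqsubset\cup g'^\sqsubset$ is again a clique; since a clique in a path has at most two elements and a two-element clique is an edge, each $g^\sqsubset$ is a single vertex or an edge. Moreover every morphism in $\mathbf{P}$ is co-surjective (images are non-empty) and co-injective (every codomain vertex has a \emph{pure} preimage mapping onto it alone). The right-factor case is then a short clique count: if ${\sqni}$ is not injective on some edge $\{q,q'\}$, then $q^\sqsubset$ and $q'^\sqsubset$ are not distinct singletons, so either both equal one vertex (and then ${\sqsupset}\circ{\sqni}$ gives $q,q'$ the same image) or one of them is a genuine edge (and then ${\sqsupset}\circ{\sqni}$ gives $q$ a non-singleton image unless it again coincides with that of $q'$). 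Either way ${\sqsupset}\circ{\sqni}$ is non-injective on $\{q,q'\}$, so its edge-injectivity forces that of ${\sqni}$.

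The left-factor case is the crux. Suppose the left factor ${\sqsupset}\colon P\to R$ is not edge-injective, witnessed by an edge $\{p,p'\}$ of $P$. If some image $p^\sqsubset$ is not a singleton, a pure ${\sqni}$-preimage $q$ of $p$ already has a non-singleton image under ${\sqsupset}\circ{\sqni}$, so any edge containing $q$ is mapped non-injectively. Otherwise ${\sqsupset}$ is a function collapsing the edge, $\sqsupset(p)=\sqsupset(p')=r$, and here I would run a \emph{crossing} argument: choose pure ${\sqni}$-preimages $q_p,q_{p'}$ of $p,p'$ and traverse the subpath $[q_p,q_{p'}]$. Deleting $\{p,p'\}$ splits $P$ into subpaths $P_L\ni p$ and $P_R\ni p'$, and since each vertex-image is a single vertex or an edge, every domain vertex is classified as ``left'', ``right'' or ``straddling'' according to whether its image lies in $P_L$, in $P_R$, or equals $\{p,p'\}$. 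The endpoints of $[q_p,q_{p'}]$ are left and right, so along the path there are adjacent $a\sqcap b$ where the classification changes; because $a^\sqsubset\cup b^\sqsubset$ must be a clique of $P$ and the only edge meeting both $P_L$ and $P_R$ is $\{p,p'\}$ itself, both $a^\sqsubset$ and $b^\sqsubset$ lie in $\{p,p'\}$. Hence ${\sqsupset}\circ{\sqni}$ sends both $a$ and $b$ to $\{r\}$, contradicting edge-injectivity.

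The step I expect to demand the most care is this crossing argument: making precise that along $[q_p,q_{p'}]$ the classification genuinely flips across a single edge, and that edge-preservation together with the linear structure of $P$ pins both endpoints of that edge into $\{p,p'\}$. Once the right- and left-factor statements are in hand, applying them successively to ${\sqsupset}\circ{\nil}\circ{\sqni}$ shows edge-injectivity of the composite forces edge-injectivity of the middle factor ${\nil}$, which is exactly the ideal property.
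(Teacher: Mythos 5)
Your proposal is correct and takes essentially the same route as the paper: split according to which side of the composition fails edge-injectivity, dispose of the first-applied factor by a short clique count, and handle the last-applied factor by a crossing argument along a subpath joining pure preimages. In fact your crossing argument supplies exactly the justification for the one step the paper asserts without proof, namely that every edge $F$ of the intermediate path arises as the image $E^\sqsubset$ of some edge $E$ of the domain.
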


\begin{proof}
    Take any ${\sqsupset}\in\mathbf{P}_Q^R$ and ${\sqni}\in\mathbf{P}_R^S$.  If $\sqsupset$ is not edge-injective then we have an edge $E\subseteq Q$ such that ${\sqsupset}|_E$ is not injective.  Then $({\sqni}\circ{\sqsupset})|_E={{\sqni}\circ{\sqsupset}|_E}$ is not injective either, showing that ${\sqni}\circ{\sqsupset}$ is also not edge-injective.  On the other hand, if $\sqni$ is not edge-injective then we have an edge $F\subseteq R$ such that ${\sqni}|_F$ is not injective.  Then we have an edge $E\subseteq Q$ with $E^\sqsubset=F$ and so again $({\sqni}\circ{\sqsupset})|_E={{\sqni}|_F\circ{\sqsupset}|_E}$ is not injective.
\end{proof}

Let us call $t\in\mathbf{P}_D^E$ a \emph{turn} if $t$ is edge-injective, $E$ is an edge and $D$ has length two, i.e.~$\mathsf{e}(D)=2$.  So every turn is isomorphic to (i.e.~can be composed with isomorphisms on either side to obtain) the turn $t\in\mathbf{P}_{P_2}^{P_1}$ where $t(p^2_0)=p^1_0=t(p^2_2)$ and $t(p^2_1)=p^1_1$.  The \emph{turning number} of any edge-injective $f\in\mathbf{P}_Q^R$ is defined to be the number $\mathsf{t}(f)$ of turns of the form $f|_D$, for subpaths $D\subseteq Q$ of length two.

\begin{proposition}
    For any edge-injective $f\in\mathbf{P}_Q^R$ and $g\in\mathbf{P}_R^S$,
    \[\mathsf{t}(g\circ f)\geq\mathsf{t}(g)+\mathsf{t}(f).\]
\end{proposition}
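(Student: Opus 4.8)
The plan is to view $f$ and $g$ as the surjective functions they are (edge-injective morphisms are functions, and co-bijective functions are surjective by co-injectivity) and to reason about the walks they trace. Enumerate the vertices of $Q$ along the path as $q_0,\ldots,q_n$ and those of $R$ as $r_0,\ldots,r_k$, so that $f$ determines the walk $(f(q_0),\ldots,f(q_n))$ in $R$, in which consecutive entries are adjacent and distinct. The length-two subpaths of $Q$ are exactly the $D_i=\{q_{i-1},q_i,q_{i+1}\}$ for $1\le i\le n-1$, and $f|_{D_i}$ is a turn precisely when $f(q_{i-1})=f(q_{i+1})$. I would thus set $T_f=\{i: f(q_{i-1})=f(q_{i+1})\}$, so $\mathsf{t}(f)=|T_f|$, and define $T_g$ and $T_{gf}$ analogously; the latter is legitimate since $g\circ f$ is again edge-injective.

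First I would record the easy inclusion $T_f\subseteq T_{gf}$: if $f(q_{i-1})=f(q_{i+1})$ then applying $g$ gives $g(f(q_{i-1}))=g(f(q_{i+1}))$. Hence $|T_{gf}|=|T_f|+|T_{gf}\setminus T_f|$, and it remains to prove $|T_{gf}\setminus T_f|\ge |T_g|=\mathsf{t}(g)$. For this I would build a map $\Phi:T_{gf}\setminus T_f\to T_g$. Given $i\in T_{gf}\setminus T_f$, edge-injectivity forces $f(q_{i-1}),f(q_i),f(q_{i+1})$ to be a \emph{straight} length-two segment of $R$; that is, $f(q_i)=r_j$ for some internal $j$ with $\{f(q_{i-1}),f(q_{i+1})\}=\{r_{j-1},r_{j+1}\}$. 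Since $g(f(q_{i-1}))=g(f(q_{i+1}))$, this says $g(r_{j-1})=g(r_{j+1})$, so $j\in T_g$, and I put $\Phi(i)=j$.

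The crux is surjectivity of $\Phi$, which I would obtain from a crossing argument using only the surjectivity of $f$ and the distinctness of consecutive walk-values. Fix any internal vertex $r_j$ ($1\le j\le k-1$) and write $L=\{r_0,\ldots,r_{j-1}\}$, $N=\{r_{j+1},\ldots,r_k\}$, both nonempty. As $f$ is surjective the walk meets both $L$ and $N$, so there are indices $a<b$ with $f(q_a)\in L$ and $f(q_b)\in N$. Setting $a'=\max\{t<b: f(q_t)\in L\}$, the only vertex of $N\cup\{r_j\}$ adjacent to an $L$-vertex is $r_j$, which forces $f(q_{a'})=r_{j-1}$ and $f(q_{a'+1})=r_j$. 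Letting $c$ be the least index $>a'$ with $f(q_c)\in N$, the walk takes the value $r_j$ at every index strictly between $a'$ and $c$; since consecutive values differ, there is exactly one such index, namely $a'+1$, whence $c=a'+2$ and $f(q_{a'+2})=r_{j+1}$. Thus $i=a'+1$ is a straight pass through $r_j$ with $1\le i\le n-1$, and when $j\in T_g$ this yields $i\in T_{gf}\setminus T_f$ with $\Phi(i)=j$.

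Surjectivity of $\Phi$ then gives $|T_{gf}\setminus T_f|\ge |T_g|$, and combining with the inclusion $T_f\subseteq T_{gf}$ we conclude $\mathsf{t}(g\circ f)=|T_{gf}|\ge |T_f|+|T_g|=\mathsf{t}(f)+\mathsf{t}(g)$. The main obstacle is exactly the crossing argument of the previous paragraph: guaranteeing that the walk genuinely passes \emph{straight} through each internal vertex at least once, rather than only doubling back at it, which is precisely where edge-injectivity (no immediate repeat of a value) is indispensable.
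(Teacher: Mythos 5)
Your proof is correct and follows essentially the same route as the paper's: turns of $f$ remain turns of $g\circ f$, while each turn of $g$ is witnessed by a straight pass of $f$ through the corresponding internal vertex of $R$, and these two families of turns of $g\circ f$ are disjoint. Your crossing argument is just an explicit, coordinate-level rendering of the step the paper phrases as ``taking a minimal subpath $D\subseteq Q$ with $f[D]=C$'', the only cosmetic difference being that you count via a surjection $T_{gf}\setminus T_f\twoheadrightarrow T_g$ where the paper implicitly uses an injection in the opposite direction.
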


\begin{proof}
    If $f|_D$ is a turn then $f[D]$ is an edge and hence so too is $g[f[D]]$ so $(g\circ f)|_D$ is also a turn.  On the other hand, if $g|_C$ is a turn then, taking a minimal subpath $D\subseteq Q$ with $f[D]=C$, we see that $\mathsf{e}(D)=\mathsf{e}(C)=2$ so $(g\circ f)|_D$ is a turn even though $f|_D$ is not.  So in this way we get distinct turns of $g\circ f$ from each of the turns of $f$ and $g$.  In particular, this implies that $\mathsf{t}(g\circ f)\geq\mathsf{t}(g)+\mathsf{t}(f)$.
\end{proof}

An edge-injective $f\in\mathbf{P}_Q^R$ is an isomorphism precisely when it has no turns, i.e.~when $\mathsf{t}(f)=0$.  Call $h\in\mathbf{P}_Q^R$ a \emph{hook} if $h$ is edge-injective with exactly one turn, i.e.~$\mathsf{t}(h)=1$.  So each hook is isomorphic to one of the form $h\in\mathbf{P}_{P_{m+n}}^{P_n}$ where $1\leq m\leq n$ and
\[h(p^{m+n}_k)=\begin{cases}p^n_k&\text{if }k\leq n\\p^n_{2n-k}&\text{if }k\geq n\end{cases}\]

\begin{proposition}
    Every hook $h\in\mathbf{P}_Q^R$ is prime.
\end{proposition}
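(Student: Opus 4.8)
The plan is to combine the two structural facts established just above --- that the non-edge-injective morphisms form an ideal in $\mathbf{P}$, and that turning numbers are superadditive under composition --- with the characterisation of edge-injective isomorphisms as exactly those of turning number $0$. First I would dispose of the requirement that $h$ itself is not an isomorphism: since $h$ is a hook we have $\mathsf{t}(h)=1\neq0$, and an edge-injective morphism is an isomorphism precisely when its turning number vanishes, so $h$ is not an isomorphism.

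It then remains to analyse an arbitrary factorisation ${h}={\sqni}\circ{\sqnii}$, say with ${\sqnii}\in\mathbf{P}_Q^P$ and ${\sqni}\in\mathbf{P}_P^R$ for some path $P$. The first key step is to observe that both $\sqni$ and $\sqnii$ are necessarily edge-injective. Indeed, were either factor not edge-injective, then, because the non-edge-injective morphisms form an ideal (hence are closed under composition with arbitrary morphisms on either side), the composite $h={\sqni}\circ{\sqnii}$ would also fail to be edge-injective; but $h$ is edge-injective, being a hook, so this is impossible.

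With both factors edge-injective I would apply superadditivity of the turning number to get
\[\mathsf{t}(\sqni)+\mathsf{t}(\sqnii)\ \leq\ \mathsf{t}({\sqni}\circ{\sqnii})\ =\ \mathsf{t}(h)\ =\ 1.\]
Since $\mathsf{t}(\sqni)$ and $\mathsf{t}(\sqnii)$ are nonnegative integers summing to at most $1$, at least one of them equals $0$; and an edge-injective morphism of turning number $0$ is an isomorphism. Hence one of $\sqni,\sqnii$ is an isomorphism, which is exactly the condition for $h$ to be prime. The only point demanding any care is the reduction to edge-injective factors in the second step, where the ideal property of the non-edge-injective morphisms rules out any factorisation through a morphism that collapses an edge; once that is in place, superadditivity of $\mathsf{t}$ does the rest and there is no real obstacle.
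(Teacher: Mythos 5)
Your proof is correct and follows essentially the same route as the paper's: rule out $h$ being an isomorphism via $\mathsf{t}(h)=1$, then use superadditivity of the turning number to force one factor in any factorisation to have turning number $0$ and hence be an isomorphism. The only difference is that you explicitly justify why both factors are edge-injective (via the ideal property of non-edge-injective morphisms), a step the paper leaves implicit; this is a welcome bit of extra care but not a different argument.
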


\begin{proof}
    First note $h$ is not an isomorphism, as $\mathsf{t}(h)=1$.  But if $h=f\circ g$ then either $f$ or $g$ must be an isomorphism, as $\mathsf{t}(f)+\mathsf{t}(g)\leq\mathsf{t}(h)=1$.  Thus $h$ is prime.
\end{proof}

Let us call a morphism ${\sqsupset}\in\mathbf{P}_Q^R$ \emph{proper} if ${\sqsupset}|_S\notin\mathbf{P}_S^R$ for any proper subpath $S\subsetneqq Q$.  If $\mathsf{E}(P)$ denotes the ends of any given path $P$ (e.g.~$\mathsf{E}(P_n)=\{p^n_0,p^n_n\}$) and $\sqsupset$ is a function then we see that $\sqsupset$ is proper precisely when $\mathsf{E}(R)^\sqsupset=\mathsf{E}(Q)$.

Let us also call $s\in\mathbf{P}_Q^R$ a \emph{snake} if $s$ is edge-injective with exactly two turns, i.e.~$\mathsf{t}(s)=2$.  So every proper snake is isomorphic to one of the form $s\in\mathbf{P}^{P_{l+m+n}}_{P_{l-m+n}}$ where $l,n>m$ and
\[s(p^{l+m+n}_k)=\begin{cases}p^{l-m+n}_k&\text{if }k\leq l\\p^{l-m+n}_{2l-k}&\text{if }l\leq k\leq l+m\\p^{l-m+n}_{k-2m}&\text{if }l+m\leq k\leq l+m+n.\end{cases}\]

\begin{proposition}\label{PrimeSnake}
    A snake $s\in\mathbf{P}_Q^R$ is prime precisely when it is proper.
\end{proposition}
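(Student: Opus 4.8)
The plan is to analyse the possible factorisations of a snake and pin down exactly which ones exist. First I would reduce to a canonical form: composing with the order-reversing isomorphisms on the domain and codomain (which affect neither primality nor properness, since isomorphisms send ends to ends) lets me assume $s$ is an up-down-up snake with end-values $a,d$, peak value $b$ and valley value $c$, surjective onto $R=P_L$. Since $s$ is surjective its peak is the global maximum and its valley the global minimum of the value sequence, so surjectivity forces $\max(b,d)=L$ and $\min(a,c)=0$. A short case analysis then yields the clean dichotomy I would record as a preliminary observation: $s$ is \emph{proper} exactly when its peak and valley both lie strictly interior to $R$ (that is, $b<L$ and $c>0$), and $s$ is \emph{not} proper exactly when the peak attains $L$ or the valley attains $0$.

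Next I would determine the shape of any nontrivial factorisation $s=\sqni\circ\sqnii$. Because the non-edge-injective morphisms form an ideal and $s$ is edge-injective, both $\sqni$ and $\sqnii$ must be edge-injective. The turning-number inequality $\mathsf{t}(\sqni\circ\sqnii)\ge\mathsf{t}(\sqni)+\mathsf{t}(\sqnii)$ together with $\mathsf{t}(s)=2$ gives $\mathsf{t}(\sqni)+\mathsf{t}(\sqnii)\le2$; if neither factor is an isomorphism then each has turning number at least $1$, so in fact $\mathsf{t}(\sqni)=\mathsf{t}(\sqnii)=1$ and both factors are hooks. Thus proving $s$ prime reduces to showing a proper snake is \emph{not} a composite of two hooks, while proving $s$ not prime reduces to exhibiting one such factorisation whenever $s$ is not proper.

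For the direction proper $\Rightarrow$ prime, I would exploit that a surjective hook $\sqni\colon P\to R$ sends its (interior) fold vertex to an end $e$ of $R$ which it attains only there, so that $\sqni^{-1}(e)$ is this single fold vertex. Hence $s^{-1}(e)=\sqnii^{-1}(\text{fold of }\sqni)$, and the key claim to verify is that the hook $\sqnii$ has an interior-of-$Q$ vertex in the preimage of every interior vertex of $P$ (in particular the fold of $\sqni$). This produces an interior vertex of $Q$ mapping under $s$ to the end $e$, contradicting the properness criterion above. For the direction not proper $\Rightarrow$ not prime, say the peak attains $L$; I would ``unfold'' across that end by setting $R'=P_{2L-c}$, taking $\sqni\colon R'\to R$ to be the hook folding at the top ($\sqni(p_k)=p_k$ for $k\le L$ and $\sqni(p_k)=p_{2L-k}$ for $k\ge L$) and $\sqnii\colon Q\to R'$ to be the single-turn map that reflects the post-peak portion of $s$ across $L$. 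A direct computation then gives $\sqni\circ\sqnii=s$ with both factors genuine hooks, and the valley case is handled symmetrically by precomposing with the order-reversing isomorphism on $R$.

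The main obstacle I anticipate is the bookkeeping in the proper case, where one must check, uniformly over all hook shapes, that the preimage under $\sqnii$ of an interior vertex of $P$ always meets the interior of $Q$; the delicate point is the degenerate half-fold hook, in which both ends of $Q$ collapse onto a single end of $P$, as well as the ``tip'' vertex whose preimage contains an endpoint. Keeping the canonical coordinates and tabulating which vertices of a hook have one versus two preimages should make this routine, but it is the step where the argument could slip. The second place demanding care is making the unfolding factorisation precise, namely choosing $R'$ of the correct length $2L-c$ and confirming that $\sqnii$ is surjective with exactly one turn.
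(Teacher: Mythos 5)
Your proposal is correct and follows the same skeleton as the paper's proof: use the ideal of non-edge-injective morphisms together with the turning-number inequality to force any factorisation of $s$ into non-isomorphisms to be a composition of two hooks, then show proper snakes admit no such composition while improper ones do. Your preliminary dichotomy is also right: given surjectivity, $\min(a,c)=0$ and $\max(b,d)=L$, the conditions $b<L$ and $c>0$ force $a=0$ and $d=L$, so they are equivalent to properness. The two halves are executed somewhat differently from the paper, though. For proper $\Rightarrow$ prime, the paper's one-line assertion that a composite of hooks is improper is most directly justified via the \emph{inner} hook: a hook $\sqnii\in\mathbf{P}_Q^P$ restricts to a surjection on a proper subpath $S\subsetneq Q$ (cut off the shorter arm past the fold), so $s|_S=\sqni\circ(\sqnii|_S)$ is still a morphism onto $R$ and $s$ is improper straight from the definition. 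You instead argue through the \emph{outer} hook, pulling its fold back along $\sqnii$; this works too, but it costs you the auxiliary claim that $\sqnii$-preimages of interior vertices contain interior vertices (true, and your tabulation over hook shapes, including the $m=n$ half-fold, does verify it) -- a claim the restriction argument avoids entirely. For improper $\Rightarrow$ not prime, your unfolding of the codomain is, after identifying the unfolded path with $Q_0\cup Q_1$ by position, exactly the paper's factorisation folding $Q_2$ onto $Q_1$: the same factorisation in different coordinates.

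One formula does need repair, and it sits precisely where you predicted trouble. In the peak case $b=L$ you take $R'=P_{2L-c}$ and assert $\sqnii$ is surjective onto it, but this holds only when $a=0$, and improperness does not grant that: the snake with values $1,2,1,0,1$ onto $R=P_2$ has $b=L=2$, $a=1$, $c=0$, and its unfolded map has values $1,2,3,4,3$, missing $0$, so it is not co-injective and hence not a morphism of $\mathbf{P}$ (here the symmetric valley-case treatment runs into the same problem). The fix is routine: take $R'$ to be the image of the unfolded map, a path of length $2L-c-a$, and shift the fold $\sqni$ accordingly; $\sqni$ stays surjective because $\min(a,c)=0$. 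With that correction both halves of your plan go through.
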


\begin{proof}
    Again first note $s$ is not an isomorphism, as $\mathsf{t}(s)=2$.  So if $s$ is not prime then $s=f\circ g$ for some $f$ and $g$ that are not isomorphisms and hence $\mathsf{t}(f),\mathsf{t}(g)\geq1$.  But then both $f$ and $g$ must be hooks, as $\mathsf{t}(f)+\mathsf{t}(g)\leq\mathsf{t}(s)=2$.  But hooks are not proper, which would imply their composition $s$ is also not proper.

    For the converse, first let $Q_0,Q_1,Q_2\subseteq Q$ be the maximal subpaths on which $s$ is injective, numbered so that $Q_0\cap Q_2=\emptyset$.  If $s$ is not proper then $s$ must be surjective on one of them, say $Q_0$, and hence $s|_{Q_0\cup Q_1}\in\mathbf{P}^R_{Q_0\cup Q_1}$.  Define $f:Q\rightarrow Q_0\cup Q_1$ by $f(q)=s(q)$, for all $q\in Q_0\cup Q_1$, and $f(q)=s|_{Q_1}^{-1}(s(q))$, for all $q\in Q_2$.  As $s[Q_1]=s[Q_2]$, it follows that $f$ is a well-defined morphism in $\mathbf{P}^{Q_0\cup Q_1}_Q$ with $s=s|_{Q_0\cup Q_1}\circ f$.  As neither $s|_{Q_0\cup Q_1}$ nor $f$ are isomorphisms, this shows that $s$ is not prime.
\end{proof}

We have now exhibited all the prime morphisms, at least within the wide subcategory $\mathbf{F}$ consisting of all edge-preserving surjective functions between paths.

\begin{proposition}\label{PrimesInF}
    Every prime in $\mathbf{F}$ is either simple, a hook or a proper snake.
\end{proposition}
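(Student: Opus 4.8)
The plan is to split on whether the prime ${\sqsupset}\in\mathbf{F}_Q^R$ is edge-injective. Since the non-edge-injective morphisms form an ideal, any factorization of an edge-injective morphism has both factors edge-injective, and (conversely) a simple morphism, collapsing an edge, is never edge-injective. So the two regimes are cleanly separated: an edge-injective prime cannot be simple and can only factor through edge-injective morphisms, whereas a non-edge-injective prime will turn out to be simple.

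First I would treat the non-edge-injective case, which is the easy half. If ${\sqsupset}$ is not edge-injective then some edge $\{q,q'\}\subseteq Q$ is collapsed, i.e.\ $q^\sqsubset=q'^\sqsubset$ is a single vertex of $R$. Let $h\colon Q\to Q'$ contract just this edge; this is a simple, hence non-isomorphic, morphism of $\mathbf{F}$. Define $g\colon Q'\to R$ to agree with ${\sqsupset}$ off the contracted edge and to send the new vertex to the common value $q^\sqsubset=q'^\sqsubset$. Then $g$ is a well-defined edge-preserving surjection with ${\sqsupset}=g\circ h$. As ${\sqsupset}$ is prime and $h$ is not an isomorphism, $g$ must be, so $\mathsf{e}(Q)=\mathsf{e}(R)+1$ and ${\sqsupset}$ is simple.

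For the edge-injective case I would regard ${\sqsupset}$ as a walk assigning to each vertex of $Q=P_N$ a vertex of $R$, stepping by one edge at a time and reversing direction at its $\mathsf{t}({\sqsupset})$ turns. Since the isomorphisms are exactly the turn-free edge-injective morphisms, primality forces $\mathsf{t}({\sqsupset})\geq1$; if $\mathsf{t}({\sqsupset})=1$ it is a hook, and if $\mathsf{t}({\sqsupset})=2$ it is a snake, necessarily proper by \Cref{PrimeSnake}. So it suffices to show $\mathsf{t}({\sqsupset})\geq3$ forces a factorization into two non-isomorphisms. The main device is \emph{unfolding} a turn at an extreme vertex of $R=P_M$, whose vertices I identify with $0,\dots,M$: if the walk attains $M$ (resp.\ $0$) at an interior vertex, that vertex is automatically a turn, and reflecting the whole tail of the walk past it across the level $M$ (resp.\ $0$) produces an edge-injective $h$ with one fewer turn and a top (resp.\ bottom) hook $g$ with ${\sqsupset}=g\circ h$; the reflection is consistent exactly because nothing sits above $M$ (resp.\ below $0$). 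Surjectivity forces both $0$ and $M$ to occur, so the only way to have \emph{no} interior turn at an extreme is for $0$ and $M$ to be realized solely at the two endpoints of $Q$, i.e.\ for ${\sqsupset}$ to be proper; and then the first and last steps both point inward, so $\mathsf{t}({\sqsupset})$ is even. Hence when ${\sqsupset}$ is not proper, $\mathsf{t}({\sqsupset})\geq3$ gives a hook factorization whose second factor still has $\mathsf{t}=\mathsf{t}({\sqsupset})-1\geq2$, and ${\sqsupset}$ is not prime.

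This leaves the \emph{proper} edge-injective morphisms with $\mathsf{t}({\sqsupset})\geq4$, and here I would peel off a proper snake rather than a hook (a single interior fold is never surjective, so no hook can be split off). Reflecting the portion of the walk lying between a suitable peak and an adjacent valley, across both levels, realizes ${\sqsupset}=g\circ h$ with $g$ a proper snake (prime, but not an isomorphism) and $h$ edge-injective with $\mathsf{t}(h)=\mathsf{t}({\sqsupset})-2\geq2$, so again ${\sqsupset}$ is not prime. This goes through verbatim when, for instance, the first valley is no higher than every later one, and the general case should be arranged by choosing the peak–valley pair appropriately or by reversing and arguing from the other end. \textbf{I expect this last step to be the main obstacle}: unlike the hook unfolding, peeling off a snake requires the reflected band to stay inside the strip $[W,P]$ bounded by the chosen valley and peak heights, so that the snake template $g$ remains a genuine surjection onto $R$ and the turning number drops by exactly two; verifying that such a pair can always be selected — with the base case $\mathsf{t}=2$ being precisely the prime proper snake — is the delicate combinatorial core of the argument.
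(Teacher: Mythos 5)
Your non-edge-injective case is correct and is exactly the paper's argument (contract a collapsed edge; primality forces the induced quotient map to be an isomorphism, so the prime is simple). Your hook-unfolding of an interior extreme is also correct and is a genuinely different device for the improper edge-injective case: the paper instead runs a single uniform argument for all edge-injective $f$ with $\mathsf{t}(f)\geq3$, locating a maximal injective segment $Q_k$ whose image is contained in the images of \emph{both} neighbouring segments and folding it onto $Q_{k-1}$, so that the two-turn map is peeled off as the \emph{inner} factor and only a local containment of images -- no global control of the walk -- is ever required.

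The genuine gap is precisely where you flagged it, and the repair you suggest does not close it. In the proper case you peel a proper snake off as the \emph{outer} factor, and this forces global constraints: writing $R=\{0,\dots,M\}$, with chosen peak height $p$ and valley height $w$, your lift $h$ (equal to $f$ before the peak, to $2p-f$ on the reflected band, to $f+2(p-w)$ after the valley) lands in the three branches of the snake only if the walk stays $\leq p$ before the peak and $\geq w$ after the valley. For \emph{adjacent} peak--valley pairs this can fail for every choice, even after reversal: the proper four-turn walk $0\to5\to3\to4\to1\to6$ in $\mathbf{F}_{P_{16}}^{P_6}$ defeats both adjacent pairs (for $(5,3)$ the suffix dips to $1$; for $(4,1)$ the prefix has already reached $5$), and its reversal $0\to5\to2\to3\to1\to6$ fails in the same way, so the dichotomy ``first valley lowest, else reverse'' does not exhaust the cases. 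The fix is to let the reflected band contain turns, i.e.\ use a non-adjacent pair: take $i$ to be the \emph{last} peak that is a running maximum of the walk, and $j$ the first attainment of $\min_{k\geq i}f(k)$ (a valley). If some point strictly between $i$ and $j$ exceeded $f(i)$, the first such ascent would either climb to a later running-maximum peak (contradicting the choice of $i$) or be the final unbroken ascent to $M$ (contradicting the fact that the turn $j$ lies beyond it); hence the band stays in $[f(j),f(i)]$, while the prefix is $\leq f(i)$ and the suffix is $\geq f(j)$ by construction, and properness gives $0<f(j)$ and $f(i)<M$, so the outer factor is a proper snake and the inner factor is edge-injective with $\mathsf{t}(f)-2\geq2$ turns. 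With this selection your outline becomes a complete proof of \Cref{PrimesInF}; without it, the proper case -- which your own parity argument shows is the only one remaining -- is unproved.
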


\begin{proof}
    Say we have prime $f\in\mathbf{F}_Q^R$ that is not edge-injective.  This means we have some $e\in R$ such that $f^{-1}\{e\}$ contains an edge $E$.  Consider the quotient graph
    \[Q'=\{\{q\}:q\in Q\setminus E\}\cup\{E\}\]
    with the canonical quotient function $f':Q'\rightarrow R$ where, for all $S\in Q'$ and $r\in R$,
    \[\phi'(S)=r\qquad\Leftrightarrow\qquad S\subseteq f^{-1}\{r\}.\]
    Now note that the restriction of the usual membership relation ${\ni_Q}\subseteq Q'\times Q$ is a simple morphism in $\mathbf{F}_Q^{Q'}$.  Also $f'\in\mathbf{F}_{Q'}^R$ and $f={f'}\circ{\ni_Q}$ so, as $f$ is prime, $f'$ must be an isomorphism and hence $f$ too is simple.

    By \Cref{PrimeSnake}, the only remaining possibility to consider is that there is some edge-injective $f\in\mathbf{F}^R_Q$ with $\mathsf{t}(f)\geq3$ that could be prime.  To see that such an $f$ could not be prime, first let $Q_0,Q_1,\ldots,Q_{\mathsf{t}(f)}\subseteq Q$ be the maximal subpaths on which $f$ is injective, numbered so that
    \[Q_j\cap Q_k\neq\emptyset\qquad\Leftrightarrow\qquad|j-k|\leq 1.\]
    For each $k<\mathsf{t}(f)$, note that either $f[Q_n]\subseteq f[Q_{n+1}]$ or $f[Q_n]\supseteq f[Q_{n+1}]$.  Also $f[Q_0]\supseteq f[Q_1]$ and $f[Q_{\mathsf{t}(f)-1}]\subseteq f[Q_{\mathsf{t}(f)}]$, from which it follows there must be at least one $k$ with $f[Q_{k-1}]\supseteq f[Q_k]\subseteq f[Q_{k+1}]$.  Then we have a subpath $S=Q_k\cup f|_{Q_{k+1}}^{-1}[f[Q_k]]$ and we may again consider the quotient graph
    \[Q'=\{\{q\}:q\in Q\setminus S\}\cup\{S\}.\]
    Take $q_-\in Q$ with $\{q_-\}=Q_k\cap Q_{k-1}$ and let $q_+$ be the other end of $Q'$, necessarily with $f(q_-)=r=f(q_+)$, for some $r\in R$.  Define $g\in\mathbf{F}^R_{Q'}$ by $g(\{q\})=f(q)$, for all $q\in Q\setminus S$, and $g(S)=r$.  Further define $h\in\mathbf{F}^{Q'}_Q$ by
    \[h(q)=\begin{cases}\{q\}&\text{if }q\in Q\setminus S\\\{f|_{Q_{k-1}}^{-1}(f(q))\}&\text{if }q\in S\setminus\{q_-,q_+\}\\S&\text{if }q\in\{q_-,q_+\}.\end{cases}.\]
    Then $f=g\circ h$, $\mathsf{t}(g)=\mathsf{t}(f)-2\geq1$ and $\mathsf{t}(h)=2$, showing that $f$ is not prime.
\end{proof}

Like in \cite[\S2.7]{BaBiVi}, let us call ${\sqsupset}\in\mathbf{G}_Q^R$ \emph{tangled} if, for all connected $S,T\subseteq Q$,
\[S\cup T\text{ is connected}\qquad\Rightarrow\qquad S\subseteq T^{\sqsubset\sqsupset}\text{ or }T\subseteq S^{\sqsubset\sqsupset}.\]

\begin{proposition}\label{TangledWide}
    The tangled morphisms are wide in $\mathbf{P}$.
\end{proposition}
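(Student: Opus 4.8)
The plan is to exploit the fact that $\mathbf{P}$ is the full path subcategory, so every object is isomorphic to some $P_n$, together with the observation that tangledness is preserved under post-composition with an isomorphism: if ${\sqsupset}\in\mathbf{P}_Q^{P_n}$ is tangled and $\theta\in\mathbf{P}_{P_n}^R$ is an isomorphism then for $\sqsupset'=\theta\circ\sqsupset$ one has $T^{\sqsubset'\sqsupset'}=T^{\sqsubset\sqsupset}$ (since $\theta$ is a bijection, passing to the image under $\theta$ and back cancels), so $\sqsupset'$ is again tangled. Hence it suffices to construct, for each $n\in\omega$, a single tangled morphism with codomain $P_n$.

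Next I would set up the right picture for a morphism ${\sqsupset}\in\mathbf{P}_Q^{P_n}$. Since $\sqsupset$ is edge-preserving and $\sqcap$ is reflexive, each $q^\sqsubset$ is a clique of $P_n$, hence a single vertex or a single edge; and for adjacent $q\sqcap q'$ the set $q^\sqsubset\cup q'^\sqsubset$ is again a clique, so consecutive values lie in a common edge. Writing $Q=\{q_0,\dots,q_N\}$ as a path, the morphism is thus recorded by a ``walk'' $(q_0^\sqsubset,\dots,q_N^\sqsubset)$ of vertices and edges of $P_n$ in which successive terms share an edge, where co-bijectivity says exactly that every vertex of $P_n$ equals some singleton value $q_k^\sqsubset$ and that every $q_k^\sqsubset$ is non-empty. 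My construction will be a Bing-crooked such walk: one which covers all vertices of $P_n$ and which, whenever it travels from a position $i$ to a far-away position $j$, first doubles back, approaching $j$ to within one step, returning to within one step of $i$, and only then proceeding. I would define such a walk explicitly (for instance recursively in $n$), after which checking that it is a genuine co-bijective edge-preserving morphism is routine.

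The heart of the argument is verifying tangledness, and this is where I expect the real work. Unwinding the definition for the walk picture, a failure of tangledness produces connected $S,T\subseteq Q$ with $S\cup T$ connected and vertices $s_0\in S$, $t_0\in T$ with $s_0^\sqsubset\cap T^\sqsubset=\emptyset$ and $t_0^\sqsubset\cap S^\sqsubset=\emptyset$. I would first show such a failure forces a clean configuration: after possibly swapping $S,T$ and reversing $Q$, one gets $s_0\notin T$ and $t_0\notin S$ together with a vertex $x\in S\cap T$ (or, in the adjacent-subpath case, a vertex at the junction) such that $s_0<x<t_0$ in $Q$ while the three values $t_0^\sqsubset$, $x^\sqsubset$, $s_0^\sqsubset$ are pairwise disjoint and occur in this increasing order of position; moreover the walk stays strictly above $t_0^\sqsubset$ on the subpath $[s_0,x]$ and strictly below $s_0^\sqsubset$ on $[x,t_0]$. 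In other words a failure is exactly a ``straight descent'' from a high position to a low one that never doubles back, which is precisely what crookedness forbids. I would then derive a contradiction by applying crookedness to $s_0<t_0$: the guaranteed intermediate near-low and near-high points must land in $[s_0,x]$ or $[x,t_0]$ and violate one of the two staying conditions.

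The main obstacle I anticipate is the off-by-one bookkeeping in this last step. Crookedness only guarantees the walk comes \emph{within one step} of the extreme positions, whereas the tangled inequalities are strict, so a naively crooked walk can still fail; indeed the obvious recursive ``go out and come straight back'' construction is \emph{not} tangled, since the outward excursion to the new endpoint, paired with the far part of the previous walk, already realises the bad configuration. The remedy is to use the fatness of the edge-valued vertices to absorb the off-by-one: by arranging the doublings-back to actually reach the adjacent \emph{edge} rather than merely the adjacent vertex, the ``within one step'' of crookedness becomes genuine overlap with $t_0^\sqsubset$ or $s_0^\sqsubset$, forcing the contradiction. Choosing the construction so that this holds uniformly, and dispatching the several index orderings together with the adjacent-subpath case, will be the bulk of the proof.
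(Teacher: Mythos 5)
Your overall strategy is the same as the paper's: the paper also proves wideness by a recursive construction of a Bing-crooked morphism onto each $P_n$ (gluing tangled morphisms onto $P_n\setminus\{p^n_n\}$, $P_n\setminus\{p^n_0,p^n_n\}$ and $P_n\setminus\{p^n_0\}$ via two junction vertices $t,t'$ whose images are the end \emph{edges} $\{p^n_{n-1},p^n_n\}$ and $\{p^n_0,p^n_1\}$), so your key device -- fattening the doubling-back points to edge values to turn ``within one step'' into genuine overlap -- is exactly the paper's $t,t'$. However, your execution plan has a genuine gap: edge-fattening is needed at \emph{every} step of the walk, not only at the turns. Apply the definition of tangledness to the degenerate pair $S=\{q\}$, $T=\{q'\}$ with $q\mathrel{\sqcap}q'$: since $S\cup T$ is connected, tangledness forces $q^\sqsubset\cap q'^\sqsubset\neq\emptyset$. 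So in a tangled morphism, \emph{adjacent domain vertices must have intersecting images}; in particular no co-bijective function onto $P_n$ ($n\geq1$) is tangled, and any walk containing a single ``sharp'' move such as $\{p^n_i\}$ followed immediately by $\{p^n_{i+1}\}$ fails tangledness outright, no matter how crooked it is and no matter how its turns are fattened. A crooked walk as you describe it -- edge-valued only at the doublings-back, moving vertex-to-vertex on straight stretches -- is therefore not tangled.

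This same oversight appears in your verification plan: your claimed ``clean configuration'' requires a vertex $x$ strictly between $s_0$ and $t_0$ with three pairwise disjoint values, but the degenerate failures above (adjacent $s_0,t_0$ with disjoint images, $S,T$ singletons) admit no such $x$, so your case analysis cannot be completed for the walk you propose to build. The fix is to require every move of the walk to pass through the connecting edge (values alternating vertex, edge, vertex, edge, $\ldots$), after which your contradiction argument does go through: a failure then forces $s_0$ and $t_0$ to be non-adjacent, the interval $[s_0,t_0]$ splits into an initial segment avoiding $t_0^\sqsubset$ and a final segment avoiding $s_0^\sqsubset$, and crookedness-with-overlap produces a touch of $t_0^\sqsubset$ weakly before a touch of $s_0^\sqsubset$, which is impossible. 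The paper gets this everywhere-overlapping property for free from its inductive set-up: the three glued pieces are tangled by the induction hypothesis (hence already overlap at every step), and the two junctions overlap because $t,t'$ are edge-valued, which is why its tangledness verification can be left essentially implicit.
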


\begin{proof}
    The unique edge-witnessing morphisms in $\mathbf{P}^{P_0}_{P_0}$ and $\mathbf{P}^{P_1}_{P_2}$ are immediately seen to be tangled.  Now assume we have already exhibited tangled morphisms with codomain $P_{n-2}$ and $P_{n-1}$.  We then have tangled ${\sqsupset}\in\mathbf{P}_Q^{P_n\setminus\{p^n_n\}}$, ${\sqni}\in\mathbf{P}_R^{P_k\setminus\{p^n_0,p^n_n\}}$ and ${\sqsupset'}\in\mathbf{P}_{Q'}^{P_k\setminus\{p_0^n\}}$ for some $Q,R,Q'\in\mathbf{P}$ which we may take to have disjoint sets of vertices.  By composing with further morphisms if necessary, we may also assume that we have $q\in\mathsf{E}(Q)$, $r,r'\in\mathsf{E}(R)$ and $q'\in\mathsf{E}(Q')$ with $q^\sqsubset=\{p^n_{n-1}\}=r^{\sqin}$ and $r'^{\sqin}=\{p^n_1\}=q'^{\sqsubset'}$.  We can then form a path $S=Q\cup\{t\}\cup R\cup\{t'\}\cup Q'$ with $q\mathrel{\sqcap}t\mathrel{\sqcap}r$ and $r'\mathrel{\sqcap}t'\mathrel{\sqcap}q'$ and define tangled ${\nil}\in\mathbf{P}_S^{P_n}$ by
    \[{\nil}={\sqsupset}\cup\{(p^n_{n-1},t),(p^n_n,t)\}\cup{\sqni}\cup\{(p^n_0,t'),(p^n_1,t')\}\cup{\sqsupset'}.\qedhere\]
\end{proof}

If ${\sqsupset}\supseteq{\sqni\circ\sqnii}$ then we call $\sqni$ a \emph{left subfactor} and $\sqnii$ a \emph{right subfactor} of $\sqsupset$.

\begin{theorem}\label{SnakeFactor}
    Every snake $s\in\mathbf{P}_S^R$ is a left subfactor of every tangled ${\sqsupset}\in\mathbf{P}_Q^R$.
\end{theorem}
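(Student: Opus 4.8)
The plan is to locate the snake pattern \emph{inside} $\sqsupset$ directly, rather than to factor $s$ itself; the latter is impossible when $s$ is proper, since then $s$ is prime by \Cref{PrimeSnake}. First I would pass to the normal form recorded just before \Cref{PrimeSnake}, so that $S=P_{l+m+n}$ and $R=P_{l-m+n}$ with $s$ running up to the peak $l$, folding back down to the valley $l-m$, and climbing again to the far end $d=l-m+n$. In particular $s$ restricts to a bijection over $[0,l-m)\cup(l,d]$ and to a three-to-one cover over the overlap $[l-m,l]$. Since a left subfactor of $\sqsupset$ is a relation ${\sqnii}\in\mathbf{P}_Q^S$ with $s\circ\sqnii\subseteq\sqsupset$, and $s$ is a function, the goal becomes: build ${\sqnii}\in\mathbf{P}_Q^S$ so that $s(x)\in q^\sqsubset$ whenever $x\mathrel{\sqnii}q$; that is, a morphism lifting the relation $\sqsupset$ through the folding quotient $s$.

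Next I would record the rigidity forced by $\mathbf{P}$. Because $\sqsupset$ is edge-preserving and $\sqcap$ is reflexive, each image $q^\sqsubset$ is a clique, hence a single vertex or an edge of $R$; images of adjacent vertices are adjacent, so the image of a subpath is again a subpath, and by co-surjectivity the assignment $q\mapsto q^\sqsubset$ sweeps a walk across all of $R$. The key consequence I extract from tangledness is a nesting property: for overlapping or abutting subpaths $A,B\subseteq Q$ one has $A^\sqsubset\subseteq B^\sqsubset$ or $B^\sqsubset\subseteq A^\sqsubset$ (up to the boundary edges of these image-intervals, which is harmless). This is the precise crookedness the construction will exploit.

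The core step is to use this nesting to cut $Q$ into three consecutive subpaths $Q_1,Q_2,Q_3$, sharing only their endpoints, whose images realize the three monotone pieces of $s$: $Q_1^\sqsubset=[0,l]$, $Q_2^\sqsubset=[l-m,l]$ and $Q_3^\sqsubset=[l-m,d]$. The two cut points are the fold points of the walk -- the last contact with the peak $l$ before the descent and the last contact with the valley $l-m$ before the final ascent -- and nesting guarantees that every intermediate excursion stays inside the relevant image-interval, so that each $Q_i$ can be assigned wholesale to the $i$-th sheet of $S$. On $Q_i$ I then define $\sqnii$ by lifting each $q^\sqsubset$ to the unique copy lying over it in the $i$-th sheet; the lifts agree at the two cuts because adjacent vertices have adjacent images and $s$ folds $S$ at exactly the vertices $l$ and $l+m$. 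By construction $s(x)\in q^\sqsubset$ for every $x\mathrel{\sqnii}q$, giving $s\circ\sqnii\subseteq\sqsupset$.

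It remains to verify ${\sqnii}\in\mathbf{P}_Q^S$, i.e.\ that it is edge-preserving and co-bijective. Edge-preservation and co-surjectivity onto $S$ follow from the piecewise definition once all three sheets are covered. The delicate point, which I expect to be the main obstacle, is co-injectivity over the triple-covered region: for each of the three copies in $S$ of a vertex of $[l-m,l]$ I must produce some $q$ with $x\mathrel{\sqnii}q$ for that $x$ alone. This forces the walk to make three genuinely \emph{clean} passes over $[l-m,l]$, one inside each $Q_i$, each witnessed by a vertex $q$ whose image $q^\sqsubset$ is a single vertex; such witnesses I would extract from the co-injectivity of $\sqsupset$ combined with the nesting property. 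Showing that these clean passes coexist in the correct sheet order and match consistently at the folds is the heart of the argument and the step that genuinely uses tangledness rather than mere surjectivity.
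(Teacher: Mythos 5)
Your framing of the problem is correct: since $s$ is a function, exhibiting $s$ as a left subfactor of $\sqsupset$ means constructing ${\sqnii}\in\mathbf{P}_Q^S$ with $s\circ{\sqnii}\subseteq{\sqsupset}$, i.e.\ a lift of the walk $q\mapsto q^\sqsubset$ through the folding map $s$. The gap is in your core step: for tangled $\sqsupset$ the cut of $Q$ into three \emph{consecutive} subpaths realizing the three sheets of $s$ does not exist in general -- tangledness is precisely what forbids it. Concretely, work in your normal form, write $p_k$ for the vertices of $R$, let $S_1,S_2,S_3$ be the three maximal subpaths of $S$ on which $s$ is injective, and take a snake with $l\geq m+2$ and $n\geq m+2$, so that $s[S_1]=[p_0,p_l]$, $s[S_2]=[p_{l-m},p_l]$, $s[S_3]=[p_{l-m},p_d]$ with $d=l-m+n$, and moreover $\{p_0,p_1\}$ is disjoint from $s[S_2]\cup s[S_3]$ while $\{p_{d-1},p_d\}$ is disjoint from $s[S_1]\cup s[S_2]$. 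Each $q^\sqsubset$ is a clique, so any $q$ with $p_0\in q^\sqsubset$ has $q^\sqsubset\subseteq\{p_0,p_1\}$; since a vertex placed in the piece $Q_i$ is only lifted into the $i$-th sheet, co-surjectivity of $\sqnii$ requires $q^\sqsubset\cap s[S_i]\neq\emptyset$, and hence \emph{every} preimage of $p_0$ must lie in the piece assigned to $S_1$ and \emph{every} preimage of $p_d$ in the piece assigned to $S_3$. These two pieces are disjoint intervals of $Q$ (sharing at most an endpoint), so one entirely precedes the other. But tangledness forces preimages of $p_0$ and $p_d$ to interleave: by co-injectivity pick $a,b$ with $a^\sqsubset=\{p_0\}$ and $b^\sqsubset=\{p_d\}$, let $x$ be the first preimage of $p_d$ in $[a,b]$, and split $[a,b]=A\cup B$ with $A=[a,x)$ and $B=[x,b]$. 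The alternative $B\subseteq A^{\sqsubset\sqsupset}$ fails at $b$, since no vertex of $A$ has $p_d$ in its image; so tangledness gives $A\subseteq B^{\sqsubset\sqsupset}$, which applied to $a$ produces $y\in(x,b)$ with $p_0\in y^\sqsubset$. Thus $a<x<y<b$ are preimages of $p_0,p_d,p_0,p_d$ in that order, so a preimage of $p_0$ follows a preimage of $p_d$ \emph{and} vice versa, contradicting either possible order of the two pieces. (This is exactly the crookedness encapsulated by \cite[Proposition 2.51]{BaBiVi}, which the paper's own proof invokes -- but to a different end.) So the ``three clean consecutive passes'' whose existence you defer to the end of your argument are not merely delicate; they cannot exist.

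The paper's proof accordingly takes a structurally different route: its three regions are \emph{not} consecutive. Fixing the fold images $r_+,r_-$ with $s[S_0]=[r_+,r_-]$ (in the paper's notation $S_+,S_0,S_-$ for the sheets), it defines $Q_0$ as the set of $q$ such that every interval from $q$ to a clean preimage of $r_+$ or of $r_-$ has image covering all of $[r_+,r_-]$, and $Q_\pm$ as the set of $q$ that reach a clean preimage of $r_\pm$ without passing through $Q_0$. These regions interleave along $Q$ arbitrarily, and that is harmless: every crooked crossing of the middle interval is lifted into the middle sheet by $s|_{S_0}^{-1}[q^\sqsubset]$, a lift indifferent to the direction of the crossing, while vertices in the overlaps $Q_\pm\cap Q_0$ are sent to fold vertices (or to all of $S_0$). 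Tangledness enters only to prove the separation facts that make this gluing well-defined and edge-preserving, namely $Q_+\cap Q_-=\emptyset$ when $\mathsf{e}(S_0)>1$ and that $q\in Q_\mp\cap Q_0$ implies $q\sqsubset r_\pm$. Any repair of your plan would have to allow the sheet-$1$ and sheet-$3$ portions of $Q$ to break into many separated stretches, with the middle sheet absorbing everything in between -- at which point you would be rebuilding exactly this decomposition.
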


\begin{proof}
    Let $S_+,S_0,S_-\subseteq S$ be the maximal subpaths on which $s$ is injective, indexed so that $S_-\cap S_+=\emptyset$.  Let $r_+$ and $r_-$ be the vertices of $R$ such that $\{r_+\}=s[S_0\cap S_-]$ and $\{r_-\}=s[S_0\cap S_+]$ and hence $s[S_0]=[r_+,r_-]$.  Further let
    \begin{align*}
        Q_0&=\{q\in Q:s[S_0]\subseteq[q,q']^\sqsubset\text{ whenever }q'^\sqsubset=\{r_+\}\text{ or }q'^\sqsubset=\{r_-\}\}.\\
        Q_+&=\{q\in Q:\exists q'\in Q\ (q'^\sqsubset=\{r_+\}\text{ and }(q,q']\cap Q_0=\emptyset)\}.\\
        Q_-&=\{q\in Q:\exists q'\in Q\ (q'^\sqsubset=\{r_-\}\text{ and }(q,q']\cap Q_0=\emptyset)\}.
    \end{align*}

    First we claim that every edge $E\subseteq Q$ is contained in one of these sets.  To see this, say $E\nsubseteq Q_0$ so we have $e\in E\setminus Q_0$.  This means we have $q'\in Q$ with $q'^\sqsubset=\{r_+\}$ or $q'^\sqsubset=\{r_-\}$ such that $s[S_0]\nsubseteq[e,q']^\sqsubset$.  If $q\in[e,q']$ then $s[S_0]\nsubseteq[q,q']^\sqsubset$ too so $[e,q']\cap Q_0=\emptyset$.  Thus $(f,q']\cap Q_0=\emptyset$, where $f\in E\setminus\{e\}$, so $E\subseteq Q_+$ or $E\subseteq Q_-$, proving the claim.

    If $\mathsf{e}(S_0)=1$ then $r_+\mathrel{\sqcap}r_-$ and $Q_+\cap Q_-\subseteq Q_0=\{q\in Q:q^\sqsubset=\{r_+,r_-\}\}$.  If $\mathsf{e}(S_0)>1$ then we claim that $Q_+\cap Q_-=\emptyset$.  For this it suffices to show that $Q_0\cap[q_+,q_-]$ contains at least two vertices, for any $q_+,q_-\in Q$ with $q_+^\sqsubset=\{r_+\}$ and $q_-^\sqsubset=\{r_-\}$ (because then $Q_0\cap((q,q_+]\cup(q,q_-])\neq\emptyset$, for any $q\in Q$).  Replacing $q_+$ or $q_-$ to define a smaller subpath if necessary, we may further assume that $\{r_+\}\neq q^\sqsubset\neq\{r_-\}$, for all $q\in(q_+,q_-)$.  As $\sqsupset$ is tangled, \cite[Proposition 2.51]{BaBiVi} yields $d_+\sqsubset r_+$ and $d_-\sqsubset r_-$ with $d_+\in[q_-,d_-]$ and $d_-\in[q_+,d_+]$.  In particular, $d_+\neq d_-$ because $\mathsf{e}(S_0)>1$ and hence $r_+\not\mathrel{\sqcap}r_-$.  Moreover, for any $q\in[d_+,d_-]$, if $q'^\sqsubset=\{r_+\}$ or $q'^\sqsubset=\{r_-\}$ then $[q,q']$ contains either $[q,q_+]\supseteq[d_-,q_+]$ or $[q,q_-]\supseteq[d_+,q_-]$.  As $[r_-,r_+]\subseteq[d_-,q_+]^\sqsubset$ and $[r_+,r_-]\subseteq[d_+,q_-]^\sqsubset$, this shows that $[d_+,d_-]\subseteq Q_0$, proving the claim.

    Next note that if $q_0\in Q_-\cap Q_0$ then $q\sqsubset r_+$.  Indeed, $q_0\in Q_-$ implies that we have $q'\in Q$ with $q'^\sqsubset=\{r_-\}$ and $(q_0,q']\cap Q_0=\emptyset$.  Replacing $q'$ if necessary we may assume that $q^\sqsubset\neq\{r_-\}$, for any $q\in(q_0,q')$.  Also $q^\sqsubset\neq\{r_+\}$, for any $q\in(q_0,q')$, by the claim just proved above.  Taking $q''$ with $(q_0,q']=[q'',q']$, it follows that $q''\notin Q_0$ so we have $q'''$ with $q'''^\sqsubset=\{r_+\}$ or $q'''^\sqsubset=\{r_-\}$ such that $s[S_0]\nsubseteq[q'',q''']^\sqsubset$.  But it can not be that $q_0\in[q'',q''']$ because this would also imply $q_0\notin Q_0$.  But we also can not have $q'''\in(q_0,q')$ so the only possibility is $q'\in(q_0,q''']=[q'',q''']$.  Thus $s[S_0]\nsubseteq[q'',q']^\sqsubset$ as well and hence $r_+\in s[S_0]\setminus[q'',q']^\sqsubset$, as $r_-\in q'^\sqsubset\subseteq[q'',q']^\sqsubset$.  Then $q_0\not\sqsubset r_+$ would imply that also $s[S_0]\nsubseteq[q_0,q']^\sqsubset$ and hence $q_0\notin Q_0$, a contradiction.  Likewise, we see that if $q\in Q_+\cap Q_0$ then $q\sqsubset r_-$.  We can therefore define ${\sqni}\in\mathbf{P}_Q^S$ with $s\circ{\sqni}\subseteq{\sqsupset}$ by
    \[q^{\sqin}=
    \begin{cases}
        S_0&\text{if }q\in Q_-\cap Q_0\cap Q_+\\
        S_-\cap S_0&\text{if }q\in Q_-\cap Q_0\setminus Q_+\\
        S_+\cap S_0&\text{if }q\in Q_+\cap Q_0\setminus Q_-\\
        s|_{S_k}^{-1}[q^\sqsubset]&\text{ otherwise for }q\in Q_k.
    \end{cases}\qedhere\]
\end{proof}

\begin{proposition}\label{HookSubfactor}
    Every non-simple hook $h\in\mathbf{P}_H^R$ and every proper simple ${\sqni}\in\mathbf{P}_P^R$ is a left subfactor of every tangled ${\sqsupset}\in\mathbf{P}_Q^R$.
\end{proposition}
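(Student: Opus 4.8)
The plan is to split into the two cases, reducing the hook case to \Cref{SnakeFactor} and treating the proper simple case by a direct lifting construction. First I would pre- and post-compose with isomorphisms to reduce to the normal forms: $R=P_n$, the hook is $h\in\mathbf{P}_{P_{m+n}}^{P_n}$ with $2\le m\le n$ in the displayed form, and the proper simple $\sqni\in\mathbf{P}_{P_{n+1}}^{P_n}$ collapses a single interior edge $\{p^{n+1}_i,p^{n+1}_{i+1}\}$ onto $r_0=p^n_i$ with $1\le i\le n-1$. I also record the trivial but crucial fact that left subfactors compose: if ${\sqsupset}\supseteq{s\circ\tau}$ and ${s}\supseteq{h\circ f}$ then ${\sqsupset}\supseteq{h\circ(f\circ\tau)}$, so $h$ is again a left subfactor of $\sqsupset$.

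For the hook I would exhibit a snake having $h$ as an honest left factor. Let ${f}\in\mathbf{P}_S^{P_{m+n}}$ be the walk $p^{m+n}_0,\ldots,p^{m+n}_{m+n},p^{m+n}_{m+n-1},\ldots,p^{m+n}_n$ running out to the far end of $H$ and back to its peak; this is a surjective edge-preserving function and hence a morphism. A direct computation gives that $s:=h\circ f$ traces $0,1,\ldots,n,n-1,\ldots,n-m,n-m+1,\ldots,n$ in $P_n$, reversing direction exactly at the peak $n$ and the valley $n-m$, so $\mathsf{t}(s)=2$ and $s$ is a (non-proper) snake with $s=h\circ f$. \Cref{SnakeFactor} makes $s$ a left subfactor of the tangled $\sqsupset$, and composing subfactors makes $h$ one as well. (The hypothesis $m\ge2$ is not actually used here; for $m=1$ the hook is a non-proper simple morphism, which lies outside the scope of the statement.)

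For the proper simple $\sqni$, a reduction to snakes is impossible, since $\sqni$ is not edge-injective whereas every snake is; so I would build the right subfactor ${\rho}\in\mathbf{P}_Q^{P_{n+1}}$ directly, lifting $\sqsupset$ through the lone collapse of $\sqni$. Away from $r_0$ the lift is forced: for $q$ with $r_0\notin q^\sqsubset$ the connected set $q^\sqsubset$ lies in $[p^n_0,p^n_{i-1}]$ or in $[p^n_{i+1},p^n_n]$, and I set $\rho(q)=\sqni^{-1}[q^\sqsubset]$, a connected subset of the left arm $\{p^{n+1}_k:k\le i\}$ or the right arm $\{p^{n+1}_k:k\ge i+1\}$ of $P_{n+1}$. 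Over the region $\{q:r_0\in q^\sqsubset\}$ I resolve the ambiguity $r_0\mapsto\{p^{n+1}_i,p^{n+1}_{i+1}\}$ componentwise: each component is a subpath of $Q$ whose ends abut the two arms, and I colour it consistently so that the lift matches the forced values on either side, a crossing component transitioning once across the split edge and a bouncing component taking the colour of its single side.

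The main obstacle is this last case: one must verify that $\rho$ is a genuine morphism in $\mathbf{P}_Q^{P_{n+1}}$ (edge-preserving and co-surjective) and that ${\sqni\circ\rho}\subseteq{\sqsupset}$. Edge-preservation is local and follows from the consistent colouring together with edge-preservation of $\sqsupset$; the delicate point is co-surjectivity, i.e.\ that both $p^{n+1}_i$ and $p^{n+1}_{i+1}$ are hit. Here I would use that $\sqsupset$ reaches both $p^n_{i-1}$ and $p^n_{i+1}$, on opposite sides of $r_0$: tracing $Q$ between their preimages, connectedness forces the image to sweep across $r_0$, producing a crossing component and hence both colours. Tangledness, via \cite[Proposition 2.51]{BaBiVi}, supplies the interleaving of preimages that makes this crossing argument work at the relational level, exactly as in the proof of \Cref{SnakeFactor}; assembling the forced values with the coloured region then yields the desired $\rho$.
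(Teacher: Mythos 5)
Your hook argument is correct and takes a genuinely different route from the paper's. The paper handles hooks directly: it takes a minimal subpath $Q_-\subseteq Q$ co-injectively covering $h[H_-]$ and uses the interleaving property of tangled morphisms (\cite[Proposition 2.51]{BaBiVi}) to produce two distinct fold points, which is exactly where non-simplicity ($\mathsf{e}(Q_-)\geq2$) enters. You instead exhibit $h$ as an honest left factor of a snake: your out-and-back walk $f$ is a surjective edge-preserving function with $s=h\circ f$ edge-injective and $\mathsf{t}(s)=2$, so \Cref{SnakeFactor} applies and left subfactors compose. This buys brevity (all the tangledness work is delegated to \Cref{SnakeFactor}) at no cost: the snake you produce is improper, but \Cref{SnakeFactor} is stated for all snakes, so the citation is legitimate, and, as you note, non-simplicity is never used, so your argument even covers the $m=1$ case.

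The proper simple case, however, has a genuine gap, located in your opening reduction. Morphisms of $\mathbf{P}$ are co-bijective \emph{relations}, not functions, and a proper simple morphism need not be a function. Concretely, enlarge the interior-edge collapse by additionally relating the collapsing vertex $p^{n+1}_{i+1}$ to $p^n_{i+1}$, so that its image is the whole edge $\{p^n_i,p^n_{i+1}\}$: this relation is still edge-preserving, co-bijective, simple and proper, but it is not a function, and since the only isomorphisms in $\mathbf{P}$ are identities and flips (bijections cannot change the fact that some vertex has two images), no pre/post-composition with isomorphisms brings it to your normal form. The paper's proof is worded precisely to cover such $\sqni$: it assumes only that $\sqni$ is a bijective function off one interior vertex $p$ whose image lies inside the edge $p^{\sqcap\sqin}$. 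The relational case also genuinely costs something extra: for such $\sqni$, co-injectivity of a right subfactor $\rho$ demands some $q$ whose $\rho$-value is exactly $\{p^{n+1}_{i+1}\}$, and ${\sqni\circ\rho}\subseteq{\sqsupset}$ then forces $q^\sqsubset=\{p^n_i,p^n_{i+1}\}$; that a tangled $\sqsupset$ has a vertex whose image is exactly this edge is true, but it requires an argument (tangledness applied to adjacent singletons shows neighbouring images meet, so the image sweep from one arm to the other must pass through the full edge) which your sketch does not contain.

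Even within the function case, your division of labour misplaces the one appeal to tangledness that you do need. It is not true that edge-preservation ``follows from the consistent colouring'' with only the collapsed edge being delicate: without tangledness a consistent colouring need not exist at all. If some $q$ with $r_0\in q^\sqsubset$ has neighbours whose images are $\{p^n_{i-1}\}$ and $\{p^n_{i+1}\}$ (a transversal crossing), then no nonempty clique is an admissible value for $\rho$ at $q$, so the lift fails outright; tangledness is exactly what excludes this configuration, since applied to the two adjacent singletons it forces $q^\sqsubset$ to meet both $\{p^n_{i-1}\}$ and $\{p^n_{i+1}\}$, impossible for a clique. Once such tight crossings are excluded, your crossing-component transition does deliver what is needed, though note that this is co-injectivity (exact singleton values at $p^{n+1}_i$ and at $p^{n+1}_{i+1}$), not ``co-surjectivity'' or mere hitting. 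So the architecture of your lift is salvageable, but as written the proof establishes the statement only for proper simple \emph{functions} and leaves its essential use of tangledness unlocalised.
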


\begin{proof}
    Let $H_+,H_-\subseteq H$ be the maximal subpaths on which $h$ is injective, indexed so that $h[H_+]=R$.  Let $Q_-$ be a minimal subpath of $Q$ for which ${\sqsupset}|^{h[H_-]}_{Q_-}$ is co-injective.  So we have $q,d\in Q$ and $r,e\in R$ with $Q_-=[q,d]$, $h[H_-]=[r,e]$, $q^\sqsubset=\{r\}$, $d^\sqsubset=\{e\}$ and $h|_{H_-}^{-1}(e)=h|_{H_+}^{-1}(e)$.  As $\sqsupset$ is tangled, \cite[Proposition 2.51]{BaBiVi} yields $q'\sqsubset r$ and $d'\sqsubset e$ with $d'\in[q,q']$ and $q'\in[d,d']$.  As $h$ is not simple, $\mathsf{e}(Q_-)\geq2$ so $q$ and $d$ are not adjacent and hence $q'\neq d'$.  Thus we can define ${\nil}\in\mathbf{P}_Q^H$ with ${\sqsupset}\supseteq h\circ{\nil}$ by
    \[p^{\nil}=\begin{cases}\{h|_{H_-}^{-1}(r)\}&\text{if }p=q'\\\{h|_{H_-}^{-1}(e)\}&\text{if }p=d'\\h|_{H_-}^{-1}[p^\sqsubset]&\text{if }p\in[d,d')\setminus\{q'\}\\h|_{H_+}^{-1}[p^\sqsubset]&\text{otherwise}.\end{cases}\]

    As ${\sqni}\in\mathbf{P}_P^R$ is proper and simple, we have $p\in P\setminus\mathsf{E}(P)$ such that $p^{\sqcap\sqin}$ is an edge of $R$ and $\sqni$ is a bijective function on $P\setminus\{p\}$.  We can then define ${\nil}\in\mathbf{P}_Q^P$ with ${\sqsupset}\supseteq{{\sqni}\circ{\nil}}$ by
    \[q^{\lin}=\begin{cases}\{p\}&\text{if }q^\sqsubset=p^{\sqcap\sqin}\\q^{\sqsubset\sqni}\setminus\{p\}&\text{if }q^\sqsubset\neq p^{\sqcap\sqin}.\end{cases}\qedhere\]
\end{proof}

Note every tangled morphism is edge-witnessing, to which the following result applies.

\begin{proposition}\label{ImproperSubfactor}
    Every improper simple ${\sqni}\in\mathbf{P}_S^R$ with $\mathsf{e}(R)\geq1$ is a left subfactor of every ${\sqsupset}\in\mathbf{P}_Q^R$ that is a composition of at least two edge-witnessing morphisms.
\end{proposition}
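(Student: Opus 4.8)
The plan is to exhibit an explicit right subfactor $\mu\in\mathbf{P}_Q^S$ with $\sqni\circ\mu\subseteq\sqsupset$, so that $\sqni$ is a left subfactor of $\sqsupset$. First I would normalise $\sqni$. Up to pre-composing with an isomorphism of $S$ and post-composing with the order-reversing automorphism of $R$, every improper simple morphism admits a common description: there is an end vertex $e$ of $S$, with unique neighbour $e'$, such that the restriction $\sqni|_{S\setminus\{e\}}$ is an isomorphism onto $R$ carrying $e'$ to an end $r_0$ of $R$, while $\sqni(e)\in\{r_0,r_1\}$, where $r_1$ is the vertex of $R$ adjacent to $r_0$. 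The case $\sqni(e)=r_0$ is the contraction of the terminal edge and the case $\sqni(e)=r_1$ is the simple hook. These normalisations preserve the hypothesis on $\sqsupset$, since pre-composing with an isomorphism and post-composing with the order-reversing automorphism of $R$ again produce a composition of edge-witnessing morphisms. Writing $\beta=(\sqni|_{S\setminus\{e\}})^{-1}\in\mathbf{P}_R^{S\setminus\{e\}}$, we have $\beta(r_0)=e'$, and $e$ is the extra end of $S$ beyond $e'$.

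Away from the terminal region $\mu$ will agree with $\beta\circ\sqsupset$. This is already a morphism from $Q$ onto $S\setminus\{e\}$ satisfying $\sqni\circ(\beta\circ\sqsupset)=\sqsupset$, since $\sqni\circ\beta={=_R}$; its sole defect is that the end vertex $e$ receives no dedicated preimage, so co-injectivity fails at $e$. The whole content of the proof is therefore to reroute $\mu$ on the preimage of the terminal edge $\{r_0,r_1\}$ so as to supply $e$ with a dedicated preimage, while keeping $\mu$ edge-preserving and co-bijective and maintaining $\sqni\circ\mu\subseteq\sqsupset$. As a composition of edge-witnessing morphisms is again edge-witnessing, I may assume $\sqsupset=\sqsupset_2\circ\sqsupset_1$ with $\sqsupset_1\in\mathbf{P}_Q^T$ and $\sqsupset_2\in\mathbf{P}_T^R$ both edge-witnessing. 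Using co-injectivity and edge-witnessing of $\sqsupset_2$ at the terminal edge I would locate $u\in T$ with $u^{\sqsubset_2}=\{r_0,r_1\}$, and observe that by edge-preservation every $T$-neighbour of $u$ maps into $r_0^\sqcap\cap r_1^\sqcap=\{r_0,r_1\}$. Feeding the edges of $T$ incident to $u$ through the edge-witnessing morphism $\sqsupset_1$ then produces, inside the $Q$-preimage of $\{r_0,r_1\}$, a subpath mapping onto the terminal edge and containing consecutive vertices whose $\sqsubset$-image is the full edge $\{r_0,r_1\}$.

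On such a terminal subpath I would define $\mu$ by hand: the outermost full-edge vertex is sent to $e$ (legitimate, as $\sqni(e)\in\{r_0,r_1\}$ lies in its $\sqsubset$-image), its inner neighbour is sent to $e'=\beta(r_0)$, and the remaining vertices revert to $\beta\circ\sqsupset$, so that the images move contiguously through $e,e',\beta(r_1),\dots$ and, in the hook case, trace the fold $r_1,r_0,r_1$ through edge-valued vertices rather than a geometric fold. One then checks the two defining properties of a morphism: edge-preservation at the single interface vertex where the rerouted segment meets the generic $\beta\circ\sqsupset$ part, and co-injectivity, whose witnesses for $e$ and $e'$ come from the surgery and whose witnesses for every other vertex of $S$ come from co-injectivity of $\sqsupset$ together with $\beta$ being an isomorphism; finally one verifies $\sqni\circ\mu\subseteq\sqsupset$ pointwise.

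The main obstacle is exactly the step just described: extracting from the hypothesis a run of at least two consecutive full-edge vertices, i.e. enough room at the terminal edge to carry both $e$ and $e'$ on distinct dedicated preimages while folding. A single edge-witnessing morphism only guarantees one vertex mapping onto the terminal edge, flanked by vertices mapping purely to $r_0$ or to $r_1$; in that situation the hook cannot be subfactored, because the vertex forced to serve as the preimage of $e$ ends up non-adjacent to that of $e'$ and edge-preservation fails. It is precisely the second edge-witnessing morphism that refines this single edge-vertex into an edge of $Q$ lying over the terminal edge, and confirming that this refinement always yields a configuration on which the surgery succeeds is the crux of the argument.
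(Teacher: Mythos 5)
Your proposal follows the same outline as the paper's proof: normalise $\sqni$ so that $S$ is a copy of $R$ with one extra end $e$ attached to $e'=\beta(r_0)$, take $\beta\circ{\sqsupset}$ as the default right subfactor, and restore co-injectivity at $e$ by a local surgery over the terminal edge $E=\{r_0,r_1\}$. The paper locates the surgery site differently, however: it takes a minimal subpath $Q_E$ of $Q$ on which the restriction of $\sqsupset$ over $E$ is co-injective; minimality forces one end of $Q_E$ to have image exactly $\{r_0\}$, the other exactly $\{r_1\}$, and every interior vertex to have image exactly $E$, and the surgery sends the interior vertex next to the $\{r_0\}$-end to $e$ and the rest of the interior to $e'$, so that both interfaces with the $\beta$-part stay edge-preserving; the two-fold edge-witnessing hypothesis is invoked there only to ensure this interior has at least two vertices. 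You instead try to manufacture two adjacent vertices of $Q$ whose images are the full edge $E$ by witnessing the edges of $T$ incident to a vertex $u$ with $u^{\sqsubset_2}=E$.

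That step—which you rightly flag as the crux—is false as stated. Witnesses in $Q$ of distinct edges of $T$ need not be adjacent, and fibres of $\mathbf{P}$-morphisms need not be connected, so pushing the edges at $u$ through $\sqsupset_1$ need not produce consecutive full-$E$ vertices. Concretely, let $R$ be the path $r_0\mathrel{\sqcap}r_1\mathrel{\sqcap}r_2$, let $T=t_0\mathrel{\sqcap}\cdots\mathrel{\sqcap}t_4$ have ${\sqsubset_2}$-images $\{r_0\},E,\{r_1\},\{r_1,r_2\},\{r_2\}$, and let $Q=x_0\mathrel{\sqcap}\cdots\mathrel{\sqcap}x_{10}$ have ${\sqsubset_1}$-images $\{t_0\},\{t_0,t_1\},\{t_0\},\{t_1\},\{t_2\},\{t_1,t_2\},\{t_2\},\{t_2,t_3\},\{t_3\},\{t_3,t_4\},\{t_4\}$: both factors are edge-witnessing morphisms, yet the composite has images $\{r_0\},E,\{r_0\},E,\{r_1\},E,\{r_1\},\{r_1,r_2\},\{r_1,r_2\},\{r_1,r_2\},\{r_2\}$, whose three full-$E$ vertices $x_1,x_3,x_5$ are pairwise non-adjacent, so the configuration you require does not exist. (The conclusion still holds there: sending $x_1$ to $\{e\}$ and every other vertex through $\beta$ works, precisely because both neighbours of $x_1$ are dedicated to $r_0$; but your recipe does not find this.) A second, independent problem is that your surgery reroutes only the inner neighbour of the vertex sent to $e$, leaving the outer neighbour on $\beta\circ\sqsupset$; edge-preservation at that interface then forces the outer neighbour's image to be exactly $\{r_0\}$, which is likewise not guaranteed: in a composite with image sequence $\{r_0\},\{r_1\},E,\{r_1\},E,E,E,\{r_1\},\ldots$ (also realisable with two edge-witnessing factors) the one-sided surgery fails at every full-$E$ vertex, and the only surgery that works sits at the middle of the run $E,E,E$ and must reroute both of its neighbours to $e'$. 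As an aside, these same examples show that a minimal $Q_E$ in the paper's sense can have interior of size $0$ or $1$ as well, so extracting a workable configuration from the bare hypothesis is genuinely the delicate heart of this proposition; it is exactly this existence claim that your proposal leaves unproved.
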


\begin{proof}
    As $\sqni$ is improper and simple, we have $e\in\mathsf{E}(S)$ such that $\sqni$ is a bijective function from $S_+=S\setminus\{e\}$ onto $R$.  Take $s\in S$ with $s\mathrel{\sqcap}e$ so $s^\sqsubset=\{r\}$ for some end $r\in\mathsf{E}(R)$.  As $\mathsf{e}(R)\geq1$, we have a (unique) edge $E\subseteq R$ with $r\in E$, necessarily with $e^\sqsubset\subseteq E$.  Let $Q_E$ be a minimal subpath of $Q$ for which ${\sqsupset}|^E_{Q_E}$ is co-injective, so we have $q_E\in\mathsf{E}(Q_E)$ with $q_E^\sqsubset=\{r\}$.  Let $Q_E'=Q_E\setminus\mathsf{E}(Q_E)$ so $q^\sqsubset=E$, for all $q\in Q_E'$.  Take $q_E'\in Q_E'$ with $q_E'\mathrel{\sqcap}q_E$.  As ${\sqsupset}\in\mathbf{P}_Q^R$ is a composition of at least two edge-witnessing morphisms, $\mathsf{e}(Q'_E)\geq1$ so we can define 
    ${\nil}\in\mathbf{P}_Q^S$ with ${\sqsupset}\supseteq{\sqni}\circ{\nil}$ by
    \[q^{\lin}=\begin{cases}\{e\}&\text{if }q=q_E'\\\{s\}&\text{if }q\in Q_E'\setminus\{q_E'\}\\q^{\sqsubset\sqni}\setminus\{e\}&\text{if }q\in Q\setminus Q_E'.\end{cases}\qedhere\]
\end{proof}

From any path $Q\in\mathbf{P}$ we can define another path $\mathsf{F}Q$ consisting of its cliques, i.e. all edges and singleton subsets, under the comparability relation, where $X,Y\in\mathsf{F}Q$ are adjacent precisely when $X\subseteq Y$ or $Y\subseteq X$.  For any ${\sqsupset}\in\mathbf{P}_Q^R$, we also define $\mathsf{F}^\sqsupset\in\mathbf{F}_{\mathsf{F}Q}^{\mathsf{F}R}$ by
\[\mathsf{F}^\sqsupset(X)=X^\sqsubset.\]
This defines a functor $\mathsf{F}$ from $\mathbf{P}$ to $\mathbf{F}$.  Moreover, for each $Q\in\mathbf{P}$, the restriction of the membership relation ${\in^Q}={\in}|^Q_{\mathsf{F}Q}$ defines a morphism in $\mathbf{P}^Q_{\mathsf{F}Q}$.  This yields a natural transformation from $\mathsf{F}$ to the identify functor -- see \cite[Proposition 4.45]{BaBiVi2}.

Let us call a morphism in $\mathbf{P}$ \emph{decomposable} if it can be written as a composition of hooks, proper snakes and simple morphisms.

\begin{proposition}\label{LeftFactorDecomposable}
    Every ${\sqni}\in\mathbf{P}_L^R$ is a left factor of some decomposable ${\sqsupset}\in\mathbf{P}_D^R$.
\end{proposition}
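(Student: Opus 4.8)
The plan is to use the functor $\mathsf{F}\colon\mathbf{P}\to\mathbf{F}$ and the natural transformation $(\in^Q)_Q$ from $\mathsf{F}$ to the identity functor recorded just above the statement. Applying naturality to the given ${\sqni}\in\mathbf{P}_L^R$ yields the commuting square
\[{\sqni}\circ{\in^L}\ =\ {\in^R}\circ{\mathsf{F}^\sqni}.\]
Setting $D=\mathsf{F}L$ and ${\sqsupset}={\in^R}\circ{\mathsf{F}^\sqni}\in\mathbf{P}_D^R$, this reads ${\sqsupset}={\sqni}\circ{\in^L}$, which already exhibits ${\sqni}$ as a left factor of ${\sqsupset}$ (indeed with equality, not merely containment). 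Thus the whole problem reduces to showing that ${\sqsupset}$ is decomposable, and since a composition of decomposable morphisms is visibly decomposable, it is enough to decompose ${\mathsf{F}^\sqni}$ and ${\in^R}$ separately.

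The factor ${\mathsf{F}^\sqni}$ is easy: as $\mathsf{F}$ lands in $\mathbf{F}$, it is an edge-preserving surjective function in $\mathbf{F}_{\mathsf{F}L}^{\mathsf{F}R}$. The inductive argument of \Cref{PrimeDecomposition} goes through verbatim inside $\mathbf{F}$ --- any non-isomorphism of $\mathbf{F}$ that is not $\mathbf{F}$-prime splits through a path of strictly intermediate length --- so ${\mathsf{F}^\sqni}$ is either an isomorphism or a finite composition of $\mathbf{F}$-primes. By \Cref{PrimesInF} every such prime is simple, a hook, or a proper snake, so in the non-isomorphism case ${\mathsf{F}^\sqni}$ is decomposable outright.

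For ${\in^R}$ I would check that it is a composition of $\mathsf{e}(R)$ simple morphisms, obtained by reinstating the edge-cliques of $R$ one at a time: subdividing a path at a single edge $\{r,r'\}$, with the new vertex related to both $r$ and $r'$, produces a domain with exactly one more edge than the codomain and hence a simple morphism, and the composite of these over all edges of $R$ is precisely ${\in^R}$ on $\mathsf{F}R$. Combining the two decompositions gives a decomposition of ${\sqsupset}$. The only points requiring care, which I expect to be the main (if minor) obstacle, are the degenerate cases: when ${\mathsf{F}^\sqni}$ is an isomorphism it can be absorbed into an adjacent simple factor of ${\in^R}$ (available whenever $\mathsf{e}(R)\geq1$), keeping that factor simple, and when $R=P_0$ one takes for ${\sqsupset}$ any simple morphism onto $P_0$ directly. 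Verifying that these edge-clique morphisms are genuinely co-bijective simple morphisms in $\mathbf{P}$ is the remaining routine bookkeeping.
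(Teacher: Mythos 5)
Your proposal is correct and follows essentially the same route as the paper: both use the naturality square ${\sqni}\circ{\in^L}={\in^R}\circ{\mathsf{F}^{\sqni}}$, decompose ${\mathsf{F}^{\sqni}}$ into primes of $\mathbf{F}$ via the counting argument of \Cref{PrimeDecomposition} together with \Cref{PrimesInF}, and observe that ${\in^R}$ is a composition of (proper) simple morphisms. The only cosmetic difference is your handling of the isomorphism case (absorbing it into a simple factor rather than composing with its inverse, as the paper does), which changes nothing of substance.
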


\begin{proof}
    Certainly $\sqni$ is a left factor of ${{\sqni}\circ{\in^L}}={{\in^R}\circ{\mathsf{F}^{\sqni}}}$.  It is also immediately seen that ${\in^R}$ is a composition of proper simple morphisms.  If ${\mathsf{F}^{\sqni}}$ is invertible then we can just compose further with its inverse and we are done.   Otherwise, as ${\mathsf{F}^{\sqni}}\in\mathbf{F}_{\mathsf{F}L}^{\mathsf{F}R}$, it can be decomposed into prime morphisms in $\mathbf{F}$, by the same counting argument as in the proof of \Cref{PrimeDecomposition}.  By \Cref{PrimesInF}, the primes in $\mathbf{F}$ consist entirely of hooks, proper snakes and possibly more simple morphisms so again we are done.
\end{proof}

We call an $\omega$-poset $\mathbb{P}$ \emph{tangled} if, for every $m\in\omega$, we have $n>m$ such that $\geq^m_n$ is a tangled morphism in $\mathbf{G}^m_n$.  As tangled morphisms form an ideal in $\mathbf{G}$, this is consistent with the notion of a tangled $\omega$-poset in \cite[\S2.7]{BaBiVi2}.

Recall that a \emph{continuum} is a connected metrisable compactum.  A continuum is \emph{indecomposable} if it is not the union of two proper subcontinua.  A continuum $X$ is \emph{hereditarily indecomposable} if all subcontinua are indecomposable.  Equivalently this is saying that, for all subcontinua $Y,Z\subseteq X$ with $Y\cap Z\neq\emptyset$, either $Y\subseteq Z$ or $Z\subseteq Y$.

\begin{theorem}\label{PFraisse}
    For any $\mathbf{P}$-poset $\mathbb{P}$ with $\mathsf{e}(\mathbb{P}_0)\geq1$, the following are equivalent.
    \begin{enumerate}
        \item\label{Ptangled} $\mathbb{P}$ is tangled.
        \item\label{PsubFraisse} $\mathbb{P}$ is $\mathbf{P}$-sub-Fra\"iss\'e.
        \item\label{PHD} $\mathsf{S}\mathbb{P}$ is hereditarily indecomposable.
    \end{enumerate}
\end{theorem}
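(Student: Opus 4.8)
The plan is to prove the two implications $\eqref{PsubFraisse}\Rightarrow\eqref{Ptangled}$ and $\eqref{Ptangled}\Rightarrow\eqref{PsubFraisse}$, giving $\eqref{Ptangled}\Leftrightarrow\eqref{PsubFraisse}$, and then to establish $\eqref{Ptangled}\Leftrightarrow\eqref{PHD}$ by continuum-theoretic means. For $\eqref{PsubFraisse}\Rightarrow\eqref{Ptangled}$ I would first record two soft facts: tangledness is upward closed under $\supseteq$ (if ${\sqsupset}\subseteq{\sqsupset'}$ then $T^{\sqsubset\sqsupset}\subseteq T^{\sqsubset'\sqsupset'}$, so any relation containing a tangled one is tangled), and the tangled morphisms form an ideal in $\mathbf{G}$. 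Now fix $m$. Since $\mathbb{P}$ is $\mathbf{P}$-subabsorbing we have subabsorbing ${\nil}\in\mathbf{P}^m_k$, and by \Cref{TangledWide} there is a tangled ${\sqsupset}\in\mathbf{P}^{\mathbb{P}_k}_G$. Subabsorption applied to $\sqsupset$ produces ${\sqsupset'}\in\mathbf{P}^G_{\mathbb{P}_n}$ with ${\nil}\circ{\sqsupset}\circ{\sqsupset'}\subseteq{\geq^m_n}$; as $\sqsupset$ is tangled and tangled morphisms form an ideal, ${\nil}\circ{\sqsupset}\circ{\sqsupset'}$ is tangled, whence $\geq^m_n$ is tangled by upward closure. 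As $m$ was arbitrary, $\mathbb{P}$ is tangled.

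For $\eqref{Ptangled}\Rightarrow\eqref{PsubFraisse}$ the real content is subabsorption. Fix $m$, choose $n$ with $\geq^m_n$ tangled, and take ${\nil}={\geq^m_n}$. Given any ${\sqsupset}\in\mathbf{P}^{\mathbb{P}_n}_G$, \Cref{LeftFactorDecomposable} exhibits it as a left factor of a decomposable morphism $\tau\in\mathbf{P}^{\mathbb{P}_n}_D$, say ${\tau}={\sqsupset}\circ{\rho}$ with ${\rho}\in\mathbf{P}^G_D$, where ${\tau}={c_1}\circ\dots\circ{c_j}$ is a composition of hooks, proper snakes and simple morphisms. The key step is to peel this off against a long chain ${\geq^n_{n'}}={\geq^n_{n_1}}\circ\dots\circ{\geq^{n_{l-1}}_{n'}}$ of tangled (hence edge-witnessing) steps: by induction on $j$, \Cref{SnakeFactor,HookSubfactor,ImproperSubfactor} factor out $c_1$ as a left subfactor of the first tangled step, after which the remainder stays tangled because tangled morphisms form an ideal, so the induction continues; the hypothesis $\mathsf{e}(\mathbb{P}_0)\geq1$ guarantees that codomains carry edges, as required by \Cref{ImproperSubfactor}. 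This yields $\sigma\in\mathbf{P}^D_{\mathbb{P}_{n'}}$ with ${\geq^n_{n'}}\supseteq{\tau}\circ{\sigma}$, and setting ${\sqsupset'}={\rho}\circ{\sigma}$ gives ${\sqsupset}\circ{\sqsupset'}={\tau}\circ{\sigma}\subseteq{\geq^n_{n'}}$, so ${\nil}\circ{\sqsupset}\circ{\sqsupset'}\subseteq{\geq^m_n}\circ{\geq^n_{n'}}={\geq^m_{n'}}$. Thus $\nil$ is subabsorbing, so $\mathbb{P}$ is $\mathbf{P}$-subabsorbing. Since any sufficiently long path folds onto two prescribed paths, $\mathbf{P}$ is directed, and \Cref{SubabsorbingDirected} then upgrades subabsorption to $\mathbf{P}$-coinitiality and hence to $\mathbf{P}$-sub-Fra\"iss\'e.

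For $\eqref{Ptangled}\Leftrightarrow\eqref{PHD}$ I would pass through the description of subcontinua: a subcontinuum $Y\subseteq\mathsf{S}\mathbb{P}$ is determined by its traces $Y_n=\{p\in\mathbb{P}_n:\overline{p_\mathsf{S}}\cap Y\neq\emptyset\}$, each a subpath of $\mathbb{P}_n$, with $Y=\bigcap_n\overline{(Y_n)_\mathsf{S}}$, and connectedness of $Y$ corresponds to connectedness of every $Y_n$. Given subcontinua $Y,Z$ with $Y\cap Z\neq\emptyset$, a common point meets some $\overline{p_\mathsf{S}}$ at each level, so $Y_n\cap Z_n\neq\emptyset$ and $Y_n\cup Z_n$ is connected; the tangled condition for $\geq^m_n$ with $S=Y_n$ and $T=Z_n$ gives $Y_n\subseteq Z_n^{\sqsubset\sqsupset}$ or $Z_n\subseteq Y_n^{\sqsubset\sqsupset}$, which projects to $Y_m\subseteq Z_m$ or $Z_m\subseteq Y_m$. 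Running this along a cofinal sequence of tangled steps and retaining whichever alternative recurs infinitely often forces $Y\subseteq Z$ or $Z\subseteq Y$, so $\mathsf{S}\mathbb{P}$ is hereditarily indecomposable. For the converse I argue contrapositively: if some $\geq^m_n$ fails to be tangled for all $n>m$, the witnessing connected pairs $S_n,T_n$ assemble, after passing to a subsequence, into two subcontinua that meet but whose level-$m$ traces are incomparable subpaths, contradicting hereditary indecomposability.

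The routine parts are the upward closure, the ideal bookkeeping, and the inductive peeling, all of which reduce to the subfactor lemmas already in hand. The main obstacle is the continuum-theoretic equivalence $\eqref{Ptangled}\Leftrightarrow\eqref{PHD}$: making the correspondence between subcontinua and nested systems of connected subpaths fully rigorous (in particular, that connectedness of $\mathsf{S}\mathbb{P}$ and of its subcontinua is detected level by level), and, for the converse, extracting genuinely non-nested subcontinua from a persistent failure of the tangled condition. I expect to lean here on the theory of tangled $\omega$-posets developed in \cite{BaBiVi2}.
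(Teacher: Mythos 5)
Your treatment of \eqref{Ptangled}$\Leftrightarrow$\eqref{PsubFraisse} is essentially the paper's own argument. The direction \eqref{PsubFraisse}$\Rightarrow$\eqref{Ptangled}, via \Cref{TangledWide}, subabsorption, and the facts that tangled morphisms form an ideal and that any morphism containing a tangled one is tangled, is the same proof; and your peeling argument for \eqref{Ptangled}$\Rightarrow$\eqref{PsubFraisse} -- decompose via \Cref{LeftFactorDecomposable} into hooks, proper snakes and simple morphisms, then absorb the prime factors one at a time into tangled steps of $\mathbb{P}$ using \Cref{SnakeFactor,HookSubfactor,ImproperSubfactor}, with the ideal property keeping the partially absorbed composite tangled -- is exactly how the paper proceeds. (Two cosmetic differences: the paper shows the identity morphisms on levels are subabsorbing rather than taking $\nil={\geq^m_n}$, and it consumes a fresh \emph{pair} of tangled steps at each peel so that the ``composition of at least two edge-witnessing morphisms'' hypothesis of \Cref{ImproperSubfactor} is met; your ``long chain'' formulation accommodates this provided you make that pairing explicit. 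Also you only need ${\geq^m_n}\circ{\geq^n_{n'}}\subseteq{\geq^m_{n'}}$, not equality, which is what holds in general.)

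The gap is in \eqref{Ptangled}$\Leftrightarrow$\eqref{PHD}. In the paper this equivalence carries no new content: both directions are quoted from \cite[Theorem 2.52]{BaBiVi}, and the only work is verifying that theorem's hypotheses for the converse direction -- $\mathbb{P}$ is prime because $\mathbf{P}$-morphisms are co-injective (so levels are minimal caps), and $\mathbb{P}$ is regular because a hereditarily indecomposable space is in particular a continuum, hence Hausdorff, so \cite[Corollary 2.40]{BaBiVi} applies. Your plan instead re-proves this external theorem from scratch, and the sketch is incomplete precisely at the hard points: (a) detecting connectedness of a subcontinuum $Y$ levelwise through its traces $Y_n$ is delicate, because $\overline{p_\mathsf{S}}$ and $\overline{q_\mathsf{S}}$ meet iff $p\barwedge q$ (\Cref{barwedgeX}), not iff $p\wedge q$, so one must pass between $\barwedge$ and $\wedge$ on levels, which requires regularity -- the very hypothesis your sketch never establishes; (b) containment of traces at level $n$ does not simply ``project'' to level $m$, since $Y_m$ is not in general the image of $Y_n$ under $\geq^m_n$; and (c) assembling two meeting, non-nested subcontinua from a persistent failure of tangledness is the genuinely hard half of \cite[Theorem 2.52]{BaBiVi}, not a routine subsequence extraction. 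Since you say you would lean on this theory anyway, the clean fix is to do what the paper does: cite \cite[Theorem 2.52]{BaBiVi} for both directions and supply only the primeness and regularity verification, which is the one piece of content your sketch omits.
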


\begin{proof}\
    \begin{itemize}
        \item[\eqref{Ptangled}$\Rightarrow$\eqref{PsubFraisse}]  Assume $\mathbb{P}$ is tangled.  As $\mathbf{P}$ is directed, it suffices to show that $\mathbb{P}$ is $\mathbf{P}$-subabsorbing.  For this it suffices to show that the identity morphisms on each level of $\mathbb{P}$ are subabsorbing.  Accordingly, take any ${\sqsupset}\in\mathbf{P}^{\mathbb{P}_n}_Q$.  Composing with another morphism if necessary, we may assume that $\sqsupset$ is decomposable, by \Cref{LeftFactorDecomposable}, i.e.~${\sqsupset}={\sqsupset_0}\circ\ldots\circ{\sqsupset_k}$ where each $\sqsupset_j$ is simple, a hook or a proper snake.  Take $n_0>n_0'>n$ such that $\geq^n_{n_0'}$ and $\geq^{n_0'}_{n_0}$ are tangled.  By \Cref{SnakeFactor} and \Cref{HookSubfactor,ImproperSubfactor}, we have ${\sqsupset_0'}\in\mathbf{P}^Q_{\mathbb{P}_{n_0}}$ such that ${{\sqsupset_0}\circ{\sqsupset_0'}}\subseteq{{\geq^n_{n_0'}}\circ{\geq^{n_0'}_{n_0}}}\subseteq{\geq^n_{n_0}}$.  Taking $n_1>n_1'>n_0$ such that $\geq^{n_0}_{n_1'}$ and $\geq^{n_1'}_{n_1}$ are tangled, ${\sqsupset_0'}\circ{\geq^{n_0}_{n_1'}}$ is also tangled, as tangled morphisms form an ideal.  So again we have ${\sqsupset_1'}\in\mathbf{P}^Q_{\mathbb{P}_{n_1}}$ with ${{\sqsupset_1}\circ{\sqsupset_1'}}\subseteq{{\sqsupset_0'}\circ{\geq^{n_0}_{n_1'}}\circ{\geq^{n_1'}_{n_1}}}\subseteq{{\sqsupset_0'}\circ{\geq^{n_0}_{n_1}}}$ and hence ${{\sqsupset_0}\circ{\sqsupset_1}\circ{\sqsupset_1'}}\subseteq{{\sqsupset_0}\circ{\sqsupset_0'}\circ{\geq^{n_0}_{n_1}}}\subseteq{{\geq^n_{n_0}}\circ{\geq^{n_0}_{n_1}}}\subseteq{\geq^n_{n_1}}$.  Continuing in this way we eventually obtain ${\sqsupset_k'}\in\mathbf{P}^Q_{\mathbb{P}_{n_k}}$ such that ${{\sqsupset}\circ{\sqsupset_k'}}\subseteq{\geq^n_{n_k}}$, as required.

        \item[\eqref{PsubFraisse}$\Rightarrow$\eqref{Ptangled}]  Assume $\mathbb{P}$ is $\mathbf{P}$-sub-Fra\"iss\'e so, for every $m\in\omega$, we have subabsorbing ${\nil}\in\mathbf{P}^m_n$.  By \Cref{TangledWide}, we have tangled ${\sqsupset}\in\mathbf{P}^{\mathbb{P}_n}_Q$ and we can then take ${\sqsupset'}\in\mathbf{P}^Q_{\mathbb{P}_{n'}}$ with ${\nil}\circ{\sqsupset}\circ{\sqsupset'}\subseteq{\geq^m_{n'}}$.  As tangled morphisms form ideal and any morphism containing a tangled morphism is itself tangled, ${\geq^m_{n'}}$ is also tangled.  This shows $\mathbb{P}$ is tangled.

        \item[\eqref{Ptangled}$\Rightarrow$\eqref{PHD}]  Immediate from \cite[Theorem 2.52]{BaBiVi}.

        \item[\eqref{PHD}$\Rightarrow$\eqref{Ptangled}]  As $\mathbb{P}$ is a $\mathbf{P}$-poset, $\mathbb{P}$ is necessarily prime, as noted at the start of \S\ref{Compatibility}.  If $\mathsf{S}\mathbb{P}$ is hereditarily indecomposable then, in particular, $\mathsf{S}\mathbb{P}$ is a continuum and hence Hausdorff so $\mathbb{P}$ is regular, by \cite[Corollary 2.40]{BaBiVi}.  The other direction of \cite[Theorem 2.52]{BaBiVi} then says that $\mathbb{P}$ must be tangled.\qedhere
    \end{itemize}
\end{proof}

Recall that a continuum is \emph{chainable} if every open cover is refined by a \emph{chain}, i.e.~an open cover forming a path with respect to the overlap relation $O\cap N\neq\emptyset$.  A \emph{pseudoarc} is a chainable hereditarily indecomposable continuum.  By \Cref{TangledWide} and \Cref{PFraisse}, pseudoarcs exist, as first shown by Knaster in \cite{Knaster1922}.  From \Cref{PFraisse} we also get an elementary Fra\"iss\'e theoretic proof of Bing's classic result from \cite{Bing1951} that there is really only one pseudoarc.

\begin{corollary}
    All pseudoarcs are homeomorphic.
\end{corollary}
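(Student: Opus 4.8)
The plan is to realise each pseudoarc as the spectrum of a tangled $\mathbf{P}$-poset and then build a homeomorphism between any two such spectra by a Fra\"iss\'e back-and-forth, converting the combinatorial data into an actual homeomorphism via \Cref{BackAndForth}.

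First I would show that every pseudoarc $X$ is homeomorphic to $\mathsf{S}\mathbb{P}$ for some $\mathbf{P}$-poset $\mathbb{P}$ with $\mathsf{e}(\mathbb{P}_0)\geq1$. Since $X$ is a nondegenerate chainable continuum, it admits a cofinal decreasing sequence of finite open chain covers of vanishing mesh; each such cover is a path under the overlap relation, and the refinements between consecutive covers can be arranged to be edge-preserving and co-bijective, hence morphisms in $\mathbf{P}$. Feeding this tower into the representation machinery of \cite{BaBiVi} (as adapted to open covers in \cite{BaBiVi2}) produces a regular $\mathbf{P}$-poset $\mathbb{P}$ with $\mathsf{S}\mathbb{P}\cong X$. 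As $X$ is hereditarily indecomposable, \Cref{PFraisse} then guarantees that $\mathbb{P}$ is tangled and hence $\mathbf{P}$-sub-Fra\"iss\'e.

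Given two pseudoarcs $X\cong\mathsf{S}\mathbb{P}$ and $Y\cong\mathsf{S}\mathbb{Q}$ with $\mathbb{P},\mathbb{Q}$ both $\mathbf{P}$-sub-Fra\"iss\'e, I would produce a homeomorphism by assembling the data required by \Cref{BackAndForth}. The levels of $\mathbb{P}$ and $\mathbb{Q}$ form coinitial sequences of finite caps $(C_n)$ and $(D_n)$, and the goal is to construct $\wedge$-preserving co-surjective morphisms ${\sqsupset_n}\subseteq D_n\times C_n$ and ${\sqni_n}\subseteq C_n\times D_{n+1}$ satisfying the coherence inclusions ${\sqsupset_n\circ\sqni_n}\subseteq{\geq_\mathbb{Q}}$ and ${\sqni_n\circ\sqsupset_{n+1}}\subseteq{\geq_\mathbb{P}}$. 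I would build these inductively, alternately extending into $\mathbb{Q}$ and into $\mathbb{P}$: at each stage one composes the previously constructed map with a subabsorbing morphism in the target poset and invokes the subabsorbing property to push the composite below the refinement relation at a sufficiently deep level. The requisite subamalgamation in $\mathbf{P}$ is precisely what the Snake, Hook and Simple factor results (\Cref{SnakeFactor,HookSubfactor,ImproperSubfactor}) supply, and which underlies the subabsorption established in the proof of \Cref{PFraisse}. Once the zigzag is complete, \Cref{BackAndForth} yields a bi-thin arrow-sequence $(\leftarrow_n)$, and the associated function $\mathsf{S}_\leftarrow=\bigcap_{n\in\omega}\overline{\leftarrow_{n\mathsf{S}}}$ is then a homeomorphism from $\mathsf{S}\mathbb{P}$ onto $\mathsf{S}\mathbb{Q}$, giving $X\cong Y$.

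The main obstacle I anticipate is the first step rather than the back-and-forth. The delicate point is to realise topological chainability as a genuine regular $\mathbf{P}$-poset: one must choose the chain covers so that the consecutive refinement relations are simultaneously edge-preserving, co-bijective, and regular in the poset-theoretic sense, rather than merely being continuous refinements of open chains. Once the combinatorial representation is secured, the back-and-forth is standard Fra\"iss\'e bookkeeping, since the subabsorbing and subamalgamation tools are already in place; so the crux is verifying that the combinatorial model faithfully captures the pseudoarc.
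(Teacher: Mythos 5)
Your proposal is correct and follows essentially the same route as the paper: represent each pseudoarc as $\mathsf{S}\mathbb{P}$ for a $\mathbf{P}$-poset built from a coinitial decreasing sequence of open chain covers (the paper handles your worry about co-bijectivity by deleting points and ends to make the covers disjoint and minimal, then invokes \cite[Proposition 2.9]{BaBiVi}), apply \Cref{PFraisse} to conclude both posets are $\mathbf{P}$-sub-Fra\"iss\'e from hereditary indecomposability, and run the subabsorption back-and-forth to obtain the data of \Cref{BackAndForth} and hence a bi-thin arrow-sequence inducing a homeomorphism. The only difference is one of emphasis: the paper treats the zigzag as immediate from sub-Fra\"iss\'eness, while you spell out the inductive mechanism.
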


\begin{proof}
    For any chainable continuum $X$, we have a sequence $(C_n)$ of chains which is both decreasing and coinitial (w.r.t.~refinement) among all open covers of $X$.  We can delete individual points from the open sets of these covers if necessary to ensure that no open set appears in more than one cover in the sequence.  Deleting some covers at the start if necessary, we may also assume that $C_0$ is not the trivial cover consisting of just the whole space.  Deleting the ends of some of these paths if necessary, we may further assume the covers are minimal and hence that the inclusion relation between any two covers is co-injective.  Thus we obtain an $\mathbf{P}$-poset $\mathbb{P}=\bigcup_{n\in\omega}C_n$ ordered by $\subseteq$ such that $\mathbb{P}_n=C_n$, for all $n\in\omega$.  By \cite[Proposition 2.9]{BaBiVi}, $X$ is homeomorphic to $\mathsf{S}\mathbb{P}$.

    Any two pseudoarcs are thus homeomorphic to $\mathsf{S}\mathbb{P}$ and $\mathsf{S}\mathbb{Q}$, for some $\mathbf{P}$-posets $\mathbb{P}$ and $\mathbb{Q}$ satisfying the equivalent conditions in \Cref{PFraisse}.  Thus $\mathbb{P}$ and $\mathbb{Q}$ are both $\mathbf{P}$-sub-Fra\"iss\'e and hence we have sequences of relations as in \Cref{BackAndForth} defining a bi-thin arrow-sequence, which in turn yields a homeomorphism between $\mathsf{S}\mathbb{P}$ and $\mathsf{S}\mathbb{Q}$.
\end{proof}

Consequently, following standard practice, we will now refer to ``the'' pseudoarc.

\subsection{Homogeneity}

Our next goal is to give an elementary Fra\"iss\'e theoretic proof of another classic result of Bing from \cite{Bing1948} stating that the pseudoarc is homogeneous, i.e.~there is a homeomorphism of the pseudoarc mapping any given point to any other.

First observe that subamalgamable morphisms are wide in $\mathbf{P}$, by \Cref{SubFraisseImpliesSubAmalgamation} and \Cref{TangledWide}.  To prove homogeneity we will need to strengthen this, which we do using the theory developed in \cite{IrwinSolecki2006} and \cite{BaBiVi2}.  Specifically, let us call ${\sqsupset}\in\mathbf{P}_Q^R$ \emph{end-preserving} if, for every $r\in\mathsf{E}(R)$, we have $q\in\mathsf{E}(Q)$ with $q^\sqsubset=\{r\}$.  We denote the wide subcategories of $\mathbf{P}$ and $\mathbf{F}$ with end-preserving morphisms by $\mathbf{EP}$ and $\mathbf{EF}$ respectively.

As usual, we say a category $\mathbf{K}$ \emph{has amalgamation} if its identity morphisms are amalgamable in $\mathbf{K}$ (from which it follows that all morphisms are amalgamable in $\mathbf{K}$).

\begin{theorem}\label{PAmalgamation}
    Each of the categories $\mathbf{F}$, $\mathbf{P}$, $\mathbf{EF}$ and $\mathbf{EP}$ has amalgamation.
\end{theorem}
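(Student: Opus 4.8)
The plan is to prove amalgamation first for the two function categories $\mathbf{F}$ and $\mathbf{EF}$ and then transport it to the relational categories $\mathbf{P}$ and $\mathbf{EP}$ through the functor $\mathsf{F}$. Recall that ``$\mathbf{K}$ has amalgamation'' means exactly that every cospan ${\sqsupset}\in\mathbf{K}^F_G$, ${\sqni}\in\mathbf{K}^F_H$ admits ${\sqsupset'}\in\mathbf{K}^G_I$ and ${\sqni'}\in\mathbf{K}^H_I$ with ${\sqsupset\circ\sqsupset'}={\sqni\circ\sqni'}$, i.e.\ that any two refinements of a common path have a common refinement compatible over $F$. For the transport step, given such a cospan in $\mathbf{P}$ (resp.\ $\mathbf{EP}$) I would apply $\mathsf{F}$ to obtain the cospan ${\mathsf{F}^\sqsupset}\in\mathbf{F}^{\mathsf{F}F}_{\mathsf{F}G}$, ${\mathsf{F}^\sqni}\in\mathbf{F}^{\mathsf{F}F}_{\mathsf{F}H}$ (both end-preserving whenever $\sqsupset,\sqni$ are, since $\mathsf{F}^\sqsupset$ sends the singleton end-clique $\{q\}$ to $q^\sqsubset$), amalgamate it in $\mathbf{F}$ (resp.\ $\mathbf{EF}$) to get $u,v$ with ${\mathsf{F}^\sqsupset\circ u}={\mathsf{F}^\sqni\circ v}$, and then set ${\sqsupset'}={\in^G\circ u}$ and ${\sqni'}={\in^H\circ v}$. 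Naturality of the membership transformation $\in^{(-)}$ (see \cite[Proposition 4.45]{BaBiVi2}), i.e.\ ${\sqsupset\circ\in^G}={\in^F\circ\mathsf{F}^\sqsupset}$, then gives ${\sqsupset\circ\sqsupset'}={\in^F\circ\mathsf{F}^\sqsupset\circ u}={\in^F\circ\mathsf{F}^\sqni\circ v}={\sqni\circ\sqni'}$, completing the square in $\mathbf{P}$; and since $\in^{(-)}$ and the $\mathbf{EF}$-amalgam are end-preserving, the same computation lands in $\mathbf{EP}$.

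This reduces everything to amalgamation in $\mathbf{F}$ and $\mathbf{EF}$, which is essentially the Irwin--Solecki amalgamation for epimorphisms of paths (equivalently, a discrete mountain-climbing statement). Here I would reduce further to amalgamating an arbitrary ${f}\in\mathbf{F}^F_G$ against a single prime ${g}\in\mathbf{F}^F_{F'}$. Indeed, by \Cref{PrimesInF} any $g$ factors as $g=g_1\circ\cdots\circ g_m$ with each $g_j$ simple, a hook, or a proper snake, and amalgams chain: having amalgamated $f$ with $g_1$ into $a\colon I_1\to G$, $b\colon I_1\to F_1$ with ${f\circ a}={g_1\circ b}$, one recurses on the shorter cospan $b$, $g_2\circ\cdots\circ g_m$ over $F_1$ and recomposes. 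For the single-prime base case the idea is to refine the source $G$ until $g$ becomes realizable as a \emph{left subfactor}: using \Cref{TangledWide} I would pick a refinement $\tau$ of $G$ that is a composite of two tangled (hence edge-witnessing) morphisms, so that $f\circ\tau$ is tangled and a composite of at least two edge-witnessing morphisms; then \Cref{SnakeFactor,HookSubfactor,ImproperSubfactor} between them cover all three prime types and furnish $b$ with ${g\circ b}\subseteq{f\circ\tau}$, which is the amalgam $(I,\tau,b)$ once the inclusion is upgraded to the required equality.

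The hard part will be the end-preserving versions $\mathbf{EF}$ and $\mathbf{EP}$. The prime-by-prime scheme only works end-preservingly if the tangled refinements of \Cref{TangledWide} and the factorizations of \Cref{SnakeFactor,HookSubfactor,ImproperSubfactor} can all be chosen to preserve ends --- equivalently, if two end-preserving epimorphisms of paths onto a common path always have a common end-preserving refinement --- and it is precisely this end-controlled version that carries the genuine combinatorial content and is most naturally secured via \cite{IrwinSolecki2006} and the refinements in \cite{BaBiVi2} rather than re-derived. A secondary technical nuisance, which I would address inside the single-prime step, is the bookkeeping between co-bijective relations and honest surjective functions together with the passage from the subfactor \emph{inclusions} (which by themselves only give the weaker subamalgamation) to the \emph{equalities} that amalgamation demands; this I would handle by thinning the witnessing relations over the fixed maps to $F$, using that a total relation contained in a function must coincide with it.
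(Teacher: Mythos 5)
Your reduction of $\mathbf{P}$ to $\mathbf{F}$ (and $\mathbf{EP}$ to $\mathbf{EF}$) via the clique functor is sound: naturality of the membership relation gives ${\sqsupset}\circ{\in^G}={\in^F}\circ{\mathsf{F}^\sqsupset}$ on the nose, so composing an $\mathbf{F}$-amalgam of $\mathsf{F}^\sqsupset,\mathsf{F}^\sqni$ with $\in^G,\in^H$ does yield an amalgam of $\sqsupset,\sqni$; this is exactly what the paper's citation of \cite[Corollary 4.48]{BaBiVi2} packages. Your proof of amalgamation in $\mathbf{F}$ itself also works, and it is genuinely different from the paper's, which simply quotes \cite[Theorem 3.1]{IrwinSolecki2006}: after decomposing the prime side, \Cref{SnakeFactor,HookSubfactor,ImproperSubfactor} applied to $f\circ\tau_1\circ\tau_2$ with $\tau_1,\tau_2$ tangled (so the composite is tangled, and each of $f\circ\tau_1$, $\tau_2$ is tangled and hence edge-witnessing) give $g\circ b\subseteq f\circ\tau_1\circ\tau_2$; the upgrade to equality then succeeds because any total selection from an edge-preserving co-bijective relation is automatically an $\mathbf{F}$-morphism (edge-preservation forces all images of adjacent vertices to be adjacent, and co-injectivity forces the selection to be surjective), so one thins $b$ to a function $b'\subseteq b$ and then picks, for each vertex $i$, an element $a(i)$ that is $\tau_1\circ\tau_2$-related to $i$ with $f(a(i))=g(b'(i))$, obtaining honest $\mathbf{F}$-morphisms with $f\circ a=g\circ b'$. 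So your plan fully proves the $\mathbf{F}$ and $\mathbf{P}$ cases, self-containedly from the paper's earlier lemmas, at the cost of re-deriving what the paper takes off the shelf.

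The genuine gap is $\mathbf{EF}$, and with it $\mathbf{EP}$. You correctly observe that your prime-by-prime scheme does not respect ends (tangled relations, the subfactor witnesses and arbitrary selections all move ends around), but your fallback---that the end-preserving case is ``most naturally secured via \cite{IrwinSolecki2006} and \cite{BaBiVi2}''---is not available: Irwin--Solecki's amalgamation theorem concerns all path epimorphisms, i.e.\ $\mathbf{F}$, with no constraint on ends, and \cite[Corollary 4.48]{BaBiVi2} only transports amalgamation from functions to relations. Neither source proves the end-preserving statement; that is precisely where the paper supplies its own argument, and contrary to your assessment it is a soft reduction rather than the locus of the ``genuine combinatorial content'': given $e\in\mathbf{EF}_Q^P$ and $f\in\mathbf{EF}_R^P$, add one new vertex at each end of $P$, $Q$, $R$ and extend $e,f$ to \emph{proper} maps $e',f'$; amalgamate $e',f'$ in $\mathbf{F}$ to get $g',h'$ with $e'\circ g'=f'\circ h'$; properness then gives $g'(s)\in\mathsf{E}(Q')\Leftrightarrow e'(g'(s))\in\mathsf{E}(P')\Leftrightarrow h'(s)\in\mathsf{E}(R')$, so after restricting to a subpath the amalgamating maps are proper, and deleting the padding vertices leaves an end-preserving amalgam of $e,f$. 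Since you have already established amalgamation in $\mathbf{F}$, this padding argument would close your gap; without it (or some other end-controlled argument) your proposal establishes only half of the theorem.
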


\begin{proof}
    Amalgamation in $\mathbf{F}$ was proved in \cite[Theorem 3.1]{IrwinSolecki2006}, from which amalgamation in $\mathbf{P}$ follows, by \cite[Corollary 4.48]{BaBiVi2}.  To see that amalgamation holds in $\mathbf{EF}$, take any $e\in\mathbf{EF}_Q^P$ and $f\in\mathbf{EF}_R^P$.  Adding one vertex adjacent to each end of the graphs and extending the mappings to these new ends, we obtain proper $e'\in\mathbf{EF}_{Q'}^{P'}$ and $f'\in\mathbf{EF}_{R'}^{P'}$ with $P=P'\setminus\mathsf{E}(Q')$, $Q=Q'\setminus\mathsf{E}(Q')$, $R=R'\setminus\mathsf{E}(Q')$, $e=e'|_Q$ and $f=f'|_R$.  By amalgamation in $\mathbf{F}$, we have $g'\in\mathbf{F}_{S'}^{Q'}$ and $h'\in\mathbf{F}_{S'}^{R'}$ with $e'\circ g'=f'\circ h'$.  As $e'$ and $f'$ are proper, $g'(s)\in\mathsf{E}(Q')$ if and only if $h'(s)\in\mathsf{E}(R')$.  Thus restricting $g'$ and $h'$ to a subpath of $S'$ if necessary, we can ensure $g'$ and $h'$ are also proper.  Letting $S=S'\setminus\mathsf{E}(S')$, $g=g'|_S$ and $h=h'|_S$, it follows that $g\in\mathbf{EF}_S^Q$, $h\in\mathbf{EF}_S^R$ and $e\circ g=f\circ h$, proving amalgamation in $\mathbf{EF}$.  Amalgamation in $\mathbf{EP}$ then follows again from \cite[Corollary 4.48]{BaBiVi2}.
\end{proof}

Given any partition $\mathcal{P}$ of a graph $G$ into non-empty subsets, we define ${\sqcap_\mathcal{P}}\subseteq\mathcal{P}\times\mathcal{P}$ by
\[Q\mathrel{\sqcap_\mathcal{P}}R\qquad\Leftrightarrow\qquad\exists q\in Q\ \exists r\in R\ (q\mathrel{\sqcap_G}r).\]
This turns $\mathcal{P}$ into a graph such that the canonical quotient map $\mathsf{q}_\mathcal{P}$ defined by
\[\mathsf{q}_\mathcal{P}(g)=Q\qquad\Leftrightarrow\qquad g\in Q\]
is edge-preserving and hence a morphism from $G$ to $\mathcal{P}$, i.e.~$\mathsf{q}_\mathcal{P}\in\mathbf{G}_G^\mathcal{P}$.  Given any other morphism ${\sqsupset}\in\mathbf{G}_G^H$ we also have a relation ${\sqsupset_\mathcal{P}}\subseteq H\times\mathcal{P}$ defined by
\[h\sqsupset_\mathcal{P}Q\qquad\Leftrightarrow\qquad\forall q\in Q\ (h\sqsupset q).\]
Note ${{\sqsupset_\mathcal{P}}\circ{\mathsf{q}_\mathcal{P}}}\subseteq{\sqsupset}$ and, moreover, ${\sqsupset_\mathcal{P}}\in\mathbf{G}^H_{\mathcal{P}}$ if (and only if) $\sqsupset_\mathcal{P}$ is co-surjective.

The following is a combinatorial analog of \cite[Theorem 3]{Bing1948}.

\begin{proposition}\label{EndMove}
	For any tangled ${\sqsupset}\in\mathbf{P}_R^S$ and any $v\in R$, we have a partition $\mathcal{P}$ of $R$ forming a path such that ${\sqsupset_\mathcal{P}}$ is co-surjective, $\mathsf{q}_\mathcal{P}(v)\in\mathsf{E}(\mathcal{P})$ and
	\[\mathsf{E}(R)\ni r\sqsubset s\in\mathsf{E}(S)\qquad\Rightarrow\qquad\mathsf{q}_\mathcal{P}(r)\sqsubset s.\]
\end{proposition}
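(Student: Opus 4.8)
The plan is to build $\mathcal{P}$ as a \emph{fold} of $R$ hinged at $v$, with the folding governed by the way $\sqsupset$ covers $R$. First I would record the relevant structure of $\sqsupset$: since ${\sqsupset}\in\mathbf{P}_R^S$ is edge-preserving, for each $r\in R$ the set $r^\sqsubset$ is a clique of $S$ (if $s,s'\sqsupset r$ then $s\mathrel{{\sqsupset}\circ{\sqcap}\circ{\sqsubset}}s'$, so $s\sqcap s'$), hence a vertex or an edge; thus every $r$ is covered by at most two consecutive links of $S$, and by co-surjectivity these covering intervals $\{r^\sqsubset\}_{r\in R}$ run along $S$. The requirement that ${\sqsupset_\mathcal{P}}$ be co-surjective unwinds to saying that every block $P\in\mathcal{P}$ admits a common covering link, i.e. $\bigcap_{p\in P}p^\sqsubset\neq\emptyset$; as the $p^\sqsubset$ are subintervals of the path $S$, by the one-dimensional Helly property this holds as soon as they pairwise meet. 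So I must partition $R$ into a path whose blocks are pairwise co-covered and whose block of $v$ is an end.

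For the fold, write $\mathsf{E}(R)=\{r_-,r_+\}$ and split $R$ at $v$ into the arms $A=[r_-,v]$ and $B=[v,r_+]$. These are connected with connected union $R$, so tangledness gives $A\subseteq B^{\sqsubset\sqsupset}$ or $B\subseteq A^{\sqsubset\sqsupset}$; assume the former (otherwise exchange the arms). Unwinding, every $a\in A$ shares a covering link with some $b\in B$, so I can fold $A$ onto $B$. Concretely I would produce an edge-preserving $\phi\colon A\to B$ fixing the hinge $v$ and with $\phi(a)^\sqsubset\cap a^\sqsubset\neq\emptyset$ throughout---a ``parallel copy'' of $A$ laid along $B$---propagating the choice of $\phi$ edge-preservingly using that the covering intervals of adjacent vertices overlap.

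I then set $\mathcal{P}=\{P_b:b\in B\}$ with $P_b=\{b\}\cup\phi^{-1}(b)$. Because $\phi$ is edge-preserving and fixes $v$, the quotient $\mathcal{P}$ carries the path structure of $B$ with no spurious adjacencies and $\mathsf{q}_\mathcal{P}(v)=P_v$ is an end. Choosing $\phi$ so that each $\phi^{-1}(b)$ is a short segment co-covered with $b$ makes the intervals $\{p^\sqsubset:p\in P_b\}$ pairwise meet, so $P_b\subseteq s^\sqsupset$ for some $s$; this is exactly co-surjectivity of ${\sqsupset_\mathcal{P}}$, and ${\sqsupset_\mathcal{P}}\circ{\mathsf{q}_\mathcal{P}}\subseteq{\sqsupset}$ holds by construction. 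The remaining checks, that $\mathcal{P}$ is a genuine path and that $\mathsf{q}_\mathcal{P}\in\mathbf{P}_R^\mathcal{P}$, are then routine.

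The delicate step, which I expect to be the main obstacle, is the end condition: for an end $r$ of $R$ with $r\sqsubset s$ and $s\in\mathsf{E}(S)$, the entire block of $r$ must lie in $s^\sqsupset$. For the base end $r_+$ this is easily arranged by folding into $P_{r_+}$ only vertices carried by the extreme link $s$ (or none at all). For the folded end $r_-$ it is genuinely subtle, since its block sits over some $b\in B$ and one must guarantee that $b$---and everything folded with it---is still covered by $r_-$'s end-link. This is precisely where the crossing form of tangledness in \cite[Proposition 2.51]{BaBiVi} enters, forcing the end-link's fibre to reach across the fold so that $\phi(r_-)$ can be chosen inside $s^\sqsupset$; the bookkeeping is the exact analogue of the end-alignment already carried out in \Cref{HookSubfactor,ImproperSubfactor}. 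Once this is in place the proposition follows.
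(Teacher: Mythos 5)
There is a genuine gap, and it sits exactly at the heart of the proposition: the existence of your fold $\phi$. The only consequence of tangledness you actually invoke for its construction is $A\subseteq B^{\sqsubset\sqsupset}$, which is a purely pointwise statement -- each $a\in A$ shares a covering link with \emph{some} $b\in B$, with no coherence whatsoever between the choices at adjacent vertices. The proposed greedy propagation (``propagating the choice of $\phi$ edge-preservingly using that the covering intervals of adjacent vertices overlap'') can get stuck: suppose $\phi(a_i)=b$ lies in a stretch of $B$ all of whose vertices have fibre $\{s\}$, while $a_{i+1}^\sqsubset=\{s'\}$ for the adjacent link $s'\neq s$. Then every candidate for $\phi(a_{i+1})$ (namely $b$ and its neighbours in $B$) has fibre disjoint from $a_{i+1}^\sqsubset$, even though $a_{i+1}$ does share $s'$ with some distant vertex of $B$ -- and edge-preservation forbids the jump. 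Ruling this out requires tangledness applied again to \emph{subpaths} of the arms, at every scale, not the single application to $A$ and $B$; this recursive use is precisely what the paper's proof supplies, by inducting on a minimal subpath $R_v\ni v$ whose restriction ${\sqsupset}|^T_{R_v}$ is again a tangled morphism onto a proper subpath $T\subseteq S$, and then gluing the recursively obtained partition $\mathcal{P}_v$ of $R_v$ to fibre-blocks $P_s$ for the remaining links of $S$, invoking the crossing property \cite[Proposition 2.51]{BaBiVi} exactly at the junction. A smaller but real defect of the same kind: your blocks $P_b=\{b\}\cup\phi^{-1}(b)$ need \emph{pairwise} intersecting fibres for the Helly reduction, and ``$a^\sqsubset\cap b^\sqsubset\neq\emptyset$ for each $a$'' does not give this when $b^\sqsubset$ is an edge $\{s_1,s_2\}$ (one $a$ may meet only $s_1$, another only $s_2$, with $a^\sqsubset\cap a'^\sqsubset=\emptyset$), so even granting $\phi$ you need a stronger property of it than the one you state.

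The second gap you acknowledge yourself: the end condition for the folded end $r_-$ is deferred to ``the exact analogue of the end-alignment in \Cref{HookSubfactor,ImproperSubfactor}'', but no argument is given, and in the paper this is not an afterthought -- it is woven into the inductive construction (the choice of $s_v$, $E_v$ and the replacement block $P_{s_v}$ are all made to secure it). Your preliminary reductions are sound (fibres of $\sqsupset$ are cliques, hence subintervals of $S$; co-surjectivity of $\sqsupset_\mathcal{P}$ unwinds to co-covered blocks; an edge-preserving fold fixing the hinge yields a path quotient with $\mathsf{q}_\mathcal{P}(v)$ an end), and the fold strategy is a legitimate outline of Bing-style consolidation, genuinely different in shape from the paper's induction. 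But as written, the two steps that carry all the difficulty -- the coherent fold and the end alignment -- are both missing, and the first is not mere bookkeeping.
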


\begin{proof}
	If $\mathsf{e}(S)=0$ then we can just take $\mathcal{P}$ to be the trivial partition of $R$, i.e.~$\mathcal{P}=\{R\}$.  If $\mathsf{e}(S)=1$ then we can take any $s\in S=\mathsf{E}(S)$ and let $\mathcal{P}=\{s^\sqsupset,R\setminus s^\sqsupset\}$.  Now assume the result holds whenever $\mathsf{e}(R)<n$ and that now $\mathsf{e}(R)=n\geq2$.  Let $R_v$ be a minimal subpath of $R$ containing $v$ such that we have $s_v\in\mathsf{E}(S)$ with $\mathsf{E}(R_v)\subseteq s_v^\sqsupset$ and $r^\sqsubset=\{s_v\}$, for some $r\in R_v$.  As $\sqsupset$ is tangled, minimality implies that we have a proper subpath $T\subseteq S$ containing $s_v$ such that ${\sqsupset}|^T_{R_v}\in\mathbf{P}_{R_v}^T$, which is again tangled.  By assumption, we have a partition $\mathcal{P}_v$ of $R_v$ forming a path such that ${\sqsupset_{\mathcal{P}_v}}$ is co-surjective, $\mathsf{q}_{\mathcal{P}_v}(v)\in\mathsf{E}(\mathcal{P}_v)$ and
	\[\mathsf{E}(R_v)\in r\sqsubset t\in\mathsf{E}(T)\qquad\Rightarrow\qquad\mathsf{q}_{\mathcal{P}_v}(r)\sqsubset t.\]
    Take $E_v\in\mathcal{P}_v$ containing some $r_v\in\mathsf{E}(R_v)$ such that no $Q\in[\mathsf{q}_{\mathcal{P}_v}(v),E_v)$ contains an end of $\mathsf{E}(R_v)$.  As $\mathsf{E}(R_v)\ni r_v\sqsubset s_v\in\mathsf{E}(T)$, this means $E_v=\mathsf{q}_{\mathcal{P}_v}(r_v)\sqsubset s_v$, so if we let $P_{s_v}=s_v^\sqsupset\setminus\bigcup[\mathsf{q}_{\mathcal{P}_v}(v),E_v)$ then $E_v\subseteq P_{s_v}$.  Taking $E_v'\mathrel{\sqcap_{\mathcal{P}_v}}E_v$, it follows that $P_{s_v}\cap R_v$ contains elements adjacent to those of $E'_v$ but no other $Q\in[\mathsf{q}_{\mathcal{P}_v}(v),E_v)$.  Moreover, $P_{s_v}\setminus R_v$ can not contain an elements adjacent to any $Q\in[\mathsf{q}_{\mathcal{P}_v}(v),E_v)$ because none of these subsets contain ends of $R_v$.  Thus replacing $E_v$ in $\mathcal{P}_v$ with $P_{s_v}$ still defines a path.  For each $s\in S\setminus\{s_v\}$, let
\[P_s=\{r\in R\setminus\bigcup[\mathsf{q}_{\mathcal{P}_v}(v),E_v):r\in s^\sqsupset\subseteq[r,s_v]\}.\]
Then we see that $\mathcal{P}=[\mathsf{q}_{\mathcal{P}_v}(v),E_v)\cup\{P_s:s\in S\}$ is the desired partition of $R$.
\end{proof}

\begin{theorem}
    The pseudoarc is homogeneous.
\end{theorem}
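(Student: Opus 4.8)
The plan is to realise the pseudoarc as $\mathsf{S}\mathbb{P}$ for a tangled (equivalently $\mathbf{P}$-sub-Fra\"iss\'e) $\mathbf{P}$-poset $\mathbb{P}$, as in \Cref{PFraisse}, and then, given two points $x,y\in\mathsf{S}\mathbb{P}$, to produce an autohomeomorphism carrying $x$ to $y$ by a \emph{pointed} back-and-forth. The guiding idea, following Bing, is that every point of the pseudoarc can be presented as an endpoint of a chain representation, so that the problem reduces to matching up two chains along a distinguished end. The combinatorial engine for moving a point to an end is \Cref{EndMove}, and the mechanism for matching ends is amalgamation in the end-preserving category $\mathbf{EP}$ from \Cref{PAmalgamation}.

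First I would re-present each point at an end. A point $x\in\mathsf{S}\mathbb{P}$ is an unbounded round filter, so at each level it selects a clique $v_n$ of $\mathbb{P}_n$. Interleaving the tangled witnesses $\geq^m_n$ (which keep the tower coinitial by driving it deeper in $\mathbb{P}$) with the partitions supplied by \Cref{EndMove} applied to $v_n$ (which push the marked clique to an end), I would build a new coinitial tower of path-covers $\mathbb{Q}_x$ whose quotient morphisms $\mathsf{q}_{\mathcal{P}}$ assemble into a bi-thin arrow-sequence. The tower $\mathbb{Q}_x$ is again tangled, so $\mathsf{S}\mathbb{Q}_x$ is a pseudoarc representing the same space and is homeomorphic to $\mathsf{S}\mathbb{P}$ via the induced refiner, by \Cref{PFraisse} and the uniqueness argument; moreover the thread of $x$ lies at an end of every path in $\mathbb{Q}_x$. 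The end-preservation clause of \Cref{EndMove} is exactly what makes the distinguished end coherent down the tower, so the induced homeomorphism $\mathsf{S}\mathbb{Q}_x\to\mathsf{S}\mathbb{P}$ sends the endpoint-thread to $x$. Doing the same for $y$ yields $\mathbb{Q}_y$.

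Next I would run the back-and-forth of \Cref{BackAndForth} inside $\mathbf{EP}$ rather than $\mathbf{P}$. Because $\mathbf{EP}$ has amalgamation (\Cref{PAmalgamation}), and the two endpoint-towers $\mathbb{Q}_x,\mathbb{Q}_y$ are coinitial and tangled, I can construct $\wedge$-preserving co-surjective end-preserving relations $\sqsupset_n\subseteq(\mathbb{Q}_y)_n\times(\mathbb{Q}_x)_n$ and $\sqni_n$ satisfying the subequalities \eqref{subequality}, which by \Cref{BackAndForth} give a bi-thin arrow-sequence and hence a homeomorphism $\phi:\mathsf{S}\mathbb{Q}_x\to\mathsf{S}\mathbb{Q}_y$. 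Since every $\sqsupset_n,\sqni_n$ is end-preserving and I align the marked ends throughout, $\phi$ carries the endpoint-thread of $x$ to that of $y$. Composing the homeomorphism $\mathsf{S}\mathbb{Q}_x\to\mathsf{S}\mathbb{P}$ (endpoint $\mapsto x$), then $\phi$, then the inverse of $\mathsf{S}\mathbb{Q}_y\to\mathsf{S}\mathbb{P}$ (endpoint $\mapsto y$) produces an autohomeomorphism of $\mathsf{S}\mathbb{P}$ taking $x$ to $y$, as required.

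The hard part will be the bookkeeping that keeps the distinguished end aligned through the whole construction. Concretely, I must track which of the two ends of each path carries the marked point and feed the output of \Cref{EndMove} into the $\mathbf{EP}$-amalgamation so that co-surjectivity and \eqref{subequality} hold \emph{while} the marked end maps to the marked end; a plain $\mathbf{EP}$-morphism merely preserves both ends, so the real content is ensuring that the end chosen by \Cref{EndMove} is the one being matched. A secondary obstacle is verifying that the iterated application of \Cref{EndMove} really yields a genuine $\mathbf{P}$-poset — minimal, co-injective, coinitial and tangled — so that \Cref{PFraisse} certifies $\mathsf{S}\mathbb{Q}_x$ as a pseudoarc and the assembled quotient maps form a bi-thin arrow-sequence inducing the pointed homeomorphism. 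Everything else is the routine arrow-sequence machinery already developed above.
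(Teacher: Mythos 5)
Your overall skeleton --- use \Cref{EndMove} to present a point as an endpoint and then match endpoints by an end-preserving back-and-forth feeding into \Cref{BackAndForth} --- is indeed the paper's strategy, but your plan has a genuine gap at the matching step. You assert that because $\mathbf{EP}$ has amalgamation (\Cref{PAmalgamation}) and the two towers $\mathbb{Q}_x,\mathbb{Q}_y$ are coinitial and tangled, you can construct end-preserving, $\wedge$-preserving, co-surjective relations $\sqsupset_n$, $\sqni_n$ satisfying \eqref{subequality}. That inference does not go through: amalgamation is a property of the ambient category, whereas running a back-and-forth between two \emph{specific} towers requires those towers themselves to absorb morphisms --- a (sub-)Fra\"iss\'e realisation property relative to the category you are working in. Tangledness is exactly $\mathbf{P}$-sub-Fra\"iss\'e-ness (\Cref{PFraisse}), i.e.\ absorption of arbitrary path morphisms with no control whatsoever over ends; upgrading it to absorption by end-preserving (or marked-end-preserving) morphisms would require end-preserving analogues of the subfactorisation lemmas (\Cref{SnakeFactor,HookSubfactor,ImproperSubfactor}), which you do not supply and which are not in the paper. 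Worse, your ad hoc towers $\mathbb{Q}_x$ are not even shown to be $\mathbf{EP}$-posets: \Cref{EndMove} controls the marked vertex and those ends already lying below ends of the codomain, but says nothing about the other end, so there is no reason the connecting morphisms of $\mathbb{Q}_x$ are end-preserving, let alone that they realise arbitrary $\mathbf{EP}$-morphisms.

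The paper circumvents precisely this problem by not building pointed towers at all. It invokes \cite[Corollary 3.8]{Ku2014} together with \Cref{PAmalgamation} to obtain a \emph{single} graded $\mathbf{EP}$-Fra\"iss\'e poset $\mathbb{P}$, whose decreasing sequence of ends $e_n\in\mathsf{E}(\mathbb{P}_n)$ gives a canonical point $E\in\mathsf{S}\mathbb{P}$; homogeneity then reduces to mapping an arbitrary $S\in\mathsf{S}\mathbb{P}$ to $E$. The back-and-forth is run inside this one poset: \Cref{EndMove} pushes a marked element of $S$ to an end of a quotient path, and the Fra\"iss\'e realisation property \cite[Proposition 3.1]{Ku2014} --- the ingredient your plan is missing --- pulls these abstract quotients back into actual levels of $\mathbb{P}$, while careful bookkeeping (including adjoining auxiliary vertices and using $\vartriangleleft$) keeps $f_k(s_k)=e_{m_k}$ at every stage, so that the resulting bi-thin arrow-sequence of \Cref{BackAndForth} yields a homeomorphism carrying $S$ to $E$. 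If you want to keep your two-tower architecture, you would have to prove that your towers are (sub-)Fra\"iss\'e in a suitable pointed or end-preserving category, which is essentially the same difficulty as the theorem itself, not routine bookkeeping.
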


\begin{proof}
    By \Cref{PAmalgamation} and \cite[Corollary 3.8]{Ku2014}, we have a graded $\mathbf{EP}$-Fra\"iss\'e poset $\mathbb{P}$, i.e.~such that the restrictions $\geq^m_n$ of the order relation to pairs of levels defines a Fra\"iss\'e sequence in $\mathbf{EP}$ in the sense of \cite{Ku2014} (and hence Fra\"iss\'e in $\mathbf{P}$ too, as every morphism of $\mathbf{P}$ is a left factor of a morphism in $\mathbf{EP}$).  In particular, $\mathbb{P}$ is an $\mathbf{EP}$-poset so we have a decreasing sequence $e_n\in\mathsf{E}(\mathbb{P}_n)$ of ends defining an element of the spectrum $E=\{e_n:n\in\omega\}\in\mathsf{S}\mathbb{P}$.  To prove homogeneity it suffices to construct a homeomorphism mapping $E$ onto any other given $S\in\mathsf{S}\mathbb{P}$.
    
    To do this first note that, for any sufficiently large $n_0\in\omega$ and $s_0\in S\cap\mathbb{P}_{n_0}$, we have $f_0\in\mathbf{F}^0_{n_0}$ with $f_0(s_0)=e_0$.  Take $n>n_0$ such that $S\cap\mathbb{P}_n\leq s_0$, so we have $f\in\mathbf{F}^{n_0}_n$ with $f\subseteq{\geq}$ and $f[S\cap\mathbb{P}_n]=\{s_0\}$.  By \Cref{PFraisse}, we have $n'>n$ such that $\geq^n_{n'}$ is tangled.  By \Cref{EndMove}, for any $s\in S\cap\mathbb{P}_{n'}$, we have $g\in\mathbf{F}^{\mathbb{P}_n}_R$ and $g'\in\mathbf{F}_{\mathbb{P}_{n'}}^R$ with $g\circ g'\subseteq{\geq^n_{n'}}$ and $g(s)\in\mathsf{E}(R)$.  In particular, $s\leq g(g'(s))\leq s_0$ and hence $f_0(f(g(g'(s))))=e_0$.  So $f_0\circ f\circ g$ preserves one end of $R$ and, by composing with any $h\in\mathbf{F}^R_{\mathbb{P}_m}$ mapping one end of $\mathbb{P}_m$ to $g'(s)$ and the other to some element of $(f_0\circ f\circ g)^{-1}[\mathsf{E}(\mathbb{P}_0)\setminus\{e_0\}]$, we obtain $f_0\circ f\circ g\circ h\in\mathbf{EF}_{\mathbb{P}_m}^{\mathbb{P}_0}$.  As $\mathbb{P}$ is $\mathbf{EP}$-Fra\"iss\'e, we may replace $h$ if necessary to further ensure that $f_0\circ f\circ g\circ h\subseteq{\geq^0_m}$.  In particular, $f_0\circ f\circ g\circ h(e_m)=e_0$.
    
    Let $R'$ be the path obtained from $R$ by adding an extra vertex $r'$ adjacent to $g'(s)$.  By \cite[Proposition 2.39]{BaBiVi2}, we have $n''>n'$ and $t\in S\cap\mathbb{P}_{n''}$ with $t\vartriangleleft_{\mathbb{P}_{n''}}s$.  This means we have $g''\in\mathbf{F}_{\mathbb{P}_{n''}}^{R'}$ such that $g''(t)=r'$ and $g''|_{\mathbb{P}_{n''}\setminus\{t\}}\subseteq(g'\circ{\geq_{n''}^{n'}})$.  Take $f'\in\mathbf{F}_{R'}^{\mathbb{P}_{n_0}}$ with $f'(r')=s_0$ and $f'|_R=f\circ g$ and hence $f'\circ g''\subseteq{\geq}$.  We can then take $m_1>m$ such that we have an edge $E\subseteq\mathbb{P}_{m_1}$ containing $e_{m_1}$ with $E\leq e_m$.  We then have a function $h'\in\mathbf{F}^{R'}_{\mathbb{P}_{m_1}}$ with $h'(e_{m_1})=r'$ and $h'|_{\mathbb{P}_{m_1}\setminus\{e_{m_1}\}}\subseteq{h\circ{\geq}}$, and hence $f_0\circ f'\circ h'\subseteq{\geq^0_{m_1}}$.  Let $g_0=f'\circ h'$ so $f_0\circ g_0\subseteq{\geq}$.

    As $\mathbb{P}$ is $\mathbf{P}$-Fra\"iss\'e, \cite[Proposition 3.1]{Ku2014} yields $n_1>n''$ and ${\sqsupset}\in\mathbf{P}_{\mathbb{P}_{n_1}}^{\mathbb{P}_{m_1}}$ such that ${h'\circ{\sqsupset}}={g''\circ{\geq}}$.  Take $s_1\in S\cap\mathbb{P}_{n_1}$ with $s_1\leq t$.  As $g''(t)=r'$, we must then have $p\sqsupset s_1$ with $h'(p)=r'$.  As $h'|_{\mathbb{P}_{m_1}\setminus\{e_{m_1}\}}\subseteq{h\circ{\geq}}$, the only possibility is $p=e_{m_1}$.  We can then take $f_1\in\mathbf{F}_{\mathbb{P}_{n_1}}^{\mathbb{P}_{m_1}}$ with $f_1\subseteq{\sqsupset}$ and $f_1(s_1)=e_{m_1}$.  Then $h'\circ f_1\subseteq g''\circ{\geq}$ and hence $g_0\circ f_1\subseteq f'\circ h'\circ f_1\subseteq f'\circ g''\circ{\geq}\subseteq{\geq}$.  Continuing in this way, we obtain functions $(f_k)$ and $(g_k)$ defining a bi-thin arrow sequence as in \Cref{BackAndForth} which thus yields a homeomorphism $\phi$ on $\mathsf{S}\mathbb{P}$.  Moreover, $\phi$ maps $S$ to $E$, as $(s_k)\subseteq S$, $(e_{m_k})\subseteq E$ and $f_k(s_k)=e_{m_k}$, for all $k\in\omega$.
\end{proof}

\subsection{A Dense Conjugacy Class}

For any graphs $G$ and $H$, the canonical edge-relation on $G\times H$ is the largest one making the coordinate projections edge-preserving, i.e.~where
\[\langle g,h\rangle\mathrel{\sqcap}\langle g',h'\rangle\qquad\Leftrightarrow\qquad g\mathrel{\sqcap}g'\quad\text{and}\quad h\mathrel{\sqcap}h'.\]
When a relation is both surjective and co-surjective, we call it \emph{bi-surjective}.

\begin{proposition}
    For any $Q\in\mathbf{P}$ and ${\leftarrow}\subseteq Q\times Q$, the following are equivalent.
    \begin{enumerate}
        \item\label{ConnectedDi} $\leftarrow$ is bi-surjective and connected $($as a subgraph of $Q\times Q)$.
        \item\label{StrongDi} ${\leftarrow}\subseteq{{\sqsupset}\circ{\sqin}}$ and ${=_R}\subseteq{\sqsubset}\circ{\leftarrow}\circ{\sqni}$, for some ${\sqsupset},{\sqni}\in\mathbf{P}_R^Q$.
        \item\label{VStrongDi} ${\leftarrow}=f\circ g^{-1}$, for some $f,g\in\mathbf{F}_R^Q$.
    \end{enumerate}
\end{proposition}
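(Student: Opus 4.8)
The plan is to prove the cyclic chain of implications $\eqref{ConnectedDi}\Rightarrow\eqref{VStrongDi}\Rightarrow\eqref{StrongDi}\Rightarrow\eqref{ConnectedDi}$. Throughout I would view $\leftarrow$ as the subgraph $D$ of $Q\times Q$ whose vertices are the pairs lying in $\leftarrow$ and whose edges come from the canonical product edge relation. Two facts will be used repeatedly: the coordinate projections $\mathsf{p}_0,\mathsf{p}_1\colon Q\times Q\to Q$ are edge-preserving by the very definition of the product edge relation; and every edge-preserving surjective function between paths is automatically co-bijective, so that $\mathbf{F}_R^Q\subseteq\mathbf{P}_R^Q$. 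Together these let me move freely between the functional description in \eqref{VStrongDi} and the relational ones in \eqref{StrongDi} and \eqref{ConnectedDi}.

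For $\eqref{ConnectedDi}\Rightarrow\eqref{VStrongDi}$ the idea is to traverse $D$ by a single walk. Since $D$ is finite and connected it admits a walk $v_0\mathrel{\sqcap}v_1\mathrel{\sqcap}\cdots\mathrel{\sqcap}v_n$ visiting every vertex (take a depth-first traversal of a spanning tree). Setting $R=P_n$ and $\pi(p^n_k)=v_k$ yields an edge-preserving surjection $\pi\colon R\to D$, and I would then define $f=\mathsf{p}_0\circ\pi$ and $g=\mathsf{p}_1\circ\pi$. Being composites of edge-preserving maps these are edge-preserving functions $R\to Q$, and they are surjective precisely because bi-surjectivity of $\leftarrow$ gives $\mathsf{p}_0[D]=Q=\mathsf{p}_1[D]$; hence $f,g\in\mathbf{F}_R^Q$. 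Finally $q\mathrel{f\circ g^{-1}}q'$ holds iff some $r$ has $\pi(r)=\langle q,q'\rangle$, i.e.\ iff $\langle q,q'\rangle\in\pi[R]=D$, so $\leftarrow=f\circ g^{-1}$. The implication $\eqref{VStrongDi}\Rightarrow\eqref{StrongDi}$ is then immediate on taking $\sqsupset=f$ and $\sqni=g$: one has $\leftarrow=f\circ g^{-1}=\sqsupset\circ\sqin$ outright, while for each $r\in R$ the pair $\langle f(r),g(r)\rangle\in\leftarrow$ witnesses $r\mathrel{\sqsubset\circ\leftarrow\circ\sqni}r$, giving $=_R\subseteq\sqsubset\circ\leftarrow\circ\sqni$.

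For $\eqref{StrongDi}\Rightarrow\eqref{ConnectedDi}$, bi-surjectivity of $\leftarrow$ is exactly $\mathbf{P}$-subfactorisability, so it follows at once from \Cref{SurjectiveSubfactor}. For connectedness I would introduce the fibres $F_r=\{\langle q,q'\rangle\in\leftarrow:q\mathrel{\sqsupset}r\text{ and }q'\mathrel{\sqni}r\}$. The subfactor condition makes every $F_r$ non-empty, and $\leftarrow\subseteq\sqsupset\circ\sqin$ forces $\leftarrow=\bigcup_{r\in R}F_r$. The crucial point is that adjacent fibres are completely linked in $D$: if $r\mathrel{\sqcap}r'$ in $R$ and $\langle a,b\rangle\in F_r$, $\langle a',b'\rangle\in F_{r'}$, then edge-preservation of $\sqsupset$ applied to $a\mathrel{\sqsupset}r\mathrel{\sqcap}r'\mathrel{\sqsubset}a'$ yields $a\mathrel{\sqcap}a'$, and likewise $b\mathrel{\sqcap}b'$ via $\sqni$, so $\langle a,b\rangle\mathrel{\sqcap}\langle a',b'\rangle$ in $D$. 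Since $R$ is a connected path, chaining fibres along a path in $R$ connects any two vertices of $D$, whence $D=\leftarrow$ is connected.

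The step I expect to carry the real content is $\eqref{ConnectedDi}\Rightarrow\eqref{VStrongDi}$. The other two implications are formal manipulations of relations, but producing the path $R$ requires a genuine graph-traversal argument, and it is exactly here that both the connectedness of $D$ and the freedom to take $R$ to be a path of arbitrary length are used. The only thing to be careful about is the existence and correctness of the covering walk (consecutive vertices adjacent, all vertices visited), which is standard but is the single non-combinatorially-trivial ingredient; degenerate cases such as $\leftarrow$ being a single pair are handled automatically, since bi-surjectivity then forces $Q$ itself to be a single vertex.
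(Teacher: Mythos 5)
Your proposal is correct and follows essentially the same route as the paper: the cycle \eqref{ConnectedDi}$\Rightarrow$\eqref{VStrongDi}$\Rightarrow$\eqref{StrongDi}$\Rightarrow$\eqref{ConnectedDi}, with a covering walk on the connected subgraph composed with the coordinate projections for the first implication, the formal computation ${=_R}\subseteq f^{-1}\circ f\circ g^{-1}\circ g$ for the second, and edge-preservation of $\sqsupset$ and $\sqni$ linking witnesses along paths in $R$ for the third. The only cosmetic differences are that you make explicit the spanning-tree traversal the paper merely asserts, and you obtain bi-surjectivity by citing \Cref{SurjectiveSubfactor} rather than repeating its co-injectivity argument inline; both are fine.
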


\begin{proof}\
    \begin{itemize}
        \item[\eqref{ConnectedDi}$\Rightarrow$\eqref{VStrongDi}] If $\leftarrow$ is connected then we have a surjective edge-preserving function $f$ from a path $R$ onto $\leftarrow$, i.e.~$f\in\mathbf{F}_R^{\leftarrow}$.  As $\leftarrow$ is bi-surjective, we can compose with the coordinate projections $p_1$ and $p_2$ to get $f_1=p_1\circ f\in\mathbf{F}_R^Q$ and $f_2=p_2\circ f\in\mathbf{F}_R^Q$, necessarily with ${\leftarrow}=f_1\circ f_2^{-1}$.
        
        \item[\eqref{VStrongDi}$\Rightarrow$\eqref{StrongDi}] Just note that if $f,g\in\mathbf{F}_R^Q$ and ${\leftarrow}=f\circ g^{-1}$ then ${=_R}\subseteq f^{-1}\circ f\circ g^{-1}\circ g={\sqsubset}\circ{\leftarrow}\circ{\sqni}$.

        \item[\eqref{StrongDi}$\Rightarrow$\eqref{ConnectedDi}] To start with, say ${=_R}\subseteq{\sqsubset}\circ{\leftarrow}\circ{\sqni}$.  For each $q\in Q$, the co-injectivity of $\sqsupset$ yields $r\in R$ with $r^\sqsubset=\{q\}$.  As $r\mathrel{{\sqsubset}\circ{\leftarrow}\circ{\sqni}}r$, we must therefore have $q'\in Q$ with $r\sqsubset q\leftarrow q'\sqni r$.  This shows that $\leftarrow$ is surjective while co-surjectivity of $\leftarrow$ follows likewise from the co-injectivity of $\sqni$.

        Now assume further that ${\leftarrow}\subseteq{{\sqsupset}\circ{\sqin}}$.  For any $\langle q_2,q_1\rangle,\langle q_2',q_1'\rangle\in{\leftarrow}$, we then have $s,s'\in R$ with $q_2\sqsupset s\sqin q_1$ and $q_2'\sqsupset s'\sqin q_1'$.  For each $r\in(s,s')$, we may pick $q_2^r\sqsupset r$ and $q_1^r\sqni r$ with $q_2^r\leftarrow q_1^r$, as ${=_R}\subseteq{\sqsubset}\circ{\leftarrow}\circ{\sqni}$.  As $\sqsupset$ and $\sqni$ are edge-preserving, so is the function $f$ from $[s,t]$ to $\leftarrow$ defined by $f(s)=\langle q_2,q_1\rangle$, $f(s')=\langle q_2',q_1'\rangle$ and $f(r)=\langle q_2^r,q_1^r\rangle$, for all $r\in(s,s')$.  Thus $\leftarrow$ is connected. \qedhere
    \end{itemize}
\end{proof}

Given graphs $G$ and $H$, let us define a smaller \emph{strict} edge-relation $\overline{\sqcap}$ on $G\times H$ by
\[\langle g,h\rangle\mathrel{\overline\sqcap}\langle g',h'\rangle\qquad\Leftrightarrow\qquad (g=g'\text{ and }h\mathrel{\sqcap}h')\text{ or }(g\mathrel{\sqcap}g'\text{ and }h=h').\]
We call a subset of $G\times H$ \emph{strictly connected} if it is connected as a subgraph of $(G\times H,\overline{\sqcap})$.

\begin{proposition}\label{StrictDigraphs}
    The strictly connected digraphs are co-initial in $\underline{\mathbf{P}}$.
\end{proposition}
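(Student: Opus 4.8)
The plan is to replace the given digraph by the \emph{full} pullback of its relation along an edge-witnessing morphism, extract strict connectivity from a ``box-filling'' argument, and obtain wide subfactorisability for free from \Cref{WideSubfactorisability}. Fix a $\mathbf{P}$-digraph $(G,\leftarrow_G)$, so ${\leftarrow_G}\in G^\mathbf{P}$ is widely $\mathbf{P}$-subfactorisable and hence, in particular, $\mathbf{P}$-subfactorisable and bi-surjective by \Cref{SurjectiveSubfactor}. Note that a widely $\mathbf{P}$-subfactorisable relation need \emph{not} be connected (a subfactorisable relation may carry extra disconnected ``junk'', and subfactorisability already forces wide subfactorisability via \Cref{WideSubfactorisability} applied to $\nil={=_G}$, which is subamalgamable as $\mathbf{P}$ has amalgamation by \Cref{PAmalgamation}). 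So first I would extract a connected bi-surjective subrelation ${\leftarrow_G^0}\subseteq{\leftarrow_G}$: subfactorisability supplies morphisms $H\to G$ in $\mathbf{P}$ whose fibres over each vertex contain a $\leftarrow_G$-related pair, and choosing such pairs coherently along the path $H$ assembles into $f,g\in\mathbf{F}^G_R$ with ${\leftarrow_G^0}=f\circ g^{-1}$, which is connected and bi-surjective by the preceding proposition. Since ${\leftarrow_G^0}\subseteq{\leftarrow_G}$, any $\underline{\mathbf{P}}$-morphism into $(G,\leftarrow_G^0)$ is automatically one into $(G,\leftarrow_G)$, so it suffices to map a strictly connected digraph onto $(G,\leftarrow_G^0)$.

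Next I would choose an edge-witnessing ${\nil}\in\mathbf{P}^G_H$. These are wide in $\mathbf{P}$ because tangled morphisms are (\Cref{TangledWide}) and every tangled morphism is edge-witnessing: applying tangledness to the singletons $\{q\},\{q'\}$ of an edge $\{q,q'\}$ of $G$ produces a vertex of $H$ lying over both $q$ and $q'$. Because $\mathbf{P}$ has amalgamation (\Cref{PAmalgamation}), $\nil$ is amalgamable, hence subamalgamable as witnessed by itself. I then set ${\leftarrow_H}={\lin\circ\leftarrow_G^0\circ\nil}$. As ${\leftarrow_G^0}$ is connected and bi-surjective it is $\mathbf{P}$-subfactorisable (preceding proposition), so ${\lin\circ\leftarrow_G^0\circ\nil}$ is $\mathbf{P}$-subfactorisable; \Cref{WideSubfactorisability} (with $\overline{\nil}=\nil$) then yields that ${\leftarrow_H}$ is widely $\mathbf{P}$-subfactorisable, so that $(H,\leftarrow_H)$ is a genuine object of $\underline{\mathbf{P}}$. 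Moreover ${\leftarrow_H}={\lin\circ\leftarrow_G^0\circ\nil}\subseteq{\lin\circ\leftarrow_G\circ\nil}$ exhibits $\nil$ as a morphism in $\underline{\mathbf{P}}^{\leftarrow_G}_{\leftarrow_H}$.

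The crux is to show ${\leftarrow_H}$ is strictly connected, and this is where the edge-witnessing hypothesis is used. Writing $q^\nil=\{k\in H:q\mathrel{\nil}k\}$ for the fibre of $q\in G$ (a subpath of $H$, since preimages under path morphisms are connected), one has
\[{\leftarrow_H}=\bigcup_{(q,q')\in\leftarrow_G^0}q^\nil\times q'^\nil,\]
a union of ``boxes''. Each box $q^\nil\times q'^\nil$ is a full grid rectangle, hence strictly connected; and whenever $(q,q'),(q_1,q_1')\in{\leftarrow_G^0}$ are $\sqcap$-adjacent, edge-witnessing gives $k\in q^\nil\cap q_1^\nil$ and $k'\in q'^\nil\cap q_1'^\nil$ (using that edge-witnessing of $\nil$ is exactly the condition $q^\nil\cap q_1^\nil\neq\emptyset$ for $q\sqcap q_1$), so the two corresponding boxes share the vertex $(k,k')$ and their union is strictly connected. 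Since ${\leftarrow_G^0}$ is connected, chaining these overlaps along a $\sqcap$-path in ${\leftarrow_G^0}$ places all boxes in one strict component, whence ${\leftarrow_H}$ is strictly connected; this completes the proof. I anticipate the main obstacle to be precisely this box-filling step: one must check carefully that the fibres $q^\nil$ are genuine subpaths and that the two-coordinate, change-one-coordinate-at-a-time navigation never leaves ${\leftarrow_H}$, which is guaranteed exactly by connectedness of the fibres together with the overlaps $q^\nil\cap q_1^\nil\neq\emptyset$ supplied by edge-witnessing.
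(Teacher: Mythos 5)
Your skeleton is in fact the same as the paper's: extract a connected bi-surjective subrelation of $\leftarrow_G$ (your selection $f\circ g^{-1}$ plays the role of the paper's intersection ${\leftarrow_G}\cap({\sqsupset\circ\sqin})$, and both work), pull it back along an edge-witnessing morphism to gain strict connectivity, and get wide subfactorisability from amalgamation via \Cref{WideSubfactorisability}. But the step you yourself flagged as the crux contains a genuine gap: the parenthetical claim that each fibre $q^\nil$ is ``a subpath of $H$, since preimages under path morphisms are connected'' is false. Morphisms in $\mathbf{P}$ fold --- hooks and snakes are precisely such folds --- so fibres are typically disconnected, and then neither the individual boxes $q^\nil\times q'^\nil$ nor their union need be strictly connected. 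Concretely, take $G=P_1=\{a,b\}$, $H=P_4$ and ${\nil}\in\mathbf{P}^G_H$ given by $h_0^\lin=\{b\}$, $h_1^\lin=\{a\}$, $h_2^\lin=\{b\}$, $h_3^\lin=\{a,b\}$, $h_4^\lin=\{a\}$: this is co-bijective, edge-preserving and edge-witnessing (the edge $\{a,b\}$ is witnessed by $h_3$). Taking the digraph ${\leftarrow_G}={=_G}$ (widely $\mathbf{P}$-subfactorisable by \Cref{EqualitySubfactor}, connected and bi-surjective, and equal to its only bi-surjective subrelation, so your extraction step changes nothing), your pullback is ${\leftarrow_H}=(a^\nil\times a^\nil)\cup(b^\nil\times b^\nil)$ with $a^\nil=\{h_1,h_3,h_4\}$ and $b^\nil=\{h_0,h_2,h_3\}$. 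The vertex $(h_1,h_1)$ is then isolated in the strict graph: each of its four strict neighbours $(h_0,h_1)$, $(h_2,h_1)$, $(h_1,h_0)$, $(h_1,h_2)$ pairs an element of $b^\nil\setminus a^\nil$ with an element of $a^\nil\setminus b^\nil$ and so lies outside $\leftarrow_H$. Hence the conclusion fails for an arbitrary edge-witnessing $\nil$; no later choice in your argument can repair this.

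The repair is exactly the paper's proof: instead of an arbitrary edge-witnessing morphism, use the one specific edge-witnessing morphism whose fibres genuinely are subpaths, namely the membership relation ${\in_Q}\in\mathbf{P}^Q_{\mathsf{F}Q}$ from the clique path $\mathsf{F}Q$. There the fibre of $q$ consists of $\{q\}$ together with the (at most two) edges of $Q$ containing $q$, which is a subpath of $\mathsf{F}Q$, and every edge $\{q,q'\}$ of $Q$ is witnessed by the vertex $\{q,q'\}$ of $\mathsf{F}Q$. With this choice your box-filling argument is correct, and it is in essence the (unproved) assertion in the paper that connectivity of $\twoheadleftarrow$ implies strict connectivity of its pullback to $\mathsf{F}Q$. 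A secondary slip worth noting: your justification that tangled morphisms are edge-witnessing applies tangledness to the singletons of an edge of the \emph{codomain} $G$, whereas tangledness quantifies over connected subsets of the \emph{domain} $H$; the implication itself is true (the paper asserts it), but your one-line argument has the variance backwards --- and it becomes unnecessary once ${\in_Q}$ is used, since that morphism is edge-witnessing by inspection.
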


\begin{proof}
    Say $\leftarrow$ is a $\mathbf{P}$-subfactorisable relation on $Q\in\mathbf{P}$ so ${=_R}\subseteq{{\sqsubset}\circ{\leftarrow}\circ{\sqni}}$, for some ${\sqsupset},{\sqni}\in\mathbf{P}_R^Q$.  Setting ${\twoheadleftarrow}={\leftarrow}\cap(\sqsupset\circ\sqin)$ we immediately see that ${\twoheadleftarrow}\subseteq{\sqsupset\circ\sqin}$ and again ${=_R}\subseteq{{\sqsubset}\circ{\twoheadleftarrow}\circ{\sqni}}$.  As $\mathbf{P}$ has amalgamation, every subfactorisable relation is automatically widely subfactorisable, by \Cref{WideSubfactorisability}.  Thus $(Q,\twoheadleftarrow)$ is a digraph and ${=_Q}\in\underline{\mathbf{P}}^\leftarrow_{\twoheadleftarrow}$, showing that connected digraphs are co-initial in $\underline{\mathbf{P}}$.  But if $\twoheadleftarrow$ is connected then ${\in_Q}\circ{\twoheadleftarrow}\circ{\ni_Q}$ is strictly connected, where as before ${\in_Q}\in\mathbf{F}_{\mathsf{F}Q}^Q$ is the membership relation restricted to $Q$ and its cliques $\mathsf{F}Q$.  As ${\in_Q}\in\underline{\mathbf{P}}^{\twoheadleftarrow}_{{\in_Q}\circ{\twoheadleftarrow}\circ{\ni_Q}}$, this shows that strictly connected digraphs are also co-initial in $\underline{\mathbf{P}}$.
\end{proof}

\begin{theorem}
    The autohomeomorphisms of the pseudoarc have a dense conjugacy class.
\end{theorem}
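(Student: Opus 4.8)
The plan is to apply \Cref{DirectedVsDenseCC} with $\mathbf{K}=\mathbf{P}$. Fix a $\mathbf{P}$-regular $\mathbf{P}$-subabsorbing $\mathbf{P}$-poset $\mathbb{P}$ whose spectrum $\mathsf{S}\mathbb{P}$ is the pseudoarc; such a poset exists by \Cref{TangledWide} and \Cref{PFraisse}, the $\mathbf{P}$-regularity following from \Cref{SubabsorbingRegular} since the neighbourly morphisms are wide in $\mathbf{P}$ (this is automatic for the chain-cover poset underlying $\mathbb{P}$, whose links can be chosen with closures refined by the coarser covers). Granting that the digraph category $\underline{\mathbf{P}}$ is directed, \Cref{CoinitialFromDirected} produces a bi-thin $\underline{\mathbf{P}}$-coinitial $\underline{\mathbf{P}}$-sequence $(\leftarrow_k)$ on $\mathbb{P}$; then \Cref{DirectedVsDenseCC}, whose remaining hypothesis that $\mathbb{P}$ is $\mathbf{P}$-coinitial is supplied by \Cref{SubabsorbingDirected}, shows that the autohomeomorphism $\phi=\bigcap_k\overline{\leftarrow_{k\mathsf{S}}}\in\mathbf{C}_\mathbf{P}^\times$ has a conjugacy class dense in $\mathbf{C}_\mathbf{P}^\times$. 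It then remains to identify $\mathbf{C}_\mathbf{P}^\times$ with the full homeomorphism group, and to establish directedness of $\underline{\mathbf{P}}$.

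For the identification I would show $\mathbf{C}_\mathbf{P}^\times=\Homeo(\mathsf{S}\mathbb{P})$. Since $\mathbf{C}_\mathbf{P}^\times$ is by construction a subgroup of $\Homeo(\mathsf{S}\mathbb{P})$ carrying the subspace topology, which coincides with the topology of uniform convergence via the homeomorphism ${\sqsupset}\mapsto\mathsf{S}_\sqsupset$ of $\mathbf{S}_\mathbb{P}^\mathbb{Q}$ onto $\mathbf{C}_{\mathsf{S}\mathbb{P}}^{\mathsf{S}\mathbb{Q}}$, it suffices to prove every autohomeomorphism $\phi$ is $\mathbf{P}$-compatible. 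By \Cref{Function->ArrowSequence}, $(\leftarrow^\phi_n)$ is a thin arrow-sequence with $\phi=\bigcap_n\overline{\leftarrow^\phi_{n\mathsf{S}}}$, so by \Cref{KLikeArrowSequence} it is enough to check that each $\leftarrow^\phi_n\subseteq\mathbb{P}_n\times\mathbb{P}_n$ is $\mathbf{P}$-subfactorisable. Here $\leftarrow^\phi_n$ is the trace of the graph $\{\langle\phi(x),x\rangle:x\in\mathsf{S}\mathbb{P}\}$ on the product cover $(\overline{q_\mathsf{S}}\times\overline{p_\mathsf{S}})$: as $\phi$ is a surjective homeomorphism this trace is bi-surjective, and as the graph is a continuum its trace is connected in $\mathbb{P}_n\times\mathbb{P}_n$. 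The Proposition characterising bi-surjective connected relations (as exactly those of the form $f\circ g^{-1}$, equivalently those subfactoring into a pair of path morphisms) then yields $\mathbf{P}$-subfactorisability. Hence $\phi\in\mathbf{C}_\mathbf{P}$, and $\phi^{-1}\in\mathbf{C}_\mathbf{P}$ by \Cref{InverseCK}, so $\phi\in\mathbf{C}_\mathbf{P}^\times$; the dense conjugacy class found above is thus dense in all of $\Homeo(\mathsf{S}\mathbb{P})$.

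The hard part will be proving that $\underline{\mathbf{P}}$ is directed, namely that any two $\mathbf{P}$-digraphs $(Q_1,\leftarrow_1)$ and $(Q_2,\leftarrow_2)$ admit a common $\mathbf{P}$-digraph $(F,\leftarrow_F)$ with $\underline{\mathbf{P}}$-morphisms onto each. By \Cref{StrictDigraphs} I would first replace each by a coinitial strictly connected digraph, representing a strictly connected bi-surjective relation as a lattice path in $Q_i\times Q_i$ surjective onto both coordinates, equivalently as $\leftarrow_i=f_i\circ g_i^{-1}$ for edge-preserving surjections $f_i,g_i$ out of a path. I would then build $F$ by concatenating a refinement of $Q_1$ and a refinement of $Q_2$ (common refinements existing because $\mathbf{P}$ is directed), equip $F$ with a digraph $\leftarrow_F$ realising the $\leftarrow_1$-pattern on the first portion and the $\leftarrow_2$-pattern on the second, joined into one bi-surjective connected relation at the seam. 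The connecting morphism $\sqsupset_1\colon F\to Q_1$ realises $\leftarrow_1$ directly on the first portion and routes the edges of the second portion back and forth along the lattice path of $\leftarrow_1$; bi-surjectivity and connectedness of $\leftarrow_1$ are precisely what make such a routing possible while keeping $\sqsupset_1$ co-bijective and edge-preserving, and by construction $\sqsupset_1$ carries $\leftarrow_F$ into $\leftarrow_1$, i.e.\ it is a $\underline{\mathbf{P}}$-morphism; symmetrically for $\sqsupset_2$. The delicate points—arranging the seam so that $\leftarrow_F$ is genuinely a digraph, and carrying out the routing so that each $\sqsupset_i$ remains a path morphism—are where amalgamation in $\mathbf{P}$ (\Cref{PAmalgamation}, via Irwin--Solecki) supplies the required common refinements, and this combinatorial construction is the technical heart of the argument, corresponding to Kwiatkowska's conjecture.

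Combining the three ingredients—directedness of $\underline{\mathbf{P}}$, the resulting dense conjugacy class in $\mathbf{C}_\mathbf{P}^\times$ obtained from \Cref{CoinitialFromDirected} and \Cref{DirectedVsDenseCC}, and the identification $\mathbf{C}_\mathbf{P}^\times=\Homeo(\mathsf{S}\mathbb{P})$—produces a homeomorphism of the pseudoarc whose conjugacy class is dense in the full autohomeomorphism group, as required.
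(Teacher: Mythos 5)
Your reduction is the same as the paper's: identify $\mathbf{C}_\mathbf{P}^\times$ with the full autohomeomorphism group, invoke \Cref{CoinitialFromDirected} and \Cref{DirectedVsDenseCC} to convert directedness of $\underline{\mathbf{P}}$ into a dense conjugacy class, and use \Cref{StrictDigraphs} to reduce to strictly connected digraphs. But the one step you defer -- directedness of $\underline{\mathbf{P}}$ -- is not a ``delicate point'' that amalgamation can absorb; it is the entire content of the theorem, and your sketch for it does not work. A $\underline{\mathbf{P}}$-morphism $\sqsupset_i$ from $(F,\leftarrow_F)$ to $(Q_i,\leftarrow_i)$ requires ${\leftarrow_F}\subseteq{{\sqsubset_i}\circ{\leftarrow_i}\circ{\sqsupset_i}}$, so \emph{every} pair of $\leftarrow_F$ -- including all pairs lying in the portion of $F$ devoted to the other digraph, and pairs across the seam -- must push forward into both $\leftarrow_1$ and $\leftarrow_2$ simultaneously. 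Realising the $\leftarrow_1$-pattern on the first portion and the $\leftarrow_2$-pattern on the second only secures compatibility of each portion with its own target; once you try to repair this, you are forced to take $\leftarrow_F$ inside the intersection $({\sqsubset_1}\circ{\leftarrow_1}\circ{\sqsupset_1})\cap({\sqsubset_2}\circ{\leftarrow_2}\circ{\sqsupset_2})$, and then the whole problem is to choose $\sqsupset_1,\sqsupset_2$ so that this intersection is still bi-surjective and connected -- i.e.\ is a digraph at all. Nothing in $\mathbf{P}$-amalgamation gives this: in the proof of this theorem, \Cref{PAmalgamation} enters only through \Cref{WideSubfactorisability} and \Cref{StrictDigraphs}, to upgrade subfactorisable relations to widely subfactorisable ones. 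Directedness is instead proved by an explicit construction: with $\leftarrow$ on $P_m$ and $\twoheadleftarrow$ on $P_n$, take $l=(m+1)(n+1)-1$, let $f\in\mathbf{F}_{P_l}^{P_m}$ collapse blocks of length $n+1$ and let $g\in\mathbf{F}_{P_l}^{P_n}$ traverse each block alternately forwards and backwards, and then verify combinatorially that ${\leftarrowtail}=(f^{-1}\circ{\leftarrow}\circ f)\cap(g^{-1}\circ{\twoheadleftarrow}\circ g)$ is bi-surjective and strictly connected, using strict connectedness of both $\leftarrow$ and $\twoheadleftarrow$ in an essential way. That verification is what settles Kwiatkowska's conjecture, and it is entirely absent from your proposal.

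There is also a smaller gap in your identification of $\mathbf{C}_\mathbf{P}^\times$ with the full homeomorphism group. You argue that $\leftarrow^\phi_n$ is connected ``as the graph of $\phi$ is a continuum'', but the connectedness this yields is with respect to intersection of the closed sets $\overline{q_\mathsf{S}}\times\overline{p_\mathsf{S}}$, i.e.\ $\barwedge$ in each coordinate by \Cref{barwedgeX}, whereas the edge relation on the product graph $\mathbb{P}_n\times\mathbb{P}_n$ -- the one appearing in the characterisation of bi-surjective connected relations that you invoke -- is $\wedge_n$ in each coordinate. Since ${\wedge}\subseteq{\barwedge}$ can be strict (think of non-taut chains, where closures of non-adjacent links may meet), $\barwedge$-connectedness of the trace does not immediately give the $\wedge_n$-connectedness you need. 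The paper's one-line justification rests on a different observation: because $\mathbf{P}$ is the \emph{full} subcategory of paths in $\mathbf{G}$, any co-bijective edge-preserving relation between levels is a $\mathbf{P}$-morphism, so it suffices to extract a co-bijective relation inside the strong refiner $\sqsupset_\phi$ between suitable levels (using that $\sqsupset_\phi$ is a $\wedge$-preserving refiner and that levels are minimal caps), with no appeal to connectedness of traces. This second gap is repairable; the first is not, short of reproducing the paper's construction.
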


\begin{proof}
    As there are no restrictions on the morphisms in $\mathbf{P}$, we see that every autohomeomorphism of $\mathsf{S}\mathbb{P}$, for any given $\mathbf{P}$-Fra\"iss\'e poset $\mathbb{P}$, is automatically $\mathbf{P}$-compatible, i.e.~$\mathbf{C}_\mathbf{P}^\times$ is already the entire autohomeomorphism group.  By \Cref{CoinitialFromDirected} and \Cref{DirectedVsDenseCC}, it thus suffices to show that $\underline{\mathbf{P}}$ is directed.  By \Cref{StrictDigraphs}, it then suffices to show that strictly connected digraphs are directed.

    Accordingly, say we have strictly connected bi-surjective relations ${\leftarrow}\subseteq P_m\times P_m$ and ${\twoheadleftarrow}\subseteq P_n\times P_n$ on paths of length $m$ and $n$ respectively.  Let $l=(m+1)(n+1)-1$ and define $f\in\mathbf{F}_{P_l}^{P_m}$ and $g\in\mathbf{F}_{P_l}^{P_n}$ so that when $0\leq i\leq m$ and $0\leq j\leq n$,
    \begin{align*}
        f(p^l_{i(n+1)+j})&=p^m_i.\\
        g(p^l_{i(n+1)+j})&=\begin{cases}p^n_j&\text{if }i\text{ is even}\\p^n_{n-j}&\text{if }i\text{ is odd}.\end{cases}
    \end{align*}
    It suffices to show that the relation on $P_l$ given by ${\leftarrowtail}=(f^{-1}\circ{\leftarrow}\circ f)\cap(g^{-1}\circ{\twoheadleftarrow}\circ g)$ is also strictly connected and bi-surjective as it is then immediate that $f\in\underline{\mathbf{P}}^\leftarrow_{\leftarrowtail}$ and $g\in\underline{\mathbf{P}}^\twoheadleftarrow_{\leftarrowtail}$.

    For surjectivity, take any $i\leq m$ and $j\leq n$.  The surjectivity of $\leftarrow$ yields $i'\leq m$ with $f(p^l_{i(n+1)+j})=p^m_i\leftarrow p^m_{i'}=f(p^l_{i'(n+1)+j})$.  If $i$ and $i'$ are even then the surjectivity of $\twoheadleftarrow$ yields $j'\leq n$ with $g(p^l_{i(n+1)+j})=p^n_j\twoheadleftarrow p^n_{j'}=g(p^l_{i'(n+1)+j'})$ so $p^l_{i(n+1)+j}\leftarrowtail p^l_{i'(n+1)+j'}$.  Similarly, if $i$ is even but $i'$ is odd then again the surjectivity of $\twoheadleftarrow$ yields $j'\leq n$ such that $g(p^l_{i(n+1)+j})=p^n_j\twoheadleftarrow p^n_{n-j'}=g(p^l_{i'(n+1)+j'})$ and hence $p^l_{i(n+1)+j}\leftarrowtail p^l_{i(n+1)+j'}$.  The other cases are proved in the same way showing that $\leftarrowtail$ is surjective.  The co-surjectivity of $\leftarrowtail$ likewise follows from the co-surjectivity of $\leftarrow$ and $\twoheadleftarrow$.

    As $\leftarrow$ is strictly connected, we have $h\in\mathbf{F}_Q^\leftarrow$, viewing $\leftarrow$ as a subgraph of $P_m\times P_m$ with the strict edge relation.  So we have $h_1\in\mathbf{F}_Q^{P_m}$ and $h_2\in\mathbf{F}_Q^{P_m}$ with $h(q)=\langle h_1(q),h_2(q)\rangle$, for all $q\in Q$.  As $\twoheadleftarrow$ is strictly connected, for any $q\in Q$ so is the (non-empty) subgraph
    \[{\leftarrowtail_q}={\leftarrowtail}\cap[p^l_{h_1(q)(n+1)},p^l_{(h_1(q)+1)(n+1)})\times[p^l_{h_2(q)(n+1)},p^l_{(h_2(q)+1)(n+1)}).\]
    As ${\leftarrowtail}={\bigcup_{q\in Q}\leftarrowtail_q}$, to show that $\leftarrowtail$ is strictly connected, it then suffices to show that ${\leftarrowtail_{q_+}}\cup{\leftarrowtail_{q_-}}$ is strictly connected, for any adjacent $q_+,q_-\in Q$.  This is immediate if $h(q_+)=h(q_-)$.  Otherwise, $h(q_+)$ and $h(q_-)$ agree on one coordinate and differ on another.  Assume $h_1(q_-)=p^m_{i_1}$, $h_1(q_+)=p^m_{i_1+1}$ and $h_2(q_+)=p^m_{i_2}=h_2(q_-)$, for some even $i_1,i_2\leq m$.  Then we have $j\leq n$ with
    \[g(p^l_{(i_1+1)(n+1)})=g(p^l_{i_1(n+1)+n})=p^n_n\twoheadleftarrow p^n_j=g(p^l_{i_2(n+1)+j}),\]
    which means that $p^l_{(i_1+1)(n+1)}\leftarrowtail_{q_+}p^l_{i_2(n+1)+j}$ and $p^l_{i_1(n+1)+n}\leftarrowtail_{q_-}p^l_{i_2(n+1)+j}$.  As
    \[\langle p^l_{(i_1+1)(n+1)},p^l_{i_2(n+1)+j}\rangle\mathrel{\overline{\sqcap}}\langle p^l_{i_1(n+1)+n},p^l_{i_2(n+1)+j}\rangle,\]
    ${\leftarrowtail_{q_+}}\cup{\leftarrowtail_{q_-}}$ is indeed connected.  The other cases are proved similarly.
\end{proof}

\bibliography{HD}{}
\bibliographystyle{alphaurl}

\end{document}